\numberwithin{equation}{subsection}
\newtheorem{theorem}[equation]{Theorem}
\newtheorem{corollary}[equation]{Corollary}
\newtheorem{lemma}[equation]{Lemma}
\newtheorem{proposition}[equation]{Proposition}
\theoremstyle{definition}
\newtheorem{construction}[equation]{Construction}
\newtheorem{convention}[equation]{Convention}
\newtheorem{definition}[equation]{Definition}
\newtheorem{example}[equation]{Example}
\newtheorem{fake-assumption}[equation]{Fake-assumption}
\newtheorem{fake-definition}[equation]{Fake-definition}
\newtheorem{fake-proposition}[equation]{Fake-proposition}
\newtheorem{fake-theorem}[equation]{Fake-equation}
\newtheorem{notation}[equation]{Notation}
\newtheorem{remark}[equation]{Remark}
\newtheorem{situation}[equation]{Situation}
\newtheorem{hypothesis}[equation]{Hypothesis}
\def\ee{\mathrm{e}}
\def\bbb{\mathbf{b}}
\def\bbe{\mathbf{e}}
\def\bbf{\mathbf{f}}
\def\bbi{\mathbf{i}}
\def\bbj{\mathbf{j}}
\def\bbv{\mathbf{v}}
\def\bbw{\mathbf{w}}
\def\bbz{\mathbf{z}}
\def\bbF{\mathbf{F}}
\def\bbP{\mathbf{P}}
\def\bbQ{\mathbf{Q}}
\def\bbR{\mathbf{R}}
\def\bbX{\mathbf{X}}
\def\bbY{\mathbf{Y}}
\def\AAA{\mathbb{A}}
\def\FF{\mathbb{F}}
\def\NN{\mathbb{N}}
\def\QQ{\mathbb{Q}}
\def\RR{\mathbb{R}}
\def\TT{\mathbb{T}}
\def\ZZ{\mathbb{Z}}
\def\calD{\mathcal{D}}
\def\calE{\mathcal{E}}
\def\calF{\mathcal{F}}
\def\calI{\mathcal{I}}
\def\calL{\mathcal{L}}
\def\calO{\mathcal{O}}
\def\calR{\mathcal{R}}
\def\gothm{\mathfrak{m}}
\def\goths{\mathfrak{s}}
\def\gothE{\mathfrak{E}}
\def\gothF{\mathfrak{F}}
\def\gothI{\mathfrak{I}}
\def\gothR{\mathfrak{R}}
\newcommand{\bs}{\backslash}
\newcommand{\exact}[5]{#1\rightarrow#2\rightarrow#3\rightarrow#4\rightarrow#5}
\newcommand{\serie}[2]{{#11},\dots,{#1{#2}}}
\newcommand{\seriezero}[2]{{#10}, \dots, {#1{#2}}}
\newcommand{\rar}{\rightarrow}
\newcommand{\lrar}{\longrightarrow}
\newcommand{\Rar}{\Rightarrow}
\newcommand{\LRar}{\Leftrightarrow}
\newcommand{\inj}{\hookrightarrow}
\newcommand{\surj}{\twoheadrightarrow}
\newcommand{\isom}{\stackrel \sim \rar}
\newcommand{\GL}{\mathrm{GL}}
\DeclareMathOperator{\Sp}{Sp}
\newcommand{\Diag}{\mathrm{Diag}}
\newcommand{\dual}{\vee}
\newcommand{\End}{\mathrm{End}}
\newcommand{\Ext}{\mathrm{Ext}}
\newcommand{\Fil}{\mathrm{Fil}}
\newcommand{\Hom}{\mathrm{Hom}}
\newcommand{\Ker}{\mathrm{Ker}\,}
\newcommand{\id}{\mathrm{id}}
\newcommand{\Mat}{\mathrm{Mat}}
\renewcommand{\mod}{\mathrm{\;mod\;}}
\DeclareMathOperator{\rank}{rank}
\renewcommand{\log}{\mathrm{log}}
\newcommand{\alg}{\mathrm{alg}}
\newcommand{\Art}{\mathrm{Art}}
\newcommand{\Char}{\mathrm{char}\,}
\DeclareMathOperator{\Frac}{Frac}
\newcommand{\Gal}{\mathrm{Gal}}
\newcommand{\Ind}{\mathrm{Ind}}
\newcommand{\sep}{\mathrm{sep}}
\newcommand{\Swan}{\mathrm{Swan}}
\newcommand{\unr}{\mathrm{unr}}
\DeclareMathOperator{\Spec}{Spec}
\newcommand{\bd}{\mathrm{bd}}
\renewcommand{\sp}{\mathrm{sp}}
\DeclareMathOperator{\Spf}{Spf}
\newcommand{\Fp}{\FF_p}
\newcommand{\Fq}{{\FF_q}}
\newcommand{\OK}{{\calO_K}}
\newcommand{\Qp}{\QQ_p}
\newcommand{\Zp}{\ZZ_p}
\newcommand{\bbpi}{\boldsymbol{\pi}}
\newcommand{\FE}{\mathrm{FE}}
\newcommand{\inte}{\mathrm{int}}
\newcommand{\multi}{\mathrm{multi}}
\newcommand{\rsw}{\mathrm{rsw}}
\newcommand{\nlog}{\mathrm{nlog}}
\begin{document}

\title{On the refined ramification filtrations in the equal characteristic case}

\author{Liang Xiao \\ Department of Mathematics, University of Chicago \\ 5734 S. University Ave, \\ Chicago, IL 60637 \\
\texttt{lxiao@math.uchicago.edu}
}

\date{December 19, 2011}

\maketitle

\begin{abstract}
Let $k$ be a complete discrete valuation field of equal characteristic $p>0$.  Using the tools of $p$-adic differential modules, we define
refined Artin and Swan conductors for a representation of the
absolute Galois group $G_k$ with finite local monodromy; this leads to a description of the subquotients of the ramification filtration on $G_k$.  We prove that our definition of the refined Swan conductors coincide with that is given by Saito, which uses \'etale cohomology.  We also study its relation with the toroidal variation of the Swan conductors.
\end{abstract}

\tableofcontents

\section*{Introduction}

The ramification theory for a complete discrete valuation field $k$ with possibly imperfect residue field $\kappa_k$ was first studied
by K. Kato \cite{kato}; he used \'etale cohomology and Milnor $K$-theory to give a detailed description of the ramification of a character of the absolute Galois group $G_k$, or rather its maximal abelian quotient $G_k^\mathrm{ab}$.  A. Abbes and T. Saito \cite{abbes-saito1, abbes-saito2} extended Kato's work by providing $G_k$ with the \emph{ramification filtrations} $\Fil^aG_k$ and the
\emph{log ramification filtrations} $\Fil_\log^aG_k$ satisfying certain properties.  Saito \cite{saito-wild-ram} later defined a natural injective homomorphism
\[
\rsw: \Hom(\Fil^a_\log G_k / \Fil_\log^{a+} G_k, \Fp) \rar \Omega_{\calO_k}^1 (\log) \otimes_{\calO_k} \pi_k^{-a} \kappa_k
\]
for each $a \in \QQ_{>0}$, where $\calO_k$ is the ring of integers of $k$, $\pi_k$ is a uniformizer, $\kappa_k$ is the residue field, and $\Omega^1_k(\log)$ is the module of logarithmic differentials; he called it the \emph{refined Swan conductor homomorphism}.  This provides some further information about the subquotients for the log ramification filtration on $G_k$.

Along a different path, 
G. Christol, B. Dwork, S. Matsuda, Z. Mebhkout, and their collaborators used $p$-adic differential modules to give an interpretation of the Swan conductors of representations of $G_k$ when the residue field $\kappa_k$ is perfect.  They associated a $p$-adic differential module over an annulus to any continuous representation of $G_k$, and proved that the Swan conductor of the representation is related to the radii of convergence of the local solutions for the differential module.
K. Kedlaya \cite{kedlaya-swan1} generalized this approach to include the case
in which the residue field is imperfect, by giving the definitions of Artin conductors and Swan conductors for a representation of $G_k$.  The author \cite{xiao1} verified that this pair of definitions coincide with those naturally associated to the ramification filtrations and log ramification filtrations of Abbes and Saito \cite{abbes-saito1, abbes-saito2}.  An important consequence of this comparison result is the \emph{Hasse-Arf theorem} for the ramification filtrations and the log ones \cite[Theorem~4.4.1]{xiao1}, which states that the Artin conductors and Swan conductors are all integers.

In this paper, we give an alternative definition of the refined Swan conductors as well as their nonlog counterparts, using $p$-adic differential modules, and we will compare our definition withe that of Saito.
Let us describe the basic idea of the definition.  In this introduction, we assume for simplicity that $\kappa_k$ has a finite $p$-basis $\{\bar b_1, \dots, \bar b_m\}$.
Let $K$ be the fraction field of the Cohen ring of $\kappa_k$ with respect to $\bar b_1, \dots, \bar b_m$. Let $B_1, \dots, B_m$ denote the canonical lifts of $\bar b_1,
\dots, \bar b_m$ to $K$, respectively.  Let $A^1_K(\eta_0, 1)$ be the annulus over $K$ with coordinate $T$ and  with radii in $(\eta_0, 1)$ for some $\eta_0 \in (0, 1)$.  By aforementioned series of work, one can associate to an irreducible $p$-adic representation $\rho$ of $G_k$ with finite image a differential module $\calE$ over $A^1_K(\eta_0, 1)$ for the differential operators $\partial_0 = \partial / \partial T$ and $\partial_1 = \partial/\partial B_1, \dots, \partial/\partial B_m$.  Let $\bbpi = {-p}^{1/(p-1)}$ denote a \emph{Dwork pi} and put $K' = K(\bbpi)$. When $\rho$ is of pure ramification break $b$, i.e., 
 when $\rho(\Fil^{b+}G_k)$ is trivial but $\rho(\Fil^bG_k)$ is not, 
the following na\"ive picture is helpful as a guide to intuition: there exists a basis of $\calE \otimes_K K'$, with respect to
 which, $\partial_0, \partial_1, \dots, \partial_m$ act as per the prescription:
\begin{equation}
\label{E:differential operators}
\partial_0 = \bbpi T^{-b-1}N_0, \quad \partial_1 = \bbpi T^{-b}N_1, \quad \dots\quad, \partial_m = \bbpi T^{-b} N_m,
\end{equation}
where $N_0, \dots, N_m$ are matrices in $\calO_{K'}\llbracket T\rrbracket$. For each $j \in \{0, \dots, m\}$, we use $\overline N_j$ to denote reduction of
$ N_j$ modulo the ideal $(\bbpi, T)$; these matrices commute and have coefficients in $\kappa_k$.  Take a common (generalized) eigen-basis $e_1, \dots, e_d$ for all $\overline N_j$; set $\theta_{i, j}$ to be
the (generalized) eigenvalue of $\overline N_j$ associated to $e_i$,
viewed as an element in $\kappa_k^\alg$. One might na\"ively tend to define the multiset of \emph{refined Swan conductors} of $\rho$ to be $\{\pi_k^{-b}\big(\theta_{i, 0} \frac {d\pi_k}{\pi_k} + \theta_{i, 1}d\bar b_1 + \cdots + \theta_{i, m}d\bar b_m\big)\,|\,i = 1, \dots, d\} \subset \Omega_{\calO_k}^1(\log) \otimes_{\calO_k} \pi_k^{-b}\kappa_k^\alg$.
Of course, such a basis of $\calE \otimes_K K'$ over the annulus $A^1_{K'}[\eta_0, 1)$ with the described properties might not exist.  In practice, we need the following two technical arguments to read off the multiset of refined Swan conductors.
\begin{itemize}
\item [(a)] The above picture can be better described over a field.  Namely, we have the description of the actions of $\partial_0, \dots, \partial_m$ as in \eqref{E:differential operators} over the completion of $K(T)$ with respect to the $\eta$-Gauss norm for any $\eta\in [\eta,1)$. By taking common eigenvalues as explained above, we can define a version of refined Swan conductors, called the \emph{refined radii}, of the differential module at each radius $\eta$.  We then show that the refined radii, as we vary the radius of the Gauss norm, also vary in a nice way when $\eta$ is sufficiently close to $1$: they form a unique multiset consisting of elements of $\Omega_{\calO_k}^1(\log) \otimes_{\calO_k} \pi_k^{-b}\kappa_k^\alg$, independent of the choice of $\eta$.  We then just simply define this multiset to be the multiset of \emph{refined Swan conductors} of the representation $\rho$; this does not require any good matrices representing the actions of $\partial_j$ over the entire annulus.
\item [(b)] When the spectral norms of the differential operators are smaller than their operator norms over the base field, the description \eqref{E:differential operators} requires some modification.  Over the completion of $K(T)$ for the $\eta$-Gauss norm, we may find a basis with respect to which $\partial_j^{p^r}$ for some appropriate $r\in \NN$ acts by some nice matrix as in \eqref{E:differential operators}.  We then take the common eigenvalues of those matrices and define the refined radii to be the $p^r$-th roots of these eigenvalues.  When trying to prove results in this case, we use a technique called \emph{Frobenius antecedents} developed in \cite{kedlaya-xiao}, which reduces the question at hand to the case when the spectral norms are bigger than the operator norms.
\end{itemize}

We can also define the notion of refined Artin conductors using a variant of the definition of the refined Swan conductors, in which the effect of log structure is removed, which amounts to replacing the factor $T^{-b-1}$ by $T^{-b}$ in \eqref{E:differential operators}.

Part of the content in this paper on refined Swan conductors has been already included in the author's thesis~\cite{thesis}.  However, we feel the present paper provides a better context
for our development of refined Swan conductors.  We also fill in some gaps in \cite{thesis}. 

 To compare our definition of refined Swan conductors with Saito's, we proceed as in \cite{xiao1} by introducing the thickening spaces
which tie the $p$-adic differential equations together with the rigid analytic spaces considered by Abbes and Saito. More precisely, we may first realize a finite Galois extension $l$ of $k$ as the corresponding extension of the function fields of a finite \'etale extension of smooth affine varieties $Y \to X$.  We may further assume that both $X$ and $Y$ lift to smooth formal schemes $\bbX$ and $\bbY$.  The differential module associated to a $p$-adic representation of $\Gal(l/k)$ lives over the a subspace of the tube of $X$ embbeded diagonally in $\bbX \times \bbX$, which is a rigid analytic subspace of the generic fibre of $\bbX\times \bbX$ and is called the \emph{thickening space}.  We carefully study the construction of the differential module and compare that with Saito's description of the special fibre of the formal scheme $\bbY$.  
The
 core of the comparison result is to identify the data  defining an Artin-Schreier cover of $\AAA^m_{\kappa_k}$ with the data coming from the associated Dwork isocrystals as a differential module.

We also remark that when $k$ is an $n$-dimensional higher local field of characteristic $p>0$, the refined conductors induce a ramification filtration on $G_k$ indexed by $\QQ^n$ with lexicographic order.  This is expected to be compatible with certain filtration on the Milnor $K$-groups via Kato's class field theory.

Finally,
we study the relation of the refined Swan conductors with the variation of intrinsic radii (certain form of Swan conductors) over a polyannulus.  We prove that the valuations of the refined Swan conductors at a vertex of the polygon associated to the polyannulus encode some information about the slopes of the log-affine functions of the intrinsic radii at that vertex.  For the precise statement, one may
consult Proposition~\ref{P:polyannuli-Swan-conductor}. 

\subsection*{Plan of the paper}

Section 1 is devoted to developing the theory of refined radii, the analog of refined conductors over a complete nonarchimedean field.  In the first two subsections, we set up notation and recall some basic results on differential modules from \cite{kedlaya-xiao}.  We define the refined radii in Subsection~\ref{S:refined-radii} and prove a decomposition Theorem~\ref{T:refined-decomposition}, which separates pieces with different refined radii in a differential module. In Subsection~\ref{S:multi-derivations} we consider the case where we allow multiple derivations to interact.  In Subsection~\ref{S:one-dim} we study how the refined radii vary on an annulus or a disc, when the radii are log-affine functions.  We then define
the refined conductors for solvable differential modules over  an annulus in Subsection~\ref{S:differential-conductor}.

In Section 2 we apply the theory of refined conductors for solvable differential modules to define refined conductors for Galois representations.
In the first two subsections we recall the construction of differential modules following \cite{kedlaya-swan1}, and deduce some basic properties.  In Subsection~\ref{S:refined} we define the homomorphism of refined conductors.  Subsection~\ref{S:higher-local-fields} briefly discusses an application	to the higher local fields.

In Section 3 we compare our definition with that of Saito, which is reviewed in Subsection~\ref{S:saito-refined}. In Subsection~\ref{S:lifting-spaces} we realize the extension of fields as a finite \'etale cover of varieties and lift them to rigid analytic spaces over $K$.
In Subsection~\ref{S:Dwork-isoc} we do a crucial calculation on the differential module structure of 
Dwork isocrystals to determine their refined radii; this calculation forms the heart of our proof of
the comparison theorem.  We wrap up Section 3 with 
a proof of the comparison  Theorem~\ref{T:comparison} in Subsection~\ref{S:comparison}.

In Section 4 we focus on the interplay of refined Swan conductors with the toroidal variation of Swan conductors.  A few technical lemmas are discussed in Subsection~\ref{S:boundary}, and the main theorems are proved in Subsection~\ref{S:polyannuli-variation}.

\subsection*{Acknowledgments}

I would like to thank
my advisor, Kiran Kedlaya, for generating some of
crucial ideas, for helpful discussions, and for spending hours reviewing early drafts.
I thank
Ivan Fesenko for suggesting the application to higher local fields.
I have benefited from discussions with Takeshi Saito.
I thank Ahmed Abbes for inviting me to the conference \emph{Journ\'ees de G\'eom\'etrie Arithm\'etique de Rennes}, which has benefited me a lot.
I am also grateful for Matthew Morrow and Sandeep Varma for their help on the presentation of this paper.

\section{Theory of differential modules}
\label{Section 1}
Our systematic study of differential modules proceeds in two stages: first over a complete nonarchimedean field, and then over an annulus over a complete nonarchimedean field.  In the former case, the spectral norm, or equivalently \emph{the radius of convergence}, of the differential operator is a very important invariant;  when the differential module has pure radii, we will focus on a secondary information of the differential module, called the \emph{refined radii}.
In the latter case, Kedlaya and the author \cite{kedlaya-xiao} proved that  the radii of convergence of a differential module over an annulus give rise to piecewise log-affine functions as one vary the radii on the annulus; we will again focus on the secondary data: the refined radii. In the case when the aforementioned piecewise log-affine functions are in fact log-affine, we prove that the multisets of refined radii of the differential module at all radii are the \emph{same}, if we naturally identify the spaces where these refined radii live in.

\subsection{Setup}
\label{S:setup}
This subsection is mainly to explain our convention on notations; however, the commutative algebra Lemma~\ref{L:proj-intersect} will become a very useful tool later as explained in Remark~\ref{R:proj-intersect}.

\begin{notation}
By a \emph{multiset} $S$, we mean a set where we allow elements to have multiplicity.  For $s \in S$, the \emph{multiplicity} of $s$ in $S$ is denoted by $\multi_s(S)$.  When $S$ consists of a single element (with multiplicity), we call it \emph{pure}.
\end{notation}

\begin{notation}

For any field $K$ that will be considered in this paper,
$K^{\alg}$ will denote a fixed algebraic closure. We let $K^\sep$ denote the separable closure of $K$ inside $K^\alg$.  Set $G_K = \Gal(K^\sep/K)$.
For a finite Galois extension $L/K$ (inside $K^\sep$), we denote its Galois group
 by $G_{L/K} = \Gal(L/K)$.
 
 For $e \in \NN$, we use $\mu_e$ to denote the set of $e$th roots of unity in $K^\alg$.
\end{notation}

\begin{notation}
\label{N:fields}
By a \emph{nonarchimedean} field, we mean a field $K$ equipped with a nonarchimedean norm $|\cdot| = |\cdot|_K: K^\times \rar \RR^\times_+$.  A subring of $K$ (with the induced norm and topology) is called a \emph{nonarchimedean ring}.

For a nonarchimedean field $K$, denote the 
ring of integers of $K$ by $\calO_K = \{x \in K| |x| \leq 1 \}$ and the  maximal
ideal of $\calO_K$ 
 by $\gothm_K = \{ x\in K | |x|<1\}$; denote the residue field of $K$ by $\kappa_K = \calO_K / \gothm_K$. 
We reserve the letter $p$ for the characteristic of $\kappa_K$.  If $\Char \kappa_K = p>0$ and $\Char K = 0$, we normalize the norm on $K$ so that $|p| = 1/p$.  For an element $a \in \calO_K$, we denote its image in $\kappa_K$ under the reduction map by $\bar a$.  In case $K$ is discretely valued, let $\pi_K$ denote a uniformizer of $\calO_K$ and let $v_K(\cdot)$ be the corresponding valuation on $K$, normalized so that $v_K(\pi_K) = 1$.

For a nonarchimedean field $K$ and $s \in \RR$, we set
\[
\gothm^{(s)}_K = \{x \in K\ \big|\ |x| \leq \ee^{-s}\}, \quad
\gothm^{(s)+}_K = \{x \in K\ \big|\ |x| < \ee^{- s}\}, \quad \textrm{and } \kappa_K^{(s)} = \gothm^{(s)}_K / \gothm^{(s)+}_K.
\]
If $s \in -\log\,|K^\times|$, there exists a non-canonical isomorphism $\kappa_K \simeq \kappa_K^{(s)}$.
For $a \in K$ with $|a| \leq \ee^{- s}$, we sometimes denote its image in $\kappa^{(s)}_K$ by $\bar a^{(s)}$.  In particular, $\kappa_K^{(0)} = \kappa_K$ and $\bar a^{(0)} = \bar a$ if $v(a) \geq 0$.
\end{notation}

\begin{notation}
Let $J$ be an index set.  We  use $e_J$ to denote a tuple $(e_j)_{j \in J}$.  For another tuple $u_J$, set $u_J^{e_J} = \prod_{j \in J} u_j^{e_j}$, if all but finitely many of the $e_j$'s are equal to $0$. We also use $\sum_{e_J = 0}^{n}$ to denote the sum over $e_j \in \{0, 1, \dots, n\}$ for each $j \in J$ provided 
$e_j \neq 0$ for only finitely many $j$; for notational simplicity, we may suppress the range of the summation when it is clear.  If $J$ is finite, put $|e_J| = \sum_{j \in J}|e_j|$ and $(e_J)!=\prod_{j \in J} (e_j!)$.
\end{notation}

\begin{convention}
Throughout this paper, all derivations on topological modules will
be assumed to be continuous; in particular, $\Omega^1_{R/S}$ will denote the
module of continuous differentials on the (topological) ring $R$ relative to the (topological) base ring $S$; we may suppress $S$ from the notation when $S = \Fp$, 
$\ZZ$ or $\Zp$.
Moreover, all derivations  on nonarchimedean rings will be assumed to be bounded (i.e.,
to have bounded operator norms). All connections considered will be
assumed to be integrable. 
\end{convention}

\begin{notation}
For a matrix $A = (A_{ij})$ with coefficients in a nonarchimedean ring, we use $|A|$ to denote the supremum among the norms of the entries $A_{ij}$ of $A$.
\end{notation}

\begin{hypothesis}
For the rest of this subsection, we assume that $K$ is a complete nonarchimedean field.
\end{hypothesis}

\begin{notation}\label{N:affinoids}
Let $I \subset [0, +\infty)$ be an interval and let $n \in \NN$. Let 
\[
A_K^n(I) = \big\{(x_1, \dots, x_n) \in K^\alg\ \big|\ |x_i| \in I \textrm{ for } i = 1, \dots, n\big\}
\]
denote
the polyannulus of dimension $n$ with radii in $I$.
(We do not impose any rationality condition on the endpoints of $I$, so this
space should be viewed as an analytic space in the sense of 
Berkovich \cite{berkovich}.)
If $I$ is written explicitly in terms of its
endpoints (e.g., $[\alpha, \beta]$),
we suppress the parentheses around $I$ (e.g., 
$A_K^n[\alpha, \beta]$).
\end{notation}

\begin{notation}
 Let $0 \leq \alpha \leq \beta < +\infty$.  We put 
\begin{align*}
K\langle \alpha/t, t / \beta \rangle &= \Big\{ \sum_{n \in \ZZ} a_nt^n \ \Big|\  |a_n| \eta^n \rightarrow 0 \textrm{ as } n \rightarrow \pm \infty, \textrm{ for any } \eta \in [\alpha, \beta] \Big\}, \\
K\langle \alpha/t, t / \beta \}\} &= \Big\{ \sum_{n \in \ZZ} a_nt^n\ \Big|\  |a_n| \eta^n \rightarrow 0 \textrm{ as } n \rightarrow \pm \infty, \textrm{ for any } \eta \in [\alpha, \beta) \Big\},\\
K\{\{ \alpha/t, t / \beta \rrbracket_0 &= \Big\{ \sum_{n \in \ZZ} a_nt^n\ \Big|\  |a_n| \eta^n \rightarrow 0 \textrm{ and } |a_n|\beta^n \textrm{ is bounded, as } n \rightarrow \pm \infty,\textrm{ for any }\eta \in (\alpha, \beta) \Big\}.
\end{align*}

When $\alpha = 0$, we simply use $K\langle t / \beta\rangle$ and $K\{\{ t/ \beta\}\}$ to denote the first and second rings above, respectively.  We also put 
\[
K \llbracket t /\beta\rrbracket _0 = \big\{ \sum_{n =0}^\infty a_n t^n \big| |a_n|\beta^n \textrm{ is bounded as }n \rar \infty\big\}.
\]

For $I = \{1, \dots, n\}$ and a nonarchimedean ring $R$, we use $R \langle u_I \rangle$ to denote the Tate algebra, consisting of formal power series $\sum_{e_I \in \ZZ_{\geq 0}} a_{e_I}u_I^{e_I}$ with $a_{e_I} \in R$ and $|a_{e_I}| \rightarrow 0$ as $|e_I| \rightarrow +\infty$.  For $(\eta_i)_{i \in I} \in (0, +\infty)^n$, the \emph{$\eta_I$-Gauss norm} on the polynomial ring $R[t_I]$
is the norm $|\cdot|_{\eta_I}$ given by $$
\left| \sum_{e_I} a_{e_I} t_I^{e_I}\right|_{\eta_I} = \max_{e_I} \left\{ |a_{e_I}| \cdot \eta_I^{e_I}\right\};
$$
this norm extends uniquely to multiplicative norms on $\Frac \big(R[t_I]\big)$, and on $R\langle t_I\rangle$ in case $|\eta_i| \leq 1$ for any $i \in I$.

For $\eta \in [\alpha, \beta]$ with $\eta \neq 0$, the $\eta$-Gauss norm on $K[t]$ extends to multiplicative norms 
on $K \langle \alpha / t, t / \beta \rangle$ and $K \llbracket t / \beta \rrbracket_0$, on
$K \langle \alpha / t, t / \beta \}\}$ in case $\eta \neq\beta$, and on $K \{\{ \alpha / t, t/ \beta \rrbracket_0$ in case $\eta \neq \alpha$.
\end{notation}

We record here a lemma in
commutative algebra which will be frequently used (implicitly) when gluing decompositions.

\begin{lemma}
\label{L:proj-intersect}
Let 
\[
\xymatrix{
R \ar[r] \ar[d] &  S \ar[d] \\
T \ar[r] & U
}
\]
be a commuting diagram of inclusions of integral domains,
such that the intersection
$S \cap T$ within $U$ is equal to $R$. Let $M$ be a finite
locally free $R$-module. Then the intersection of $M \otimes_R S$
and $M \otimes_R T$ within $M \otimes_R U$ is equal to $M$.
\end{lemma}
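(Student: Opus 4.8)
The plan is to reduce to the case of a finite \emph{free} module, where the assertion is immediate, and then transfer back by means of an idempotent.

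Since $M$ is finite locally free, it is a finitely generated projective $R$-module, so there are an integer $n$, an $R$-module $M'$, and an isomorphism $R^n \cong M \oplus M'$; equivalently there is an idempotent $e \in \End_R(R^n)$ with image $M$. For any of the rings $A \in \{S, T, U\}$, tensoring the decomposition $R^n = M \oplus M'$ over $R$ with $A$ gives $A^n = (M \otimes_R A) \oplus (M' \otimes_R A)$, so $M \otimes_R A$ is canonically the image of the idempotent $e_A := e \otimes \id_A$ acting on $A^n$, and in particular $M \otimes_R A$ is a split $A$-submodule of $A^n$. Because the inclusions $S \hookrightarrow U$ and $T \hookrightarrow U$ are injective, so are $S^n \hookrightarrow U^n$ and $T^n \hookrightarrow U^n$, and these respect the idempotents $e_A$; hence $M \otimes_R S$, $M \otimes_R T$, and $M \otimes_R U$ all embed compatibly into $U^n$, and the intersection in the statement may be formed inside $U^n$. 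Writing $e$ again for $e_U$, we have $M \otimes_R A = e(A^n) = \{v \in A^n \mid e(v) = v\}$ for $A \in \{S, T, U\}$, and likewise $M = e(R^n) = \{v \in R^n \mid e(v) = v\}$.

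Now take $v \in (M \otimes_R S) \cap (M \otimes_R T)$ inside $U^n$. Then $v \in S^n$ and $v \in T^n$, so each coordinate of $v$ lies in $S \cap T = R$ (applying the hypothesis coordinatewise), i.e., $v \in R^n$; moreover $e(v) = v$ since $v \in e(S^n)$. Hence $v \in e(R^n) = M$. The reverse inclusion $M = e(R^n) \subseteq e(S^n) \cap e(T^n)$ is clear, so $(M \otimes_R S) \cap (M \otimes_R T) = M$ inside $M \otimes_R U$, as desired.

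As for obstacles: there is essentially no analytic or geometric content here, and the argument is short. The only point requiring care is that the three modules $M \otimes_R S$, $M \otimes_R T$, $M \otimes_R U$ must be intersected inside a common ambient object with all the structure maps injective, and this is exactly what the direct-summand presentation of $M$ supplies (it lets us avoid assuming that $S$, $T$, $U$ are flat over $R$). An alternative route would be to localize on $\Spec R$ and glue, using exactness of localization to propagate the identity $S \cap T = R$ (rewritten as $S \cap T = \ker(S \oplus T \to U)$), but the idempotent argument is cleaner and imposes no Noetherian hypothesis.
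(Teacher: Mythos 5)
Your proof is correct, and the paper itself simply cites \cite[Lemma~2.3.1]{kedlaya-xiao} rather than giving an argument; the proof there proceeds by exactly the same reduction you use, realizing the finite locally free $M$ as a direct summand $e(R^n)$ of a finite free module and then checking $S^n \cap T^n = R^n$ coordinatewise. One minor point worth making explicit, which you handle implicitly: the maps $M \otimes_R S \to M \otimes_R U$ and $M \otimes_R T \to M \otimes_R U$ need not be injective a priori (no flatness is assumed), but they are injective here precisely because they are restrictions of $S^n \hookrightarrow U^n$ and $T^n \hookrightarrow U^n$ to compatible direct summands, which is what makes the intersection well-posed inside $M \otimes_R U$.
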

\begin{proof}
See \cite[Lemma~2.3.1]{kedlaya-xiao}.
\end{proof}

\begin{remark}
\label{R:proj-intersect}
Let us remark on how this lemma
is used in this paper.  We often apply this lemma to the $R$-module $\mathrm{End}(M)$ over $R$ for a differential module $M$.  More precisely, we often encounter the situation when we can write both $M \otimes_R S$ and $M \otimes_R T$ as direct sums of two submodules such that both direct sum decompositions, when tensored with $U$,
give the same direct sum decomposition of
$M \otimes_R U$.  We view the projections constituting the direct sum decompositions
as elements in $\mathrm{End}(M) \otimes_R S$, $\mathrm{End}(M) \otimes_R T$, and $\mathrm{End}(M) \otimes_R U$, respectively.  By Lemma~\ref{L:proj-intersect}, we see that the projections above are actually the images of one element  of $\End(M)$ under the natural maps; this element defines a direct sum decomposition of $M$ which when tensored with $S$ (respectively, $T$) yields the given
direct sum decomposition of $M \otimes_R S$ (respectively, $M \otimes_R T$).  In other words, we can ``glue" the direct sum decompositions of $M \otimes_R S$ and of $M \otimes_R T$ along $M\otimes_R U$ to get a direct sum decomposition of $M$ (over $R$).
\end{remark}

\subsection{Differential modules and radii of convergence}

The starting point of the theory of nonarchimedean differential modules is to understand differential modules over a nonarchimedean field.  One of the important tools is the \emph{Newton polygon} associated to a \emph{cyclic vector}, which gives many numerical information if the spectral norm of the differential operator is strictly bigger than the operator norm on the base field.  To extend interesting results across the threshold imposed by the operator norm mentioned above, we restrict ourselves to the case when the differential operator is of \emph{rational type}, i.e. its metric properties resembles $d/dX$ acting on the completion of $\Qp(X)$ with respect to the $1$-Gauss norm; in this case, we may entirely remove the restriction on spectral norms by considering the \emph{Frobenius antecedents} of the differential modules.

\begin{definition}
\label{D:differential-module}
Let $K$ be a differential ring, i.e.,
a ring equipped with a derivation $\partial$.  Let $K\{T\}$ denote the (noncommutative) ring of twisted polynomials over $K$ \cite{ore}; its elements are finite formal sums $\sum_{i \geq 0} a_iT^i$ with $a_i \in K$, multiplied according to the rule $Ta = aT + \partial(a)$ for $a \in K$.

A \emph{$\partial$-differential module} over $K$ is a finite projective $K$-module $V$ equipped with an action of $\partial$ (subject to the Leibniz rule); any $\partial$-differential module over $K$ inherits a left action of $K\{T\}$ where $T$ acts via $\partial$.  The \emph{rank} of $V$ is the rank of $V$ as a $K$-module.  The module dual $V^\dual  = \Hom_K(V,K)$ of $V$
may be viewed as a $\partial$-differential module
by setting $(\partial f)(\bbv) = \partial(f(\bbv)) - f(\partial(\bbv))$.
We say $V$ is \emph{free} if $V$ is free as a module over $K$. We say $V$ is \emph{trivial} if it is isomorphic to $K^{\oplus d}$ for some $d\in \NN$ as a $\partial$-differential module.

For a $\partial$-differential module $V$  free of rank $d$ over $K$, an element $\bbv \in V$ is called a \emph{cyclic vector} if $\bbv, \partial \bbv, \dots, \partial^{d - 1}\bbv$ form a basis of $V$ as a $K$-module.  A cyclic vector defines
an isomorphism $V \simeq K\{T\} / K \{T\}P$ of $\partial$-differential modules, where $P\in K\{T\}$ is some monic twisted polynomial of degree $d$, and the $\partial$-action on $K\{T\}/K\{T\}P$ is the left multiplication by $T$.  If $K$ is a differential field  of characteristic 0, $V$ always has a cyclic vector (see  \cite[Theorem~III.4.2]{dgs} or \cite[Theorem~5.4.2]{kedlaya-course}). 

For a $\partial$-differential module $V$, we put $H_\partial^0(V) = \Ker \partial$.
\end{definition}

\begin{hypothesis} \label{H:K-diff-field}
For the rest of this subsection,
we assume that $K$ is a complete nonarchimedean field of characteristic zero, equipped with a derivation $\partial$ with
operator norm $|\partial|_K < \infty$,
and that $V$ is a nonzero $\partial$-differential module over $K$.
\end{hypothesis}

\begin{definition}\label{D:partial-radii}
Let $p$ denote the residual characteristic of $K$; we conventionally set
\[
\omega = \begin{cases} 1 & p = 0 \\ p^{-1/(p-1)} & p > 0 \end{cases}.
\]
The \textit{spectral norm of $\partial$ on $V$} is defined to be
$
|\partial|_{\sp, V} = \lim_{n \rar \infty} |\partial^n|_V^{1/n}
$
for any fixed $K$-compatible norm $|\cdot|_V$
on $V$.
Define the \emph{generic $\partial$-radius} of $V$ to be
$
R_\partial(V) = \omega|\partial|_{\sp, V}^{-1};
$
note that $R_\partial(V) > 0$.
Let $\serie{V_}d$ be the Jordan-H\"older constituents of $V$ as a $K\{T\}$-module.
We define the multiset $\gothR_\partial(V)$ of \emph{(extrinsic) subsidiary $\partial$-radii} of $V$  to be the collection of $R_\partial(V_i)$ with multiplicity $\dim V_i$ for $i = \serie{}d$.  Let $R_\partial(V;1) \leq \cdots \leq R_\partial(V;\dim V)$ denote the elements of $\gothR_\partial(V)$ in nondecreasing order.
We say that $V$ has \emph{pure $\partial$-radii} if $\gothR_\partial(V)$ consists of $\dim V$ copies of $R_\partial(V)$.
\end{definition}

\begin{definition}\label{D:Taylor-series}
Let $R$ be a complete $K$-algebra.  For $\bbv \in V$ and $x \in R$, we define the \emph{$\partial$-Taylor series} of $\bbv$ with respect to $x$ to be
\begin{equation}\label{E:Taylor}
\TT (\bbv; \partial; x) = \sum_{n = 0}^\infty \frac {\partial^n (\bbv)}{n!}x^n \in V \otimes_K R,
\end{equation}
in case this series converges. When $V = K$, the $\partial$-Taylor series \eqref{E:Taylor} with respect to a fixed $x \in R$ gives 
a homomorphism $K \rar R$ of rings, if it converges for all $\bbv \in V=K$.
  For general $V$, the $\partial$-Taylor series \eqref{E:Taylor} with respect to the same fixed $x \in R$ gives a homomorphism of $K$-modules $V \rar V \otimes_K R$ respecting the aforementioned ring homomorphism, if both homomorphisms converge.
\end{definition}

\begin{lemma}\label{L:basic-IR-proposition}
Let $V$, $V_1$, and $V_2$ be nonzero $\partial$-differential modules over $K$.

\emph{(a)} If $\exact 0 {V_1} V {V_2} 0$ is exact, then we have
$
\gothR_\partial(V) = \gothR_\partial(V_1) \cup \gothR_\partial(V_2).
$

\emph{(b)} We have
$
\gothR_\partial(V^\dual) = \gothR_\partial(V ).
$

\emph{(c)} We have $R_\partial(V_1 \otimes V_2) \geq \min \left\{ R_\partial(V_1),R_\partial(V_2) \right\}$.  If $V_1$ is irreducible and $R_\partial(V_1) < R_\partial(V_2)$, then $V_1 \otimes V_2$ has pure $\partial$-radius $R_\partial(V_1)$.

\emph{(d)} Let $f: K \rar K\llbracket T/u\rrbracket_0$ be the homomorphism given by $f(x) = \TT(x; \partial; T)$.  Then $f^*V = V \otimes_{K, f}K \llbracket T/u\rrbracket_0$ is a $\partial_T = \partial / \partial T$-differential module over $K\llbracket T/u \rrbracket_0$. For $r \in (0, R_\partial(K))$, $R_\partial(V) \geq  r$ if and only if $f^*V$ restricts to a trivial $\partial_T$-differential module over $A^1_K[0, r)$. 
\end{lemma}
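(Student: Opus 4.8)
The plan is to treat the four parts in the order (a), (b), (c), (d), using the Jordan–Hölder formulation of subsidiary radii as the organizing principle. Throughout I would fix a $K$-compatible norm on each module and recall that, by definition, $\gothR_\partial(V)$ is assembled from the generic radii $R_\partial(V_i)$ of the Jordan–Hölder constituents $V_i$ of $V$ as a $K\{T\}$-module, each counted with multiplicity $\dim V_i$.

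For part (a), given a short exact sequence $\exact 0 {V_1} V {V_2} 0$ of $\partial$-differential modules, the point is simply that a Jordan–Hölder series of $V$ can be chosen to pass through $V_1$, so that the multiset of constituents of $V$ is the disjoint union of those of $V_1$ and of $V_2$ (Jordan–Hölder uniqueness). Applying the definition of $\gothR_\partial$ to each gives $\gothR_\partial(V) = \gothR_\partial(V_1) \cup \gothR_\partial(V_2)$ directly. For part (b), I would note that dualization is an exact contravariant functor on $K\{T\}$-modules that sends irreducibles to irreducibles and preserves dimension, so it induces a bijection on Jordan–Hölder constituents; it then suffices to check $R_\partial(W^\dual) = R_\partial(W)$ for $W$ irreducible, which follows because the spectral norm $|\partial|_{\sp, W^\dual}$ equals $|\partial|_{\sp, W}$ — this comes from the formula $(\partial f)(\bbv) = \partial(f(\bbv)) - f(\partial(\bbv))$, which shows the matrix of $\partial$ on $W^\dual$ in a dual basis is (up to sign and transpose) that of $\partial$ on $W$, giving the same iterated operator norms. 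For part (c), the inequality $R_\partial(V_1 \otimes V_2) \geq \min\{R_\partial(V_1), R_\partial(V_2)\}$ is the standard estimate: if $\bbv_1, \bbv_2$ are elements of $V_1, V_2$, then $\partial^n(\bbv_1 \otimes \bbv_2) = \sum_{i} \binom{n}{i} \partial^i \bbv_1 \otimes \partial^{n-i}\bbv_2$ and $|\binom{n}{i}| \leq 1$, so $|\partial^n|_{V_1 \otimes V_2} \leq \max_i |\partial^i|_{V_1} |\partial^{n-i}|_{V_2}$, whence $|\partial|_{\sp, V_1\otimes V_2} \leq \max\{|\partial|_{\sp,V_1}, |\partial|_{\sp,V_2}\}$. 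For the refinement, if $V_1$ is irreducible with $R_\partial(V_1) < R_\partial(V_2)$: by the above all subsidiary radii of $V_1 \otimes V_2$ are $\geq R_\partial(V_1)$, and one shows they are all $\leq R_\partial(V_1)$ by applying the same estimate to $V_1 \cong (V_1 \otimes V_2) \otimes V_2^\dual$ (using (b)) — so the radii would all be $\geq \min\{R_\partial(V_1 \otimes V_2), R_\partial(V_2)\}$; since $R_\partial(V_2) > R_\partial(V_1)$ this forces $R_\partial(V_1 \otimes V_2) = R_\partial(V_1)$ and, combined with the lower bound, purity.

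Part (d) is the substantive one and will be the main obstacle. The map $f\colon K \to K\llbracket T/u\rrbracket_0$, $x \mapsto \TT(x;\partial;T)$, is a ring homomorphism precisely because $r < R_\partial(K)$ guarantees convergence of the $\partial$-Taylor series on $A^1_K[0,r)$ (Definition~\ref{D:Taylor-series}); that $f^*V$ carries a $\partial_T$-action compatible with $f$ and that $\partial_T$ agrees with $\partial$ under $f$ is the formal identity $\partial_T \TT(\bbv;\partial;T) = \TT(\partial\bbv;\partial;T)$, obtained by differentiating the series termwise. The heart of the matter is the equivalence "$R_\partial(V) \geq r$ iff $f^*V|_{A^1_K[0,r)}$ is trivial as a $\partial_T$-module." For the forward direction, if $R_\partial(V) \geq r$ then for any $\bbv \in V$ the series $\TT(\bbv;\partial;T)$ converges on $A^1_K[0,r)$, giving a $\partial_T$-horizontal section of $f^*V$ specializing to $\bbv$ at $T=0$; running this over a $K$-basis of $V$ produces a full basis of horizontal sections, i.e.\ a trivialization. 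For the converse, a trivialization of $f^*V$ over $A^1_K[0,r)$ provides horizontal sections whose Taylor expansions in $T$ must reconstruct the $\partial$-action on $V$, and convergence of these expansions on the disc of radius $r$ forces $|\partial^n(\bbv)/n!|_V \cdot r^n \to 0$, which unwinds (using $|n!| \sim \omega^n$, i.e.\ $\lim |n!|^{1/n} = \omega$) to $|\partial|_{\sp,V} \leq \omega/r$, i.e.\ $R_\partial(V) \geq r$. I expect the bookkeeping around the $\llbracket T/u\rrbracket_0$ (boundedness) conditions versus genuine convergence on the open disc, and the precise passage between the growth rate of $|\partial^n|_V$ and the radius of convergence via the asymptotics of $|n!|$, to be where care is most needed; the conceptual content is standard (this is essentially the Dwork–Robba "generic point" trick, cf.\ \cite{dgs}, \cite{kedlaya-course}), so I would cite those sources for the analytic estimates rather than reprove them.
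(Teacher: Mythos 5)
The paper itself dispatches this lemma by citation to \cite{kedlaya-xiao}, so your exercise is to reconstruct the underlying arguments. Your treatment of (a) is correct and essentially definitional, and your outline of (d) is the right Dwork--Robba generic-point argument, with appropriate deference to the cited sources for the analytic bookkeeping.

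Part (b), however, has a real gap. You claim the matrix of $\partial$ on $W^\dual$ in the dual basis is ``up to sign and transpose'' that of $\partial$ on $W$, and that this gives ``the same iterated operator norms.'' But the matrices $N_n$ of $\partial^n$ satisfy the noncommutative recursion $N_{n+1} = N_n N + \partial(N_n)$, while on the dual the matrices $M_n$ of $\partial^n$ satisfy $M_{n+1} = -M_n N^{T} + \partial(M_n)$: already $M_2 = -\partial(N)^{T} + (N^{T})^2$ whereas $N_2^{T} = \partial(N)^{T} + (N^{T})^2$, and since the relative sign on the $\partial(N)$ term flips, these need not have the same norm (ultrametric cancellation could occur in one sum and not the other). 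So $|M_n| = |N_n|$ does not follow from the transpose picture alone. The standard argument goes instead through a cyclic vector: the Newton polygon of the twisted polynomial attached to a cyclic vector of $W^\dual$ (the ``formal adjoint'' of the twisted polynomial of $W$) agrees with that of $W$ in the visible range, and one reduces to that range by Frobenius antecedents.

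Part (c) is also incomplete, in two respects. First, your sketch establishes that $R_\partial(V_1 \otimes V_2) = R_\partial(V_1)$, i.e.\ that the \emph{minimum} subsidiary radius equals $R_\partial(V_1)$; it does not show that \emph{all} subsidiary radii do, and the latter is what purity means. Second, your argument never actually invokes the irreducibility of $V_1$, which is essential (purity fails without it). The missing step is the adjunction argument: if some constituent $W$ of $V_1 \otimes V_2$ had $R_\partial(W) > R_\partial(V_1)$, then $\Hom(V_1, W \otimes V_2^\dual) \cong \Hom(V_1 \otimes V_2, W) \neq 0$, and the irreducibility of $V_1$ would then yield an embedding $V_1 \hookrightarrow W \otimes V_2^\dual$; but the Leibniz estimate together with (a) and (b) shows every constituent of $W \otimes V_2^\dual$ has radius $\geq \min\{R_\partial(W), R_\partial(V_2)\} > R_\partial(V_1)$, contradicting the existence of such an embedding. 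This is exactly the content supplied by the cited source \cite{kedlaya-xiao} (see also \cite{kedlaya-course}).
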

\begin{proof}
The statements (a)--(c) are \cite[Lemma~1.2.9]{kedlaya-xiao} and the statement (d) is \cite[Proposition~1.2.14]{kedlaya-xiao}.
\end{proof}

\begin{definition}\label{D:newton-polygon}
For $P(T) = \sum_i a_iT^i \in K[T]$ or $K\{T\}$ a nonzero (possibly twisted) polynomial, define the \emph{Newton polygon} of $P$ as the lower convex hull of the set $\{(-i, -\log|a_i|)\} \subset \RR^2$.
\end{definition}

\begin{proposition}[Christol-Dwork]
\label{P:spec-norm-from-NP}
Suppose that $V \simeq K\{T\} / K\{T\}P$,
and let $s$ be the lesser of $-\log |\partial|_K$ and the
least slope of the Newton polygon of $P$. Then we have
$
\max\{|\partial|_K, |\partial|_{\sp, V} \} = \ee^{-s}.$
More generally, the 
multiplicity of any $s' < -\log |\partial|_K$ as a slope of the Newton polygon 
of $P$ coincides with the multiplicity of $\omega \ee^{s'}$ in $\gothR_\partial(V)$. 
\end{proposition}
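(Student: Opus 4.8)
The plan is to reduce to the situation where the Newton polygon of $P$ consists of a single slope, and then to compute $|\partial^n|_V$ directly from the companion matrix of $P$. Since $\Char K = 0$, the ring $K\{T\}$ is a left and right principal ideal domain, so the $K\{T\}$-submodules of $V \simeq K\{T\}/K\{T\}P$ are exactly the $K\{T\}Q/K\{T\}P$ arising from factorizations $P = SQ$; such a submodule is $\simeq K\{T\}/K\{T\}S$ and the corresponding quotient is $\simeq K\{T\}/K\{T\}Q$. I would first record two facts about Newton polygons of twisted polynomials below the operator-norm threshold $-\log|\partial|_K$: \emph{(i)} if $P = SQ$, then for every $s' < -\log|\partial|_K$ the multiplicity of $s'$ as a slope of $P$ is the sum of its multiplicities as a slope of $S$ and of $Q$; and \emph{(ii)} $P$ factors as a product of monic twisted polynomials $P_{(s_1)}, \dots, P_{(s_r)}$ — with $P_{(s_i)}$ of single slope $s_i < -\log|\partial|_K$ — and one monic factor $P_{\geq}$ all of whose slopes are $\geq -\log|\partial|_K$. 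Granting these, fact \emph{(ii)} and the dictionary above equip $V$ with a filtration whose graded pieces are $V_{(s_i)} := K\{T\}/K\{T\}P_{(s_i)}$ and $V_{\geq} := K\{T\}/K\{T\}P_{\geq}$, and iterating Lemma~\ref{L:basic-IR-proposition}(a) yields $\gothR_\partial(V) = \bigsqcup_i \gothR_\partial(V_{(s_i)}) \sqcup \gothR_\partial(V_{\geq})$. Thus it suffices to prove: \emph{(a)} if $P$ has a single slope $s < -\log|\partial|_K$ with multiplicity $d = \dim V$, then $\gothR_\partial(V)$ is pure, equal to $d$ copies of $\omega\ee^{s}$; and \emph{(b)} if every slope of $P$ is $\geq -\log|\partial|_K$, then $|\partial|_{\sp,V} \leq |\partial|_K$, so that $\gothR_\partial(V)$ contains no element $\omega\ee^{s'}$ with $s' < -\log|\partial|_K$.

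\textbf{The single-slope computation.}
For \emph{(a)}, choose a cyclic vector $\bbv$ (available since $\Char K = 0$) and write $P = T^d + a_{d-1}T^{d-1} + \cdots + a_0$, so that in the basis $\bbv, \partial\bbv, \dots, \partial^{d-1}\bbv$ the operator $\partial$ acts by $\bbc \mapsto \partial(\bbc) + D\bbc$ with $D$ the companion matrix of $P$. The single-slope hypothesis forces $|a_0| = \ee^{-sd}$, and convexity of the Newton polygon forces $|a_i| \leq \ee^{-s(d-i)}$ for every $i$. Enlarging $K$ if necessary (which changes neither $|\partial|_{\sp,V}$ nor the Newton polygon), I may assume there is $\mu \in K^\times$ with $|\mu| = \ee^{-s}$; rescaling the basis vector $\partial^j\bbv$ by $\mu^{-j}$ then replaces $D$ by a matrix of the form $\mu N$ in which $N$ is integral with $|N| = 1$ and $|\det N| = 1$, so $\overline N$ is invertible over $\kappa_K$ and $|N^n| = 1$ for all $n$ — the lower-order terms coming from $\partial(\mu) \neq 0$ (of norm $\leq |\partial|_K < |\mu|$) are absorbed into $N$. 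Because $|\partial|_K < |\mu| = \ee^{-s}$, a short induction on $n$ (Leibniz rule) shows that, as operators on $V$, $\partial^n = (\mu N)^n + E_n$ with $|E_n|_V \leq \ee^{-s(n-1)}|\partial|_K < \ee^{-sn}$; since $|(\mu N)^n|_V = \ee^{-sn}$, the ultrametric inequality gives $|\partial^n|_V = \ee^{-sn}$ for all $n$, hence $|\partial|_{\sp,V} = \ee^{-s}$ and $R_\partial(V) = \omega\ee^{s}$. To see that $\gothR_\partial(V)$ is pure, note that by fact \emph{(i)} each Jordan--H\"older constituent of $V$ corresponds to a monic twisted polynomial again of single slope $s$, so the very same computation (which used single-slope-below-threshold but not irreducibility) gives $|\partial|_{\sp} = \ee^{-s}$ on each constituent; thus every element of $\gothR_\partial(V)$ equals $\omega\ee^{s}$, and there are $d$ of them, proving \emph{(a)}.

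\textbf{Assembling the statement, and the degenerate case.}
Running the same rescaled-companion-matrix estimate for an arbitrary $P$, with $s_{\min}$ its least slope, one uses only $|a_i| \leq \ee^{-s_{\min}(d-i)}$ to see that the rescaled connection matrix has norm $\leq \ee^{-s_{\min}}$, hence $|\partial^n|_V \leq \max\{|\partial|_K, \ee^{-s_{\min}}\}^n$ and $|\partial|_{\sp,V} \leq \max\{|\partial|_K, \ee^{-s_{\min}}\}$; this gives \emph{(b)} and is also the ``$\leq$'' half of the first assertion. For the ``$\geq$'' half in the case $s_{\min} < -\log|\partial|_K$, peel off the least-slope factor $P_{(s_{\min})}$ as in \emph{(ii)}: then $K\{T\}/K\{T\}P_{(s_{\min})}$ is a subquotient of $V$ with $|\partial|_{\sp} = \ee^{-s_{\min}}$ by \emph{(a)}, and $|\partial|_{\sp,V}$ dominates the spectral norm of any subquotient by Lemma~\ref{L:basic-IR-proposition}(a). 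Combining the halves gives $\max\{|\partial|_K, |\partial|_{\sp,V}\} = \ee^{-s}$ with $s$ the lesser of $-\log|\partial|_K$ and the least slope. The one degenerate case, $P = T^d$ (no slopes), is immediate: $\partial^n$ then annihilates every $\partial^j\bbv$ once $n + j \geq d$, forcing $|\partial^n|_V \leq |\partial|_K^{\,n-d+1}$ and $|\partial|_{\sp,V} \leq |\partial|_K$, again consistent with the statement.

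\textbf{The main obstacle.}
The substantive input is the pair of facts \emph{(i)}, \emph{(ii)}: strictly below the threshold $-\log|\partial|_K$, the twisted polynomial ring $K\{T\}$ behaves, for the purposes of Newton-polygon factorization, like an ordinary polynomial ring over a complete field. These are proved by the usual Hensel-type successive-approximation argument — the key being that below the threshold the terms produced by the derivation $\partial$ in the multiplication rule of $K\{T\}$ have norm strictly smaller than the principal terms, so the iteration converges — or one may simply quote the corresponding statements for twisted polynomials from \cite{kedlaya-course}. Everything else is either bookkeeping via Lemma~\ref{L:basic-IR-proposition}(a) or the explicit estimate of the second paragraph, which is the true heart of the classical Christol--Dwork argument.
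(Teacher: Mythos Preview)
Your proposal is correct and is precisely the classical Christol--Dwork argument: factor the twisted polynomial by Newton slopes below the threshold $-\log|\partial|_K$ (your facts \emph{(i)}, \emph{(ii)}), then on each single-slope piece rescale the cyclic basis by $\mu^{-j}$ with $|\mu|=\ee^{-s}$ so that the connection matrix becomes $\mu N$ with $\overline N\in\GL_d(\kappa_K)$, whence $|\partial^n|_V=\ee^{-sn}$ exactly. The paper does not prove this statement at all---it simply cites \cite[Theorem~6.5.3]{kedlaya-course}---and what you have written is a faithful sketch of the proof given there (Chapter~6 of that reference supplies your facts \emph{(i)} and \emph{(ii)} as well), so there is nothing to compare; you have reconstructed the cited argument.
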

\begin{proof}
This is \cite[Theorem~6.5.3]{kedlaya-course}.
\end{proof}

\begin{definition}\label{D:admissible-operator}
We say a derivation $\partial$ on $K$ is of \emph{rational type} if there exists
$u \in K$ such that the following conditions hold (in this case,
we call $u$ a \emph{rational parameter} for $\partial$):
\begin{enumerate}
\item[(i)]
we have $\partial(u) = 1$ and $|\partial|_K = |u|^{-1}$, and
\item[(ii)]
for each positive integer $n$, $|\partial^n/n!|_K \leq |\partial|_K^n$.
\end{enumerate}
If $\partial$ is of rational type, the inequalities in (ii) are in fact equalities, which yields that $|\partial|_{\sp, K} = \omega |\partial|_K$; see \cite[Definition~1.4.1]{kedlaya-xiao}.
\end{definition}

\begin{lemma}\label{L:tame-extension}
Let $\partial$ be a derivation on $K$ of rational type with $u$ as a rational parameter and let $L/K$ be a finite tamely ramified extension.  Then the unique extension of $\partial$ to $L$ is of rational type with $u$ again as a rational parameter.
\end{lemma}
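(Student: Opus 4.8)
The plan is to reduce the statement to checking the two defining conditions (i) and (ii) of Definition~\ref{D:admissible-operator} for the extended derivation $\partial$ on $L$, with $u$ as the candidate rational parameter. First I would recall that since $L/K$ is finite and tamely ramified, the norm on $K$ extends uniquely to $L$, the ramification index $e = [|L^\times| : |K^\times|]$ is prime to $p$, and the residue extension $\kappa_L/\kappa_K$ is separable; moreover $\calO_L$ is generated over $\calO_K$ by a uniformizer $\pi_L$ satisfying an Eisenstein equation $\pi_L^e = \pi_K v$ with $v$ a unit, together with lifts of a separating transcendence basis and a primitive element for $\kappa_L/\kappa_K$. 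The key point is that the derivation $\partial$, being determined on $K$ and extended by continuity and the Leibniz rule, is controlled on these generators: on the tame part one differentiates the Eisenstein relation to solve for $\partial(\pi_L)$, and tameness ($p \nmid e$) is exactly what makes $e\pi_L^{e-1}$ a unit times $\pi_L^{e-1}$, so no denominators of positive valuation are introduced.

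The main steps, in order, are: (1) Verify that $\partial$ extends uniquely to a bounded derivation on $L$; this is standard for finite separable extensions of complete nonarchimedean fields, and one checks the operator norm does not increase on the tamely ramified part. (2) Check condition (i): $\partial(u) = 1$ holds since $u \in K$ and the extension restricts to the original derivation; and $|\partial|_L = |u|^{-1}$, i.e. the operator norm is unchanged. For the norm equality one shows $|\partial(x)| \le |u|^{-1}|x|$ for all $x \in L$ by writing $x$ in terms of the generators above and using that $|\partial(\pi_L)| \le |u|^{-1}|\pi_L|$ (obtained by differentiating $\pi_L^e = \pi_K v$ and using $p \nmid e$) and that $\partial$ kills or tamely moves the residual generators; the reverse inequality is automatic since $u \in K \subset L$. (3) Check condition (ii): $|\partial^n/n!|_L \le |\partial|_L^n = |u|^{-n}$ for all $n$. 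Here I would invoke the divided-power / Taylor-series formalism: since $\partial$ is of rational type on $K$ with parameter $u$, the $\partial$-Taylor series $\TT(\,\cdot\,;\partial;T)$ converges on a disc of radius $|u|$ (Lemma~\ref{L:basic-IR-proposition}(d) and the last sentence of Definition~\ref{D:admissible-operator}), and one propagates this to $L$ by checking the Taylor series converges on each generator of $\calO_L$ over $\calO_K$ with the required radius — for $\pi_L$ this is an elementary estimate using the binomial/multinomial expansion of $(\pi_L + u^{-1}T\,\partial(\pi_L)+\cdots)^e$ and tameness, and for the residual generators it is immediate. Convergence of the $u$-radius Taylor map on a topological generating set implies $|\partial^n/n!|_L \le |u|^{-n}$, which is exactly (ii). Finally, the parenthetical remark in Definition~\ref{D:admissible-operator} upgrades these inequalities to equalities and gives $|\partial|_{\sp,L} = \omega|\partial|_L$ for free.

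The step I expect to be the main obstacle is step (3): controlling $|\partial^n/n!|_L$ uniformly in $n$, rather than just $|\partial|_L$. The subtlety is that even if $|\partial(\pi_L)|$ is small, the higher divided powers $\partial^n(\pi_L)/n!$ could a priori acquire $p$-adic denominators; the resolution is precisely that tameness ($p \nmid e$) keeps all the combinatorial factors arising from differentiating $\pi_L^e = \pi_K v$ repeatedly as $p$-adic units, so one never divides by something of positive valuation. The cleanest way to package this is via the convergence of the Taylor series on $\calO_L$ rather than by a direct induction on $n$: one checks convergence on the finitely many algebra generators of $\calO_L$ over $\calO_K$ and then uses that the Taylor map, where it converges, is a ring homomorphism, so it extends automatically to the subring they generate, which is dense in $\calO_L$. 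This reduces an a priori infinite family of norm inequalities to a finite check. (Alternatively, and perhaps more efficiently, one may cite the tame-extension compatibility established in \cite{kedlaya-xiao}, of which this lemma is the expected analogue; but the self-contained argument above via divided-power Taylor series on generators is the route I would take.)
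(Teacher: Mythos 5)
The paper does not actually prove this lemma: the entire body of the proof is the single sentence ``This is \cite[Lemma~1.4.5]{kedlaya-xiao}'', i.e., the alternative citation route you mention parenthetically at the end. So your self-contained argument is by construction a different route from the paper's, and the useful question is whether it closes.

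There is a gap in step (3), at the sentence asserting that convergence of the $u$-radius Taylor series on a topological generating set implies the operator-norm bound $|\partial^n/n!|_L \le |u|^{-n}$. The target is $|\partial^n(y)/n!| \le |u|^{-n}|y|$ for \emph{all} $y \in L$, with the factor $|y|$ on the right. The set of $y$ satisfying this for every $n$ is stable under multiplication (Leibniz formula for divided powers together with multiplicativity of the norm) and under multiplication by elements of $K$, but it is \emph{not} stable under addition, since $|y_1+y_2|$ can drop strictly below $\max(|y_1|,|y_2|)$. What checking on algebra generators of $\calO_L$ and invoking the ring-homomorphism property of the Taylor map actually gives is the weaker statement $|\partial^n(y)/n!| \le |u|^{-n}$ for $y \in \calO_L$, with no proportionality to $|y|$; for $y=\pi_L$ you need the extra factor $|\pi_L|$, and that is precisely where tameness is spent. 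The repair, consistent with your step (2), is to exhibit an \emph{orthogonal} $K$-basis of $L$ --- for a tame extension one may take $\{\, y^a\pi_L^b \,\}$ with $y$ lifting a primitive element of the separable residue extension, $\pi_L$ Eisenstein, $0 \le b < e$, $p \nmid e$ --- check the proportional bound $|\partial^n(\,\cdot\,)/n!| \le |u|^{-n}|\,\cdot\,|$ on those basis vectors (your binomial estimate with $\binom{1/e}{n} \in \Zp$ handles $\pi_L$; the unramified part is a Hensel argument against a separable monic polynomial), and then propagate to all of $L$ by the Leibniz formula, using orthogonality to identify the dominant coordinate with $|y|$. With that repair the rest of the proposal --- uniqueness of the extension, condition (i) of Definition~\ref{D:admissible-operator} as the $n=1$ case, and the reduction to Taylor-series convergence --- is on track.
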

\begin{proof}
This is \cite[Lemma~1.4.5]{kedlaya-xiao}.  
\end{proof}

\begin{remark}\label{R:enlarge-K}
We sometimes need to replace $K$ by the completion of $K(x)$ with respect to
the $\eta$-Gauss norm for some $\eta \in \RR_{>0}$, where $x$ is transcendental over $K$ and we set $\partial x=0$.  The derivation $\partial$ is again of rational type when acting on the new field.
\end{remark}

\begin{definition}
When $\partial$ is  of rational type, it is more convenient to consider $\partial$-radii with a differential normalization, as follows.
For $V$ a $\partial$-differential module, we define the \emph{intrinsic $\partial$-radius} of $V$ to be $IR_\partial(V) = |\partial|_{\sp, K} / |\partial|_{\sp, V} = |\partial|_K \cdot R_\partial(V)$. We define the multiset of \emph{intrinsic subsidiary $\partial$-radii} to be $\gothI\gothR_\partial(V) = |\partial|_K \cdot \gothR_\partial(V)$.  We put $IR_\partial(V;i)  = |\partial|_K \cdot R_\partial(V;i)$ for $i = 1, \dots, \dim V$.  We say that $V$ has \emph{pure intrinsic $\partial$-radii} if $\gothI\gothR_\partial(V)$ consists of $\dim V$ copies of one single number.
\end{definition}

\begin{hypothesis}\label{H:partial}
From now on, we assume that $K$ is a complete nonarchimedean field 
of characteristic zero and residual characteristic $p$, 
equipped with a derivation 
$\partial$ of rational type.  We \emph{fix} $u \in K$ a rational parameter of $\partial$.
We also assume $p>0$ unless otherwise specified.
\end{hypothesis}

\begin{construction}\label{Cstr:j-frobenius}
We construct the $\partial$-Frobenius as follows.
If $K$ contains a primitive $p$-th root of unity $\zeta_p$, we may define an isometric action of the group $\ZZ/p\ZZ$ on $K$ using $\partial$-Taylor series:
$$
x^{(i)} = \TT(x; \partial; (\zeta_p^i - 1) u), \qquad (i \in \ZZ / p\ZZ, x \in K);
$$
in particular, $u^{(i)} = \zeta_p^i u$.  Let $K^{(\partial)}$ be the fixed subfield of $K$ under this action; in particular, $u^p \in K^{(\partial)}$.  Hence, we have a Galois extension $K/K^{(\partial)}$ generated by $u$ with Galois group $\ZZ / p\ZZ$. 
If $K$ does not contain all $p$-th roots of unity, we may still define $K^{(\partial)}$ by first constructing $(K(\mu_p))^{(\partial)}$ and then applying the Galois descent; in this case, the extension  $K/K^{(\partial)}$ may not be Galois.

We call the inclusion $\varphi^{(\partial)*}: K^{(\partial)} \inj K$ 
the \emph{$\partial$-Frobenius morphism}.  We view $K^{(\partial)}$ as being equipped with the derivation
$\partial' = \partial / (pu^{p-1})$; it is a derivation on $K^{(\partial)}$ because a simple calculation shows that $(\partial(x))^{(i)} = \zeta_p^i \partial(x^{(i)})$ for any $x \in K$, yielding that $\partial'(x)$ is invariant under the $\ZZ / p\ZZ$-action if $x \in K^{(\partial)}$. By \cite[Lemma~1.4.9]{kedlaya-xiao}, $\partial'$ is of rational type on $K^{(\partial)}$.

We sometimes use $\varphi^{(\partial, n)}: K^{(\partial, n)} \inj K$ to denote the $p^n$-th $\partial$-Frobenius obtained by applying the above construction $n$ times; if $K$ contains a primitive
$p^n$-th root of unity $\zeta_{p^n}$, this is the same as the fixed field for the natural
action of $\ZZ/p^n \ZZ$ on $K$ given by $x^{(i)} = \TT(x; \partial; (\zeta_{p^n}^i - 1)u)$ for $i \in \ZZ/ p^n\ZZ$.
\end{construction}

\begin{remark}\label{R:Frobenius-depends-on-u}
We point out that the definitions of $\partial$-Frobenius and $K^{(\partial)}$ depend on the choice of the rational parameter $u$.
\end{remark}

\begin{lemma}\label{L:Frob-does-to-kappa}
The residue field $\kappa_{K^{(\partial)}}$ contains $\kappa_K^p$.
\end{lemma}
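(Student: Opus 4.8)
The plan is to exhibit, for every unit $a \in \calO_K^\times$, an element of $\calO_{K^{(\partial)}}$ whose reduction equals $\bar a^p$; since $\bar a$ runs over all of $\kappa_K^\times$ as $a$ runs over $\calO_K^\times$, and $0 \in \kappa_{K^{(\partial)}}$ trivially, this produces all of $\kappa_K^p$. To have an honest Galois action available, I would first set $L = K(\mu_p)$ (so $L = K$ when $\mu_p \subset K$); this is a finite tamely ramified extension, so by Lemma~\ref{L:tame-extension} the derivation $\partial$ extends to $L$ and is still of rational type with the same rational parameter $u$. Over $L$ we then have the genuine Galois action of $G = \ZZ/p\ZZ = \Gal(L/L^{(\partial)})$ on $L$ given by $x \mapsto x^{(i)} = \TT(x; \partial; (\zeta_p^i - 1)u)$, and by Construction~\ref{Cstr:j-frobenius} the group $\Gal(L/K)$ acts compatibly with $K^{(\partial)} = (L^{(\partial)})^{\Gal(L/K)}$ and, for the induced norm, $\calO_{K^{(\partial)}} = K^{(\partial)} \cap \calO_L$.

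Next, fixing $a \in \calO_K^\times$, I would take $b = \prod_{i=0}^{p-1} a^{(i)} = \mathrm{N}_{L/L^{(\partial)}}(a) \in L^{(\partial)}$ and write $a^{(i)} = a + \epsilon_i$ with $\epsilon_i = \sum_{n \geq 1} \tfrac{\partial^n(a)}{n!}(\zeta_p^i - 1)^n u^n$. Using the standard equality $|\zeta_p^i - 1| = \omega$ for $i \not\equiv 0 \pmod p$ together with condition (ii) of Definition~\ref{D:admissible-operator} in the form $|\partial^n/n!|_K \leq |\partial|_K^n = |u|^{-n}$, one gets $|\epsilon_i| \leq |a|\max_{n \geq 1}\omega^n = \omega|a|$; in particular each $a^{(i)} \in \calO_L$, so $b \in \calO_L$. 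Expanding, $b - a^p = \sum_{\emptyset \neq S \subseteq \{0,\dots,p-1\}} a^{p-|S|}\prod_{i \in S}\epsilon_i$, and each summand has norm at most $\omega^{|S|}|a|^p \leq \omega|a|^p$, whence $|b - a^p| \leq \omega|a|^p = \omega < 1$ because $a$ is a unit. Thus $b$ reduces to $\bar a^p$ in $\kappa_L$.

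Finally I would check that $b$ actually lies in $K^{(\partial)}$ and trace the reduction back. For $\sigma \in \Gal(L/K)$ one has $\sigma(a) = a$, $\sigma(u) = u$, and $\sigma(\zeta_p) = \zeta_p^{c}$ for some $c \in (\ZZ/p\ZZ)^\times$, so $\sigma(a^{(i)}) = a^{(ci)}$ and $\sigma$ merely permutes the factors; hence $\sigma(b) = b$, so $b \in (L^{(\partial)})^{\Gal(L/K)} = K^{(\partial)}$, and therefore $b \in \calO_{K^{(\partial)}}$. Its class in $\kappa_{K^{(\partial)}} \subseteq \kappa_K \subseteq \kappa_L$ maps to $\bar a^p \in \kappa_L$ (the image of $\bar a^p \in \kappa_K$, since $a \in \calO_K$), and since $\kappa_K \hookrightarrow \kappa_L$ is injective this class equals $\bar a^p$ already in $\kappa_K$. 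Hence $\bar a^p \in \kappa_{K^{(\partial)}}$, which is what is needed.

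I expect the substantive inequality to be cost-free: it works precisely because $|\partial^n/n!|_K\,|u|^n \leq 1$ on the nose and $\omega < 1$, so $|b - a^p| < 1$ is immediate. The only points requiring a little care are bookkeeping: that $\partial$ stays of rational type on the tame extension $L$ (supplied by Lemma~\ref{L:tame-extension}), that the Galois-descent construction of $K^{(\partial)}$ yields the identifications $K^{(\partial)} = (L^{(\partial)})^{\Gal(L/K)}$ and $\calO_{K^{(\partial)}} = K^{(\partial)} \cap \calO_L$ used above, and the elementary fact $|\zeta_p^i - 1| = \omega$; none of these should present a real obstacle.
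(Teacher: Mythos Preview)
Your argument is correct, but it takes a different route from the paper's. You build an explicit preimage of $\bar a^p$ by taking the norm $b = \prod_i a^{(i)}$ and estimating $|b - a^p| \leq \omega < 1$ via the rational-type bound $|\partial^n/n!|_K\,|u|^n \leq 1$; then Galois descent puts $b$ into $K^{(\partial)}$. The paper instead argues structurally: since $K = K^{(\partial)}(u)$ is a degree-$p$ extension with $u^p \in K^{(\partial)}$, basic valuation theory splits into two cases. If $|u| \notin |K^{(\partial)\times}|$ then the value-group index is already $p$, forcing $\kappa_{K^{(\partial)}} = \kappa_K$. If $|u| \in |K^{(\partial)\times}|$, pick $x \in K^{(\partial)}$ with $|x| = |u|$; then $\kappa_K = \kappa_{K^{(\partial)}}(\overline{u/x})$ and $(\overline{u/x})^p = \overline{u^p/x^p} \in \kappa_{K^{(\partial)}}$, so $\kappa_K^p \subseteq \kappa_{K^{(\partial)}}$. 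The paper's proof is a two-line valuation-theoretic observation and avoids any passage to $K(\mu_p)$ or any analytic estimate; your approach is more explicit and self-contained, producing for each $\bar a$ a concrete element of $\calO_{K^{(\partial)}}$ with the right reduction, at the cost of the bookkeeping you already flagged (extending to $L = K(\mu_p)$, checking $\Gal(L/K)$-invariance of the norm, and tracing the residue-field inclusions).
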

\begin{proof}
We know that $K$ is generated by $u$ over $K^{(\partial)}$.  If $|u| \notin |K^{(\partial)\times}|$, $K^{(\partial)}$ will have the same residue field as $K$ does.  If $|u| \in |K^{(\partial)\times}|$, let $x \in K^{(\partial)}$ be an element such that  $|x| = |u|$.  Then $\kappa_K$ is generated over $\kappa_{K^{(\partial)}}$ by $\overline{u/x}$, whose $p$-th power lies in $\kappa_{K^{(\partial)}}$.  The statement follows.
\end{proof}

\begin{definition}
Given a $\partial'$-differential module $V'$ over $K^{(\partial)}$, 
its \emph{$\partial$-Frobenius pullback} is the $\partial$-differential module $\varphi^{(\partial)*} V' = V' \otimes_{K^{(\partial)}} K$ over $K$, where
$$
\partial (\bbv' \otimes x) = pu^{p-1} \partial'(\bbv') \otimes x + \bbv' \otimes \partial(x) \qquad (\bbv' \in V', x \in K).
$$

For a $\partial$-differential module $V$ over $K$, we define the \emph{$\partial$-Frobenius descendant} of $V$ to be the $K^{(\partial)}$-module
$\varphi^{(\partial)}_* V$ obtained from $V$ by restriction along $\varphi^{(\partial)*}: K^{(\partial)} \rar K$ and viewed as a $\partial'$-differential module over $K^{(\partial)}$ with the action given by $\partial'(v) = \partial(v)/ pu^{p-1}$ for any $v \in V$.

Let $V$ be a $\partial$-differential module over $K$ such that $IR_\partial(V) > p^{-1/(p-1)}$. A \emph{$\partial$-Frobenius antecedent} of $V$ (which always exists as is shown in Lemma~\ref{L:frob-properties}(c) below)  is a $\partial'$-differential module $V'$ over $K^{(\partial)}$ such that $V \cong \varphi^{(\partial)*}V'$ and $IR_{\partial'}(V') > p^{-p/(p-1)}$.
\end{definition}

\begin{lemma} \label{L:frob-properties}
The $\partial$-Frobenius pullbacks and descendants have the following properties.
\begin{enumerate}
\item[\emph{(a)}]
For $V'$ a $\partial'$-differential module over $K^{(\partial)}$, we have
$IR_\partial(\varphi^{(\partial)*} V') \geq \min \{IR_{\partial'}(V')^{1/p}, p\, IR_{\partial'}(V')\}$.  Moreover, if $IR_{\partial'}(V') \neq p^{-p/(p-1)}$, the above inequality is in fact an equality.
\item[\emph{(b)}]
For $V$ a $\partial$-differential module over $K$, there is a canonical isomorphism
$
\varphi^{(\partial)*}\varphi^{(\partial)}_* V \cong V^{\oplus p}.
$
\item[\emph{(c)}]  For $i = 0, \dots, p-1$, let $W_i^{(\partial)}$ be the $\partial'$-differential module over $K^{(\partial)}$ with one generator $\bbv$ (which is a proxy of $u^i$), such that $\partial'(\bbv) = \frac ip u^{-p}\bbv$.  Then we have $IR_{\partial'}(W_i^{(\partial)}) = p^{-p/(p-1)}$ for $i = 1, \dots, p-1$.  For any $\partial$-differential module $V$ over $K$, we have canonical isomorphisms
$
\iota_i : (\varphi^{(\partial)}_* V) \otimes W_i^{(\partial)} \cong \varphi^{(\partial)}_* V$ for $i = \seriezero{}p-1$.
Moreover, 
 a submodule $U$ of $\varphi^{(\partial)}_* V$ is itself the $\partial$-Frobenius descendant of a submodule of $V$ if and only if $\iota_i(U \otimes W_i^{(\partial)}) = U$ for $i = \seriezero{}p-1$.
 
For $V_1$ and $V_2$ $\partial$-differential modules over $K$, we have
\[
 \varphi^{(\partial)}_*V_1 \otimes \varphi^{(\partial)}_*V_2 = \big(\varphi^{(\partial)}_*(V_1 \otimes V_2) \big)^{\oplus p}.
 \]

For $V'$ a $\partial'$-differential module over $K^{(\partial)}$, we have $\varphi^{(\partial)}_*\varphi^{(\partial)*}V'
\cong V' \oplus \bigoplus_{i=1}^{p-1} V'\otimes W^{(\partial)}_i$.

\item[\emph{(d)}]\emph{(Christol-Dwork)} Let $V$ be a $\partial$-differential module over $K$ such that $IR_\partial(V) > p^{-1/(p-1)}$. Then there exists a unique $\partial$-Frobenius antecedent $V'$ of $V$.  Moreover, we have $IR_{\partial'}(V') = IR_\partial(V)^p$.
\item[\emph{(e)}] Let $V$ be a $\partial$-differential module over $K$.  Then we have
$$
\gothI\gothR_{\partial'}(\varphi^{(\partial)}_*V) = 
\bigcup_{r \in \gothI\gothR_\partial(V)} \left\{
\begin{array}{ll}
\big\{r^p, \underbrace{p^{-p/(p-1)}, \dots, p^{-p/(p-1)}}_{p-1 \textrm{ times}}\big\} & r > p^{-1/(p-1)}\\
\big\{\underbrace{p^{-1} r, \dots, p^{-1} r}_{p \textrm{ times}} \big\} & r \leq p^{-1/(p-1)}.
\end{array}
\right.
$$
In particular, we have $IR_{\partial'}(\varphi^{(\partial)}_* V) = \min \{p^{-1} IR_\partial(V),\ p^{-p/(p-1)}\}$.
\end{enumerate}
\end{lemma}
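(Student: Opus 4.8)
The plan is to establish the five parts in the order (b), (c), (a), (d), (e); only (a) and (d) require genuine nonarchimedean analysis, the rest being formal bookkeeping built on (b). For (b), I would use that $K$ is free of rank $p$ over $K^{(\partial)}$ on $1, u, \dots, u^{p-1}$ (the minimal polynomial of $u$ is $T^p-u^p$, and no hypothesis on $\mu_p\subset K$ is needed), so $\varphi^{(\partial)*}\varphi^{(\partial)}_*V = V\otimes_K(K\otimes_{K^{(\partial)}}K)$ with $\partial$ acting as $\partial_V\otimes 1 + 1\otimes\partial_M$, where $\partial_M$ is the derivation on $M:=K\otimes_{K^{(\partial)}}K$ induced by $\partial$ on each factor. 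The key observation is that $(M,\partial_M)$ is \emph{trivial}: the element $\tau:=u\otimes 1 - 1\otimes u$ satisfies $\partial_M\tau = 0$, hence so do $1,\tau,\dots,\tau^{p-1}$, and these form a $K$-basis of $M$ since each $\tau^j$ equals $u^j\otimes 1$ plus terms of lower degree in the first factor. Tensoring the resulting trivialization $M\cong K^{\oplus p}$ with $V$ over $K$ yields $\varphi^{(\partial)*}\varphi^{(\partial)}_*V\cong V^{\oplus p}$ as differential modules.

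For (c), the identity $IR_{\partial'}(W_i^{(\partial)}) = p^{-p/(p-1)}$ for $1\le i\le p-1$ is a one-line computation from Proposition~\ref{P:spec-norm-from-NP}: the connection matrix $\frac ip u^{-p}$ has norm $p\cdot|\partial'|_{K^{(\partial)}}>|\partial'|_{K^{(\partial)}}$, so the single slope of its length-one Newton polygon reads off the radius. I would take the map $\iota_i\colon\varphi^{(\partial)}_*V\otimes W_i^{(\partial)}\to\varphi^{(\partial)}_*V$, $v\otimes\bbv\mapsto u^iv$, check that it intertwines connections (using $\partial'=\partial/(pu^{p-1})$), and note $\iota_i(U\otimes W_i^{(\partial)}) = u^iU$ inside $V$; then ``$\iota_i(U\otimes W_i^{(\partial)}) = U$ for all $i$'' says exactly that $U$ is stable under multiplication by $1,u,\dots,u^{p-1}$, i.e.\ is a $K$-submodule of $V$, which (since $\partial=pu^{p-1}\partial'$) is equivalent to being $\partial$-stable --- the descent criterion. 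The two tensor identities I would deduce from (b) via the projection-type formula $\varphi^{(\partial)}_*(\varphi^{(\partial)*}A\otimes_K B)\cong A\otimes_{K^{(\partial)}}\varphi^{(\partial)}_*B$: with $B=K$ and the decomposition $\varphi^{(\partial)}_*K = \bigoplus_{i=0}^{p-1}K^{(\partial)}u^i\cong\bigoplus_{i=0}^{p-1}W_i^{(\partial)}$ (each $K^{(\partial)}u^i$ being $\partial'$-stable as $u^{-p}\in K^{(\partial)}$) one gets $\varphi^{(\partial)}_*\varphi^{(\partial)*}V'\cong V'\oplus\bigoplus_{i=1}^{p-1}V'\otimes W_i^{(\partial)}$; and $\varphi^{(\partial)}_*V_1\otimes_{K^{(\partial)}}\varphi^{(\partial)}_*V_2\cong\varphi^{(\partial)}_*\big(V_1\otimes_K\varphi^{(\partial)*}\varphi^{(\partial)}_*V_2\big)$ combined with (b) gives $(\varphi^{(\partial)}_*V_1)\otimes(\varphi^{(\partial)}_*V_2)\cong(\varphi^{(\partial)}_*(V_1\otimes V_2))^{\oplus p}$.

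Parts (a) and (d) are the analytic heart. For (a) I would pass to the generic point: by Lemma~\ref{L:basic-IR-proposition}(d), $IR_\partial(\varphi^{(\partial)*}V')$ is detected by the disc on which the horizontal ($\partial$-Taylor) sections of $\varphi^{(\partial)*}V'$ converge, and this disc is the preimage, under the $p$-power map near the generic point, of the analogous disc for $V'$ in the $u^p$-coordinate; tracking how $w\mapsto w^p$ distorts radii --- multiplying a disc of radius $\rho$ about $s$ by $(\omega|s|)^{p-1}$ when $\rho<\omega|s|$, and raising it to the $p$-th power when $\rho\ge\omega|s|$ --- produces the two cases, the inequality becoming equality away from the transitional value $p^{-p/(p-1)}$; this is \cite[\S1.4]{kedlaya-xiao}, cf.\ \cite{kedlaya-course}. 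For (d), \emph{existence} of the Frobenius antecedent is the theorem of Christol--Dwork on descending a connection of intrinsic radius $>\omega$ along the Frobenius covering; granting it, the rest is formal: (a) gives $IR_\partial(\varphi^{(\partial)*}V') = IR_{\partial'}(V')^{1/p}$ (as $x^{1/p}<px$ for $x>\omega^p$, and $IR_{\partial'}(V')>\omega^p$ by definition of antecedent), whence $IR_{\partial'}(V') = IR_\partial(V)^p$; and if $V_1',V_2'$ are antecedents of $V$, then each $V_j'$ --- all of whose subsidiary radii exceed $p^{-p/(p-1)}$ --- is, by (c) and Lemma~\ref{L:basic-IR-proposition}(c) (the $W_i^{(\partial)}$ having strictly smaller radius), the radius-$>p^{-p/(p-1)}$ part of the subsidiary-radius decomposition of $\varphi^{(\partial)}_*\varphi^{(\partial)*}V_j'\cong V_j'\oplus\bigoplus_{i=1}^{p-1}V_j'\otimes W_i^{(\partial)}$; since $\varphi^{(\partial)}_*\varphi^{(\partial)*}V_1' = \varphi^{(\partial)}_*V = \varphi^{(\partial)}_*\varphi^{(\partial)*}V_2'$, this forces $V_1'\cong V_2'$.

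Finally, for (e) I would, using Lemma~\ref{L:basic-IR-proposition}(a) and exactness of restriction of scalars, reduce to $V$ irreducible of pure intrinsic radius $r$ and rank $d$. If $r\le\omega$: decompose $\varphi^{(\partial)}_*V = \bigoplus_j M_j$ into pure-radius pieces of radius $s_j$; since $\varphi^{(\partial)*}M_j$ is a summand of $\varphi^{(\partial)*}\varphi^{(\partial)}_*V\cong V^{\oplus p}$, it is pure of radius $r$, and $\min\{s_j^{1/p},ps_j\} = r$ forces $s_j = r/p$ for every $j$ (when $r<\omega$ the value $s_j = p^{-p/(p-1)}$ is excluded, as it would give $IR_\partial(\varphi^{(\partial)*}M_j)\ge\omega>r$; when $r=\omega$ the only solution is $s_j = p^{-p/(p-1)} = r/p$). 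If $r>\omega$: by (d) the antecedent $V'$ is pure of radius $r^p$, and (c) gives $\varphi^{(\partial)}_*V\cong V'\oplus\bigoplus_{i=1}^{p-1}V'\otimes W_i^{(\partial)}$ with each $V'\otimes W_i^{(\partial)}$ pure of radius $p^{-p/(p-1)}$ by Lemma~\ref{L:basic-IR-proposition}(c); assembling these multisets, and taking their minimum, gives the two stated formulas. In summary, the only genuinely hard inputs --- and the main obstacle --- are the Frobenius-antecedent theorem (existence in (d)) and the generic-disc radius estimate underlying (a); all of (b), (c), and the easy halves of (d) and (e) then follow formally from these together with Lemma~\ref{L:basic-IR-proposition} and Proposition~\ref{P:spec-norm-from-NP}.
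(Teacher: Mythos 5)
The paper's proof of this lemma consists entirely of citations to \cite{kedlaya-xiao} and \cite{kedlaya-course}, so there is no internal argument to compare against; your proposal essentially reconstructs the standard arguments that those references contain, and your reconstruction is correct. In particular: the horizontal element $\tau = u\otimes 1 - 1\otimes u$ trivializing $K\otimes_{K^{(\partial)}}K$ for (b), the explicit maps $v\otimes\bbv\mapsto u^iv$ and the projection formula for (c), the disc-radius distortion under the $p$-power map for (a), the uniqueness-via-radius-decomposition argument for (d), and the reduction to irreducibles with the case split $r\lessgtr\omega$ for (e) are all sound, and the dichotomy $s_j=r/p$ vs.\ $s_j=r^p$ in (e) and the comparison $x^{1/p}\lessgtr px$ at the threshold $\omega^p$ are correctly handled. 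One minor over-specification: in the descent criterion of (c), since $u^p\in K^{(\partial)}$ already acts on any $\partial'$-submodule $U$, the single condition $\iota_1(U\otimes W_1^{(\partial)})=U$ (i.e.\ $uU=U$) already implies all the others, so requiring it for all $i=0,\dots,p-1$ is redundant --- but of course that is how the lemma is stated.
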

\begin{proof}
For (a), see \cite[Lemma~1.4.11]{kedlaya-xiao} and \cite[Corollary~1.4.20]{kedlaya-xiao}.  (b) and (c) are straightforward.  For (d), see \cite[Theorem~10.4.2]{kedlaya-course}.  For (e), see \cite[Theorem~1.4.19]{kedlaya-xiao}.
\end{proof}

\begin{remark}\label{R:frob-over-annulus}
As in \cite[Theorem~10.4.4]{kedlaya-course}, one can form a version of 
Lemma~\ref{L:frob-properties}(d)
for differential modules over discs or annuli.
\end{remark}

For the following theorem, we do not assume $p>0$.
\begin{theorem}\label{T:decomp-over-field-complete}
Let $V$ be a $\partial$-differential module over $K$.  Then there exists a unique decomposition of $\partial$-differential modules:
$$
V = \bigoplus_{r \in (0, 1]}V_r,
$$
where every subquotient of $V_r$ has pure intrinsic $\partial$-radii $r$.  Moreover, $V_r =0$ if $r \notin |K^\times|^{\QQ}$.
\end{theorem}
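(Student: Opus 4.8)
The plan is to reduce everything to the case $IR_\partial(V) > p^{-1/(p-1)}$ (equivalently, the spectral norm strictly exceeds the operator norm), where the Christol--Dwork machinery via cyclic vectors and Newton polygons applies directly, and then to bootstrap to the general case using Frobenius descendants. First I would dispose of the characteristic $0$ (including $p=0$) and the ``large radius'' case: if $IR_\partial(V) = 1$ the module is trivial-ish and there is nothing to do; if $p^{-1/(p-1)} < IR_\partial(V) < 1$, pick a cyclic vector $\bbv$ (which exists by Definition~\ref{D:differential-module} since $K$ has characteristic zero), write $V \simeq K\{T\}/K\{T\}P$, and invoke Proposition~\ref{P:spec-norm-from-NP}: the slopes of the Newton polygon of $P$ that are $< -\log|\partial|_K$ compute the subsidiary radii exactly. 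The Newton polygon factors $P$ into twisted-polynomial factors $P = P_1 \cdots P_t$ with each $P_i$ having a single slope, and this factorization is unique if we order the slopes; by Proposition~\ref{P:spec-norm-from-NP} each quotient $K\{T\}/K\{T\}P_i$ has pure intrinsic $\partial$-radius equal to $\omega \ee^{s_i}$ with $s_i$ the corresponding slope. This gives the desired decomposition in the large-radius case, with the radii lying in $\omega \cdot \ee^{-\log|\partial|_K} \cdot |K^\times|^\QQ$; since $|\partial|_K = |u|^{-1}$ with $u \in K$, and $\omega \in |K^\times|^\QQ$ for the relevant range (one enlarges $K$ by a tame extension via Lemma~\ref{L:tame-extension} to adjoin $\omega$ if needed, then descends), the radii lie in $|K^\times|^\QQ$.

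Next I would handle uniqueness in the large-radius case and in general. Uniqueness follows from Lemma~\ref{L:basic-IR-proposition}(a): the subsidiary radii of a direct sum are the union of those of the summands, so a decomposition into pure pieces is forced to group the Jordan--H\"older constituents by their generic radius; since $\Hom$ between two pure pieces of distinct radii must vanish (by (c) of that lemma, a nonzero map would force a common radius among constituents), the decomposition is canonical, hence respected by all morphisms and unique. This argument is radius-range-independent, so it will also cover the general case once existence is established there.

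For the general case, where $IR_\partial(V)$ may be $\leq p^{-1/(p-1)}$, the tool is the $\partial$-Frobenius descendant $\varphi^{(\partial)}_* V$ over $K^{(\partial)}$ with derivation $\partial' = \partial/(pu^{p-1})$, which is again of rational type. By Lemma~\ref{L:frob-properties}(e), the subsidiary intrinsic $\partial'$-radii of $\varphi^{(\partial)}_*V$ are obtained from those of $V$ by an explicit, strictly monotone-on-each-branch recipe; in particular, after finitely many applications of $\varphi_*$ (say $n$ of them), every subsidiary radius of $V$ that was $\leq p^{-1/(p-1)}$ is moved strictly above $p^{-1/(p-1)}$ in $\varphi^{(\partial,n)}_* V$, while radii already $> p^{-1/(p-1)}$ stay $> p^{-1/(p-1)}$ (they get $p$-th powered). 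So $\varphi^{(\partial,n)}_* V$ has all its subsidiary radii in the large-radius range except possibly for blocks sitting exactly at $p^{-p/(p-1)}, \dots$ coming from the $W_i^{(\partial)}$ factors; a little care with those exceptional values (they are handled by isolating the summand of $\varphi^{(\partial,n)}_* V$ of pure radius $p^{-p/(p-1)}$, which the large-radius decomposition of the rest does not see) lets me decompose $\varphi^{(\partial,n)}_* V$ over $K^{(\partial,n)}$ into pure pieces. Now I descend: by Lemma~\ref{L:frob-properties}(c), a submodule of a Frobenius descendant comes from a submodule downstairs precisely when it is stable under the isomorphisms $\iota_i$ with the $W_i^{(\partial)}$; the pure-radius summands of $\varphi^{(\partial,n)}_* V$ are each a union of $\iota_i$-orbits because the $\iota_i$ only twist by the fixed radius-$p^{-p/(p-1)}$ modules $W_i^{(\partial)}$ and hence, away from the exceptional value, preserve the radius; using Lemma~\ref{L:frob-properties}(c) inductively (and Lemma~\ref{L:proj-intersect}/Remark~\ref{R:proj-intersect} is not even needed here since we are over a field, but the same gluing philosophy applies), each pure summand descends, step by step, to a pure summand of $V$. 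Finally the rationality constraint $r \in |K^\times|^\QQ$ propagates back down since $|K^{(\partial,n)\times}|^\QQ \subset |K^\times|^\QQ$ and the radius recipe in (e) is compatible with it.

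The main obstacle I expect is the careful bookkeeping around the exceptional radius value $p^{-p/(p-1)}$ (respectively $p^{-1/(p-1)}$ upstairs): the clean equalities in Lemma~\ref{L:frob-properties}(a),(e) degenerate exactly there, and a naive induction on $n$ could merge a genuine constituent of $V$ with the spurious $W_i^{(\partial)}$-contributions. Handling this requires first splitting off the (unique, by the uniqueness argument above) summand of pure radius equal to the critical value before applying the Frobenius-descent recursion to the complement, and verifying that this splitting is itself compatible with the $\iota_i$'s so that it descends. Everything else is a routine, if somewhat lengthy, induction.
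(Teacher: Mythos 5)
You have the visible/non-visible dichotomy exactly backwards, and this is not a cosmetic slip---it reverses the direction of every reduction in your argument. Since $|\partial|_{\sp,K} = \omega|\partial|_K$ for a derivation of rational type, the identity $IR_\partial(V) = |\partial|_{\sp,K}/|\partial|_{\sp,V}$ shows that the spectral norm on $V$ strictly exceeds the operator norm $|\partial|_K$ precisely when $IR_\partial(V) < \omega = p^{-1/(p-1)}$, i.e.\ in the \emph{visible} range, not in $(p^{-1/(p-1)},1)$ as you claim. And it is exactly in the visible range that Proposition~\ref{P:spec-norm-from-NP} detects the subsidiary radii from the Newton polygon of the twisted polynomial attached to a cyclic vector; in the range $(\omega,1)$ the least slope of that Newton polygon is truncated at $-\log|\partial|_K$ and carries no information about subsidiary radii, so your proposed cyclic-vector step yields nothing there.

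The Frobenius step inherits the same reversal. Lemma~\ref{L:frob-properties}(e), which you cite, sends a subsidiary radius $r \leq p^{-1/(p-1)}$ to $p^{-1}r$, and a radius $r > p^{-1/(p-1)}$ to $r^p$ together with spurious copies of $p^{-p/(p-1)}$; in every branch the descendant \emph{decreases} the intrinsic radius (or holds it at $1$). Iterating $\varphi^{(\partial)}_*$ therefore drives radii towards the visible range, never out of it, so it cannot move a radius from below $p^{-1/(p-1)}$ to above it as your plan requires. The correct reduction runs the other way: handle the visible range by the Newton polygon (factoring the twisted polynomial from both sides, in the spirit of Lemma~\ref{L:twisted-poly-decomposition} and Theorem~\ref{T:touch-decomposition}, since a one-sided factorization only produces a filtration, not a direct sum), and for $IR_\partial(V) > p^{-1/(p-1)}$ pass to the Frobenius \emph{antecedent} of Lemma~\ref{L:frob-properties}(d), which lowers $IR$ to $IR^p$ and whose pullback $\varphi^{(\partial)*}$ is rank-preserving, so a pure decomposition of the antecedent pulls back to one of $V$ without any of the $W_i^{(\partial)}$ bookkeeping that the descendant forces on you. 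For what it is worth, the paper itself simply cites \cite[Theorem~1.4.21]{kedlaya-xiao} for the decomposition and only supplies the rationality statement directly.
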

\begin{proof}
For the decomposition, see \cite[Theorem~1.4.21]{kedlaya-xiao}.  The rationality of those $r$ such that $V_r\neq0$ follows from Proposition~\ref{P:spec-norm-from-NP} when $r < \omega$ and from taking $\partial$-Frobenius antecedents in the general case.
\end{proof}

\begin{definition}
We call $\oplus_{r \in (0, \omega)} V_r$ the \emph{visible part} of $V$ and $\oplus_{r \in [\omega, 1]} V_r$ the \emph{non-visible part} of $V$.  If $V$ consists of only its visible part, we say $V$ has \emph{visible (intrinsic) $\partial$-radii}; similarly, if $V$ consists of only its non-visible part, we say $V$ has \emph{non-visible (intrinsic) $\partial$-radii}.
\end{definition}

\begin{remark}
\label{R:antecedent-vs-pushforward}
Let $V$ be a $\partial$-differential module over $K$ with pure intrinsic $\partial$-radii $IR_\partial(V) > p^{-1/(p-1)}$.  By Lemma~\ref{L:frob-properties}(d), $V$ has a $\partial$-Frobenius antecedent $V'$.
By Lemma~\ref{L:frob-properties}(c),
\[
\varphi^{(\partial)}_*V = \varphi^{(\partial)}_*\varphi^{(\partial)*}V'
\cong V' \oplus \Big( \bigoplus_{i=1}^{p-1} V'\otimes W^{(\partial)}_i \Big).
\]
This decomposition coincides with the decomposition obtained by applying Theorem~\ref{T:decomp-over-field-complete} to $\varphi^{(\partial)}_*V$.
\end{remark}

\subsection{Refined radii}
\label{S:refined-radii}

When a $\partial$-differential module $V$ has \emph{pure} $\partial$-radii, we will define the multiset of \emph{refined $\partial$-radii}, certain secondary information for the differential module.  Similar to the case of $\partial$-radii, we may canonically write $V$ as a direct sum of $\partial$-differential submodules such that the multiset of refined $\partial$-radii for each direct summand consists of elements pairwise-conjugate under the action of $\Gal(K^\alg/K)$.

\begin{hypothesis}\label{H:refined}
In this subsection, let $K$ be a complete nonarchimedean field of characteristic zero and residual characteristic $p$ (possibly $p=0$), equipped with a derivation $\partial$ of rational type.  We fix $u \in K$ a rational parameter for $\partial$.  Unless otherwise specified, we assume that $V$ is a $\partial$-differential module of rank $d$ over $K$ with \emph{pure} intrinsic $\partial$-radii $IR_\partial(V)$.   Denote $s = -\log(\omega R_\partial(V)^{-1}) = -\log |\partial|_{\sp, V}$.
\end{hypothesis}

\begin{notation}
For $P(T) = T^d + a_1 T^{d-1}+ \cdots + a_d \in K[T]$ a polynomial whose Newton polygon has pure slope $s$, the multiset of the \emph{reduced roots} of $P$ consists of the reductions of the roots of $P$ in $\kappa_{K^\alg}^{(s)}$, counted with multiplicity.  If $P$ is the characteristic polynomial of a matrix $A \in \Mat(\gothm_K^{(s)})$, we call the reduced roots of $P$ \emph{the reduced eigenvalues} of $A$.
\end{notation}

\begin{notation}\label{N:dichotomy}
For $\bbb \in (0,1]$ (a proxy of $IR_\partial(V)$), we define $\lambda = \lambda(\bbb)$ and $r = r(\bbb)$ as follows.
\begin{itemize}
\item[(i)] When $\bbb < \omega$ (which happens if $V$ has pure visible intrinsic $\partial$-radii), we let $\lambda(\bbb) = 0$ and $r(\bbb) = 1$.
\item[(ii)] When $\bbb \in [\omega,1)$ and hence $p>0$ (which happens if $V$ has pure non-visible  $\partial$-radii),  let $\lambda(\bbb)$ denote the unique positive integer such that  $ \bbb \in [p^{-1/p^{\lambda(\bbb)-1}(p-1)}, p^{-1/p^{\lambda(\bbb)}(p-1)})$, and put $r(\bbb) = p^{\lambda(\bbb)}$.

\item[(iii)] When $\bbb = 1$, we let $\lambda(\bbb)= r(\bbb) = \infty$.
\end{itemize}
\end{notation}

\begin{definition}\label{D:good-norm}
Let $\bbb \in (0,1]$.
A $K$-norm $|\cdot|_V$ on $V$ is called \emph{$\bbb$-good} (or simply \emph{good} if $\bbb = IR_\partial(V)$), if it admits an orthogonal (not necessarily orthonormal) basis, and
\begin{itemize}
\item[(i)] when $\bbb < \omega$ (which happens when $\bbb = IR_\partial(V)$ for $V$ visible), we have $|\partial|_V \leq \omega (\bbb|u|)^{-1}$;

\item[(ii)] when $\bbb \in [\omega,1)$ and hence $p>0$ (which happens when $\bbb = IR_\partial(V)$ for $V$ non-visible), we have
\begin{equation}\label{E:good-norm}
\Big|\frac{\partial^i}{i!} \Big|_V \leq |\partial|^i_K, \textrm{ for }i = 1, \dots, r-1, \quad
\Big|\frac{\partial^r} {r!} \Big|_V \leq p^{-1/(p-1)}(\bbb|u|)^{-r}; \textrm{ and}
\end{equation}

\item[(iii)] when $\bbb = 1$, we have $|\partial^i/i!|_V \leq |\partial|_K^i$ for all $i \geq 0$.
\end{itemize}

One may summarize the conditions (i)--(iii) by writing 
\[
\textrm{(iv)}\quad \big|\partial^i / i! \big|_V \leq \max\big\{ |\partial|^i_K, (\omega \bbb^{-1}|u|^{-1})^i / |i!|\big\}  \textrm{ for }i = 1, \dots, r. 
\]
Indeed, the equivalence of (1) or (iii) with (iv) is straightforward and the equivalence of (ii) and (iv) (when necessarily $p>0$) follows from the observation that the maximum above is equal to  $|\partial|^i_K$ if $i<r$ and to $p^{-1/(p-1)}(\bbb |u|)^{-r}$ if $i=r$.  From condition (iv), it is obvious that a $\bbb$-good norm is also $\bbb'$-good for any $\bbb' \leq \bbb$.

For the rest of this definition, we assume that $\bbb = IR_\partial(V)<1$.
By Lemma~\ref{L:good-norm-exists} below there exists a good norm for $V$.

Using this good norm, we define the multiset of \emph{refined $\partial$-radii} of $V$, denoted by $\Theta_\partial(V)$, as follows.  Enlarge the value group of $K$ in the sense of Remark~\ref{R:enlarge-K} so that $V$ admits an orthonormal basis.  Let $N_r$ be the matrix of $\partial^r$ with respect to the chosen basis.  If $\alpha_1, \dots, \alpha_d$ are the reduced eigenvalues of $N_r$, viewed as elements in $\kappa_{K^\alg}^{(rs)}$, we put $\Theta_\partial(V, |\cdot|) =\{ \alpha_1^{1/r}, \dots, \alpha_d^{1/r} \}$ as the multiset consisting of elements in $\kappa_{K^\alg}^{(s)}$ (note that there is no ambiguity of taking $r$-th roots for elements in $\kappa_{K^\alg}^{(rs)}$ when $p>0$).  
We will see in Lemmas~\ref{L:refined-well-defined1} and \ref{L:refined-well-defined2} that the multiset of refined $\partial$-radii is independent of the choices of the good norm and the orthonormal basis of $V$.  After these lemmas, we will abbreviate $\Theta_\partial(V, |\cdot|)$ to $\Theta_\partial(V)$.  When $\Theta_\partial(V)$ consists of $\dim V$ copies of a single element $\theta$, we say that $V$ has \emph{pure refined $\partial$-radii}.

We remark that $\Theta_\partial(V)$ does not depend on the choice of the rational parameter $u$.  But it is sometimes convenient to use the multiset of \emph{intrinsic refined $\partial$-radii} $\calI\Theta_\partial(V) = u \Theta_\partial(V)$ for a fixed rational parameter $u \in K$.

Finally, in the case when $IR_\partial(V) =1$, we conventionally define $\Theta_\partial(V)$ and $\calI\Theta_\partial(V)$ to be the multisets consisting of $0$ with multiplicity $\dim V$.
\end{definition}

\begin{remark}
\label{R:refined-in-K}
In the definition of refined $\partial$-radii, we first enlarged $K$ to $K'$, the completion of $K(x_1, \dots, x_n)$ for some $(\eta_1, \dots, \eta_n)$-Gauss norm.  However, the multiset of refined $\partial$-radii $\Theta_\partial(V, |\cdot|)$ is still composed of elements in $\kappa_{K^\alg}^{(s)}$.  Indeed,  since the construction is canonical, for any $\theta \in \Theta_\partial(V, |\cdot|)$, we have $g \theta \in \Theta_\partial(V, |\cdot|)$ for any automorphism $g$ of $K'$ fixing $K$.  But $\Theta_\partial(V, |\cdot|)$ is a finite multiset.  So it can consist only of elements in $\kappa_{K^\alg}^{(s)}$.  Alternatively, we can carefully keep track of the  new  variables we introduced in the computation of reduced eigenvalues; from this, we can also see that the multiset of refined $\partial$-radii is composed of elements in $\kappa_{K^\alg}^{(s)}$.
\end{remark}

\begin{remark}\label{R:refined-for-critical}
We also remark that when $p>0$ and $\bbb = \omega^{1/p^\lambda}$, the condition \eqref{E:good-norm} for $i = 1, \dots, p^{\lambda-1}$ is equivalent to \eqref{E:good-norm} for $i =1, \dots, p^\lambda$. But we need the matrix $N_{p^\lambda}$ to define refined $\partial$-radii.
For example, when $\bbb = IR_\partial(V) =\omega$,  we will see in Lemma~\ref{L:good-norm-exists} below that the twisted polynomial from Proposition~\ref{P:spec-norm-from-NP} gives us a good norm on $V$.  However, one \emph{cannot} compute the refined $\partial$-radii by taking the reduced roots of this twisted polynomial.  Instead, one has to find the matrix for $\partial^p$.
\end{remark}

\begin{remark}
For a good norm, one can show that the inequalities in \eqref{E:good-norm} are in fact equalities, but we will not use this fact later (see \cite[Lemma~6.2.4]{kedlaya-course} for a proof of similar flavor).
\end{remark}

\begin{lemma} \label{L:good-norm-exists}
Let $V$ be as in Hypothesis~\ref{H:refined}.  Assume that $\bbb \leq IR_\partial(V)$, and that $\bbb<1$ if $p>0$.  Then $V$ admits a $\bbb$-good norm.  In particular, any $V$ with pure intrinsic radius $IR_\partial(V)<1$ admits a good norm.
\end{lemma}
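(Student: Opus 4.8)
The plan is to separate the ranges $\bbb\le\omega$ and $\bbb\in(\omega,1)$; the second range forces $p>0$ and $IR_\partial(V)\ge\bbb>\omega=p^{-1/(p-1)}$. By the reformulation (iv) in Definition~\ref{D:good-norm}, in either range it suffices to produce a $K$-norm on $V$ admitting an orthogonal basis with $|\partial^i/i!|_V\le\max\{|\partial|_K^i,\,(\omega\bbb^{-1}|u|^{-1})^i/|i!|\}$ for $i=1,\dots,r(\bbb)$. The last assertion of the lemma is then the case $\bbb=IR_\partial(V)$, which is legitimate since $IR_\partial(V)<1$ by hypothesis there.

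For $\bbb\le\omega$ (this subsumes every case with $p=0$, and the critical case $\bbb=\omega$) I would choose a cyclic vector, available since $K$ is a differential field of characteristic $0$, realizing $V\cong K\{T\}/K\{T\}P$ with $P$ monic of degree $d=\dim V$, and equip $V$ with the norm for which $1,T,\dots,T^{d-1}$ is orthogonal and $|T^j|_V=\rho^{-j}$, where $\rho^{-1}:=\max\{|\partial|_K,|\partial|_{\sp,V}\}$. By Proposition~\ref{P:spec-norm-from-NP} one has $\rho^{-1}=\ee^{-s}$ for some $s$ bounded above by every slope of the Newton polygon of $P$; writing out the matrix of $\partial$ (left multiplication by $T$) in this basis then yields $|\partial|_V\le\rho^{-1}$. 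Since $|\partial|_{\sp,V}=\omega|\partial|_K\,IR_\partial(V)^{-1}$ and $\bbb\le IR_\partial(V)$, one has $\rho^{-1}=|\partial|_K\max\{1,\omega\,IR_\partial(V)^{-1}\}\le|\partial|_K\max\{1,\omega\bbb^{-1}\}$, and a short computation --- using $\bbb<\omega$ when $r(\bbb)=1$, and the identity $|u|^{-i}/|i!|=|\partial|_K^i/|i!|$ when $\bbb=\omega$ --- shows that $|\partial^i/i!|_V\le\rho^{-i}/|i!|$ meets the bound above for all $i\le r(\bbb)$.

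For $\bbb\in(\omega,1)$, since $IR_\partial(V)\ge\bbb>p^{-1/(p-1)}$, Lemma~\ref{L:frob-properties}(d) provides the $\partial$-Frobenius antecedent $V'$ over $K^{(\partial)}$, with $IR_{\partial'}(V')=IR_\partial(V)^p$ and $\partial'$ of rational type with rational parameter $u^p$; moreover $V'$ is again of pure intrinsic $\partial'$-radii, since $\partial$-Frobenius pullback is compatible with Jordan--H\"older filtrations and with Lemma~\ref{L:frob-properties}(a). Put $\bbb':=\bbb^p$; then $\bbb'\le IR_{\partial'}(V')$, $\bbb'<1$, and $\lambda(\bbb')=\lambda(\bbb)-1$ by Notation~\ref{N:dichotomy}, so by induction on $\lambda(\bbb)$ --- the base cases $\lambda(\bbb)=0$ and $\bbb=\omega$ being settled above --- $V'$ admits a $\bbb'$-good norm $|\cdot|_{V'}$. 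To conclude, I would transport this norm along $\varphi^{(\partial)*}\colon K^{(\partial)}\hookrightarrow K$: fix an orthogonal $K^{(\partial)}$-basis of $K$ adapted to $u$ (namely $\{u^i\}_{i=0}^{p-1}$, or $\{(u/x)^i\}_{i=0}^{p-1}$ for a suitable $x\in K^{(\partial)}$ when $|u|\in|K^{(\partial)\times}|$, as in the proof of Lemma~\ref{L:Frob-does-to-kappa}), tensor it with an orthogonal basis of $V'$, and put on $V=V'\otimes_{K^{(\partial)}}K$ the resulting orthogonal basis with product norms. One then checks (iv) for $\bbb$ on $V$ by estimating $|\partial^n/n!|_V$ through the Leibniz expansion of $\partial^n$ obtained from $\partial(\bbv'\otimes x)=pu^{p-1}\partial'(\bbv')\otimes x+\bbv'\otimes\partial(x)$, using $|\partial'|_{K^{(\partial)}}=|u|^{-p}$, $|pu^{p-1}|=p^{-1}|u|^{p-1}$, and the bounds on $|(\partial')^i/i!|_{V'}$ coming from the $\bbb'$-good norm, for $i\le r(\bbb')=r(\bbb)/p$.

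I expect this last verification --- the behaviour of a good norm under $\partial$-Frobenius pullback --- to be the main obstacle. The delicate point is that on $V'$ one only controls $|(\partial')^i/i!|_{V'}$ up to $i=r(\bbb')$, whereas on $V$ one must bound $|\partial^n/n!|_V$ up to $n=p\,r(\bbb')$; one therefore has to combine the $\partial'$-contributions with those of the $(1\otimes\partial)$-terms and with the shrinking factors $|pu^{p-1}|^{j}$, carefully separating the regime $n<p$ (where $1\otimes\partial$ dominates and the bound is simply $|\partial|_K^n$) from $n\ge p$ (where the Frobenius part enters and produces the factor $p^{-1/(p-1)}(\bbb|u|)^{-n}$). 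This is in effect a graded refinement of the radius inequality in Lemma~\ref{L:frob-properties}(a), and I would carry it out following the corresponding estimates in \cite{kedlaya-xiao}.
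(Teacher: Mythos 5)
Your proposal follows the same two-step route as the paper: handle $\bbb\le\omega$ via a cyclic vector and Proposition~\ref{P:spec-norm-from-NP}, then reduce the range $\bbb\in(\omega,1)$ to the visible range by $\partial$-Frobenius. Your treatment of the visible case is essentially the paper's (you normalize by $\max\{|\partial|_K,|\partial|_{\sp,V}\}$ where the paper uses $\omega\bbb^{-1}|u|^{-1}$; both are admissible since $\bbb\le IR_\partial(V)$ makes your radius the smaller of the two). The one structural difference in the non-visible case is worth noting: you pass through a single antecedent $V'$ over $K^{(\partial)}$ and induct on $\lambda(\bbb)$, whereas the paper takes the $n$-fold antecedent $W$ over $K^{(\partial,n)}$ in one shot, with $n$ chosen so that $IR_{\tilde\partial}(W)\in(p^{-p/(p-1)},p^{-1/(p-1)}]$ lands in the range already covered by the cyclic-vector construction. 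The paper's choice means it has to transfer a good norm along a Frobenius pullback exactly once, whereas your induction must do it at every level, which multiplies the bookkeeping.

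The genuine gap is the step you yourself flag as ``the main obstacle'': you do not actually verify that the transported norm on $V=V'\otimes_{K^{(\partial)}}K$ satisfies condition (iv). Your sketch of that verification is also imprecise in a way that suggests the difficulty is not fully in view. First, the split is not ``$n<p$ vs.\ $n\ge p$''; condition (iv) requires the bound $|\partial|_K^i$ for \emph{all} $i<r(\bbb)=p^{\lambda(\bbb)}$ (which can be much larger than $p$), and the sharper bound $p^{-1/(p-1)}(\bbb|u|)^{-r}$ only at $i=r(\bbb)$. Second, rather than attacking the double sum from the Leibniz expansion of $\partial^n$ directly, the paper exploits the operator identity $u^i\partial^i=(u\partial)(u\partial-1)\cdots(u\partial-(i-1))$: since $u\partial$ restricted to $W$ equals $p^n u^{p^n}\tilde\partial$ (a genuine operator on $W$, because $u^{p^n}\in K^{(\partial,n)}$), one gets $\big|\tfrac{u^i\partial^i}{i!}\big|_W\le\big|\tfrac{u\partial}{i}\big|_W$ from the single input $|u\partial-j|_W\le|j|$, which in turn follows from the estimate $|u\partial|_W\le p^{1-\lambda}$ of \eqref{E:good-norm-exists}. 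The remaining transfer from $\big|\cdot\big|_W$ to $\big|\cdot\big|_V$ is exactly where the generalized Leibniz rule you mention enters: one obtains $\big|\tfrac{u^i\partial^i}{i!}\big|_V\le\max_{0\le l\le i}\big|\tfrac{u^l\partial^l}{l!}\big|_W$ because $|u^k\partial^k/k!|_K=1$ for $\partial$ of rational type, and this inequality suffices. Without carrying out this chain, your proposal is not a proof of the non-visible case.
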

\begin{proof}
We first assume that $\bbb \leq \omega$.  We take a cyclic vector $\bbv \in V$ with twisted polynomial $P$.  By Proposition~\ref{P:spec-norm-from-NP}, the lesser of $-\log |\partial|_K$ and the least slope of the Newton polygon of $P$ equals $\min\{s, -\log |\partial|_K\} \geq  -\log(\omega \bbb^{-1}|u|^{-1})$.  Then we can define a $\bbb$-good norm on $V$ by taking the orthogonal basis to be $\bbv, \partial \bbv,  \dots,\partial^{d-1} \bbv$ with $|\partial^i \bbv| = \omega^i(\bbb|u|)^{-i}$ for $i = 0, \dots, d-1$.  When $\bbb = \omega$, as pointed out in Remark~\ref{R:refined-for-critical}, our bound $|\partial|_V \leq |u|^{-1}$ alone implies condition \eqref{E:good-norm} for $r=1, \dots, p$ when $p>0$, and condition (iii) in Definition~\ref{D:good-norm} when $p=0$.

The remainder case is when $p>0$ and $\bbb \in (p^{-1/(p-1)}, 1)$.  We let $n = \lambda-1$ if $\bbb = p^{-1/p^{\lambda-1}(p-1)}$ and $n=\lambda$ otherwise.  In other words, $n$ is the unique nonpositive integer such that $\bbb \in (p^{-1/p^{n-1}(p-1)}, p^{-1/p^{n}(p-1)}]$.  Let $\varphi^{(\partial, n)}: K^{(\partial, n)} \rar K$ be the $p^{n}$-th $\partial$-Frobenius and let $\tilde \partial = \partial / (p^{n}u^{p^{n}-1})$ be the corresponding derivation on $K^{(\partial, n)}$.  Since $IR_\partial(V) \geq \bbb > p^{-1/p^{n-1}(p-1)}$, by repeatedly applying Lemma~\ref{L:frob-properties}(d), we obtain an $n$-fold $\partial$-Frobenius antecedent $W$ over $K^{(\partial, n)}$; it has intrinsic $\tilde \partial$-radii $IR_{\tilde \partial}(W) = IR_\partial(V)^{p^{n}} \geq \bbb^{p^n} \in (p^{-p/(p-1)}, p^{-1/(p-1)}]$.  In particular, $W$ has a $\bbb^{p^n}$-good norm by the argument in previous paragraph.  We have 
\begin{align}
\nonumber
\big|u^{p^{n}}\tilde \partial \big|_W & \leq p^{-1/(p-1)} \bbb^{-p^n} \in [1,p) \\
\label{E:good-norm-exists}
\Rar \big| u\partial \big|_W & = p^{-n} \big|u^{p^{n}}\tilde \partial \big|_W
\left\{
\begin{array}{ll}
< p^{-n} \cdot p  = p^{\lambda-1} & \textrm{when } n = \lambda, \\
\leq p^{-n} \cdot 1 = p^{\lambda - 1} & \textrm{when }n = \lambda-1.
\end{array} \right.
\end{align}
This norm on $W$ gives rise to a $K$-norm $|\cdot|_V$ on $V$, which we will show is $\bbb$-good.  By \eqref{E:good-norm-exists}, we have $|u\partial- i|_V = |u \partial-i|_W \leq |i|$ for $i = 1, \dots, p^\lambda -1$.  Hence we have, for $i = 1, \dots, p^\lambda$,
\begin{align*}
&\Big|\frac{u^i\partial^i}{i!} \Big|_V = 
\Big|\frac{u^i\partial^i}{i!} \Big|_W =
\Big| \frac{u\partial(u\partial - 1) \cdots (u\partial -(i-1))}{i!}\Big|_W \leq \Big|\frac{u\partial} i \Big|_W \\
=\ &  \Big|\frac{p^n}{i} u^{p^{n}}\tilde \partial \Big|_W 
\left\{\begin{array}{ll} 
\leq 1 & \textrm{if }i =1, \dots, p^\lambda - 1, \\
\leq p^{-1/(p-1)} \bbb^{-p^n} = p^{-1/(p-1)} \bbb^{-p^\lambda} & \textrm{if }i = p^\lambda \textrm{ and }n =\lambda,
\\
\leq p^{-p/(p-1)} \bbb^{-p^n} = p^{-1} & \textrm{if }i = p^\lambda \textrm{ and }n =\lambda-1.
\end{array}\right.
\end{align*}
This verifies \eqref{E:good-norm}.
\end{proof}

\begin{lemma}\label{L:refined-well-defined1}
Assume that $IR_\partial(V)<1$.
Let $|\cdot|$ be a good norm on $V$.  Then the multiset of refined $\partial$-radii $\Theta_\partial(V, |\cdot|)$ is well-defined.
\end{lemma}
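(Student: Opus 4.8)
The plan is to prove that the multiset of reduced eigenvalues of the matrix $N_r$ — and hence, after extracting $r$-th roots, $\Theta_\partial(V,|\cdot|)$ — is unchanged when the chosen orthonormal basis is replaced by another; independence of the auxiliary enlargement of $K$ will then follow by passing to a common enlargement and invoking Remark~\ref{R:refined-in-K}. So I fix the good norm $|\cdot|$, an enlargement $K'$ of $K$ over which $(V,|\cdot|)$ has an orthonormal basis $\mathbf e=(e_1,\dots,e_d)$, and a second orthonormal basis $\mathbf e'=\mathbf e U$ with $U\in\GL_d(\calO_{K'})$. Since $U$ carries one orthonormal basis to another it is an isometry, so $|U|=|U^{-1}|=1$. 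Writing $N_l$ (resp.\ $N_l'$) for the matrix of $\partial^l$ in the basis $\mathbf e$ (resp.\ $\mathbf e'$), with $N_0=I$, the Leibniz rule gives
\[
N_r'=U^{-1}\sum_{l=0}^{r}\binom{r}{l}\,N_l\,\partial^{r-l}(U),
\]
where $\partial^{r-l}(U)$ denotes the entrywise application of $\partial^{r-l}$ to $U$; the $l=r$ summand is $U^{-1}N_rU$, whose characteristic polynomial equals that of $N_r$.

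Set $r=p^{\lambda}$ with $\lambda=\lambda(\bbb)$ (so $\lambda=0$ in the visible case and when $p=0$), and note $\bbb|u|=R_\partial(V)$, whence $\ee^{-rs}=|\partial|_{\sp,V}^{\,r}=\big(\omega(\bbb|u|)^{-1}\big)^{r}$ and $|r!|=|p^{\lambda}!|=\omega^{\,r-1}$. The key step is the estimate that the terms with $l<r$ contribute an error of norm strictly below $\ee^{-rs}$. First I would observe that $|U^{-1}|\le 1$; that by \eqref{E:good-norm} (and condition (i) of Definition~\ref{D:good-norm} when $r=1$) one has $|N_l|=|\partial^l|_V\le|l!|\,|\partial|_K^{\,l}=|l!|\,|u|^{-l}$ for $0\le l\le r-1$; that $\partial$ stays of rational type on $K'$ with parameter $u$ (Remark~\ref{R:enlarge-K}), so $|\partial^{r-l}(U)|\le|\partial^{r-l}|_{K'}\,|U|\le|(r-l)!|\,|u|^{-(r-l)}$; and that $\binom{r}{l}\in\ZZ$ forces $\big|\binom{r}{l}\big|\,|l!|\,|(r-l)!|=|r!|$. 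Multiplying these, each summand with $0\le l\le r-1$ has norm at most $|r!|\,|u|^{-r}=\omega^{\,r-1}|u|^{-r}$, whose ratio to $\ee^{-rs}=\omega^{\,r}\bbb^{-r}|u|^{-r}$ is $\omega^{-1}\bbb^{\,r}$. Because $IR_\partial(V)<1$, Notation~\ref{N:dichotomy} gives $\bbb<\omega^{1/p^{\lambda}}$ in every remaining case, i.e.\ $\bbb^{\,r}<\omega$, so this ratio is $<1$. Therefore $N_r'=U^{-1}N_rU+E$ with $E\in\Mat_d(\gothm_{K'}^{(rs)+})$.

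The rest is bookkeeping. Since $|N_r|\le\ee^{-rs}$ (again from the good-norm bound, using $|r!|\,\omega^{1-r}=1$) and $|E|<\ee^{-rs}$, expanding the coefficients of $\det(T\cdot I-N_r')$ as sums of minors shows that the coefficient of $T^{d-i}$ is congruent modulo $\gothm_{K'}^{(irs)+}$ to the corresponding coefficient of $\det(T\cdot I-U^{-1}N_rU)=\det(T\cdot I-N_r)$. Hence $N_r$ and $N_r'$ have the same reduced characteristic polynomial, so the same multiset of reduced eigenvalues in $\kappa_{K^{\alg}}^{(rs)}$ (these are defined because the purity of the radii of $V$ makes this polynomial of pure slope $rs$, as in the discussion preceding Notation~\ref{N:dichotomy}), and so the same $\Theta_\partial(V,|\cdot|)$ after taking $r$-th roots — unambiguous when $p>0$, trivial when $p=0$. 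Two enlargements $K'_1,K'_2$ embed into a common one $K'_3$ over which either given orthonormal basis is still orthonormal, so the computation yields the same multiset regardless of the enlargement; by Remark~\ref{R:refined-in-K} this multiset already lies in $\kappa_{K^{\alg}}^{(s)}$. This gives the lemma.

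I expect the norm estimate of the second paragraph to be the crux, and in particular the fact that it is \emph{strict}: this strictness is exactly what the half-open interval in the dichotomy of Notation~\ref{N:dichotomy} supplies, and the estimate closes only because the $p$-adic valuations of the binomial coefficients $\binom{r}{l}$ conspire, via $\binom{r}{l}\,l!\,(r-l)!=r!$, to make the bound on every lower-order term equal to the single quantity $|r!|\,|u|^{-r}$. This is precisely the reason the good norm is set up using the critical index $r=r(\bbb)$ and not, for instance, the twisted polynomial of a cyclic vector (cf.\ Remark~\ref{R:refined-for-critical}).
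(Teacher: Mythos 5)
Your proof is correct and takes essentially the same route as the paper's: expand $\partial^r$ on the new orthonormal basis via the Leibniz rule, bound the cross terms using the good-norm inequalities \eqref{E:good-norm} together with the rational-type identity $|\partial^{r-l}/(r-l)!|_K = |\partial|_K^{r-l}$ and $\binom{r}{l}\,l!\,(r-l)!=r!$, and observe that the strict inequality $\bbb^{r}<\omega$ from the half-open interval in Notation~\ref{N:dichotomy} makes the perturbation strictly smaller than $\ee^{-rs}$. The only cosmetic differences are normalization (you work with $N_r$, the paper with $N_r/r!$) and the last step (you expand the characteristic polynomial into minors to see the coefficient-by-coefficient congruence, whereas the paper cites the perturbation result \cite[Theorem~4.2.2]{kedlaya-course}); both correctly conclude that the reduced eigenvalues are unchanged.
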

\begin{proof}
By possibly enlarging $K$ in the sense of Remark~\ref{R:enlarge-K}, we have two orthonormal bases $\underline \bbe$ and $\underline \bbe'$ for $|\cdot|_V$ such that $\underline \bbe' = \underline \bbe A$ for a transition matrix $A \in \GL_d(\calO_K)$.  For $i= 1, \dots, r$, let $N_i$ denote the matrix of $\partial^i$ with respect to $\underline \bbe$; by \eqref{E:good-norm}, we have $|N_i / i!|\leq |\partial|^i_K$ for $i=1, \dots, r-1$.
Then we have
\[
\frac{ \partial^r(\underline \bbe')}{r!} = 
\frac{ \partial^r(\underline \bbe A)}{r!} = 
\sum_{i=0}^r \frac{\partial^i(\underline \bbe)}{i!} \frac {\partial^{r-i}(A)}{(r-i)!} =
\underline \bbe' A^{-1} \Big( \sum_{i=0}^{r}\frac{N_i}{i!}\frac{\partial^{r-i}(A)}{(r-i)!} \Big)
\]
If $A^{-1}MA$ denote the matrix of $\partial^r / r!$ with respect to $\underline \bbe'$, we have
\[
M = \frac{N_r}{r!} + \sum_{i=0}^{r-1} \frac{N_i}{i!} \frac{\partial^{r-i}(A)A^{-1}}{(r-i)!}.
\]
Note that $|N_i/ i!| \leq |\partial|_K^i$ and $| \partial^{r-i}(A) A^{-1}/ (r-i)!| \leq |\partial|^{r-i}_K|A||A^{-1}| \leq |\partial|^{r-i}_K$ imply that $|M - N_r/r!| \leq |\partial|^r_K < \omega R_\partial(V)^{-r}$, which is smaller than any singular value of $N_r/r!$.  By \cite[Theorem~4.2.2]{kedlaya-course}, the reduced eigenvalues of $N_r/r!$ coincide with those of $A^{-1}  MA$.  Therefore, $\Theta_\partial(V)$ does not depend on the choice of good norms $|\cdot | $ on $V$.
\end{proof}

\begin{lemma} \label{L:refined-well-defined2}
Assume that $IR_\partial(V)<1$. Let $|\cdot|_1$ and $|\cdot|_2$ be two good norms on $V$.  Then we have $\Theta_\partial(V, |\cdot|_1) = \Theta_\partial(V, |\cdot|_2)$.
\end{lemma}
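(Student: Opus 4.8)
The plan is to join $|\cdot|_1$ and $|\cdot|_2$ by a finite chain of good norms on $V$ whose consecutive members differ so little that the perturbation estimate used in the proof of Lemma~\ref{L:refined-well-defined1} forces the reduced eigenvalues of the relevant matrix, and hence $\Theta_\partial(V,\cdot)$, to stay constant along the chain.

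First I would enlarge $K$ in the sense of Remark~\ref{R:enlarge-K} so that both $|\cdot|_1$ and $|\cdot|_2$ admit orthonormal bases; since the recipe of Definition~\ref{D:good-norm} for $\Theta_\partial(V,\cdot)$ is canonical, this changes neither multiset (Remark~\ref{R:refined-in-K}). Because $\calO_K$ is a valuation ring it is an elementary divisor ring, so the transition matrix between the two orthonormal bases has a Smith normal form; equivalently, there is a basis $\bbg_1,\dots,\bbg_d$ of $V$ that is orthogonal for $|\cdot|_1$ and for $|\cdot|_2$ at the same time. Set $a_i=|\bbg_i|_1$ and $b_i=|\bbg_i|_2$, and enlarge $K$ once more so that the finitely many real numbers $a_i^{1-t/N}b_i^{t/N}$ (for a large integer $N$ fixed below and $0\le t\le N$) all lie in $|K^\times|$.

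For $0\le t\le N$, let $|\cdot|^{(t)}$ be the norm with orthogonal basis $\bbg_1,\dots,\bbg_d$ and $|\bbg_i|^{(t)}=a_i^{1-t/N}b_i^{t/N}$, so $|\cdot|^{(0)}=|\cdot|_1$ and $|\cdot|^{(N)}=|\cdot|_2$. Each $|\cdot|^{(t)}$ is again a good norm: writing a vector in the basis $\bbg_\bullet$ and feeding the uniform bound $|\partial^k/k!|_V\le\max\{|\partial|_K^k,(\omega\bbb^{-1}|u|^{-1})^k/|k!|\}$ of Definition~\ref{D:good-norm}(iv) — valid for both $|\cdot|_1$ and $|\cdot|_2$ — into the multiplicative H\"older inequality (from $x\le Py$ and $x'\le Py'$ one gets $x^{1-\tau}(x')^{\tau}\le Py^{1-\tau}(y')^{\tau}$, applied coordinatewise with $\tau=t/N$) produces the same bound for $|\cdot|^{(t)}$. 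Furthermore the transition matrix $A^{(t)}$ from the orthonormal basis for $|\cdot|^{(t)}$ to that for $|\cdot|^{(t+1)}$ is diagonal with entries of absolute value $(a_i/b_i)^{1/N}$, whence $|A^{(t)}|\cdot|(A^{(t)})^{-1}|\le\rho^{2/N}$ with $\rho=\max_i\max\{a_i/b_i,\,b_i/a_i\}$; since $\omega R_\partial(V)^{-r}\cdot|\partial|_K^{-r}=\omega\bbb^{-r}>1$ (Notation~\ref{N:dichotomy}), I would pick $N$ large enough that $\rho^{2/N}<\omega\bbb^{-r}$.

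Then, for each $t$, I would rerun the computation from the proof of Lemma~\ref{L:refined-well-defined1} with $A=A^{(t)}$: denoting by $N_i$ the matrix of $\partial^i$ in the orthonormal basis for $|\cdot|^{(t)}$, the matrix of $\partial^r/r!$ in the orthonormal basis for $|\cdot|^{(t+1)}$ is $(A^{(t)})^{-1}\big(N_r/r!+E^{(t)}\big)A^{(t)}$, where $E^{(t)}=\sum_{i=0}^{r-1}\frac{N_i}{i!}\cdot\frac{\partial^{r-i}(A^{(t)})\,(A^{(t)})^{-1}}{(r-i)!}$. From $|N_i/i!|\le|\partial|_K^i$ for $i\le r-1$ (goodness) and $|\partial^{r-i}(A^{(t)})/(r-i)!|\le|\partial|_K^{r-i}|A^{(t)}|$ (rational type on the enlarged field, Remark~\ref{R:enlarge-K}), one obtains $|E^{(t)}|\le|\partial|_K^{r}\,|A^{(t)}|\,|(A^{(t)})^{-1}|<\omega R_\partial(V)^{-r}$, which — just as in Lemma~\ref{L:refined-well-defined1} — is smaller than every singular value of $N_r/r!$; hence by \cite[Theorem~4.2.2]{kedlaya-course} neither the conjugation by $A^{(t)}$ nor the perturbation by $E^{(t)}$ changes the reduced eigenvalues of $N_r/r!$, so $\Theta_\partial(V,|\cdot|^{(t)})=\Theta_\partial(V,|\cdot|^{(t+1)})$. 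Chaining over $t=0,\dots,N-1$ gives $\Theta_\partial(V,|\cdot|_1)=\Theta_\partial(V,|\cdot|_2)$, which is the assertion. The part I expect to demand the most care is the construction and verification of this interpolating family: securing a common orthogonal basis for the two good norms (via the elementary divisor property of $\calO_K$, together with the auxiliary enlargements of $K$) and checking that the geometric interpolants of two good norms are themselves good, so that $\Theta_\partial(V,\cdot)$ makes sense at every node of the chain; once this is set up, each edge of the chain is disposed of by the estimate already proved in Lemma~\ref{L:refined-well-defined1}.
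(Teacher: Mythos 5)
Your proposal is correct, and its skeleton is the same as the paper's: use the Smith normal form over $\calO_K$ to produce a single basis orthogonal for both norms (so that the transition matrix between orthonormal bases is diagonal), then rerun the perturbation computation from Lemma~\ref{L:refined-well-defined1} and invoke \cite[Theorem~4.4.2]{kedlaya-course}. Where you diverge from the paper is that you then build a geometric interpolation chain $|\cdot|^{(t)}$ to make $|A^{(t)}|\cdot|(A^{(t)})^{-1}|$ small; this whole apparatus is unnecessary and stems from a non-sharp estimate. Having arranged $A=\Diag(a_{11},\dots,a_{dd})$, the matrix $\frac{\partial^{r-i}(A)}{(r-i)!}A^{-1}$ is \emph{itself diagonal}, with $(l,l)$-entry $\frac{\partial^{r-i}(a_{ll})}{(r-i)!}a_{ll}^{-1}$; since $\partial$ is of rational type, each such entry has norm $\le |\partial|_K^{r-i}$, so $\bigl|\frac{\partial^{r-i}(A)}{(r-i)!}A^{-1}\bigr|\le |\partial|_K^{r-i}$ with no factor of $|A|\cdot|A^{-1}|$ whatsoever. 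Consequently the error term $E=\sum_{i=0}^{r-1}\frac{N_i}{i!}\frac{\partial^{r-i}(A)A^{-1}}{(r-i)!}$ already satisfies $|E|\le|\partial|_K^r<\omega R_\partial(V)^{-r}$ in a single step, and one concludes directly — no matter how wildly the ratios $a_i/b_i$ vary. Your bound $\bigl|\frac{\partial^{r-i}(A)A^{-1}}{(r-i)!}\bigr|\le|\partial|_K^{r-i}|A|\,|A^{-1}|$ treats the product of two diagonal matrices as a product of arbitrary matrices and so loses exactly the cancellation that makes the argument clean; everything you add afterward (the enlargement of $K$ to accommodate the values $a_i^{1-t/N}b_i^{t/N}$, the interpolated operator-norm bound via multiplicative H\"older, the choice of $N$ against $\omega\bbb^{-r}$) is correct but is only there to compensate for that missed cancellation. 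I would encourage you to look for exactly this kind of ``entrywise'' simplification whenever you find yourself estimating a product of diagonal (or triangular, or otherwise structured) matrices by the product of their full operator norms.
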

\begin{proof}
By possibly enlarging $K$ as in Remark~\ref{R:enlarge-K}, we may choose orthonormal bases $\underline \bbe$ and $\underline \bbf$ of $|\cdot|_1$ and $|\cdot|_2$, respectively, so that $\underline \bbe A= \underline \bbf $ with $A = \Diag\{a_{11}, \dots, a_{dd}\}$.

Let $N_i$ denote the matrix of $\partial^i$ with respect to $\underline \bbe$; by \eqref{E:good-norm}, we have $|N_i/i!|\leq 1$ for $i=1, \dots, r-1$.  Then we have
\[
\frac{\partial^r (\underline \bbf)}{r!} = 
\frac{\partial^r(\underline \bbe A)}{r!} = 
\sum_{i=0}^r \frac{\partial^i(\underline \bbe)}{i!} \frac {\partial^{r-i}(A)}{(r-i)!} =
\underline \bbf A^{-1} \Big( \sum_{i=0}^{r}\frac{N_i}{i!}\frac{\partial^{r-i}(A)}{(r-i)!}A^{-1} \Big) A.
\]
It suffices to show that $N_r/r!$ has the same reduced eigenvalues as $\sum_{i=0}^r\frac{N_i}{i!}\frac{\partial^{r-i}(A)}{(r-i)!}A^{-1}$.  This is true by \cite[Theorem~4.4.2]{kedlaya-course} since 
\[
\Big|\frac{N_i}{i!}\frac{\partial^{r-i}(A)}{(r-i)!}A^{-1}\Big|
 = \Big|\frac{N_i}{i!}\Big| \cdot \Big|\Diag\Big(
\frac{\partial^{r-i}(a_{11})}{(r-i)!}a_{11}^{-1},
 \dots, \frac{\partial^{r-i}(a_{dd})}{(r-i)!}a_{dd}^{-1}
 \Big)\Big| \leq |\partial|_K^i \cdot |\partial_K|^{r-i} < \omega R_\partial(V)^{-1},
\]
for $i = 0, \dots, r-1$.
\end{proof}

\begin{corollary}\label{C:refined-radii=reduced-roots}
Assume that $V$ has pure visible $\partial$-radii.  For \emph{any} cyclic vector $\bbv \in V$, the multiset of the reduced roots of the twisted polynomial associated to $\bbv$ is exactly the multiset of refined $\partial$-radii of $V$.  In particular, this multiset is composed of nonzero elements of $\kappa_{K^\alg}^{(s)}$.

More generally, we may drop the hypothesis that $V$ has pure $\partial$-radii, and only assume that $V$ has visible $\partial$-radii $R_\partial(V) = \omega \ee^s$.  Let $h$ denote the multiplicity of $R_\partial(V)$ in the multiset $\calR_\partial(V)$.  In this case, for any cyclic vector $\bbv \in V$, if we write the associated monic twisted polynomial as $X^d + a_1X^{d-1} + \cdots + a_d$, then $|a_i|\leq \ee^{-is}$ for $i \leq h$ and $|a_h| = \ee^{-ih}$.  Moreover, if $V_{\omega \ee^s}$ is the direct summand of $V$ with pure $\partial$-radii $\omega \ee^s$ as given by Theorem~\ref{T:decomp-over-field-complete}, then $\Theta_\partial(V_{\omega \ee^s})$ consists of the reduced roots of the polynomial $X^h + a_1X^{h-1} + \cdots + a_h = 0$.
\end{corollary}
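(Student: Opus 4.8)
The key observation is that when $V$ has pure visible $\partial$-radii, Lemma~\ref{L:good-norm-exists} (case $\bbb = \omega \ee^s \le \omega$) tells us that for \emph{any} cyclic vector $\bbv$ with associated monic twisted polynomial $P$, the norm obtained by declaring $\bbv, \partial\bbv, \dots, \partial^{d-1}\bbv$ orthogonal with $|\partial^i\bbv| = \omega^i(\bbb|u|)^{-i} = \ee^{is}|u|^{\text{-}i}\omega^{i}/\omega^{i}$... more precisely $|\partial^i\bbv| = (\omega/(\bbb|u|))^i = \ee^{is}$ up to the fixed rational parameter normalization, is a good norm on $V$. So the plan is: fix such a cyclic vector $\bbv$, use the associated orthogonal basis to compute $\Theta_\partial(V, |\cdot|)$ directly, and identify the resulting matrix of $\partial^r = \partial$ (here $r = r(\bbb) = 1$ since $\bbb < \omega$, so $N_r = N_1$ is just the companion-type matrix of $\partial$ on $\bbv, \partial\bbv, \dots$) with the companion matrix of $P$. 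Its reduced eigenvalues in $\kappa_{K^\alg}^{(s)}$ are precisely the reduced roots of $P$. Since $\Theta_\partial(V)$ is independent of the choice of good norm by Lemmas~\ref{L:refined-well-defined1} and~\ref{L:refined-well-defined2}, this shows the reduced roots of $P$ — for \emph{any} cyclic vector — agree with $\Theta_\partial(V)$. The nonvanishing of these elements in $\kappa_{K^\alg}^{(s)}$ follows because the Newton polygon of $P$ has pure slope $s$ (by Proposition~\ref{P:spec-norm-from-NP}, since $V$ has pure radii $R_\partial(V) = \omega\ee^s < \omega$, so $s > -\log\omega \ge -\log|\partial|_K$ is below the operator-norm threshold and every slope equals $s$): pure slope $s$ means $|a_d| = \ee^{-ds}$ exactly, so $0$ is not a root mod $\gothm^{(s)}$.

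For the ``more general'' statement, drop purity and apply Theorem~\ref{T:decomp-over-field-complete} to write $V = \bigoplus_{r'} V_{r'}$, with $V_{\omega\ee^s}$ the summand of pure radii $\omega\ee^s$ and $h = \dim V_{\omega\ee^s}$ the multiplicity of $R_\partial(V) = \omega\ee^s$ in $\calR_\partial(V)$. Since $\omega\ee^s = R_\partial(V)$ is the \emph{smallest} subsidiary radius (it equals the generic radius), $V_{\omega\ee^s}$ collects exactly the $h$ Jordan–Hölder constituents with the least radius. Now I would take the cyclic vector $\bbv$ of $V$ (which exists as $K$ has characteristic $0$) and analyze the Newton polygon of its twisted polynomial $P = X^d + a_1 X^{d-1} + \cdots + a_d$ via Proposition~\ref{P:spec-norm-from-NP}: the multiplicity of $s$ as a slope of $\NP(P)$ equals the multiplicity of $\omega\ee^s$ in $\gothR_\partial(V)$, which is $h$; and $s$ is the least slope (equivalently, $s \le -\log|\partial|_K$ and no smaller slope occurs, again by purity of radii of the bottom piece being the global minimum). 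Hence $\NP(P)$ starts with a segment of slope $s$ and horizontal length $h$: this forces $|a_i| \le \ee^{-is}$ for $i \le h$ and $|a_h| = \ee^{-hs}$ (I should fix the typo ``$\ee^{-ih}$'' to ``$\ee^{-hs}$''). The remaining slopes of $\NP(P)$ are all $> s$, i.e. $< -\log|\partial|_K$... wait, no: slopes $> s$ correspond to radii $> \omega\ee^s$, fine.

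The final and most delicate point is identifying $\Theta_\partial(V_{\omega\ee^s})$ with the reduced roots of the truncated polynomial $X^h + a_1 X^{h-1} + \cdots + a_h$. The plan here is to build a good norm on $V$ adapted to the decomposition: on $V_{\omega\ee^s}$ use the cyclic-vector norm as above (this piece is visible of pure radius, so Lemma~\ref{L:good-norm-exists} applies), and on the complementary summand $\bigoplus_{r' > \omega\ee^s} V_{r'}$ use any good norm for its (larger) pure-radii pieces — crucially this complementary norm has $|\partial| \le \omega(\bbb'|u|)^{-1}$ with $\bbb' > \omega\ee^s$, hence its matrix of $\partial$ has operator norm strictly smaller than $\ee^{-s}\cdot(\text{anything on the bottom piece})$, so when we pass to the orthogonal direct-sum norm and look at the matrix $N_1$ of $\partial$, it is block-upper-triangular with the $V_{\omega\ee^s}$-block dominating; its reduced eigenvalues in $\kappa_{K^\alg}^{(s)}$ are exactly those of the $V_{\omega\ee^s}$-block (the other block contributes reduced eigenvalue $0$, which doesn't appear since we only record nonzero reductions, or rather: the off-diagonal and lower block have entries of norm $< \ee^{-s}$ so they vanish in $\kappa^{(s)}$). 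Then I relate the characteristic polynomial of the $V_{\omega\ee^s}$-block, computed in the cyclic-vector basis, to the bottom slope-$s$ segment of $\NP(P)$ — this is essentially the factorization of twisted polynomials over the Newton polygon (Hensel-type factorization, cf. the decomposition underlying Theorem~\ref{T:decomp-over-field-complete}), which gives $P = P_{\text{low}} \cdot P_{\text{high}}$ with $P_{\text{low}}$ monic of degree $h$, pure slope $s$, and whose reduced roots are those of $X^h + a_1 X^{h-1} + \cdots + a_h$ modulo $\gothm^{(\cdot)}$. The main obstacle is making this last identification clean: one must check that reducing $P$ to its degree-$\le h$ truncation and reducing the genuine factor $P_{\text{low}}$ give the same multiset of reduced roots in $\kappa_{K^\alg}^{(s)}$, which follows because $P_{\text{high}}$ has constant term of norm $\ee^{-(d-h)s'}$ with $s' > s$... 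I would instead argue directly: the coefficients of $P_{\text{low}}$ agree with $a_1, \dots, a_h$ up to error of norm $< \ee^{-is}$ in degree $i$ (since $P_{\text{high}}$ has all slopes $> s$, its lower-degree coefficients are ``small''), and by the continuity of reduced roots under such perturbations (\cite[Theorem~4.2.2]{kedlaya-course}-type stability, as already used in Lemma~\ref{L:refined-well-defined1}) the reduced roots are unchanged. That stability argument is exactly the mechanism already deployed twice above, so it transfers without new ideas; the bookkeeping of which coefficients are ``small'' is the only real work.
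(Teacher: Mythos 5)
Your proposal is correct and follows essentially the same route as the paper: the pure visible case via the cyclic-vector good norm of Lemma~\ref{L:good-norm-exists} (companion matrix $=$ twisted polynomial, so reduced eigenvalues $=$ reduced roots), the coefficient bounds via Proposition~\ref{P:spec-norm-from-NP}, and the identification of $\Theta_\partial(V_{\omega\ee^s})$ via a low-slope/high-slope factorization $P=QR$ of the twisted polynomial (the paper cites Kedlaya's \emph{Semistable reduction III}, Corollary~3.2.4) combined with stability of reduced roots under perturbations of size $<\ee^{-is}$. The detour through an adapted direct-sum good norm on $V$ is superfluous --- since $V=V_{\omega\ee^s}\oplus(\text{rest})$ the matrix is block-\emph{diagonal}, not merely triangular, and in any case the paper goes straight from the factorization to the conclusion; the substance of the argument (factorization plus the perturbation stability already used in Lemma~\ref{L:refined-well-defined1}) matches yours. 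You also correctly spotted the typo $|a_h|=\ee^{-ih}$ in the statement, which should read $|a_h|=\ee^{-hs}$.
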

\begin{proof}
As already pointed out in Remark~\ref{R:refined-for-critical}, we emphasize again that the case $IR_\partial(V) = \omega$ is not included in the statement.
We first treat the case when $V$ has pure visible $\partial$-radii.
We can construct the good norm using the twisted polynomial as in Lemma~\ref{L:good-norm-exists}.  This twisted polynomial is then exactly the characteristic polynomial of the matrix of $\partial$ with respect to this basis.  The claim follows.

For $V$ not necessarily pure of $\partial$-radii, the bound for norms on $a_i$ follows from Proposition~\ref{P:spec-norm-from-NP}.  For the statement about refined $\partial$-radii, we need to dig into the proof of Theorem~\ref{T:decomp-over-field-complete} a bit more.  By \cite[Corollary~3.2.4]{kedlaya-semistable3}, we can write $P = QR$ where $Q$ and $R$ are monic twisted polynomials such that the Newton polygon of $Q = X^h + a'_1X^{h-1} + \cdots + a'_h$ has pure slopes $s$ and the Newton polygon of $R$ has slope strictly bigger than $s$.  Moreover, we have $V_{\omega \ee^s} = K\{T\} / QK\{T\}$.  The upshot is that the formal multiplication satisfies $|a_i- a'_i| < \ee^{is}$ for any $i = 1, \dots, h$. Therefore, the reduced roots of $X^h + a_1X^{h-1} + \cdots + a_h = 0$ are the same as the reduced roots of $X^h + a'_1X^{h-1} + \cdots + a'_h = 0$, which are the same as the elements of $\Theta_\partial(V)$ by the discussion in the previous paragraph.
\end{proof}

\begin{lemma}
\label{L:converse-good-norm}
Fix $\bbb \in (0,1)$ and set $r = r(\bbb), \lambda = \lambda(\bbb)$, and $s = -\log(\omega (\bbb|u|)^{-1})$.
Let $V'$ be a $\partial$-differential module of rank $d$, equipped with a basis $\underline \bbe$, with respect to which the action of $\partial$ satisfies the conditions in Definition~\ref{D:good-norm} with the chosen $\bbb$.  Assume that the reduced eigenvalues of the matrix $N_r \in \Mat(\gothm_K^{(rs)})$ of $\partial^r$ on $V'$ are all nonzero in $\kappa_{K^\alg}^{(rs)}$. Then $V'$ has pure intrinsic $\partial$-radii $\bbb$.  As a consequence,  $\Theta_\partial(V')$ is exactly the multiset of the reduced eigenvalues of $N$.
\end{lemma}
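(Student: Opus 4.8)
The plan is to extract from the hypothesis a lower bound on $|\partial^r|_{V'}$ matching the given $\bbb$, then combine it with the upper bound coming from the good-norm conditions to conclude that $V'$ has pure intrinsic $\partial$-radii $\bbb$; once purity is known, the identification of $\Theta_\partial(V')$ with the reduced eigenvalues of $N_r$ is immediate from Definition~\ref{D:good-norm} (the chosen basis $\underline\bbe$ is already a $\bbb$-good orthonormal basis, so no further change of basis is needed, and Lemmas~\ref{L:refined-well-defined1}--\ref{L:refined-well-defined2} guarantee independence of the answer). So the real content is the purity claim.

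First I would reduce to the visible case by $\partial$-Frobenius descent. If $\bbb < \omega$ there is nothing to reduce; otherwise, with $n$ as in the proof of Lemma~\ref{L:good-norm-exists} (so $\bbb \in (p^{-1/p^{n-1}(p-1)}, p^{-1/p^{n}(p-1)}]$ and $r = p^\lambda$), the good-norm conditions \eqref{E:good-norm} say precisely that, after passing to $K^{(\partial,n)}$ with $\tilde\partial = \partial/(p^n u^{p^n-1})$, the module $\varphi^{(\partial,n)}_* $ applied appropriately produces a candidate Frobenius antecedent $W$ whose matrix of $u^{p^n}\tilde\partial$ has norm $\le p^{-1/(p-1)}\bbb^{-p^n}$, i.e.\ $W$ carries a $\bbb^{p^n}$-good norm with $\bbb^{p^n}\in(p^{-p/(p-1)},p^{-1/(p-1)}]$. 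The nonvanishing of the reduced eigenvalues of $N_r$ on $V'$ translates (via the explicit relation $u^r\partial^r/r! \equiv (p^n u^{p^n}\tilde\partial)\cdot(\text{unit})$ modulo lower-order terms, as in the display in the proof of Lemma~\ref{L:good-norm-exists}) into nonvanishing of the reduced eigenvalues of the matrix of $u^{p^n}\tilde\partial$ on $W$. Thus it suffices to treat the case $n=0$, i.e.\ $\bbb\in(p^{-1/(p-1)},1)$ with $r=1$ together with the case $\bbb<\omega$ with $r=1$ — in both of which $r=1$ and the hypothesis is simply that the matrix $N=N_1$ of $\partial$ has all reduced eigenvalues nonzero in $\kappa_{K^\alg}^{(s)}$, while condition (i) (resp.\ (ii) with $r=1$) gives $|\partial|_{V'}\le \omega(\bbb|u|)^{-1} = \ee^{-s}$.

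For that base case I would pick a cyclic vector $\bbv\in V'$ (using $\Char K = 0$) with twisted polynomial $P = X^d + a_1 X^{d-1}+\cdots+a_d$, and compare Newton polygons. The upper bound $|\partial|_{V'}\le\ee^{-s}$ forces every slope of the Newton polygon of $P$ to be $\ge s$, hence $|a_i|\le \ee^{-is}$ for all $i$; by Proposition~\ref{P:spec-norm-from-NP} it then suffices to show all slopes equal $s$, equivalently $|a_d| = \ee^{-ds}$. To see this I would argue that the characteristic polynomial of $N$ and the twisted polynomial $P$ have the same reduced form in slope $s$: over an orthonormal basis the matrix of $\partial$ lies in $\Mat(\gothm_K^{(s)})$, its reduced characteristic polynomial is $\prod(X-\bar\alpha_i)$ with all $\bar\alpha_i\ne 0$ by hypothesis, so the reduced characteristic polynomial has nonzero constant term; and a cyclic-vector computation (or \cite[Theorem~4.2.2]{kedlaya-course} applied to the companion matrix versus $N$) shows the reduced characteristic polynomial of $N$ equals $X^d + \bar a_1^{(s)}X^{d-1}+\cdots+\bar a_d^{(s)}$, whence $\bar a_d^{(s)}\ne 0$, i.e.\ $|a_d| = \ee^{-ds}$. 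This gives pure intrinsic $\partial$-radii $\bbb$. Unwinding the Frobenius descent, the reduced eigenvalues of $N_r$ are exactly the $r$-th powers of the refined $\partial$-radii, so $\Theta_\partial(V')$ is the multiset of reduced eigenvalues of $N_r$, as claimed.

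The main obstacle I anticipate is bookkeeping in the Frobenius-descent step: one must check that the $\bbb$-good basis $\underline\bbe$ for $V'$ really does descend to a $\bbb^{p^n}$-good basis for an honest antecedent $W$ (not merely that the norms are bounded), and that the reduced-eigenvalue nonvanishing is preserved under the passage $N_r \rightsquigarrow (\text{matrix of } u^{p^n}\tilde\partial)$ — this uses that the "correction terms" $\partial^{r-i}(A)A^{-1}/(r-i)!$ type contributions are strictly smaller than the singular values of $N_r$, exactly as in the proofs of Lemmas~\ref{L:refined-well-defined1} and \ref{L:refined-well-defined2}, together with the nonvanishing hypothesis to rule out cancellation. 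Once that is in hand, the base-case Newton-polygon argument is routine.
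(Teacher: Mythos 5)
Your route --- Frobenius descent to a ``visible'' base case, then cyclic vector and Newton polygon --- is genuinely different from the paper's. The paper argues directly by contradiction: since the basis $\underline\bbe$ already gives $IR_\partial(V')\geq\bbb$, if $V'$ were not pure one applies Theorem~\ref{T:decomp-over-field-complete} and Lemma~\ref{L:good-norm-exists} to the Jordan--H\"older constituents to build a second $\bbb$-good basis $\underline\bbf$ whose matrix $\widetilde N_r$ of $\partial^r$ is degenerate modulo $\gothm_{K^\alg}^{(rs)+}$ (the constituent of strictly larger radius forces a zero reduced eigenvalue); the perturbation estimate from the proof of Lemma~\ref{L:refined-well-defined2} then shows $N_r$ and $\widetilde N_r$ have the same reduced eigenvalues, a contradiction. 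That argument is uniform in $\bbb$ and sidesteps Frobenius antecedents, cyclic vectors, and Newton polygons entirely.

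Your reduction has a genuine gap at the critical values of $\bbb$. After $n$-fold $\partial$-Frobenius descent one lands in $\bbb^{p^n}\in(p^{-p/(p-1)},p^{-1/(p-1)}]$, not $(p^{-1/(p-1)},1)$, and the right endpoint $\bbb^{p^n}=p^{-1/(p-1)}=\omega$ \emph{is} attained, precisely when $\bbb=p^{-1/p^{\lambda-1}(p-1)}$ for some $\lambda\geq 1$. There $r(\omega)=p$, not $1$, and as Remark~\ref{R:refined-for-critical} warns, the refined radii at $\omega$ are not read off from the Newton polygon of the twisted polynomial of a cyclic vector --- one must work with the matrix of $\partial^p$. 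So your stated base case ``$\bbb\in(p^{-1/(p-1)},1)$ with $r=1$'' cannot occur (if $r=1$ then $\bbb<\omega$), and the Newton-polygon computation does not cover the critical radius; you would need an extra step there, e.g.\ passing to the Frobenius descendant as in Proposition~\ref{P:refined-frob}(b) and Proposition~\ref{P:refined-properties-general}. The rest of your outline --- building the $\bbb^{p^n}$-good norm on the candidate antecedent $W$, comparing the rescaled cyclic basis against $\underline\bbe$ via the change-of-basis estimate, and reading purity off $|a_d|=\ee^{-ds}$ --- is sound in the genuinely visible range $\bbb^{p^n}<\omega$, but once the critical case is accounted for, the paper's single contradiction argument is considerably more economical.
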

\begin{proof}
Since $N_r \in \Mat(\gothm_K^{(rs)})$, we have $IR_\partial(V') \geq \bbb$.  Suppose that $V'$ does not have pure intrinsic  $\partial$-radii $\bbb$. Enlarging $K$ as in Remark~\ref{R:enlarge-K} if needed, we may apply Theorem~\ref{T:decomp-over-field-complete} and Lemma~\ref{L:good-norm-exists} to $V'$ and its Jordan-H\"older constituents to find a basis $\underline \bbf$ for which the conditions in Definition~\ref{D:good-norm} hold and the matrix  $\widetilde N_{r} \in \Mat(\gothm_K^{(rs)})$ of $\partial^r$ is degenerate modulo $\gothm_{K^\alg}^{(rs)+}$ (when identifying $\kappa_K^{(rs)}$ with $\kappa_K$).
Now, the same argument in  the proof of Lemma~\ref{L:refined-well-defined2} implies that $N_r$ and $\widetilde N_r$ must have the same multiset of reduced eigenvalues.  But zero is a reduced eigenvalue of $\widetilde N_r$ but not of $N_r$, which is a contradiction. Hence $V'$ has pure intrinsic $\partial$-radii $\bbb$.  The last statement follows from Definition~\ref{D:good-norm}.
\end{proof}

\begin{lemma}\label{L:refined-dual}
We have $\Theta_\partial(V^\dual)  = -\Theta_\partial(V) = \{-\theta\; | \; \theta \in \Theta_\partial(V)\}$.
\end{lemma}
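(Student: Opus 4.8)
The plan is to read off a good norm and a distinguished matrix for $V^\dual$ from one for $V$, and then compare reduced eigenvalues. Recall that if $V$ has pure intrinsic $\partial$-radii $IR_\partial(V) = \bbb < 1$, then by Lemma~\ref{L:basic-IR-proposition}(b) so does $V^\dual$, with the same value of $\bbb$; hence the dichotomy invariants $r = r(\bbb)$, $\lambda = \lambda(\bbb)$, $s = -\log|\partial|_{\sp,V}$ are the same for $V$ and $V^\dual$, and $\Theta_\partial(V^\dual)$ is well-defined. If $IR_\partial(V) = 1$ both sides are $\{0,\dots,0\}$ by the convention in Definition~\ref{D:good-norm}, so we may assume $\bbb < 1$.

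First I would pick (after the harmless enlargement of $K$ of Remark~\ref{R:enlarge-K}) an orthonormal basis $\underline\bbe = (e_1,\dots,e_d)$ of a good norm $|\cdot|_V$ on $V$, and take the dual basis $\underline\bbe^\dual = (e_1^\dual,\dots,e_d^\dual)$ of $V^\dual$. The dual norm (sup of $|f(e_i)|$) is again orthonormal with respect to $\underline\bbe^\dual$. Next I would compute the matrix of $\partial^r$ on $V^\dual$ in this dual basis. From the defining relation $(\partial f)(\bbv) = \partial(f(\bbv)) - f(\partial(\bbv))$ one gets, for the iterated operator, $(\partial^n f)(\bbv) = \sum_{i=0}^n \binom{n}{i}(-1)^i\,\partial^{n-i}\big(f(\partial^i \bbv)\big)$; dividing by $n!$ this reads $\frac{\partial^n f}{n!}(\bbv) = \sum_{i=0}^n (-1)^i\,\frac{\partial^{n-i}}{(n-i)!}\big(f(\frac{\partial^i \bbv}{i!})\big)$. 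In matrix terms, if $N_i$ denotes the matrix of $\partial^i$ on $V$ in $\underline\bbe$ and $N_i^\dual$ the matrix of $\partial^i$ on $V^\dual$ in $\underline\bbe^\dual$, this identity expresses $N_n^\dual/n!$ as $(-1)^n (N_n/n!)^{\mathrm T}$ plus a sum of terms each of which is a product involving some $N_{n-i}/(n-i)!$ with $0 < n-i < n$ and a derivative-of-scalar factor; more precisely one shows by induction that the matrix of $\partial^n$ on $V^\dual$ in the dual basis is $(-1)^n N_n^{\mathrm T}$ corrected by lower-order terms — or, cleanest, one uses that $\End(V) \cong V \otimes V^\dual$ carries the derivation $\partial$ and that the identity endomorphism is horizontal, which forces the matrix of $\partial$ on $V^\dual$ (acting on row vectors) to be $-N_1^{\mathrm T}$, and then iterate.

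The point is to apply this in the good-norm regime. By goodness of $|\cdot|_V$ (condition (iv) of Definition~\ref{D:good-norm}) we have $|N_i/i!| \le |\partial|_K^i$ for $i = 1,\dots,r-1$ and $|N_r/r!| \le p^{-1/(p-1)}(\bbb|u|)^{-r}$ (or the analogous bound when $\bbb < \omega$), and the same bounds hold for the transposed matrices since $|\cdot|$ on matrices is the sup of the entries. Feeding the formula for $\partial^n$ on $V^\dual$ into these estimates shows first that the dual norm is itself $\bbb$-good for $V^\dual$, and second that the matrix $N_r^\dual/r!$ of $\partial^r$ on $V^\dual$ differs from $(-1)^r (N_r/r!)^{\mathrm T}$ by a matrix of norm $\le |\partial|_K^r < \omega R_\partial(V)^{-r}$, which is smaller than every singular value of $N_r/r!$ (these singular values all equal $p^{-1/(p-1)}(\bbb|u|)^{-r}$-ish, by purity of the radii and Lemma~\ref{L:converse-good-norm}). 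Hence by \cite[Theorem~4.2.2]{kedlaya-course} the reduced eigenvalues of $N_r^\dual/r!$ coincide with those of $(-1)^r(N_r/r!)^{\mathrm T}$, i.e.\ with $(-1)^r$ times the reduced eigenvalues of $N_r/r!$. Taking $r$-th roots (no ambiguity, as noted after Definition~\ref{D:good-norm}) and using $((-1)^r\alpha)^{1/r} = -\alpha^{1/r}$ in characteristic $p$ gives $\Theta_\partial(V^\dual, |\cdot|^\dual) = -\Theta_\partial(V, |\cdot|)$, and by Lemmas~\ref{L:refined-well-defined1} and \ref{L:refined-well-defined2} this is independent of the choices, yielding $\Theta_\partial(V^\dual) = -\Theta_\partial(V)$.

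The main obstacle I anticipate is bookkeeping the lower-order correction terms in the formula for $\partial^n$ on $V^\dual$ carefully enough to confirm (a) that the dual norm is $\bbb$-good and (b) that the correction to $N_r^\dual/r!$ has norm strictly below the singular values of $N_r/r!$ — both of these are just the binomial/Leibniz estimates already used in the proofs of Lemmas~\ref{L:refined-well-defined1} and \ref{L:refined-well-defined2}, applied with transposes, so no genuinely new idea is needed; one could also shortcut the whole computation when $\bbb < \omega$ by invoking Corollary~\ref{C:refined-radii=reduced-roots} together with the fact that a cyclic vector for $V$ yields a cyclic vector for $V^\dual$ whose twisted polynomial is the ``formal adjoint'' of the original, whose roots are the negatives of the original roots.
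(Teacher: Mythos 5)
The paper's proof is literally ``This is straightforward,'' so there is no detailed argument to compare against; your proposal fills in exactly the computation the paper is gesturing at, and it is correct. Your matrix identity is right: with $\partial^i\underline\bbe = \underline\bbe N_i$ and the dual basis $\underline\bbe^\dual$, the $i=0$ term in the Leibniz expansion always vanishes since $N_0$ is the identity matrix (whose entries are killed by $\partial$), and the remaining correction terms for $i=1,\dots,r-1$ are bounded by $|\partial|_K^r$ exactly as in the proofs of Lemmas~\ref{L:refined-well-defined1} and~\ref{L:refined-well-defined2}, which is strictly below the (common) singular value $\omega R_\partial(V)^{-r}$ of $N_r/r!$. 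So \cite[Theorem~4.2.2]{kedlaya-course} applies, the dual norm is good, and the reduced eigenvalues of $N_r^\dual/r!$ coincide with those of $(-1)^r(N_r/r!)^{\mathrm T}$. The final sign manipulation $((-1)^r\alpha)^{1/r} = -\alpha^{1/r}$ holds in all cases (including $p=2$, where $-1=1$ anyway, and $p=0$, where $r=1$), so the conclusion $\Theta_\partial(V^\dual) = -\Theta_\partial(V)$ follows. Your ``$\End(V) = V\otimes V^\dual$ with the identity horizontal'' shortcut for $N_1^\dual = -N_1^{\mathrm T}$ is also correct, though for $r>1$ you still need to iterate and track corrections, so it does not save much work. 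The cyclic-vector/formal-adjoint shortcut you mention at the end, using Corollary~\ref{C:refined-radii=reduced-roots} in the visible range and then Frobenius descent, is probably what the author had in mind as ``straightforward''; it is a valid alternative route and a little lighter, but your direct matrix computation has the virtue of handling all the cases ($\bbb<\omega$, $\bbb\in[\omega,1)$, $p=0$) uniformly.
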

\begin{proof}
This is straightforward.
\end{proof}

We will prove in Theorem~\ref{T:refined-decomposition} a direct sum decomposition of $V$ parametrized by the multiset of refined $\partial$-radii.  For this, we
start with some basic properties of refined $\partial$-radii when $V$ has visible $\partial$-radii.

\begin{lemma}\label{L:interpretation-refined}
Let $V$ and $W$ be two $\partial$-differential modules over $K$ with pure and \emph{visible} $\partial$-radii $R_\partial(V) = R_\partial(W)$.  Then the following two statements are equivalent.
\begin{itemize}
\item[\emph{(1)}] The refined $\partial$-radii of $V$ and $W$ are distinct, i.e., $\Theta_\partial(V) \cap \Theta_\partial(W) = \emptyset$.
\item[\emph{(2)}] The tensor product $V \otimes W^\dual$ has pure $\partial$-radii $R_\partial(V)$.
\end{itemize}
Moreover, if either statement holds, we have an equality of multisets: $\Theta_\partial(V \otimes W^\dual) = \{\theta_1 - \theta_2 | \theta_1 \in \Theta_\partial(V), \theta_2 \in \Theta_\partial(W)\}$.
As a corollary, we have the following
\begin{itemize}
\item[\emph{(a)}] If $\Theta_\partial(V) \cap \Theta_\partial(W) = \emptyset$, then any homomorphism $f: W\rar V$ of $\partial$-differential modules is zero.
\item[\emph{(b)}] If $\Theta_\partial(W)$ has pure refined $\partial$-radii $\theta \in \kappa_{K^\alg}^{(s)}$, then $\theta \in \Theta_\partial(V)$ if and only if $V \otimes W^\dual$ does \emph{not} have pure $\partial$-radii $R_\partial(V)$.
\item[\emph{(c)}] If $\Theta_\partial(V)$ and $\Theta_\partial(W)$ both have the same pure $\partial$-radii $\theta \in \kappa_{K^\alg}^{(s)}$, then we have $R_\partial(V \otimes W^\dual) > R_\partial(V)$.
\end{itemize}
\end{lemma}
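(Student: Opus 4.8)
The plan is to do everything with good norms and one Kronecker‑sum computation. After enlarging $K$ as in Remark~\ref{R:enlarge-K} — which changes neither the subsidiary $\partial$-radii nor, by Remark~\ref{R:refined-in-K}, the refined $\partial$-radii — we may assume $V$ and $W$ carry orthonormal bases $\underline\bbe,\underline\bbf$ for good norms. Put $\bbb=IR_\partial(V)=IR_\partial(W)$ and $s=-\log(\omega(\bbb|u|)^{-1})$; visibility means $\bbb<\omega$, so $r(\bbb)=1$ and $s<-\log|\partial|_K$. Let $N^V,N^W\in\Mat(\gothm_K^{(s)})$ be the matrices of $\partial$; by Definition~\ref{D:good-norm} and Corollary~\ref{C:refined-radii=reduced-roots} their reduced eigenvalues in $\kappa_{K^\alg}^{(s)}$ are $\Theta_\partial(V)$, resp.\ $\Theta_\partial(W)$, all nonzero, so the characteristic polynomials of $N^V$ and $N^W$ have pure slope $s$. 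The dual basis $\underline\bbf^\dual$ is orthonormal for $W^\dual$, on which $\partial$ acts by $-(N^W)^{\mathrm T}$; hence $\underline\bbe\otimes\underline\bbf^\dual$ is orthonormal for the tensor-product norm on $V\otimes W^\dual$, on which $\partial$ acts by the Kronecker sum $N:=N^V\otimes\id-\id\otimes(N^W)^{\mathrm T}$. Since $|N|\le\ee^{-s}=\omega(\bbb|u|)^{-1}$, this norm is $\bbb$-good, and $IR_\partial(V\otimes W^\dual)\ge\bbb$ by Lemma~\ref{L:basic-IR-proposition}(b),(c).

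The crux is the computation of the reductions in $\kappa_{K^\alg}^{(s)}$ of the eigenvalues of $N$. Triangularizing $N^V$ and $N^W$ over $K^\alg$, the usual description of the spectrum of a Kronecker sum gives that the characteristic polynomial of $N$ factors over $K^\alg$ as $\prod_{i,j}(T-(\alpha_i-\beta_j))$, where $\alpha_i$ (resp.\ $\beta_j$) runs through the roots of the characteristic polynomial of $N^V$ (resp.\ $N^W$). Because $V$ and $W$ have pure $\partial$-radii $\omega\ee^s$, every $\alpha_i,\beta_j$ has absolute value $\ee^{-s}$, so the $\bar\alpha_i^{(s)}$ exhaust $\Theta_\partial(V)$ and the $\bar\beta_j^{(s)}$ exhaust $\Theta_\partial(W)$ (all nonzero); reduction being additive, the multiset of reductions of the eigenvalues of $N$ equals $\{\theta_1-\theta_2\mid\theta_1\in\Theta_\partial(V),\ \theta_2\in\Theta_\partial(W)\}$.

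Now the equivalence follows. If (1) holds, every $\theta_1-\theta_2$ is nonzero, so all reduced eigenvalues of $N$ are nonzero; Lemma~\ref{L:converse-good-norm}, applied with the basis $\underline\bbe\otimes\underline\bbf^\dual$ and $r=1$, shows $V\otimes W^\dual$ has pure intrinsic $\partial$-radii $\bbb$, i.e.\ pure $\partial$-radii $R_\partial(V)$ — which is (2) — and that $\Theta_\partial(V\otimes W^\dual)$ is exactly this multiset, giving the stated formula. Conversely, if (2) holds, $V\otimes W^\dual$ has pure visible $\partial$-radii, so by Corollary~\ref{C:refined-radii=reduced-roots} the reduced eigenvalues of $N$ are all nonzero, forcing $\Theta_\partial(V)\cap\Theta_\partial(W)=\emptyset$. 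Corollary (a): a nonzero $f\colon W\to V$ of $\partial$-differential modules is a nonzero element of $H^0_\partial(\Hom_K(W,V))=H^0_\partial(V\otimes W^\dual)$ and hence spans a trivial $\partial$-submodule, of intrinsic $\partial$-radius $1$; but (1)$\Rightarrow$(2) and Lemma~\ref{L:basic-IR-proposition}(a) force all intrinsic subsidiary radii of $V\otimes W^\dual$ to equal $\bbb<1$, a contradiction. Corollary (b) is immediate from the equivalence, since for pure $\Theta_\partial(W)=\{\theta\}$ one has $\Theta_\partial(V)\cap\Theta_\partial(W)=\emptyset$ if and only if $\theta\notin\Theta_\partial(V)$.

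For corollary (c), where $\Theta_\partial(V)=\Theta_\partial(W)=\{\theta\}$: by Lemma~\ref{L:basic-IR-proposition}(c) we have $R_\partial(V\otimes W^\dual)\ge R_\partial(V)=\omega\ee^s$, so it suffices to show that $\omega\ee^s$ does not occur among the subsidiary $\partial$-radii. After replacing $K$ by a finite extension over which $\theta$ has a representative $g$ in the residue field (harmless for all the data involved), form the rank-one $\partial$-module $W_0$ with $\partial\bbv=g\bbv$; then $\Theta_\partial(W_0)=\{\theta\}$, and for a cyclic vector $\bbe_1$ of $V$ the element $\bbe_1\otimes\bbv^\dual$ is cyclic for $V\otimes W_0^\dual$ with associated (ordinary) characteristic polynomial the shift $\chi_{N^V}(T+g)$, whose roots all reduce to $\theta-\theta=0$; hence by Proposition~\ref{P:spec-norm-from-NP} the value $\omega\ee^s$ is absent from $\gothR_\partial(V\otimes W_0^\dual)$, and with Lemma~\ref{L:basic-IR-proposition}(c) this gives $R_\partial(V\otimes W_0^\dual)>\omega\ee^s$, and likewise $R_\partial(W\otimes W_0^\dual)>\omega\ee^s$. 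Applying Lemma~\ref{L:basic-IR-proposition}(c) to $V\otimes W^\dual\cong(V\otimes W_0^\dual)\otimes(W\otimes W_0^\dual)^\dual$ then yields $R_\partial(V\otimes W^\dual)>R_\partial(V)$. The step I expect to be the real work is the Kronecker-sum reduction in the second paragraph — one must ensure that passing to eigenvalues commutes with reduction modulo $\gothm_{K^\alg}^{(s)+}$, which is precisely where purity of the radii of $V$ and $W$ (forcing pure slope $s$ of the two characteristic polynomials, so that nothing reduces accidentally to $0$) is indispensable; the auxiliary rank-one twist in (c) is needed because the perturbation argument underlying the well-definedness of refined radii degenerates once $0$ appears among the reduced eigenvalues.
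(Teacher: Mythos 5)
Your main argument (the Kronecker‑sum setup, the computation that the reduced eigenvalues of $N=N_0\otimes\id+\id\otimes N_1$ are the differences $\theta_1-\theta_2$, the use of Lemma~\ref{L:converse-good-norm} for $(1)\Rightarrow(2)$ and the ``moreover'' statement, and the use of Corollary~\ref{C:refined-radii=reduced-roots} for $(2)\Rightarrow(1)$) is essentially the paper's proof, phrased a bit more explicitly. Parts (a) and (b) are also handled exactly as in the paper.

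Where you diverge is part (c). You introduce an auxiliary rank-one module $W_0$ with pure refined $\partial$-radius $\theta$, argue via a cyclic vector that $R_\partial(V\otimes W_0^\dual)$ and $R_\partial(W\otimes W_0^\dual)$ both exceed $\omega\ee^s$, and then combine via $V\otimes W^\dual\cong(V\otimes W_0^\dual)\otimes(W\otimes W_0^\dual)^\dual$ and Lemma~\ref{L:basic-IR-proposition}(c). This works, but it is a detour: you already have in hand the Kronecker-sum matrix $N$ for a good norm, and in case (c) its reduced eigenvalues are all $\theta+(-\theta)=0$, so $\bar N^{(s)}$ is nilpotent; hence $N^n\in\Mat(\gothm_{K^\alg}^{(ns)+})$ once $n\geq\dim V\cdot\dim W$, which immediately gives $|\partial|_{\sp,V\otimes W^\dual}<\ee^{-s}$, i.e.\ $R_\partial(V\otimes W^\dual)>R_\partial(V)$. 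That is the paper's one-line argument. Your rank-one twist is the germ of the machinery used in the decomposition Theorem~\ref{T:refined-decomposition}, but it is overkill here. There is also a small imprecision in your version of (c): you feed the \emph{ordinary} characteristic polynomial $\chi_{N^V}(T+g)$ into Proposition~\ref{P:spec-norm-from-NP}, which is a statement about the \emph{twisted} polynomial of a cyclic vector. The two differ by terms involving $\partial g,\partial^2 g,\dots$; in the visible range $\ee^{-s}>|\partial|_K$ these correction terms are too small to affect the Newton polygon or the reduced roots, so the argument is repairable, but as written the step is not licensed by the proposition you cite. The direct nilpotency argument avoids the issue entirely.
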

\begin{proof}
By Lemma~\ref{L:refined-dual}, $\Theta_\partial(W^\dual) = -\Theta_\partial(W)$.
We may enlarge $K$ as in Remark~\ref{R:enlarge-K} so that we have good norms on both $V$ and $W^\dual$ given by orthonormal bases.  Equip $V \otimes W^\dual$ with the tensor product norm.  Let $N_0, N_1 \in \Mat(\gothm_{K^\alg}^{(s)})$ be the matrices of $\partial$ acting on $V$ and $W^\dual$ with respect to the given bases, respectively.  By Definition~\ref{D:good-norm}, $\Theta_\partial(V)$ and $-\Theta_\partial(W)$ are the multisets of reduced eigenvalues of $N_0$ and $ N_1$, respectively.  Then the multiset of  reduced eigenvalues of the matrix $N = N_0 \otimes 1 + 1 \otimes N_1$ is exactly $\{\theta_1 - \theta_2 | \theta_1 \in \Theta_\partial(V), \theta_2 \in \Theta_\partial(W)\}$.  

If (1) holds, then all reduced eigenvalues of $N$ are nonzero and hence $|N^n| = \ee^{-ns}$ for all $n \in \NN$.  Moreover, the reduction of $N^n$ in $M_d(\kappa_{K^\alg}^{(ns)})$ has full rank if we identify $\kappa_{K^\alg}^{(ns)}$ with $\kappa_{K^\alg}$.  Therefore, $V \otimes W^\dual$ has pure $\partial$-radii $R_\partial(V)$ by Lemma~\ref{L:converse-good-norm}, proving (2).

Conversely, if (2) holds, then the tensor product norm is a good norm on $V \otimes W^\dual$ already and the multiset of reduced eigenvalues of $N$ is the multiset of refined $\partial$-radii of $V \otimes W^\dual$.  By Corollary~\ref{C:refined-radii=reduced-roots}, $0 \notin \Theta_\partial(V \otimes W^\dual)$.  This implies (1).

We now prove (a).  Since $V \otimes W^\dual$ has pure $\partial$-radii $R_\partial(V)< \omega$, we have $H_\partial^0(V \otimes W^\dual) = 0$ and hence there is no nonzero homomorphism of $\partial$-differential modules from $W$ to $V$.

The statement (b) is just (a special case of) the inverse statement of $(1)\LRar(2)$.

For (c), we know that $N_0$ and $N_1$ have pure reduced eigenvalues $\theta$ and $-\theta$, respectively.  Hence $N = N_0 \otimes 1+ 1 \otimes N_1$ reduces to a matrix in $\kappa_{K^\alg}^{(s)}$ with zero eigenvalues (if we identify $\kappa_{K^\alg}^{(s)}$ with $\kappa_{K^\alg}$).  It is then nilpotent, i.e.,  $N^{n} \in \Mat \big(\gothm_{K^\alg}^{(ns)+} \big)$ for $n \geq \dim V \cdot \dim W$.  This implies that $R_\partial(V \otimes W^\dual) > R_\partial(V)$.
\end{proof}

\begin{lemma}\label{L:V-otimes-W-not-pure}
Let $V$ and $W$ be two $\partial$-differential modules over $K$.  Assume that $V$ has pure and visible $\partial$-radii and $R_\partial(V) < R_\partial(W)$.  Then $V \otimes W^\dual$ has pure $\partial$-radii $R_\partial(V)$ and multiset of refined $\partial$-radii is composed of $\dim W$ copies of  $\Theta_\partial(V \otimes W^\dual)$.
\end{lemma}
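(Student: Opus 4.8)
The plan is to reduce the statement to a computation with matrices of $\partial$ and then read off both assertions from Lemma~\ref{L:converse-good-norm}. Write $d_V = \dim V$, $d_W = \dim W$, $s = -\log|\partial|_{\sp, V} = -\log(\omega R_\partial(V)^{-1})$, and $\bbb = IR_\partial(V) = |\partial|_K \cdot R_\partial(V)$. Since $V$ has \emph{visible} $\partial$-radii we have $\bbb < \omega$, hence $\bbb \in (0,1)$, $r(\bbb) = 1$, and $|\partial|_{\sp, V} > |\partial|_K$, i.e. $\ee^{-s} > |\partial|_K$. Moreover $R_\partial(W^\dual) = R_\partial(W) > R_\partial(V)$ by Lemma~\ref{L:basic-IR-proposition}(b), so every subsidiary $\partial$-radius of $W^\dual$ is strictly larger than $R_\partial(V)$.

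First I would enlarge $K$ as in Remark~\ref{R:enlarge-K} so that $V$ and $W^\dual$ both acquire orthonormal bases $\underline\bbe$, $\underline\bbf$ for which the matrices $N_V$, $N_{W^\dual}$ of $\partial$ satisfy $|N_V| \le \ee^{-s}$ and $|N_{W^\dual}| < \ee^{-s}$. For $V$ this is simply a good norm, available by Lemma~\ref{L:good-norm-exists} (with $\bbb = IR_\partial(V) < 1$), and then $\Theta_\partial(V)$ is the multiset of reduced eigenvalues of $N_V$ in $\kappa_{K^\alg}^{(s)}$, all nonzero by Corollary~\ref{C:refined-radii=reduced-roots}. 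For $W^\dual$, which need not have pure $\partial$-radii, I would first decompose $W^\dual = \bigoplus_r (W^\dual)_r$ via Theorem~\ref{T:decomp-over-field-complete}; each block $(W^\dual)_r$ has $R_\partial \ge R_\partial(W^\dual) > R_\partial(V)$, and Lemma~\ref{L:good-norm-exists} together with Definition~\ref{D:good-norm} equips it with a norm for which $|\partial| \le \max\{\omega R_\partial(W)^{-1}, |\partial|_K\} < \ee^{-s}$ (the first bound for a visible block, the second for a block with $IR_\partial \ge \omega$, using $\ee^{-s} > |\partial|_K$); the orthogonal direct sum of these is the required norm on $W^\dual$.

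Next I would put on $V \otimes W^\dual$ the tensor product norm, so that $\underline\bbe \otimes \underline\bbf$ is orthonormal and the matrix of $\partial$ in it is $N = N_V \otimes I_{d_W} + I_{d_V} \otimes N_{W^\dual}$. Then $|N| \le \ee^{-s}$, so $\underline\bbe \otimes \underline\bbf$ satisfies the conditions of Definition~\ref{D:good-norm} for the parameter $\bbb$ (with $r(\bbb) = 1$ and associated value $s$). The one substantive point is that $|I_{d_V} \otimes N_{W^\dual}| < \ee^{-s}$, so this summand lies in $\Mat(\gothm_{K^\alg}^{(s)+})$ and $\overline N^{(s)} = \overline{N_V}^{(s)} \otimes I_{d_W}$; its eigenvalues over $\kappa_{K^\alg}$ (with multiplicity) are $d_W$ copies of those of $\overline{N_V}^{(s)}$, hence $d_W$ copies of $\Theta_\partial(V)$, and in particular none is zero. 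Lemma~\ref{L:converse-good-norm}, applied with this $\bbb$, $r = 1$ and $s$, then yields at once that $V \otimes W^\dual$ has pure intrinsic $\partial$-radii $\bbb$, i.e. pure $\partial$-radii $R_\partial(V)$, and that $\Theta_\partial(V \otimes W^\dual)$ is the multiset of reduced eigenvalues of $N$, namely $d_W$ copies of $\Theta_\partial(V)$. Since refined $\partial$-radii are insensitive to the enlargement of $K$ (Remark~\ref{R:refined-in-K}), this proves the lemma.

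The main obstacle is the bookkeeping required to produce the norm on $W^\dual$, as that module is not assumed to have pure $\partial$-radii while Lemma~\ref{L:good-norm-exists} is stated under Hypothesis~\ref{H:refined}; decomposing $W^\dual$ into pure blocks is what handles this. All the genuine content sits in the strict inequality $R_\partial(V) < R_\partial(W)$: it is exactly what makes $|N_{W^\dual}| < \ee^{-s}$, so that the $W^\dual$-direction disappears modulo $\gothm_{K^\alg}^{(s)+}$ and the refined $\partial$-radii of the tensor product are those of $V$ with every multiplicity scaled by $\dim W$.
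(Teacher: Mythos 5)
Your proof is correct and takes essentially the same route as the paper: equip $V$ and $W^\dual$ with orthonormal bases so that the matrix of $\partial$ on $W^\dual$ falls into $\Mat(\gothm_K^{(s)+})$, then read off the reduced eigenvalues of $N_V \otimes 1 + 1 \otimes N_{W^\dual}$. The paper first reduces to $W$ pure via Theorem~\ref{T:decomp-over-field-complete} and takes a $\bbb$-good norm with $\bbb = \min\{IR_\partial(W), \omega\}$, whereas you decompose $W^\dual$ into pure blocks and take the orthogonal-sum norm, explicitly closing the argument with Lemma~\ref{L:converse-good-norm} where the paper only says to proceed as in Lemma~\ref{L:interpretation-refined}; these are cosmetic differences.
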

\begin{proof}
By Theorem~\ref{T:decomp-over-field-complete}, we may assume that $W$ has pure $\partial$-radii.  By Lemma~\ref{L:good-norm-exists} we may find a $\bbb$-good norm on $W$ with $\bbb =\min\{IR_\partial(W), \omega\}> IR_\partial(V)$.

We proceed as in Lemma~\ref{L:interpretation-refined}.  If $N_0$ and $ N_1$ are the matrices of $\partial$ with respect to some orthonormal bases of $V$ and $W^\dual$, respectively, then we have $N_1 \in \Mat(\gothm_K^{(s)+})$ and that $N_0$ has pure reduced eigenvalue $\Theta_\partial(V)$.  Hence the multiset of reduced eigenvalues of $N_0 \otimes 1 + 1 \otimes N_1$ is simply composed of $\dim W$ copies of the set of reduced eigenvalues of $N_0$. The lemma follows.
\end{proof}

The refined $\partial$-radii of a non-visible $\partial$-differential module is closely related to the $\partial'$-radii of its Frobenius antecedent; this fact would allow us to reduce many computation to the visible case.  To establish this relation explicitly, it is more convenient to work with the refined \emph{intrinsic} $\partial$-radii.

\begin{proposition}\label{P:refined-frob}
Assume $p>0$.
Let $\varphi^{(\partial)}: K^{(\partial)} \rar K$ be the $\partial$-Frobenius with respect to the parameter $u$.
\begin{itemize}
\item[\emph{(a)}] Assume that $IR_\partial(V) \in(p^{-1/(p-1)}, 1)$, and then Lemma~\ref{L:frob-properties}(d) implies that $V = \varphi^{(\partial)*}W$ for some $\partial'$-differential module $W$ on $K^{(\partial)}$ such that $IR_{\partial'}(W) = IR_\partial(V)^p$.  We have
\[
\Theta_\partial(V) = \big\{ -(p\theta')^{1/p}\; \big|\; \theta' \in\Theta_{\partial'}(W) \big\}.
\]
\item[\emph{(b)}] Assume that $IR_\partial(V) = p^{-1/(p-1)}$, and then $\varphi^{(\partial)}_*(V)$ has pure intrinsic $\partial'$-radii $p^{-p/(p-1)}$. The elements in $\calI\Theta_{\partial'}(\varphi^{(\partial)}_*(V))$ can be grouped into $p$-tuples $\big(\frac\theta p, \frac{\theta+1}p, \dots, \frac{\theta + p-1}p\big)$ with $\theta \in \kappa_{K^\alg}$, and $\calI \Theta_\partial(V)$ is the multiset composed of
$(\theta^p - \theta)^{1/p} \in \kappa_{K^\alg}$
for each $p$-tuple above.
\item[\emph{(c)}] Assume $IR_{\partial}(V)< p^{-1/(p-1)}$. Then we have $\calI\Theta_{\partial'}(\varphi^{(\partial)}_*V) = \big\{\underbrace{p^{-1}\theta,\dots, p^{-1}\theta}_{p\textrm{ times}} \big| \theta \in \calI\Theta_\partial(V) \big\}$.
\end{itemize}
\end{proposition}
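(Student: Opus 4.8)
The plan is to run all three parts off one algebraic identity, valid in the twisted polynomial ring of $K$ (and of any $\partial$-Frobenius descendant),
\[
u^p\partial^p \;=\; \prod_{k=0}^{p-1}\bigl(u\partial-k\bigr),
\]
which follows from the Euler relation $(u\partial)\,u=u\,(u\partial+1)$ by conjugating $u\partial$ successively by powers of $u$; modulo $p$ its right-hand side becomes the Artin--Schreier polynomial $(u\partial)^p-(u\partial)$. Write $\vartheta:=u\partial$; since $\partial=pu^{p-1}\partial'$ one has $\vartheta=pu^{p}\partial'$, so $u^p\partial'=p^{-1}\vartheta$ and the \emph{intrinsic} refined radii on the $\partial'$-side are read off from reductions of the matrix of $\vartheta$, while on the $\partial$-side they are read off, through $u^p\partial^p$, from a polynomial in $\vartheta$. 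Throughout I will first adjoin $\zeta_p$ (a tame extension, harmless by Lemma~\ref{L:tame-extension}), so that $K/K^{(\partial)}$ is $\ZZ/p$-Galois with $K=K^{(\partial)}(u)$ and $u^p\in K^{(\partial)}$, and I will enlarge $K$ as in Remark~\ref{R:enlarge-K} whenever orthonormal good bases are convenient.

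For (a) I would transport a good $\partial'$-norm on $W$ to $V=W\otimes_{K^{(\partial)}}K$ and verify it is $IR_\partial(V)$-good; the point making this work is that on $V$ the transported norm has $|\partial|_V$ strictly below $|\partial|_K$, which keeps all the $|\partial^i/i!|_V$ under control. The arithmetic heart is the ``$p$-th power'' congruence $\partial^p\equiv-p\nabla\pmod{\text{small}}$, where $\nabla=\partial/(pu^{p-1})$ restricts to $\partial'$ on $W$, obtained from the displayed identity by using $\vartheta=pu^p\partial'$ to push $(u\partial)^p$ into the error and using that $\prod_{k=0}^{p-1}(X-k)-(X^p-X)$ is divisible by $pX$. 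Raising this to the $r'$-th power, with $r'=r(IR_{\partial'}(W))=r(IR_\partial(V))/p$, gives $\partial^{pr'}\equiv(-p)^{r'}\nabla^{r'}$ up to a small error, so that the matrix of $\partial^{pr'}$ on $V$ and $(-p)^{r'}$ times the matrix of $(\partial')^{r'}$ on $W$ have the same reduced eigenvalues by the perturbation results \cite[Theorems~4.2.2 and 4.4.2]{kedlaya-course}; extracting $(pr')$-th roots and using $(-1)^{1/p}=-1$ in characteristic $p$ yields $\Theta_\partial(V)=\{-(p\theta')^{1/p}:\theta'\in\Theta_{\partial'}(W)\}$, and Lemma~\ref{L:converse-good-norm} then confirms $V$ has pure intrinsic $\partial$-radii $IR_\partial(V)$ since the reduced eigenvalues are nonzero. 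I expect the error bookkeeping to be the main obstacle when $W$ is itself non-visible; there I would proceed by induction on $\lambda(IR_\partial(V))$, the base case $\lambda=1$ ($W$ visible) being the clean estimate above and the inductive step folding in (a) for $W$.

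For (b), Lemma~\ref{L:frob-properties}(e) gives that $\varphi^{(\partial)}_*V$ has pure \emph{visible} $\partial'$-radii $p^{-p/(p-1)}$, so its refined radii are visible-type data computable from first-power matrices. On $\varphi^{(\partial)}_*V$ the relation $\vartheta u=u(\vartheta+1)$ shows, upon reduction, that conjugation by $u$ carries $\bar\vartheta$ to $\bar\vartheta+1$; hence the eigenvalue multiset of $\bar\vartheta$ is invariant under translation by $\FF_p$, i.e.\ a disjoint union of cosets $\{\theta,\theta+1,\dots,\theta+p-1\}$, and dividing by $p$ this is exactly the asserted $p$-tuple structure of $\calI\Theta_{\partial'}(\varphi^{(\partial)}_*V)$. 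Applying the displayed identity to (a cyclic vector of) $V$ and reducing then identifies the common Artin--Schreier value $\theta^p-\theta$ on each coset with the corresponding reduced eigenvalue $\overline{u^pN_p}$ of $\partial^p$ in a good basis of $V$; since $\calI\Theta_\partial(V)$ consists of the $p$-th roots of these, one gets $(\theta^p-\theta)^{1/p}\in\calI\Theta_\partial(V)$, and a dimension count forces equality.

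For (c), both $V$ and $\varphi^{(\partial)}_*V$ are visible, so the refined radii are again reduced roots of twisted polynomials (Corollary~\ref{C:refined-radii=reduced-roots}); after reducing along Jordan--Hölder one works with a cyclic vector $\bbv$ of $V$ over $K$, which one checks remains cyclic for $\varphi^{(\partial)}_*V$ over $K^{(\partial)}$. Writing the matrix of $u^p\partial'=p^{-1}\vartheta$ in the basis $\bbv,u\bbv,\dots,u^{p-1}\bbv$, one sees that at the relevant graded level its characteristic polynomial is congruent to $\bigl(X-p^{-1}\theta\bigr)^{p}$ with $\theta\in\calI\Theta_\partial(V)$: the reduced eigenvalues of $\partial$ on $V$ (which have norm strictly above $|u|^{-1}$) swamp the integer shifts $0,\dots,p-1$ produced by $\vartheta(u^j\bbv)=ju^j\bbv+\cdots$, so all $p$ reduced eigenvalues collapse to $p^{-1}\theta$, which is the claim. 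Across (b) and (c) the genuine difficulty is not conceptual but a matter of choosing lattices simultaneously adapted to $\partial'$ over $K^{(\partial)}$ and to $\partial$ over $K$, so as to compare the reduction over $K$ (which defines $\calI\Theta_\partial(V)$) with the reduction over $K^{(\partial)}$ (which defines $\calI\Theta_{\partial'}$); this forces a split into the cases $|u|\in|K^{(\partial)\times}|$ (residue extension purely inseparable of degree $p$) and $|u|\notin|K^{(\partial)\times}|$ ($K/K^{(\partial)}$ totally ramified), and it is the identity $u^p\partial^p=\prod_{k=0}^{p-1}(u\partial-k)$ that makes both cases deliver the same formula.
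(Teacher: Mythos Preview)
Your approach is essentially the paper's: both pivot on the Euler identity $u^{p^\lambda}\partial^{p^\lambda}=\prod_{k=0}^{p^\lambda-1}(u\partial-k)$ together with $u\partial=pu^p\partial'$. The differences are in execution, and in each case the paper is more direct than what you outline.

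For (a), the paper does not induct on $\lambda$. It writes the full identity
\[
u^{p^\lambda}\partial^{p^\lambda}
=\prod_{k=0}^{p^\lambda-1}(pu^p\partial'-k)
=p^{p^{\lambda-1}}\,u^{p^\lambda}(\partial')^{p^{\lambda-1}}\cdot\!\!\prod_{\substack{1\le i<p^\lambda\\ p\nmid i}}\!\!(pu^p\partial'-i),
\]
and uses only the single bound $|u^p\partial'|_W<p$ (from $IR_{\partial'}(W)>p^{-p/(p-1)}$) to see that each factor with $p\nmid i$ reduces to $-i$. So the matrix of $\partial^{p^\lambda}$ is congruent, up to the correct graded level, to $(-1)^{p^{\lambda-1}(p-1)}(p!)^{p^{\lambda-1}}(\partial')^{p^{\lambda-1}}$, and Wilson's theorem collapses the constant to $-p$ inside the $p$-th root. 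Your ``error bookkeeping'' worry for $W$ non-visible evaporates: one estimate handles all $\lambda$ simultaneously.

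For (b), your $p$-tuple argument via the conjugation $\vartheta u=u(\vartheta+1)$ is a clean alternative; the paper instead cites Lemma~\ref{L:frob-properties}(c) (the isomorphisms $\varphi^{(\partial)}_*V\otimes W_i^{(\partial)}\cong\varphi^{(\partial)}_*V$, where $W_i^{(\partial)}$ has refined intrinsic radius $i/p$) together with Lemma~\ref{L:interpretation-refined} on refined radii of tensor products. The Artin--Schreier computation at the end is the same.

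For (c), your plan with cyclic vectors and the basis $\bbv,u\bbv,\dots,u^{p-1}\bbv$ works, but the paper's argument is a one-liner: since $u^p\partial'=p^{-1}u\partial$ as operators, a good norm on $\varphi^{(\partial)}_*V$ immediately gives
\[
\calI\Theta_{\partial'}(\varphi^{(\partial)}_*V)
=\tfrac{1}{p}\,\calI\Theta_\partial\bigl(\varphi^{(\partial)*}\varphi^{(\partial)}_*V\bigr)
=\tfrac{1}{p}\,\calI\Theta_\partial(V^{\oplus p}),
\]
using $\varphi^{(\partial)*}\varphi^{(\partial)}_*V\cong V^{\oplus p}$ from Lemma~\ref{L:frob-properties}(b). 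There is no need to split into the cases $|u|\in|K^{(\partial)\times}|$ or not, nor to worry about lattice compatibility.
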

\begin{proof}
(a) By Lemma~\ref{L:good-norm-exists} and by possibly enlarging $K$ in the sense of Remark~\ref{R:enlarge-K}, we can take an orthonormal basis $\underline \bbe$ on $W$ which defines a good norm.  The norm induces a good norm on $V$ by the explicit construction in Lemma~\ref{L:good-norm-exists}.  Let $\lambda$ and $r$ be as in Notation~\ref{N:dichotomy}.  We have
\begin{align*}
u^{p^\lambda}\partial^{p^\lambda} &= u\partial(u\partial - 1) \cdots (u\partial - p^\lambda+1) = pu^p\partial' (pu^p\partial' -1) \cdots (pu^p \partial' - p^\lambda+1) \\
&= p^{p^{\lambda-1}} u^{p^\lambda}\partial'^{p^{\lambda-1}} \prod_{i = 1, p\nmid i}^{p^\lambda-1} (pu^p\partial' -i);
\end{align*}
this operator also acts on $W$.
Since $|u^p\partial'|_W \leq \max\{1, p^{-1/(p-1)} IR_{\partial'}(W)\} < p$, we have
\[
\big| u^{p^\lambda}\partial^{p^n} - p^{p^{\lambda-1}} \big((-1) \cdots (-p+1)\big)^{p^{\lambda-1}} u^{p^\lambda}\partial'^{p^\lambda}\big|_W < \big| u^{p^n\lambda}\partial^{p^\lambda} \big|_W.
\]
Therefore, the matrix of $\partial^{p^\lambda}$ with respect to $\underline \bbe$ is congruent to the matrix of $(-1)^{p^{\lambda-1}(p-1)}(p!)^{p^{\lambda-1}}\partial'^{p^{\lambda-1}}$ modulo $\gothm_K^{(p^\lambda s)+}$.  We then must have 
\[
\Theta_\partial(V, |\cdot|) = \big\{ \big((-1)^{(p-1)}(p!)\theta \big)^{1/p} \big| \theta \in\Theta_{\partial'}(W) \big\} = \big\{ -\big(p \theta \big)^{1/p}\big| \theta \in\Theta_{\partial'}(W) \Big\}
.\]

(b)
When $IR_\partial(V) = p^{-1/(p-1)}$,  Lemma~\ref{L:frob-properties}(e) implies that $\varphi^{(\partial)}_* V$ has pure intrinsic $\partial'$-radii $p^{-p/(p-1)}$.  By Lemma~\ref{L:frob-properties}(e) and Lemma~\ref{L:interpretation-refined}, the elements in $\calI\Theta_{\partial'}(\varphi^{(\partial)}_*(V))$ can be grouped into $p$-tuples $\big(\frac\theta p, \frac{\theta+1}p, \dots, \frac{\theta + p-1}p\big)$ with $\theta \in \kappa_{K^\alg}$.  By possibly enlarging $K$ in the sense of Remark~\ref{R:enlarge-K}, we may assume that $\varphi^{(\partial)}_*V$ admits a good norm defined by an orthonormal basis $\underline \bbe$.  Let $N$ be the matrix of $pu^p\partial'$ with respect to $\underline \bbe$.  Then $u^p\partial^p$ acts on $\varphi^{(\partial)*}\varphi^{(\partial)}_*(V) = V^{\oplus p}$ as per description
\[
u\partial(u\partial-1) \cdots (u\partial-p+1) = pu^p\partial'(pu^p\partial' -1) \cdots (pu^p\partial' - p+1).
\]
Hence the matrix for this action with respect to $\underline \bbe$ is congruence to $N(N-1) \cdots (N-p+1)$ modulo $p \calO_{K^{(\partial)}}$ since $|pu^p\partial'|_{K^{(\partial)}} = p^{-1}$; then the multiset of its reduced eigenvalues is composed of $\theta^p - \theta$ with multiplicity $p$ for each tuple $(\frac \theta p, \frac {\theta+1}p, \dots, \frac{\theta+p-1}p)$ in the multiset of reduced eigenvalues of $N$.  The statement follows.

(c) By Lemma~\ref{L:frob-properties}(e), $\varphi^{(\partial)}_*V$ has pure intrinsic $\partial'$-radii $p^{-1}IR_\partial(V) \leq p^{-p/(p-1)}$.  Since $u^p \partial' = u\partial/p$, we can take a good norm of $\varphi^{(\partial)}_*V$ and deduce that $\calI\Theta_{\partial'}(\varphi^{(\partial)}_*V)
= \frac 1p\calI\Theta_\partial \big(\varphi^{(\partial)*} \varphi^{(\partial)}_*V\big)$, which in turn equals $\frac 1p\calI\Theta_\partial(V^{\oplus p})$ by Lemma~\ref{L:frob-properties}(b).  The statement follows.
\end{proof}

\begin{proposition}\label{P:refined-properties}
Lemma~\ref{L:interpretation-refined} holds with only assuming $IR_\partial(V) =IR_\partial(W)<1$ instead of the visible hypothesis. Similarly, Lemma~\ref{L:V-otimes-W-not-pure} holds with only assuming $IR_\partial(V) <1$ instead of the visible hypothesis on $V$.
\end{proposition}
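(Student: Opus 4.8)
The plan is to bootstrap from the visible case, where the two assertions are exactly Lemma~\ref{L:interpretation-refined} and Lemma~\ref{L:V-otimes-W-not-pure}, by peeling off Frobenius. First I would use Theorem~\ref{T:decomp-over-field-complete}, Lemma~\ref{L:basic-IR-proposition}(a), and the additivity of $\Theta_\partial$ under direct sums to reduce both statements to the case where $V$ and $W$ have \emph{pure} intrinsic $\partial$-radii; in the setting of Lemma~\ref{L:V-otimes-W-not-pure} one decomposes $W$, each constituent still having radii $\geq R_\partial(W) > R_\partial(V)$. If $IR_\partial(V) < \omega$ there is nothing to prove, so set $\bbb := IR_\partial(V) \geq \omega$ and $\lambda = \lambda(\bbb)$, and proceed by induction on $\lambda$, the base $\lambda = 0$ being the visible case already cited.

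\emph{Case $\bbb > \omega$.} Here $IR_\partial(V) > p^{-1/(p-1)}$, and likewise $IR_\partial(W) > p^{-1/(p-1)}$ (it equals $IR_\partial(V)$ in Lemma~\ref{L:interpretation-refined} and strictly exceeds it in Lemma~\ref{L:V-otimes-W-not-pure}); hence Lemma~\ref{L:frob-properties}(d) furnishes $\partial$-Frobenius antecedents $\widetilde V, \widetilde W$ over $K^{(\partial)}$ with $IR_{\partial'}(\widetilde V) = IR_\partial(V)^p$, $IR_{\partial'}(\widetilde W) = IR_\partial(W)^p$, both exceeding $p^{-p/(p-1)}$, and with $\lambda(IR_{\partial'}(\widetilde V)) = \lambda - 1$. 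Since $\varphi^{(\partial)*}$ is a tensor functor compatible with duals, $V \otimes W^\dual \cong \varphi^{(\partial)*}(\widetilde V \otimes \widetilde W^\dual)$, and once we know $\widetilde V \otimes \widetilde W^\dual$ has pure $\partial'$-radii the uniqueness in Lemma~\ref{L:frob-properties}(d) identifies it as the antecedent of $V \otimes W^\dual$. I would then apply the induction hypothesis to $\widetilde V, \widetilde W$ and transport the conclusions: the statements about $\partial$-radii come from Lemma~\ref{L:frob-properties}(a), which is an equality here because every subsidiary radius in play exceeds $p^{-p/(p-1)}$, so $r' \mapsto (r')^{1/p}$ is an order isomorphism on them; the statements about refined $\partial$-radii come from Proposition~\ref{P:refined-frob}(a), using that $\theta' \mapsto -(p\theta')^{1/p}$ is injective on $\kappa_{K^\alg}^{(\cdot)}$ and, since the $p$-th power is a field endomorphism there, sends $\theta_1' - \theta_2'$ to $(p\theta_1')^{1/p} - (p\theta_2')^{1/p}$; this matches the difference formula and the ``$\dim W$ copies'' formula on both sides.

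\emph{Case $\bbb = \omega$.} Now no antecedent exists, so I would instead pass to the $\partial$-Frobenius descendants $\varphi^{(\partial)}_* V$ and $\varphi^{(\partial)}_* W$, which by Lemma~\ref{L:frob-properties}(e) have pure \emph{visible} $\partial'$-radii $p^{-p/(p-1)}$. Applying the visible Lemma~\ref{L:interpretation-refined} (resp.\ Lemma~\ref{L:V-otimes-W-not-pure}) to these, together with the identity $\varphi^{(\partial)}_* V \otimes \varphi^{(\partial)}_*(W^\dual) \cong \bigl(\varphi^{(\partial)}_*(V \otimes W^\dual)\bigr)^{\oplus p}$ from Lemma~\ref{L:frob-properties}(c), one reads off the $\partial'$-radii and refined $\partial'$-radii of $\varphi^{(\partial)}_*(V \otimes W^\dual)$; then Lemma~\ref{L:frob-properties}(e) and Proposition~\ref{P:refined-frob}(b) translate these back to $V \otimes W^\dual$. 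Finally, corollaries (a)--(c) of Lemma~\ref{L:interpretation-refined} carry over: (a) and (b) are formal from the equivalence $(1)\Leftrightarrow(2)$ together with the fact that pure $\partial$-radii $R_\partial(V)$ on $V \otimes W^\dual$ forces $H^0_\partial(V \otimes W^\dual) = 0$ (because $IR_\partial(V) < 1$), and (c) follows by applying the equivalence and then running the same Frobenius reduction once more on $V \otimes W^\dual$.

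The step I expect to be the main obstacle is the critical case $\bbb = \omega$, precisely the need to \emph{invert} Proposition~\ref{P:refined-frob}(b): the passage between refined radii upstairs and downstairs is mediated by the Artin--Schreier-type map $\theta \mapsto (\theta^p - \theta)^{1/p}$ and a grouping into $p$-tuples $\bigl(\tfrac{\theta}{p}, \dots, \tfrac{\theta+p-1}{p}\bigr)$, which is neither injective nor additive. Hence the clean conclusions for $V \otimes W^\dual$ must be extracted from the corresponding ones for $\varphi^{(\partial)}_*(V \otimes W^\dual)$ by carefully tracking how these $p$-tuples split and recombine under $\otimes$; in particular one must verify that \emph{disjointness} (resp.\ \emph{coincidence with the correct multiplicities}) of the $\Theta_{\partial'}$-multisets of $\varphi^{(\partial)}_* V$ and $\varphi^{(\partial)}_* W$ is genuinely equivalent to, rather than merely implied by, the desired statement about $\Theta_\partial(V)$ and $\Theta_\partial(W)$.
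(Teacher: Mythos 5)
Your proposal is correct and follows essentially the same route as the paper: reduce to pure radii, peel off Frobenius antecedents when $IR_\partial(V) > p^{-1/(p-1)}$ via Proposition~\ref{P:refined-frob}(a), and handle the critical case $IR_\partial(V) = p^{-1/(p-1)}$ via Frobenius descendants using Proposition~\ref{P:refined-frob}(b) together with Lemma~\ref{L:frob-properties}(c). Your induction on $\lambda(\bbb)$ simply makes explicit what the paper leaves implicit when citing the antecedent case.

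The ``main obstacle'' you flag in the $\bbb = \omega$ case does resolve cleanly, and it is worth recording why. Suppose $V$ and $W$ have pure refined $\partial$-radii attached (via Proposition~\ref{P:refined-frob}(b)) to $p$-tuples $\bigl(\tfrac{\theta_V}{p}, \dots, \tfrac{\theta_V+p-1}{p}\bigr)$ and $\bigl(\tfrac{\theta_W}{p}, \dots, \tfrac{\theta_W+p-1}{p}\bigr)$ inside $\calI\Theta_{\partial'}$ of the descendants. If the descendant multisets intersect, then $\theta_V - \theta_W \in \FF_p$, whence $\theta_V^p - \theta_V = \theta_W^p - \theta_W$ (since $c^p = c$ for $c \in \FF_p$), so the corresponding elements of $\Theta_\partial(V)$ and $\Theta_\partial(W)$ agree. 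Conversely, if $(\theta_V^p - \theta_V)^{1/p} = (\theta_W^p - \theta_W)^{1/p}$, then $(\theta_V - \theta_W)^p = \theta_V - \theta_W$, so $\theta_V - \theta_W \in \FF_p$ and the $p$-tuples coincide as sets. Thus disjointness (and coincidence with multiplicities) really is equivalent upstairs and downstairs, because the fibers of the Artin--Schreier map $\theta \mapsto \theta^p - \theta$ are exactly $\FF_p$-translates, which is precisely the grouping into $p$-tuples coming from Lemma~\ref{L:frob-properties}(c) via the $W_i^{(\partial)}$-twists. (Also note a small sign slip in your visible-case reduction: the map is $\theta' \mapsto -(p\theta')^{1/p}$, so differences go to $-(p\theta_1')^{1/p} + (p\theta_2')^{1/p}$, which is what additivity of the $p$-th root gives; the conclusion is unaffected.)
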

\begin{proof}
It suffices to check the remaining cases: $p>0$ and $IR_\partial(V) \geq p^{-1/(p-1)}$.  
If $IR_\partial(V) > p^{-1/(p-1)}$, the statements for $V$ and $W$ follow from the statements on their $\partial$-Frobenius antecedents by Proposition~\ref{P:refined-frob}(a).  If $IR_\partial(V) = p^{-1/(p-1)}$, the statements for $V$ and $W$ follow from the statements on their $\partial$-Frobenius descendants by Proposition~\ref{P:refined-frob}(b) and Lemma~\ref{L:frob-properties}(c).
\end{proof}

The following is an example of $\partial$-differential modules with pure refined $\partial$-radii.  It will serve as a testing object later.  

Our convention is to use Gothic letter $\goths$ instead of $s$ when discussing intrinsic radii of convergence; we will never use both $s$ and $\goths$ together.

\begin{example}\label{Ex:pure-refined}
Fix $\goths \in -\log |K^\times|^\QQ$ such that $\goths<0$ if $p=0$, and $\goths<\frac 1p \log p$ if $p>0$.  Let $\theta \in \kappa_{K^\alg}^{(\goths)}$ be a nonzero element.
\begin{itemize}
\item[(1)]  If $p=0$, then we have $\goths \in -\log |(K')^\times|$ and $\theta \in \kappa_{K'}^{(\goths)}$ for some finite \emph{tamely ramified} extension $K'$ of $K$.  Let $x$ be a lift of $\theta$ to $ \gothm_{K'}^{(\goths)} $. Put $d=1$ and $n=0$.
\item[(2)] If $p>0$, there exists $n\in\NN$ such that $\theta^{p^n} \in (\kappa_{K'}^{(p^{n-1}\goths)})^p$ with $p^{n-1}\goths \in -\log|(K')^\times|$ for some finite \emph{tamely ramified} extension $K'$ of $K$.  By Lemma~\ref{L:Frob-does-to-kappa}, we may find a lift
$x \in u^{-p^n}\gothm_{K'^{(\partial)}}^{(p^n\goths)}$ of $u^{-p^n}\theta^{p^n}$, where the extra $u^{-p^n}$ reflects the different normalizations of refined intrinsic $\partial$-radii and refined $\partial$-radii. Put $d=p^n$.
\end{itemize}

Let $\calL_{x, (n)}$ denote  the  $\partial$-differential module over $K'$ of rank $d$ with basis $\{\bbe_1, \dots, \bbe_d\}$, where the $\partial$-action is given by $\partial \bbe_i = \bbe_{i+1}$ for $i = 1, \dots, d-1$ and $\partial \bbe_d = x\bbe_1$.
\end{example}

\begin{remark}\label{R:pure-refined-local}
When $p>0$, we point out that $\mathfrak s < \frac 1p \log p$ also includes some part of the non-visible range.
The restriction $\goths <\frac 1p \log p$ in Example~\ref{Ex:pure-refined} is linked with the choice $x \in u^{-p^n}\gothm_{K'^{(\partial)}}^{(p^n\goths)}$.  In general, we may extend the range of $\goths$ to be $(-\infty, (\frac 1{p-1} - \frac 1{p^c(p-1)})\log p)$ for some $c \in \NN$, but the price we pay is to take $x \in u^{-p^n}\gothm_{K'^{(\partial, c)}}^{(p^n\goths)}$ lifting $u^{-p^n}\theta^{p^n}$ for some $n \in\NN$ and some finite tamely ramified extension $K'$ of $K$.  However, as $c$ gets larger, we need to enlarge $n$ to guarantee the existence of such a lift $x$.  This is why we may not assume that $\goths<\frac 1{p-1}\log p$.
\end{remark}

\begin{remark}
In the non-visible case, one can construct a $\partial$-differential module with pure refined  $\partial$-radii by simply pulling back a $\partial'$-differential module over $K^{(\partial)}$ with appropriate refined  $\partial'$-radii.  However, the action of $\partial$ is not in a form that works nicely when we study the one-dimensional variation of refined  $\partial$-radii later.  We will construct Example~\ref{Ex:pure-IR-1-dim}, a family version of Example~\ref{Ex:pure-refined}, which looks similar in both visible and \emph{non-visible} ranges.
\end{remark}

\begin{lemma}\label{L:pure-refined}
Keep the notation as in Example~\ref{Ex:pure-refined}.  Then $\calL_{x, (n)}$ has pure intrinsic $\partial$-radii $IR_\partial(\calL_{x, (n)}) = \omega \ee^\goths$ and pure refined intrinsic $\partial$-radii $\theta$.
\end{lemma}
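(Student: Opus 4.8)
\noindent The vector $\bbe_1$ is a cyclic vector for $\calL_{x,(n)}$: one has $\partial^{i-1}\bbe_1=\bbe_i$ for $1\le i\le d$ and $\partial^d\bbe_1=x\bbe_1$, so $\calL_{x,(n)}\cong K'\{T\}/K'\{T\}(T^d-x)$ as $\partial$-differential modules, where $d=1$ if $p=0$ and $d=p^n$ if $p>0$; the construction pins down $|x|=\ee^{-ds}$, where $s:=\goths+\log|u|$ is the quantity attached in Hypothesis~\ref{H:refined} to $\bbb:=\omega\ee^\goths$. The plan is to split along the dichotomy of Notation~\ref{N:dichotomy}: the \emph{visible} case $\goths<0$, where $r(\bbb)=1$, and the \emph{non-visible} case $\goths\ge 0$, which forces $p>0$ and, since $\goths<\tfrac1p\log p$, falls in the range $r(\bbb)=p$.

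In the visible case the Newton polygon of $T^d-x$ is the single segment of slope $s<\log|u|=-\log|\partial|_K$ and horizontal length $d$. Proposition~\ref{P:spec-norm-from-NP} then gives $|\partial|_{\sp,\calL_{x,(n)}}=\ee^{-s}$ with all subsidiary radii equal, so $\calL_{x,(n)}$ has pure intrinsic $\partial$-radii $IR_\partial(\calL_{x,(n)})=\omega\ee^\goths$. By Corollary~\ref{C:refined-radii=reduced-roots}, $\Theta_\partial(\calL_{x,(n)})$ is the multiset of reduced roots of $T^d-x$; since (for $p>0$) every $p^n$-th root of unity reduces to $1$, and trivially when $d=1$, these reduced roots all coincide with $\overline{x^{1/d}}^{(s)}$, so $\Theta_\partial(\calL_{x,(n)})$ is pure. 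It remains to pass to the intrinsic normalization: $\calI\Theta_\partial(\calL_{x,(n)})=u\,\Theta_\partial(\calL_{x,(n)})=\{\,\overline{u\,x^{1/d}}^{(\goths)}\,\}$, and since $u^{p^n}x$ reduces to $\theta^{p^n}$ at level $p^n\goths$ and $p^n$-th roots are unique in characteristic $p$, this multiset equals $\{\theta\}$.

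In the non-visible case I would, after enlarging $K'$ as in Remark~\ref{R:enlarge-K}, rescale the staircase basis to an orthonormal basis $\bbe_1,\dots,\bbe_d$ with $|\bbe_i|=\ee^{-(i-1)s}$, and check that it defines a $\bbb$-good norm. The bounds on $|\partial^j/j!|_V$ for $1\le j\le p$ required by Definition~\ref{D:good-norm} follow from the rational-type estimates $|\partial^k/k!|_K\le|\partial|_K^k$ together with the sharper bound $|\partial^k x|\le p^{-1}|u|^{-k}|x|$ — which holds because $x\in u^{-p^n}\gothm_{K'^{(\partial)}}^{(p^n\goths)}$, so $u^{p^n}x$ lies in the $\partial$-Frobenius fixed field of $K'$ and hence $|\partial(u^{p^n}x)|\le p^{-1}|u|^{-1}|u^{p^n}x|$ — and the inequality $p^{-1}\ee^{(p-1)\goths}\le 1$, which is exactly where $\goths<\tfrac1p\log p$ is used. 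With respect to this basis the matrix $N_p$ of $\partial^p$ reduces, modulo $\gothm_{K'^\alg}^{(ps)+}$, to a monomial matrix whose underlying permutation is ``add $p$ modulo $d=p^n$''; this permutation has $p$ cycles, each of length $p^{n-1}$, and a telescoping computation shows the product of the reduced scalars around each cycle is $\overline{x}^{(p^ns)}$. Hence every reduced eigenvalue of $N_p$ equals $(\overline{x}^{(p^ns)})^{1/p^{n-1}}\in\kappa_{K'^\alg}^{(ps)}$, and these are nonzero. Lemma~\ref{L:converse-good-norm} then yields both that $\calL_{x,(n)}$ has pure intrinsic $\partial$-radii $\omega\ee^\goths$ and that $\Theta_\partial(\calL_{x,(n)})=\{(\overline{x}^{(p^ns)})^{1/p^n}\}=\{\overline{x^{1/p^n}}^{(s)}\}$; exactly as above, $\calI\Theta_\partial(\calL_{x,(n)})=u\,\Theta_\partial(\calL_{x,(n)})=\{\theta\}$. (One could also avoid the explicit computation of $N_p$: when $\goths>0$, pass to the unique $\partial$-Frobenius antecedent over $K'^{(\partial)}$, which is visible, and transport the conclusion back via Proposition~\ref{P:refined-frob}(a); when $\goths=0$, use the $\partial$-Frobenius descendant together with Proposition~\ref{P:refined-frob}(b) and Lemma~\ref{L:frob-properties}(c).)

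The skeleton above is short, but two steps carry the weight. The first, which I expect to be the main obstacle, is the verification that the staircase basis is $\bbb$-good in the non-visible case and the attendant reduction of $N_p$ to a monomial matrix: here both the extra factor $p^{-1}$ coming from $u^{p^n}x$ lying in the Frobenius fixed field and the bound $\goths<\tfrac1p\log p$ are indispensable. The second is the bookkeeping with the graded residue pieces $\kappa_{K'^\alg}^{(s)}$ and the shifts produced by multiplying by $u^{\pm p^n}$ — this is precisely the role of the factor $u^{-p^n}$ in the definition of $x$ — which is what forces the common reduced eigenvalue to be $\theta$ rather than a $u$-twist of it.
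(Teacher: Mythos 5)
Your proof is correct and follows essentially the same route as the paper's: both compute with the rescaled staircase basis (equivalently, the paper adjoins $z$ with $|z|=\ee^{-s}$ and uses $\bbe_1, z^{-1}\bbe_2,\dots,z^{-(p^n-1)}\bbe_{p^n}$), verify it defines a good norm using the Frobenius-fixed-field bound on derivatives of $x$ (the paper phrases this as $|\partial|_{K^{(\partial)}}=p^{-1}|u|^{-1}$; you phrase it as $|\partial^k x|\le p^{-1}|u|^{-k}|x|$, which is the same estimate), read off the dominant part of $N_p$, and conclude via Lemma~\ref{L:converse-good-norm}.

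The only genuine stylistic difference is in how the dominant part of $N_p$ is analyzed: the paper records it as a $2$-by-$2$ block matrix and simply asserts the reduced eigenvalues are $p^n$ copies of $\bar x^{1/p^{n-1}}$; you instead view it as a monomial matrix for the $p^{n-1}$-cycles of ``add $p$ modulo $p^n$'' and compute each cycle product to be $\bar x$ by telescoping. These are two descriptions of the same matrix and give the same eigenvalue multiset; your version makes the final step explicit, which is arguably a small improvement. The alternative you sketch parenthetically (passing to the Frobenius antecedent or descendant and invoking Proposition~\ref{P:refined-frob}) is the strategy the paper uses for analogous statements elsewhere (e.g.\ in Theorem~\ref{T:refined-decomposition}), so it is a legitimate alternate route, though the paper does not use it for this particular lemma.

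Two tiny points of exposition. First, a basis with $|\bbe_i|=\ee^{-(i-1)s}$ is orthogonal but not orthonormal; one gets an orthonormal basis only after the enlargement of Remark~\ref{R:enlarge-K}, as the paper does by introducing $z$. Second, your blanket assertion $|x|=\ee^{-ds}$ is what the lemma actually requires (so that $|\partial|_{\sp,\calL_{x,(n)}}=\ee^{-s}$ and hence $IR_\partial=\omega\ee^\goths$), and it matches Example~\ref{Ex:pure-refined}(2) on the nose; for case (1) with $p=0$ this amounts to silently inserting the $u^{-1}$-twist (taking $x$ with $|x|=|u|^{-1}\ee^{-\goths}$ lifting $u^{-1}\theta$) that the example's wording appears to omit but is clearly intended by analogy with case (2). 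This is the right reading and your final step $\calI\Theta_\partial=u\Theta_\partial=\{\theta\}$ depends on it.
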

\begin{proof}
We may replace $K$ by the completion of $K(z)$ with respect to the $|u|^{-1}\ee^{-\goths}$-Gauss norm (and set $\partial z = 0$).

We first assume that either we have $p=0$ or we have $p>0$ and $\goths <0$, i.e. we consider the visible $\partial$-radii case.  We note that $\bbe_1, z^{-1}\bbe_2, \dots,$ and $ z^{-(d-1)}\bbe_d$ together define a good norm on $\calL_{x, (n)}$; it is a straightforward computation to check that the statement in this case.

We now tackle the case when $p>0$ and $\goths \in [0, \frac 1p \log p)$.  For $i = 1, \dots, p$, we have
\[
\partial^i \bbe_l = \bbe_{i+l},  \textrm{ when }i+l \leq p^n,\ \textrm{ and }\
\partial^i \bbe_{p^n-l} = \partial^{i-l}(x\bbe_1), \textrm{ when } i\geq l.
\]
We will show that $\{\bbe_1, z^{-1}\bbe_2, \dots, z^{-(p^n-1)}\bbe_{p^n}\}$ defines a good norm on $\calL_{x, (n)}$.  Indeed, for $i =1, \dots, p$, the matrix of $\partial^i$ with respect to this basis is
\begin{equation}\label{E:N_l}
N_i = 
\begin{pmatrix}
0 & 0 & \cdots & z^i & 0 & \cdots & 0\\
0 & 0& \cdots & 0 & z^i & \cdots & 0 \\
\vdots &\vdots & \ddots & \vdots & \vdots & \ddots & \vdots \\
0 & 0& \cdots & 0 &0 & \cdots & z^i\\
z^{-p^n+i}x & 0 & \cdots &0 & 0 &\cdots &0\\
z^{-p^n+i}\partial x & z^{-p^n+i} x & \cdots &0 & 0 &\cdots &0\\
\vdots &\vdots & \ddots & \vdots & \vdots & \ddots & \vdots \\
z^{-p^n+i}\partial^{i-1} x & (i-1)z^{-p^n+i} \partial^{i-2} x & \cdots &0 & 0 &\cdots &0
\end{pmatrix}
\end{equation}
Note that \[
|\partial|_{K^{(\partial)}} = p^{-1}|u|^{p-1} |\partial'|_{K^{(\partial)}} = p^{-1}|u|^{-1} \leq \omega |z| < |z|.
\]
Hence, modulo $\gothm_K^{(-\log|z|)+}$, the nonzero terms of $N_i$ are the $z^i$'s and $z^{-p^n+i}x$'s in \eqref{E:N_l}; they form a 2-by-2 block matrix
\[
\overline{N_i}^{(-\log|z|)} =
\begin{pmatrix}
0 & z^i \cdot I_{(p^n-i) \times (p^n-i)} \\
z^{-p^n+i}x \cdot I_{i \times i} &0
\end{pmatrix} \in \Mat_{p^n \times p^n}(\kappa_K^{(-\log|z|)}).
\]
Note that $|z^{-p^n+i}x| = |z|^i$.  By Lemma~\ref{L:converse-good-norm}, we have $IR_\partial(\calL_{x, (n)}) = \omega \ee^\goths$ and that this basis defines a good norm on $V$. Moreover, the multiset of reduced eigenvalues of $N_p$ is composed of the element $x^{1/p^{n-1}}$ with multiplicity $p^n$-.  This implies that $\calI\Theta_\partial(V) = \{\theta$ $(p^n$ times$)\}$ by the choice of $x$ in Example~\ref{Ex:pure-refined}.
\end{proof}

\begin{lemma}\label{L:refined-subquotient}
Let $V$ be a $\partial$-differential module over $K$ with pure \emph{visible} $\partial$-radii $R_\partial(V) = \omega \ee^s$.  Then we have the following.

(a) For any subquotient $V_0$ of $V$, the elements in $\Theta_\partial(V_0)$ already appears in $\Theta_\partial(V)$.  

(b) For any $\theta \in \kappa_{K^\alg}^{(s)}$, there is a \emph{unique} maximal $\partial$-differential submodule of $V$ which has pure refined $\partial$-radii $\theta$.
\end{lemma}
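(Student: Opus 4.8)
The plan is to reduce both statements to the "generic eigenvalue" picture supplied by a good norm, via a cyclic-vector/Newton-polygon argument. First consider part (a). Pick a good norm on $V$ coming from an orthonormal basis (after possibly enlarging $K$ as in Remark~\ref{R:enlarge-K}, which does not change the refined radii by Remark~\ref{R:refined-in-K}); let $N_0$ be the matrix of $\partial$ in this basis, so $\Theta_\partial(V)$ is the multiset of reduced eigenvalues of $N_0$ in $\kappa_{K^\alg}^{(s)}$. Given a subquotient $V_0$, it suffices by duality (Lemma~\ref{L:refined-dual}, noting that the reduced eigenvalues of the dual matrix are the negatives) and by the short exact sequence behavior to treat the case of a submodule, and then the case of a quotient; equivalently, it is enough to handle a submodule $U\subseteq V$ (apply this to $U\subseteq V$ and to $U^\perp$-type quotients). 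For a submodule $U$: the good norm on $V$ restricts to an orthogonalizable norm on $U$ satisfying the same bound $|\partial|_U\le\omega(\bbb|u|)^{-1}$, hence is a good norm on $U$ (this is immediate from condition~(i) of Definition~\ref{D:good-norm}). Choosing an orthonormal basis of $U$ and extending to one of $V$, the matrix $N_0$ becomes block upper-triangular with the top-left block being the matrix of $\partial$ on $U$; so the reduced eigenvalues of $U$ form a sub-multiset of those of $N_0$, i.e. $\Theta_\partial(U)\subseteq\Theta_\partial(V)$. For a quotient $V/U$, the complementary block gives the same conclusion. Since any subquotient is a quotient of a submodule, (a) follows.

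The subtlety in (a) is the choice of orthonormal basis: a submodule of an orthogonalizable normed module need not be a direct summand as a normed module, but after enlarging $K$ so that the value group is divisible one can still extract an orthonormal basis of $U$ and complete it (using that $V$ itself is orthonormalizable and the quotient norm on $V/U$ is again orthogonalizable). I would cite the relevant statement from \cite[Chapter~1]{kedlaya-course} on orthogonality in the split case rather than reprove it. With that in hand the block-triangular matrix argument is routine.

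For part (b), I would argue as follows. By (a) the candidate refined radius $\theta$ must lie in $\Theta_\partial(V)$, otherwise the zero submodule is the unique (maximal) one. Existence: consider the collection of all $\partial$-submodules of $V$ with pure refined $\partial$-radii $\theta$, and show it is closed under sum. If $V_1,V_2$ are two such, then $V_1+V_2$ is a quotient of $V_1\oplus V_2$; the direct sum has pure refined radii $\theta$ (the matrix of $\partial$ in an orthonormal basis adapted to the decomposition is block-diagonal with both reduced-eigenvalue multisets equal to $\{\theta,\dots,\theta\}$), and by (a) any subquotient of it — in particular $V_1+V_2$ — has refined radii a sub-multiset of $\{\theta\}$, hence pure $\theta$. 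Therefore the sum of all such submodules is again such a submodule, and it is the unique maximal one. Uniqueness is then automatic from maximality.

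The main obstacle I anticipate is the first bullet of the reduction in (a): cleanly justifying that a $\partial$-submodule of a good-normed module inherits a good norm via an orthonormal basis that extends to one of the ambient module, in the generality where $K$ need only have divisible value group after enlargement. Everything else — the block-triangular eigenvalue bookkeeping, the use of Lemma~\ref{L:refined-dual} to pass between sub and quotient, and the closure-under-sums step in (b) — is formal once that orthogonality input is available. As a fallback, if extending the basis is awkward, I would instead run the argument through cyclic vectors and Corollary~\ref{C:refined-radii=reduced-roots}: a cyclic vector of a subquotient's twisted polynomial divides (in the appropriate sense, up to the usual Newton-polygon perturbation) the one of $V$, and its reduced roots — which compute $\Theta_\partial$ of the subquotient by Corollary~\ref{C:refined-radii=reduced-roots} — form a sub-multiset of those of $V$.
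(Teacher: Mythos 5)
Your part (b) matches the paper's proof exactly, but your part (a) takes a genuinely different route. The paper argues ``softly'' by twisting: for any test $\theta\notin\Theta_\partial(V)$, it tensors $V$ and $V_0$ with $\calL_{x,(n)}^\dual$ from Example~\ref{Ex:pure-refined} (which has pure refined radii $\theta$ by Lemma~\ref{L:pure-refined}); Lemma~\ref{L:interpretation-refined} then shows $V\otimes\calL_{x,(n)}^\dual$ has pure $\partial$-radii, hence so does the subquotient $V_0\otimes\calL_{x,(n)}^\dual$, and running Lemma~\ref{L:interpretation-refined}(b) backwards forces $\theta\notin\Theta_\partial(V_0)$. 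This never touches a basis. Your block-triangular approach instead requires exactly the splitting lemma you flag: the restriction of a good norm to a $\partial$-submodule must admit an orthonormal basis extending to one of $V$. That is genuinely delicate here, because Remark~\ref{R:enlarge-K} allows enlarging $K$ to non-discretely-valued fields (completions of $K(x)$ for arbitrary $\eta$-Gauss norms), and orthogonal splitting of submodules is not automatic without spherical completeness or discreteness---Lemma~\ref{L:lattice} only covers the discrete case. Your cyclic-vector fallback via Corollary~\ref{C:refined-radii=reduced-roots} and the factorization $P=SQ$ of twisted polynomials is closer in spirit to the machinery the paper already sets up (the perturbation between the reduced roots of the formal product and those of the factors is handled by the argument used in Corollary~\ref{C:refined-radii=reduced-roots}), but the paper's twist avoids even that. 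It is also worth noting that the paper pointedly remarks its argument only establishes the \emph{set}-theoretic containment and defers the multiset inclusion to a corollary of Theorem~\ref{T:refined-decomposition}; your block-triangular argument, if the orthogonality issue is resolved, would give the stronger multiset inclusion directly---which is more than (b) needs, and the paper's economy here is deliberate.
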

\begin{proof}
For $\theta \in \kappa_{K^\alg}^{(s)}$ such that $\theta \notin \Theta_\partial(V)$, let $\calL_{x, (n)}$ be the $\partial$-differential module constructed in Example~\ref{Ex:pure-refined}.  By Lemmas~\ref{L:pure-refined} and \ref{L:interpretation-refined}, $V \otimes \calL_{x, (n)}^\dual$ has pure $\partial$-radii $R_\partial(V)$, and so does $V_0 \otimes \calL_{x, (n)}^\dual$. By the same lemmas again, we have $\theta \notin \Theta_\partial(V_0)$. This proves (a).  We point out that this, however, does not prove the inclusion $\Theta_\partial(V_0) \subseteq \Theta_\partial(V)$ as a multiset, which will be a corollary of Theorem~\ref{T:refined-decomposition} below.

The second statement follows from the observation that if two submodules $V_1$ and $V_2$ of $V$ both have pure refined $\partial$-radii $\theta$, so does their sum $V_1 + V_2$ because it is a quotient of $V_1 \oplus V_2$.
\end{proof}

Similarly to the direct sum decomposition by intrinsic $\partial$-radii, we have a direct sum decomposition by refined intrinsic $\partial$-radii.  The latter is in fact deduced from the former by twisting $\partial$-differential modules of the form $\calL_{x, (n)}$.

\begin{theorem}\label{T:refined-decomposition}
Let $K$ and $V$ be as in Hypothesis~\ref{H:refined}.  Then $V$ admits a unique direct sum  decomposition
\begin{equation}\label{E:refined-decomposition}
V = \bigoplus_{\{\theta\} \subset \kappa_{K^\alg}^{(s)}} V_{\{\theta\}},
\end{equation}
where the direct sum runs through all $\Gal(K^\alg/K)$-orbits $\{\theta\}$ in $\kappa_{K^\alg}^{(s)}$,
such that the refined $\partial$-radii of $V_{\{\theta\}}$ is a multiset consisting of the $\Gal(K^\alg/K)$-orbit $\{\theta\}$  with appropriate multiplicities.

Moreover, if $K'$ is a finite tamely ramified extension of $K$ such that all the $\theta$'s in the above decomposition belong to $\cup_n \big(\kappa_{K'}^{(p^ns)} \big)^{1/p^n}$,  then we have a unique direct sum decomposition
\[
V \otimes_K K'= \bigoplus_{\theta \in \kappa_{K'^\alg}^{(s)}} V_{\theta},
\]
of $\partial$-differential modules over $K'$ such that each $V_\theta$ has pure refined $\partial$-radii $\theta$. 
\end{theorem}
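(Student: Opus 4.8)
The plan is to reduce everything to a \emph{pure refined radii} decomposition over a large field — from which both parts of the statement follow by descent — and to construct that decomposition over the large field by twisting with the modules of Example~\ref{Ex:pure-refined} so as to reduce to Theorem~\ref{T:decomp-over-field-complete}. If $IR_\partial(V)=1$ the only decomposition is $V=V_{\{0\}}$, so assume $IR_\partial(V)<1$. Any decomposition of a base change of $V$ into $\partial$-differential modules with \emph{pure} refined radii is automatically unique: for $\theta\neq\theta'$ a morphism between modules with disjoint refined radii vanishes (Lemma~\ref{L:interpretation-refined}(a), or Proposition~\ref{P:refined-properties} in the non-visible range), so the composite $V_\theta\hookrightarrow V\twoheadrightarrow V_{\theta'}$ is zero and any two such decompositions agree. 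Now take $K'$ to be a finite tame Galois extension of $K$ followed by the transcendental enlargement of Remark~\ref{R:enlarge-K}, chosen large enough that every element of $\Theta_\partial(V\otimes_K K')$ lies in $\bigcup_n(\kappa_{K'}^{(p^ns)})^{1/p^n}$ and that each residual $\Gal(K^\alg/K)$-orbit meeting $\Theta_\partial(V)$ is realized over $K'$. A pure-refined decomposition over $K'$ then descends to $K$: along the transcendental part because the relevant projectors in $\End(V)$ are canonical, hence fixed by all $K$-automorphisms (compare Remark~\ref{R:refined-in-K}) and so defined over $K$ by Lemma~\ref{L:proj-intersect}; and along the tame Galois part because $\Gal(K'/K)$ permutes the summands according to its action on the $\theta$'s, so grouping them over $\Gal(K^\alg/K)$-orbits yields \eqref{E:refined-decomposition} with the prescribed refined radii, the finer splitting over $K'$ giving the last assertion.

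It thus remains to build the pure-refined decomposition over $K'$. Fix a value $\theta$ occurring in $\Theta_\partial(V\otimes_K K')$ with multiplicity $\mu_\theta$, and let $\calL_\theta$ be a module as in Example~\ref{Ex:pure-refined} (in the non-visible range, as extended in Remark~\ref{R:pure-refined-local}) of rank $e_\theta$ with pure refined radii $\theta$ and $IR_\partial(\calL_\theta)=IR_\partial(V)$. By Theorem~\ref{T:decomp-over-field-complete} and Lemma~\ref{L:basic-IR-proposition}(c), $V\otimes_{K'}\calL_\theta^\dual=A_\theta\oplus B_\theta$, where every constituent of $A_\theta$ has intrinsic radii $>IR_\partial(V)$ and $B_\theta$ has pure intrinsic radii $IR_\partial(V)$; writing the matrix of $\partial$ on $V\otimes\calL_\theta^\dual$ as $N_V\otimes 1+1\otimes N_{\calL_\theta^\dual}$ (with $N_{\calL_\theta^\dual}$ reducing to $-\theta$ with multiplicity $e_\theta$) identifies $\dim A_\theta=e_\theta\mu_\theta$. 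By Lemma~\ref{L:V-otimes-W-not-pure} (with Proposition~\ref{P:refined-properties} in the non-visible range), $A_\theta\otimes_{K'}\calL_\theta$ has pure refined radii $\theta$. Define $V_\theta\subseteq V\otimes_K K'$ as the image of $A_\theta\otimes_{K'}\calL_\theta$ under the evaluation morphism $A_\theta\otimes_{K'}\calL_\theta\hookrightarrow(V\otimes\calL_\theta^\dual)\otimes\calL_\theta\xrightarrow{\ \id\otimes\ev\ }V\otimes_K K'$; being a quotient of a module with pure refined radii $\theta$, $V_\theta$ has pure refined radii $\theta$ or is $0$ (Lemma~\ref{L:refined-subquotient}(a)). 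Since the evaluation $V\otimes\calL_\theta^\dual\otimes\calL_\theta\to V$ is surjective and $B_\theta\otimes\calL_\theta$ contributes no $\theta$ to the refined radii, one obtains $V\otimes_K K'=\sum_\theta V_\theta$; the sum is direct because any submodule of $\bigoplus_{\theta'\neq\theta}V_{\theta'}$ has refined radii disjoint from $\{\theta\}$ and so meets $V_\theta$ trivially, and a dimension count (forcing $\dim V_\theta=\mu_\theta$) shows it exhausts $V\otimes_K K'$. Alternatively, in the non-visible range one may instead reduce to the visible case by iterated $\partial$-Frobenius antecedents when $IR_\partial(V)>\omega$ and by a $\partial$-Frobenius descendant when $IR_\partial(V)=\omega$, transporting decompositions through Proposition~\ref{P:refined-frob} and Lemma~\ref{L:frob-properties}(c),(d).

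The crux is the last dimension count: that the twisting genuinely isolates the $\theta$-isotypic part of $V$, i.e.\ that the evaluation morphism has image of dimension exactly $\mu_\theta$. This reduces to the precise multiplicity bookkeeping for tensor products underlying Lemmas~\ref{L:interpretation-refined} and~\ref{L:V-otimes-W-not-pure}, combined with the identification $\dim A_\theta=e_\theta\mu_\theta$. The Frobenius alternative in the non-visible range is more routine but still needs care in checking, via the criterion of Lemma~\ref{L:frob-properties}(c), that one obtains honest $\partial$-submodules of $V$ and that the refined radii match under Proposition~\ref{P:refined-frob}, especially at the critical value $IR_\partial(V)=\omega$, where the Artin--Schreier substitution $\theta\mapsto\theta^p-\theta$ governs the grouping of summands.
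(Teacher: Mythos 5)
Your overall strategy matches the paper's: twist $V$ by the rank-one-ish module $\calL_\theta$ of Example~\ref{Ex:pure-refined}, decompose the twist by intrinsic $\partial$-radii via Theorem~\ref{T:decomp-over-field-complete}, and untwist. The uniqueness argument via vanishing Homs and the reduction to the tame extension $K'$ are also fine in spirit (though you needn't involve the transcendental enlargement of Remark~\ref{R:enlarge-K} in the field of descent; that scaffold is only used to produce orthonormal bases in the \emph{computation} of $\Theta_\partial$, and the theorem itself descends over a finite tame Galois extension, so Galois descent is all that is needed).

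The genuine gap is exactly where you flag it: the claim that the image $V_\theta$ of $A_\theta\otimes\calL_\theta$ under evaluation has dimension exactly $\mu_\theta$, and hence that $\sum_\theta V_\theta$ exhausts $V$. The cited lemmas give you that $A_\theta\otimes\calL_\theta$ has pure refined radii $\theta$, hence so does any nonzero quotient, but they do \emph{not} directly give the count $\dim A_\theta = e_\theta\mu_\theta$; your ``reduced eigenvalue $0$ has multiplicity $e_\theta\mu_\theta$'' observation does not automatically identify the dimension of the larger-radius summand with that multiplicity at this stage, and the clean statement of that bookkeeping is Proposition~\ref{P:refined-properties-general}(d) — whose proof in the paper explicitly invokes the decomposition~\eqref{E:refined-decomposition} you are trying to establish, so appealing to it here would be circular. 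The paper sidesteps this entirely with a projector argument that requires no dimension count: with $i\colon V\hookrightarrow V\otimes\calL^\dual\otimes\calL$ the coevaluation, $j$ the trace normalized so that $ji=\id$, and $p_0,p_1$ the projections onto $\widetilde W_0 = W_0\otimes\calL$ and $\widetilde W_1 = W_1\otimes\calL$, the vanishing $\Hom(\widetilde W_0,\widetilde W_1)=\Hom(\widetilde W_1,\widetilde W_0)=0$ (Lemma~\ref{L:interpretation-refined}(a)) forces $p_0ijp_1=p_1ijp_0=0$, whence $jp_0i$ and $jp_1i$ are complementary idempotents on $V$ and $V=jp_0i(V)\oplus jp_1i(V)$; the refined-radius constraints on the two summands then follow from Lemma~\ref{L:refined-subquotient}(a). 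Iterating over the $\theta$'s finishes the visible range, and the non-visible range is transported through $\partial$-Frobenius antecedents/descendants via Proposition~\ref{P:refined-frob} exactly as you suggest in your alternative. I recommend you replace the evaluation-image-plus-dimension-count step with this projector argument; once that is done your proof closes.
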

\begin{proof}
The statement is void if $IR_\partial(V) = 1$.  We assume $IR_\partial(V)<1$ from now on.
We first replace $K$ by the $K'$ in the theorem; using the uniqueness of such direct sum decomposition and Galois descent, we may recover the statement over $K$. Note that Lemma~\ref{L:tame-extension} implies that $\partial$ is still a derivation of rational type.

We first assume that either $p=0$, or $p>0$ and $IR_\partial(V) < p^{-1/(p-1)}$.
For each $\theta \in \Theta_\partial(V)$, we construct $\calL_{x, (n)}$ as in Example~\ref{Ex:pure-refined}, which is a rank $d$ $\partial$-differential module with pure $\partial$-radii $R_\partial(V)$ and pure refined radii $\theta$.  By Lemma~\ref{L:interpretation-refined}(b), $V \otimes \calL_{x, (n)}^\dual$ does not have pure radii $R_\partial(V)$.  Theorem~\ref{T:decomp-over-field-complete} then gives rise to a decomposition $V \otimes \calL_{x, (n)}^\dual = W_0 \oplus W_1$, where $R_\partial(W_0) > R_\partial(V)$ and $W_1$ has pure $\partial$-radii $R_\partial(V)$.

Put $\widetilde W_0 = W_0 \otimes \calL_{x, (n)}$ and $\widetilde W_1 = W_1 \otimes \calL_{x, (n)}$.
Consider the following homomorphisms of $\partial$-differential modules:
\[
\xymatrix{
V \ar@/^1pc/[r]^-{i} & V \otimes \calL_{x, (n)}^\dual \otimes \calL_{x, (n)} \ar[r]^-{\sim} \ar@/^1pc/[l]^-{j} & \widetilde W_0 \oplus \widetilde W_1,
}
\]
where $i$ is induced by the diagonal embedding $K \inj \calL_{x, (n)}^\dual \otimes \calL_{x, (n)}$ and $j$ is induced by the trace map $\calL_{x, (n)}^\dual \otimes \calL_{x, (n)} \surj K$ normalized so that $ji=\id$.  Let $p_0$ and $p_1$ be the projections from $V \otimes \calL_{x, (n)}^\dual \otimes \calL_{x, (n)} $ to the factors $\widetilde W_0$ and $\widetilde W_1$, respectively, viewed as submodules of the source.  We then have $p_0^2 = p_0$, $p_1^2 = p_1$, and $p_0 + p_1 =1$.

We claim that $jp_0i$ and $jp_1i$ are projectors on $V$.
Indeed, Lemma~\ref{L:interpretation-refined}(c) implies that $R_\partial(\calL_{x, (n)}^\dual \otimes \calL_{x, (n)} ) > R_\partial(V)$.  By Lemma~\ref{L:V-otimes-W-not-pure}, $V \otimes \calL_{x, (n)}^\dual \otimes \calL_{x, (n)} $ and hence $\widetilde W_0$ and $\widetilde W_1$ have pure $\partial$-radii $R_\partial(V)$.
Lemma~\ref{L:V-otimes-W-not-pure} also implies that $\Theta_\partial(\widetilde W_0)$ consists of solely $\theta$, and by the ``moreover" part of Lemma~\ref{L:interpretation-refined},  we have
\[
\Theta_\partial(\widetilde W_1) = \big\{ \theta_1 + \theta \textrm{ (with multiplicity } d)\;|\; \theta_1 \in \Theta_\partial(W_1) \big\}.
\]
In particular, we have $\theta \notin \Theta_\partial(\widetilde W_1)$.  Hence any homomorphism of $\partial$-differential modules between $\widetilde W_0$ and $\widetilde W_1$ has to be zero by Lemma~\ref{L:interpretation-refined}(a).  In particular, $p_1ijp_0 = p_0ijp_1 = 0$.  Thus, we have
\begin{align*}
(jp_0i)(jp_0i) &= jp_0ij(1-p_1)i = jp_0i(ji) - j(p_0ijp_1)i = jp_0i\\
(jp_1i)(jp_1i) &= jp_1ij(1-p_0)i = jp_1i(ji) - j(p_1ijp_0)i = jp_1i\\
jp_0i + jp_1i &= j(p_0 + p_1) i = ji = 1.
\end{align*}
This proves that $V = jp_0i(V) \oplus jp_1i(V)$.  Moreover, by Lemma~\ref{L:refined-subquotient}(i), $\Theta_\partial(jp_0i(V))$ consists of only $\theta$ since it is a quotient of $\widetilde W_0$, and $\Theta_\partial(jp_1i(V))$ does not contain $\theta$ since it is a quotient of $\widetilde W_1$.  Applying this process to each of $\theta \in \Theta_\partial(V)$ gives the desired decomposition~\eqref{E:refined-decomposition}.

The uniqueness of the direct sum decomposition follows from Lemma~\ref{L:refined-subquotient}(b).

Now if $p>0$ and $IR_\partial(V) = p^{-1/(p-1)}$, the decomposition~\eqref{E:refined-decomposition} comes from the decomposition of its $\partial$-Frobenius descendent, via the relation described in Proposition~\ref{P:refined-frob}(2).  If $p>0$ and $IR_\partial(V) > p^{-1/(p-1)}$, the decomposition~\eqref{E:refined-decomposition} comes from the decomposition of its $\partial$-Frobenius antecedent, via the relation described in Proposition~\ref{P:refined-frob}(b).
\end{proof}

We now can prove some fundamental properties for tensor products of $\partial$-differential modules with pure $\partial$-radii and pure refined $\partial$-radii.  One can combine this with Theorems~\ref{T:decomp-over-field-complete} and \ref{T:refined-decomposition} to obtain corresponding results for general $\partial$-differential modules.

\begin{proposition}\label{P:refined-properties-general}
Let $V$ and $W$ be two $\partial$-differential modules over $K$ with pure $\partial$-radii $R_\partial(V) = R_\partial(W)< |u|^{-1}$ and pure refined $\partial$-radii $\theta_V$ and $\theta_W$, respectively. 
\begin{itemize}
\item[\emph{(a)}] Then $W^\dual$ has pure refine $\partial$-radii $-\theta_W$.
\item[\emph{(b)}] If $\theta_V = \theta_W$, then we have $R_\partial(V\otimes W^\dual) > R_\partial(V)$.
\item[\emph{(c)}] If $\theta_V \neq \theta_W$, then $V \otimes W^\dual$ has pure $\partial$-radii $R_\partial(V)$ and pure refined $\partial$-radii $\theta_V - \theta_W$.
\item[\emph{(d)}]
Moreover, if we do not assume that $V$ and $W$ has pure refined $\partial$-radii and let $U$ denote the maximal submodule of $V \otimes W^\dual$ that has $\partial$-radii strictly larger than $R_\partial(V)$, then we have
\[
\dim U = \sum_{\theta \in \kappa_{K^\alg}^{(s)}} \multi_\theta(\Theta_\partial(V)) \cdot \multi_\theta(\Theta_\partial(W)).
\]
\end{itemize}
\end{proposition}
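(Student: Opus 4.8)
The plan is to dispatch parts (a)--(c) almost immediately from results already in hand, and then bootstrap (d) from them together with the two decomposition theorems.

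For (a), I would invoke Lemma~\ref{L:refined-dual}: since $\Theta_\partial(W^\dual) = -\Theta_\partial(W)$ and $\Theta_\partial(W)$ is $\dim W$ copies of $\theta_W$, the multiset $\Theta_\partial(W^\dual)$ consists of $\dim W$ copies of $-\theta_W$; that $W^\dual$ still has pure $\partial$-radii $R_\partial(W)$ is Lemma~\ref{L:basic-IR-proposition}(b). For (b) and (c) the key observation is that $R_\partial(V) = R_\partial(W) < |u|^{-1}$ is exactly $IR_\partial(V) = IR_\partial(W) < 1$, so Proposition~\ref{P:refined-properties} lets me use Lemma~\ref{L:interpretation-refined} and its corollaries with the visibility hypothesis dropped. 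For (b), $\theta_V = \theta_W$ means $\Theta_\partial(V)$ and $\Theta_\partial(W)$ share the same pure value, so corollary (c) of Lemma~\ref{L:interpretation-refined} gives $R_\partial(V \otimes W^\dual) > R_\partial(V)$. For (c), $\theta_V \neq \theta_W$ gives $\Theta_\partial(V) \cap \Theta_\partial(W) = \emptyset$, so the equivalence $(1)\Leftrightarrow(2)$ of Lemma~\ref{L:interpretation-refined} yields that $V \otimes W^\dual$ has pure $\partial$-radii $R_\partial(V)$, and its ``moreover'' clause computes $\Theta_\partial(V \otimes W^\dual) = \{\theta_1 - \theta_2 \mid \theta_1 \in \Theta_\partial(V),\ \theta_2 \in \Theta_\partial(W)\}$, which is $\dim V \cdot \dim W$ copies of $\theta_V - \theta_W$.

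For (d), I would first pass to a finite tamely ramified extension $K'/K$ over which both $V$ and $W$ admit the refined decomposition of Theorem~\ref{T:refined-decomposition} into pieces with pure refined $\partial$-radii. This costs nothing: by Lemma~\ref{L:tame-extension} the derivation stays of rational type, and by the uniqueness in Theorems~\ref{T:decomp-over-field-complete} and~\ref{T:refined-decomposition} together with Galois descent (and the canonicity noted in Remark~\ref{R:refined-in-K}), the submodule $U$, the radii decomposition, and the multiplicities $\multi_\theta(\Theta_\partial(-))$ are all unchanged by the base change. Writing $V = \bigoplus_\theta V_\theta$ and $W = \bigoplus_{\theta'} W_{\theta'}$ with $V_\theta$, $W_{\theta'}$ of pure refined radii $\theta$, $\theta'$, I would note $\multi_\theta(\Theta_\partial(V)) = \dim V_\theta$ (a direct sum of good norms is a good norm, so the matrix $N_r$ is block diagonal and $\Theta_\partial$ is additive over the decomposition), and similarly for $W$. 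Then $V \otimes W^\dual = \bigoplus_{\theta,\theta'} V_\theta \otimes W_{\theta'}^\dual$; applying part (b) when $\theta = \theta'$ shows that summand has \emph{all} subsidiary $\partial$-radii $> R_\partial(V)$ (as $R_\partial$ is the least subsidiary radius, Definition~\ref{D:partial-radii}), while part (c) when $\theta \neq \theta'$ shows that summand has pure $\partial$-radii $R_\partial(V)$ (and Lemma~\ref{L:basic-IR-proposition}(b),(c) rules out radii smaller than $R_\partial(V)$ anywhere). By the uniqueness of the radii decomposition in Theorem~\ref{T:decomp-over-field-complete}, $U = \bigoplus_\theta V_\theta \otimes W_\theta^\dual$, whence $\dim U = \sum_\theta \dim V_\theta \cdot \dim W_\theta = \sum_{\theta \in \kappa_{K^\alg}^{(s)}} \multi_\theta(\Theta_\partial(V)) \cdot \multi_\theta(\Theta_\partial(W))$.

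I do not expect a genuine obstacle here: the content is entirely in assembling (a)--(c) with Theorems~\ref{T:refined-decomposition} and~\ref{T:decomp-over-field-complete}. The only mildly delicate point is the bookkeeping in (d) around the tame base change---checking that $\dim U$ is insensitive to it and that the refined decompositions of $V$ and of $W$ can be realized over one common $K'$---but this is routine given the uniqueness statements already available.
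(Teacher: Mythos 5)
Your proof is correct, and for parts (b) and (c) you take a genuinely shorter route than the paper. You observe that $R_\partial(V)=R_\partial(W)<|u|^{-1}$ is the same as $IR_\partial(V)=IR_\partial(W)<1$ and then simply cite Proposition~\ref{P:refined-properties}, which already removes the visibility hypothesis from Lemma~\ref{L:interpretation-refined} and its corollaries. The paper instead re-runs the Frobenius antecedent/descendant casework (visible, $IR>p^{-1/(p-1)}$, and the critical $IR=p^{-1/(p-1)}$) inside the proof of Proposition~\ref{P:refined-properties-general}, effectively duplicating the argument of Proposition~\ref{P:refined-properties} and, in the critical case, invoking Theorem~\ref{T:refined-decomposition} once more. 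Your citation is logically available (Proposition~\ref{P:refined-properties} precedes the statement and its proof does not depend on it), so this is a clean simplification with no loss. For (d) the paper offers only the one-line remark that it follows from (b), (c), and the decomposition~\eqref{E:refined-decomposition}; you fill in exactly the omitted bookkeeping: realizing both refined decompositions over a common tame extension, noting that $\multi_\theta(\Theta_\partial(-))$ equals the rank of the corresponding summand because a direct sum of good norms is good and $N_r$ becomes block-diagonal, tensoring the decompositions, and applying (b), (c), Lemma~\ref{L:basic-IR-proposition}, and the uniqueness of the radius decomposition to identify $U$. The appeal to Galois descent and Remark~\ref{R:refined-in-K} to see that $\dim U$ and the multiplicities are insensitive to the tame base change is the right way to close that loop.
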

\begin{proof}
(a) is straightforward, and (d) follows from (b) and (c) by the decomposition~\eqref{E:refined-decomposition}.

When $IR_\partial(V) = IR_\partial(W) < \omega$, (b) follows from Lemma~\ref{L:interpretation-refined}(c) and (c) follows from the ``moreover part" of  the same lemma.  When $p>0$ and $IR_\partial(V) = IR_\partial(W)>p^{-1/(p-1)}$, (b) and (c) for $V$ and $W$  follow from the same statement for the $\partial$-Frobenius antecedents of $V$ and $W$, by the relation described in Proposition~\ref{P:refined-frob}(a).  
It then suffices to prove (b) and (c) in the case when $p>0$ and $IR_\partial(V) = IR_\partial(W)=p^{-1/(p-1)}$.  

In this case, Lemma~\ref{L:frob-properties}(3) implies that $\varphi^{(\partial)}_*V \otimes (\varphi^{(\partial)}_*W)^\dual = \big(\varphi^{(\partial)}_*(V \otimes W^\dual) \big)^{\oplus p}$.  Note that Proposition~\ref{P:refined-frob}(2) implies that the multiset of refined intrinsic $\partial$-radii of $V$ (resp. $W$) is composed of all the solutions to $(\frac xp)^p - \frac xp = u\theta_V$ (resp. $(\frac xp)^p - \frac xp = u\theta_W$), each with multiplicity $\dim V$ (resp. $\dim W$).  If $\theta_V \neq \theta_W$, by (c) in the visible case together with Theorem~\ref{T:refined-decomposition}, the multiset of refined intrinsic $\partial'$-radii of $\varphi^{(\partial)}_*V \otimes (\varphi^{(\partial)}_*W)^\vee$ consists of roots of $(\frac xp)^p - \frac xp = u(\theta_V - \theta_W)$, each with multiplicity $p \dim V \dim W$.  The statement (c) then follows from Proposition~\ref{P:refined-frob}(b).  If $\theta_V = \theta_W$, by (b) in the visible case together with Theorem~\ref{T:refined-decomposition}, $\varphi^{(\partial)}_*V \otimes \varphi^{(\partial)}_*W$ has a submodule of dimension $(p-1)\dim V \dim W$ whose intrinsic $\partial'$-radius is strictly larger than $p^{-p/(p-1)}$.  By Lemma~\ref{L:frob-properties}(e), this can  happen only if $IR_\partial(V \otimes W) > p^{-1/(p-1)}$, which is what we need to prove in (b).
\end{proof}

\begin{remark}
\label{R:visible-instead-of-rational}
We remark that if we do not assume that $\partial$ is  of rational type but assume that $R_\partial(V)<|\partial|_K^{-1}$ instead, all the results in the subsection still hold (note that Frobenius antecedent in the visible case).
\end{remark}

\subsection{Multiple derivations}
\label{S:multi-derivations}

Having studied the situation of one single derivation, we now let multiple commutative derivations to interact.  This essentially amounts to putting the information from each derivation together.  To give the refined radii for multiple derivations a more canonical definition, we will represent the multiset of refined radii as a multiset of differential forms; for this, we need to check the naturality of such presentation.

\begin{notation}
In this subsection, we put $J = \{1, \dots, m\}$.
\end{notation}

\begin{definition}
\label{D:multi-differential-module}
Let $K$ be a differential ring of order $m$, that is a ring equipped with $m$ commuting derivations $\serie{\partial_}m$.   A \emph{$\partial_J$-differential module},
or simply a \emph{differential module},
is a finite projective $K$-module $V$ equipped with commuting 
actions of $\serie{\partial_}m$.  We will apply the results in previous subsections to each $\partial_j$ separately.
\end{definition}

\begin{definition}
Let $K$ and $V$ be as above, and let $R$ be a complete $K$-algebra.  For $\bbv \in V$ and $T_1, \dots, T_m \in R$, we define the \emph{$\partial_J$-Taylor series} to be
\[
\TT(\bbv; \partial_J; T_1, \dots, T_m) = \sum_{e_J = 0}^\infty \frac{\partial_J^{e_J}(\bbv)}{(e_J)!} T_J^{e_J} \in V \otimes_K R,
\]
if it converges.
\end{definition}

We will need the following tautological lemma later in the proof of Theorem~\ref{T:independence-of-basis}.

\begin{lemma}
Let $\partial = \alpha_1 \partial_1 +\cdots + \alpha_m \partial_m$ be another derivation, where $\alpha_1, \dots, \alpha_m \in K$.  To simplify notation, we formally write $\alpha_j = \partial(u_j)$ for any $j \in J$ (and one can check that the formula \eqref{E:Taylor-change-basis} can be written with no reference to $u_j$).  Then, for any $x \in V$, we have
\begin{equation}\label{E:Taylor-change-basis}
\TT\Big(x; \partial_J; \TT(u_1; \partial; \delta) - u_1, \dots, \TT(u_m; \partial; \delta) - u_m \Big) = \TT(x; \partial; \delta),
\end{equation}
as  formal power series in $V \otimes_K K\llbracket \delta \rrbracket$.
\end{lemma}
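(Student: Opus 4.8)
The plan is to recognize both sides of \eqref{E:Taylor-change-basis} as solutions of one and the same first-order linear ``differential equation'' in the formal variable $\delta$ (with coefficients that are honest power series over $K$), and then to deduce the equality from uniqueness of such solutions via a $\delta$-adic induction. Write $\Phi(x) = \TT(x;\partial;\delta)$ for the right-hand side, and $s_j(\delta) = \TT(u_j;\partial;\delta) - u_j$ for $j \in J$, so that the left-hand side is $\Xi(x) := \TT(x;\partial_J;s_1(\delta),\dots,s_m(\delta))$. Since $\partial(u_j) = \alpha_j$ we have $s_j(\delta) = \sum_{n\geq 1}\frac{\delta^n}{n!}\,\partial^{n-1}(\alpha_j) \in \delta\, K\llbracket\delta\rrbracket$, which exhibits $s_j$ with no reference to $u_j$ (this is the parenthetical remark in the statement). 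The first thing I would verify is that $\Xi(x)$ is a well-defined element of $V \otimes_K K\llbracket\delta\rrbracket$: each $s_j(\delta)$ has zero constant term, so only finitely many multi-indices $e_J$ contribute to a given power of $\delta$; and I would note that both $\Xi(x)$ and $\Phi(x)$ reduce to $x$ modulo $\delta$ (for $\Xi$, only the $e_J = 0$ summand survives in degree zero).

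Next I would compute the $\delta$-derivatives. Put $\beta_j(\delta) := \TT(\alpha_j;\partial;\delta) \in K\llbracket\delta\rrbracket$. Differentiating $\Phi(x) = \sum_{n\geq 0}\frac{\partial^n x}{n!}\delta^n$ termwise gives $\frac{d}{d\delta}\Phi(x) = \TT(\partial x;\partial;\delta)$; expanding $\partial = \sum_j \alpha_j\partial_j$ and using that $v \mapsto \TT(v;\partial;\delta)$ respects multiplication by elements of $K$ (the Leibniz rule for the single derivation $\partial$, as in Definition~\ref{D:Taylor-series}) we get $\frac{d}{d\delta}\Phi(x) = \sum_j \beta_j(\delta)\,\Phi(\partial_j x)$. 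For the left-hand side I would differentiate $\Xi(x) = \sum_{e_J}\frac{\partial_J^{e_J}(x)}{(e_J)!}\prod_j s_j(\delta)^{e_j}$ termwise and reindex, the combinatorial point being the identity $\frac{e_k}{(e_J)!}\,\partial_J^{e_J} = \frac{1}{(f_J)!}\,\partial_J^{f_J}\partial_k$ with $f_k = e_k - 1$ and $f_j = e_j$ for $j \neq k$; combined with $s_k'(\delta) = \frac{d}{d\delta}\TT(u_k;\partial;\delta) = \TT(\partial u_k;\partial;\delta) = \beta_k(\delta)$ this yields the matching identity $\frac{d}{d\delta}\Xi(x) = \sum_j \beta_j(\delta)\,\Xi(\partial_j x)$. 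So $\Xi$ and $\Phi$ solve the same linear system, and it is precisely the equality $s_k'(\delta) = \beta_k(\delta)$ that makes the two systems literally coincide.

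Finally I would set $g(x) = \Xi(x) - \Phi(x) \in V \otimes_K K\llbracket\delta\rrbracket$, which satisfies $g(x) \equiv 0 \pmod{\delta}$ for all $x \in V$ together with $\frac{d}{d\delta}g(x) = \sum_j \beta_j(\delta)\,g(\partial_j x)$, and run an induction on $N$: if $g(x) \equiv 0 \pmod{\delta^N}$ for every $x \in V$, then $g(\partial_j x) \equiv 0 \pmod{\delta^N}$, hence $\frac{d}{d\delta}g(x) \equiv 0 \pmod{\delta^N}$ (as $\beta_j \in K\llbracket\delta\rrbracket$), so the coefficients of $g(x)$ in degrees $1,\dots,N$ vanish, and with $g(x) \equiv 0 \pmod{\delta}$ this gives $g(x) \equiv 0 \pmod{\delta^{N+1}}$; therefore $g(x) = 0$, which is \eqref{E:Taylor-change-basis}. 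I expect the only real obstacle to be the bookkeeping in the termwise differentiation and reindexing of the series defining $\Xi(x)$ (and checking $s_k'(\delta) = \beta_k(\delta)$); there are no analytic subtleties, since the whole argument takes place in the formal power series ring $V \otimes_K K\llbracket\delta\rrbracket$ and each $s_j(\delta)$ vanishes at $\delta = 0$.
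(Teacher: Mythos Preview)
Your proof is correct and follows essentially the same route as the paper's: both arguments differentiate with respect to $\delta$, perform the same reindexing to obtain $\frac{d}{d\delta}\Xi(x) = \sum_j \beta_j(\delta)\,\Xi(\partial_j x)$ (with $\beta_j = s_j'$), and close with a $\delta$-adic induction. The one point you leave implicit is that your final step, deducing $c_N = 0$ from $N c_N = 0$, needs $K$ to be $\ZZ$-torsion free; the paper notes this explicitly at the outset by remarking that the identity is tautological and one may reduce to that case.
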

\begin{proof}
Since \eqref{E:Taylor-change-basis} is a tautological statement, we may assume that $K$ is $\ZZ$-torsion free.
It suffices to show that \eqref{E:Taylor-change-basis} is true modulo $\delta^n$ for any $\partial_J$-differential module $V$ and for any $x \in V$, by induction on $n$.  This is clear for $n=1$.  Assume that we have proved this claim for $n$ and we need to prove it for $n+1$.  It suffices to prove the equality
\[
\frac \partial{\partial \delta} \TT\Big(x; \partial_J; \TT(u_1; \partial; \delta) - u_1, \dots, \TT(u_m; \partial; \delta) - u_m \Big) = \frac \partial{\partial \delta}\TT(x; \partial; \delta) = \TT(\partial(x); \partial; \delta)
\]
modulo $\delta^n$ (note that the derivation reduces the exponents on $\delta$ by $1$).  We compute the left hand side as follows.
\begin{align*}
&
\frac \partial{\partial \delta} \TT\Big(x; \partial_J; \TT(u_1; \partial; \delta) - u_1, \dots, \TT(u_m; \partial; \delta) - u_m \Big) \\
=& \sum_{e_J=0}^\infty \frac{\partial_J^{e_J}(x)}{(e_J)!}
\frac \partial{\partial \delta}
\Big(
\big(\TT(u_1; \partial; \delta) - u_1\big)^{e_1}
\cdots \big(\TT(u_m; \partial; \delta) - u_m \big)^{e_m}
\Big) \\
=& \sum_{e_J=0}^\infty \frac{\partial_J^{e_J}(x)}{(e_J)!}\
\Big(
\sum_{j \in J} e_j \cdot
\big(\TT(u_1; \partial; \delta) - u_1\big)^{e_1}\cdots \big(\TT(u_j; \partial; \delta) - u_j \big)^{e_j-1}
\cdots \big(\TT(u_m; \partial; \delta) - u_m \big)^{e_m} \cdot \frac \partial{\partial \delta}\TT(u_j; \partial; \delta)
\Big) \\
=& \sum_{j \in J}
\sum_{e_J=0}^\infty \frac{\partial_J^{e_J}\big(\partial_j(x)\big)}{(e_J)!}\
\Big(
\big(\TT(u_1; \partial; \delta) - u_1\big)^{e_1}
\cdots \big(\TT(u_m; \partial; \delta) - u_m \big)^{e_m} \cdot \frac \partial{\partial \delta}\TT(u_j; \partial; \delta)
\Big)
\end{align*}
By the induction hypothesis, modulo $\delta^n$, this is congruent to
\begin{align*}
& \sum_{j \in J} \TT \big(\partial_j(x); \partial; \delta \big)\cdot \frac \partial{\partial \delta}\TT(u_j; \partial; \delta) = \sum_{j \in J} \TT \big(\partial_j(x); \partial; \delta \big)\cdot \TT(\partial(u_j); \partial; \delta)\\
=& \TT \big( \sum_{j \in J}\partial_j(x)\partial(u_j); \partial; \delta\big) = \TT \big( \partial(x); \partial; \delta\big).
\end{align*}
This finishes the induction and proves the lemma.
\end{proof}

\begin{definition}\label{D:multi-op-cvgt-radii}
Let $K$ be a complete nonarchimedean
differential field of order $m$ and characteristic zero,
and let $V$ be a nonzero $\partial_J$-differential module over $K$.
Define the \emph{intrinsic radius} of $V$ to be
$$
IR(V) = \min_{j \in J} \left\{IR_{\partial_j}(V) \right\} = \min_{j \in J} \left\{ |\partial_j|_{\sp, K} \big/ |\partial_j|_{\sp, V}\right\}.
$$
For $j \in J$, we say $\partial_j$ is \emph{dominant} for $V$ if 
$IR_{\partial_j}(V) = IR(V)$.  We define the multiset of \emph{intrinsic subsidiary radii}  $\gothI\gothR(V) = \{IR(V;1), \dots, IR(V;\dim V)\}$ by collecting and ordering 
intrinsic radii from the Jordan-H\"older constituents, as in
Definition~\ref{D:partial-radii}.
 We again say that $V$ has \emph{pure intrinsic
radii} if $\gothI\gothR(V)$ consists of $\dim V$ copies of  $IR(V)$.

We similarly define the \emph{extrinsic radius} $ER(V)$ to be the minimum of $R_{\partial_j}(V)$ and the multiset of \emph{extrinsic subsidiary radii} $\gothE\gothR (V)  = \{ ER(V; 1), \dots, ER(V; \dim V) \}$ by collecting and ordering extrinsic radii from the Jordan-H\"older constituents.
\end{definition}

\begin{definition} \label{D:rational type multi}
Let $K$ be a complete nonarchimedean differential field of order $m$ and
characteristic zero.
We say that $K$ is of \emph{rational type} with respect to a set of parameters
$\{u_j| j \in J\}$ if each $\partial_j$ is of rational type with respect to
$u_j$, and $\partial_i(u_j) = 0$ for $i \neq j$ in $J$.
\end{definition}

\begin{hypothesis}\label{H:K-multi-derivative}
For the rest of this subsection, let $K$ be a complete nonarchimedean field of characteristic zero, equipped with derivations $\partial_J$ of rational type with respect to parameters $u_J$.  Let $V$ be a $\partial_J$-differential module with pure $\partial_j$-radii for each $j \in J$.  We assume moreover that $IR(V) <1$.

\end{hypothesis}

\begin{notation}
For each $j$, put $s_j = -\log (\omega R_{\partial_j}(V)^{-1})$, $\lambda_j = \lambda(IR_{\partial_j}(V))$, and $r_j = r(IR_{\partial_j}(V))$.  By Theorem~\ref{T:decomp-over-field-complete}, we have $s_j \in \QQ \cdot \log |K^\times|$ for any $j$.
\end{notation}

\begin{definition}\label{D:multi-refined}
By Theorem~\ref{T:refined-decomposition}, we may replace $K$ by a finite tamely ramified extension such that $V$ admits a direct sum decomposition $V = \oplus V_{\theta_J}$, where each direct summand $V_{\theta_J}$ has pure refined $\partial_j$-radii $\theta_j$ for any $j \in J$.
Define the multiset of \emph{refined radii} of $V$, denoted by $\Theta(V)$, to be the collection of $\vartheta = \sum_{j \in J} \theta_j du_j$ with multiplicity $\dim V_{\theta_J}$, where $\vartheta$ is viewed as an element of $\oplus_{j \in J} \kappa_{K^\alg}^{(s_j)} du_j$.  The reason that we write the refined radii in the form of differentials will be justified later, in Theorem~\ref{T:independence-of-basis}.

We will also consider cases where the derivations with larger radii of convergence are ignored.
\begin{itemize}
\item [(i)] Let $\calI\Theta(V)$ be the multiset consisting of  elements $\sum \theta_j du_j$ with multiplicity $\dim V_{\theta_J}$, where the sum is only taken over those $j$ such that $IR_{\partial_j}(V_{\theta_J}) = IR(V_{\theta_J})$; this is called the multiset of \emph{refined intrinsic radii}.  Often, we view it as a multiset of elements in $\oplus_{j \in J} \kappa_{K^\alg}^{(\goths)} \frac{du_j}{u_j}$ for $\goths =-\log(\omega IR(V)^{-1})$.
We remark that this definition does not depend on the field extension of $K$ we made earlier.
\item [(ii)] Let $\calE\Theta(V)$ be the multiset consisting of  elements $\sum \theta_j du_j$ with multiplicity $\dim V_{\theta_J}$, where the sum is only taken over those $j$ such that $R_{\partial_j}(V_{\theta_J}) = R(V_{\theta_J})$.  We call it the \emph{refined extrinsic radii}.
\end{itemize}

\end{definition}

\begin{definition}
\label{D:multi-good-norm}
Let $(\bbb_1, \dots, \bbb_m) \in (0,1]^m$.  A norm $ |\cdot|_V$ on $V$ is \emph{$(\bbb_1, \dots, \bbb_m)$-good} (or simply \emph{good} if $\bbb_j = IR_{\partial_j}(V)$ for all $j\in J$), if it is $\bbb_j$-good with respect to $\partial_j$ for all $j \in J$.
\end{definition}

\begin{remark}
In contrast to the single derivation case, we do not know if a good norm exists in general, unless we assume that $K$ is discretely valued, in which case, Lemma~\ref{P:multi-good-norm-exists} below gives an affirmative answer.  This assumption may not be necessary for some of the results later, as one might get around this using some approximation process.  Since we will work with complete discrete valuation field in most applications, we restrict ourselves here to this case.
\end{remark}

\begin{hypothesis}\label{H:K-discrete}
For the rest of this subsection, we assume that $K$ is discretely valued.
\end{hypothesis}

\begin{lemma}\label{P:multi-good-norm-exists}
Assume that $\bbb_j \in (0, IR_{\partial_j}(V)] \cap |K^\times|^\QQ$ for any $j\in J$, and that $\bbb_j <1$ if $p>0$.
Then the differential module $V$ admits a $(\bbb_1, \dots, \bbb_m)$-good norm.
\end{lemma}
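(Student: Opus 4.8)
The plan is to reduce to the single-derivation Lemma~\ref{L:good-norm-exists} by treating the directions $j\in J$ in groups that are handled compatibly, using $\partial_j$-Frobenius antecedents to push the non-visible directions into the visible range and a gluing argument to move between extensions. First I would make two harmless reductions. Since each $\bbb_j\in|K^\times|^\QQ$ and the refined radii of $V$ become defined over $\bigcup_n(\kappa^{(p^ns_j)}_{K'})^{1/p^n}$ for a suitable finite tamely ramified $K'/K$, I would pass to such a $K'$: by Lemma~\ref{L:tame-extension} the $\partial_j$ remain of rational type with the \emph{same} parameters and $|\partial_j|$, so it suffices to produce a $\Gal(K'/K)$-equivariant good norm on $V\otimes_KK'$; this restricts to a norm on $V$ that is still good and, because $K$ is discretely valued, still splittable (its unit ball is a finitely generated torsion-free, hence free, $\calO_K$-module, and a discrete value group forces such a basis to be orthogonal). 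Thus we may assume $\bbb_j\in|K^\times|$ for all $j$, so that a good norm is the same thing as an $\calO_K$-lattice $\Lambda\subset V$ whose lattice-norm satisfies the finitely many inequalities in Definition~\ref{D:good-norm}(iv), one block of them for each $j$.

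Next I would dispose of the non-visible directions. For each $j$ with $\bbb_j>\omega$ we have $IR_{\partial_j}(V)\geq\bbb_j>\omega$, so Lemma~\ref{L:frob-properties}(d) provides a $\partial_j$-Frobenius antecedent; because the parameters are independent ($\partial_i(u_j)=0$ for $i\neq j$), the other derivations $\partial_i$ descend to $K^{(\partial_j)}$, stay of rational type, and keep their radii. Iterating as in the proof of Lemma~\ref{L:good-norm-exists} (and handling the critical case $\bbb_j=\omega$ by hand, exactly as there), I reduce to the \emph{all-visible} case, where $\bbb_j\leq\omega$ and the only constraint is $|\partial_j|_V\leq\omega(\bbb_j|u_j|)^{-1}$ for every $j$; a good norm on the iterated antecedent pulls back in each direction to a good norm on $V$ by the bookkeeping of Lemma~\ref{L:good-norm-exists}.

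In the all-visible case I would induct on $\dim V$. If $V$ has a proper nonzero $\partial_J$-submodule $V'$, then $V'$ and $V/V'$ carry good norms by induction; splitting $V=V'\oplus(V/V')$ over $K$, the $\partial_j$ differ from the block-diagonal actions by $K$-linear second fundamental forms $\phi_j\colon V/V'\to V'$, and on $V'\oplus(V/V')$ with norm $\max\{t|\cdot|_{V'},\,|\cdot|_{V/V'}\}$ one checks that for a single $t\in|K^\times|$ small enough to absorb the finitely many bounded quantities $|\phi_j|$ (and, in the critical sub-case, their compositions with powers of $\partial_j^{V'}$) the resulting lattice is good for all $j$ at once. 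This leaves the case where $V$ is $\partial_J$-irreducible; since the decompositions of Theorems~\ref{T:decomp-over-field-complete} and \ref{T:refined-decomposition} are canonical and hence stable under every $\partial_i$, such a $V$ automatically has pure and visible $\partial_j$-radii and pure refined $\partial_j$-radii for every $j$. For this core case I would pick over a further tame extension the models $\calL_{x_j,(n_j)}$ of Example~\ref{Ex:pure-refined} realizing the refined $\partial_j$-radii of $V$, equip $\calL=\bigotimes_j\calL_{x_j,(n_j)}$ with the explicit good norm from Lemma~\ref{L:pure-refined} and tensor-compatibility, and use Proposition~\ref{P:refined-properties-general}(b) to see that $V\otimes\calL^\vee$ has strictly larger intrinsic $\partial_j$-radius in every direction; feeding this into an induction on how far the radii are from $1$ (they take only finitely many relevant values after the Frobenius reductions) and untwisting by the good norm on $\calL$ yields the desired good norm on $V$.

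The hard part is precisely this last point: ensuring that a construction repairing one direction does not spoil the others. All the genuine content sits in arranging the reductions---Frobenius antecedents in each coordinate, the canonical decompositions---so that every step is visibly compatible with \emph{all} $\partial_j$ simultaneously, and in verifying the irreducible-core step (both that the twisting strictly increases radii and that the process terminates). The ``one $t$ for all $j$'' device in the gluing step and the splittability of norms over a discrete valuation are the two places where finiteness of $J$ and Hypothesis~\ref{H:K-discrete} are used in an essential way.
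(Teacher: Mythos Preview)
Your reduction via $\partial_j$-Frobenius antecedents to the all-visible range $\bbb_j\leq\omega$ matches the paper exactly, and you are right that this is compatible with the other derivations. The divergence is in how you handle the all-visible case, and there your plan has a real gap.

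The paper's argument in the visible case is short and direct, and avoids your induction entirely. One adjoins auxiliary elements $x_1,\dots,x_m$ with $|x_j|=\ee^{-s_j}=\omega(\bbb_j|u_j|)^{-1}$ (so $K'$ remains discretely valued), picks any orthonormal basis $\bbe_1,\dots,\bbe_d$ of $V'=V\otimes K'$, and considers the $\calO_{K'}$-submodule $M'$ generated by all $x_J^{a_J}\partial_J^{a_J}\bbe_i$. The point is that $M'$ is finite: for each fixed $j$, boundedness of $|x_j^n\partial_j^n|_{V'}$ follows from the \emph{single-derivation} good norm of Lemma~\ref{L:good-norm-exists}, and discreteness of $K'$ turns bounded into finite. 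By construction $M'$ is stable under every $x_j\partial_j$, so the associated lattice norm satisfies $|\partial_j|\leq\ee^{-s_j}$ for all $j$ at once. Restricting to $V$ and invoking the lattice Lemma~\ref{L:lattice} produces the orthogonal basis. There is no need to decompose $V$, twist by test objects, or track refined radii.

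In your approach the step that fails is the termination of the ``induction on how far the radii are from $1$'' in the irreducible core. Proposition~\ref{P:refined-properties-general}(b) gives only a \emph{strict} inequality $R_{\partial_j}(V\otimes\calL^\vee)>R_{\partial_j}(V)$ with no quantitative lower bound on the gain; after twisting, the new radii can land anywhere in $(IR_{\partial_j}(V),1]$, so your claim that ``they take only finitely many relevant values after the Frobenius reductions'' is unjustified, and nothing forces the process to reach $IR_{\partial_j}=1$ (or even $\geq\omega$) in finitely many steps. A second, smaller issue: the objects $\calL_{x_j,(n_j)}$ of Example~\ref{Ex:pure-refined} are only $\partial_j$-differential modules, and to form $\calL=\bigotimes_j\calL_{x_j,(n_j)}$ as a $\partial_J$-module with \emph{commuting} actions you need the remaining $\partial_i$ to act trivially on the generators, which forces $\partial_i(x_j)=0$ for $i\neq j$; arranging compatible lifts $x_j$ with this property for all $j$ simultaneously is not automatic and is not supplied by the paper's construction. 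Both problems disappear in the paper's ``saturated lattice'' argument, which never leaves the single module $V$.
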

\begin{proof}
We first remark that if $IR_{\partial_j}(V) <1$, Theorem~\ref{T:decomp-over-field-complete} implies that $IR_{\partial_j}(V) \in |K^\times|^\QQ$.  To prove the lemma, we may assume $\bbb_j = IR_{\partial_j}(V)$.

By the same argument as in Lemma~\ref{L:good-norm-exists} using Frobenius antecedent, it suffices to prove the lemma under the assumption that $\bbb_j \leq \omega$ for any $j \in J$. Note that the $\partial_j$-Frobenius antecedent is compatible with  $\partial_{j'}$ for $j' \neq j$. 
Let $K'$ be the completion of $K(x_J)$ with respect to the $\ee^{-s_J}$-Gauss norm, where we set $\partial_j(x_{j'}) = 0$ for all $j,j' \in J$ and $s_j = -\log(\omega (\bbb|u|)^{-1})$. In particular, $K'$ is discretely valued since $\ee^{-s_j} \in |K^\times|^\QQ$ for any $j \in J$.

We first show that $V' :=V \otimes K'$ has a $(\bbb_1, \dots, \bbb_n)$-good norm.  For this, it suffices to show that given any norm $|\cdot|_{V'}$ with orthonormal basis $\serie{\bbe_}d$, the submodule $M'$ of $V'$ generated by 
\[
\big\{ x_J^{a_J}\partial_J^{a_J} \bbe_i \big| a_j \in \ZZ_{\geq 0} \textrm{ for any } j \in J \textrm{ and } i \in \{1, \dots, d\} \big\}
\]
over $\calO_{K'}$ is a finite $\calO_{K'}$-module; if so, $M'$ gives rise to a norm on $V'$, under which $|\partial_j| \leq |x_j| = \ee^{-s_j}$ for all $j$ verify the conditions of $(\bbb_1, \dots, \bbb_n)$-good norm in Definition~\ref{D:good-norm}. To prove that $M'$ is a finite $\calO_{K'}$-submodule, it suffices to prove that $|x_j^n\partial_j^n|_{V'}$ is bounded for each $j$ as $n \rar +\infty$ (we used here the fact that $K'$ is discretely valued, otherwise boundness may not imply finiteness.)  It is then enough to verify this boundness condition for any $K'$-norm on $V'$.  In particular, for each of $\partial_j$, we can choose a $\bbb_j$-good norm by Lemma~\ref{L:good-norm-exists}, for which $|x_j^n\partial_j^n|_{V'} \leq 1$.  Thus $M'$ is finite over $\calO_{K'}$ and hence we have a $(\bbb_1, \dots, \bbb_n)$-good norm on $V'$.

This norm restricts to a $K$-norm on $V$ satisfying all the norm conditions in Definition~\ref{D:good-norm}.  We use the following lattice lemma to show that it admits an orthogonal basis.
\end{proof}

\begin{lemma}\label{L:lattice}
Let $F$ be a complete \emph{discrete} valuation field and let $V$ be a finite dimensional vector space, equipped with a norm compatible with $F$.  Assume moreover that the valuation group $\log |V- \{0\}|_V$ of $V$ is also discrete.  Then $V$ admits an orthogonal basis.
\end{lemma}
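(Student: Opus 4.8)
The plan is to build the orthogonal basis greedily by repeatedly extracting, from the current subspace, a vector that best approximates its own ``norm infimum,'' and showing this process terminates because of the twin discreteness hypotheses. More precisely, first I would reduce to the following claim: for any nonzero finite-dimensional subspace $W\subseteq V$, there is a nonzero $\bbw\in W$ such that $|\bbw|_V$ realizes a ``locally optimal'' value and the quotient seminorm on $W/F\bbw$ is again a norm with discrete value group; then induct on $\dim W$. The reduction is routine linear algebra: given an orthogonal basis $\bbe_1,\dots,\bbe_{k-1}$ of a complement-free flag and a vector $\bbw_k$ orthogonal to their span, the collection $\{\bbe_1,\dots,\bbe_{k-1},\bbw_k\}$ together with an orthogonal basis of the quotient (lifted arbitrarily, then corrected) assembles into an orthogonal basis of the larger space; this last correction step uses that orthogonality of a set can be checked one vector at a time against the span of the rest, combined with the strong triangle inequality.

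The heart of the matter is producing the single vector $\bbw$. Here is where I would use the hypotheses: fix a norm-compatible isomorphism $V\cong F^n$ as $F$-vector spaces (not as normed spaces), so that the unit ball $\calO_V=\{|\bbv|_V\le 1\}$ is an $\calO_F$-submodule of $F^n$. Since $\log|V\setminus\{0\}|_V$ is discrete, after rescaling we may assume $\max_{\bbv\ne 0}$ of the value group is attained, i.e.\ there exists $\bbw$ with $|\bbw|_V=1$ and $|\bbv|_V\le 1$ for all $\bbv\in\calO_V$ — wait, that is automatic; the real content is that the \emph{infimum} of $|\bbv|_V$ over $\bbv\in W$ with $\bbv\notin F\cdot(\text{chosen hyperplane})$ is \emph{attained}, which follows from discreteness of the value group of $V$ restricted to $W$ (a subspace, so still discrete). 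Concretely: among all nonzero $\bbw\in W$, the quantity $|\bbw|_V / \mathrm{dist}(\bbw, \text{prior span})$ — or more simply, just the set of values $|\bbw|_V$ for $\bbw$ ranging over coset representatives — takes a minimum because the value group is discrete and bounded below away from $0$ on the compact-like unit sphere (here discreteness of $\log|F^\times|$ ensures the sphere behaves well). That minimizing $\bbw$ is, by the usual argument (if $|\bbw+\bbv|_V<|\bbw|_V$ for some $\bbv$ in the span, one derives a contradiction with minimality after scaling), orthogonal to the previously chosen span.

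The main obstacle I anticipate is making the ``infimum is attained'' step fully rigorous without local compactness: a general complete discretely valued $F$ need not have locally compact $V$, so one cannot simply invoke compactness of the unit sphere. The fix is to lean entirely on the hypothesis that $\log|V\setminus\{0\}|_V$ is discrete: this set is a discrete subset of $\RR$, hence in any bounded interval it is finite, so any subset of it that is bounded below has a minimum. Thus the infimum over a coset (which is bounded below by $0$, indeed bounded below by the quotient seminorm value, which I must separately check is positive, i.e.\ that the quotient seminorm is a genuine norm — this uses completeness of $F$ and finite-dimensionality, via the standard fact that all norms on a finite-dimensional space over a complete field are equivalent, or more elementarily that a finite-dimensional subspace is closed) is attained. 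Once attainment is in hand, the orthogonality of the minimizer and the inductive assembly are formal, and the lemma follows; I would write this out as an induction on $\dim V$, the base case $\dim V=1$ being trivial.
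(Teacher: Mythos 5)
Your proposal is correct and rests on the same two pillars as the paper's proof: induction on $\dim V$, and the observation that discreteness of the value group of $V$ forces the quotient-norm infimum over any coset to be attained, so one can lift vectors isometrically. The paper organizes this more cleanly (pick an arbitrary $v_1$, pass to $W=V/Fv_1$, apply the inductive hypothesis to $W$, then lift each $\bar v_i$ to $v_i\in V$ with $|v_i|_V=|\bar v_i|_W$, and verify orthogonality directly by the ultrametric inequality) whereas your version is a greedy build-up with some detours, but the essential content — including the point you flag about the quotient seminorm being a genuine norm because a finite-dimensional subspace over a complete field is closed — is the same.
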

\begin{proof}
The proof is almost the same as \cite[Lemma~1.3.7]{kedlaya-course}.  For completeness and the convenience of the reader, we reproduce it here.

We use induction on the dimension $n = \dim V$.  When $n = 1$, the statement is obvious; any nonzero vector forms an orthogonal basis.  Now assuming the statement for $n-1$,  we will prove it for an $n$-dimensional $F$-normed vector space $(V, |\cdot|_V)$ whose valuation group is discrete.  Pick a nonzero vector $v_1 \in V$ and denote $W = V / Fv_1$, provided with the quotient norm $|\cdot|_W$; this is again $F$-compatible and has discrete valuation group.  By the inductive hypothesis, $W$ admits an orthogonal basis $\bar v_2, \dots, \bar v_n$.  For $i = 2, \dots, n$, we pick $v_i \in V$ that lifts  $\bar v_i \in W$ such that $|v_i|_V =|\bar v_i|_W$ (this is possible because $V$ has discrete valuation group).  We claim that $v_1, \dots, v_n$ form an orthogonal basis of $V$.

They obviously form a basis of $V$.
We need to prove that for any $v = x_1v_1+\cdots +x_nv_n \in V$, $|v|_V = \max_i\{|x_i| |v_i|_V\}$.  It is clear that $|v|_V$ is less than or equal to the right hand side; we need to show $|v|_V \geq \max_i\{|x_i| |v_i|_V\}$.  We prove it the following two cases separately.

(i) If the maximum above is achieved by some $i\geq 2$, we have
\[
|v|_V \geq |v\mod Fv_1|_W = |x_2\bar v_2+ \cdots + x_n\bar v_n|_W = \max_{i=2}^n\{|x_i| |\bar v_i|_W\} = \max_{i=1}^n\{|x_i| |v_i|_V\}.
\]

(ii) We have $|x_1||v_1|>|x_i||v_i|$ for all $i = 2, \dots, n$.  In this case, we have  $|v| = |x_1||v_1| = \max_i\{|x_i| |v_i|_V\}$.

This shows that $v_1, \dots, v_n$ form an orthogonal basis of $V$ and finishes the proof of the lemma.
\end{proof}

\begin{remark}
One may hope to find an analog of Example~\ref{Ex:pure-refined} for $\partial_J$-differential modules. This, however, amounts to carefully choosing the element $x$ in Example~\ref{Ex:pure-refined} so that the actions of $\partial_J$ \emph{commutes}.  For this, we might need to restrict the possible intrinsic refined radii to a subset of $\oplus_{j \in J} \kappa_{K^\alg}^{(\goths)} \frac{du_j}{u_j}$, where $\goths = -\log( \omega IR(V)^{-1})$.  Unfortunately, we do not know how to identify this subset in general.  Proposition~\ref{P:refined-is-boundary-p=0} below partly answers this question.

It would be interesting to know, when $p>0$, whether any element in $\oplus_{j \in J} \kappa_{K^\alg}^{(\goths)} \frac{du_j}{u_j}$ can appear in the multiset of refined intrinsic radii of some differential module.
The referee also pointed out that the reduction of $\partial_j$ may give rise to a $\calD$-module in characteristic $p$.  We do not know if this construction is independent of the choice of good norms.  But we suspect that this is related to the reduction of some arithmetic $\mathscr D$-module when the differential module comes from one.
\end{remark}

\begin{proposition}\label{P:refined-is-boundary-p=0}
Assume that $IR(V) < \omega$ and that $p=0$ or $d = \rank V = 1$.  Let $\goths = -\log( \omega IR(V)^{-1})$. Note that the action of $u_j \partial_j$ on $K$ induces a derivation on $\kappa_{K^\unr}^{(\goths)}$.  If $\vartheta = \sum_{j \in J} \theta_j \frac{du_j}{u_j} \in \calI\Theta(V)$, then for $i, j \in J$, we have $u_i\partial_i \theta_j = u_j\partial_j \theta_i$ in $\kappa_{K^\unr}^{(\goths)}$.
\end{proposition}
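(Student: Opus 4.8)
The plan is to reduce to the case where $V$ is pure with refined $\partial_j$-radii $\theta_j$ for every $j$, install a good norm, and then exploit the fact that the operators $u_j\partial_j$ commute by passing to traces.

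First I would perform the reductions. Since the claim concerns a fixed $\vartheta\in\calI\Theta(V)$, by Theorem~\ref{T:refined-decomposition} and Definition~\ref{D:multi-refined} we may pass to a finite tamely ramified extension of $K$ — this keeps every $\partial_j$ of rational type with the same parameter $u_j$ by Lemma~\ref{L:tame-extension}, and it does not alter $\kappa_{K^\unr}^{(\goths)}$ since the residue extension is separable — and replace $V$ by the summand $V_{\theta_J}$, which has pure refined $\partial_j$-radii $\theta_j$ for all $j$ and the same $\partial_j$-radii as $V$. It suffices to treat indices $i,j$ for which $\partial_i$ and $\partial_j$ are both dominant for $V$ (the remaining indices do not contribute to $\vartheta$); for such $i,j$ one has $s_i=s_j=\goths$, the $\partial_i$- and $\partial_j$-radii are visible, and $r_i=r_j=1$. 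By Lemma~\ref{P:multi-good-norm-exists} (here we use that $K$ is discretely valued) $V$ carries a good norm; enlarging the value group of $K$ as in Remark~\ref{R:enlarge-K} we obtain an orthonormal basis $\underline\bbe$ for it. For each dominant $j$ let $M_j$ be the matrix of $D_j:=u_j\partial_j$ with respect to $\underline\bbe$. Since the norm is good and $r_j=1$, Definition~\ref{D:good-norm} gives $M_j\in\Mat(\gothm_K^{(\goths)})$, and because $\calI\Theta_{\partial_j}(V)=u_j\Theta_{\partial_j}(V)$ consists of $\dim V$ copies of $\theta_j$, every reduced eigenvalue of $M_j$ in $\kappa_{K^\alg}^{(\goths)}$ equals $\theta_j$.

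Next I would use commutativity. As $K$ is of rational type with respect to $\{u_j\}$ we have $\partial_i(u_j)=0$ for $i\neq j$, hence $[D_i,D_j]=[u_i\partial_i,u_j\partial_j]=0$. Writing $D_j\underline\bbe=\underline\bbe M_j$ and applying $D_i$ by the Leibniz rule gives $D_i(\underline\bbe M_j)=\underline\bbe\,(M_iM_j+D_i(M_j))$, where $D_i(M_j)$ denotes the entrywise application of $D_i$; comparing $D_iD_j\underline\bbe$ with $D_jD_i\underline\bbe$ and using that $\underline\bbe$ is a basis yields
\[
M_iM_j+D_i(M_j)=M_jM_i+D_j(M_i).
\]
Taking traces, and using $\Tr(M_iM_j)=\Tr(M_jM_i)$ together with $\Tr(D_i(M_j))=D_i(\Tr M_j)$, we get $D_i(\Tr M_j)=D_j(\Tr M_i)$ in $K$.

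Finally, this is where the hypothesis $p=0$ or $d=\rank V=1$ enters: $d$ is then a unit in $\calO_K$, so we may divide and obtain $D_i\!\left(\frac1d\Tr M_j\right)=D_j\!\left(\frac1d\Tr M_i\right)$. The element $\frac1d\Tr M_j$ lies in $\gothm_K^{(\goths)}$, and since all reduced eigenvalues of $M_j$ equal $\theta_j$ and $p\nmid d$ (so no cancellation in the trace occurs, $\theta_j$ being nonzero), it reduces to $\theta_j$ modulo $\gothm_K^{(\goths)+}$. The operator $D_j=u_j\partial_j$ has operator norm $1$ on $K$ (and on $K^\unr$), hence preserves $\gothm_K^{(\goths)}$ and $\gothm_K^{(\goths)+}$ and induces exactly the derivation on $\kappa_{K^\unr}^{(\goths)}$ named in the statement; reducing the displayed identity modulo $\gothm^{(\goths)+}$ therefore gives $u_i\partial_i\theta_j=u_j\partial_j\theta_i$, an identity whose two sides lie in $\kappa_{K^\unr}^{(\goths)}$ and are unaffected by the value-group enlargement. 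The only real difficulty is in the first step: arranging a pure summand over a tame extension while retaining a good norm behaving as in Definition~\ref{D:good-norm}, and checking that the reduced eigenvalues of $M_j$ are all exactly $\theta_j$. The arithmetic heart is the observation that the trace (not the individual eigenvalues) transforms well under the commutator relation, and that $\frac1d\Tr M_j$ recovers $\theta_j$ precisely when $p\nmid d$ — which is exactly the content of the hypothesis.
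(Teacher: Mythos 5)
Your proposal follows essentially the same route as the paper: reduce to an irreducible summand with pure refined intrinsic radii via Theorem~\ref{T:refined-decomposition} and a tame extension, install a good norm with orthonormal basis, extract the commutator relation $N_iN_j + u_i\partial_i(N_j) = N_jN_i + u_j\partial_j(N_i)$, take traces, and divide by $d$ (which is invertible in $\kappa_K$ by the hypothesis $p=0$ or $d=1$). This matches the paper's proof step for step.

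One small point deserves attention. You restrict to the case where both $\partial_i$ and $\partial_j$ are dominant, on the grounds that "the remaining indices do not contribute to $\vartheta$." But the convention in Definition~\ref{D:multi-refined} sets $\theta_j = 0$ for non-dominant $j$, and the proposition is asserted for \emph{all} $i,j \in J$. When $j$ is non-dominant and $i$ is dominant the identity becomes $u_j\partial_j\theta_i = 0$, which is a genuine (nonvacuous) statement — it is not an automatic consequence of $\theta_j$ vanishing. Fortunately your trace argument handles this case without modification: for non-dominant $j$ the good norm still yields $N_j \in \Mat(\gothm_K^{(\goths)+})$, so $\frac{1}{d}\Tr N_j$ reduces to $0 = \theta_j$ in $\kappa_K^{(\goths)}$, and the same commutator identity taken modulo $\gothm_K^{(\goths)+}$ gives $u_j\partial_j\theta_i = 0$. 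The fix is simply to drop the restriction to dominant indices and let the trace computation run uniformly over all $j$; the paper's proof does exactly this implicitly.
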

\begin{proof}
By possibly replacing $K$ by a finite tamely ramified extension, we reduce to the case when $V$ is irreducible with a good norm given by an orthonormal basis, and when $V$ has pure refined intrinsic radii $\sum_{j \in J} \theta_j \frac{du_j}{u_j}$.  The $u_j\partial_j$-action with respect to this basis is given by a matrix $N_j \in \Mat_{d \times d}(\gothm_K^{(\goths)})$.  Since $\partial_i$ and $\partial_j$ commute with each other for any $i, j \in J$, we have
\begin{equation}\label{E:refined=boundary}
N_iN_j + u_i \partial_i(N_j) = N_j N_i + u_j \partial_j(N_i).
\end{equation}
Taking the trace of \eqref{E:refined=boundary} gives $d\cdot u_i\partial_i \theta_j = d \cdot u_j\partial_j \theta_i$, which yields the proposition because $d$ is invertible in $\kappa_K$.
\end{proof}

Before proceeding, we need some notation to use in Theorem~\ref{T:independence-of-basis} below.

\begin{notation}
If $p>0$, we can write an integer $n \in \NN$ as $n = a_0 + pa_1 +\cdots +p^k a_k$ with $a_1, \dots, a_k \in \{0, \dots, p-1\}$. Put $\sigma_p(n) = a_0+ \cdots + a_k$ if $p>0$, and $\sigma_p(n) = 0$ if $p=0$.  It is straightforward to check that $\sigma_p(n_1) + \sigma_p(n_2) \geq \sigma_p(n_1+n_2)$ for $n_1, n_2 \in \NN$, and that $|n!| = \omega^{n - \sigma_p(n)}$ for $n \in \NN$.
\end{notation}

The following theorem explains how refined radii change when we consider a different set of derivations, and hence justifies the reason we wrote refined radii in the form of differentials in Definition~\ref{D:multi-refined}.

\begin{theorem}\label{T:independence-of-basis}
Assume that $V$ has pure refined $\partial_j$-radii $\theta_j \in \kappa_{K^\alg}^{(s_j)}$ for any $j \in J$. Let $ K'$ be a complete discrete valuation field containing $K$.
Let $\partial$ be a derivation on $K'$, extending the action of $\alpha_1 \partial_1 + \cdots + \alpha_m \partial_m$ on $K$ to $K'$, where $\alpha_1, \dots, \alpha_m \in K'$. In fact, we have $\alpha_j = \partial(u_j)$ for any $j \in J$.  We assume that $ \partial$ is a derivation of rational type on $K'$. Set $s = \min_{j \in J} \{s_j-\log|\alpha_j|\}$ and let  
$J_0$ be a subset of $J$ consisting of $j$ for which $s = s_j - \log|\alpha_j|$.  Assume moreover that $IR_j(V) <1$ if $j \in J_0$.
Let $\theta = \sum_{j \in J_0}\alpha_j \theta_j \in \kappa_{ K'^\alg}^{(s)}$.

Then $R_\partial(V \otimes_K K') \leq \omega \ee^s$, and the equality is achieved if and only if $\theta \neq 0$ in $\kappa_{K'^\alg}^{(s)}$.  Moreover, when equivalent statement is verified, $V \otimes_K K'$ has pure $\partial$-radii $\omega \ee^s$ and pure refined $\partial$-radii $\theta$.
\end{theorem}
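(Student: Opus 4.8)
The plan is to pass to a simultaneously good norm, read off the action of $\partial$ as an explicit matrix, and reduce it modulo the appropriate ``threshold'' ideal; the refined $\partial$-radii will then be the reduced eigenvalues of that matrix, which I will show has pure reduced eigenvalue $\theta$.

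\emph{Reductions and setup.} First I would reduce to the irreducible case (Theorem~\ref{T:refined-decomposition}) and, after a finite tamely ramified extension of $K$ and a Gauss-norm enlargement of its value group (Remark~\ref{R:enlarge-K}, Lemma~\ref{L:tame-extension}; these preserve rational type of all the $\partial_j$ and of $\partial$), invoke Lemma~\ref{P:multi-good-norm-exists} together with Lemma~\ref{L:lattice} to obtain an \emph{orthonormal} basis $\underline{\bbe}$ of $V$ realizing a norm that is good for every $\partial_j$ at once; enlarge $K'$ correspondingly. Writing $N_j$ for the matrix of $\partial_j$ in $\underline{\bbe}$, the derivation $\partial$ acts on $V\otimes_K K'$ by $N=\sum_j \alpha_j N_j$. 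The hypothesis $IR_j(V)<1$ for $j\in J_0$ guarantees the $\theta_j$ entering $\theta$ are the genuine refined $\partial_j$-radii.

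\emph{The visible case.} Assume $IR_{\partial_j}(V)<\omega$ for all $j\in J_0$ and that $\partial$ has visible radii on $V\otimes_K K'$, i.e. the target intrinsic radius $\bbb:=|\partial|_{K'}\,\omega\ee^s$ satisfies $\bbb<\omega$; the remaining cases come last. Goodness gives $|N_j|=\ee^{-s_j}$, whence $|\alpha_j N_j|=\ee^{-(s_j-\log|\alpha_j|)}\le\ee^{-s}$ with equality exactly for $j\in J_0$, so $N\in\Mat(\gothm_{K'}^{(s)})$ and its reduction in $\Mat(\kappa_{K'}^{(s)})$ equals $\sum_{j\in J_0}\overline{\alpha_jN_j}^{(s)}$. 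Commutativity $\partial_j\partial_{j'}=\partial_{j'}\partial_j$ forces $N_jN_{j'}-N_{j'}N_j=\partial_{j'}(N_j)-\partial_j(N_{j'})$, and in the visible range $|\partial_j(N_{j'})|\le|\partial_j|_K\ee^{-s_{j'}}=|u_j|^{-1}\ee^{-s_{j'}}<\ee^{-(s_j+s_{j'})}$ (this strict inequality being exactly $IR_{\partial_j}(V)<\omega$), so the reductions $\bar N_j$ for $j\in J_0$ pairwise commute. Since each such $\bar N_j$ has pure reduced eigenvalue $\theta_j$, simultaneous triangularization over $\kappa_{K^\alg}$ shows $\bar N$ has pure reduced eigenvalue $\sum_{j\in J_0}\bar\alpha_j\theta_j=\theta$. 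If $\theta\ne0$, all reduced eigenvalues of $N$ are nonzero, and $\underline{\bbe}$ is $\bbb$-good for $\partial$ (Definition~\ref{D:good-norm}(i): $|\partial|_V=|N|\le\ee^{-s}=\omega(\bbb|u|)^{-1}$ for a rational parameter $u$ of $\partial$), so Lemma~\ref{L:converse-good-norm} gives that $V\otimes_K K'$ has pure $\partial$-radii $\omega\ee^s$ and that $\Theta_\partial(V\otimes_K K')$ is the multiset of reduced eigenvalues of $N$, i.e. pure $\theta$. If $\theta=0$, then $\bar N$ is a sum of commuting nilpotents, hence nilpotent, so the matrix $N^{(n)}$ of $\partial^n$ (differing from $N^n$ only by lower-order derivative corrections) has $|N^{(n)}|<\ee^{-ns}$ for $n\gg0$, which determines $R_\partial(V\otimes_K K')$ and the equality criterion (together with the elementary comparison $R_\partial(V\otimes_K K')\le R_\partial(K')$).

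\emph{The non-visible cases and the main obstacle.} When some $IR_{\partial_j}(V)\ge\omega$ for $j\in J_0$ (forcing $p>0$) or $\bbb\ge\omega$, I would reduce to the visible case by Frobenius: pass to Frobenius antecedents when the relevant intrinsic radius exceeds $p^{-1/(p-1)}$ (Lemma~\ref{L:frob-properties}(d)) and to Frobenius descendants at $p^{-1/(p-1)}$, simultaneously for $\partial$ and for the non-visible $\partial_j$, using that the $\partial_j$-Frobenius is compatible with $\partial_{j'}$ for $j'\ne j$ (as in the proof of Lemma~\ref{P:multi-good-norm-exists}); finitely many such steps reach the visible case. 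The point that must be tracked is that at each step the transformed data again satisfies a relation $\theta=\sum_{j\in J_0}\alpha_j\theta_j$, with the substitutions of Proposition~\ref{P:refined-frob} (the map $\theta'\mapsto-(p\theta')^{1/p}$, the grouping into $p$-tuples, etc.) matched against this linear form; verifying that these $p$-th-power/Frobenius identities on the residue fields are compatible with the linear form $\sum\alpha_j\theta_j$ is the delicate bookkeeping, handled by the same techniques as in the proof of Proposition~\ref{P:refined-properties-general}. I expect this Frobenius descent over the varying base fields — arranging that $\partial$ and the $\partial_j$ descend compatibly and that the linear form survives — to be the only genuinely hard step; the visible case itself is a direct matrix computation whose only subtlety is the near-commutativity of the reductions $\bar N_j$.
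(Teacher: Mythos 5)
Your visible-range computation is essentially correct and is even slightly more careful than the paper's in one respect: you make explicit that the reductions $\bar N_{j}$ for $j\in J_{0}$ commute (after the estimate on $|\partial_{j}(N_{j'})|$), so simultaneous triangularization over $\kappa_{K^\alg}$ legitimately produces the pure reduced eigenvalue $\sum_{j\in J_{0}}\bar\alpha_{j}\theta_{j}$; the paper writes ``this is a sum of matrices with single eigenvalues'' and leaves this to the reader. Note also that when $IR_{\partial_{j}}(V)=1$ for some $j\notin J_{0}$ you cannot literally ask for a $\partial_{j}$-good norm: Lemma~\ref{P:multi-good-norm-exists} requires $\bbb_{j}<1$, which is why the paper substitutes an auxiliary $\bbb_{j}\in(|\alpha_{j}|\ee^{s-s_{j}},1)\cap|K^\times|^\QQ$ in its case (ii). This is a small adjustment your reduction step would need.

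The genuine gap is the non-visible case, and I do not think the Frobenius-descent route you sketch can be pushed through as stated. The problem is structural, not merely one of bookkeeping. The $\partial$-Frobenius on $K'$ (Construction~\ref{Cstr:j-frobenius}) is the fixed field of a $\ZZ/p\ZZ$-action defined via $\partial$-Taylor series and a rational parameter $u$ of $\partial$, whereas the $\partial_{j}$-Frobenius on $K$ is the fixed field of a different $\ZZ/p\ZZ$-action defined via $\partial_{j}$-Taylor series and $u_{j}$. These live on different fields and use incompatible Taylor structures: because $\partial=\sum\alpha_{j}\partial_{j}$ with $\alpha_{j}\in K'$ typically transcendental over $K$, the derivations $\partial$ and $\partial_{j}$ do \emph{not} commute on $K'$ (the $\partial_{j}(\alpha_{j'})$ need not vanish), so $\partial_{j}$ does not descend to $K'^{(\partial)}$, and conversely $\partial'$ on $K'^{(\partial)}$ is not a $K$-linear combination of the restricted $\partial_{j}$. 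Thus there is no linear form $\theta'=\sum\alpha_{j}'\theta_{j}'$ on the antecedent/descendant to propagate, and the inductive scheme (``the delicate bookkeeping'') has no base to run on. The paper's proof bypasses Frobenius entirely: it uses the formal Taylor-substitution identity \eqref{E:Taylor-change-basis} to express the matrix of $\partial^{r}$ (for $r=p^{\lambda}$ matching the target intrinsic radius) as a universal polynomial in the $\partial_{J}$-matrices, then bounds each coefficient using the mixed thresholds $r_{j}$ for each $j$ and the $p$-adic carry function $\sigma_{p}$, showing that modulo the relevant ideal only the terms $\alpha_{j}^{r}\partial_{j}^{r}$ with $j\in J_{0}$ survive. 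The divisibility $r_{j}\mid r$ (forced by $\bbb\ge IR_{\partial_{j}}(V)$ for $j\in J_{0}$, via $|\partial|_{K'}\ge|\alpha_{j}||\partial_{j}|_{K}$) is what makes the final extraction of the $r$-th root $\theta=(\sum\alpha_{j}^{r}\theta_{j}^{r})^{1/r}$ well-defined. This direct estimate is the one technical idea your proposal is missing, and it is not an optimization of the Frobenius approach but a replacement for it.

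One final remark on sign: the inequality you quote from the statement, $R_\partial(V\otimes_K K')\le\omega\ee^{s}$, appears to be a typo for $\ge$ (the $\bbb$-good norm conditions bound the operator norm of $\partial^{r}$, hence bound the spectral norm from above and the radius from below; Lemma~\ref{L:converse-good-norm} then gives equality exactly when the reduced eigenvalues do not vanish). Your own analysis of the $\theta=0$ case, where $\bar N$ is nilpotent and one gets $R_\partial>\omega\ee^{s}$, has the correct direction; the parenthetical appeal to $R_\partial(V\otimes_K K')\le R_\partial(K')$ does not help establish $\le\omega\ee^{s}$ and can be dropped.
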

\begin{proof}
For $j \in J$, the equality $\alpha_j = \partial(u_j)$ follows from applying $\partial$ to $u_j$.

By Proposition~\ref{P:multi-good-norm-exists} and by possibly enlarging $K$ and $K'$, we may assume that $V$ admits a norm given by some orthonormal basis $\underline \bbe$ such that, for any $j \in J$,
\begin{itemize}
\item[(i)] if $IR_j(V)<1$, the norm is good with respect to $\partial_j$;

\item[(ii)] if $IR_j(V)=1$, the norm is $\bbb_j$-good with respect to $\partial_j$ for some  $\bbb_j$ in $(|\alpha_j| e^{s-s_j}, 1) \cap |K^\times|^\QQ$.  In this case, instead of taking the usual definitions of $r_j$, $\lambda_j$, and $s_j$, we set $r_j=r(\bbb_j)$, $\lambda_j=\lambda(\bbb_j)$, and $s_j=s-\log(\bbb_j|\alpha_j|^{-1})$.   Note that $s_j - \log |\alpha_j|>s$ still holds.
\end{itemize}
Similarly to Notation~\ref{N:dichotomy}, we define integers $r$ and $\lambda$ as follows.
\begin{itemize}
\item[(x)] When $|\partial|_{K'}\omega \ee^s< \omega$ we denote $\lambda= 0$ and $r = 1$.
\item[(xx)] When $|\partial|_{K'}\omega \ee^s \in [\omega, 1)$ and $p>0$, let $\lambda$ denote the unique nonnegative integer such that  $|\partial|_{K'}\omega \ee^s \in [p^{-1/p^{\lambda-1}(p-1)}, p^{-1/p^\lambda(p-1)})$, and put $r = p^{\lambda}$.  In this case, we have $(|\partial|_{K'}\omega\ee^s)^{p^k} \leq \omega$ for $k< \lambda$ and hence $(|\partial|_{K'}\omega\ee^s)^{i} \leq \omega^{\sigma_p(i)}$ for $i = 1, \dots, r-1$.
\end{itemize}
For each $j \in J$, we have
\[
\Big|\frac{\partial^i_j}{i!} \Big|_V \leq |\partial_j|^i_K, \textrm{ for }i = 1, \dots, r_j-1, \quad \textrm{and }
|\partial_j^{r_j} |_V \leq |u_j|^{-r_j} \ee^{-r_js_j}.
\]
For $i = 1, \dots, r$, the action of $\partial^i$ on an element $x$ of $\underline e$ can be expressed in terms of the actions of $\partial_J$, according to the coefficients of $\delta^i$ on the left hand side of \eqref{E:Taylor-change-basis}, applied to $x$. More precisely, for any $j \in J$ and any $i \in \NN$, the coefficient of $\delta^i$ in $\TT(u_j; \partial; \delta) -u_j$ has norm $\leq |\partial(u_j)| |\partial|^{i-1}_{K'} = |\alpha_j| |\partial|^{i-1}_{K'}$.  For any coefficient that arises in the $\partial_J$-Taylor series expansion, if we put $e_j = c_j + d_jr_j $ with $c_j \in \{0,\dots, r_j-1\}$ and $d_j \in \ZZ_{\geq 0}$   for any $j \in J$, then we have
\[
\Big| \frac{\partial_J^{e_J}(x)}{(e_J)!} \Big|_V \leq \prod_{j \in J} \Big| \frac{\partial_j^{d_jr_j}}{(d_jr_j)!} \Big|_V \cdot \prod_{j \in J} \Big| \frac{\partial_j^{c_j}(x)}{(c_j)!} \Big|_V   
\leq |x|_V \cdot \prod_{j \in J}|\partial_j|_K^{c_j}\cdot
\prod_{j \in J} \big( \ee^{-d_jr_js_j} \omega^{-d_jr_j+\sigma_p(d_jr_j)} \big),
\]
Putting these two bounds together, we see that if a $\delta^i$-term on the left hand side of \eqref{E:Taylor-change-basis} arises in a term that includes $\frac{\partial_J^{e_J}(x)}{(e_J)!}$ (which particularly implies that $i \geq e_1+ \cdots+ e_m$), then its norm is smaller than or equal to
\begin{eqnarray*}
&& |x||\partial|_{K'}^{i-e_1-\cdots -e_m}
\prod_{j \in J}|\alpha_j|^{e_j} \prod_{j \in J}|\partial_j|_K^{c_j}\cdot
\prod_{j \in J} \big( \ee^{-d_jr_js_j} \omega^{-d_jr_j+\sigma_p(d_jr_j)} \big) \\
&=& |x||\partial|_{K'}^{i-e_1-\cdots -e_m}
\prod_{j \in J}\big(|\partial_j|_K |\alpha_j|\big)^{c_j}\cdot
\prod_{j \in J} \big( \big(|\alpha_j|_K \ee^{-s_j} \big)^{d_jr_j} \omega^{-d_jr_j+\sigma_p(d_jr_j)} \big) \\
&\leq& |x||\partial|_{K'}^{i-e_1-\cdots -e_m}
\prod_{j \in J}|\partial|_{K'}^{c_j}\cdot
\prod_{j \in J} \big( \ee^{-d_jr_js} \omega^{-d_jr_j+\sigma_p(d_jr_j)} \big) 
\quad \ \textrm{(note } |\partial|_{K'} \geq |\partial(u_j)| |u_j|^{-1} = |\alpha_j||\partial_j|_K)
\\
&\leq& |x||\partial|_{K'}^i
(|\partial|_{K'}\omega \ee^s)^{-\sum_jd_jr_j} \omega^{\sigma_p(\sum_jd_jr_j)}.
\end{eqnarray*}
When $i=1, \dots, r-1$, the coefficient of this $\delta^i$-term has norm $\leq |\partial|^i_{K'} |x|$ by condition (xx).  When $i = r$, this $\delta^i$-term has norm $\leq |\partial|_{K'}^r \big((|\partial|_{K'}\omega \ee^s)^{-r} \omega \big)|x| = \omega^{-r+1} \ee^{-rs}|x|$; the equality can happen only when $\sum_j d_jr_j = r$ and $\sigma_p(\sum_j d_jr_j) = \sum_j \sigma_p(d_jr_j)$, which together yield
$e_j = r$ for some $j \in J_0$ and $e_{j'} = 0$ for $j' \neq j$.  When equality of norms is achieved, the corresponding $\delta^i$-term is 
$\alpha_j^r \partial_j^r(x) / r!$.
Therefore, modulo the elements with norm smaller than $\ee^{-rs}$, the matrix of $\partial^r$ with respect to $\underline \bbe$ is congruent to $\sum_{j \in J_0}
\alpha_j^r \partial_j^r$; this is a sum of matrices with single eigenvalues $\alpha_j^r\theta_j^r$ for $j \in J_0$ (note that, again, $IR_{\partial_j}(V) <1$ for all $j \in J_0$).
By Lemma~\ref{L:converse-good-norm}, we have $R_\partial(V) \leq \omega \ee^s$ and this is an equality if and only if $\sum_{j \in J_0} \alpha_j^r \theta_j^r \neq 0$ in $\kappa_{K'^\alg}^{(rs)}$, which is equivalent to $\sum_{j \in J_0} \alpha_j \theta_j \neq 0$ in $\kappa_{K'^\alg}^{(s)}$; note that $r$ is always 1 or a power of $p$.  Moreover, if the equivalent condition is satisfied, $V$ has pure  refined $\partial$-radii
\[
\big(\sum_{j \in J_0} \theta_j^r \alpha_j^r \big)^{1/r} = \sum_{j \in J_0} \theta_j \alpha_j = \theta \in \kappa_{K'^\alg}^{(s)}.
\]
\end{proof}

\begin{corollary}\label{C:refined-radii-for-generic-point}
Let $V$ be a $\partial$-differential module over $K$ and let $f =\TT(-; \partial, T) : K \rar K\llbracket T/u\rrbracket_0$ and $f^*V$ be as in Lemma~\ref{L:basic-IR-proposition}(d).  For $\eta \in [0,|u|)$, let $F_\eta$ denote the completion of $K(T)$ with respect to the $\eta$-Gauss norm.
\begin{itemize}
\item[\emph{(a)}]  If $\eta \in (0, R_\partial(V)]$, $f^*V \otimes F_\eta$ has pure intrinsic $\partial_T$-radius $1$; if $\eta \in ( R_\partial(V), |u|)$, $f^*V \otimes F_\eta$ has (extrinsic) $\partial_T$-radius $R_\partial(V)$.
\item[\emph{(b)}]  When $\eta \in (R_\partial(V), |u|)$, we have $\Theta_{\partial_T}(f^*V \otimes F_\eta) = \Theta_\partial(V)$.
\end{itemize}
\end{corollary}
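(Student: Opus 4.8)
The plan is to reduce everything to a concrete computation with the connection matrix of $f^*V$ at the $\eta$-Gauss point. After enlarging $K$ (Remark~\ref{R:enlarge-K}), fix a good orthonormal basis $\underline\bbe$ of $V$, which exists by Lemma~\ref{L:good-norm-exists} once $IR_\partial(V)<1$, and let $N$ be the matrix of $\partial$ on $V$ with respect to $\underline\bbe$. Unwinding the definition $f^*V = V\otimes_{K,f}K\llbracket T/u\rrbracket_0$, one checks that in the induced basis $\underline\bbe\otimes1$ of $f^*V\otimes F_\eta$ the operator $\partial_T$ acts as $\partial_T\cdot I + f(N)$, where $f(N)$ is the matrix with entries $\TT(N_{ij};\partial;T)\in K\llbracket T/u\rrbracket_0$. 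The crucial elementary observation is that, since $\eta<|u|$ and $\partial$ is of rational type, each entry satisfies $|\TT(N_{ij};\partial;T)|_\eta = |N_{ij}|_K$, and in fact $\TT(N_{ij};\partial;T)\equiv N_{ij}$ modulo $\gothm_{F_\eta}^{(s)+}$ (the $T^m$-terms with $m\geq1$ have strictly smaller $\eta$-Gauss norm, by a factor $\leq(\eta/|u|)^m$), where $s=-\log|\partial|_{\sp,V}$. Thus at the $\eta$-Gauss point the connection matrix of $f^*V$ is, up to strictly lower order, the constant matrix $N$.

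Next I would reduce to the case that $V$ has pure intrinsic $\partial$-radii $R:=R_\partial(V)$ and pure refined $\partial$-radii: by Theorems~\ref{T:decomp-over-field-complete} and \ref{T:refined-decomposition} one splits $V$ this way after a finite tamely ramified extension of $K$, and both $f^*(-)$ and $(-)\otimes F_\eta$ commute with direct sums and with tame base change, while $\Theta_\partial(-)$ and the multiset of subsidiary radii are additive over the decomposition. For $\eta\in(0,R]$, part (a) follows directly from Lemma~\ref{L:basic-IR-proposition}(d): $f^*V$ restricts to a trivial $\partial_T$-module over $A^1_K[0,r)$ for every $r<R$, hence has pure intrinsic $\partial_T$-radius $1$ at each interior Gauss point; the endpoint $\eta=R$ is handled by continuity of the subsidiary $\partial_T$-radius functions on the disc (cf.\ \cite{kedlaya-xiao}), together with the easy upper bound $R_{\partial_T}(f^*V\otimes F_\eta)\leq\min\{\eta,R\}$ coming from $|\partial_T|_{\sp,f^*V\otimes F_\eta}\geq\max\{|\partial_T|_{\sp,F_\eta},|\partial|_{\sp,V}\}$ (the latter via the basis $\underline\bbe\otimes1$ and the isometry of $f$).

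The main range is $\eta\geq R/\omega$, which is nonempty exactly when $V$ has visible intrinsic $\partial$-radii and is precisely where $f^*V\otimes F_\eta$ itself has visible intrinsic $\partial_T$-radii. Here I would invoke Lemma~\ref{L:converse-good-norm} with $\bbb=R/\eta$ and $r=1$: the basis $\underline\bbe\otimes1$ with its orthonormal norm satisfies the conditions of Definition~\ref{D:good-norm}, because $|\partial_T\cdot I+f(N)| = \max\{\eta^{-1},|N|_K\} = \omega R^{-1}$ (using $\eta\geq R/\omega$ and the good-norm bound $|N|_K\leq\omega R^{-1}$ valid since $V$ is visible), and the reduced eigenvalues of $f(N)$ modulo $\gothm_{F_\eta}^{(s)+}$ equal those of $N$ by the congruence above (as in Lemmas~\ref{L:refined-well-defined1} and \ref{L:refined-well-defined2}, cf.\ \cite[Theorem~4.2.2]{kedlaya-course}); these are exactly the nonzero elements of $\Theta_\partial(V)$, by Definition~\ref{D:good-norm} and Corollary~\ref{C:refined-radii=reduced-roots}. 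Lemma~\ref{L:converse-good-norm} then yields that $f^*V\otimes F_\eta$ has pure intrinsic $\partial_T$-radii $R/\eta$, i.e.\ $\partial_T$-radius $R=\min\{\eta,R\}$, proving (a), and that $\Theta_{\partial_T}(f^*V\otimes F_\eta)$ is the multiset of reduced eigenvalues of $N$, namely $\Theta_\partial(V)$, proving (b).

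The remaining cases — the interval $\eta\in(R,R/\omega)$, the critical value $\eta=R/\omega$, and any summand of $V$ with $IR_\partial(V)=1$ — are where $f^*V\otimes F_\eta$ is non-visible (or critical), so the norm above is no longer good and the refined radii cannot be read off directly; this is where I expect the main obstacle to lie. The route is the standard one of the paper: pass to $\partial_T$-Frobenius antecedents or descendants of $f^*V\otimes F_\eta$ (available by Remark~\ref{R:frob-over-annulus}), and identify them, via the uniqueness of Frobenius antecedents, with the Taylor pullbacks of the corresponding $\partial$-Frobenius antecedents or descendants of $V$ over the appropriate smaller Gauss field; the resulting comparison of $\partial_T$-radii and refined $\partial_T$-radii with those of $V$ is then exactly what Lemma~\ref{L:frob-properties} and Proposition~\ref{P:refined-frob} predict, and is consistent with $R_{\partial_T}(f^*V\otimes F_\eta)=\min\{\eta,R\}$ and $\Theta_{\partial_T}(f^*V\otimes F_\eta)=\Theta_\partial(V)$. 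The delicate bookkeeping is to check that forming a $\partial_T$-Frobenius descendant of $f^*V\otimes F_\eta$ and forming $f^*\bigl(\varphi^{(\partial)}_*V\bigr)$ base-changed to the smaller Gauss field agree up to the auxiliary twists $W_i^{(\partial)}$ of Lemma~\ref{L:frob-properties}(c); once this compatibility is established, the visible-case computation of the previous paragraph propagates to all $\eta\in(R_\partial(V),|u|)$, and hence to all summands.
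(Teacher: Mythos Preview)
The paper's proof is a one-line application of Theorem~\ref{T:independence-of-basis}. The key observation---which you compute but never quite exploit---is that $\partial_T\circ f^* = f^*\circ\partial$ on $V$ (and on $K$); this says precisely that, via the isometric embedding $f:K\hookrightarrow F_\eta$, the derivation $\partial_T$ on $K'=F_\eta$ \emph{extends} $\partial$ on $K$, with $\alpha_1=\partial_T(f(u))=f(\partial u)=1$. Thus $f^*V\otimes F_\eta$ is literally $V\otimes_K K'$ in the setup of Theorem~\ref{T:independence-of-basis} with $m=1$, $J_0=\{1\}$, $s=s_1$, and $\theta=\theta_1$; part (b) falls out immediately, and (a) is a direct spectral-norm computation from the same identity.

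Your visible-range argument ($V$ visible and $\eta>R/\omega$) is correct and amounts to re-deriving the $m=1$ visible case of Theorem~\ref{T:independence-of-basis}. The Frobenius route you propose for the remaining range, however, has a real obstruction rather than mere bookkeeping. The $\partial$-Frobenius on $K$ is built from the rational parameter $u$, while the $\partial_T$-Frobenius on $F_\eta$ is built from $T$; by Remark~\ref{R:Frobenius-depends-on-u} these are genuinely different constructions. Concretely, the $\ZZ/p\ZZ$-action defining $K^{(\partial)}$ is $x\mapsto\TT(x;\partial;(\zeta_p^i-1)u)$, and pushing this through $f$ gives $f(x)\mapsto\TT(f(x);\partial_T;(\zeta_p^i-1)(u+T))$ since $f(u)=u+T$; this does \emph{not} agree with the $\ZZ/p\ZZ$-action $y\mapsto\TT(y;\partial_T;(\zeta_p^i-1)T)$ defining $F_\eta^{(\partial_T)}$. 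Hence $f(K^{(\partial)})\not\subset F_\eta^{(\partial_T)}$, and there is no identification of the $\partial_T$-Frobenius antecedent of $f^*V\otimes F_\eta$ with a Taylor pullback of the $\partial$-Frobenius antecedent of $V$. (Nor can one switch parameters: $u+T$ satisfies $\partial_T(u+T)=1$ but $|u+T|_\eta=|u|\neq\eta$, so it is not a rational parameter for $\partial_T$.) The correct way to handle the non-visible range is to bound $|\partial_T^i/i!|$ on the basis $\underline\bbe\otimes1$ up to $r'=r(IR_{\partial_T}(f^*V\otimes F_\eta))$---noting that $r'$ need not equal $r(IR_\partial(V))$---and then invoke Lemma~\ref{L:converse-good-norm} at that $r'$. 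That estimate is exactly the content of the proof of Theorem~\ref{T:independence-of-basis}, so you should simply cite the theorem.
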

\begin{proof}
For any $x \in V$, $f^*(\partial(x)) = \partial_T(f^*(x))$.  The first statement follows from this immediately, and the second statement follows from Theorem~\ref{T:independence-of-basis}.  (When $IR_\partial(V) = 1$, (b) is void.)
\end{proof}

\begin{remark}
Similar to Remark~\ref{R:visible-instead-of-rational}, if  we do not  assume that $\partial_1, \dots, \partial_n$ are of rational type (but only commutative),  the results from this subsection still hold if, for any $\partial_j$ for which the refined $\partial_j$-radii are relevant, we have $R_{\partial_j}(V) \leq |\partial_j|^{-1}$.
\end{remark}

\subsection{One-dimensional variation of refined radii}
\label{S:one-dim}

Having established the results for differential modules over a field, we now study the case of a differential module over a rigid analytic annulus or a rigid analytic disc.  It is particularly interesting to study how the multisets of radii of the differential module with respect to different Gauss norms vary as we change the radii which define the Gauss norm. Kedlaya and the author had proved various results on this in \cite[Chapter~11]{kedlaya-course} and \cite[Section~2]{kedlaya-xiao}, essentially stating that the radii of convergence are piecewise log-affine functions in the radii of the annulus.  In this subsection, we will characterize how the refined radii change as we change the radii for the Gauss norm, in the case when the functions given by the radii of convergence are in fact log-affine.

\begin{hypothesis}\label{H:K-multi}
Throughout this subsection, we assume that $K$ is a 
complete nonarchimedean field of characteristic zero and residual characteristic $p$. We also assume that $K$ is equipped with derivations $\partial_1, \dots, \partial_m$ of rational type with respect to $u_1, \dots, u_m$. \end{hypothesis}

\begin{notation}
Put $J = \{\serie{}m\}$ and $J^+ = J \cup \{0\}$.
For $\eta > 0$, let $F_\eta$ denote the completion of $K(t)$ under the 
$\eta$-Gauss norm $|\cdot|_\eta$. Set $\partial_0 = \frac d {dt}$ on $K[t]$; it extends by continuity to $F_\eta$ and ring of functions on discs or annuli.  The derivations
$\partial_{J^+}$ are of rational type on
$F_\eta$.
\end{notation}

\begin{notation}\label{N:subsidiary-radii-and-sums}
Fix $j \in J^+$ and an interval $I \subseteq [0, \infty)$.  We say that $I$ is an open interval in $[0, \infty)$ if it is of the form $[0, \beta)$ or $(\alpha, \beta)$, where $0 < \alpha < \beta$. Put $\dot I=I \bs \{0\}$.  For $M$ a $\partial_j$-differential module of rank $d$
over $A_K^1(I)$,  $r \in - \log \,\dot I$, and $i \in \{\serie{}d\}$, we put 
$$
f_i^{(j)}(M,r) = -\log R_{\partial_j}(M \otimes F_{\ee^{-r}}; i), \quad\textrm{and} \quad F_i^{(j)}(M,r) = f_1^{(j)}(M,r) + \cdots + f_i^{(j)}(M,r).
$$
\end{notation}

\begin{theorem} \label{T:variation-j}
Fix $j \in J^+$ and an interval $I \subseteq [0, +\infty)$.  Let $M$ be a $\partial_j$-differential module of rank $d$
over $A_K^1(I)$.  Then we have the following.
\begin{enumerate}
\item[\emph{(a)}]
(Linearity)
For $i=\serie{}d$, the functions $f_i^{(j)}(M,r)$ and $F_i^{(j)}(M,r)$ are continuous.  They are piecewise affine on the locus where $f_i^{(j)}(M, r) > -\log |u_j|$ if $j\in J$; and they are piecewise affine on whole $-\log\, \dot I$ if $j =0$.

\item[\emph{(b)}]
(Weak integrality)
\begin{enumerate}
\item[\emph{(b1)}] Suppose $p=0$ or $j = 0$. If $i=d$ or $f_{i+1}^{(j)}(M, r_0) < f_i^{(j)}(M,r_0)$, the slopes of $F_i^{(j)}(M,r)$ in some neighborhood of $r=r_0$ belong to $\ZZ$.  Consequently, the slopes of each $f_i^{(j)}(M,r)$ and $F_i^{(j)}(M,r)$ belong to $\frac 11\ZZ \cup \cdots \cup \frac 1d\ZZ$.
\item[\emph{(b2)}] Suppose $p>0$ and $j \in J$.
If $f_i^{(j)} (M, r_0) > \frac 1 {p^n(p-1)} \log p - \log |u_j|$ for some $n \in \ZZ_{\geq 0}$, then the slopes of each $f_i^{(j)}(M,r)$ and $F_i^{(j)}(M,r)$ in some neighborhood of $r_0$ belong to $\frac 1 {p^n d!}\ZZ$.
\end{enumerate}

\item[\emph{(c)}]
(Monotonicity)
Suppose $0 \in I$ and suppose either $j \in J$, or $j =0$ and $f_i^{(0)}(M, r_0) > r_0$.
Then the slopes of $F_i^{(j)}(M,r_0)$
are nonpositive in a neighborhood of $r_0$.

\item[\emph{(d)}]
(Convexity)
For $i=1, \dots, d$, the function $F_i^{(j)}(M,r)$ is convex.

\item[\emph{(e)}] (Decomposition)
Assume  that $I$ is an open interval in $(0, +\infty)$.  Suppose that for some $i \in \{1, \dots,d\}$, $F_i^{(j)}(M, r)$ is affine and $f_i^{(j)}(M, r) > f_{i+1}^{(j)}(M, r)$ for $r \in -\log\, \dot I$.  Then we can write $M$ uniquely as the direct sum of two $\partial_j$-differential submodules $M_1$ and $M_2$, such that, for any $\eta \in \dot I$, the multiset of $\partial_j$-radii of $M_1 \otimes F_\eta$ exactly consists of the smallest $i$ elements in the multiset of $\partial_j$-radii of $M \otimes F_\eta$.
\end{enumerate}
\end{theorem}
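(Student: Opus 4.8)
The statement is a repackaging, in the present notation, of the variational theory for a single derivation acting over an annulus worked out in \cite[Chapter~11]{kedlaya-course} and \cite[Section~2]{kedlaya-xiao}, and the plan is to reduce to it. For $j=0$ the derivation $\partial_0=d/dt$ is the coordinate derivation on $A_K^1(I)$, with $|\partial_0|_{F_\eta}=\eta^{-1}$; parts (a)--(e) are then precisely (the relevant cases of) the cited results, the locus $f_i^{(0)}(M,r)>-\log|\partial_0|_{F_{\ee^{-r}}}^{-1}$ being the locus $f_i^{(0)}(M,r)>r$, off which $f_i^{(0)}(M,r)\equiv r$ since $IR_{\partial_0}(M\otimes F_{\ee^{-r}};i)=1$ there. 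So the only work is the case $j\in J$. Here one can either quote the polyannulus version of these results from \cite[Section~2]{kedlaya-xiao} after presenting $K$ together with its elements $u_j$ as the generic fibre of a relative polyannulus, or rerun the single-derivation arguments directly; the one new feature is that, since $\partial_j t=0$, the norm $|\partial_j|_{F_\eta}=|\partial_j|_K=|u_j|^{-1}$ is \emph{independent of $\eta$}, which is why the relevant threshold is the constant $-\log|u_j|$ and off it $f_i^{(j)}(M,r)\equiv-\log|u_j|$.

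For the direct approach with $j\in J$: over any closed subinterval $[\alpha,\beta]\subset\dot I$ the module $M$ is free over the ring of functions, so choose a cyclic vector for $\partial_j$ over its fraction field and let $P$ be the associated twisted polynomial, whose coefficients lie in $F_\eta$ for each $\eta\in(\alpha,\beta)$. On the visible locus $f_i^{(j)}(M,r)>-\log|u_j|$, Proposition~\ref{P:spec-norm-from-NP} reads $R_{\partial_j}(M\otimes F_{\ee^{-r}};i)$ off the slopes of the Newton polygon of $P$; the $\eta$-Gauss norms of the coefficients of $P$ are continuous, piecewise affine and convex in $\log\eta$ with slopes of bounded denominator, and these properties transfer to the Newton-polygon slopes of $P$, hence to $f_i^{(j)}$ and $F_i^{(j)}$. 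This yields (a), the convexity in (d), the monotonicity in (c) over a disc, and the slope constraints of (b1) (resp.\ the $d!$ appearing in (b2)) on the visible locus, by the same vertex-tracking and multiplicity arguments as in the references. To pass the visible threshold when $p>0$, use Frobenius antecedents over annuli (Remark~\ref{R:frob-over-annulus}, Lemma~\ref{L:frob-properties}(d)): where $f_i^{(j)}(M,r)>\tfrac{1}{p^n(p-1)}\log p-\log|u_j|$ one has $IR_{\partial_j}(M\otimes F_{\ee^{-r}};i)<\omega^{1/p^n}$, so after $n$ antecedents this slot becomes visible over the $n$-fold $\partial_j$-Frobenius base (Construction~\ref{Cstr:j-frobenius}), with $u_j$ replaced by $u_j^{p^n}$ and $f_i^{(j)}$ by $p^n f_i^{(j)}$; transporting the visible-range conclusions back through this substitution gives (a)--(d) together with the bound $\tfrac{1}{p^n d!}\ZZ$ in (b2).

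For (e), by (a) and (d) one may shrink to a closed $[\alpha,\beta]\subset(0,+\infty)$ on which $F_i^{(j)}$ is affine and $f_i^{(j)}>f_{i+1}^{(j)}$. At any generic point $\eta\in(\alpha,\beta)$, Theorem~\ref{T:decomp-over-field-complete} gives a canonical $\partial_j$-stable splitting $M\otimes F_\eta=N_1(\eta)\oplus N_2(\eta)$ with $N_1(\eta)$ carrying the $i$ smallest $\partial_j$-radii (the strict gap makes the cut unambiguous). The corresponding idempotent of $\End(M\otimes F_\eta)$ is canonical, hence automorphism-invariant, and the point is that it in fact comes from $\End(M)$ over $K\langle\alpha/t,t/\beta\rangle$: this is the annular decomposition theorem of \cite[Section~2]{kedlaya-xiao}, a Hensel-type matrix factorization over the sub-annulus whose success is guaranteed exactly by the affineness of $F_i^{(j)}$ and the strict gap. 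Doing this for an exhaustion of $\dot I$ by closed subintervals, the resulting decompositions agree on overlaps (the idempotent being uniquely pinned down at the generic point of an overlap), so Lemma~\ref{L:proj-intersect} glues them to $M=M_1\oplus M_2$ over $A_K^1(I)$; uniqueness is immediate from uniqueness of the field decomposition at one generic point.

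The principal obstacle is the globalization in (e). Theorem~\ref{T:decomp-over-field-complete} is a purely numerical separation of spectral radii, and converting the resulting generic idempotent into an honest $\partial_j$-submodule of $M$ over the annulus requires controlling its convergence as a function of $t$; this is precisely where the hypotheses (affineness of $F_i^{(j)}$, strict gap $f_i^{(j)}>f_{i+1}^{(j)}$) are used, and where one must invoke the transfer/matrix-factorization machinery of \cite{kedlaya-xiao} rather than anything formal. A secondary, routine nuisance is the Frobenius-antecedent bookkeeping in the non-visible range for $p>0$, which has to be done with care to land on the exact denominators in (b2).
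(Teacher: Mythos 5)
Your proposal is correct and takes essentially the same approach as the paper, which simply cites \cite[Theorems~2.2.5, 2.2.6, and 2.3.5]{kedlaya-xiao} verbatim. Your sketch of the Newton-polygon, Frobenius-antecedent, and matrix-factorization machinery is a faithful summary of how those cited results are in fact proved, so it amounts to an expansion of the same reference rather than a different route.
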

\begin{proof}
This is \cite[Theorems~2.2.5, 2.2.6, and 2.3.5]{kedlaya-xiao}.
\end{proof}

\begin{notation} \label{N:subsidiary-radii2}
Let $I \subseteq [0, +\infty)$ be an interval and let $M$ be a $\partial_{J^+}$-differential module of rank $d$ 
on $A_K^1(I)$.  
For $r \in - \log \, \dot I$ and $i \in \{\serie{}d\}$, we put 
\[
f_i(M,r) = -\log IR(M \otimes F_{\ee^{-r}}; i), \quad\textrm{and}\quad F_i(M,r) = f_1(M,r) + \cdots + f_i(M,r).
\]
Suppose that $I \subseteq [0,1)$ and that $|u_j| = 1$ for any $j \in J$, we put
\[
\hat f_i(M, r) = -\log ER(M \otimes F_{\ee^{-r}}; i), \quad \textrm{and}\quad\hat F_i(M, r) = \hat f_1(M, r) + \cdots + \hat f_i(M, r). 
\]
\end{notation}

\begin{theorem}
\label{T:variation}
Fix an interval $I \subseteq [0, +\infty)$.  Let $M$ be a $\partial_{J^+}$-differential module of rank $d$ over $A_K^1(I)$.
\begin{enumerate}
\item[\emph{(a)}]
(Linearity)
For $i=\serie{}d$, the functions $f_i(M,r)$ and $F_i(M,r)$ are 
continuous and piecewise affine.

\item[\emph{(b)}]
(Integrality)
If $i=d$ or $f_i(M,r_0) > f_{i+1}(M,r_0)$, then 
the slopes of $F_i(M,r)$ in some neighborhood of $r_0$ belong to $\ZZ$. Consequently, the slopes of each $f_i(M,r)$ and $F_i(M,r)$ belong to
$\frac{1}{1} \ZZ \cup \cdots \cup \frac{1}{d} \ZZ$.

\item[\emph{(c)}]
(Monotonicity)
Suppose that $0 \in I$.
Then the slopes of $F_i(M,r)$
are nonpositive, and each $F_i(M,r)$ is constant for $r$ sufficiently large.

\item[\emph{(d)}]
(Convexity)
For $i=1, \dots, d$, the function $F_i(M,r)$ is convex.

\item[\emph{(e)}] (Decomposition)
Suppose that $I$ is an open interval in $(0, +\infty)$, and suppose that, for some $i \in \{\serie{}d-1\}$, the function 
$F_i(M,r)$ is affine and  $f_i(M,r) > f_{i+1}(M,r)$ for $r \in -\log \dot I$.
Then $M$ can be uniquely written as the direct sum of two $\partial_{J^+}$-differential submodules $M_1$ and $M_2$ such that, for any $\eta \in \dot I$, the multiset of intrinsic radii of $M_1 \otimes F_\eta$ exactly consists of the smallest $i$ elements in the multiset of intrinsic radii of $M \otimes F_\eta$.

\item[\emph{(f)}] (Dichotomy)  Suppose that $I$ is an open interval in $(0, +\infty)$ and that $M$ is not the direct sum of two nonzero $\partial_{J^+}$-differential submodules.  If $f_1(M, r)$ is affine for $r \in -\log \dot I$, then, for each $j \in J^+$,
\begin{itemize}
\item[\emph{(1)}] either $M \otimes F_\eta$ has pure intrinsic $\partial_j$-radii which equal $IR(M \otimes F_\eta)$ for all $\eta \in \dot I$, or 
\item[\emph{(2)}] we have $IR_{\partial_j}(M \otimes F_\eta)>IR(M \otimes F_\eta)$ for all $\eta \in \dot I$.
\end{itemize}
\end{enumerate}

Moreover, if $|u_j| = 1$ for any $j \in J$ and if $I \subseteq [0, 1)$, then the same statements above except (c) hold for $\hat f_i(M, r)$ and $\hat F_i(M, r)$ in place of $f_i(M, r)$ and $F_i(M, r)$, respectively.  In this case, the following statement holds.
\begin{itemize}
\item [\emph{(c')}] (Monotonicity)
Suppose that $0 \in I$.  For $i=1, \dots, d$, for any point $r_0$ where $\hat f_i(M, r_0) > r_0$, the slopes of $\hat F_i(M, r)$ are nonpositive in some neighborhood of $r_0$.  We also have $\hat f_i(M, r) = r$ for $r$ sufficiently large.
\end{itemize}
\end{theorem}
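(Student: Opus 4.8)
The plan is to deduce the theorem from the single-derivation statements of Theorem~\ref{T:variation-j} by a localization-and-decomposition argument, with one genuinely harder input. First I would fix an open sub-interval $I' \subseteq \dot I$ on which, for every $j \in J^+$ and every $i$, the function $f_i^{(j)}(M,r)$ is affine in $r$ and the $f_1^{(j)}(M,\cdot) \geq \cdots \geq f_d^{(j)}(M,\cdot)$ are pairwise non-crossing (possible locally, since each $f_i^{(j)}$ is piecewise affine by Theorem~\ref{T:variation-j}(a)). On $I'$, applying Theorem~\ref{T:variation-j}(e) to each $\partial_j$ at each gap $f_i^{(j)} > f_{i+1}^{(j)}$ splits $M|_{A_K^1(I')}$ into $\partial_j$-summands with pure $\partial_j$-radii; since each such splitting is canonical, it is preserved by the other derivations (which commute with $\partial_j$), so the splittings for all $j \in J^+$ admit a common refinement $M|_{A_K^1(I')} = \bigoplus_b P_b$ in which every $P_b$ has pure $\partial_j$-radii for all $j$.

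For each $P_b$ and $\eta \in I'$, all Jordan--H\"older constituents of $P_b \otimes F_\eta$ as a $\partial_{J^+}$-module share the same intrinsic $\partial_j$-radius for every $j$, hence the same intrinsic radius; thus $\gothI\gothR(P_b \otimes F_\eta)$ is pure of value $\min_j IR_{\partial_j}(P_b \otimes F_\eta)$, and likewise for $\gothE\gothR$. Therefore on $-\log I'$ the multiset $\gothI\gothR(M \otimes F_{\ee^{-r}})$ is $\biguplus_b$ of $\dim P_b$ copies of the radius with $-\log$ equal to $g_b(r) := \max_j f_1^{(j)}(P_b, r)$, so $f_i(M,r)$ is the $i$-th largest member (with multiplicity) of $\{g_b\}_b$. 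Each $g_b$ is a finite maximum of affine functions, hence piecewise affine and convex: this gives Linearity~(a), and Convexity~(d) via the elementary identity $F_i = \max_{|S| = i}\sum_{\ell \in S} g_\ell$ (a maximum of sums of convex functions). Monotonicity~(c) and~(c') follow from Theorem~\ref{T:variation-j}(c) applied to the relevant $\partial_j$ (to $\partial_0$ precisely at points where $f_i^{(0)}(M, r_0) > r_0$, matching the hypothesis), combined with $F_i(M,r) = \sum_{\ell\in S}g_\ell(r)$ for a locally constant maximizing set $S$.

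Decomposition~(e) is the common refinement above, specialized to a single gap and then globalized over $\dot I$ using uniqueness and Lemma~\ref{L:proj-intersect} to glue across overlapping sub-intervals; uniqueness also forces each summand to be $\partial_{J^+}$-stable. Dichotomy~(f): if $M$ is $\partial_{J^+}$-indecomposable with $f_1(M,\cdot)$ affine, then any gap in the $f_i^{(j)}(M,\cdot)$ would split $M$ canonically, hence $\partial_{J^+}$-equivariantly, contradicting indecomposability; so $M \otimes F_\eta$ has pure $\partial_j$-radii throughout, and then $f_1 = \max_j f_1^{(j)}$ being affine yields the alternative ``$\partial_j$ dominant for all $\eta$ / for no $\eta$''. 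For Integrality~(b), the gap hypothesis $f_i(M,r_0) > f_{i+1}(M,r_0)$ forces the top-$i$ selection $S$ near $r_0$ to consist of \emph{whole} pieces $P_b$, so $F_i(M,\cdot) = \sum_{b \in S} \dim P_b \cdot g_b = \sum_{b \in S} F_{\dim P_b}(P_b,\cdot)$; here, however, one cannot conclude integral slopes from Theorem~\ref{T:variation-j}(b1)--(b2) alone, since those give only $\tfrac{1}{p^n d!}\ZZ$-slopes for an external $\partial_j$ in positive characteristic. The integer-slope statement is the ``Hasse--Arf'' phenomenon for differential modules carrying \emph{enough} derivations; it is precisely here that the full strength of \cite{kedlaya-xiao} (the refined Dwork--Robba estimates and the $\partial_{J^+}$-Frobenius-antecedent formalism) is needed, rather than a formal combination of the one-variable results.

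Accordingly, the step I expect to be the main obstacle is Integrality~(b): proving that, once a rank-$i$ summand has been split off, the sum of its subsidiary radii has integer slope — this is invisible to the one-derivation variation theorems and requires the interaction of all of $\partial_0, \dots, \partial_m$ (through a multivariate Frobenius-antecedent descent reducing the non-visible and threshold ranges $IR_{\partial_j} \in [\omega, 1)$ to the visible case, compatibly with all the derivations at once), as carried out in \cite{kedlaya-xiao}. Secondary difficulties are the bookkeeping that turns the pointwise common refinement into coherent global decompositions, and verifying that the two combinatorial claims used above (blockwise reading of $\gothI\gothR$, and ``whole blocks'' under the gap hypothesis) hold in the stated generality of $K$ and $I$.
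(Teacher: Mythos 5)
Your proposal takes a genuinely different and considerably longer route than the paper.  For parts (a)--(e) in the intrinsic case the paper simply cites \cite[Theorems~2.4.4~and~2.5.1]{kedlaya-xiao} rather than rebuilding them from Theorem~\ref{T:variation-j}, and your plan to re-derive them from the one-derivation theorems runs into exactly the two obstructions you should expect: you correctly flag Integrality~(b) as invisible to a purely combinatorial rearrangement of the single-variable conclusions (the $\tfrac{1}{p^n d!}\ZZ$-bounds of Theorem~\ref{T:variation-j}(b2) do not sharpen to $\ZZ$ without the multi-derivation Frobenius-antecedent machinery of \cite{kedlaya-xiao}), but you do not flag the second obstruction, which is that your convexity argument for (d) is genuinely local: you only establish that $F_i = \max_{|S|=i}\sum_{\ell\in S}g_\ell$ holds on each subinterval $I'$ on which the $P_b$-decomposition exists, and local piecewise convexity plus continuity does not give global convexity at the breakpoints between the patches.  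The paper's proof of (f), by contrast, is a clean two-step convexity argument: $f_1^{(j)}(M,\cdot)$ is convex by Theorem~\ref{T:variation-j}(d) and sits above (the negative log of) the affine $f_1(M,\cdot)$, so contact at one radius forces contact everywhere, after which Theorem~\ref{T:variation}(e) and indecomposability force the remaining $\partial_j$-radii to saturate as well; your ``any gap splits $M$'' argument conflates a gap at a single radius with the global gap hypothesis that (e) requires.

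The more serious omission is the \textbf{extrinsic} (``moreover'') part, which is the paper's actual contribution here: the intrinsic statements are cited, but the $\hat f_i, \hat F_i$ statements and (c') are proved by a \emph{generic rotation} trick.  One passes to $\widetilde K$, the completion of $K(x_J)$ for the $(1,\dots,1)$-Gauss norm, and uses the Taylor-series embedding $\tilde f^*(u_j) = u_j + x_j t$, under which $\partial_0|_{\tilde f^*M} = \partial_0|_M + \sum_j x_j \partial_j|_M$, so that $R_{\partial_0}(\tilde f^*M \otimes \widetilde F_\eta) = \min_{j\in J^+} R_{\partial_j}(M \otimes F_\eta) = ER(M \otimes F_\eta)$.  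This collapses the whole extrinsic multiset to the $\partial_0$-subsidiary radii of a single module over $A^1_{\widetilde K}$, to which Theorem~\ref{T:variation-j} applies verbatim; the decomposition in (e) is then recovered on $A^1_K$ by gluing via Lemma~\ref{L:proj-intersect}.  Your proposal asserts ``likewise for $\gothE\gothR$'' without acknowledging that $\partial_0$ plays an asymmetric role ($|\partial_0|_{F_\eta} = \eta^{-1}$ varies with $\eta$, while $|\partial_j|_K$ for $j\in J$ does not), so the extrinsic radii are not just the intrinsic ones re-labelled; in particular the extrinsic bound $\hat f_i(M,r)\ge r$ and the eventual equality $\hat f_i(M,r)=r$ in (c') are phenomena governed by the $\partial_0$-radius ceiling, and your blockwise $g_b$ bookkeeping does not by itself produce them.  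If you adopt the rotation trick, (c') reduces immediately to Theorem~\ref{T:variation-j}(c) for $j=0$, and the rest of the moreover clause follows the same pattern; without it, your outline has a real gap there.
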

\begin{proof}
Statements (a)-(e) for $f_i(M, r)$ and $F_i(M, r)$ are proved in \cite[Theorems~2.4.4~and~2.5.1]{kedlaya-xiao}.  Statements (a), (b), (c'), (d), and (e) for $\hat f_i(M, r)$ and $\hat F_i(M, r)$ can be proved similarly as follows.

Let $\widetilde K$ denote the completion of $K(x_J)$ with respect to the $(1, \dots, 1)$-Gauss norm.  For $I = [\alpha, \beta) \subseteq [0,1)$, (as in \cite[Notation~2.4.1]{kedlaya-xiao},) Taylor series defines an injective continuous homomorphism $\tilde f^*: K \langle \alpha / t, t/ \beta \}\} \rar \widetilde K \langle \alpha / t, t / \beta \}\}$ such that $\tilde f^*(u_j) = u_j + x_j t$.  
For $\eta \in (\alpha, \beta)$, we use $\widetilde F_\eta$ to denote the completion of $\widetilde K(t)$ with respect to the $\eta$-Gauss norm.  Then $\tilde f^*$ extends to an injective isometric homomorphism $\tilde f^*: F_\eta \inj \widetilde F_\eta$.

We view $\tilde f^*M$ as a $\partial_0$-differential module on $A_{\widetilde K}^1[\alpha, \beta)$.  Since $
\partial_0|_{\tilde f^* M} = \partial_0 |_M + \sum_{j \in J} x_j \partial_j|_M$, we have
\[
R_{\partial_0} (M \otimes \widetilde F_\eta) = \min_{j \in J^+} \big\{R_{\partial_j}(M \otimes F_\eta)\big\} = ER(M \otimes F_\eta), \textrm{ for any } \eta \in [\alpha, \beta).
\]
In other words, $f_i^{(0)}(\tilde f^* M, r) = \hat f_i(M, r)$ for $r \in (-\log \beta, -\log \alpha)$.  The theorem follows from Theorem~\ref{T:variation-j}; to obtain the decomposition in (e), we use Lemma~\ref{L:proj-intersect} and Remark~\ref{R:proj-intersect} to glue the decompositions over $A^1_{\widetilde K}[\alpha, \beta)$ and over $F_\eta$ for some $\eta \in (\alpha, \beta)$.

We now prove (f) for the intrinsic radii and the proof for  the extrinsic radii is similar.

Assume that we are not in case (2).  Then $IR_{\partial_j}(M \otimes F_\eta)=IR(M \otimes F_\eta)$ for some $\eta \in \dot I$.  By Theorem~\ref{T:variation-j}(d), the convexity of $f_1^{(j)}(M, r)$ forces $IR_{\partial_j}(M \otimes F_\eta)=IR(M \otimes F_\eta)$ for all $\eta \in \dot I$.  Now, if $IR_{\partial_j}(M \otimes F_\eta; 2) > IR( M \otimes F_\eta)$ for all $\eta \in (\alpha, \beta)$, the decomposition (e) would imply that $M$ is decomposable, which contradicts the assumption.  Therefore, we have $IR_{\partial_j}(M \otimes F_\eta; 2) = IR( M \otimes F_\eta)$ for some $\eta \in \dot I$.  By Theorem~\ref{T:variation-j}(d) again, we have the equality for all $\eta \in \dot I$.  Continuing this argument for the third smallest and other $\partial_j$-radii leads us to case (1).
\end{proof}

Next, we discuss how the multiset of refined $\partial_j$-radii of the $\partial_j$-differential module $M$ changes when we base change the $\partial_j$-differential module $M$ to the completions with respect to different Gauss norms, in the case when $f_1^{(j)}(M, r) = \cdots = f_{\rank M}^{(j)}(M, r)$ is \emph{affine}.
Before proving general results, we first look at an example of $\partial_j$-differential module with pure refined $\partial_j$-radii when base changed to any completion with respect to the Gauss norm.  It is a 1-dimensional family analog of Example~\ref{Ex:pure-refined}.

\begin{example}\label{Ex:pure-IR-1-dim}
Let $j\in J^+$ and let $(\alpha, \beta) \subseteq (0, \infty)$ be an open interval. 
Fix $b \in \QQ$ and $\theta \in \kappa_{K^\alg}^{(a)}$, where $a \in -\log |K^\times|^\QQ$.  Assume that
\begin{equation}\label{E:pure-IR-1-dim}
\ee^{a}\alpha^{b}, \ee^{a} \beta^{b} < \left\{
\begin{array}{ll}
1 & \textrm{if } p=0,\\
p^{1/p} & \textrm{if } p>0.
\end{array}
\right.
\end{equation}
We will see that this \emph{includes some non-visible radii}.
As noted in Remark~\ref{R:pure-refined-local}, we cannot improve the restriction from $p^{-1/p}$ to $p^{-1/(p-1)}$.

Let $e$ be the prime-to-$p$ part of the denominator of $b$.
We have the following.
\begin{itemize}
\item[(i)] If $p=0$, then $a \in -\log |(K')^\times|$ and $\theta \in \kappa_{K'}^{(a)}$ for some finite \emph{tamely ramified} extension $K'/K$.  Let $x \in \gothm_{K'}^{(a)} $ be a lift of $\theta$. We set $n=0$ and $d = 1$ in this case.
\item[(ii)] If $p>0$ and $j =0$, there exists $n \in\NN$ such that $\theta^{p^n} \in \kappa_{K'}^{(p^na)}$ with $p^na \in -\log|(K')^\times|$ and $p^neb \in p\ZZ$, for some finite \emph{tamely ramified} extension $K'/K$.  Let $x \in \gothm_{K'}^{(p^na)}$ be a lift of $\theta^{p^n}$.  We set $d=  p^n$.
\item[(ii')] If $p>0$ and $j \in J$, there exists $n \in\NN$ such that $\theta^{p^n} \in (\kappa_{K'}^{(p^{n-1}a)})^p$ and $p^neb \in \ZZ$ with $p^{n-1}a \in -\log|(K')^\times|$ for some finite \emph{tamely ramified} extension $K'/K$.  Let $x \in \gothm_{K'^{(\partial_j)}}^{(p^na)}$ be a lift of $\theta^{p^n}$; this is possible by Lemma~\ref{L:Frob-does-to-kappa}.
\end{itemize}

Let $A_{K'}^1(\alpha^{1/e}, \beta^{1/e})$ be the open annulus with coordinate $t^{1/e}$.
Let $\calL_{x, b, (n)}^{(j)}$ denote the  $\partial_j$-differential module over $A_{K'}^1(\alpha^{1/e}, \beta^{1/e})$ of rank $d$ with basis $\{\bbe_1, \dots, \bbe_d\}$, on which $\partial_j$ acts per description
\[
\partial_j \bbe_i = \bbe_{i+1} \textrm{ for }i = 1, \dots, d-1, \quad \textrm{and}\quad\partial_j \bbe_d = \left\{
\begin{array}{ll}
xt^{-db}u_j^{-d}\bbe_1, & \textrm{if }j \in J\\
xt^{-d(b+1)}\bbe_1, & \textrm{if }j =0.
\end{array}\right.
\]
We added $u_j^{-d}$ and $t^{-d}$ in the definition to balance the different normalizations on intrinsic $\partial_j$-radii.
\end{example}

\begin{lemma}\label{L:pure-IR-1-dim}
Keep the notation as in Example~\ref{Ex:pure-IR-1-dim}.  If we set $F'_{\ee^{-r}} = F_{\ee^{-r}}(t^{1/e})$, then for any $r \in (-\log \beta, -\log \alpha)$, $\calL_{x, b, (n)}^{(j)} \otimes F'_{\ee^{-r}}$ has pure intrinsic $\partial_j$-radii $\omega \ee^{a-br}$ and pure refined $\partial_j$-radii $\theta t^{-b}$.
\end{lemma}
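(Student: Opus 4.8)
The plan is to imitate the proof of Lemma~\ref{L:pure-refined} in the one-parameter family, one radius at a time, and to finish with Lemma~\ref{L:converse-good-norm}. Fix $r\in(-\log\beta,-\log\alpha)$ and put $\eta=\ee^{-r}$. Since the assertion is pointwise in $r$ and (as in Remark~\ref{R:refined-in-K}) the multiset of refined $\partial_j$-radii is unchanged by enlarging the base field, I would first replace $K'$ by the completion of $K'(z)$ for a suitable Gauss norm, where $z$ is transcendental, annihilated by the relevant derivation in the sense of Remark~\ref{R:enlarge-K}, and has $|z|=\omega R_{\partial_j}(\calL^{(j)}_{x,b,(n)}\otimes F'_\eta)^{-1}$, i.e.\ $|z|=\ee^{br-a}|u_j|^{-1}$ when $j\in J$ and $|z|=\ee^{(b+1)r-a}$ when $j=0$; the sole purpose of $z$ is to supply scalars of the needed valuations.

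Next I would declare $\{\bbf_i:=z^{-(i-1)}\bbe_i\}_{i=1}^{d}$ to be an orthonormal basis of $\calL^{(j)}_{x,b,(n)}\otimes F'_\eta$. With respect to it the derivation ($\partial_j$ for $j\in J$, or $\partial_0=d/dt$) is a companion-type matrix with $z$ along the superdiagonal and $z^{-(d-1)}xt^{-db}u_j^{-d}$ (resp.\ $z^{-(d-1)}xt^{-d(b+1)}$) in the lower-left corner; using $|x|=\ee^{-da}$, $|t|=\eta$ and the definition of $|z|$, every entry then has norm exactly $|z|$, and more generally $|\partial_j^i/i!|\le|\partial_j|_K^i$ for $1\le i<r(\omega\ee^{a-br})$. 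The twists $t^{-db},u_j^{-d}$ (resp.\ $t^{-d(b+1)}$) are put into Example~\ref{Ex:pure-IR-1-dim} precisely so that these norms balance and so that the intrinsic normalization used in the statement is absorbed --- recall $u_0=t$, whence $t^{-db}\cdot t^{-d}=t^{-d(b+1)}$ for $j=0$.

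I would then split into cases as in Lemma~\ref{L:pure-refined}, using that \eqref{E:pure-IR-1-dim} forces $a-br<0$ if $p=0$ and $a-br<\tfrac1p\log p$ if $p>0$, hence $r(\omega\ee^{a-br})\in\{1,p\}$. In the \emph{visible} case $a-br<0$: if $p=0$ then $d=1$ and the rank-one check is immediate; if $p>0$ then $r(\omega\ee^{a-br})=1$, and by Corollary~\ref{C:refined-radii=reduced-roots} the refined $\partial_j$-radii are the reduced roots of the characteristic polynomial $X^d-xt^{-db}u_j^{-d}$ of the companion matrix, namely the $d$-th roots of $\overline{xt^{-db}u_j^{-d}}=\bigl(\theta\,\overline{t^{-b}u_j^{-1}}\bigr)^{d}$, all equal (after the intrinsic normalization) to $\theta t^{-b}$; the Newton slope $-\log(\omega\ee^{a-br})$ of the same polynomial gives the intrinsic radius. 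In the \emph{non-visible} case $a-br\in[0,\tfrac1p\log p)$ (so $p>0$, $r(\omega\ee^{a-br})=p$): I would write out the matrices $N_1,\dots,N_p$ of $\partial_j^1,\dots,\partial_j^p$ in $\{\bbf_i\}$ as in \eqref{E:N_l}; the cross terms involving $\partial_j x,\dots,\partial_j^{p-1}x$ all have norm $<|z|$ because $a-br<\tfrac1p\log p<\log p$ makes the operator norm of $\partial_j$ on the Frobenius-descended field strictly smaller than $|z|$. Reducing modulo $\gothm^{(\,\cdot\,)+}$ then leaves a $2\times2$ block permutation-type matrix whose $N_p$-part has pure, nonzero reduced eigenvalue equal to the $p^{n-1}$-th root of $\overline{xt^{-db}u_j^{-d}}$, and Lemma~\ref{L:converse-good-norm} yields at once that $\{\bbf_i\}$ is a good norm, that $IR_{\partial_j}(\calL^{(j)}_{x,b,(n)}\otimes F'_\eta)=\omega\ee^{a-br}$, and (taking the further $p$-th root from Definition~\ref{D:good-norm}) that the refined $\partial_j$-radii are pure equal to $\theta t^{-b}$. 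The case $j=0$ is identical with $u_j$ replaced by $t$ and $\partial_j$ by $\partial_0$.

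I expect the main obstacle to be bookkeeping rather than conceptual: keeping the three twists $z^{-(i-1)}$, $t^{-db}$ (or $t^{-d(b+1)}$) and $u_j^{-d}$ aligned so that every inequality in Definition~\ref{D:good-norm} comes out as an equality, and correctly pinning down $r(\omega\ee^{a-br})$. The one genuine computation, exactly as in Lemma~\ref{L:pure-refined}, is the non-visible expansion of $\partial_j^p$ and the check that every term except the wrap-around terms dies modulo $\gothm^{(\,\cdot\,)+}$; this is why \eqref{E:pure-IR-1-dim} cannot be relaxed past $p^{1/p}$ (cf.\ Remark~\ref{R:pure-refined-local}), and one must bear in mind that $\omega\ee^{a-br}$ can equal the critical value $\omega$, so $N_p$, not merely $N_1$, is needed (cf.\ Remark~\ref{R:refined-for-critical}).
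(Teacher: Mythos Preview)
Your argument is correct, but it takes a longer path than the paper's. The paper's proof is a two-line reduction: for each fixed $r$, one simply observes that $\calL^{(j)}_{x,b,(n)}\otimes F'_{\ee^{-r}}$ is \emph{literally} the module $\calL_{y,(n)}$ of Example~\ref{Ex:pure-refined} over $F'_{\ee^{-r}}$, with $y=xt^{-db}u_j^{-d}$ (for $j\in J$) or $y=xt^{-d(b+1)}$ (for $j=0$), and then applies Lemma~\ref{L:pure-refined} directly to that module. The condition~\eqref{E:pure-IR-1-dim} is exactly what makes the hypothesis $\goths<\tfrac1p\log p$ of Example~\ref{Ex:pure-refined} hold for $\goths=a-br$, and the extra twists $u_j^{-d}$, $t^{-d}$ in Example~\ref{Ex:pure-IR-1-dim} were inserted precisely so that $y$ lands in the correct Frobenius-descended ring required by Example~\ref{Ex:pure-refined}.

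What you do instead is transplant the entire proof of Lemma~\ref{L:pure-refined} into this setting: enlarging by $z$, writing down the good basis, splitting into visible and non-visible cases, and computing $N_p$ by hand. This is valid and all the estimates go through, but it duplicates work already packaged in Lemma~\ref{L:pure-refined}. The only thing your approach buys is self-containment at this spot; the paper's approach buys brevity and makes transparent that Example~\ref{Ex:pure-IR-1-dim} is nothing more than Example~\ref{Ex:pure-refined} with a parameter $t$ carried along for the ride. If you recognize the isomorphism with $\calL_{y,(n)}$ at the outset, everything after your first paragraph becomes a citation.
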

\begin{proof}
Comparing this with Example~\ref{Ex:pure-refined} shows that for any $r$, $\calL^{(j)}_{x, b, (n)} \otimes F'_{\ee^{-r}}$ is isomorphic to $\calL_{xt^{-db}u_j^{-d}, (n)}$ if $j \in J$, and to $\calL_{xt^{-d(b+1)}, (n)}$ if $j=0$. Applying Lemma~\ref{L:pure-refined} to this $\partial_j$-differential module yields the result; note that the condition~\eqref{E:pure-IR-1-dim} corresponds to the condition on $\goths$ in Example~\ref{Ex:pure-refined}.
\end{proof}

\begin{theorem}
\label{T:var-refined}
Fix $j \in J^+$.  Let $M$ be a $\partial_j$-differential module over an \emph{open} annulus $A_K^1(\alpha, \beta)$ such that $M \otimes F_{\ee^{-r}}$ has pure intrinsic $\partial_j$-radii $\omega \ee^{a-br}< 1$ for any $r \in (-\log \beta, -\log \alpha)$ (this implies that $f_1^{(j)}(M, r) = \cdots = f_{\dim M}^{(j)}(M, r)$ is an affine function with slope $b$).  
Let $e$ be the prime-to-$p$ part of the denominator of $b$.
Then there exists a unique  direct sum decomposition
$
M = \bigoplus_{\{\mu_e\theta\} \subseteq \kappa_{K^\alg}^{(a)}} M_{\{\mu_e\theta\}}
$
of $\partial_j$-differential modules
over $A_{K}^1(\alpha, \beta)$, where the sum is taken over all $\mu_e \rtimes \Gal(K^\alg/K)$-orbits of $\kappa_{K^\alg}^{(a)}$, and the refined $\partial_j$-radii of $M_{\{\mu_e\theta\}} \otimes F_\eta$ for any $\eta \in (\alpha, \beta)$ is a multiset consisting of the $\mu_e \rtimes \Gal(K^\alg/K)$-orbit $\{t^{-b}\theta \}$ with an appropriate multiplicity.  

Moreover, if $K'$ is a finite tamely ramified tension of $K$ such that all the $\theta$'s in the above decomposition belong to $\cup_n(\kappa_{K'}^{(p^ns)} )^{1/p^n}$, then we have a unique direct sum decomposition \[
M \otimes_{K\{\{\alpha/t, t/\beta\}\}} K'\{\{\alpha^{1/e}/t^{1/e}, t^{1/e}/\beta^{1/e}\}\} = \bigoplus_{\theta \in  \kappa_{K^\alg}^{(a)}} M_{\theta}
\]
of $\partial_j$-differential modules
over $A_{K'}^1(\alpha^{1/e}, \beta^{1/e})$ such that $M_{\theta} \otimes K'F'_\eta$ has pure refined $\partial_j$-radii $t^{-b}\theta $ for any $\eta \in (\alpha, \beta)$.  
\end{theorem}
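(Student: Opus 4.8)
The plan is to run the argument of Theorem~\ref{T:refined-decomposition} one parameter at a time, but now over the annulus: the test objects $\calL_{x,(n)}$ are replaced by the family modules $\calL_{x,b,(n)}^{(j)}$ of Example~\ref{Ex:pure-IR-1-dim}, which by Lemma~\ref{L:pure-IR-1-dim} have pure refined $\partial_j$-radii $\theta t^{-b}$ \emph{at every Gauss norm simultaneously}, and the field decomposition Theorem~\ref{T:decomp-over-field-complete} is replaced by the annulus decomposition Theorem~\ref{T:variation-j}(e). As a preliminary reduction, replace $K$ by the tamely ramified extension $K'$ of the statement and $t$ by $t^{1/e}$, so that $\calL_{x,b,(n)}^{(j)}$ and $t^{-b}\theta$ make sense; granting the uniqueness proved at the end, descent along $\mu_e\rtimes\Gal(K^\alg/K)$ (with $\mu_e$ acting through $t^{1/e}\mapsto\zeta t^{1/e}$) recovers the decomposition over $K$ with the stated orbit structure. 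Next, taking $\partial_j$-Frobenius antecedents over the annulus (Remark~\ref{R:frob-over-annulus}) when the common intrinsic $\partial_j$-radius $\omega\ee^{a-br}$ exceeds $p^{-1/(p-1)}$, and $\partial_j$-Frobenius descendants when it is identically $p^{-1/(p-1)}$ (which forces $b=0$), and transporting refined radii through Proposition~\ref{P:refined-frob}, reduces us to the case in which $M\otimes F_\eta$ has pure \emph{visible} $\partial_j$-radii for all $\eta$; an affine function assumes the value $p^{-1/(p-1)}$ at most at one point, so after removing a single slice (to be reglued via Lemma~\ref{L:proj-intersect} and Remark~\ref{R:proj-intersect}) this reduction is uniform over the annulus.

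With these reductions in place, fix a point $\eta_0\in(\alpha,\beta)$, apply Theorem~\ref{T:refined-decomposition} over $F_{\eta_0}$, and observe (from $|t|_{\eta_0}=\eta_0$) that the refined $\partial_j$-radii of $M\otimes F_{\eta_0}$ all have the form $\theta t^{-b}$ with $\theta\in\kappa_{K^\alg}^{(a)}$. Fix one orbit $\{\mu_e\theta\}$ occurring, construct $\calL:=\calL_{x,b,(n)}^{(j)}$, and set $N:=M\otimes\calL^\dual$ over the annulus. Since $M\otimes F_\eta$ and $\calL^\dual\otimes F_\eta$ have pure visible $\partial_j$-radii $\omega\ee^{a-br}$, the tensor-product-norm computation of Lemma~\ref{L:interpretation-refined} (cf.\ Proposition~\ref{P:refined-properties-general}) shows that $N\otimes F_\eta$ has exactly $d\cdot m_\eta$ subsidiary $\partial_j$-radii strictly exceeding $\omega\ee^{a-br}$ and $\dim N-d\cdot m_\eta$ subsidiary radii \emph{equal} to $\omega\ee^{a-br}$, where $m_\eta=\multi_{\theta t^{-b}}(\Theta_{\partial_j}(M\otimes F_\eta))$.

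The crux is the constancy of $m_\eta$. Put $L(r)=-\log(\omega\ee^{a-br})$. By Theorem~\ref{T:variation-j}(d) each $F_i^{(j)}(N,r)$ is convex, so $G_i(r):=iL(r)-F_i^{(j)}(N,r)$ is \emph{concave}, and by Lemma~\ref{L:basic-IR-proposition}(c) all subsidiary radii of $N$ are $\geq\omega\ee^{a-br}$, whence $G_i\geq0$; moreover $G_i$ vanishes exactly where all of $f_1^{(j)}(N,\cdot),\dots,f_i^{(j)}(N,\cdot)$ meet $L$, i.e.\ where $m_\eta=0$. A nonnegative concave function on an open interval is identically $0$, positive everywhere, or vanishes at a single point; since $m_{\eta_0}\geq1$ this rules out $m_\eta\equiv0$, so $m_\eta\geq1$ off an isolated set, and combining this with the same concavity input applied to the indecomposable summands of $M$ over the annulus (Theorem~\ref{T:variation}(f) and Theorem~\ref{T:variation-j}(e)) forces $m_\eta$ to equal the generic value $m:=m_{\eta_0}$ for all $\eta$, hence $F_{dm}^{(j)}(N,r)$ is affine with $f_{dm}^{(j)}(N,r)<f_{dm+1}^{(j)}(N,r)$. (This last step—propagating the generic refined-radius structure across the whole open annulus—is the main obstacle; if the concavity argument proves insufficient in the boundary-of-visibility or accumulating-kink cases, the fallback is to decompose over a common refinement of the finitely many closed subintervals on which the relevant $F_i^{(j)}$ are affine and reglue with Lemma~\ref{L:proj-intersect}.) Now Theorem~\ref{T:variation-j}(e) produces a unique splitting $N=W_0\oplus W_1$ of $\partial_j$-differential modules over the annulus with $\rank W_0=dm$, where $W_0\otimes F_\eta$ carries the large subsidiary radii and $W_1\otimes F_\eta$ has pure radii $\omega\ee^{a-br}$; Proposition~\ref{P:refined-properties-general}(c)--(d) then gives, fiberwise, that $W_0\otimes F_\eta$ has pure refined radii $\theta t^{-b}$ and that $\theta t^{-b}\notin\Theta_{\partial_j}(W_1\otimes F_\eta)$.

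Finally, transport the splitting to $M$ as in Theorem~\ref{T:refined-decomposition}: since $R_{\partial_j}(\calL^\dual\otimes\calL\otimes F_\eta)>\omega\ee^{a-br}$ for all $\eta$ by Proposition~\ref{P:refined-properties-general}(b), the diagonal embedding $M\inj M\otimes\calL^\dual\otimes\calL$ and the normalized trace $M\otimes\calL^\dual\otimes\calL\surj M$ conjugate the projectors of $(W_0\otimes\calL)\oplus(W_1\otimes\calL)$ into idempotents $jp_0i$, $jp_1i$ of $M$ over the annulus, the cross terms vanishing because $W_0\otimes\calL$ and $W_1\otimes\calL$ have disjoint refined $\partial_j$-radii at every $\eta$ (Lemma~\ref{L:interpretation-refined}(a), fiberwise). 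This splits off $M_{\{\mu_e\theta\}}:=jp_0i(M)$, a quotient of $W_0\otimes\calL$, whose fibre at every $\eta$ has refined $\partial_j$-radii the orbit $\{t^{-b}\theta\}$ with multiplicity $\rank M_{\{\mu_e\theta\}}/|\{\mu_e\theta\}|=m$, while $jp_1i(M)$ avoids $\theta t^{-b}$; iterating over the finitely many occurring orbits yields the decomposition, and over $K'$ (no orbit issue) the finer one. Uniqueness follows as in Theorem~\ref{T:refined-decomposition} via Lemma~\ref{L:refined-subquotient}(b): over the annulus there is a unique maximal $\partial_j$-submodule of $M$ whose fibre at every $\eta$ has pure refined $\partial_j$-radii $t^{-b}\theta$, since the sum of two such submodules is again one, being a quotient of their direct sum; Galois descent from $K'$ then gives the statement over $K$.
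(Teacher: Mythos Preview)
Your overall strategy---twist by the family $\calL_{x,b,(n)}^{(j)}$, invoke the annulus decomposition by radii (Theorem~\ref{T:variation-j}(e)), and transfer the splitting back to $M$ via the diagonal/trace projectors exactly as in Theorem~\ref{T:refined-decomposition}---is precisely the paper's approach. Two points need attention, one a genuine gap and one a matter of presentation.

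The gap is your assertion that, for a fixed $\eta_0\in(\alpha,\beta)$, the refined $\partial_j$-radii of $M\otimes F_{\eta_0}$ automatically have the form $\theta t^{-b}$ with $\theta\in\kappa_{K^\alg}^{(a)}$. This fails in general: when $-\log\eta_0\in\QQ\cdot\log|K^\times|$, the residue field of $F_{\eta_0}$ picks up a transcendental over $\kappa_K$, so $\kappa_{F_{\eta_0}^\alg}^{(a-br_0)}$ is strictly larger than $t^{-b}\kappa_{K^\alg}^{(a)}$ and a priori the refined radii could be anything there. The paper's fix is to first reduce to $\QQ\cdot\log|K^\times|\neq\RR$ (harmless: a differential module involves only finitely many coefficients), then choose the test point $r_1$ with $r_1\notin\QQ\cdot\log|K^\times|$. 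At such an $r_1$ the value groups of $K$ and of $t$ are independent, forcing the isomorphism $\kappa_{F_{\ee^{-r_1}}^\alg}^{(a-br_1)}\cong t^{-b}\kappa_{K^\alg}^{(a)}$ and hence the desired shape of the refined radii (cf.\ Remark~\ref{R:r-not-in-valuation}). Without this you cannot even construct the test object $\calL_{x,b,(n)}^{(j)}$ with the correct $\theta$.

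Your concavity argument for the constancy of $m_\eta$ is the paper's convexity argument in dual form, but you undersell it and then reach for a needless fallback. A nonnegative concave function on an open interval that vanishes at one interior point is identically zero; there is no ``single isolated zero'' case. Apply this with $i=(\dim M-\mu)d$, where $\mu=m_{\eta_0}$: since $G_i(r_0)=0$ we get $G_i\equiv0$, i.e.\ $m_\eta\leq\mu$ for all $\eta$. With $i=(\dim M-\mu)d+1$ we have $G_i(r_0)>0$, hence $G_i>0$ everywhere, i.e.\ $m_\eta\geq\mu$. Thus $m_\eta\equiv\mu$ directly, with no appeal to indecomposable summands. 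This is exactly what the paper means by ``a convex function below a linear function is the same as the linear function if and only if they touch at some point.''

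A smaller issue: your Frobenius reduction to the visible range is more delicate than stated. Splitting the annulus at the single radius where $IR=p^{-1/(p-1)}$ leaves two pieces handled by different mechanisms (antecedent on one side, direct on the other), and the regluing across the missing slice is not automatic. The paper sidesteps this by running the direct argument in the broader range $IR<p^{-1/p(p-1)}$, where $\calL_{x,b,(n)}^{(j)}$ is still available (condition~\eqref{E:pure-IR-1-dim}), and then using only Frobenius antecedents---never descendants---for the remainder; this keeps the reduction local and uniform.
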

\begin{proof}
First of all, since defining a $\partial_j$-differential module only needs finite data, we may assume that $\QQ \cdot \log|K^\times| \neq \RR$.

The decomposition as stated in the theorem if exists is determined by the decomposition of $M \otimes F_{\ee^{-r}}$ for each $r \in (-\log \beta, -\log \alpha)$; it is hence unique.  We may always replace $M$ by $M \otimes_{K\{\{\alpha/t, t/\beta\}\}} K'\{\{\alpha^{1/e}/t^{1/e}, t^{1/e}/\beta^{1/e}\}\}$ for $e$ and any finite tamely ramified extension $K'$ of $K$, and we may recover the result for $M$ using Galois descent.  In particular, we may assume that $e=1$.  Moreover, using Lemma~\ref{L:proj-intersect} and Remark~\ref{R:proj-intersect}, it suffices to obtain the decomposition in a neighborhood of each radius in $(\alpha, \beta)$ and we can glue the decompositions over the overlaps.

Let $r_0 \in (-\log \beta, -\log \alpha)$ be a point. We first assume that $IR_{\partial_j}(M \otimes F_{\ee^{-r_0}}) < 1$ when $p=0$, and $IR_{\partial_j}(M \otimes F_{\ee^{-r_0}}) < p^{-1/p(p-1)}$ when $p>0$ (note that this restriction still allows some non-visible radii).  By shrinking the interval $(\alpha, \beta)$ to a smaller neighborhood of $r_0$, we may assume that the  condition above at $r_0$ holds for all points in $(-\log \beta, -\log \alpha)$.  Pick a point $r_1 \in (-\log \beta, -\log \alpha)$ which \emph{does not belong to} $\QQ \cdot \log |K^\times|$.  

Let $\theta t^{-b} \in \calI\Theta_{\partial_j}(M \otimes F_{\ee^{-r_1}})$ be an element in the multiset of refined intrinsic $\partial_j$-radii, with multiplicity $\mu$.  Since $M \otimes F_{\ee^{-r_1}}$ has pure intrinsic $\partial_j$-radii $\omega \ee^{a-br}$, we have $\theta t^{-b}\in \kappa_{F_{\ee^{-r}}^\alg}^{(a-br)} \cong t^{-b}\kappa_{K^\alg}^{(a)} $; here the latter isomorphism follows from our choice $r_1 \notin \QQ \cdot \log|K^\times|$.  we may replace $K$ by a finite tamely ramified extension so that $\theta \in \cup_n( \kappa_{K}^{(p^na)})^{1/p^n}$.  The construction in Example~\ref{Ex:pure-IR-1-dim} gives a $\partial_j$-differential module $\calL_{x, b, (n)}^{(j)}$ over $A_{K}^1(\alpha, \beta)$ such that $\calL_{x, b, (n)}^{(j)} \otimes F_{\ee^{-r}}$ has pure $\partial_j$-radii $\omega \ee^{a-br}$ and pure intrinsic $\partial_j$-radii $\theta t^{-b}$ for any $r \in (-\log \beta, -\log \alpha)$.

If we set $N = M \otimes (\calL_{x,b, (n)}^{(j)})^\dual $, then we have $IR_{\partial_j}(N \otimes F_{\ee^{-r}}) \leq \omega\ee^{a-br}$ for any $r \in (-\log \beta, -\log \alpha)$.
Moreover, Proposition~\ref{P:refined-properties} and Theorem~\ref{T:refined-decomposition} together implies that
\[
f^{(j)}_1(M, r_1) = f^{(j)}_1(N, r_1) = f^{(j)}_{(\dim M -\mu)d}(N, r_1) > f^{(j)}_{(\dim M -\mu)d+1}(N, r_1).
\]
By Theorem~\ref{T:variation}(d), the same inequality holds for all $r \in (-\log \beta, -\log \alpha)$ in place of $r_1$ because a convex function below a linear function is same as the linear function if and only if the two functions touch at some point.  By Theorem~\ref{T:variation-j}(e), we have a unique decomposition of $\partial_j$-differential module $N = N_0 \oplus N_1$ such that, for any $r \in (-\log \beta, -\log \alpha)$,  $N_0\otimes F_{\ee^{-r}}$ has pure intrinsic $\partial_j$-radii $\omega \ee^{a-br}$ and $IR_{\partial_j}(N_1 \otimes F_{\ee^{-r}}) > \omega \ee^{a-br'}$.  By the same argument as in Theorem~\ref{T:refined-decomposition}, this implies that $M$ admits a decomposition of $\partial_j$-differential modules $M  = M_\theta \oplus M'$ over $A^1_{K}(\alpha, \beta)$
such that $M_\theta \otimes (\calL_{x, b, (n)}^{(j)})^\dual = N_1$ and $M' \otimes (\calL_{x, b, (n)}^{(j)})^\dual = N_0$.  By Proposition~\ref{P:refined-properties-general} and Lemma~\ref{L:pure-IR-1-dim},  for any $r \in (-\log \beta, - \log \alpha)$, $M_\theta \otimes F_{\ee^{-r}}$ has pure refined intrinsic $\partial_j$-radii $\theta t^{-b}$, and the multiset of refined intrinsic $\partial_j$-radii of $M'\otimes F_{\ee^{-r}}$ does not contain $\theta t^{-b}$.  We obtain the decomposition asked in the theorem by applying the above argument to every $\theta$.

To finish the proof,  it suffices to consider the case when $p>0$ and $IR_{\partial_j}(M \otimes F_{\ee^{-r}}) \in [p^{-1/p(p-1)}, 1)$.   But in this case, the $\partial_j$-Frobenius antecedent of $M$ exists over the annulus with radii in a neighborhood of $r$.  The decomposition follows from the decomposition of the $\partial_j$-Frobenius antecedents of $M$ (applied iteratively until the intrinsic $\partial_j$-radii fall in the range above).
\end{proof}

\begin{remark}\label{R:r-not-in-valuation}
The artificial reduction to the case $\QQ \cdot \log |K^\times| \neq \RR$ is to deduce $\theta \in \kappa_{K^\alg}^{(a)}$.  This fact can also be proved using Newton polygons if the $f_1^{(j)}(M,r)$ is not constantly $p^{-1/(p-1)}$, in which case, one may alternatively use Frobenius pushforward to reduce to the visible case.
\end{remark}

\begin{theorem}
\label{T:var-refined-multi}  Let $I$ be an open interval of $[0, +\infty)$ and let $M$ be a $\partial_{J^+}$-differential module over $A_K^1(I)$ such that $M \otimes F_{\ee^{-r}}$ has pure intrinsic radii $\omega \ee^{a-br} < 1$ for $r \in -\log(\dot I)$. Let $e$ denote the prime-to-$p$ part of the denominator of $b$. 
Then there exists a unique direct sum decomposition
$
M = \bigoplus_{\{\mu_e\vartheta\}} M_{\{\mu_e\vartheta\}}
$
of $\partial_{J^+}$-differential modules
over $A_{K}^1(I)$, where the sum is taken over all $\mu_e \rtimes \Gal(K^\alg/K)$-orbits of $\oplus_{j \in J} \kappa_{K^\alg}^{(a)} \frac{du_j}{u_j} \oplus \kappa_{K^\alg}^{(a)} \frac{dt}t$, and the refined intrinsic radii of $M_{\{\mu_e\vartheta\}} \otimes F_\eta$ for any $\eta \in -\log \dot I$ is a multiset consisting of the $\mu_e \rtimes \Gal(K^\alg/K)$-orbit $\{ t^{-b}\vartheta\}$ with an appropriate multiplicity.  

Moreover, there exists a finite tamely ramified tension $K'$ of $K$ such that we have a unique direct sum decomposition
\begin{equation}\label{E:var-refined-all}
M \otimes_{K[t]} K'[t^{1/e}] = \bigoplus_{\vartheta \in  \oplus_{j \in J} \kappa_{K^\alg}^{(a)} \frac{du_j}{u_j} \oplus \kappa_{K^\alg}^{(a)} \frac{dt}t} M_{\vartheta}
\end{equation}
of $\partial_{J^+}$-differential modules
over $A_{K'}^1(I^{1/e})$ such that $M_{\vartheta} \otimes K'F'_\eta$ has pure refined intrinsic radii $t^{-b}\vartheta$ for any $\eta \in -\log \dot I$.
\end{theorem}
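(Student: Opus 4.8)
The plan is to bootstrap from the one-derivation result Theorem~\ref{T:var-refined}, applied to each $\partial_j$ in turn, and then glue. \emph{Uniqueness} is immediate: for every $r\in-\log\dot I$ the restriction $M_\vartheta\otimes F_{\ee^{-r}}$ is forced to be the $t^{-b}\vartheta$-isotypic summand of the multi-refined decomposition of $M\otimes F_{\ee^{-r}}$ furnished by Theorem~\ref{T:refined-decomposition} (cf.\ Definition~\ref{D:multi-refined}), so the decomposition is pinned down fibrewise and hence unique. For \emph{existence} I would first make the reductions used in the proof of Theorem~\ref{T:var-refined}: pass to a finite tamely ramified extension $K'/K$ and the coordinate $t^{1/e}$ (recovering the coarser $\mu_e\rtimes\Gal(K^\alg/K)$-orbit decomposition over $K$ by uniqueness and Galois descent), assume $\QQ\cdot\log|K^\times|\neq\RR$, and---using Lemma~\ref{L:proj-intersect} and Remark~\ref{R:proj-intersect}---work over a small open sub-annulus around an arbitrary point of $-\log\dot I$ and glue the local answers along overlaps; when $0\in I$ one works with the disc analogues of the results quoted and notes that the decomposition over $A^1_K(\dot I)$ is constant near the origin (Theorem~\ref{T:variation}(c)), hence extends over $A^1_K(I)$.

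So I may assume $e=1$ and that $M$ lives over a small open annulus. By induction on rank, write $M$ as a finite direct sum of $\partial_{J^+}$-differential submodules each of which is indecomposable as a $\partial_{J^+}$-module (direct summands of $M$ have pure intrinsic radii $\omega\ee^{a-br}$ as well, their subsidiary radii being a sub-multiset); it suffices to treat each summand, so assume $M$ indecomposable. Since $M$ has pure intrinsic radii $\omega\ee^{a-br}<1$, the function $f_1(M,r)$ is affine, so the dichotomy Theorem~\ref{T:variation}(f) applies: for each $j\in J^+$, either $M\otimes F_\eta$ has pure $\partial_j$-radii equal to $IR(M\otimes F_\eta)=\omega\ee^{a-br}$ for all $\eta$, or $IR_{\partial_j}(M\otimes F_\eta)>\omega\ee^{a-br}$ for all $\eta$. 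Let $J_0\subseteq J^+$ be the set of $j$ of the first kind; since $\min_jIR_{\partial_j}(M\otimes F_\eta)=\omega\ee^{a-br}$, the set $J_0$ is nonempty, and for $j\in J_0$ the pure $\partial_j$-radii $\omega\ee^{a-br}$ are $<1$.

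For each $j\in J_0$, apply Theorem~\ref{T:var-refined} to $M$ regarded as a $\partial_j$-differential module (enlarging $K$ tamely again if needed): this gives a decomposition $M=\bigoplus_\theta M^{(j)}_\theta$ of $\partial_j$-differential modules with $M^{(j)}_\theta\otimes F_\eta$ of pure refined $\partial_j$-radii $t^{-b}\theta$. The projector $\pi^{(j)}_\theta$ onto $M^{(j)}_\theta$ is linear over the ring of functions on the annulus, and after tensoring with any $F_\eta$ it becomes, by the fibrewise description of Theorem~\ref{T:var-refined} and the uniqueness of refined $\partial_j$-decompositions over a field, a projector onto a union of members of the $\partial_{J^+}$-stable decomposition $M\otimes F_\eta=\bigoplus_{\theta_J}V_{\theta_J}$ of Definition~\ref{D:multi-refined}; such projectors commute with all $\partial_{j'}$ and with one another. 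Since a finite-module endomorphism killed after $\otimes F_\eta$ for every $\eta$ is zero, the $\pi^{(j)}_\theta$ ($j\in J_0$) already commute with all $\partial_{j'}$ and mutually over the annulus. Hence $M=\bigoplus_\theta M^{(j)}_\theta$ is a decomposition of $\partial_{J^+}$-differential modules, and refining over all $j\in J_0$ yields $M=\bigoplus_{(\theta_j)_{j\in J_0}}M_{(\theta_j)}$ with $M_{(\theta_j)}\otimes F_\eta$ of pure refined $\partial_j$-radii $t^{-b}\theta_j$ for every $j\in J_0$.

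Finally I would identify the refined intrinsic radii. On $M_{(\theta_j)}$ and on every $\partial_{J^+}$-subquotient thereof, the dominant set is exactly $J_0$ (for $j\in J_0$, $IR_{\partial_j}=\omega\ee^{a-br}=IR$, unchanged by passing to subquotients, while for $j\notin J_0$ the inequality $IR_{\partial_j}\geq\omega\ee^{a-br}$ stays strict), and for $j\in J_0$ the refined $\partial_j$-radii are pure equal to $t^{-b}\theta_j$, again inherited by subquotients (Lemma~\ref{L:refined-subquotient}, Proposition~\ref{P:refined-properties}). Thus, by Definition~\ref{D:multi-refined}, $M_{(\theta_j)}\otimes F_\eta$ has pure refined intrinsic radii $t^{-b}\vartheta$, where $\vartheta=\sum_{j\in J_0\cap J}\theta_j\frac{du_j}{u_j}$ (plus $\theta_0\frac{dt}{t}$ if $0\in J_0$). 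Collecting the $M_{(\theta_j)}$ over all indecomposable summands of $M$ and grouping them by the value of $\vartheta$ produces the decomposition \eqref{E:var-refined-all}; undoing the reductions on $e$ and on $K$ by Galois descent, and gluing the local decompositions via Lemma~\ref{L:proj-intersect}, gives the asserted decomposition over $A^1_K(I)$.

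I expect the main obstacle to be the descent argument of the third paragraph---checking that the a priori only $\partial_j$-equivariant decompositions coming from Theorem~\ref{T:var-refined} are in fact $\partial_{J^+}$-equivariant and can be simultaneously refined---together with the bookkeeping ensuring that the non-visible ranges (handled internally to Theorems~\ref{T:var-refined} and~\ref{T:variation} via Frobenius antecedents over the annulus, cf.\ Remark~\ref{R:frob-over-annulus}) are covered uniformly; the remaining steps are routine once these are in place.
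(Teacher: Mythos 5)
Your proof of the annulus case ($0\notin I$) is correct and follows the same strategy as the paper's: reduce to an indecomposable module over a small annulus, invoke the dichotomy Theorem~\ref{T:variation}(f), apply Theorem~\ref{T:var-refined} for each dominant $\partial_j$, and show the resulting decompositions are compatible. The paper's own proof is quite terse on the compatibility step---it says only ``note that the decompositions for different $\partial_j$'s given by Theorem~\ref{T:var-refined} are compatible''---whereas your fibrewise argument (showing the commutators $[\pi^{(j)}_\theta,\partial_{j'}]$ vanish because the projectors are, after $\otimes F_\eta$, sums of projectors from the $\partial_{J^+}$-stable decomposition of Definition~\ref{D:multi-refined}, and an endomorphism that dies after $\otimes F_\eta$ is already zero) usefully fills in that lacuna. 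Your final identification of the refined \emph{intrinsic} radii, via the observation that the dominant set $J_0$ is inherited by all subquotients, is also correct.

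Where your proposal is weaker is the disc case. You write that ``the decomposition over $A^1_K(\dot I)$ is constant near the origin (Theorem~\ref{T:variation}(c)), hence extends over $A^1_K(I)$,'' but this doesn't actually explain why the projectors, a priori only functions on a punctured disc, are holomorphic at the origin; constancy of the radii (equivalently $b=0$, forced by Theorem~\ref{T:variation}(c) convexity plus the solvability-type constraints) does not by itself rule out singularities. One would at minimum need to invoke Proposition~\ref{P:refined-disc-j} for each dominant $\partial_j$ and then rerun the commutativity argument, and the dichotomy Theorem~\ref{T:variation}(f) is stated only for open intervals in $(0,+\infty)$, so the reduction to an indecomposable module has to be rethought. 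The paper instead proceeds more concretely: for $\alpha$ small, $M|_{A^1_K[0,\alpha)}$ is \emph{trivial} as a $\partial_0$-module, hence is the pullback along $K\to K\{\{t/\alpha\}\}$ of a $\partial_J$-module $M_0$ over $K$; the decomposition of $M_0$ via Theorem~\ref{T:refined-decomposition} pulls back, and one glues with the already-established decomposition over $(\alpha',\beta)$. You should replace the ``constancy'' heuristic with this argument.
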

\begin{proof}
We first treat the case when $0 \notin I$.
Without loss of generality, we assume that $M$ is not a direct sum of two nonzero sub-$\partial_{J^+}$-modules, which implies the dichotomy given by Theorem~\ref{T:variation}(f).  We may apply Theorem~\ref{T:var-refined} to the $\partial_j$ for which case (f1) of Theorem~\ref{T:variation} holds for $M$ and note that the decompositions for different $\partial_j$'s given by Theorem~\ref{T:var-refined} are compatible.  This gives rise to the desired decomposition.

Now, we consider the case when $I = [0, \beta)$.  Since we have already proved the theorem over $(\alpha, \beta)$ for any $\alpha>0$, it suffices to find the decomposition for $I = [0, \alpha)$ for some $\alpha \in (0,1)$.  Note that when $\alpha$ is sufficiently small, $M \otimes A^1_K[0, \alpha)$ is  trivial as a $\partial_0$-differential module and hence is the pullback of a $\partial_J$-differential module $M_0$ over $K$ along the natural morphism $K \to K\{\{t/\alpha\}\}$.  The decomposition~\ref{E:var-refined-all} follows from the decomposition of $M_0$ given by Theorem~\ref{T:refined-decomposition}.
\end{proof}

We have the similar result for refined extrinsic radii, but only over $A^1_K(I)$; this is because adjoining $t^{1/e}$ would change the extrinsic radii.  This is also subtlety when considering differential modules over discs (as oppose to annuli) and trying to extend the decomposition into the center of the disc;  this is only possible if the functions defined by the extrinsic radii are  ``constant".

\begin{theorem}
\label{T:nonlog-refined-decomposition}
Assume that $|u_j|=1$ for all $j \in J$.
Let $M$ be a $\partial_{J^+}$-differential module over an open \emph{annulus} $A_K^1(I)$ with $I \subseteq (0, 1)$.  Assume that $M \otimes F_{\ee^{-r}}$ has pure extrinsic radii $\omega \ee^{a-br} < \ee^{-r}$ for $r \in -\log (\dot I)$. Let $e$ denote the prime-to-$p$ part of the denominator of $b$.  Then there exists a unique  direct sum decomposition 
\begin{equation}
\label{E:var-refined-all-extrinsic}
M = \bigoplus_{\{\mu_e \hat\vartheta\}} M_{\{\mu_e \hat\vartheta\}}
\end{equation}
of $\partial_{J^+}$-differential modules over $A_K^1(I)$,
 where the direct sum is taken over all $\mu_e \rtimes\Gal(K^\alg/K)$-orbits $\{\mu_e\hat \vartheta\}$ in $\oplus_{j \in J} \kappa_{K^\alg}^{(a)} du_j \oplus \kappa_{K^\alg}^{(a)} dt$, and the multiset of refined extrinsic radii of $M_{\{\mu_e\hat\vartheta\}} \otimes F_\eta$ exactly consists of the  $\mu_e \rtimes\Gal(K^\alg/K)$-orbit $\{t^{-b}\mu_e\hat \vartheta\}$ with an appropriate multiplicity, for any $\eta \in \dot I$.
\end{theorem}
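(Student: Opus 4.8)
The plan is to reduce Theorem~\ref{T:nonlog-refined-decomposition} to the one-derivation result Theorem~\ref{T:var-refined}, following the pattern of the proof of Theorem~\ref{T:var-refined-multi} but working throughout with \emph{extrinsic} radii and never adjoining $t^{1/e}$ (which would disturb the $\partial_0$-radii, and which is exactly why only the coarse $\mu_e\rtimes\Gal(K^\alg/K)$-orbit decomposition can be asserted). Since $I\subseteq(0,1)$ is already an open annulus there is no ``center'' to worry about, so the disc reduction used in Theorem~\ref{T:var-refined-multi} is not needed here. First I would reduce to the case where $M$ is not a direct sum of two nonzero $\partial_{J^+}$-submodules, the general case being recovered by assembling the indecomposable constituents. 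The point of this reduction is that, together with the hypothesis that $\hat f_1(M,\cdot)$ is affine (of slope $b$) on $-\log\dot I$, it lets us invoke the extrinsic form of the dichotomy Theorem~\ref{T:variation}(f) (valid for $\hat f_i,\hat F_i$ by the ``moreover'' clause, using $|u_j|=1$ for $j\in J$ and $I\subseteq[0,1)$): for each $j\in J^+$, either \textrm{(1)} $M\otimes F_\eta$ has pure $\partial_j$-radii equal to $ER(M\otimes F_\eta)=\omega\ee^{a-br}$ for every $\eta\in\dot I$, or \textrm{(2)} $R_{\partial_j}(M\otimes F_\eta)>ER(M\otimes F_\eta)$ for every $\eta\in\dot I$. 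This is precisely what excludes the mixed fibrewise behaviour that would otherwise obstruct passing from a fibrewise decomposition to a global one.

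Next, for each $j$ in case \textrm{(1)} I would apply Theorem~\ref{T:var-refined}. For $j\in J$, the equality $|u_j|=1$ gives $IR_{\partial_j}=R_{\partial_j}$, so $M\otimes F_{\ee^{-r}}$ has pure intrinsic $\partial_j$-radii $\omega\ee^{a-br}<1$, and Theorem~\ref{T:var-refined} produces a unique $\mu_e\rtimes\Gal(K^\alg/K)$-orbit-indexed direct sum decomposition of $M$ over $A_K^1(I)$ whose summands have pure refined $\partial_j$-radii $t^{-b}\theta$. For $j=0$ one has $u_0=t$ and $|\partial_0|_{F_{\ee^{-r}}}=\ee^r$, so $IR_{\partial_0}(M\otimes F_{\ee^{-r}})=\ee^r\,R_{\partial_0}(M\otimes F_{\ee^{-r}})=\omega\ee^{a-(b-1)r}$, which is again $<1$ because $\omega\ee^{a-br}=ER<\ee^{-r}$; applying Theorem~\ref{T:var-refined} to $\partial_0$ with slope $b-1$ (whose reduced denominator has the same prime-to-$p$ part $e$ as that of $b$) gives a $\mu_e\rtimes\Gal(K^\alg/K)$-orbit-indexed decomposition over $A_K^1(I)$ whose summands have pure refined intrinsic $\partial_0$-radii $t^{-(b-1)}\theta$, hence pure refined $\partial_0$-radii $u_0^{-1}\cdot t^{-(b-1)}\theta=t^{-b}\theta$, i.e.\ refined extrinsic $\partial_0$-contribution $t^{-b}\theta\,dt$. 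Each of these decompositions is canonical (being determined fibrewise, where uniqueness of the refined-radii decomposition holds), hence stable under all the remaining derivations $\partial_{j'}$, so the decompositions attached to the various $j$ in case \textrm{(1)} are mutually compatible; taking their common refinement --- using Lemma~\ref{L:proj-intersect} and Remark~\ref{R:proj-intersect} as in the proof of Theorem~\ref{T:var-refined-multi} --- yields a single $\partial_{J^+}$-stable direct sum decomposition $M=\bigoplus M_{\{\mu_e\hat\vartheta\}}$, indexed by $\mu_e\rtimes\Gal(K^\alg/K)$-orbits of $\hat\vartheta=\sum_{j}\theta_j\,du_j$ where $j$ runs over case \textrm{(1)} (including $j=0$, with $du_0=dt$); the coefficient along $du_j$ for $j$ in case \textrm{(2)} is absent, precisely because such $\partial_j$ do not satisfy $R_{\partial_j}=R$ and therefore contribute nothing to $\calE\Theta$ in the sense of Definition~\ref{D:multi-refined}.

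It then remains to read off that $M_{\{\mu_e\hat\vartheta\}}\otimes F_\eta$ has refined extrinsic radii exactly the orbit $\{t^{-b}\hat\vartheta\}$ with the correct multiplicity; this is done componentwise from the $\partial_j$-statements just produced, with Example~\ref{Ex:pure-IR-1-dim} and Lemma~\ref{L:pure-IR-1-dim} serving as the local model objects and Propositions~\ref{P:refined-properties-general} and~\ref{P:refined-properties} handling the tensor bookkeeping, together with the compatibility above. Uniqueness is immediate, since any such decomposition must restrict at each $r\in-\log\dot I$ to the unique refined-radii decomposition of $M\otimes F_{\ee^{-r}}$; the arithmetic hygiene (arranging $\QQ\cdot\log|K^\times|\neq\RR$ and choosing an auxiliary radius outside the value group so that the relevant reductions lie in $\kappa_{K^\alg}^{(a)}$) is the same as in Theorem~\ref{T:var-refined-multi} and is in fact already internal to Theorem~\ref{T:var-refined}. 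I expect the main obstacle to be the $\partial_0$ bookkeeping: one must track the slope shift $b\mapsto b-1$ and the normalizing factor $u_0^{-1}=t^{-1}$ between the extrinsic and intrinsic $\partial_0$-radii, check that this keeps the intrinsic radius $<1$ on all of $\dot I$ and preserves the prime-to-$p$ denominator $e$, and verify that the $\mu_e$-orbit indexing produced for the $dt$-component coincides with the one for the $du_j$-components, so that the $\mu_e\rtimes\Gal(K^\alg/K)$-action on $\bigoplus_{j\in J}\kappa_{K^\alg}^{(a)}du_j\oplus\kappa_{K^\alg}^{(a)}dt$ is the diagonal one claimed; since we cannot pass to $t^{1/e}$, we genuinely obtain only this coarse orbit-level decomposition, which is exactly what the statement asserts.
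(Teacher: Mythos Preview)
Your proposal is correct and follows essentially the same approach as the paper, whose proof is simply ``The proof is the same as Theorem~\ref{T:var-refined-multi}.'' You have spelled out the details the paper leaves implicit: the reduction to the indecomposable case via the extrinsic dichotomy Theorem~\ref{T:variation}(f), the application of Theorem~\ref{T:var-refined} to each dominant $\partial_j$, and the observation that one must stay over $A_K^1(I)$ (no passage to $t^{1/e}$), yielding only the $\mu_e\rtimes\Gal(K^\alg/K)$-orbit decomposition. Your careful $\partial_0$ bookkeeping (the slope shift $b\mapsto b-1$, the check that $IR_{\partial_0}<1$ follows from $ER<\ee^{-r}$, and that the prime-to-$p$ denominator is unchanged) is more than the paper writes out but is exactly what is needed to make the one-line proof honest.
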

\begin{proof}
The proof is the same as Theorem~\ref{T:var-refined-multi}.
\end{proof}

\begin{proposition}
\label{P:refined-disc-j}
Fix $j \in J^+$.  Let $M$ be a $\partial_j$-differential module over an open disc $A_K^1[0, \alpha)$ such that $M \otimes F_\eta$ for any $\eta$ in a neighborhood of $\eta = \alpha$ has pure $\partial_j$-radii $\omega \ee^s$, where $\omega \ee^s$ is independent of $\eta$, and is strictly less than $|u_j|$ if $j \in J$ and less than $\alpha$ if $j=0$.  Then there exists a unique direct sum decomposition $M  = \bigoplus_{\{\theta\} \subset \kappa_K^{(s)}} M_{\{\theta\}}$ of $\partial_j$-differential modules over $A_{K}^1[0, \alpha)$, where the direct sum is taken over all $\Gal(K^\alg/K)$-orbits $\{\theta\}$ of $\kappa_K^{(s)}$, and the multiset of refined $\partial_j$-radii of $M_{\{\theta\}} \otimes F_\eta$ consists of the $\Gal(K^\alg/K)$-orbits $\{\theta\}$ with appropriate multiplicity, for any $\eta \in (0, \alpha)$ if $j \in J$ and for any $\eta \in (\omega \ee^s, \alpha)$ if $j=0$.
\end{proposition}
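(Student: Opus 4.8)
The plan is to obtain the decomposition over the punctured disc from the annulus result Theorem~\ref{T:var-refined} applied to an exhaustion by sub-annuli, and then to extend it across the centre $t=0$.

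\textbf{Step 1 (propagating purity).} Write $d=\dim M$. Applying Theorem~\ref{T:variation-j} to $M$ as a $\partial_j$-differential module over $A^1_K[0,\alpha)$: the function $f_1^{(j)}(M,r)=\cdots=f_d^{(j)}(M,r)$ equals the constant $-\log(\omega\ee^s)$ for $r$ near the left endpoint $-\log\alpha$ (this is the hypothesis, rephrased), it is convex by part~(d), and its slopes are nonpositive by part~(c). The slope near the left endpoint is $0$; convexity forces all later slopes to be $\geq 0$, while monotonicity forces them to be $\leq 0$, hence they vanish. Thus each $f_i^{(j)}(M,r)$ is the constant $-\log(\omega\ee^s)$, so $M\otimes F_{\ee^{-r}}$ has pure $\partial_j$-radii $\omega\ee^s$ for all $r\in(-\log\alpha,+\infty)$ when $j\in J$, and for all $r\in(-\log\alpha,-\log(\omega\ee^s)]$ when $j=0$ — the extra condition $f_1^{(0)}(M,r)>r$ needed to invoke monotonicity for $j=0$ holding precisely there because $\omega\ee^s<\alpha$. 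In the notation of Theorem~\ref{T:var-refined} this means $a-br$ is constant (for $j\in J$ even $b=0$, since $|\partial_j|_{F_\eta}=|\partial_j|_K$ is $\eta$-independent and $\omega\ee^s<|u_j|$ gives $IR_{\partial_j}<1$), so $e=1$ and no tame extension of $K$ is required.

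\textbf{Step 2 (decomposition over a punctured disc).} For each $\alpha'$ in the range just controlled (i.e.\ $\alpha'\in(0,\alpha)$ if $j\in J$, $\alpha'\in(\omega\ee^s,\alpha)$ if $j=0$), Theorem~\ref{T:var-refined} applied over the open annulus $A^1_K(\alpha',\alpha)$ produces a unique decomposition $M|_{A^1_K(\alpha',\alpha)}=\bigoplus_{\{\theta\}\subset\kappa_K^{(s)}}M_{\{\theta\}}$ into $\partial_j$-submodules with the refined-$\partial_j$-radii behaviour stated there (with $b=0$ there is no $t^{-b}$-twist). By the uniqueness in that theorem these decompositions agree on overlaps, and by Lemma~\ref{L:proj-intersect} together with Remark~\ref{R:proj-intersect} they glue to a decomposition of $M$ over the punctured disc $A^1_K(0,\alpha)$ when $j\in J$, and over the annulus $A^1_K(\omega\ee^s,\alpha)$ when $j=0$, with the asserted refined-radii property on the corresponding range of $\eta$.

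\textbf{Step 3 (extension across the centre).} The remaining point — and the main obstacle — is to promote this to a decomposition of $M$ over all of $A^1_K[0,\alpha)$, equivalently to show the idempotent projectors $P_{\{\theta\}}$, which are $\partial_j$-horizontal sections of $\End(M)$ over the region already treated, lie in $\End(M)$ over $A^1_K[0,\alpha)$. When $j=0$ the connection matrix of $\End(M)$ has no pole at $t=0$, and a $\partial_0$-horizontal section over an annulus near the boundary is the restriction of the power-series solution of the regular system $G'=-NG$ near $t=0$, whose radius of convergence is controlled by the $\partial_0$-radii of $\End(M)$ near the centre (governed by Step~1 and the variation theorems); hence the $P_{\{\theta\}}$ extend over $t=0$, and one glues with Lemma~\ref{L:proj-intersect} once more. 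When $j\in J$ a horizontal section of a $\partial_j$-module can acquire a pole, so instead I would pass to the fibre $M_0:=M\otimes_{\Gamma(A^1_K[0,\alpha))}K$ at $t=0$: its $\partial_j$-radii are $\omega\ee^s<|u_j|$ by continuity of radii from Step~1, so $IR_{\partial_j}(M_0)<1$ and Theorem~\ref{T:refined-decomposition} gives $M_0=\bigoplus_{\{\theta\}}M_{0,\{\theta\}}$; lifting this decomposition to a neighbourhood $A^1_K[0,\epsilon)$ of the centre by successive approximation in powers of $t$ is unobstructed because distinct refined $\partial_j$-radii kill the relevant $\Hom$-modules (Lemma~\ref{L:interpretation-refined}(a), Proposition~\ref{P:refined-properties-general}), and the lifted decomposition coincides with the one already built over $A^1_K(0,\epsilon)$ by uniqueness, so Lemma~\ref{L:proj-intersect} glues everything over $A^1_K[0,\alpha)$. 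Uniqueness of the global decomposition follows, as always, from the uniqueness on the punctured disc. The hard work is concentrated in this last step — the convergence estimate for $j=0$ and the unobstructedness-and-convergence of the lift for $j\in J$; if needed one first reduces to the visible range $IR_{\partial_j}<p^{-1/p(p-1)}$ by taking $\partial_j$-Frobenius antecedents over the disc (Remark~\ref{R:frob-over-annulus}) and twisting by modules of the type in Example~\ref{Ex:pure-IR-1-dim}, exactly as in the proof of Theorem~\ref{T:var-refined}.
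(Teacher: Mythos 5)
Your Step~1 is exactly what the paper does (propagate purity via monotonicity and convexity, Theorem~\ref{T:variation-j}(c)(d)), and Step~2 is fine as far as it goes. The genuine difficulty is Step~3, and your treatment of it has a real gap. For $j\in J$, you decompose the fibre $M_0$ at $t=0$ and propose to lift this to a decomposition over $A^1_K[0,\epsilon)$ ``by successive approximation in powers of $t$,'' invoking vanishing of $\Hom$-modules to kill obstructions. But that argument only produces a \emph{formal} lift of the idempotents, i.e.\ a splitting over $K\llbracket t\rrbracket$; it says nothing about whether the resulting power series converge on a disc of positive radius. The obstruction-vanishing you cite (Lemma~\ref{L:interpretation-refined}(a), Proposition~\ref{P:refined-properties-general}) is about $\Hom$ over a complete field, not a convergence estimate for sections of $\End(M)$ over the disc, and this is precisely where a transfer-type estimate is needed. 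For $j=0$ the sketch (``whose radius of convergence is controlled by the $\partial_0$-radii of $\End(M)$ near the centre'') at least points at the right kind of input, but it is likewise only a plan, not an argument, and it still needs to be reconciled with the fact that for $\eta<\omega\ee^s$ one has $R_{\partial_0}(M\otimes F_\eta)=\eta$, so no naive bound gives convergence past $\omega\ee^s$ directly.

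The paper bypasses this entirely: rather than first decomposing on a punctured disc and then trying to extend across the centre, it runs the argument of Theorem~\ref{T:var-refined} \emph{directly over the disc} $A^1_K[0,\alpha)$, substituting \cite[Theorem~2.3.10]{kedlaya-xiao} — a decomposition theorem for the extrinsic radii that is already formulated over a disc and already handles the behaviour through $t=0$ — for the annulus decomposition Theorem~\ref{T:variation-j}(e). The only delicate point in translating Theorem~\ref{T:var-refined} to the disc setting is that the auxiliary twisting module must make sense there, and it does because the slope $b$ is $0$, so the modules used are $\calL_{x,0,(n)}^{(j)}$, which are defined over the entire disc $A^1_K[0,\alpha)$. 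This is the key observation your proposal is missing. If you want to salvage your route, Step~3 needs a genuine analytic extension lemma (essentially what \cite[Theorem~2.3.10]{kedlaya-xiao} provides); the formal lifting argument by itself is not enough.
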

\begin{proof}
 Theorem~\ref{T:variation-j}(c) implies that $M \otimes F_\eta$ has pure $\partial_j$-radii $\omega \ee^s$, for any $\eta \in (0, \alpha]$ if $j \in J$ and for any $\eta \in (\omega \ee^s, \alpha]$ if $j=0$.
The proposition then follows from the same argument as in Theorem~\ref{T:var-refined}, but invoking \cite[Theorem~2.3.10]{kedlaya-xiao} in place of Theorem~\ref{T:variation-j}(e) when making the the decomposition by extrinsic radii.  Note also that we will only make use of the $\partial_j$-differential module $\calL_{x,0,(n)}^{(j)}$ in the proof which is defined over the entire disc $A_K^1[0, \alpha)$.
\end{proof}

\begin{proposition}
\label{P:nonlog-refined-disc}
Assume that $|u_j|=1$ for any $j \in J$.
Let $M$ be a $\partial_{J^+}$-differential module over an open disc $A_K^1[0, \alpha)$ with $\alpha <1$.  Assume that $M \otimes F_{\ee^{-r}}$ has pure extrinsic radii $\min\{\omega \ee^{s}, \ee^{-r}\}$ for any $r >-\log \alpha$, where $\omega \ee^s < \alpha$.   Then there exists a unique direct sum decomposition 
$
M = \bigoplus_{\{ \hat\vartheta\}} M_{\{\hat\vartheta\}}
$
of $\partial_{J^+}$-differential modules over $A_K^1[0, \alpha)$,
 where the direct sum is taken over all $\Gal(K^\sep/K)$-orbits  $\{\hat \vartheta\}$ in $\oplus_{j \in J} \kappa_{K^\alg}^{(s)} du_j \oplus \kappa_{K^\alg}^{(s)} dt$, such that the multiset of refined extrinsic radii of $M_{\{\hat\vartheta\}} \otimes F_\eta$ exactly consists of the $\Gal(K^\sep/K)$-orbits $\{\hat\vartheta\}$ with appropriate multiplicity, for any $\eta > \omega \ee^s$.
\end{proposition}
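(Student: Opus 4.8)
The plan is to run the argument of Theorem~\ref{T:var-refined-multi} in the disc case, using Theorem~\ref{T:nonlog-refined-decomposition} (which, since $b=0$ here, gives $e=1$ and requires no adjunction of $t^{1/e}$) in the role of Theorem~\ref{T:var-refined}, and the disc decomposition result \cite[Theorem~2.3.10]{kedlaya-xiao} in the role of Theorem~\ref{T:variation-j}(e), exactly as in the proof of Proposition~\ref{P:refined-disc-j}. By the uniqueness assertion it suffices to produce the decomposition in a neighbourhood of each point of $[0,\alpha)$ and then glue, using Lemma~\ref{L:proj-intersect} and Remark~\ref{R:proj-intersect}. After replacing $K$ by a finite tamely ramified extension (Lemma~\ref{L:tame-extension}), and recovering the statement over $K$ afterwards by Galois descent and uniqueness, we may assume all the $\theta$'s occurring lie in $\cup_n(\kappa_K^{(p^ns)})^{1/p^n}$; we may also assume $M$ is not a direct sum of two nonzero $\partial_{J^+}$-submodules, so that the analogue for $\hat f$ of the dichotomy in Theorem~\ref{T:variation}(f) holds for every $j\in J^+$. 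Since, by Definition~\ref{D:multi-refined}(ii), only the $j$ achieving the extrinsic minimum contribute a $du_j$-component, it is enough to produce, for each such $j$, a decomposition of $M$ over $A_K^1[0,\alpha)$ by refined $\partial_j$-radii and then intersect the results (they are compatible, as in Theorem~\ref{T:var-refined-multi}).

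On the open annulus $A_K^1(\omega\ee^s,\alpha)$ the hypothesis reads ``$M\otimes F_{\ee^{-r}}$ has pure extrinsic radii $\omega\ee^{s}=\omega\ee^{s-0\cdot r}<\ee^{-r}$'', so Theorem~\ref{T:nonlog-refined-decomposition} (with $a=s$, $b=0$) already gives the decomposition there, with the refined extrinsic radii of each summand the prescribed orbit and, because $b=0$, independent of $\eta$. Near the centre the situation is different: for $\eta$ sufficiently small the assumed purity forces $ER(M\otimes F_\eta)=\eta$, so $M$ restricted to some sub-disc $A_K^1[0,\beta_0)$ has full generic $\partial_0$-radius everywhere, hence is trivial as a $\partial_0$-differential module and is the pullback along $K\to K\{\{t/\beta_0\}\}$ of a $\partial_J$-differential module $M_0$ over $K$, to which Theorem~\ref{T:refined-decomposition} (applied to each $\partial_j$, $j\in J$) applies. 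To glue the two, I would rerun the twist--decompose--untwist argument of Theorems~\ref{T:var-refined} and \ref{T:var-refined-multi} over the whole disc: for $j\in J$ one twists $M$ by the dual of $\calL_{x,0,(n)}^{(j)}$, which has no pole at $t=0$ (because $b=0$) and so is defined over all of $A_K^1[0,\alpha)$, and then applies \cite[Theorem~2.3.10]{kedlaya-xiao} to the twist (iterating Frobenius antecedents when $\omega\ee^s$ lies in the non-visible range, as in the proof of Theorem~\ref{T:var-refined}); for $j=0$ no twist is used, since the refined $\partial_0$-radius decomposition produced over $A_K^1(\omega\ee^s,\alpha)$ extends across the $\partial_0$-trivial centre by the same theorem. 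Untwisting and intersecting over the various $j$ produces $M=\bigoplus_{\{\hat\vartheta\}}M_{\{\hat\vartheta\}}$ over $A_K^1[0,\alpha)$, and that $M_{\{\hat\vartheta\}}\otimes F_\eta$ has the asserted refined extrinsic radii for $\eta>\omega\ee^s$ follows by comparison with the annulus decomposition above, using its uniqueness together with Proposition~\ref{P:refined-properties-general} and Lemma~\ref{L:pure-IR-1-dim}.

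The hard part is precisely the centre of the disc, where two features absent on an annulus have to be accommodated. First, the auxiliary modules used to separate refined radii must be defined across $t=0$: this forces the pole-free choice $\calL_{x,0,(n)}^{(j)}$ for $j\in J$ and, in the $\partial_0$-direction, requires handling $j=0$ separately, since the naive choice $\calL_{x,0,(n)}^{(0)}$ of Example~\ref{Ex:pure-IR-1-dim} has a pole at the origin. Second, the annulus decomposition over $A_K^1(\omega\ee^s,\alpha)$ and the decomposition of $M_0$ near $0$ need not be defined over a common annulus, because $\hat f_1(M,r)=\max\{-\log(\omega\ee^s),\,r\}$ is constant only on the range $r\in(-\log\alpha,-\log(\omega\ee^s))$; so the passage across the puncture is not formal but is exactly the content packaged by \cite[Theorem~2.3.10]{kedlaya-xiao}, whose proof in turn exploits that $M$ becomes $\partial_0$-trivial (solvable) near the centre. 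Once these two points are secured, the remainder is the by-now-routine machinery already used for Theorems~\ref{T:var-refined} and \ref{T:var-refined-multi}, and Proposition~\ref{P:nonlog-refined-disc} follows, its proof being formally identical to that of Theorem~\ref{T:var-refined-multi} with the substitutions indicated above.
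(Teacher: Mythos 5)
Your overall strategy matches the paper's: reduce to indecomposable $M$, establish a dichotomy for each $\partial_j$, and then obtain the refined-radii decomposition one derivation at a time. The one genuine gap is that you cite ``the analogue for $\hat f$ of the dichotomy in Theorem~\ref{T:variation}(f)'' as if it were already established for the disc. That statement (even its $\hat f$ variant, given in the ``Moreover'' clause of Theorem~\ref{T:variation}) is stated only for open intervals $I\subset(0,+\infty)$; it does not cover $I=[0,\alpha)$, which is precisely the case at hand. The dichotomy has to be re-proved for the disc, and the paper does this carefully: starting from $R_{\partial_j}(M\otimes F_\eta)=ER(M\otimes F_\eta)$ at a single $\eta$, the monotonicity and convexity of $f_1^{(j)}(M,r)$ from Theorem~\ref{T:variation-j}(c)(d) force equality for all $\eta\in(0,\alpha)$, and then one iterates on the second, third, etc.\ $\partial_j$-radius, each time splitting $M$ via the disc decomposition \cite[Theorem~2.3.10]{kedlaya-xiao} and contradicting indecomposability. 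Your argument presupposes exactly this conclusion, so the asserted ``intersect the results'' step is floating without the underlying dichotomy. Your observation that the passage across the centre is ``exactly the content packaged by \cite[Theorem~2.3.10]'' is correct, but it is invoked too late --- it is already needed to get the dichotomy off the ground.

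Beyond that gap, your proposal is more elaborate than it needs to be. Once the dichotomy is in place, the paper simply applies Proposition~\ref{P:refined-disc-j} to each $j$ for which $\partial_j$ is persistently dominant; that proposition already encapsulates the twist by $\calL_{x,0,(n)}^{(j)}$ (pole-free on the disc for $j\in J$), the iterated $\partial_j$-Frobenius antecedent for the non-visible range, and the special treatment of $j=0$ (where the conclusion is only asserted for $\eta>\omega\ee^s$, because $\calL_{x,0,(n)}^{(0)}$ has a pole at the origin). You are re-deriving all of this inline rather than citing it, which is not wrong but does needlessly duplicate the proof of Proposition~\ref{P:refined-disc-j}. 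Your identification of the two centre-of-the-disc subtleties (pole-free choice of $\calL$ for $j\in J$; separate handling of $j=0$) is accurate and is the same issue the paper flags; once you repair the dichotomy step as above, the rest goes through and agrees with the paper's argument.
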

\begin{proof}
Without loss of generality, we assume that $M$ is not a direct sum of two nonzero $\partial_{J^+}$-differential modules.
We first show a dichotomy (similar to Theorem~\ref{T:variation}) that for each $\partial_j$, either $M \otimes F_\eta$ has pure $\partial_j$-radii $\omega \ee^s$ for all $\eta > \omega \ee^s$, or $R_{\partial_j}(M \otimes F_\eta)< \omega \ee^s$ for all $\eta > \omega \ee^s$.  Assume that we are not in the latter case.  Then we have $R_{\partial_j}(M \otimes F_\eta) = ER(M \otimes F_\eta)$ for some $\eta \in (\omega \ee^s, \alpha)$.  By Theorem~\ref{T:variation-j}(c)(d), the monotonicity and the convexity of $f_1^{(j)}(M, r)$ forces $R_{\partial_j}(M \otimes F_\eta) = ER(M \otimes F_\eta)$ for all $\eta \in (0, \alpha)$.  Now, if $R_{\partial_j}(M \otimes F_\eta, 2) > ER(M \otimes F_\eta)$ for all $\eta \in (\omega \ee^s, \alpha)$, we may use \cite[Theorem~2.3.10]{kedlaya-xiao} to decompose $M$ to split off the smallest $\partial_j$-radii, which contradicts the indecomposability assumption on $M$.  Therefore, $R_{\partial_j}(M \otimes F_\eta, 2) = ER(M \otimes F_\eta)$ for some $\eta \in (\omega \ee^s, \alpha)$.  Continuing this argument for the third and other $\partial$-radii leads us to the former case of the claim.  The proposition now follows from applying Proposition~\ref{P:refined-disc-j} to each $\partial_j$ that satisfies the former condition of the claim.
\end{proof}

\begin{remark}
We do not expect a decomposition theorem analogous to Proposition~\ref{P:nonlog-refined-disc} in the case when the functions for extrinsic radii are linear with negative slopes.  One of the reason is that we do not know how to construct modules $\calL_{x, b, (n)}^{(j)}$ over the open disc.  A more serious reason is that, when $\eta$ is sufficiently close to $0$, $M \otimes F_\eta$ is always the same as $\eta$, and hence no information of the $\partial_j$-radii of $M \otimes F_\eta$ is reflected in the extrinsic radii; in contrast, if the functions of extrinsic radii stay constant before the they become equal to $-\log \eta$, all dominant $\partial_j$ must have constant $\partial_j$-radii by the monotonicity (Theorem~\ref{T:variation-j}(c)).
\end{remark}

\subsection{Refined differential conductors}
\label{S:differential-conductor}

Differential modules defined over an open annulus with outer radius 1 are historically considered very important, in particular those whose intrinsic radii approach to 1, as we base change to the completion with respect to the Gauss norms with radii approaching to 1; this is known as the solvable case.  In particular, the rate of the such change of intrinsic radii is related to the Swan conductors if the differential modules come from a Galois representation of $G_{\FF_p((t))}$.  In this subsection, we focus on this situation and define differential conductors, as well as refined differential conductors if the differential module has pure differential conductors.

We continue to assume Hypothesis~\ref{H:K-multi}.  Moreover, we assume $p>0$ in this subsection.

\begin{definition}
\label{D:solvable-module}
Let $M$ be a $\partial_{J^+}$-differential module of rank $d$ over $A^1_K(\eta_0, 1)$ for some $\eta_0 \in (0, 1)$.  We say that $M$ is \emph{solvable} if $IR(M \otimes F_{\eta}) \to 1$ as $\eta \rar 1^-$.
\end{definition}

\begin{theorem} \label{T:swan1}
Let $M$ be a solvable $\partial_{J^+}$-differential module of rank $d$ over $A^1_K(\eta_0,1)$
for some $\eta_0 \in (0,1)$.
Then by making $\eta_0$ sufficiently close to $1$, there exists a unique direct sum decomposition
$M = M_1 \oplus \cdots \oplus M_\gamma$ over $A^1_K(\eta_0,1)$
and  nonnegative distinct
rational numbers $b_1,\dots,b_\gamma$ with $b_i \cdot \rank(M_i) \in \ZZ$,
such that $M_i \otimes F_\eta$ has pure intrinsic radii $\eta^{b_i}$ for any $i = 1, \dots, \gamma$ and any $\eta \in (\eta_0, 1))$.

Keep the same hypothesis and assume moreover that $|u_j| = 1$ for all $j \in J$.  Then by making $\eta_0$ sufficiently close to $1$, there exists a unique direct sum decomposition
$M = \hat M_1 \oplus \cdots \oplus \hat M_{\hat \gamma}$ over $A^1_K(\eta_0,1)$
and nonnegative distinct
rational numbers $\hat b_1,\dots,\hat b_{\hat \gamma}$ with $\hat b_i \cdot \rank(\hat M_i) \in \ZZ$,
such that $\hat M_i \otimes F_\eta$ has pure extrinsic radii $\eta^{\hat b_i}$ for any $i=1,\dots,\hat \gamma$ and any $\eta \in (\eta_0, 1))$.
\end{theorem}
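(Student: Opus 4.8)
The plan is to deduce this statement from the by-now-standard monotonicity, convexity, and decomposition machinery for the functions $F_i^{(j)}(M,r)$ and $F_i(M,r)$ established in Theorems~\ref{T:variation-j} and \ref{T:variation}, combined with the solvability hypothesis which pins down the behavior of these functions near $r=0$. For the first (intrinsic) decomposition: consider the functions $F_i(M,r)$ for $r \in (0, -\log\eta_0)$. By Theorem~\ref{T:variation}(a) they are continuous and piecewise affine, by (d) convex, and since $0$ is a limit point of the interval and $M$ is solvable we have $f_i(M,r) \to 0$ as $r \to 0^+$ for all $i$; combined with convexity and monotonicity (Theorem~\ref{T:variation}(c), available once we note the radii approach $1$ from below, i.e.\ the relevant $f_i$ are eventually below the diagonal), each $F_i(M,r)$ is affine with a nonnegative slope on a sufficiently small interval $(0,\varepsilon)$. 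Shrinking $\eta_0$ so that $-\log\eta_0 < \varepsilon$, each $F_i(M,r)$ is globally affine on $(0,-\log\eta_0)$; let the slope of $f_i(M,r) = F_i - F_{i-1}$ be the constant $b_i'$. These are nonnegative rationals (integrality, Theorem~\ref{T:variation}(b)), and sorting them into distinct values $b_1,\dots,b_\gamma$ groups the indices. Wherever two consecutive slopes differ, $f_i(M,r) > f_{i+1}(M,r)$ throughout the interval (again using that two affine functions with a common point and different slopes separate), so Theorem~\ref{T:variation}(e) applies at each such break and yields, by iteration, the direct sum decomposition $M = M_1 \oplus \cdots \oplus M_\gamma$ with $M_i \otimes F_\eta$ of pure intrinsic radii $\eta^{b_i}$. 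The integrality statement $b_i \cdot \rank(M_i) \in \ZZ$ follows from the fact that $F_i(M,r)$ has integer slope at the top of each block (Theorem~\ref{T:variation}(b)), so $\sum_{\rank M_1 + \cdots + \rank M_{i-1} < \ell \le \rank M_1 + \cdots + \rank M_i} b_i = b_i \cdot \rank(M_i)$ is the difference of two integer-valued quantities.

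For the extrinsic decomposition, the argument is \emph{mutatis mutandis} the same, now using the functions $\hat F_i(M,r)$ and $\hat f_i(M,r)$ and the ``moreover'' part of Theorem~\ref{T:variation} (which asserts that statements (a), (b), (c'), (d), (e) hold for $\hat f_i, \hat F_i$ in place of $f_i, F_i$, under the hypothesis $|u_j|=1$ for all $j \in J$, which we have assumed). Here monotonicity is in the form (c'): at any $r_0$ with $\hat f_i(M, r_0) > r_0$ the slope of $\hat F_i$ is nonpositive nearby, and $\hat f_i(M,r) = r$ for $r$ large — but we are in the regime $r \to 0^+$ where solvability gives $\hat f_i(M, r) \to 0$ while $r \to 0$, so for small $r$ we must check $\hat f_i(M,r) > r$; this holds because solvability forces the intrinsic radii (hence a fortiori a lower bound on extrinsic radii, since $ER \ge$ the minimum over $j$ and $|u_j|=1$ makes extrinsic and intrinsic comparable) to tend to $1$ faster than $\ee^{-r} \to 1$. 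Then the same convexity-plus-affineness argument near $0$, after shrinking $\eta_0$, makes each $\hat F_i(M,r)$ affine on $(0,-\log\eta_0)$ with constant nonnegative rational slopes $\hat b_i'$; sorting gives $\hat b_1, \dots, \hat b_{\hat\gamma}$; Theorem~\ref{T:variation}(e) for $\hat f_i$ at each break gives $M = \hat M_1 \oplus \cdots \oplus \hat M_{\hat\gamma}$ with $\hat M_i \otimes F_\eta$ of pure extrinsic radii $\eta^{\hat b_i}$ and $\hat b_i \cdot \rank(\hat M_i) \in \ZZ$ by the integrality clause.

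The main obstacle, and the point deserving the most care, is the passage from ``affine on a small neighborhood of $0$'' to ``affine on all of $(0,-\log\eta_0)$ after shrinking $\eta_0$,'' together with the compatibility of the decomposition produced at one radius with the one produced at another. The first is handled by convexity: a convex piecewise-affine function that is affine near one endpoint of an interval need not be affine on the whole interval, so one genuinely must \emph{choose} $\eta_0$ large enough that the first break point of every $F_i$ (resp.\ $\hat F_i$) lies outside $(0,-\log\eta_0)$; since there are finitely many such functions and finitely many break points each, such an $\eta_0$ exists. The compatibility — that the decomposition coming from the break between $f_i$ and $f_{i+1}$ refines (or is refined by) the one from the break between $f_{i'}$ and $f_{i'+1}$ for $i' > i$ — is exactly the uniqueness assertion in Theorem~\ref{T:variation}(e): the summand $M_1$ there is characterized as the one whose radii at every $F_\eta$ are the smallest $i$ of $M$'s, so these submodules are nested and their successive quotients are the $M_i$ (resp.\ $\hat M_i$). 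Finally, uniqueness of the whole decomposition $M = \bigoplus M_i$ follows because each $M_i$ is determined by $M$ as the summand with pure intrinsic (resp.\ extrinsic) radii $\eta^{b_i}$, a property that can be read off from $M \otimes F_\eta$ for any single $\eta \in (\eta_0,1)$.
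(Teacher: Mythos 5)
You follow the same route as the paper — Theorem~\ref{T:variation}(a)(b)(d) for continuity, convexity and integrality of slopes, solvability to force $F_l(M,r) \to 0$ as $r \to 0^+$, shrinking $\eta_0$ until each $F_l$ (resp.\ $\hat F_l$) is globally affine, and then (e) to produce the summands and (b) for $b_i \cdot \rank M_i \in \ZZ$. However, your justification that the slopes near $r=0$ are nonnegative does not hold up. You invoke Theorem~\ref{T:variation}(c) and (c'), but both of those are stated under the hypothesis $0 \in I$, i.e.\ that the underlying space is a disc; here $I = (\eta_0,1)$ with $\eta_0>0$, so they do not apply. And even where they do apply, they assert that the slopes of $F_i(M,r)$ (resp.\ $\hat F_i(M,r)$) are \emph{nonpositive}, the opposite sign from what you need.

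The paper dispenses with monotonicity altogether. The argument for nonnegativity of slopes is simply: $F_l(M,r)\geq 0$ (each $f_i \geq 0$ since intrinsic radii are at most $1$; similarly $\hat f_i \geq 0$ once $|u_j|=1$), $F_l(M,r)\to 0$ as $r\to 0^+$ (solvability — note $f_1 \geq \cdots \geq f_d \geq 0$, so $f_1\to 0$ propagates to all $f_i$, and then to the extrinsic functions), and convexity from (d). If $F_l$ had a negative slope at some $r_1>0$, convexity would force the slope to be at most that negative value for all $r<r_1$, so $F_l(r)\geq F_l(r_1)+|\textrm{slope}|\cdot(r_1-r)$ would stay bounded away from $0$ as $r\to 0^+$, contradicting solvability. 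Linearity in a right neighborhood of $0$ then follows because the slopes of $d!\,F_l$ are integers by (b), hence discrete, nondecreasing, and bounded below, so they stabilize. This last step also repairs the thin spot where you claim each $F_i$ has ``finitely many break points'': a priori the break points of a convex piecewise-affine function could accumulate at the endpoint $r=0$, and it is the integrality of the scaled slopes that rules this out.
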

\begin{proof}
By Theorem~\ref{T:variation}(a)(b)(d),
for $l=1,\dots,d$, the functions $d! F_l(M, r)$ and $d! \hat F_l(M, r)$ on $(0, -\log \eta_0)$
are continuous, convex, and piecewise affine with integer slopes.
The assumption $d! F_l(M,r) \to 0$ also implies that $d! \hat F_l(M, r) \rar 0$ as $r \to 0^+$; because of this
and the fact that $d! F_l(M,r) \geq 0$ and $d! \hat F_l(M, r) \geq 0$ for all $r$, the slopes of
$F_l(M,r)$ and $\hat F_l(M, r)$ are forced to be nonnegative. Hence there is a least such
slope; that is, $d! F_l(M,r)$ and $d! \hat F_l(M, r)$ are linear in a right neighborhood of $r=0$.

We can thus choose $\eta_0 \rar 1^-$ so that $d! F_l(M,r)$ and $d! \hat F_l(M, r)$ are linear on
$(0, -\log  \eta_0)$ for $l=1,\dots,d$.  The desired decompositions is constructed in  Theorem~\ref{T:variation}(e) and 
the integrality of $b_i \cdot \rank(M_i)$ and $\hat b_i \cdot \rank (\hat M_i)$ follows from the fact that
$F_{\dim M_i}(M_i,r)$ and $\hat F_{\dim \hat M_i} (\hat M_i, r)$ have integral slopes, again by Theorem~\ref{T:variation}(b).
\end{proof}

\begin{definition}
\label{D:differential-conductor}
Let $M$ be a solvable $\partial_{J^+}$-differential module of rank $d$ over $A_K^1(\eta_0, 1)$ for some $\eta_0 \in (0, 1)$.  Define the multiset of \emph{differential log-breaks} of $M$ to be the multiset consisting of $b_i$ from Theorem~\ref{T:swan1}  with multiplicity $\rank M_i$; we use $b_\log(M; 1) \geq \cdots \geq b_\log(M; d)$ to denote the differential log-breaks in decreasing order.  We define the \emph{differential Swan conductor} of $M$ to be the sum of the differential log-breaks, that is $\Swan(M) = \sum_{i=1}^r b_i\cdot \rank(M_i)$; it is a nonnegative integer by Theorem~\ref{T:swan1}.  We say that $M$ has \emph{pure differential log-breaks} if all differential log-breaks are equal.

When $M$ has pure differential log-breaks, we define the multiset of \emph{refined Swan conductors} of $M$, denoted by $\calI\Theta(M)$, to be the multiset consisting of $\vartheta$ in \eqref{E:var-refined-all} with multiplicity $\rank M_\vartheta$.

Similarly, when $|u_j| = 1$ for all $j \in J$, we define multiset of \emph{differential nonlog-breaks} to be the multiset consisting of $\hat b_i$ from Theorem~\ref{T:swan1}  with multiplicity $\rank \hat M_i$; we use $b_\nlog(M; 1) \geq \cdots \geq b_\nlog(M; d)$ to denote the differential nonlog-breaks in decreasing order.  We define the \emph{differential Artin conductor} of $M$ to be the sum of the differential nonlog-breaks; it is also a nonnegative integer by Theorem~\ref{T:swan1}.   We say that $M$ has \emph{pure differential nonlog-breaks} if all differential nonlog-breaks are equal. 

When $M$ has pure differential nonlog-breaks, we define the multiset of \emph{refined Artin conductors} of $M$, denoted by $\calE\Theta(M)$, to be the multiset of $\mu_e \rtimes\Gal(K^\sep/K)$-orbits $\{\mu_e\hat \vartheta\}$ in \eqref{E:var-refined-all-extrinsic} with  multiplicity equal to the multiplicities of $\{t^{-b}\mu_e\hat \vartheta\}$ in $M_{\{\mu_e\hat \vartheta\}} \otimes F_{\eta}$ for any $\eta \in (\eta_0, 1)$.
\end{definition}

\section{Refined differential conductors for Galois representations}

One of the most important application of $p$-adic differential modules is to provide an interpretation of the Swan conductors of representations of $G_k$, where $k$ is a complete discrete valuation field of equal characteristic $p>0$ with perfect residue field.  This idea was later generalized by Kedlaya \cite{kedlaya-swan1} to the case when the residue field of $k$ need not to be perfect, and by the author \cite{xiao1} to relate the differential modules to the Swan conductors in the sense of Abbes and Saito \cite{abbes-saito1}.  In this section, we further develop the theory on the differential module side to incorporate the study of refined differential conductors, which will be related to Saito's definition \cite{saito-wild-ram} of refined Swan conductors, as proved in the next section. 

Throughout this section, we assume that $p>0$ is a prime number.

\subsection{Construction of differential modules}
\label{S:construction-diff-mod}

This subsection is dedicated to the construction of the differential modules associated to representations of $G_k$, where $k$ is a complete discrete valuation field of equal characteristic $p>0$.

\begin{definition}
\label{D:p-basis}
For a field $\kappa$ of characteristic $p>0$, a \emph{$p$-basis} of $\kappa$ is a set $(b_j)_{j \in J} \subset \kappa$ such that  the products $b_J^{e_J}$, where $e_j \in \{0, 1, \dots, p-1\}$ for all $j \in J$ and $e_j = 0$ for all but finitely many $j$, form a basis of the vector space $\kappa$ over $\kappa^p$.
\end{definition}

\begin{notation}\label{N:k}
Let $k$ be a complete discrete valuation field of characteristic $p>0$.  Let $\pi_k$ be a uniformizer of $k$, generating the maximal ideal $\gothm_k$ in the ring of integers $\calO_k$. Let $\kappa= \kappa_k$ denote the residue field.  Let $\bar \kappa = \kappa^\alg$ denote an algebraic closure of $\kappa$.   We choose and fix a non-canonical isomorphism $k \simeq \kappa((\pi_k))$.  We fix a $p$-basis $\bar b_J$ of $\kappa$ and let $b_J$ be the preimage of them via the isomorphism above.  Then $\{b_J, \pi_k\}$ form a $p$-basis of $k$, which  we refer to as a \emph{lifted $p$-basis}.
Let $k_0 = \cap_{n \in \NN} \kappa^{p^n} = \cap_{n \in \NN} k^{p^n}$.  We know that $d\pi_k$ and $db_J$ form a basis of $\Omega_{\calO_k}^1$ over $\calO_k$.

Let $\calO_K$ denote the Cohen ring of $\kappa$ with respect to $\bar b_J$ and let $B_J \subset \OK$ be the canonical lifts of the $p$-basis.  Put $K = \Frac \calO_K$. We use $\calO_{K_0}$ to denote the ring of Witt vectors of $k_0$, viewed as a subring of $\calO_K$ and we put $K_0 = \calO_{K_0} [\frac 1p]$.
\end{notation}

\begin{notation}
For an extension $k'/k$ of complete discrete valuation field, the \emph{(na\"ive) ramification degree} of $k'/k$ is simply the index of the valuation of $k$ in that of $k'$.

We say that $k'/k$ is \emph{tamely ramified} if $p \nmid e$ and the residue field extension $\kappa_{k'}/\kappa_k$ is separable, that is $\kappa_{k'}$ is algebraic and separable over $\kappa_k(x_\alpha; \alpha \in \Lambda)$ for some transcendental elements $x_\alpha$ and an index set $\Lambda$.  If moreover, $e=1$, we say $k'/k$ is \emph{unramified}.
\end{notation}

\begin{notation} \label{N:representation}
By a \emph{representation} of $G_k$, we mean a continuous homomorphism $\rho: G_k \rar \GL(V_\rho)$, where $V_\rho$ is a vector space over a (topological) field $F$ of characteristic zero.  We say that $\rho$ is a \emph{$p$-adic} if $F$ is a finite extension of $\Qp$.

Let $F$ be a finite extension of $\Qp$, let $\calO$ denote its ring of integers, and let $\Fq$ denote the residue field of $\calO$, where $q$ is a power of $p$. Put $\ZZ_q=W(\Fq)$  and $\QQ_q = \ZZ_q[\frac{1}{p}]$.  By an \emph{$\calO$-representation} of $G_k$, we mean a continuous homomorphism $\rho: G_k \rar \GL(\Lambda_\rho)$ with $\Lambda_\rho$ a finite free $\calO$-module.

For $\rho$ a $p$-adic representation or an $\calO$-representation, we say that $\rho$ has finite local monodromy if the image of the inertia group $I_k$ is finite.

We assume that $\Fq \subseteq k_0$.  Put $K' = KF$.  Since $F/\QQ_q$ is totally ramified, we have $\calO_{K'} \cong \calO_K \otimes_{\ZZ_q} \calO$.
\end{notation}

\begin{notation}\label{N:K}
We put $\calR_{K'}^\eta = K' \langle \eta/T,T \}\}$ for $\eta \in (0,1)$ and put
$\calR_{K'} = \cup_{\eta \in (0,1)} \calR_{K'}^\eta$; the latter ring is commonly called the \emph{Robba ring} over $K'$.  Let $\calR_{K'}^\inte$ be the subring of $\calR_{K'}$ consisting of elements whose $1$-Gauss norm is bounded by 1; it is a Henselian discrete valuation ring, with residue field $k$ if we identify the reduction of $T$ with $\pi_k$.  For $\eta \in (0,1)$, we use $F'_\eta$ to denote the completion of $K'(T)$ with respect to the $\eta$-Gauss norm.

A \emph{Frobenius lift} $\phi$ is an endomorphism of $\calR_{K'}^\inte$ which lifts the natural $q$-th power Frobenius on $k$. Any Frobenius lift extends by continuity to an action on $\calR_{K'}$.  A \emph{standard Frobenius lift} is a Frobenius lift which sends $T$ to $T^p$ and $B_j$ to $B_j^p$ for any $j \in J$.

The differentials $\Omega^1_{\calR_{K'}^\inte}$, $\Omega^1_{\calR_{K'}}$ and $\Omega^1_{\calR_{K'}^\eta}$ for any $\eta \in (0,1)$ admit a basis given by $dB_J$ and $dT$.  We set $\partial_0 = \partial / \partial T, \partial_j = \partial / \partial B_j$ with $j \in J$ for the dual basis.  Then a $\nabla$-module over $\calR_{K'}$ is just a $\partial_{J^+}$-differential module.
\end{notation}

\begin{definition}
Let $\phi$ be a Frobenius lift. Let $R = \calR_{K'}$, $\calR_{K'}^\eta$, or $\calR_{K'}^\mathrm{int}$.
A \emph{$(\phi, \nabla)$-module} $M$ over $R$ is a $\partial_{J^+}$-differential module together with an isomorphism $\Phi: \phi^*M \rar M$ of $\partial_{J^+}$-differential modules.
\end{definition}

\begin{theorem}\label{T:equivalence-categories}
For any Frobenius lift $\phi$, we have an equivalence of categories between the category of $\calO$-representations with finite local monodromy and the category of $(\phi, \nabla)$-modules over $\calR_{K'}^\inte$.  Moreover, all $(\phi, \nabla)$-modules can be realized over $\calR_{K'}^\eta$ for some $\eta \in (0, 1)$.  This $(\phi, \nabla)$-module is independent of the choice of the $p$-basis.
\end{theorem}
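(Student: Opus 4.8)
The plan is to follow the strategy of \cite[\S3]{kedlaya-swan1}, of which we now sketch the main steps. The functor from $(\phi,\nabla)$-modules to representations is the standard one: a $(\phi,\nabla)$-module $M$ over $\calR_{K'}^\inte$ is in particular an \'etale $\phi$-module over the Henselian discrete valuation ring $\calR_{K'}^\inte$, whose residue field is $k$; by Fontaine's dictionary for \'etale $\phi$-modules over a Cohen-type ring with residue characteristic $p$, such modules correspond to continuous $\calO$-representations of $G_k$ (concretely one passes to a suitable large ring, takes $\phi$-fixed vectors, and recovers the $G_k$-action). What must be verified is that this functor lands in representations with finite local monodromy and admits a quasi-inverse, the content of which is the interplay between $\Phi$ and $\nabla$.

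First I would treat the construction of the connection, the heart of the matter. Given the \'etale $\phi$-module $\bar M$ over $\calR_{K'}^\inte$ attached to a representation $\rho$ with finite local monodromy, one shows there is a \emph{unique} connection $\nabla$ making $\bar M$ into a $(\phi,\nabla)$-module. Uniqueness is a Frobenius-contraction argument: the difference of two such connections is an element $\delta \in \Hom\bigl(\bar M,\bar M\otimes\Omega^1_{\calR_{K'}^\inte}\bigr)$ satisfying $\delta = \Phi\circ(\phi^*\delta)\circ\Phi^{-1}$, and since $\phi^*$ is norm-decreasing on $\Omega^1_{\calR_{K'}^\inte}$ (it sends $dT\mapsto d\phi(T)$ with $d\phi(T)$ of small $1$-Gauss norm, and similarly for $dB_j$), iterating this identity forces $|\delta|\to 0$, hence $\delta=0$. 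For existence one uses finite local monodromy directly: choose a finite Galois $l/k$ with $\rho(I_l)$ trivial, lift it to a finite \'etale extension $\calS^\inte/\calR_{K'}^\inte$ (possible since $\calR_{K'}^\inte$ is Henselian with residue field $k$) carrying a $\Gal(l/k)$-action, a canonically extended Frobenius, and a canonical connection; over $\calS^\inte$ the module $\bar M\otimes\calS^\inte$ becomes \'etale-trivial as a $\phi$-module, hence inherits the base-changed trivial connection, and the latter is $\Gal(l/k)$-equivariant, so it descends to a connection on $\bar M$. Conversely, for an arbitrary $(\phi,\nabla)$-module $M$ one must show the associated representation has finite local monodromy; here one invokes the local analogue of Tsuzuki's unit-root monodromy theorem (or the self-contained treatment of \cite{kedlaya-swan1}): the $\nabla$-horizontal sections over a suitable finite \'etale cover are defined over a finite constant extension, which bounds the inertia image.

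Next I would establish realizability over $\calR_{K'}^\eta$. Fix a basis of $M$ over $\calR_{K'}^\inte$; the matrices of $\Phi$, $\Phi^{-1}$, and of each $\partial_j$ involve only finitely many elements of $\calR_{K'}^\inte=\cup_{\eta}\calR_{K'}^{\inte,\eta}$, hence all lie in $\calR_{K'}^{\inte,\eta_0}$ for some $\eta_0\in(0,1)$. Because $\phi$ improves radii of convergence (it raises $T$ to a $p$-power), the functional equation relating each connection matrix to $\phi^*$ of itself together with $\Phi$ bootstraps its entries to converge on a genuine half-open annulus, so $M$ descends to $\calR_{K'}^\eta$ for a suitable $\eta$. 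Finally, independence of the lifted $p$-basis: two choices yield two rings $\calR_{K'}^\inte$, but each is a Henselian discrete valuation ring with the \emph{same} residue field $k$, equipped with a compatible Frobenius lift; the functor to representations is built solely from this residue-field datum (the $\phi$-fixed-vector construction is insensitive to the lift), so the resulting $\calO$-representation is unchanged, and by the equivalence the $(\phi,\nabla)$-module attached to a given $\rho$ is canonically the same.

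The step I expect to be the main obstacle is the bijectivity at the level of objects with the monodromy bound for the inverse functor: showing that $(\phi,\nabla)$-modules over $\calR_{K'}^\inte$ are precisely the unit-root overconvergent objects and that these all have finite local monodromy. This relies on Tsuzuki-type analysis and is where we lean most heavily on \cite{kedlaya-swan1}; by contrast, the overconvergence statement (realizability over $\calR_{K'}^\eta$) and the independence of the $p$-basis are comparatively formal consequences of the constructions above.
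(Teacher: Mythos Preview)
Your proposal is correct and follows essentially the same approach as the paper: the paper's own proof is simply a citation to \cite[Section~3]{kedlaya-swan1} (and \cite[Subsection~2.2]{xiao1}), and your sketch expands precisely that construction. The key ingredients you identify---Fontaine's \'etale $\phi$-module dictionary, the Frobenius-contraction uniqueness argument for $\nabla$, trivialization over a finite \'etale cover for existence, and the Tsuzuki-type monodromy theorem for the converse---are exactly the ones in Kedlaya's treatment, so there is no substantive difference between your route and the paper's.
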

\begin{proof}
The functor is constructed in \cite[Section~3]{kedlaya-swan1} or \cite[Subsection~2.2]{xiao1}.
\end{proof}

\begin{definition}
For a $p$-adic representation $\rho$ of $G_k$ with finite local monodromy, we choose an $\calO$-lattice $\Lambda_\rho$ of $V_\rho$, stable under the action of $G_k$; this gives an $\calO$-representation of $G_k$.   Theorem~\ref{T:equivalence-categories} then produces a $(\phi, \nabla)$-module over $\calR_{K'}^\inte$, whose base change to $\calR_{K'}$ is called the \emph{differential module associated to} $\rho$, denoted by $\calE_\rho$.  This $\calE_\rho$ does not depend on the choice of the lattice $\Lambda_\rho$.
\end{definition}

For the rest of this subsection, we assume the following.
\begin{hypothesis}
\label{H:J-finite-set}
The residue field $\kappa$ has a finite $p$-basis $\bar b_J$, where $J = \{1, \dots, m\}$.  We put $J^+=J \cup \{0\}$.
\end{hypothesis}

\begin{proposition}
\label{P:standard-Frob}
Let $\phi$ be the standard Frobenius lift on $\calR_{K'}^\inte$.  Then the Frobenius $\phi: F'_{\eta^q} \rar F'_{\eta}$ is the same as the iterative Frobenius $\varphi^{(\partial_0, \lambda)} \circ \cdots \circ\varphi^{(\partial_m, \lambda)}$ in Construction~\ref{Cstr:j-frobenius}, where $q = p^\lambda$.
\end{proposition}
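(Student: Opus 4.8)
The plan is to identify both $\phi$ and the iterated Frobenius $\varphi^{(\partial_0,\lambda)}\circ\cdots\circ\varphi^{(\partial_m,\lambda)}$ with one and the same inclusion $F'_{\eta^q}\hookrightarrow F'_\eta$, by realizing the target's source as a fixed field for an explicit finite group of continuous automorphisms. First I would set up the local structure: on $F'_\eta$ the derivation $\partial_0$ is of rational type with rational parameter $T$ and, for $j\in J$, $\partial_j$ is of rational type with rational parameter $B_j$; writing $u_0=T$ and $u_j=B_j$ one has $\partial_i(u_j)=0$ for $i\neq j$ in $J^+$. Because of these vanishing cross-terms, for each fixed $j$ the remaining derivations $\partial_{j'}$ ($j'\neq j$) commute with the $\ZZ/p\ZZ$-action $x\mapsto\TT(x;\partial_j;(\zeta_p^i-1)u_j)$ that Construction~\ref{Cstr:j-frobenius} uses to build the $\partial_j$-Frobenius (apply $\partial_{j'}$ to $\TT(x;\partial_j;c)$ and use $\partial_{j'}c=0$); hence they descend to, and remain of rational type on, the fixed field, so the composite $\varphi^{(\partial_0)}\circ\cdots\circ\varphi^{(\partial_m)}$ and its $\lambda$-fold iterate are well defined. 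Exactly as in Construction~\ref{Cstr:j-frobenius} itself I would then pass to $K'(\mu_{p^\lambda})$ by faithfully flat descent and assume $\zeta_{p^\lambda}\in K'$; with this in hand the iterated Frobenius becomes the inclusion $(F'_\eta)^{G}\hookrightarrow F'_\eta$, where $G=(\ZZ/p^\lambda\ZZ)^{J^+}$ acts through $x\mapsto\TT(x;\partial_j;(\zeta_{p^\lambda}^{i_j}-1)u_j)$ on its $j$-th factor.

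Next comes the key computation. Since $\partial_ju_j=1$ and $\partial_j^{\,2}u_j=0$ we have $\TT(u_j;\partial_j;c)=u_j+c$, so the $j$-th factor of $G$ sends $u_j\mapsto\zeta_{p^\lambda}^{i_j}u_j$ while fixing the other parameters $u_{j'}$ and, as $\partial_j$ kills it, the coefficient subring $W(k_0)$; thus $G$ acts coordinatewise on $T,B_1,\dots,B_m$ by scaling with $p^\lambda$-th roots of unity. Interpreting $\phi$ as the $q$-power standard Frobenius lift (the $\lambda$-fold iterate of the one sending $T\mapsto T^p$, $B_j\mapsto B_j^p$), so that $\phi(T)=T^q$, $\phi(B_j)=B_j^q$, and $\phi$ restricts to the canonical $q$-power Frobenius on the coefficient ring, each of $T^q$, $B_j^q$ and every element of the coefficient ring is pointwise $G$-fixed; hence $\phi(F'_{\eta^q})\subseteq(F'_\eta)^{G}$.

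Finally I would upgrade this inclusion to an equality by a degree count and then match the sources. A Frobenius lift on a Cohen ring fixes $p$ and carries units to units, hence is isometric, so $\phi$ embeds $F'_{\eta^q}$ isometrically onto the completion of $\phi(K')(T^q)$ inside $F'_\eta$; since $\bar b_J$ is a $p$-basis of size $m$ one has $[\kappa:\kappa^q]=q^m$, whence $[K':\phi(K')]=q^m$, and adjoining $T$ over $T^q$ gives one more factor $q$, so $[F'_\eta:\phi(F'_{\eta^q})]=q^{m+1}$. On the other side $G$ acts faithfully (its $j$-th factor moves $u_j$), so $(F'_\eta)^{G}\subset F'_\eta$ is Galois of degree $|G|=q^{m+1}$; comparing the two degrees forces $\phi(F'_{\eta^q})=(F'_\eta)^{G}$. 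To conclude, $\phi$ matches the parameters $T,B_j$ of $F'_{\eta^q}$ with $T^q,B_j^q$ and the derivations $\partial_j$ with $\partial_j/(p^\lambda u_j^{\,p^\lambda-1})$, which is precisely the structure Construction~\ref{Cstr:j-frobenius} attaches to the source of $\varphi^{(\partial_0,\lambda)}\circ\cdots\circ\varphi^{(\partial_m,\lambda)}$; under this identification the two maps into $F'_\eta$ agree, which is the assertion.

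I expect the main obstacle to be the bookkeeping hidden in the first and last steps: checking carefully that each $\partial_j$-Frobenius is compatible with all the other derivations so that the iterated composite is genuinely well defined with the expected intermediate derivations, routing the wildly ramified extension $K'(\mu_{p^\lambda})/K'$ through the descent in a way that is manifestly compatible with $\phi$, and making the identification of the source of the iterated Frobenius with $F'_{\eta^q}$ precise enough that ``agreement up to the obvious relabeling'' becomes an honest equality of maps.
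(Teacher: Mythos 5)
Your proof follows the same skeleton as the paper's (identify both maps with the inclusion of the fixed field of $G=(\ZZ/q\ZZ)^{m+1}$ acting via $\partial_j$-Taylor series, then do a degree count), but there is a genuine gap in the central step of establishing $\phi(F'_{\eta^q})\subseteq(F'_\eta)^G$.

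You observe that $G$ fixes $T^q=\phi(T)$, $B_j^q=\phi(B_j)$, and the subring $W(k_0)$, and from this conclude that the image of $\phi$ is $G$-fixed. This does not follow, because $\calO_K$ is \emph{not} topologically generated over $W(k_0)$ by the $B_j$: modulo $p$, the ring $W(k_0)[B_J]$ reduces to the polynomial ring $k_0[\bar b_J]$, which is a proper subring of $\kappa$ whenever $\kappa$ is imperfect, so $W(k_0)[B_J]$ is not even $p$-adically dense in $\calO_K$. Thus $\phi(\calO_K)$ contains many elements not lying in $W(k_0)$ nor expressible in terms of the $B_j^q$ over $W(k_0)$, and the $G$-invariance of $\phi(a)$ for a general $a\in\calO_K$ is precisely what needs to be proved, not a formal consequence of the invariance of the chosen parameters. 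Concretely, you must show $\TT(\phi(a);\partial_j;(\zeta_q^{i_j}-1)B_j)=\phi(a)$ for all $a\in\calO_K$, and neither $\partial_j(B_{j'}^q)=0$ (for $j'\neq j$) nor $\partial_j(W(k_0))=0$ gives you control of $\partial_j^n(\phi(a))$ for an arbitrary $a$.

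The paper closes this gap with functoriality of Cohen rings: each composite $(\cdot)^{(\underline{i})}\circ\phi$ is a continuous ring endomorphism of $\calO_K\llbracket T\rrbracket$ that lifts the $q$-power Frobenius and sends $T\mapsto T^q$, $B_j\mapsto B_j^q$; the uniqueness part of the universal property of Cohen rings (cited as \cite[Proposition 2.1.8]{xiao1}) then forces all of these to coincide with $\phi$, so the image of $\phi$ is $G$-stable. Once this step is inserted, the remainder of your argument --- faithfulness of the $G$-action, the degree count $q^{m+1}$ on both sides, and the identification of the derivation on the source with $\partial_j/(p^\lambda u_j^{p^\lambda-1})$ --- is sound and matches the paper's conclusion.
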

\begin{proof}
We may assume that $K'$ contains $\zeta_q$, a $q$-th root of unity.
It suffices to show that the image $\phi(F'_{\eta^q})$ is stable under the action of $(\ZZ / q \ZZ)^{m+1}$ in the sense of Construction~\ref{Cstr:j-frobenius}, where each $\partial_j$-Frobenius corresponds to a factor $\ZZ / q \ZZ$, and that the degree of $F'_\eta$ over $\phi(F'_{\eta^q})$ is $q^{m+1}$.

For $\underline i = (i_0, \dots, i_m) \in (\ZZ/q\ZZ)^{m+1}$, we have $T^{(\underline i)} = \zeta_q^{i_0}T$ and $(B_j)^{(\underline i)} = \zeta_q^{i_j}B_j$ for any $j \in J$.
Hence $(\cdot)^{(\underline i)} \circ \phi$ for all $\underline i$ are continuous homomorphisms from $\calO_K\llbracket T \rrbracket$ to itself, sending $B_j$ to $B_j^q$ and $T$ to $T^q$. By the functoriality of Cohen rings (e.g., \cite[Proposition~2.1.8]{xiao1}), these homomorphisms are all the same.  Hence the image of $\phi$ is stable under the $(\ZZ / q \ZZ)^{m+1}$-action.
It is evident that $F'_\eta$ has rank $q^{m+1}$ over $\phi(F'_{\eta^q})$; this forces the two homomorphisms to be the same.
\end{proof}

\begin{proposition}
\label{P:calE-solvable}
Let $\phi$ be the standard  Frobenius lift on $\calR_{K'}^\inte$ and let
$\calE$ be a $(\phi, \nabla)$-module over $A_{K'}^1[\eta_0, 1)$ for some $\eta_0 \in (0,1)$.  Then $\calE$ is solvable.
\end{proposition}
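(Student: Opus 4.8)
The plan is to exploit the Frobenius isomorphism $\Phi\colon \phi^*\calE\xrightarrow{\ \sim\ }\calE$ to convert solvability into a functional inequality relating the intrinsic radii of $\calE$ at Gauss norms with radii $\eta$ and $\eta^q$, and then to run a two-step bootstrap. Set $r=-\log\eta$ and $m(r)=-\log IR(\calE\otimes F'_{\ee^{-r}})$; by Theorem~\ref{T:variation}(a) the function $m$ is continuous on $(0,-\log\eta_0]$, and by Definition~\ref{D:solvable-module} solvability is precisely the assertion that $m(r)\to 0$ as $r\to 0^+$. Since the standard Frobenius lift sends $T$ to $T^q$ (where $q=p^\lambda=\#\Fq$) and is isometric on coefficients, it restricts to an isometric embedding $\phi\colon F'_{\eta^q}\hookrightarrow F'_\eta$; hence $\Phi$ induces an isomorphism $\calE\otimes F'_\eta\cong\phi^*(\calE\otimes F'_{\eta^q})$ of $\partial_{J^+}$-differential modules over $F'_\eta$, valid whenever $\eta^q\ge\eta_0$.

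Next I would record how $\phi^*$ moves intrinsic radii. By Proposition~\ref{P:standard-Frob}, $\phi^*=\varphi^{(\partial_0,\lambda)*}\circ\cdots\circ\varphi^{(\partial_m,\lambda)*}$, a composite of single-variable $p^\lambda$-Frobenius pullbacks, one in each direction; using that $\varphi^{(\partial_{j'})*}$ preserves $IR_{\partial_j}$ for $j\ne j'$, it suffices to iterate Lemma~\ref{L:frob-properties}(a) in each direction and take the minimum over $j$. This yields, for any $\partial_{J^+}$-differential module $N$ over $F'_{\eta^q}$, the crude bound $IR(\phi^*N)\ge\min\{\,q\cdot IR(N),\ \omega\,\}$, and the sharper bound $IR(\phi^*N)\ge IR(N)^{1/q}$ whenever $IR(N)\ge\omega$ (in the latter range every intermediate step stays $>p^{-p/(p-1)}$, so each single Frobenius pullback simply extracts a $p$-th root). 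Applied to $N=\calE\otimes F'_{\ee^{-qr}}$, these translate to $m(r)\le\max\{\,m(qr)-\log q,\ -\log\omega\,\}$ in general, and $m(r)\le\tfrac1q\,m(qr)$ whenever $m(qr)\le-\log\omega$.

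The crude inequality gives solvability up to $\omega$. Fix $r\in(0,-\log\eta_0)$ and let $k=k(r)$ be maximal with $q^kr<-\log\eta_0$; iterating the crude bound gives $m(r)\le\max\{\,m(q^kr)-k\log q,\ -\log\omega\,\}$. Since $q^kr$ lies in the fixed compact interval $[(-\log\eta_0)/q,\,-\log\eta_0]$, on which $m$ is bounded by some constant $M<\infty$, we obtain $m(r)\le\max\{M-k\log q,\ -\log\omega\}$. As $r\to 0^+$ we have $k(r)\to\infty$, so there is $r_1>0$ with $m(r)\le-\log\omega$ for all $r\in(0,r_1)$; equivalently $IR(\calE\otimes F'_{\ee^{-r}})\ge\omega$ there. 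Now I would bootstrap with the sharper inequality: for $r$ small enough that $q^kr<r_1$ (take $k=k(r)$ maximal with this property), the hypothesis $m(q^{j+1}r)\le-\log\omega$ holds for $0\le j\le k-1$, so chaining gives $m(r)\le\tfrac1q m(qr)\le\cdots\le q^{-k}m(q^kr)\le q^{-k}(-\log\omega)$. Since $k=k(r)\to\infty$ as $r\to 0^+$, this forces $m(r)\to 0$, i.e.\ $IR(\calE\otimes F'_{\ee^{-r}})\to 1$, which is solvability.

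The main obstacle is the second paragraph: making the two Frobenius-pullback inequalities precise in the multi-derivation setting, in particular verifying that applying $\varphi^{(\partial_{j'},\lambda)*}$ leaves the radii $IR_{\partial_j}$ of the other derivations unchanged, so that the bounds on $IR=\min_j IR_{\partial_j}$ genuinely follow from the single-variable Lemma~\ref{L:frob-properties}(a) together with Proposition~\ref{P:standard-Frob}. Once those inequalities are in hand, the remainder is the elementary iteration above, requiring only the continuity and local boundedness of $m$ supplied by Theorem~\ref{T:variation}(a); neither convexity nor integrality of the variation functions is needed.
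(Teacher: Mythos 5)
Your proposal is correct and reaches the conclusion by a genuinely different route than the paper. Both arguments start from the same two ingredients: Proposition~\ref{P:standard-Frob}, identifying the standard Frobenius with a composite of single-variable $p^\lambda$-Frobenius pullbacks, and Lemma~\ref{L:frob-properties}(a), which controls how each pullback moves intrinsic radii. After that the paper passes to $g_i = \limsup_{r\to 0^+} f_i(M,r)$, derives the self-consistency identity $g_i = \max\{p^{-\lambda} g_i,\dots, g_i - \lambda\log p\}$, and observes that this forces $g_i = 0$. You instead keep everything at the level of inequalities on $m(r)=f_1(M,r)$ and run a two-stage iteration: the linear estimate $m(r)\le\max\{m(qr)-\log q,\,-\log\omega\}$ pushes $m$ below $-\log\omega$, and from there the multiplicative estimate $m(r)\le q^{-1}m(qr)$ drives $m$ to zero geometrically. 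Your bootstrap is a bit more robust: it bypasses the need to first verify $g_i<\infty$ (which the paper's functional-equation argument implicitly requires and does not spell out), and it does not invoke the convexity of $F_i$ that the paper cites in its last sentence. It needs only continuity of $f_1$ for boundedness on the compact fundamental domain.

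On the point you flag as the main obstacle — that $\varphi^{(\partial_{j'},\lambda)*}$ preserves $IR_{\partial_j}$ for $j\neq j'$ — you are right to be careful, but the claim is true and the paper tacitly uses the same fact. The reason: $\partial_j(u_{j'})=0$, so $\partial_j$ commutes with the Taylor operator $\TT(\,\cdot\,;\partial_{j'};(\zeta_p^i-1)u_{j'})$ defining the $\ZZ/p\ZZ$-action in Construction~\ref{Cstr:j-frobenius}, hence $\partial_j$ restricts to $K^{(\partial_{j'})}$ with unchanged operator norm; the $\partial_{j'}$-Frobenius pullback $V'\otimes_{K^{(\partial_{j'})}}K$ is then base change along an isometric extension compatible with $\partial_j$, which leaves $|\partial_j|_{\sp}$ and hence $IR_{\partial_j}$ unchanged. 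It is worth writing this down explicitly, since neither the single-variable Lemma~\ref{L:frob-properties}(a) nor Proposition~\ref{P:standard-Frob} states it, and both your proof and the paper's rely on it.
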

\begin{proof}
This is well-known to the experts; we include a proof for the convenience of the reader.
By Lemma~\ref{L:frob-properties}(a), we have
$$
f_i(\phi^*M, r) = \max \big \{ p^{-\lambda} f_i(M, qr), p^{1-\lambda} (f_i(M, qr) - \log p), \dots, f_i(M, qr) - \lambda\log p \big\},
$$
 where $\lambda = \log_p q$.  Since $\phi^*M \isom M$, the function $g_i(M) = \limsup_{r \rar 0^+} f_i(M ,r)$ satisfies
\[
g_i(M) = \max \big \{ p^{-\lambda} g_i(M), p^{1-\lambda} (g_i(M) - \log p), \dots, g_i(M) - \lambda\log p \big\}.
\]
This forces $g_i(M)$ to be zero.  By the continuity of $f_i(M, r)$ and the convexity of $F_i(M, r)$ in Theorem~\ref{T:variation}, $\lim_{r \rar 0^+} f_i(M, r) = 0$.  In other words, $\calE$ is solvable.
\end{proof}

\begin{proposition}
\label{P:Frobenius-equivalent}
Let $\phi$ be the standard Frobenius lift and let $\phi'$ be another Frobenius lift on $\calR_{K'}^\inte$.  Assume that $\calE$ is a $(\phi, \nabla)$-module over $A_{K'}^1[\eta_0, 1)$ for some $\eta_0 \in (0,1)$.  Then the restriction of $\calE$ to $A_{K'}^1[\eta, 1)$ for some $\eta \in [\eta_0, 1)$ is naturally equipped with a $(\phi', \nabla)$-module structure.
\end{proposition}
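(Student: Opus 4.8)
The idea is to transport the Frobenius structure from $\phi$ to $\phi'$ along a canonical horizontal comparison map of Frobenius pullbacks. After restricting to a sufficiently small $A_{K'}^1[\eta,1)$, I would construct a canonical isomorphism $\psi\colon (\phi')^{*}\calE\to\phi^{*}\calE$ of $\partial_{J^+}$-differential modules, and then define the sought $(\phi',\nabla)$-structure as the composite $\Phi'=\Phi\circ\psi\colon(\phi')^{*}\calE\to\phi^{*}\calE\to\calE$, where $\Phi\colon\phi^{*}\calE\to\calE$ is the given one for $\phi$. Being a composite of isomorphisms of $\partial_{J^+}$-differential modules, $\Phi'$ is again one, and the construction is manifestly canonical (which is what ``naturally'' refers to).

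Here is how $\psi$ is built. Write $(t_0,t_1,\dots,t_m)=(T,B_1,\dots,B_m)$ and set $\delta_j=\phi'(t_j)-\phi(t_j)\in\calR_{K'}^\inte$, $\delta_{J^+}=(\delta_j)_{j\in J^+}$. Since $\phi$ and $\phi'$ both lift the $q$-power Frobenius on the residue field $k$ of $\calR_{K'}^\inte$, we have $\phi'\equiv\phi\pmod{p}$, hence each $\delta_j\in p\calR_{K'}^\inte$, so that $|\delta_j|_\rho=p^{-1}|\delta_j/p|_\rho$ with $|\delta_j/p|_\rho\to|\delta_j/p|_1\le1$ as $\rho\to1^-$. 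For a local section $\bbv$ of $\calE$ I would put
\[
\psi\big(\bbv\otimes_{\phi'}1\big)\ =\ \TT(\bbv;\partial_{J^+};\delta_0,\dots,\delta_m)\ =\ \sum_{e_{J^+}\ge 0}\frac{\partial_{J^+}^{e_{J^+}}(\bbv)}{(e_{J^+})!}\otimes_{\phi}\delta_{J^+}^{e_{J^+}},
\]
extended $\calR_{K'}^\eta$-linearly. Compatibility with the defining relation of $(\phi')^{*}\calE$ is the Taylor expansion identity $\phi'(r)=\sum_{e_{J^+}}\frac{\delta_{J^+}^{e_{J^+}}}{(e_{J^+})!}\,\phi\big(\partial_{J^+}^{e_{J^+}}(r)\big)$, valid on $\calR_{K'}^\eta$ for $\eta$ close to $1$, which expresses that $\phi'$ is $\phi$ followed by translating the coordinates by $\delta_{J^+}$; that $\psi$ intertwines the two induced connections is then a direct computation with the Leibniz and chain rules, parallel to the tautological identity~\eqref{E:Taylor-change-basis}. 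Finally $\psi$ is an isomorphism: modulo $p$ it is the identity (as $\delta_{J^+}\in p\calR_{K'}^\inte$), and concretely its inverse is given by the same recipe with $\phi$ and $\phi'$ exchanged and $\delta_{J^+}$ replaced by $-\delta_{J^+}$.

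The only step with real content is the convergence of the series defining $\psi$, which is exactly what forces the restriction to $A_{K'}^1[\eta,1)$. For $\rho\in[\eta,1)$, the $\rho$-Gauss norm of the $e_{J^+}$-term of $\psi(\bbv\otimes_{\phi'}1)$ is at most $|\bbv|_\rho\prod_{i\in J^+}\big|\partial_i^{e_i}/(e_i)!\big|_{\calE\otimes F'_\rho}|\delta_i|_\rho^{e_i}$; since $\big(\big|\partial_i^{n}/n!\big|_{\calE\otimes F'_\rho}\big)^{1/n}\to|\partial_i|_{\sp,\,\calE\otimes F'_\rho}\,\omega^{-1}=R_{\partial_i}(\calE\otimes F'_\rho)^{-1}$, the series converges provided $|\delta_i|_\rho<R_{\partial_i}(\calE\otimes F'_\rho)$ for every $i\in J^+$. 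The decisive input is solvability: as $\calE$ is a $(\phi,\nabla)$-module over $A_{K'}^1[\eta_0,1)$ with $\phi$ standard, Proposition~\ref{P:calE-solvable} gives $IR(\calE\otimes F'_\rho)\to1$, hence $R_{\partial_i}(\calE\otimes F'_\rho)=IR_{\partial_i}(\calE\otimes F'_\rho)\,|t_i|_\rho\to1$, as $\rho\to1^-$; combined with $|\delta_i|_\rho\lesssim p^{-1}<1$ as $\rho\to1^-$, the required inequalities hold strictly once $\rho$ — hence $\eta$ — is sufficiently close to $1$, so $\psi$ converges over $A_{K'}^1[\eta,1)$. Beyond this the remaining work is bookkeeping: tracking how $\phi$ and $\phi'$ (each acting essentially as $z\mapsto z^p$ near radius $1$) move the annulus, so that $\phi^{*}\calE$, $(\phi')^{*}\calE$, $\psi$, and $\Phi$ all live over a common $A_{K'}^1[\eta,1)$ and $\Phi'$ indeed has target $\calE$ there.
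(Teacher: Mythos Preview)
Your proposal is correct and is essentially the paper's own proof: both define the new Frobenius by the Taylor series $\sum_{e_{J^+}}\frac{\delta_{J^+}^{e_{J^+}}}{(e_{J^+})!}\,\phi\big(\partial_{J^+}^{e_{J^+}}(\bbv)\big)$ with $\delta_j=\phi'(t_j)-\phi(t_j)$, and your repackaging of this as an isomorphism $\psi\colon(\phi')^*\calE\to\phi^*\calE$ followed by $\Phi$ is just a reformulation. The one substantive difference is that the paper justifies convergence simply by noting $|\delta_j|_\eta<1$ for $\eta$ close to $1$, whereas you explicitly invoke solvability (Proposition~\ref{P:calE-solvable}) to force $R_{\partial_i}(\calE\otimes F'_\rho)\to 1$; since the terms $\partial_{J^+}^{e_{J^+}}/(e_{J^+})!$ grow like $R_{\partial_i}^{-e_i}$, your argument supplies the estimate the paper's terse sentence leaves implicit.
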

\begin{proof}
Define the Frobenius structure for $\phi'$ by Taylor series as follows.  For $\bbv \in \calE$,
\[
\phi'(\bbv) = \sum_{e_{J^+}=0}^\infty \frac{(\phi'(T) - \phi(T))^{e_0} \prod_{j \in J} (\phi'(B_j) - \phi(B_j))^{e_j}}{(e_{J^+})!}  \phi \Big( \frac{\partial^{e_0}}{\partial T^{e_0}} \frac{\partial^{e_1}}{\partial B_1^{e_1}} \cdots \frac{\partial^{e_m}}{\partial B_m^{e_m}} (\bbv) \Big).
\]

Since $|\phi'(T) - \phi(T)|_1 <1$ and $|\phi'(B_j) - \phi(B_j)|_1 < 1$ for all $j \in J$, we have the same inequality using $\eta$-Gauss norm when $\eta \in [\eta'_0, 1]$ for some $\eta'_0$ sufficiently close to 1.  Hence the expression for $\phi'$ converges on $A_{K'}^1[\eta'_0, 1)$ and gives the restriction of $\calE$ to $A_{K'}^1[\eta'_0, 1)$ a structure of $(\varphi', \nabla)$-module.
\end{proof}

\begin{remark}
One may also approach the results of this subsection without referring to the standard Frobenius but instead using a generalized version of Lemma~\ref{L:frob-properties}(a) for non-centered Frobenius.  This point of view is taken in \cite[Chap.~17]{kedlaya-course}.
\end{remark}

\subsection{Differential conductors}
\label{S:diff-conductors}

Combining the results from Subsection~\ref{S:differential-conductor} and Proposition~\ref{P:calE-solvable}, we can define differential conductors for a representation of $G_k$ with finite local monodromy.  To make this definition more robust, we will introduce the break with respect to each element of the $p$-basis, and the break of the differential module is just the maximum among all breaks for each element of the $p$-basis, after appropriate normalization. This point of view is in particular useful when we try to understand how the conductors change when restricting a Galois representation to $G_l$ for some (explicit) finite extension $l$ of $k$.

\begin{definition}\label{D:conductors-repn}
For a $p$-adic representation $\rho$ of $G_k$ with finite local monodromy, let $l$ be the extension of $k$ corresponding to $\Ker \rho$ via Galois theory.  We may choose a $p$-basis $\{c_J, \pi_l\}$ of $l$ such that $\pi_l$ is an uniformizer and $c_J \subset \calO_l^\times$, and such that $c_{J \bs J_0} \subset \calO_k$ for some finite subset $J_0 \subset J$.  If we use $k^\wedge$ to denote the completion of $k(c_{J \bs J_0}^{1/p^n}; n \in \NN)$, then $k^\wedge$ verifies Hypothesis~\ref{H:J-finite-set}.  We define the \emph{nonlog-breaks} (resp. \emph{log-breaks}) of $\rho$ to be those of $\rho|_{G_{k^\wedge}}$.  Their sums are called the \emph{Artin (resp. Swan) conductors} of $\rho$, denoted by $\Art(\rho)$ (resp. $\Swan(\rho)$).  These do not depend on the choice of the $p$-basis or of $J_0$, by \cite[Proposition~2.6.6]{kedlaya-swan1}.
\end{definition}

\begin{definition}
Put $\Fil^0G_k = G_k$ and $\Fil^aG_k = I_k$ for $a \in (0, 1]$.  For $a > 1$, let $R_a$ be the set of finite image representations $\rho$ with nonlog-break strictly less than $a$.  Put $\Fil^aG_k = \bigcap _{\rho \in R_a} \big(I_k \cap \ker (\rho)\big)$ and set $\Fil^{a+}G_k$ to be the closure of $\cup_{b > a} \Fil^b G_k$. This defines a  filtration on $G_k$ such that for any representation $\rho$ with finite image, $\rho(\Fil^aG_k)$ is trivial if and only if $\rho \in R_a$.

Similarly, put $\Fil_\log^0G_k = G_k$.  For $a > 0$, let $R_{a, \log}$ be the set of finite image representations $\rho$ with log-break less than $a$.  Put $\Fil_\log^aG_k = \bigcap _{\rho \in R_{a, \log}} \big(I_k \cap \ker (\rho)\big)$ and set $\Fil_\log^{a+}G_k$ to be the closure of $\cup_{b > a} \Fil_\log^bG_k$. This defines a   filtration on $G_k$ such that for any representation $\rho$ with finite image, $\rho(\Fil_\log^aG_k)$ is trivial if and only if $\rho \in R_{a, \log}$.

For a finite Galois extension $l$ of $k$, the above filtrations induce filtrations on the Galois group $G_{l/k}$ by $G_{l/k, (\log)}^a =  G_l\Fil^a_{(\log)}G_k / G_l$ and $G_{l/k, (\log)}^{a+} =  G_l \Fil^{a+}_{(\log)}G_k/ G_l$, for $a \geq 0$.  We define the \emph{(log-)ramification breaks} of the extension $l/k$ to be the numbers $b$ for which $G_{l/k (,\log)}^b \neq G_{l/k (,\log)}^{b+}$.  We order them as $b_{\mathrm{(n)log}}(l/k) = b_{\mathrm{(n)log}}(l/k; 1) \geq b_{\mathrm{(n)log}}(l/k; 2) \geq \cdots$.  In particular, if $\rho$ is a \emph{faithful} representation of $G_{l/k}$, we have $b_{\mathrm{(n)log}}(\rho) = b_{\mathrm{(n)log}}(l/k)$. 
\end{definition}

\begin{theorem}\label{T:properties-diff-conductor}
The differential conductors satisfy the following properties:

\emph{(a)} For any representation $\rho$ of finite local monodromy, 
\begin{eqnarray*}
\Art(\rho) & = & \sum_{a \in \QQ_{\geq 0}} a \cdot \dim \big( V_\rho^{\Fil^{a+} G_k} \big/ V_\rho^{\Fil^a G_k} \big) \in \ZZ_{\geq 0}, \\
\Swan(\rho) &=& \sum_{a \in \QQ_{\geq 0}} a \cdot \dim \big( V_\rho^{\Fil_\log^{a+} G_k} \big/ V_\rho^{\Fil_\log^a G_k}  \big)\in \ZZ_{\geq 0}.
\end{eqnarray*}

\emph{(b)} Let $k'/k$ be a (not necessarily finite) extension of complete discretely valued fields.  If $k'/k$ is unramified, then $\Fil^a G_{k'} = \Fil^a G_k$ for $a>0$. If $k'/k$ is tamely ramified with na\"ive ramification index $e < \infty$, then $\Fil_\log^{ea}G_{k'} = \Fil_\log^a G_k$ for $a>0$.

\emph{(c)}
For $a >0$, we have $\Fil^{a+1} G_k \subseteq \Fil_\log^a G_k \subseteq \Fil^a G_k$.

\emph{(d)}  For graded pieces, we have
\begin{eqnarray*}
\textrm{for } a > 1 \textrm{, } \Fil^aG_k / \Fil^{a+}G_k & = & \left\{
\begin{array}{ll}
0 & a \notin \QQ, \\
\textrm{an abelian group killed by $p$} & a \in \QQ,
\end{array}
\right. \\
\textrm{for } a > 0 \textrm{, } \Fil_\log^aG_k / \Fil_{ \log}^{a+} G_k& = & \left\{
\begin{array}{ll}
0 & a \notin \QQ, \\
\textrm{an abelian group killed by $p$} & a \in \QQ.
\end{array}
\right.
\end{eqnarray*}

\emph{(e)} These filtrations on $G_k$ agree with the ones defined by Abbes and Saito in \cite{abbes-saito1, abbes-saito2}.
\end{theorem}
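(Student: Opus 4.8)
The plan is to push everything to the differential side and then invoke the comparison of \cite{xiao1}. For part (a), I would first replace $\rho$ by $\rho|_{G_{k^\wedge}}$ as in Definition~\ref{D:conductors-repn}, so that Hypothesis~\ref{H:J-finite-set} applies and $\calE_\rho$ makes sense; Proposition~\ref{P:calE-solvable} then guarantees solvability, and Theorem~\ref{T:swan1} decomposes $\calE_\rho$ into pieces of pure differential log-break $b_i$ (resp. nonlog-break $\hat b_i$). Transporting this decomposition back through the equivalence of Theorem~\ref{T:equivalence-categories} gives a $G_k$-stable splitting of $V_\rho$ into subrepresentations each carrying a single break. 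Against this splitting, $V_\rho^{\Fil^{a+}G_k}$ (resp. $V_\rho^{\Fil^{a}G_k}$) is exactly the span of the summands whose nonlog-break is $\le a$ (resp. $<a$), by the very characterization of $\Fil^\bullet G_k$ (a finite-image representation is killed by $\Fil^a G_k$ iff its nonlog-break is $<a$); likewise for the log-filtration. The displayed sums then collapse to $\Art(\rho)=\sum_i \hat b_i \dim M_i$ and $\Swan(\rho)=\sum_i b_i \dim M_i$, which agree with Definition~\ref{D:differential-conductor}, and their integrality is the integrality clause of Theorem~\ref{T:swan1}.

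For (b), the unramified case is a flat base change $K' \to K'F_{k'}$ on the Robba ring that preserves the common uniformizer $T$ and every radius of convergence, hence every break; and the tamely ramified case, after an unramified twist making $\pi_{k'}=\pi_k^{1/e}$, becomes the substitution $T\mapsto T^{1/e}$, which keeps the derivations of rational type by Lemma~\ref{L:tame-extension} and carries the $\eta$-Gauss norm to the $\eta^{1/e}$-Gauss norm, so that each log-break is multiplied by $e$. For (c), I would compare extrinsic and intrinsic radii: since $|u_j|=1$ for $j\in J$ while $|u_0|=|T|_\eta=\eta$ under $F_\eta$, one gets $\eta\cdot IR(\calE_\rho\otimes F_\eta)\le ER(\calE_\rho\otimes F_\eta)\le IR(\calE_\rho\otimes F_\eta)$; on an indecomposable piece (where both the intrinsic and extrinsic break decompositions are trivial) this forces the nonlog-break $\hat b$ and log-break $b$ to satisfy $b\le \hat b\le b+1$, which is precisely $\Fil^{a+1}G_k\subseteq \Fil_\log^a G_k\subseteq \Fil^a G_k$. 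For (d), the vanishing of the graded pieces off $\QQ$ is the rationality assertion of Theorem~\ref{T:swan1}; that the surviving graded pieces (for $a>1$, resp. $a>0$) are abelian and killed by $p$ I would cite from \cite{abbes-saito1, abbes-saito2}, noting it will also follow once the refined Swan conductor homomorphism of Subsection~\ref{S:refined}, whose target is an $\FF_p$-vector space, is shown injective.

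Part (e) is the real content and where the work concentrates: it is the statement that the differential Artin and Swan conductors of Definition~\ref{D:differential-conductor} coincide with the Artin and Swan conductors attached to the Abbes--Saito ramification and log-ramification filtrations, which is the main comparison theorem of \cite{xiao1}. I would cite that result and then observe that since each $\Fil^a G_k$ (resp. $\Fil_\log^a G_k$) is determined by the set of finite-image representations it kills, and since the two families of breaks agree by the comparison, the two filtrations coincide. I expect the main obstacle to be purely bookkeeping rather than conceptual at this stage: one must carefully match normalizations — the $k^\wedge$ reduction and the role of $J_0$ in Definition~\ref{D:conductors-repn}, the rational-type hypothesis needed to apply the machinery of Section~\ref{Section 1}, and the passage between differential breaks and Galois breaks — so that the citation of \cite{xiao1} can be applied verbatim.
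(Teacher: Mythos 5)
Your route is genuinely different from the paper's: the paper disposes of the entire theorem by citing the comparison \cite[Theorem~4.4.1]{xiao1} together with the basic properties in \cite[Theorem~2.4.1 and Proposition~4.1.7]{xiao1}, whereas you attempt to rederive (a)--(d) internally from Theorem~\ref{T:swan1} and the radius computations, deferring only (e) to \cite{xiao1}. That alternative is legitimate in spirit, and your arguments for (b) and (c) are essentially correct (the $\eta \cdot IR \le ER \le IR$ comparison is exactly right, and the tame case is Lemma~\ref{L:tame-refined}). But as written there are two real gaps.

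First, in (a) you invoke ``the very characterization of $\Fil^\bullet G_k$ (a finite-image representation is killed by $\Fil^a G_k$ iff its nonlog-break is $<a$)'' as if it were built into the definition. The ``if'' direction is definitional, but the ``only if'' direction is a genuine theorem: from $\Fil^a G_k = \bigcap_{\rho\in R_a}(I_k\cap\ker\rho)$ one cannot immediately exclude the pathology that the intersection is so small that it is also contained in $\ker\rho_0$ for some $\rho_0$ of break $\ge a$. The standard repair uses compactness to reduce to a single $\rho'\in R_a$ with $I_k\cap\ker\rho'\subseteq\ker\rho_0$, then Burnside together with the tensor bound $b_\nlog(\rho_1\otimes\rho_2)\le\max\{b_\nlog(\rho_1),b_\nlog(\rho_2)\}$ (from Lemma~\ref{L:basic-IR-proposition}(b),(c)) and $b_\nlog(\rho^\vee)=b_\nlog(\rho)$ to conclude $b_\nlog(\rho_0)<a$; none of this appears in your proposal, and without it the filtration is not yet known to detect breaks, so your identification of $V_\rho^{\Fil^{a+}G_k}/V_\rho^{\Fil^a G_k}$ with the pure-break summands is unjustified. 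This is precisely the content that the paper outsources to \cite[Theorem~2.4.1]{xiao1}.

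Second, also in (a), the passage from the $\nabla$-module decomposition of Theorem~\ref{T:swan1} (over $\calR_{K'}$) to a $G_k$-stable splitting of $V_\rho$ is not a direct application of Theorem~\ref{T:equivalence-categories}, which is an equivalence over $\calR_{K'}^{\inte}$. One must first observe that the break decomposition is canonical, hence $\Phi$-stable, and then invoke the slope filtration of \cite[Theorem~3.4.6]{kedlaya-swan1} to see that each summand is an \'etale $(\phi,\nabla)$-module over $\calR_{K'}^{\inte}$; the paper itself performs exactly this step in the proof of Theorem~\ref{T:refined-homomorphism}(a), and you should do the same here. Finally, for (b) in the unramified case, note that $\kappa_{k'}$ may have a strictly larger $p$-basis than $\kappa$, so ``preserves every radius of convergence'' requires the independence of the breaks from the choice of $p$-basis, which the paper cites from \cite[Proposition~2.6.6]{kedlaya-swan1} (see Definition~\ref{D:conductors-repn}); and for (d), your warning about circularity if one leans on Theorem~\ref{T:refined-homomorphism} is well placed — one must establish (e) first and then pull back the $p$-torsion assertion from Abbes--Saito, exactly as the paper does.
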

\begin{proof}
Using the comparison \cite[Theorem~4.4.1]{xiao1} of the arithmetic and differential conductors, this follows from their basic properties as stated in \cite[Theorem~2.4.1 and Proposition~4.1.7]{xiao1}.  We refer to \cite{xiao1} and \cite{abbes-saito1} for the definition of Abbes and Saito's filtrations.
\end{proof}

We assume Hypothesis~\ref{H:J-finite-set} for the rest of the subsection. 

\begin{definition}
Let $\rho$ be a representation of $G_k$ with finite local monodromy.  The \emph{log-breaks} of $\rho$ are defined to be the differential log-breaks of $\calE_\rho$, as a solvable $\partial_{J^+}$-differential module.  Put $b_\log(\rho; l) = b_\log(\calE_\rho; l)$ for $l = 1, \dots, \dim \rho$.
Similarly, the \emph{nonlog-breaks} of $\rho$ are defined to be the differential nonlog-breaks of $\calE_{\rho / \rho^{I_k}}$ together with the element 0 with multiplicity $\dim \rho^{I_k}$, where $\rho^{I_k}$ is the maximal subrepresentation  of $\rho$ on which $I_k$ acts trivially. Put $b_\nlog(\rho; l) = b_\nlog(\calE_{\rho/ \rho^{I_k}}; l)$ for $l = 1, \dots, \dim (\rho / \rho^{I_k})$, and $b_\nlog(\rho; \dim (\rho / \rho^{I_k}) +1) = \cdots = b_\nlog(\rho; \dim \rho) = 0$.

For simplicity, we also put $b_\nlog(\rho) = b_\nlog(\rho; 1)$ and $b_\log(\rho)= b_\log(\rho; 1)$; they are called the \emph{highest nonlog-break} and the \emph{highest log-break}, respectively.
\end{definition}

\begin{proposition}
For each $j \in J^+$, there is a ramification break $b_j(\rho)$ associated to  $b_j$ ($j \in J$) or $\pi_k$ ($j = 0$), such that $R_{\partial_j}(\calE_\rho \otimes F'_\eta) = \eta^{b_j(\rho)}$ for any $\eta \in (\eta_0,1)$ with some $\eta_0<1$.  Moreover, 
\[
b_\nlog(\rho) =  \max_{j \in J^+} \{b_j(\rho)\}, \quad b_{\log}(\rho) = \max \{b_0(\rho)-1; b_j(\rho) \textrm{ for } j \in J\}.
\]
\end{proposition}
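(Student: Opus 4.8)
The plan is to study $\calE_\rho$ one derivation at a time, combining the variation theorems of Subsection~\ref{S:one-dim} with solvability (Proposition~\ref{P:calE-solvable}). Fix $j \in J^+$ and view $\calE_\rho$ as a $\partial_j$-differential module over $A^1_{K'}(\eta_0,1)$. By Theorem~\ref{T:variation-j}(a),(d), the function $f_1^{(j)}(\calE_\rho,r)=-\log R_{\partial_j}(\calE_\rho\otimes F'_{\ee^{-r}};1)$ is continuous, convex, nonnegative, and piecewise affine on the locus where it exceeds $-\log|u_j|$ when $j\in J$ (and on all of $-\log\dot I$ when $j=0$); since $|u_j|=|B_j|=1$ for $j\in J$, that locus is just $\{f_1^{(j)}>0\}$. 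Solvability forces $f_1^{(j)}(\calE_\rho,r)\to 0$ as $r\to 0^+$: from $IR(\calE_\rho\otimes F'_\eta)\to 1$ we get $IR_{\partial_j}(\calE_\rho\otimes F'_\eta)\to 1$, and since $|\partial_j|_{F'_\eta}$ is $1$ for $j\in J$ and $\eta^{-1}=|T|^{-1}$ for $j=0$, also $R_{\partial_j}(\calE_\rho\otimes F'_\eta)\to 1$. A nonnegative convex piecewise-affine function vanishing at $0^+$ is affine on a right neighborhood of $0$: if it vanishes at some $r_0>0$ it is identically $0$ on $[0,r_0]$ by convexity, and otherwise it is positive and piecewise affine on some $(0,r_0)$, so (convexity again) its leftmost piece has the least slope. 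Call this slope $b_j(\rho)$; it is $\ge 0$, and $\ge 1$ for $j=0$ because $R_{\partial_0}(\calE_\rho\otimes F'_\eta;1)\le |\partial_0|_{F'_\eta}^{-1}=\eta$. Shrinking $\eta_0$ once to accommodate the finitely many $j\in J^+$, we obtain $R_{\partial_j}(\calE_\rho\otimes F'_\eta)=\eta^{b_j(\rho)}$ for all $\eta\in(\eta_0,1)$.

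For the two maximum formulas I would pass through Jordan--Hölder constituents. For a $\partial_{J^+}$-differential module $V$ one has $ER(V;1)=\min_{j\in J^+}R_{\partial_j}(V;1)$ and $IR(V;1)=\min_{j\in J^+}IR_{\partial_j}(V;1)$: for a single constituent these are definitional, and the smallest (extrinsic or intrinsic) subsidiary radius of $V$ is the smallest over its constituents, which commutes with $\min$ over $j$ since the scalars $|\partial_j|_{F'_\eta}$ factor out. Hence, for $\eta=\ee^{-r}$,
\[
\hat f_1(\calE_\rho,r)=\max_{j\in J^+}f_1^{(j)}(\calE_\rho,r),\qquad
f_1(\calE_\rho,r)=\max\Big\{\,f_1^{(0)}(\calE_\rho,r)-r,\ \max_{j\in J}f_1^{(j)}(\calE_\rho,r)\,\Big\},
\]
the shift by $r$ in the second coming from $|\partial_0|_{F'_{\ee^{-r}}}=\ee^r$. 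Near $r=0$ the right-hand sides equal $(\max_j b_j(\rho))\,r$ and $\max\{b_0(\rho)-1,\ \max_{j\in J}b_j(\rho)\}\,r$ respectively.

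It remains to identify the slopes near $0$ of $\hat f_1(\calE_\rho,r)$ and $f_1(\calE_\rho,r)$ with $b_\nlog(\rho)$ and $b_\log(\rho)$. By Theorem~\ref{T:variation} (applicable to $\hat f_1$ since $|u_j|=1$ for $j\in J$ and $I\subseteq[0,1)$), both functions are convex, continuous, and piecewise affine near $0$, so their slopes there are the largest differential nonlog-break, resp.\ log-break, of $\calE_\rho$ in the sense of Theorem~\ref{T:swan1}. For $f_1$ this slope is $b_\log(\rho)$ by definition. For $\hat f_1$ one extra step is needed, because $b_\nlog(\rho)$ is defined via $\calE_{\rho/\rho^{I_k}}$ padded with zeros: using that the functor of Theorem~\ref{T:equivalence-categories} is exact, that extrinsic subsidiary radii are additive along $0\to\calE_{\rho^{I_k}}\to\calE_\rho\to\calE_{\rho/\rho^{I_k}}\to 0$, and that $\calE_{\rho^{I_k}}$ has all radii equal to $1$ (its Artin conductor vanishes by Theorem~\ref{T:properties-diff-conductor}(a), being unramified), one gets $\hat f_1(\calE_\rho,r)=\hat f_1(\calE_{\rho/\rho^{I_k}},r)$ near $0$, whose slope is $b_\nlog(\rho)$. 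Comparing with the previous paragraph gives $b_\nlog(\rho)=\max_{j\in J^+}b_j(\rho)$ and $b_\log(\rho)=\max\{b_0(\rho)-1;\ b_j(\rho)\ \text{for}\ j\in J\}$.

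The content is essentially bookkeeping: the only delicate points are keeping straight the normalizations $|u_j|=1$ ($j\in J$) versus $|u_0|=|T|=\eta$ ($j=0$), which is precisely what distinguishes the nonlog from the log formula, and the passage through $\calE_{\rho/\rho^{I_k}}$ in the nonlog case --- the one place where the definition of $b_\nlog(\rho)$ does not literally refer to $\calE_\rho$. I expect the latter to be the main thing to get right.
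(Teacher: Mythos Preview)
Your bookkeeping for the two maximum formulas is fine, and indeed the paper's own proof just says ``the proposition follows'' at that point. The gap is earlier, in the existence of the individual $b_j(\rho)$ for $j\in J$.

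You argue: $f_1^{(j)}(\calE_\rho,r)$ is nonnegative, convex, piecewise affine, and vanishes at $0^+$, hence has a ``leftmost piece.'' But in this paper (compare the proof of Theorem~\ref{T:swan1}, which explicitly invokes \emph{integer} slopes to reach the same conclusion for $F_l$), ``piecewise affine'' means only locally finitely many pieces; one still needs the slopes to lie in a fixed discrete set to rule out accumulation at $r=0$. For $j=0$ you have this by Theorem~\ref{T:variation-j}(b1): the slopes lie in $\tfrac11\ZZ\cup\cdots\cup\tfrac1d\ZZ$. For $j\in J$ with $p>0$, however, Theorem~\ref{T:variation-j}(b2) only puts the slope near $r_0$ into $\tfrac{1}{p^n d!}\ZZ$, where $n$ is chosen so that $f_1^{(j)}(\calE_\rho,r_0)>\tfrac{1}{p^n(p-1)}\log p$. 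Since $f_1^{(j)}\to 0$ as $r_0\to 0^+$, that $n$ is forced to grow without bound, and nothing you have cited prevents infinitely many slope changes accumulating at $0$. So the step ``its leftmost piece has the least slope'' is not justified for $j\in J$.

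The paper closes this gap not by quoting solvability (the \emph{conclusion} of Proposition~\ref{P:calE-solvable}) but by rerunning its \emph{argument} one derivation at a time: the Frobenius isomorphism $\phi^*\calE_\rho\cong\calE_\rho$, together with Proposition~\ref{P:standard-Frob} and Lemma~\ref{L:frob-properties}(a), gives $IR_{\partial_j}(\calE_\rho\otimes F'_{\eta^q})=IR_{\partial_j}(\calE_\rho\otimes F'_\eta)^q$ for $\eta$ close to $1^-$, i.e.\ $f_1^{(j)}(\calE_\rho,qr)=q\,f_1^{(j)}(\calE_\rho,r)$ for small $r$. Since $f_1^{(j)}$ is convex with $f_1^{(j)}(0^+)=0$, the secant slopes $r\mapsto f_1^{(j)}(\calE_\rho,r)/r$ are nondecreasing; the functional equation says they are invariant under $r\mapsto qr$, hence constant near $0$. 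That constant is $b_j(\rho)$. Once you insert this Frobenius step for $j\in J$, the rest of your write-up goes through.
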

\begin{proof}
By applying the same argument of Proposition~\ref{P:calE-solvable} to intrinsic $\partial_j$-radii, we know that $IR_{\partial_j}(\calE_\rho \otimes F'_{ \eta^q}) = IR_{\partial_j}(\calE_\rho \otimes F'_{\eta})^q$ as $\eta \rar 1^-$.  Therefore, by the convexity given by Theorem~\ref{T:variation-j}(d), $f_1^{(j)}(\calE_\rho, r)$ is affine as $r\rar 0^+$.  The proposition follows.
\end{proof}

\begin{definition}
We call $b_{J^+}(\rho)$ the \emph{breaks by $p$-basis} of $\rho$ with respect to the lifted $p$-basis $b_J$ and the uniformizer $\pi_k$.
\end{definition}

When we change the choices of $p$-basis of $k$, the breaks by basis $b_j(\rho)$ may change accordingly.

\begin{lemma}
Fix $j_0 \in J$.  Let $b'_{J^+}(\rho)$ be the breaks by $p$-basis of $\rho$ with respect to the lifted $p$-basis $\{ b_{J \bs \{j_0\}}, b_{j_0} + \pi_k\}$ and the uniformizer $\pi_k$.  Then we have $b'_j(\rho) = b_j(\rho)$ for $j \in J$ and
\[
b'_0 (\rho) \left\{
\begin{array}{ll}
= \max\{b_0(\rho), b_{j_0}(\rho)\} & \textrm{ if } b_0(\rho) \neq b_{j_0}(\rho),\\
\leq b_0(\rho) & \textrm{ if } b_0(\rho) = b_{j_0}(\rho).
\end{array}
\right.
\]
\end{lemma}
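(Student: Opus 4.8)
The plan is to re-express the change of lifted $p$-basis as a change of the derivations on the Robba ring, and then to carry out the estimate at a single Gauss norm near the boundary circle.

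First I would observe that the new lifted $p$-basis $\{b_{J\bs\{j_0\}},\,b_{j_0}+\pi_k\}$ together with $\pi_k$ corresponds to the coordinates $T'=T$, $B'_j=B_j$ for $j\in J\bs\{j_0\}$, and $B'_{j_0}=B_{j_0}+T$ on $\calR_{K'}^\inte$; applying the chain rule (noting that $\partial'_0$ differentiates in $T$ while $B'_{j_0}$ is held fixed) yields $\partial'_j=\partial_j$ for $j\in J$ and $\partial'_0=\partial_0-\partial_{j_0}$, while the $\eta$-Gauss completion $F'_\eta$ of $K'(T)=K'(T')$ is unaffected. The equalities $\partial'_j=\partial_j$ give $b'_j(\rho)=b_j(\rho)$ for $j\in J$ at once, so only $\partial'_0$ remains. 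Fix $\eta<1$ close enough to $1$ that all the ``generic radius'' statements of the preceding proposition hold for the old \emph{and} the new derivations, put $M=\calE_\rho\otimes F'_\eta$, and note that $\partial'_0=\partial_0-\partial_{j_0}$ is again of rational type on $F'_\eta$ with the same rational parameter $T$, so that $b'_0(\rho)$ is read off from $R_{\partial'_0}(M)=\eta^{b'_0(\rho)}$; recall also $b_0(\rho)\ge 1$ always, since $R_{\partial_0}(M)\le|\partial_0|_{F'_\eta}^{-1}=\eta$.

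For one inequality, since $\partial_0$ and $\partial_{j_0}$ commute we get $|\partial_0-\partial_{j_0}|_{\sp,M}\le\max\{|\partial_0|_{\sp,M},|\partial_{j_0}|_{\sp,M}\}$, hence $R_{\partial'_0}(M)\ge\min\{R_{\partial_0}(M),R_{\partial_{j_0}}(M)\}=\eta^{\max\{b_0(\rho),b_{j_0}(\rho)\}}$, so $b'_0(\rho)\le\max\{b_0(\rho),b_{j_0}(\rho)\}$ in every case; in particular $b'_0(\rho)\le b_0(\rho)$ when $b_0(\rho)=b_{j_0}(\rho)$, which is the second clause. For the matching lower bound when $b_0(\rho)\ne b_{j_0}(\rho)$, I would decompose $M$, using Theorems~\ref{T:decomp-over-field-complete} and~\ref{T:refined-decomposition} (the latter possibly after a harmless tamely ramified extension of $K'$) and the canonicity of those decompositions, into a direct sum of $\partial_{J^+}$-differential submodules each having pure $\partial_0$- and $\partial_{j_0}$-radii, and pure refined $\partial_0$- (resp.\ $\partial_{j_0}$-) radii on the summands with $IR_{\partial_0}<1$ (resp.\ $IR_{\partial_{j_0}}<1$). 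Assume $b_0(\rho)>b_{j_0}(\rho)$ (the case $b_{j_0}(\rho)>b_0(\rho)$ is symmetric, and there $b_{j_0}(\rho)>b_0(\rho)\ge1$ automatically gives $IR_{\partial_{j_0}}<1$ on the relevant summand). On the summand $M^{\ast}$ whose $\partial_0$-break equals $b_0(\rho)$, the $\partial_{j_0}$-break is strictly smaller, so $\partial_0$ is strictly dominant; if $b_0(\rho)>1$, then $IR_{\partial_0}(M^{\ast})<1$ and its refined $\partial_0$-radius is nonzero (as the refined radii of a pure module of intrinsic radius $<1$ are), and Theorem~\ref{T:independence-of-basis} with $\alpha_0=1$, $\alpha_{j_0}=-1$, $J_0=\{0\}$ shows $M^{\ast}$ has pure $\partial'_0$-radii equal to $R_{\partial_0}(M^{\ast})=\eta^{b_0(\rho)}$; if $b_0(\rho)=1$, then $IR_{\partial_0}(M^{\ast})=1$, but $|\partial_{j_0}|_{\sp,M^{\ast}}<|\partial_0|_{\sp,M^{\ast}}$ together with $|\partial'_0|_{\sp,M^{\ast}}\ge|\partial'_0|_{\sp,F'_\eta}=|\partial_0|_{\sp,F'_\eta}$ forces $|\partial'_0|_{\sp,M^{\ast}}=|\partial_0|_{\sp,M^{\ast}}$, i.e.\ $R_{\partial'_0}(M^{\ast})=\eta=\eta^{b_0(\rho)}$. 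In either case $R_{\partial'_0}(M)\le R_{\partial'_0}(M^{\ast})=\eta^{b_0(\rho)}$, so $b'_0(\rho)\ge b_0(\rho)$, and combined with the first inequality $b'_0(\rho)=\max\{b_0(\rho),b_{j_0}(\rho)\}$.

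The main obstacle I anticipate is the bookkeeping around the boundary case $b_0(\rho)=1>b_{j_0}(\rho)$: there $IR_{\partial_0}=1$, so Theorem~\ref{T:independence-of-basis} does not apply and one must argue purely with spectral norms, using the basic inequality $|\partial|_{\sp,M}\ge|\partial|_{\sp,F'_\eta}$ and the rationality of $\partial'_0$. A secondary point to get right is the first step itself --- that replacing $b_{j_0}$ by $b_{j_0}+\pi_k$ really is the coordinate change $B'_{j_0}=B_{j_0}+T$, whence $\partial'_0=\partial_0-\partial_{j_0}$ --- and the verification that the pure-refined decomposition of $M$ is a decomposition of $\partial_{J^+}$-differential modules, which rests on the canonicity asserted in Theorems~\ref{T:decomp-over-field-complete} and~\ref{T:refined-decomposition}.
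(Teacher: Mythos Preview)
Your proof is correct, and your first step---identifying $\partial'_J=\partial_J$ and $\partial'_0=\partial_0-\partial_{j_0}$---is exactly what the paper does. However, your argument for the lower bound is considerably more elaborate than needed. The paper's proof consists of nothing beyond the derivation identity and the phrase ``the lemma follows immediately''; the intended reasoning is purely the ultrametric behaviour of spectral norms of commuting operators. You already use $|\partial_0-\partial_{j_0}|_{\sp,M}\le\max\{|\partial_0|_{\sp,M},|\partial_{j_0}|_{\sp,M}\}$ for the upper bound; for the lower bound when the two spectral norms differ, one simply writes $\partial_0=(\partial_0-\partial_{j_0})+\partial_{j_0}$ (and likewise for $\partial_{j_0}$) and applies the same inequality in reverse, forcing $|\partial'_0|_{\sp,M}=\max\{|\partial_0|_{\sp,M},|\partial_{j_0}|_{\sp,M}\}$. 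This settles both cases at once, with no need to decompose $M$, no appeal to refined radii or Theorem~\ref{T:independence-of-basis}, and no separate treatment of the boundary case $b_0(\rho)=1$.

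Your route does work: the decomposition by pure $\partial_0$- and $\partial_{j_0}$-radii and refined radii is legitimate (the decompositions of Theorems~\ref{T:decomp-over-field-complete} and~\ref{T:refined-decomposition} are canonical, hence respected by the other derivations), and your invocation of Theorem~\ref{T:independence-of-basis} on the dominant summand is valid since only the coordinates $0$ and $j_0$ enter. But this machinery is disproportionate to the statement, and the case split you flag as the ``main obstacle'' ($b_0(\rho)=1>b_{j_0}(\rho)$, where $IR_{\partial_0}=1$) is an artifact of the method rather than a genuine difficulty---it evaporates under the elementary spectral-norm argument.
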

\begin{proof}
Let $\partial'_{J^+}$ denote the derivations dual to the basis $dB_{J \bs \{j_0\}}, dT, d(B_{j_0}+T)$ of $\Omega^1_{\calR^\inte_{K'}}$.  Then we have $\partial'_J = \partial_J$ and $\partial'_0 = \partial_0 - \partial_{j_0}$.  The lemma follows immediately.
\end{proof}

\begin{remark}
This lemma is in fact much stronger than it appears.  Applying the same argument to $b_{j_0} + \alpha\pi_k$ for all $\alpha \in k_0$ implies that, for \emph{all but possibly one} $\alpha \in k_0$, $b'_0(\rho) \geq b_{j_0}(\rho)$.  So, vaguely speaking, the equality $b_0(\rho) = b(\rho)$ holds ``generically"; this motivates the following lemma.
\end{remark}

\begin{lemma}\label{L:generic-rotation-break}
Fix $j_0 \in J$.
Let $\tilde k$ be the completion of $k(x)$ with respect to the $1$-Gauss norm, equipped with the lifted $p$-basis $\{b_{J \bs \{j_0\}}, b_{j_0} + x \pi_k, x\}$.  Let $\tilde \rho$ be the representation $G_{\tilde k} \rar G_k \stackrel \rho \rar GL(V_\rho)$.  Let $\tilde b_{J^+ \cup\{m+1\}}(\tilde\rho)$ denote the breaks by $p$-basis with respect to the aforementioned  lifted $p$-basis and the uniformizer $\pi$, where $\tilde b_{J \bs \{j_0\}}(\tilde \rho)$ corresponds to $b_{J \bs \{j_0\}}$, $\tilde b_{j_0}(\tilde\rho)$ corresponds to $b_{j_0} + x\pi_k$, $\tilde b_0(\tilde\rho)$ corresponds to $\pi_k$, and $\tilde b_{m+1}(\tilde\rho)$ corresponds to $x$.  Then we have $
 \tilde b_j(\rho') = b_j(\rho)$  for $j \in J$, $\tilde b_{m+1}(\tilde \rho) = b_{j_0}(\rho)-1$, $
\tilde b_0(\tilde\rho) = \max \{ b_0(\rho)$, and $ b_{j_0}(\rho)\}$.  In particular, $\tilde b_\nlog(\tilde\rho) = b_\nlog(\rho)$.
 \end{lemma}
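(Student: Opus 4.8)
The plan is to identify $\calE_{\tilde\rho}$ with a base change of $\calE_\rho$, to compute the new derivations in terms of the old ones, and then to read off each break from Theorem~\ref{T:independence-of-basis} applied at a Gauss-norm completion with radius close to $1$. Since by Theorem~\ref{T:equivalence-categories} the $(\phi,\nabla)$-module attached to $\tilde\rho$ is independent of the chosen lifted $p$-basis of $\tilde k$, I would first compute it with the ``obvious'' lifted $p$-basis $\{b_J, x;\pi_k\}$ of $\tilde k$; this exhibits $\calE_{\tilde\rho} = \calE_\rho \otimes_{\calR_{K'}} \calR_{\tilde K'}$, where $\calR_{\tilde K'}$ is obtained from $\calR_{K'}$ by adjoining the coordinate $X$ lifting $x$, and where the derivation $\partial_X = \partial/\partial X$ acts trivially on the $\calE_\rho$-factor. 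The lifted $p$-basis $\{b_{J\setminus\{j_0\}}, b_{j_0}+x\pi_k, x;\pi_k\}$ corresponds instead to the coordinate system $(B_{J\setminus\{j_0\}}, C, X, T)$ of $\calR_{\tilde K'}$ with $C = B_{j_0}+XT$ (so $T$ still lifts $\pi_k$ and $B_{j_0} = C-XT$). Applying the chain rule to this change of coordinates gives, for the derivations $\tilde\partial_{J^+}, \tilde\partial_{m+1}$ dual to $dB_{J\setminus\{j_0\}}, dC, dT, dX$,
\[
\tilde\partial_j = \partial_j \ (j \in J), \qquad \tilde\partial_{m+1} = \partial_X - T\partial_{j_0}, \qquad \tilde\partial_0 = \partial_0 - X\partial_{j_0}.
\]

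The equality $\tilde\partial_j = \partial_j$ for all $j\in J$ (the $XT$-perturbation is invisible to $\tilde\partial_{j_0}=\partial/\partial C=\partial_{j_0}$) immediately gives $\tilde b_j(\tilde\rho)=b_j(\rho)$, because adjoining $X$ with the $1$-Gauss norm is an enlargement in the sense of Remark~\ref{R:enlarge-K} and leaves the $\partial_j$-radii of convergence unchanged. For the remaining two breaks I would fix $\eta\in(\eta_0,1)$ close to $1$, so that (by Theorem~\ref{T:swan1}, Theorem~\ref{T:variation-j} and, over the field $F'_\eta$, Theorem~\ref{T:refined-decomposition}) $\calE_\rho\otimes F'_\eta$ decomposes into submodules with pure refined $\partial_j$-radii for each relevant $j$, with $R_{\partial_j}(\calE_\rho\otimes F'_\eta)=\eta^{b_j(\rho)}$; note also $R_{\partial_X}(\calE_{\tilde\rho}\otimes\tilde F'_\eta)=1$ since $\partial_X$ is trivial. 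Now apply Theorem~\ref{T:independence-of-basis} summand by summand. For $\partial=\tilde\partial_{m+1}=1\cdot\partial_X+(-T)\cdot\partial_{j_0}$ (rational parameter $X$), one computes $s=\min\{-\log\omega,\ -\log\omega+(b_{j_0}(\rho)-1)\log\eta\}$; when $b_{j_0}(\rho)>1$ only $\partial_{j_0}$ attains the minimum, the hypothesis $IR_{\partial_{j_0}}<1$ holds, the associated reduced eigenvalue $-T\cdot\theta_{j_0}$ is nonzero because the refined $\partial_{j_0}$-radii are nonzero (Corollary~\ref{C:refined-radii=reduced-roots}), so $R_{\tilde\partial_{m+1}}(\calE_{\tilde\rho}\otimes\tilde F'_\eta)=\omega e^{s}=\eta^{b_{j_0}(\rho)-1}$, i.e.\ $\tilde b_{m+1}(\tilde\rho)=b_{j_0}(\rho)-1$; in the degenerate range $b_{j_0}(\rho)\le 1$ the $\partial_X$-term dominates and a direct spectral-norm estimate gives $\tilde b_{m+1}(\tilde\rho)=0=\max\{b_{j_0}(\rho)-1,0\}$. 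For $\partial=\tilde\partial_0=1\cdot\partial_0+(-X)\cdot\partial_{j_0}$ (rational parameter $T$), one gets $s=-\log\omega+\max\{b_0(\rho),b_{j_0}(\rho)\}\log\eta$, so $R_{\tilde\partial_0}(\calE_{\tilde\rho}\otimes\tilde F'_\eta)\le\eta^{\max\{b_0(\rho),b_{j_0}(\rho)\}}$, with equality because in the tie case $b_0(\rho)=b_{j_0}(\rho)$ the relevant reduced eigenvalue is $\theta_0-\bar x\,\theta_{j_0}$, which is nonzero: $\bar x$ is transcendental over the residue field of $F'_\eta$ and $\theta_0,\theta_{j_0}$ are not both zero (indeed $\theta_{j_0}\ne 0$ as $b_{j_0}(\rho)\ge 1$ in this setting). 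Hence $\tilde b_0(\tilde\rho)=\max\{b_0(\rho),b_{j_0}(\rho)\}$.

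Finally, for the ``in particular'' statement: since $\tilde b_{m+1}(\tilde\rho)\le b_{j_0}(\rho)\le\max_{j\in J}b_j(\rho)$ and $\tilde b_0(\tilde\rho)=\max\{b_0(\rho),b_{j_0}(\rho)\}$, the maximum of all breaks of $\tilde\rho$ equals $\max\{\max_{j\in J}b_j(\rho),\ b_0(\rho)\}=b_\nlog(\rho)$. I expect the main obstacle to be the bookkeeping around Theorem~\ref{T:independence-of-basis}: checking that $\tilde\partial_{m+1}$ and $\tilde\partial_0$ are genuinely of rational type on the relevant Gauss-norm completions, correctly identifying the dominant set $J_0$ in each of the several regimes for $(b_0(\rho),b_{j_0}(\rho))$, and above all verifying the non-vanishing of the resulting reduced eigenvalue. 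It is precisely the last point where the word ``generic'' does its work: adjoining the transcendental $x$ forces $\theta_0-\bar x\,\theta_{j_0}\ne 0$ even when $b_0(\rho)=b_{j_0}(\rho)$, which is exactly the coincidence that can fail for the special shift $b_{j_0}\mapsto b_{j_0}+\alpha\pi_k$ in the preceding (non-generic) lemma.
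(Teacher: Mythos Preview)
Your proposal is correct and follows essentially the same route as the paper: identify $\calE_{\tilde\rho}$ as the base change $f^*\calE_\rho$ along the inclusion $K'\hookrightarrow\widetilde K'$, compute the new derivations by the chain rule, and conclude using the transcendence of $X$. The paper's proof is extremely terse---it records the formulas \eqref{E:generic-rotation-differential} and then says only ``The lemma follows from this because $X$ is transcendental over $K'$''---whereas you spell out the application of Theorem~\ref{T:independence-of-basis} and the non-vanishing of $\theta_0-\bar x\,\theta_{j_0}$ explicitly, which is exactly the content hidden in that one sentence.

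Two small remarks. First, your chain-rule formula $\tilde\partial_{m+1}=\partial_X-T\partial_{j_0}$ is the literal one; the paper's \eqref{E:generic-rotation-differential} writes $\tilde\partial_{m+1}=T\partial_{j_0}$, silently dropping the $\partial_X$ term (harmless on $f^*\calE_\rho$, as you note) and the sign (harmless for spectral norms). Second, your observation that in the range $b_{j_0}(\rho)\le 1$ one really gets $\tilde b_{m+1}(\tilde\rho)=\max\{b_{j_0}(\rho)-1,0\}$ rather than $b_{j_0}(\rho)-1$ is a correct refinement of the statement; it does not affect the ``in particular'' clause, which is what is actually used downstream.
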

\begin{proof}
Let $\widetilde K'$ denote the completion of $K'(X)$ with respect to the 1-Gauss norm, where $X$ is the canonical lift of $x$.  Let $f: A_{\widetilde K'}^1[\eta_0, 1) \rar A_{K'}^1[\eta_0, 1)$ be the natural morphism. Then $f^*\calE_\rho$ is the differential module associated to $\rho'$.
Let $\tilde \partial_{J^+\cup \{m+1\}}$ be the differential operators corresponding to the $p$-basis $(b_{J \bs\{j_0\}}, b_{j_0} + x \pi_k, \pi_k)$.  Then under the identification by $f^*$, we have
\begin{equation}
\label{E:generic-rotation-differential}
\tilde \partial_J = \partial_J, \quad
 \tilde \partial_{m+1} = T\partial_{j_0}, \quad \textrm{and}\ \tilde \partial_0 = \partial_0 - X \partial_{j_0}.
\end{equation}
The lemma follows from this because $X$ is transcendental over $K'$.
\end{proof}

\begin{lemma}\label{L:break-pth-root}
Fix $j_0 \in J$.  Set $k' = k(b_{j_0}^{1/p})$, equipped with the lifted $p$-basis $\{ b_{J \bs \{j_0\}}, b_{j_0}^{1/p}\}$.  Let $b'_{J^+}(\rho|_{G_{k'}})$ be the breaks by $p$-basis of $\rho|_{G_{k'}}$ with respect to the aforementioned $p$-basis and uniformizer $\pi_k$.  Then we have $b'_j(\rho|_{G_{k'}}) = b_j(\rho)$ for $j \in J^+ \bs \{j_0\}$ and $b'_{j_0}(\rho|_{G_{k'}}) = \frac 1p b_{j_0}(\rho)$.
\end{lemma}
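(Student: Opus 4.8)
The plan is to realize $\calE_{\rho|_{G_{k'}}}$ as a base change of $\calE_\rho$ along the map of Robba rings induced by $k \hookrightarrow k'$, and then to read off each break from the comparison of radii of convergence under this base change. Concretely, I would first set up the two differential modules side by side: on the $k'$-side the construction of Subsection~\ref{S:construction-diff-mod} uses the lifted $p$-basis $\{b_{J\setminus\{j_0\}}, b_{j_0}^{1/p}\}$, so let $C$ be the canonical lift of $\bar b_{j_0}^{1/p}$ to the associated Cohen ring, and then $\calE_{\rho|_{G_{k'}}}$ carries the operators $\partial'_j = \partial/\partial B_j$ for $j \in J\setminus\{j_0\}$, $\partial'_{j_0} = \partial/\partial C$, and $\partial'_0 = \partial/\partial T$. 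By functoriality of Cohen rings (as used in Proposition~\ref{P:standard-Frob}) the inclusion $k \hookrightarrow k'$ lifts to a map of Robba rings sending $B_{j_0}\mapsto C^p$ and fixing $B_j$ ($j\neq j_0$) and $T$, and by the compatibility of the equivalence of Theorem~\ref{T:equivalence-categories} with base extension of the base field, $\calE_{\rho|_{G_{k'}}}$ is the base change of $\calE_\rho$ along this map. For $\eta \in (\eta_0,1)$ I write $F'_\eta$ for the $\eta$-Gauss norm completion on the $k$-side and $F''_\eta$ for its analogue on the $k'$-side; then $F''_\eta = F'_\eta(C)$ with $C^p = B_{j_0}$, an extension of degree $p$.

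For $j \in J^+\setminus\{j_0\}$ the operator $\partial'_j$ restricts to $\partial_j$ on $\calR_{K'}$ and $C$ is a $\partial_j$-constant; since $F''_\eta$ is obtained from $F'_\eta$ by adjoining such a constant, the $\eta$-Gauss norm extends and the matrices of $\partial_j^n$ in a fixed basis are unchanged, so $R_{\partial_j}(\calE_{\rho|_{G_{k'}}}\otimes F''_\eta) = R_{\partial_j}(\calE_\rho\otimes F'_\eta)$ for every $\eta$; letting $\eta \to 1^-$ gives $b'_j(\rho|_{G_{k'}}) = b_j(\rho)$.

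For $j = j_0$ the chain rule on $\calR_{K'}$, using $B_{j_0} = C^p$, gives $\partial'_{j_0} = pC^{p-1}\partial_{j_0}$, which is precisely the relation between a derivation and its Frobenius descendant in Construction~\ref{Cstr:j-frobenius} with rational parameter $C$ (the rational parameter of $\partial_{j_0}$ being $C^p = B_{j_0}$). Hence at the field level $F''_\eta$ is the $\partial'_{j_0}$-Frobenius extension of $F'_\eta$ and $\calE_{\rho|_{G_{k'}}}\otimes F''_\eta = \varphi^{(\partial'_{j_0})*}(\calE_\rho\otimes F'_\eta)$. Since $\calE_{\rho|_{G_{k'}}}$ is solvable by Proposition~\ref{P:calE-solvable}, $IR_{\partial'_{j_0}}(\calE_{\rho|_{G_{k'}}}\otimes F''_\eta) \geq IR(\calE_{\rho|_{G_{k'}}}\otimes F''_\eta) \to 1$ as $\eta \to 1^-$, so for $\eta$ near $1$ it exceeds $p^{-1/(p-1)}$; then Lemma~\ref{L:frob-properties}(d), together with the uniqueness of Frobenius antecedents, identifies $\calE_\rho\otimes F'_\eta$ with the $\partial_{j_0}$-Frobenius antecedent of $\calE_{\rho|_{G_{k'}}}\otimes F''_\eta$ and yields $IR_{\partial_{j_0}}(\calE_\rho\otimes F'_\eta) = IR_{\partial'_{j_0}}(\calE_{\rho|_{G_{k'}}}\otimes F''_\eta)^p$. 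Because $|B_{j_0}| = |C| = 1$, these intrinsic radii coincide with the corresponding extrinsic radii, so $\eta^{b_{j_0}(\rho)} = \eta^{\,p\,b'_{j_0}(\rho|_{G_{k'}})}$ for $\eta$ near $1$, whence $b'_{j_0}(\rho|_{G_{k'}}) = \frac1p\,b_{j_0}(\rho)$.

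The main point requiring care is the first step: checking that the construction of Subsection~\ref{S:construction-diff-mod} is compatible with the purely inseparable residue extension $\kappa \hookrightarrow \kappa(\bar b_{j_0}^{1/p})$ in the precise form $B_{j_0}\mapsto C^p$, so that the chain-rule identity $\partial'_{j_0} = pC^{p-1}\partial_{j_0}$ holds on the nose, and confirming the degree-$p$ and Frobenius-descendant statements for $F''_\eta/F'_\eta$ (in particular that $F'_\eta$ is the fixed field under the relevant $\ZZ/p\ZZ$-action, via a degree count). Granting these, both break computations follow formally from Lemma~\ref{L:frob-properties} and the definition of the breaks by $p$-basis.
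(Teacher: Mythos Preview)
Your proposal is correct and follows essentially the same approach as the paper: both identify the passage from $k$ to $k'$ with the $\partial_{j_0}$-Frobenius pullback of $\calE_\rho$ and read off the change in intrinsic $\partial_{j_0}$-radii from Lemma~\ref{L:frob-properties}. The only cosmetic difference is that the paper invokes part~(a) of that lemma directly (which, for $IR_{\partial_{j_0}}(\calE_\rho\otimes F'_\eta)$ close to $1$, gives the $p$-th root relation immediately), whereas you go through part~(d) and the uniqueness of the Frobenius antecedent; your treatment of the indices $j\neq j_0$ is more explicit than the paper's one-line proof, but the content is the same.
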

\begin{proof}
Replacing $k$ by $k'$ is equivalent to pulling back the differential module $\calE_\rho$ along $\varphi^{(\partial_j)}$.  The lemma follows from applying Lemma~\ref{L:frob-properties}(a) to $\calE \otimes F'_\eta$ when $\eta \rar 1^-$.
\end{proof}

\begin{lemma}\label{L:break-p-infty-root}
Fix $j_0 \in J$.  Let $k'$ denote the completion of $k(b_{j_0}^{1/p^n}; n \in \NN)$ equipped with lifted $p$-basis $ b_{J \bs \{j_0\}}$.  Let $b'_{J^+}(\rho|_{G_{k'}})$ be the breaks by $p$-basis of $\rho|_{G_{k'}}$ with respect to this $p$-basis and the uniformizer $\pi_k$.  Then we have $b'_j(\rho|_{G_{k'}}) = b_j(\rho)$ for $j \in J^+ \bs \{j_0\}$.
\end{lemma}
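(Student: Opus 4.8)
The plan is to exploit that adjoining a compatible system of $p$-power roots of the unit $b_{j_0}$ affects only the $b_{j_0}$-direction: I would realize $\calE_{\rho|_{G_{k'}}}$ as a base change of $\calE_\rho$ along a map of Robba rings that leaves the derivations $\partial_j$ with $j\in J^+\setminus\{j_0\}$ untouched, and then observe that the generic $\partial_j$-radius of convergence is insensitive to this base change. (One could instead iterate Lemma~\ref{L:break-pth-root} over the fields $k_n=k(b_{j_0}^{1/p^n})$ and pass to the limit, but that only introduces an extra base-change step of exactly the same nature, so I would compare with $k$ directly.)

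First I would set up the Cohen-ring picture for $k'$. Since $b_{j_0}\in\calO_k^\times$, the element $\pi_k$ is still a uniformizer of $k'$, and $\kappa_{k'}=\kappa(\bar b_{j_0}^{1/p^n};\ n\in\NN)$ has $\bar b_{J\setminus\{j_0\}}$ as a $p$-basis, because each $\bar b_{j_0}^{1/p^n}$ lies in $\kappa_{k'}^p$. Thus Hypothesis~\ref{H:J-finite-set} holds for $k'$, and $\rho|_{G_{k'}}$ still has finite local monodromy, so $\calE_{\rho|_{G_{k'}}}$ is defined. Let $\calO_{K''}$ be the Cohen ring of $\kappa_{k'}$ with respect to $\bar b_{J\setminus\{j_0\}}$ and $K''=\Frac(\calO_{K''})F$. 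Functoriality of Cohen rings (cf.\ \cite[Proposition~2.1.8]{xiao1}) yields a local, hence isometric, ring homomorphism $\calO_K\hookrightarrow\calO_{K''}$ fixing $B_j$ for $j\in J\setminus\{j_0\}$ and compatible with the standard Frobenius lifts; extending scalars to $F$ and keeping the common variable $T$ gives isometric inclusions $\calR^\inte_{K'}\hookrightarrow\calR^\inte_{K''}$, $\calR_{K'}\hookrightarrow\calR_{K''}$, and $F'_\eta\hookrightarrow F''_\eta$ (the $\eta$-Gauss completions) under which the standard Frobenius lift and the derivations $\partial_0$ and $\partial_j$ ($j\in J\setminus\{j_0\}$) correspond. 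By the compatibility of the equivalence of Theorem~\ref{T:equivalence-categories} with restriction of representations along such base changes — built into the construction of \cite{kedlaya-swan1}, see also \cite{xiao1} — one gets $\calE_{\rho|_{G_{k'}}}\cong\calE_\rho\otimes_{\calR_{K'}}\calR_{K''}$ as $\partial_{J^+\setminus\{j_0\}}$-differential modules.

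It then remains to compare radii. Fix $j\in J^+\setminus\{j_0\}$ and $\eta$ close to $1$. The derivation $\partial_j$ is of rational type on both $F'_\eta$ and $F''_\eta$ with the same rational parameter ($B_j$, or $T$ if $j=0$) and the same operator norm. Choosing a basis of $\calE_\rho\otimes F'_\eta$ and the induced basis of $\calE_{\rho|_{G_{k'}}}\otimes F''_\eta=\big(\calE_\rho\otimes F'_\eta\big)\otimes_{F'_\eta}F''_\eta$, the matrices of $\partial_j^n$ have identical entrywise norms because $F'_\eta\hookrightarrow F''_\eta$ is isometric; hence $|\partial_j^n|$, and therefore (by Definition~\ref{D:partial-radii}) the spectral norm $|\partial_j|_{\sp}$ and the generic radius $R_{\partial_j}$, agree on the two modules. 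For $\eta$ sufficiently close to $1$ this common value is $\eta^{b_j(\rho)}$ on the one side and $\eta^{b'_j(\rho|_{G_{k'}})}$ on the other, so $b'_j(\rho|_{G_{k'}})=b_j(\rho)$ for every $j\in J^+\setminus\{j_0\}$.

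I expect the main obstacle to be the identification in the second paragraph: one must check that restricting $\rho$ to $G_{k'}$ corresponds, under Theorem~\ref{T:equivalence-categories}, to extending scalars along $\calR^\inte_{K'}\hookrightarrow\calR^\inte_{K''}$ — in particular that the standard Frobenius lifts match under the Cohen-ring functoriality map and that the $p$-basis $\{b_{J\setminus\{j_0\}},\pi_k\}$ of $k'$ yields precisely the restrictions of $\partial_0$ and of $\partial_j$, $j\neq j_0$. Granting this, the radius comparison is immediate from the isometry of the Gauss-norm extension, exactly as in the proofs of Lemmas~\ref{L:break-pth-root} and \ref{L:generic-rotation-break}.
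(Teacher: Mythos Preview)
Your proposal is correct and follows exactly the paper's approach: the paper's proof is the single sentence ``Replacing $k$ to $k'$ is equivalent to simply forgetting the $j_0$-direction,'' and you have spelled out precisely what this means---base-changing $\calE_\rho$ along an isometric map of Robba rings that preserves $\partial_j$ for $j\neq j_0$, hence preserves the corresponding $\partial_j$-radii. The functoriality check you flag as the main obstacle is indeed the only content, and the paper takes it for granted.
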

\begin{proof}
Replacing $k$ to $k'$ is equivalent to simply forgetting the $j_0$-direction.
\end{proof}

\begin{situation}
\label{Sit:saito-base-change}
Now, we study a particular case of base change, which will be useful in the comparison Theorem~\ref{T:comparison}.  This type of base change was first considered by Saito in \cite{saito-wild-ram}.

Fix $e \in \NN$ possibly divisible by $p$.  Let $k$ be as above, and let $k'$ be the completion of $k(x)$ with respect to the 1-Gauss norm, with uniformizer $\pi_{k'} = \pi_k$.  Put $\tilde k = k'[u] / (u^e - x^{-1}\pi_k)$.
The residue field of $\tilde k$ is $\kappa(\bar x)$; we consider the $p$-basis $(b_J, \bar x)$ and the uniformizer $\pi_{\tilde k} = u$ of $\tilde k$.  We choose the unique isomorphism $ \kappa (\bar x) ((u)) \simeq \tilde k$ that is compatible with the chosen isomorphism $\kappa(\pi_k) \simeq k$ in Notation~\ref{N:k} and that sends $\bar x$ to $x$. This gives rise to the lifted $p$-basis $(b_J, x, u)$ of $\tilde k$.
\end{situation}

\begin{proposition}\label{P:saito-base-change}

The natural homomorphism $G_{\tilde k} \rar G_k$ induces a homomorphism $\Fil^{ea}_\log G_{\tilde k} \rar \Fil^a_\log G_k$ for any $a \in \QQ_{\geq 0}$.  Moreover, the induced homomorphism $\Fil^{ea}_\log G_{\tilde k} / \Fil^{ea+}_\log G_{\tilde k} \rar \Fil^a_\log G_k / \Fil^{a+}_\log G_k$ is surjective for any $a \in \QQ_{>0}$.
\end{proposition}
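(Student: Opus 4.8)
The plan is to reduce the proposition to the single assertion that, for every $p$-adic representation $\rho$ of $G_k$ with finite local monodromy,
\[
b_\log(\rho|_{G_{\tilde k}}) = e\cdot b_\log(\rho),
\]
and to prove this equality on the differential-module side. Granting it, the homomorphism $\Fil^{ea}_\log G_{\tilde k}\to\Fil^a_\log G_k$ is immediate from the definition of the filtrations: the natural map carries $I_{\tilde k}$ into $I_k$, and if $b_\log(\rho)<a$ then $b_\log(\rho|_{G_{\tilde k}})=e\,b_\log(\rho)<ea$, so $\rho|_{G_{\tilde k}}$ kills $\Fil^{ea}_\log G_{\tilde k}$ and hence $\rho$ kills its image in $G_k$; intersecting over all such $\rho$ places that image inside $\Fil^a_\log G_k$. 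Applying this with $a$ replaced by $b/e$ for $b>ea$ and passing to closures shows $\Fil^{ea+}_\log G_{\tilde k}$ lands in $\Fil^{a+}_\log G_k$, so the map on graded pieces is defined.

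To compute $b_\log(\rho|_{G_{\tilde k}})$ I would factor $e=p^c e'$ with $p\nmid e'$ and follow the tower $k\subset k'\subset k''\subset\tilde k$, where $k'$ is the completion of $k(x)$, $k''=k'[v]/(v^{e'}-x^{-1}\pi_k)$, and $u^{p^c}=v$. The extension $k'/k$ is unramified in the sense of Theorem~\ref{T:properties-diff-conductor}, so it leaves all log-breaks unchanged; on differential modules it replaces $\calE_\rho$ (over $\calR_{K'}$, with $T\leftrightarrow\pi_k$ and operators $\partial_0=\partial/\partial T$, $\partial_j=\partial/\partial B_j$ for $j\in J$) by its constant base change, now carrying the extra operator $\partial_x=\partial/\partial X$, which kills $\calE_\rho$. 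The extension $k''/k'$ is tamely ramified of naive index $e'$, so by Theorem~\ref{T:properties-diff-conductor}(b) it multiplies every log-break by $e'$; in particular the uniformizer-direction break of $\calE_{\rho|_{G_{k''}}}$ is $e'(b_0-1)+1$. Finally, passing to $\tilde k$ amounts to the coordinate substitutions $T=XU^e$, $V=U^{p^c}$, under which the operators dual to the lifted $p$-basis $(b_J,x,u)$ of $\tilde k$ become
\[
\tilde\partial_j=\partial_j\ (j\in J),\qquad \tilde\partial_x=U^e\partial_0,\qquad \tilde\partial_0=\partial/\partial U=p^cU^{p^c-1}\partial_V=eXU^{e-1}\partial_0,
\]
where $\partial_V$ is the uniformizer operator of $k''$ and we used $\partial_x(\calE_\rho)=0$.

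Now the breaks of $\calE_{\rho|_{G_{\tilde k}}}$ can be read off. Since $\tilde\partial_j=\partial_j$ and the inclusion $F_{\eta^e}\hookrightarrow F_\eta$ of Gauss-norm completions (in the $U$-coordinate) is isometric, the $b_j$-direction break is multiplied by $e$. Since $\tilde\partial_x(U^e)=0$ we get $\tilde\partial_x^{\,n}=U^{ne}\partial_0^{\,n}$ on $\calE_{\rho|_{G_{\tilde k}}}$, so the $\tilde\partial_x$-Taylor series of a section is its $\partial_0$-Taylor series evaluated at $U^eS$; by Lemma~\ref{L:basic-IR-proposition}(d) this gives $R_{\tilde\partial_x}(\calE_{\rho|_{G_{\tilde k}}}\otimes F_\eta)=\eta^{e(b_0-1)}$, i.e. the $x$-direction break equals $e(b_0-1)$. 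For the uniformizer direction, $\calE_{\rho|_{G_{\tilde k}}}$ is the $c$-fold $\tilde\partial_0$-Frobenius pullback of $\calE_{\rho|_{G_{k''}}}$, and iterating Lemma~\ref{L:frob-properties}(a),(d) — in its version over an annulus, Remark~\ref{R:frob-over-annulus} — in the range near the outer radius, where all intrinsic radii exceed $p^{-1/(p-1)}$, gives that the $\tilde\partial_0$-log-contribution of $\calE_{\rho|_{G_{\tilde k}}}$ equals $e'(b_0-1)\le e(b_0-1)$. Combining, $b_\log(\rho|_{G_{\tilde k}})=\max\{e'(b_0-1),\,e(b_0-1),\,eb_j\ (j\in J)\}=e\max\{b_0-1,\,b_j\ (j\in J)\}=e\,b_\log(\rho)$; the $x$-direction absorbs exactly whatever the uniformizer direction loses when $p\mid e$.

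Finally, for surjectivity on graded pieces I would argue through finite quotients. The log-filtration of $G_k$ is the inverse limit of the induced filtrations on the finite Galois groups $G_{l/k}$, and likewise $\Fil^a_\log G_k/\Fil^{a+}_\log G_k=\varprojlim_l\bigl(G^a_{l/k,\log}/G^{a+}_{l/k,\log}\bigr)$. For each finite Galois $l/k$ — linearly disjoint from $\tilde k$ over $k$, so $l\tilde k/\tilde k$ is Galois with group $G_{l/k}$ — one verifies that, under this identification, $G^{ea}_{l\tilde k/\tilde k,\log}$ and $G^a_{l/k,\log}$ both coincide with the intersection of the kernels of the representations of that common group having log-break $<a$: the same subgroup, precisely because restriction to $G_{\tilde k}$ multiplies each such break by $e$. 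Hence $\Fil^{ea}_\log G_{\tilde k}$ surjects onto each $G^a_{l/k,\log}$, so (being compact) onto $\Fil^a_\log G_k$, and in particular onto $\Fil^a_\log G_k/\Fil^{a+}_\log G_k$. The main obstacle in this program is the uniformizer-direction computation when $p\mid e$: there $\tilde\partial_0$ is not well adapted to direct spectral-norm estimates — the scalar $p^cU^{p^c-1}$ is small but has $\partial_V$-derivatives large relative to its own size — so one must genuinely route the argument through the Frobenius antecedent/descendant formalism; this is also exactly the point at which the twist by $x^{-1}$ in $u^e=x^{-1}\pi_k$ does its work, transferring the ramification carried by the uniformizer to the auxiliary variable $x$.
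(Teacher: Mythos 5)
Your proof is correct, and the central break computation takes a genuinely different route from the paper's. Where the paper performs the single substitution $T = XU^e$ and reads off $\tilde b_0(\tilde\rho)\le eb_0(\rho)-(e-1)$, $\tilde b_J(\tilde\rho)=eb_J(\rho)$, $\tilde b_{m+1}(\tilde\rho)=eb_0(\rho)-e$ via Theorem~\ref{T:independence-of-basis}, you factor $e=p^ce'$ and follow the tower $k\subset k'\subset k''\subset\tilde k$, treating the tame index-$e'$ step via Theorem~\ref{T:properties-diff-conductor}(b) and the wild index-$p^c$ step as a $c$-fold Frobenius pullback via Lemma~\ref{L:frob-properties}(a),(d) and Remark~\ref{R:frob-over-annulus}, while handling the $B_J$- and $x$-directions directly. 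Your route is longer but sharper: it delivers the exact value $\tilde b_0(\tilde\rho)-1=e'(b_0(\rho)-1)$ rather than an upper bound, and gives a cleaner picture of how the $x$-direction absorbs the wild part of the ramification lost to the uniformizer direction. It also sidesteps a delicate point in the paper's argument: when $p\mid e$, the quantity $|\tilde\partial_0|_{K'}\,\omega\ee^{s}$ entering Theorem~\ref{T:independence-of-basis} equals $p^c\eta_U^{e(b_0-1)}$, which tends to $p^c\ge1$ as $\eta_U\to1^-$ and therefore falls outside the dichotomy (x)--(xx) in that theorem's proof (and outside the hypothesis $\mathbf{b}\in(0,1)$ of Lemma~\ref{L:converse-good-norm}); the bound one would read off there is no stronger than the trivial $IR\le1$. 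Your Frobenius computation is what genuinely establishes $\tilde b_0-1\le e(b_0-1)$ in the wild case, which the final identity $b_\log(\tilde\rho)=eb_\log(\rho)$ requires. Finally, the paper leaves the deduction of the proposition from the break identity implicit; your explicit argument via finite Galois quotients of the group $G_{l/k}\cong G_{l\tilde k/\tilde k}$, linear disjointness of $l$ and $\tilde k$ over $k$, and compactness of $\Fil^{ea}_\log G_{\tilde k}$ fills that in correctly.
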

\begin{proof}
It suffices to show that, for a $p$-adic representation of $G_k$ with finite local monodromy and pure log-break $b_\log(\rho)$, the induced representation $\tilde \rho: G_{\tilde k} \rar G_k \rar GL(V_\rho)$ also has the same log-break.
 Let $\widetilde K'$ be the completion of $K'(X)$ with respect to the 1-Gauss norm, where $X$ is the canonical lift of $\bar x$.  We then  have a natural map $f: A^1_{\widetilde K'}[ \eta^{1/e}, 1) \rar A^1_{K'}[\eta, 1)$ for $\eta \rar 1^-$, sending $T$ to $XU^e$, where $U$ is the coordinate of the former annulus.

Let $\tilde b_0(\tilde \rho), \dots, \tilde b_{m+1}(\tilde \rho)$ be the breaks by $p$-basis with respect to $b_J$, $x$ and the uniformizer $\pi_{\tilde k } = u$.  Then $f^* \calE_\rho$ is the differential module associated to $\tilde \rho$, with the actions of $\tilde \partial_0 = \partial / \partial U$, $\tilde \partial_J = \partial / \partial B_J$, and $\tilde \partial_{m+1} = \partial / \partial X$.  We have
\begin{equation}\label{E:saito-base-change-differential}
\tilde \partial_J = \partial_J, \quad \tilde \partial_0 = eXU^{e-1} \partial_0, \quad \textrm{and }\ \tilde \partial_{m+1} = U^e \partial_0.
\end{equation}
By Theorem~\ref{T:independence-of-basis}, we have $\tilde b_J(\tilde \rho) = e b_J(\rho)$, $\tilde b_0(\tilde \rho) \leq e b_0(\rho) - (e-1)$, and $\tilde b_{m+1} (\tilde \rho) = e b_0(\rho) -e$  (when $e$ is prime to $p$, the inequality becomes an equality).  In particular, we have $\tilde b_{m+1}(\tilde \rho) \geq \tilde b_0(\tilde \rho) -1$.  Hence we conclude that
\[
b_\log(\tilde \rho) = \max\{\tilde b_0(\tilde \rho) - 1, \tilde b_J(\tilde \rho), \tilde b_{m+1}(\tilde \rho)\} = \max \{e b_J(\rho), e b_0(\rho) - e\} = e b_\log(\rho).
\]
This proves the proposition.
\end{proof}

\subsection{Refined differential conductors}
\label{S:refined}

In this subsection, we define the refined differential conductors, which provides additional information the subquotient $\Fil^a_{(\log)}G_k / \Fil^{a+}_{(\log)}G_k$ of the ramification filtrations.  This definition makes use of the refined Swan conductors we discussed in Subsection~\ref{S:differential-conductor}.

We keep the notation as in previous subsections but we drop Hypothesis~\ref{H:J-finite-set}.

\begin{notation}\label{N:Dwork-pi}
Fix a Dwork pi $\boldsymbol{\pi} = (-p)^{1/(p-1)}$.
\end{notation}

\begin{notation}
We put $\Omega^1_{\calO_k}(\log) = \Omega^1_{\calO_k} + \calO_k \frac{d\pi_k}{\pi_k} \subset \Omega^1_k$.  If we choose a $p$-basis $\bar b_J$ of $\kappa$ as in Notation~\ref{N:k}, we have $\Omega^1_{\calO_k}(\log) = \calO_k \frac{d\pi_k}{\pi_k} \oplus \bigoplus_{j \in J} \calO_k db_j$.
\end{notation}

\begin{construction}
\label{C:refined-conductor}
Let $\rho$ be a $p$-adic representation of $G_k$ with finite local monodromy and with pure break $b = b_\nlog(\rho)$ (resp. log-break $b = b_{ \log}(\rho)$).  We may replace $k$ by the completion of an inseparable extension as in Definition~\ref{D:conductors-repn} and then assume Hypothesis~\ref{H:J-finite-set}.  Let $\calE_\rho$ denote the $(\phi, \nabla)$-module associated to $\rho$.  By Theorem~\ref{T:variation}(e), there exists $\eta_0 \in (0, 1)$ such that $\calE_\rho \otimes F'_\eta$ has pure extrinsic (resp. intrinsic) radii $\eta^b$ for any $\eta \in [\eta_0, 1)$.

We define the multiset of \emph{refined Artin conductors} of $\rho$ to be
\[
\mathrm{rar}(\rho) = \big\{\frac 1{\boldsymbol \pi}\vartheta \pi_k^{-b} \; | \; \vartheta \in \calI\Theta(\calE_\rho) \big\} \subset  \Omega_{\calO_k}^1 \otimes_{\calO_k} \pi_k^{-b}\bar \kappa.
\]
Similarly, we define the multiset of \emph{refined Swan conductors} of $\rho$ to be
\[
\rsw(\rho) = \big\{\frac 1{\boldsymbol \pi}\vartheta \pi_k^{-b} \; | \; \vartheta \in \calI\Theta(\calE_\rho) \big\} \subset  \Omega_{\calO_k}^1(\log) \otimes_{\calO_k} \pi_k^{-b}\bar \kappa.
\]
\end{construction}

\begin{remark} \label{R:Dwork-pi-vs-pth-root}
There is a unique primitive $p$-th root of unity $\zeta_p$ such that $\boldsymbol \pi \equiv (\zeta_p-1) \mod (\zeta_p-1)^2$.  The definition of refined conductors above is unchanged if we replace $\boldsymbol \pi$ by this $\zeta_p - 1$.
\end{remark}

\begin{lemma}
In Construction~\ref{C:refined-conductor}, the definition of the refined Artin and Swan conductors does not depend on the choices of the lifted $p$-basis of $k$ and the uniformizer $\pi_k$.
\end{lemma}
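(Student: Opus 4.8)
The plan is to reduce the independence to the transformation law for refined radii under a change of commuting derivations, i.e.\ to Theorem~\ref{T:independence-of-basis} and its solvable-module incarnation used in Definition~\ref{D:differential-conductor} (which runs through Theorems~\ref{T:var-refined} and~\ref{T:var-refined-multi}). First I would reduce to Hypothesis~\ref{H:J-finite-set}: by definition the refined conductors of $\rho$ are those of $\rho|_{G_{k^\wedge}}$, and the choices entering $k^\wedge$ do not matter by the same mechanism as for the ordinary breaks (\cite[Proposition~2.6.6]{kedlaya-swan1}), so we may assume $J=\{1,\dots,m\}$ is finite. Next I would isolate what is already choice-free. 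By Theorem~\ref{T:equivalence-categories}, together with the functoriality of Cohen rings (\cite[Proposition~2.1.8]{xiao1}), the $(\phi,\nabla)$-module $\calE_\rho$ over $\calR_{K'}$ and its connection $\nabla\colon\calE_\rho\to\calE_\rho\otimes\Omega^1_{\calR_{K'}}$ are canonically attached to $\rho$, independently of the lifted $p$-basis; by Theorem~\ref{T:properties-diff-conductor} the break $b$ is intrinsic; the module $\Omega^1_{\calO_k}(\log)$ does not depend on $\pi_k$ because $\frac{d(v\pi_k)}{v\pi_k}-\frac{d\pi_k}{\pi_k}=\frac{dv}{v}\in\Omega^1_{\calO_k}$ for a unit $v$; and the $\bar\kappa$-lines $\pi_k^{-b}\bar\kappa$ for different uniformizers are canonically identified, the choice of a $b$-th root of $\bar v$ being harmless since it rescales the whole multiset. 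Thus there is a single well-defined target $\Omega^1_{\calO_k}(\log)\otimes_{\calO_k}\pi_k^{-b}\bar\kappa$, and the assertion is that Construction~\ref{C:refined-conductor} lands in it independently of the choices.

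The mechanism is then as follows: the derivations $\partial_{J^+}$ attached to a lifted $p$-basis $(b_J,\pi_k)$ are exactly the dual basis to $(dB_J,dT)$ on $\calR_{K'}^{\inte}$, so two lifted $p$-bases give two systems of commuting rational-type derivations related by an invertible linear change of basis, and the passage from $\calE_\rho$ to its refined decomposition $\calE_\rho=\bigoplus_\vartheta\calE_{\rho,\vartheta}$ and to the multiset $\calI\Theta(\calE_\rho)$ of $1$-forms is controlled entirely by that change, via Theorem~\ref{T:independence-of-basis} (applied to every Gauss completion $F'_\eta$ with $\eta\to1^-$) and Theorem~\ref{T:var-refined-multi}. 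I would decompose an arbitrary change of lifted $p$-basis into elementary moves: (i) a change of uniformizer $\pi_k\mapsto v\pi_k$, handled by the new coordinate $T'=VT$ for a unit lift $V$ of $v$, so that $dT'=(V+T\partial_0V)\,dT+T\sum_{j\in J}(\partial_jV)\,dB_j$ with $V+T\partial_0V$ a unit; (ii) an invertible substitution among the $b_j$'s, including passage to another $p$-basis of $\kappa$, inducing the corresponding invertible change on $(dB_J,dT)$; and (iii) a change of lift $b_{j_0}\mapsto b_{j_0}+\pi_k(\text{unit})$, of the type treated before Lemma~\ref{L:generic-rotation-break}, where $\partial'_J=\partial_J$ and $\partial'_0=\partial_0-\partial_{j_0}$ up to a unit. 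For each move one writes the linear relation between $\partial'_{J^+}$ and $\partial_{J^+}$ on $\calR_{K'}^{\inte}$, applies Theorem~\ref{T:independence-of-basis} to conclude that the refined decomposition is preserved and that the form $\vartheta$ computed in the new coordinates is the image of the old one under the induced base change of $1$-forms, and checks that this base change of forms is precisely the canonical identification of the two descriptions of $\Omega^1_{\calO_k}(\log)\otimes\pi_k^{-b}\bar\kappa$; in particular the normalizing factor $\frac1{\bbpi}\pi_k^{-b}$ gets rescaled by the same $\bar v^{-b}$, read off from $T'=VT$, as the symbol $\pi_k^{-b}$, so the two multisets coincide.

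I expect the hard part to be move (i), and within it the logarithmic component: one must verify that the extra term $\frac{dv}{v}$ in $\frac{d(v\pi_k)}{v\pi_k}=\frac{d\pi_k}{\pi_k}+\frac{dv}{v}$ is exactly produced by the cross-terms $T\sum_j(\partial_jV)\,dB_j$ in $dT'$ when Theorem~\ref{T:independence-of-basis} is run through the change $\partial'_{J^+}\leftrightarrow\partial_{J^+}$, and simultaneously that the rescaling by $\bar v^{-b}$ on the normalizing symbol matches the factor arising from $|V|_1=1$ and $\bar V=\bar v$ in the reduction step that reads off the refined radii. This is a finite but delicate bookkeeping; the $\nabla$-functoriality from Theorem~\ref{T:equivalence-categories} and the freedom to replace $\bbpi$ by $\zeta_p-1$ (Remark~\ref{R:Dwork-pi-vs-pth-root}) keep it tractable. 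Once (i)--(iii) are checked, the general case follows by composing the moves, and the independence for the refined Artin conductors is proved identically, with $\calE_{\rho/\rho^{I_k}}$ in place of $\calE_\rho$ and the nonlog/extrinsic versions of the constructions in place of the log/intrinsic ones.
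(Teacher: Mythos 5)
Your proposal is fundamentally on the right track: it correctly identifies Theorem~\ref{T:independence-of-basis} as the engine of the proof and correctly observes that the differential-form presentation of the refined radii is exactly what makes the construction basis-independent. You also correctly do the preliminary reduction to Hypothesis~\ref{H:J-finite-set}, and your housekeeping about the target $\Omega^1_{\calO_k}(\log)\otimes_{\calO_k}\pi_k^{-b}\bar\kappa$ (including the cancellation of the $\bar v^{-b}$ scaling) is the right bookkeeping.

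However, your plan diverges from the paper in the reduction step, and the divergence introduces a genuine gap. You propose to decompose an arbitrary change of lifted $p$-basis and uniformizer into elementary moves (i)--(iii) and verify the invariance move by move. The paper instead treats the general change in a single stroke: it writes the relation $dB_j = \sum_{j'}\alpha_{j,j'}dB'_{j'} + \alpha_{j,0}dT'$ (and $dT = \sum_{j'}\alpha_{0,j'}dB'_{j'} + \alpha_{0,0}dT'$ with $\alpha_{0,j}\in T\calO_{K'}\llbracket T\rrbracket$ reflecting that $T'$ is again a coordinate reducing to a uniformizer), dualizes to obtain $\partial'_j = \sum_{l\in J^+}\alpha_{l,j}\partial_l$ for each $j$, and then applies Theorem~\ref{T:independence-of-basis} directly to each dominant $\partial'_j$ at $F'_\eta$. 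A short linear-algebra identity then shows the resulting multiset of $1$-forms, written in $(dB'_{J},dT')$, coincides with the original one written in $(dB_J,dT)$. This one-shot argument sidesteps the issue your plan must face: you would need to establish that every change of $(\pi_k,\ \kappa\hookrightarrow k,\ \bar b_J)$ genuinely factors as a finite composite of moves (i)--(iii), and that the moves compose coherently. In particular, move (iii) as stated only changes a single $b_{j_0}$ by $\pi_k\cdot(\text{unit})$, but a general change of coefficient field $\kappa\hookrightarrow k$ changes all $b_j$ simultaneously by arbitrary elements of $\gothm_k$, requiring either an unbounded iteration of (iii) or a separate limiting/approximation argument; neither is addressed. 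I would recommend replacing the elementary-moves reduction by the direct application of Theorem~\ref{T:independence-of-basis} to the general coefficient matrix $(\alpha_{j,j'})$, which is both shorter and avoids the factorization question entirely.
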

\begin{proof}
We may assume Hypothesis~\ref{H:J-finite-set} since only finitely many elements in the $p$-basis appear in the refined Artin and Swan conductors.

For another choice of lifted $p$-bases and uniformizers, we will consider another set of differential operators: $\partial'_j = \partial / \partial B'_j$ for $j \in J$ and $\partial'_0 = \partial/\partial T'$.  We put
\[
dB_j = \sum_{j' \in J} \alpha_{j, j'} dB'_{j'} + \alpha_{j, 0}dT' \textrm{ for } j \in J, \quad\textrm{and }\
dT = \sum_{j' \in J} \alpha_{0, j'} dB_{j'} + \alpha_{0, 0} dT',
\]
where $\alpha_{j, j'} \in \calO_{K'}\llbracket T\rrbracket$ for $j, j' \in J^+$.  Moreover, we have $\alpha_{0, j} \in T\cdot \calO_{K'}\llbracket T\rrbracket$.

We may assume that $\calE_\rho$ has pure differential nonlog-break (resp. log-break) and has pure extrinsic (resp. intrinsic) radii $\eta^b$ for $\eta \in [\eta_0, 1)$ for some $\eta_0 \in (0, 1)$.

 Theorem~\ref{T:independence-of-basis} then implies that, for $\eta \in (\eta_0, 1) \cap p^\QQ$ and
for any $j \in J^+$ such that $R_{\partial'_j}(V \otimes F'_\eta) = ER(V \otimes F'_\eta)$ (resp. $IR_{\partial'_j}(V \otimes F'_\eta) = IR(V \otimes F'_\eta)$), we have
\begin{align*}
\Theta_{\partial'_j}(\calE_\rho \otimes F'_\eta) &= \big\{\bbpi T^{-b}(\alpha_{0,j}\theta_0 + \cdots +\alpha_{m, j} \theta_m) \big| \bbpi T^{-b}(\theta_0dT + \theta_1 dB_1 +\cdots + \theta_m dB_m) \in \calE \Theta(\calE_\rho \otimes F'_\eta)\big\} \\
\textrm{(resp. }
\Theta_{\partial'_j}(\calE_\rho \otimes F'_\eta) &= \big\{\bbpi T^{-b}(\frac{\alpha_{0,j}}T\theta_0 + \cdots +\alpha_{m,j} \theta_m) \big| \bbpi T^{-b}(\theta_0\frac {dT}T + \theta_1 dB_1 +\cdots + \theta_m dB_m) \in \calI \Theta(\calE_\rho \otimes F'_\eta)\big\}\ )
\end{align*}
Note also that
\begin{align*}
&\big(\alpha_{0, 0}\theta_0 + \cdots +\alpha_{m,0} \theta_m\big) dT' +
\sum_{j \in J} \big(\alpha_{0,j}\theta_0 + \cdots +\alpha_{m,j} \theta_m\big) dB'_j \\
&= \theta_0 dT + \theta_1 dB_1 + \cdots +\theta_m dB_m.
\end{align*}
Combining these two formulas, we conclude that $\calE\Theta(V)$ (resp. $\calI\Theta(V)$) for $\partial_{J^+}$ is the same as that for $\partial'_{J^+}$.  Hence the refined Artin (resp. Swan) conductors are well-defined.
\end{proof}

\begin{lemma}
\label{L:tame-refined}
Let $k'/k$ be a tamely ramified extension of ramification degree $e=e_{k'/k}$ and let $\rho$ be a $p$-adic representation of $G_k$ with finite local monodromy and with pure log-break $b=b_\log(\rho)$.  Then  $\rho|_{G_{k'}}$ has pure log-break $eb$.  Moreover, if we identify $\Omega_{\calO_k}^1(\log) \otimes_{\calO_k} \pi_k^{-b} \bar\kappa$ with $\Omega_{\calO_{k'}}^1(\log) \otimes_{\calO_{k'}} \pi_{k'}^{-eb} \bar\kappa$, then $\rsw(\rho) $ is the same as $\rsw(\rho|_{g_{k'}})$.
\end{lemma}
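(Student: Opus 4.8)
The plan is to reduce to two basic kinds of tame extension and, in each, follow both the intrinsic radii and the refined intrinsic radii of $\calE_\rho$ through the induced change of coordinates, using Theorem~\ref{T:independence-of-basis}. Since only finitely many elements of the $p$-basis ever occur in $\rsw$, we may assume Hypothesis~\ref{H:J-finite-set}; and since the tame case $b_\log(\rho)=0$ is trivial ($\calE_\rho$ is then trivial and $\rsw(\rho)$ is a multiset of zeros), we may assume $b>0$. The equality $b_\log(\rho|_{G_{k'}})=e\,b_\log(\rho)$ is already contained in Theorem~\ref{T:properties-diff-conductor}(b) (and will reappear in the computation below), so the real content is the compatibility of the refined Swan conductors. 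The essential cases are (a) $k'/k$ unramified — where, in this paper's terminology, the residue extension may be separable algebraic or purely transcendental — and (b) $k'=k(\pi_k^{1/e})$ with $p\nmid e$; a general tame $k'/k$ is handled by the same method, differing from (b) only by a unit in the coordinate relation $T\mapsto T'^e(\mathrm{unit})$, which does not affect the leading-order computation.

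In case (a), a separable algebraic residue extension does not change the chosen $p$-basis, so the Cohen ring and the Robba ring are replaced by their finite étale base changes, the operators $\partial_0,\partial_J$ are unchanged, $e=1$ and $\pi_{k'}=\pi_k$; by Theorem~\ref{T:equivalence-categories}, $\calE_{\rho|_{G_{k'}}}$ is the base change of $\calE_\rho$, so the decomposition \eqref{E:var-refined-all}, the multiset $\calI\Theta(\calE_\rho)$, and hence $\rsw$, are unchanged. Adjoining transcendental elements to the residue field is the ``enlarge $K$'' operation of Remark~\ref{R:enlarge-K}: the corresponding new derivations annihilate the pulled-back $\calE_{\rho|_{G_{k'}}}$, so they are never dominant and contribute nothing to $\calI\Theta$; by Remark~\ref{R:refined-in-K} the refined radii still lie in $\bar\kappa$, and $\rsw$ is carried across by the natural map $\Omega^1_{\calO_k}(\log)\to\Omega^1_{\calO_{k'}}(\log)$.

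In case (b), choosing the isomorphism $\kappa((\pi_{k'}))\simeq k'$ compatibly with the one fixed for $k$, the Robba ring of $k'$ has coordinate $T'$ with $T'^e=T$, and $\calE_{\rho|_{G_{k'}}}$ is the pullback of $\calE_\rho$ along $A^1_{K'}(\eta_0^{1/e},1)\to A^1_{K'}(\eta_0,1)$, $T\mapsto T'^e$; the relevant derivations transform by $\partial'_j=\partial_j$ for $j\in J$ and $\partial'_0=\partial/\partial T'=eT'^{e-1}\partial_0$, as in \eqref{E:saito-base-change-differential} of Proposition~\ref{P:saito-base-change} with the auxiliary variable suppressed. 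Fixing $\eta_0$ close to $1$ so that $\calE_\rho$ has pure intrinsic radii $\eta^b$ over $A^1_{K'}[\eta_0,1)$, I would pass to a finite tame extension of $K'$ and use Theorem~\ref{T:var-refined-multi} to write $\calE_\rho=\bigoplus_\vartheta M_\vartheta$ with $M_\vartheta\otimes F'_\eta$ of pure refined intrinsic radii $T^{-b}\vartheta$, so that by Definition~\ref{D:differential-conductor} $\rsw(\rho)$ is the multiset of $\bbpi^{-1}\vartheta\,\pi_k^{-b}$ with multiplicity $\rank M_\vartheta$. Applying Theorem~\ref{T:independence-of-basis} to each $M_\vartheta\otimes F'_\nu$ with $\nu=\eta^{1/e}$ (and $\eta\notin\QQ\cdot\log|K'^\times|$): for $\partial'_j$ ($j\in J$) the coefficient $\alpha_j=1$, so the radius and refined radius are unchanged; for $\partial'_0$ one has $\alpha_0=eT'^{e-1}$ with $|\alpha_0|_\nu=\nu^{e-1}$ (as $|e|=1$), whence the intrinsic $\partial'_0$-radius of $M_\vartheta\otimes F'_\nu$ is $\nu^{e(b_0-1)}$ (where $\eta^{b_0-1}$ was that of $M_\vartheta\otimes F'_\eta$) and its refined $\partial'_0$-radius is $\overline{eT'^{e-1}}\,\theta_0$. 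Taking the minimum over $j\in J^+$ and using the purity of $b$ gives $IR(\calE_{\rho|_{G_{k'}}}\otimes F'_\nu)=\nu^{eb}$, i.e.\ pure log-break $eb$. For the refined conductor, the extra factor $e$ produced by $\alpha_0=eT'^{e-1}$ in the $\tfrac{dT'}{T'}$-component of $\calI\Theta(\calE_{\rho|_{G_{k'}}})$ is exactly cancelled against the factor $e$ in $\tfrac{d\pi_k}{\pi_k}=e\,\tfrac{d\pi_{k'}}{\pi_{k'}}$, while $\pi_{k'}^{-eb}=\pi_k^{-b}$ and the $dB_j$-components are untouched since $\partial'_j=\partial_j$; hence $\rsw(\rho|_{G_{k'}})$ corresponds to $\rsw(\rho)$ under the natural identification $\Omega^1_{\calO_k}(\log)\otimes_{\calO_k}\pi_k^{-b}\bar\kappa\cong\Omega^1_{\calO_{k'}}(\log)\otimes_{\calO_{k'}}\pi_{k'}^{-eb}\bar\kappa$ sending $\tfrac{d\pi_k}{\pi_k}\mapsto e\,\tfrac{d\pi_{k'}}{\pi_{k'}}$, $dB_j\mapsto dB_j$, $\pi_k^{-b}\mapsto\pi_{k'}^{-eb}$.

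The main obstacle I expect is the normalization bookkeeping in case (b): keeping straight the intrinsic versus extrinsic normalizations, the $\tfrac{du_j}{u_j}$-versus-$db_j$ conventions entering $\calI\Theta$ and $\Omega^1(\log)$, and the reductions of $T'^{e-1}$ and $T'^e$ in the appropriate graded pieces $\kappa^{(s)}_{K'^{\alg}}$, so that the powers of $e$ really do cancel and the residual Dwork-$\pi$ factor behaves. One uses here in an essential way that the Frobenius structure forces the functions $f^{(j)}_i(\calE_\rho,r)$ to be exactly affine (as in Proposition~\ref{P:calE-solvable}), which is what makes ``$R_{\partial_j}=\eta^{b_j}$'' literally true without the usual factor $\omega$. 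A secondary point is verifying the hypotheses of Theorem~\ref{T:independence-of-basis}: that $\partial'_0$ is of rational type on $F'_\nu$ (immediate, $T'$ being a coordinate), and that $IR_{\partial_j}<1$ for the dominant source derivations — for $b>0$ the dominant $\partial_0$ has $IR_{\partial_0}=\eta^b<1$, and in the summands $M_\vartheta$ where $\partial_0$ is not dominant it simply contributes $0$ to the $\tfrac{dT'}{T'}$-component of $\calI\Theta$.
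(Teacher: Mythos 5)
Your proposal is correct and takes the same approach as the paper, which disposes of the lemma with a single sentence: $\calE_{\rho|_{G_{k'}}}$ is the base change of $\calE_\rho$ along $A^1_{K'}[\eta^{1/e},1)\to A^1_{K'}[\eta,1)$ with coordinate $t^{1/e}$. You have simply unwound what that sentence means for the derivations ($\partial'_j=\partial_j$ for $j\in J$, $\partial'_0=eT'^{e-1}\partial_0$), fed it through Theorem~\ref{T:independence-of-basis}, and verified that the factor $e$ coming from $\alpha_0=eT'^{e-1}$ is exactly absorbed by the identification $\tfrac{d\pi_k}{\pi_k}=e\,\tfrac{d\pi_{k'}}{\pi_{k'}}$, $\pi_k^{-b}=\pi_{k'}^{-eb}$, so the multisets agree; the unramified and residually transcendental cases you split off in (a) are the ``trivial'' part of the same base change. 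Two cosmetic remarks: in the phrase ``where $\eta^{b_0-1}$ was that of $M_\vartheta\otimes F'_\eta$'' you mean $M_\vartheta\otimes F_\eta$ (the source annulus has coordinate $T$, not $T'$); and your pointwise computation at one generic radius $\nu$ should, to be complete, be propagated to the multiset $\calI\Theta(\calE_{\rho|_{G_{k'}}})$ by observing that the pullback of the decomposition from Theorem~\ref{T:var-refined-multi} is again a decomposition of the required type and invoking uniqueness — a gap the paper glosses over too, so this is a matter of polish rather than of correctness.
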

\begin{proof}
This follows immediately from the fact that $\calE_{\rho|_{G_{k'}}}$ is just the base change of $\calE_{\rho}$ along $A^1_{K'}[\eta^{1/e}, 1) \to A^1_{K'}[\eta, 1)$, where the coordinate for the first annulus is $t^{1/e}$.
\end{proof}

\begin{theorem}
\label{T:refined-homomorphism}
Let $k$ be a complete discrete valuation field of equal characteristic $p>0$.
\begin{itemize}
\item[\emph{(a)}] Let $\rho$ be a $p$-adic representation of $G_k$ with finite local monodromy and with pure log-break $b = b_{\log}(\rho)>0$.  Then there exists a unique direct sum  decomposition of $\rho$ as $\rho \cong \oplus_{\{\vartheta\} \subset \rsw(\rho)} \rho_{\{\vartheta\}}$, where the direct sum is taken over all $\mu_e \rtimes G_k$-orbits $\{\vartheta\}$ in $\rsw(\rho)$, and $\rsw(\rho_{\{\vartheta\}})$ consists of the Galois orbits $\{\vartheta\}$ with appropriate multiplicity.  Moreover, there exists a finite tamely ramified extension $k'/k$ of na\"ive ramification degree $e$ such that  we have a unique  direct sum decomposition of representations of $G_{k'}$ over some finite extension $F'$ of $F$:
$
\rho|_{G_{k'}} \otimes F' \cong \bigoplus_{\vartheta \in \rsw(\rho)} \rho_\vartheta,
$
such that $\rho_\vartheta$ has pure refined Swan conductors $\vartheta \in \Omega_{\calO_{k'}}^1(\log) \otimes_{\calO_{k'}} \pi_{k'}^{-eb} \bar\kappa \cong \Omega_{\calO_k}^1(\log) \otimes_{\calO_k} \pi_k^{-b} \bar\kappa$.
\item[\emph{(b)}]  Choose the $p$-th root of unity $\zeta_p$ as in Remark~\ref{R:Dwork-pi-vs-pth-root}.  Then there exists an \emph{injective} homomorphism for any $b \in \QQ_{> 0}$,
\begin{equation}\label{E:rsw}
\rsw = \rsw_k: \Hom(\Fil_{\log}^b G_k / \Fil_{\log}^{b+} G_k, \Fp) \rar \Omega^1_{\calO_k}(\log) \otimes_{\calO_k} \pi_k^{-b}\bar \kappa,
\end{equation}
such that, when viewing the left hand side as a subset of $\Hom(\Fil_{\log}^b G_k / \Fil_{\log}^{b+} G_k, \Qp(\zeta_p)^\times)$ via the identification of $1 \in \Fp$ with $\zeta_p$, we have, for any $p$-adic representation $\rho$ of $G_k$ with finite local monodromy and with pure log-break $b$, the images of the summands of $\rho|_{\Fil_{\log}^b G_k}$ under the homomorphism $\rsw$ exactly form the multiset of refined Swan conductors of $\rho$.
Moreover, 
the homomorphism \eqref{E:rsw} does not depend on the choices of the Dwork pi.
\end{itemize}
\end{theorem}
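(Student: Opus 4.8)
The proof splits into part (a), a decomposition statement that follows from the one-dimensional variation package of Subsection~\ref{S:one-dim} once Frobenius-compatibility is checked, and part (b), which builds $\rsw_k$ by realizing every character of the graded piece inside some $\calE_\rho$.

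\emph{Part (a).} After replacing $k$ by the completion of the inseparable extension of Definition~\ref{D:conductors-repn} (which affects neither breaks nor refined conductors, by the lemma preceding Lemma~\ref{L:tame-refined}), we may assume Hypothesis~\ref{H:J-finite-set}. Since $\calE_\rho$ is solvable (Proposition~\ref{P:calE-solvable}), Theorem~\ref{T:swan1} shows that for $\eta_0$ close enough to $1$ the module $\calE_\rho\otimes F'_\eta$ has pure intrinsic radii $\eta^b$ for all $\eta\in[\eta_0,1)$, so the functions $f_1^{(j)}(\calE_\rho,r)=\cdots$ are affine on $(0,-\log\eta_0)$. Theorem~\ref{T:var-refined-multi}, applied over $(\eta_0,1)$, yields the canonical decomposition $\calE_\rho=\bigoplus_{\{\vartheta\}}\calE_{\rho,\{\vartheta\}}$ of $\partial_{J^+}$-differential modules indexed by $\mu_e\rtimes\Gal(K^\alg/K)$-orbits ($e$ the prime-to-$p$ part of the denominator of $b$), together with the finer splitting over a tame extension. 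The remaining point is that this decomposition respects the Frobenius structure $\Phi\colon\phi^*\calE_\rho\isom\calE_\rho$: by Proposition~\ref{P:standard-Frob} the standard $\phi$ is an iterate of $\partial_j$-Frobenii, and by Proposition~\ref{P:refined-frob} Frobenius pullback transforms refined $\partial_j$-radii by an explicit injective rule; the normalizations entering Construction~\ref{C:refined-conductor} (the factor $\bbpi$ and the power $\pi_k^{-b}$) are exactly those for which that rule becomes the identity on the $\eta$-independent refined-Swan-conductor invariant underlying Theorem~\ref{T:var-refined-multi}. Hence the canonical decomposition of $\phi^*\calE_\rho$ is the $\phi$-pullback of that of $\calE_\rho$ and $\Phi$ carries one to the other, so each $\calE_{\rho,\{\vartheta\}}$ is a $(\phi,\nabla)$-submodule. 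Theorem~\ref{T:equivalence-categories} converts this into the asserted decomposition of $\rho$ (and of an $\calO$-lattice), and the tame descent back to $k$ uses Lemma~\ref{L:tame-refined} together with Theorem~\ref{T:properties-diff-conductor}(b).

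\emph{Part (b): construction of $\rsw_k$ and realization.} Fix $b>0$. A tame base change $k'/k$ of naive ramification degree $e$ identifies $\Fil^b_\log G_k/\Fil^{b+}_\log G_k$ with $\Fil^{eb}_\log G_{k'}/\Fil^{(eb)+}_\log G_{k'}$ (Theorem~\ref{T:properties-diff-conductor}(b)) and the two targets (Lemma~\ref{L:tame-refined}), so we may assume everything splits. For $\rho$ of pure log-break $b$ and a summand $\rho_\vartheta$ of its refined-Swan decomposition, $\rho_\vartheta|_{\Fil^b_\log G_k}$ factors through the abelian $p$-torsion group $\Fil^b_\log G_k/\Fil^{b+}_\log G_k$ and is isotypic, say $\psi_\vartheta^{\oplus}$: otherwise $\rho_\vartheta\otimes\rho_\vartheta^\dual$ would contain a nontrivial character of $\Fil^b_\log G_k$, contradicting Proposition~\ref{P:refined-properties-general}(b), which forces $\calE_{\rho_\vartheta}\otimes\calE_{\rho_\vartheta}^\dual$ to have break $<b$ (equivalently, that tensor to be trivial on $\Fil^b_\log G_k$). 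The same argument applied to $\rho_\vartheta\otimes(\rho'_{\vartheta'})^\dual$ shows $\vartheta\mapsto\psi_\vartheta$ is injective across all such $\rho$. Moreover every nonzero $\psi\in\Hom(\Fil^b_\log G_k/\Fil^{b+}_\log G_k,\Fp)$ arises: it has open kernel in $\Fil^b_\log G_k$, hence factors through $G^b_{l/k,\log}/G^{b+}_{l/k,\log}$ for a finite Galois $l/k$; with $\bar G=G_{l/k}/G^{b+}_{l/k,\log}$ and the abelian $p$-torsion normal subgroup $\bar N=G^b_{l/k,\log}/G^{b+}_{l/k,\log}$, the representation $\rho=\Ind_{\bar N}^{\bar G}\bar\psi$ inflated to $G_k$ has finite image (hence finite local monodromy), is trivial on $\Fil^{b+}_\log G_k$ and nontrivial on $\Fil^b_\log G_k$, hence of pure log-break $b$, and has $\psi$ as a summand of $\rho|_{\Fil^b_\log G_k}$. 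We thus define $\rsw_k(\psi)=\vartheta$ when $\psi=\psi_\vartheta$ and $\rsw_k(0)=0$; the characterizing property for general $\rho$ is then immediate from part (a), since $\rho|_{\Fil^b_\log G_k}=\bigoplus_\vartheta\psi_\vartheta^{\oplus\dim\rho_\vartheta}$.

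\emph{Part (b): homomorphism, injectivity, independence of the Dwork pi.} Injectivity follows once $\rsw_k(\psi)\neq0$ for $\psi\neq0$: realizing $\psi$ by $\rho$ of pure log-break $b>0$, every element of $\calI\Theta(\calE_\rho)$ is a nonzero differential form (its dominant components are nonzero reduced eigenvalues, by Corollary~\ref{C:refined-radii=reduced-roots} and, in the non-visible range, its analogue obtained via Frobenius antecedents), so every refined Swan conductor is nonzero. For additivity, realize $\psi_1,\psi_2$ by $\rho_1,\rho_2$ with $\rho_i|_{\Fil^b_\log G_k}$ $\psi_i$-isotypic, so $\calE_{\rho_i}$ has pure refined Swan conductor $\rsw_k(\psi_i)$, $\calE_{\rho_1\otimes\rho_2}=\calE_{\rho_1}\otimes\calE_{\rho_2}$, and $(\rho_1\otimes\rho_2)|_{\Fil^b_\log G_k}$ is $(\psi_1\psi_2)$-isotypic; if $\psi_1\psi_2\neq0$ then $\rho_1\otimes\rho_2$ still has pure log-break $b$, excluding the equal-refined-radii case of Proposition~\ref{P:refined-properties-general}(b), so Proposition~\ref{P:refined-properties-general}(a),(c) give $\calE_{\rho_1\otimes\rho_2}$ pure refined Swan conductor $\rsw_k(\psi_1)+\rsw_k(\psi_2)=\rsw_k(\psi_1\psi_2)$, while for $\psi_2=\psi_1^{-1}$ one realizes $\psi_1^{-1}$ by $\rho_1^\dual$ and uses Proposition~\ref{P:refined-properties-general}(a) to get $\rsw_k(\psi_1^{-1})=-\rsw_k(\psi_1)$. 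Finally, changing the Dwork pi changes both the chosen primitive $p$-th root of unity $\zeta_p$ (hence the identification $\Fp\cong\mu_p$ used on the source) and the factor $\bbpi^{-1}$ in $\rsw$, and by Remark~\ref{R:Dwork-pi-vs-pth-root} these two changes cancel. The main obstacle is the Frobenius-compatibility step in part (a): reconciling the explicit transformation law for refined $\partial_j$-radii under the standard Frobenius (Propositions~\ref{P:standard-Frob} and~\ref{P:refined-frob}) with the $\eta$-independent packaging of Theorem~\ref{T:var-refined-multi} is the one place where the analytic and arithmetic sides must be combined rather than invoked as black boxes.
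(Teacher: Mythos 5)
Your proposal follows the same architecture as the paper's proof: realize the refined-radii decomposition of $\calE_\rho$ as a $(\phi,\nabla)$-module decomposition, convert it to a decomposition of $\rho$, and then build $\rsw$ by realizing every character of $\Fil^b_\log G_k/\Fil^{b+}_\log G_k$ inside an induced representation and using the tensor-duality arguments of Proposition~\ref{P:refined-properties-general} for well-definedness and injectivity. The independence of Dwork pi is handled in the same spirit. There are two substantive points worth flagging.

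First, in part (a) you pass directly from ``each $\calE_{\rho,\{\vartheta\}}$ is a $(\phi,\nabla)$-submodule'' to ``Theorem~\ref{T:equivalence-categories} converts this into a decomposition of $\rho$.'' That equivalence is between $\calO$-representations and $(\phi,\nabla)$-modules over $\calR_{K'}^\inte$, not over $\calR_{K'}$ (or $\calR_{K'}^\eta$), where the refined-radii decomposition lives. The paper explicitly invokes Kedlaya's slope filtration theorem to ensure the Frobenius action on each summand is \'etale, so that each summand has a model over $\calR_{K'}^\inte$. Without that step the equivalence of categories does not directly apply; you should at least cite the slope filtration result.

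Second, you are right that the Frobenius-compatibility of the $\eta$-independent decomposition is the delicate point of (a), but both your write-up and the paper leave it essentially at the level of assertion (the paper says ``since this decomposition is canonical''; you gesture at Propositions~\ref{P:standard-Frob} and~\ref{P:refined-frob} and assert that the $\bbpi$- and $\pi_k^{-b}$-normalizations cancel the transformation law). Flagging it as the obstacle is good intellectual hygiene, but you should not present an assertion as a verification. The cleanest route is probably the paper's: the multiset $\rsw(\rho)$ is intrinsic to $\rho$, so the $\Phi$-transport of the canonical decomposition is again a decomposition by refined Swan conductors of the same representation, hence equals the canonical one by uniqueness; the explicit transformation law then need not be unwound. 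Where you do improve on the paper is the explicit check that $\rsw$ is additive (the paper dismisses this as ``clearly a homomorphism''), and your direct argument for injectivity via nonvanishing of the dominant components of $\calI\Theta$ (using Corollary~\ref{C:refined-radii=reduced-roots}) is a valid, arguably cleaner, alternative to the paper's claim (v).
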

\begin{proof}
For both (a) and (b), we may assume that Hypothesis~\ref{H:J-finite-set} holds, since only finitely many elements in a $p$-basis matter.
 
(a)
Using the identification given in Lemma~\ref{L:tame-refined}, we may first replace $k$ and $\Frac \calO$ by a tamely ramified extension of $k$ and a finite extension of $\Frac\calO$, respectively, so that the decomposition of the $\nabla$-module $\calE_\rho$ given by \eqref{E:var-refined-all} of $\calE_\rho$ can be realized over $\calR_{K'}$, and that $\Fq \subseteq k_0$.  
Since this decomposition is canonical, it is also a decomposition of $(\phi, \nabla)$-modules.  By the slope filtration \cite[Theorem~3.4.6]{kedlaya-swan1}, the Frobenius action on each direct summand of $\calE_\rho$ is \'etale, yielding the decomposition of the representation via the equivalence of categories in Theorem~\ref{T:equivalence-categories}.

(b) The following are immediate corollaries of Proposition~\ref{P:refined-properties}.
\begin{itemize}

\item[(i)] For any $p$-adic representations $\rho$ and $\rho'$ of $G_k$ with finite local monodromy, same pure log-break $b$, and same pure refined Swan conductor $\vartheta$, the log-break of $\rho \otimes \rho'^\dual$ is strictly smaller than $b$.

\item[(ii)] For any $p$-adic representations $\rho$ and $\rho'$ of $G_k$ with finite local monodromy, same pure log-break $b $, but different pure refined Swan conductor $\vartheta \neq \vartheta'$, respectively,  $\rho \otimes \rho'^\dual$ has pure log-break $b$ and pure refined Swan conductor $\vartheta -\vartheta'$.
\end{itemize}

We also need the following easy fact about Galois representations.

\begin{itemize}
\item[(iii)] For any homomorphism $\chi: \Fil_{\log}^b G_k / \Fil_{\log}^{b+} G_k \rar \Fp$, there exist a finite tamely ramified extension $k'$ of $k$ with na\"ive ramification degree $e$ and a representation $\rho_\chi$ of $G_{k'}$ with finite local monodromy,  pure log-break $eb$, and  pure refined Swan conductor, such that $\rho_\chi|_{\Fil_{\log}^b G_k / \Fil_{\log}^{b+} G_k}$ contains $\chi$ as a direct summand.
\end{itemize}
Proof of (iii):  The chosen $p$-th root of unity $\zeta_p$ in Remark~\ref{R:Dwork-pi-vs-pth-root} promotes $\chi$ to the homomorphism $\chi: \Fil_{\log}^b G_k / \Fil_{\log}^{b+} G_k \rar \Fp \to \Qp(\zeta_p)^\times$ by identifying $1$ with $\zeta_p$.  
Since $G_k / \Fil_\log^{b+}G_k$ is a pro-finite group, there exists a normal subgroup $H$ of $G_k$ of finite index containing $\Fil_\log^{b+}G_k$, such that $\chi$ factors through $I = \Fil_\log^b G_k / (H \cap \Fil_\log^b G_k)$.  Put $\rho' = \Ind^{G_k/H}_{I} \chi$; then $\rho'|_{\Fil^bG_k}$ contains $\chi$ as a direct summand.  We may use (a) to write $\rho'|_{G_{k'}}$ for some finite tamely ramified extension $k'$ of $k$ as the the direct sum of representations with pure refined Swan conductors.  Then $\chi$ appears in at least one of the direct summand, which we take to be our chosen $\rho_\chi$.

Having established (iii), we define $\rsw$ to be the morphism sending $\chi$ to the unique refined Swan conductor of $\rho_\chi$, which is an element of 
$
\Omega^1_{\calO_{k'}}(\log) \otimes_{\calO_{k'}} \pi_{k'}^{-eb} \bar \kappa \cong \Omega^1_{\calO_{k}}(\log) \otimes_{\calO_{k}} \pi_{k}^{-b} \bar \kappa,
$
via the identification in Lemma~\ref{L:tame-refined}.
This map is well-defined by (iv) below and it is clearly a homomorphism. Its injectivity will follow from (v).

\begin{itemize}
\item[(iv)]  For any two representations $\rho_\chi$ and $\rho'_{\chi}$ satisfying (iii), they must have the same refined Swan conductor.
\end{itemize}
Suppose the contrary, that is $\rho_\chi$ and $\rho'_\chi$ have distinct pure refined Swan conductors $\vartheta$ and $\vartheta'$.  This particular implies that $\rho_\chi \otimes \rho'^\vee_\chi$ has pure Swan conductor $b$ by (ii).  However, the construction of $\rho_\chi$ and $\rho'_\chi$ implies that $\rho_\chi \otimes \rho'^\vee_{\chi} |_{G_{k'}}$ contains a direct summand trivial on $\Fil_\log^{eb}G_{k'}$; this is a contradiction.

\begin{itemize}
\item[(v)]  For two distinct homomorphisms $\chi, \chi': \Fil_{\log}^b G_k / \Fil_{\log}^{b+} G_k \rar \Fp$, the representations $\rho_\chi$ and $\rho_{\chi'}$ given by (iii) have distinct refined Swan conductors.
\end{itemize}
Suppose the contrary.  Then (i) implies that $\rho_\chi \otimes \rho^\vee_{\chi'}$ would have log-break strictly less than $eb$.  However,  $\rho_\chi \otimes \rho^\vee_{\chi'}$, when restricted to $\Fil_{\log}^b G_k / \Fil_{\log}^{b+} G_k = \Fil_{\log}^{eb} G_{k'} / \Fil_{\log}^{eb+} G_{k'}$, has a direct summand isomorphic to $\chi \otimes \chi'^\vee$, which is nontrivial.  This is a contradiction.

We now prove the independence on the choice of the Dwork pi.
If we choose another Dwork pi, we would need to use another primitive $p$-th root of unity $\zeta_p^i$ for some $i \in {1, \dots, p-1}$.  On one hand, the refined Swan conductor is multiplied by $\frac{\zeta_p^i - 1}{\zeta_p-1} \equiv i \mod (\zeta_p -1)$. On the other hand, the $p$-adic representation $\Fil^b_\log G_k / \Fil^{b+}_\log G_k \rar \Qp(\zeta_p)^\times$ becomes $\chi^i$.  Hence we need to take $\rho^{\otimes i}_chi$ as our $p$-adic representation of $G_{k'}$ to define the homomorphism $\rsw$.  This representation has refined Swan conductor $\rsw(\rho_\chi^{\otimes i}) = i \cdot \rsw(\rho_\chi)$, which is the same as the refined Swan conductor of $\rho$ computed using the old Dwork pi.
\end{proof}

\begin{remark}
It is interesting to point out that the choice  of a Dwork pi is related to the choice of the Artin-Scheier $\ell$-adic sheaf in \cite{saito-wild-ram}; they both amount to choosing a primitive $p$-th root of unity.  The difference is that we consider it as an element in $\overline \QQ_p$ whereas Saito viewed it as an element in $\overline \QQ_l$.
\end{remark}

\begin{proposition}
\label{P:decomposition=>conjugation}
Let $k$ be a complete discrete valuation field of equal characteristic $p>0$.  Then for $b \in \QQ_{>0}$, the conjugation action of $\Fil_\log^{0+}G_k/\Fil_\log^b G_k$ on $\Fil_\log^b G_k / \Fil_\log^{b+} G_k$ is trivial.  In other words, $\Fil_\log^b G_k / \Fil_\log^{b+} G_k$ lies in the center of $\Fil_\log^{0+}G_k/\Fil_\log^{b+} G_k$.
\end{proposition}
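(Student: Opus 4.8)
The plan is to reduce the statement to the assertion that conjugation by $\Fil_\log^{0+}G_k$ acts trivially on the dual group $\Hom(\Fil_\log^b G_k/\Fil_\log^{b+}G_k,\Fp)$, and then to deduce it by combining the well-definedness and injectivity of the refined Swan conductor homomorphism $\rsw_k$ of Theorem~\ref{T:refined-homomorphism}(b) with the elementary fact that the wild inertia sits inside the Galois group of every tame extension. For the reduction, set $A=\Fil_\log^b G_k/\Fil_\log^{b+}G_k$; by Theorem~\ref{T:properties-diff-conductor}(d) this is an abelian profinite group killed by $p$, hence a profinite $\Fp$-module whose continuous characters separate points, so any continuous $\Fp$-linear automorphism of $A$ fixing every continuous character of $A$ is the identity. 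Thus it suffices to prove $\chi\circ c_\sigma=\chi$ for every continuous $\chi\colon A\to\Fp$ and every $\sigma\in\Fil_\log^{0+}G_k$, where $c_\sigma(x)=\sigma x\sigma^{-1}$ (which stabilizes $\Fil_\log^b G_k$ and $\Fil_\log^{b+}G_k$ as these are normal in $G_k$). The case $\chi=1$ is trivial, so assume $\chi\neq 1$.

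Fix such a $\chi$ and $\sigma$. I would then invoke item (iii) in the proof of Theorem~\ref{T:refined-homomorphism}(b) to choose a finite tamely ramified extension $k'/k$ of na\"ive ramification degree $e$ and a $p$-adic representation $\rho_\chi$ of $G_{k'}$ with finite local monodromy, pure log-break $eb$, and pure refined Swan conductor, whose restriction to $\Fil_\log^{eb}G_{k'}$ contains $\chi$ as a direct summand; here one uses the identification $\Fil_\log^{eb}G_{k'}/\Fil_\log^{eb+}G_{k'}=A$ coming from Theorem~\ref{T:properties-diff-conductor}(b). That same statement gives $\Fil_\log^{a}G_k=\Fil_\log^{ea}G_{k'}$ for all $a>0$, hence $\Fil_\log^{0+}G_k=\Fil_\log^{0+}G_{k'}\subseteq G_{k'}$, so $\sigma\in G_{k'}$ and $c_\sigma$ is an \emph{inner} automorphism of $G_{k'}$; therefore $\rho_\chi\circ c_\sigma\cong\rho_\chi$ via the intertwiner $\rho_\chi(\sigma)$. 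Consequently $\rho_\chi\circ c_\sigma$ is again a representation of $G_{k'}$ with finite local monodromy, pure log-break $eb$, and pure refined Swan conductor equal to that of $\rho_\chi$, and (since $\Fil_\log^{eb}G_{k'}$ is normal in $G_{k'}$, hence $c_\sigma$-stable) its restriction to $\Fil_\log^{eb}G_{k'}$ contains $\chi\circ c_\sigma$ as a summand. So $\rho_\chi\circ c_\sigma$ is a legitimate choice for computing $\rsw_k(\chi\circ c_\sigma)$, and comparing it with $\rho_\chi$ under the common identification of Lemma~\ref{L:tame-refined} yields $\rsw_k(\chi\circ c_\sigma)=\rsw_k(\chi)$. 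Injectivity of $\rsw_k$ then forces $\chi\circ c_\sigma=\chi$, and the reduction above finishes the proof.

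The only points needing real care are bookkeeping rather than analysis: checking that the tame base change identifies $\Fil_\log^{0+}G_k$, $\Fil_\log^b G_k$ and $\Fil_\log^{b+}G_k$ with the corresponding subgroups of $G_{k'}$ compatibly with conjugation by $\sigma$ (all contained in Theorem~\ref{T:properties-diff-conductor}(b)), and verifying that $\rho_\chi\circ c_\sigma$ still satisfies every hypothesis entering the definition of $\rsw_k$. No new differential-module input is required; the proposition is a formal consequence of the basic properties of the ramification filtrations and the injectivity of $\rsw_k$, the decisive observation being that $\Fil_\log^{0+}G_k\subseteq G_{k'}$ for every tame $k'/k$, so that conjugating by one of its elements preserves the isomorphism class of any representation of $G_{k'}$.
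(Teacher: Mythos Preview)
Your argument is correct, and it takes a route genuinely different from the paper's. The paper reduces to the claim that any $p$-adic representation $\rho$ of $G_k$ with finite local monodromy and pure log-break $b$, which remains absolutely irreducible after every tame base change, is isotypic on $\Fil_\log^b G_k$; this is deduced from Theorem~\ref{T:refined-homomorphism}(a) (such $\rho$ has pure refined Swan conductor) together with Proposition~\ref{P:refined-properties-general}(b) (so $\rho\otimes\rho^\vee$ has log-break $<b$, hence $\Fil_\log^b G_k$ acts trivially on $\rho\otimes\rho^\vee$). In contrast, you dualize: you fix a character $\chi$ and a wild element $\sigma$, invoke the auxiliary representation $\rho_\chi$ over a tame extension $k'$ from item (iii) in the proof of Theorem~\ref{T:refined-homomorphism}(b), and exploit the fact that $\Fil_\log^{0+}G_k\subseteq G_{k'}$ so that $c_\sigma$ is inner on $G_{k'}$; the injectivity of $\rsw_k$ then finishes. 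Both arguments rest on the same differential input (purity of the refined Swan conductor and its consequences), but the paper packages it via the tensor-product criterion for isotypicity, whereas you package it via the well-definedness and injectivity of the $\rsw$ homomorphism already established. Your approach has the virtue of isolating the one group-theoretic observation that does the work (wild inertia sits inside every tame $G_{k'}$) and otherwise treating $\rsw_k$ as a black box; the paper's approach is slightly more self-contained in that it does not unpack the proof of Theorem~\ref{T:refined-homomorphism}(b).
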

\begin{proof}
This proposition is proved in \cite[Theorem~1]{abbes-saito2}.  We hereby give an alternative proof using differential modules.

It suffices to prove the following: for a $p$-adic representation $\rho$ of $G_k$ with finite local monodromy and with pure log-break $b$, if it is absolutely irreducible under any tamely ramified extension, then $\rho|_{\Fil^b_{\log} G_k / \Fil^{b+}_\log G_k}$ is a direct sum of a \emph{single} character $\chi: \Fil^b_\log G_k/ \Fil^{b+}_\log G_k \rar \calO^\times$.   This is equivalent to showing that the action of $\Fil^b_\log G_k$ on $\rho \otimes \rho^\dual$ is trivial, and hence to showing that the log-break of $\rho \otimes \rho^\dual$ is strictly smaller than $b$.

As usual, we may assume Hypothesis~\ref{H:J-finite-set}.
By Theorem~\ref{T:refined-homomorphism}(a), the irreducibility condition on $\rho$ implies that $\rho$ must have pure refined Swan conductor and hence the log-break $\rho \otimes \rho^\dual$ must be strictly less than $b$.  We are done.
\end{proof}

\begin{proposition}
Keep the notation as in Situation~\ref{Sit:saito-base-change}.  Then the refined Swan conductor homomorphism $\rsw_k$ for $k$ factors as
\begin{equation}\label{E:saito-base-change}
\Hom( \Fil^b_\log G_k / \Fil^{b+}_\log G_k, \Fp) \rar \Hom( \Fil^{e_{\tilde k/ k}b}_\log G_{\tilde k} / \Fil^{e_{\tilde k / k}b+}_\log G_{\tilde k}, \Fp)
\stackrel {\rsw_{\tilde k}}\lrar \Omega^1_{\calO_{\tilde k}}(\log) \otimes_{\calO_{\tilde k}} \pi_{\tilde k}^{-eb} \kappa_{\tilde k^\alg}.
\end{equation}
\end{proposition}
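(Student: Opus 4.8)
The assertion is that the composite map in \eqref{E:saito-base-change} equals $\iota \circ \rsw_k$, where $\iota\colon \Omega^1_{\calO_k}(\log) \otimes_{\calO_k} \pi_k^{-b}\bar\kappa \to \Omega^1_{\calO_{\tilde k}}(\log) \otimes_{\calO_{\tilde k}} \pi_{\tilde k}^{-eb}\kappa_{\tilde k^\alg}$ (with $e = e_{\tilde k/k}$) is the natural map induced by $\calO_k \inj \calO_{\tilde k}$ and the relation $\pi_k = x\pi_{\tilde k}^e$; concretely, $\iota$ is $\bbpi$-linearly extended from $db_j \mapsto db_j$, $\tfrac{d\pi_k}{\pi_k} \mapsto \tfrac{dx}{x} + e\,\tfrac{d\pi_{\tilde k}}{\pi_{\tilde k}}$, and $\pi_k^{-b} \mapsto x^{-b}\pi_{\tilde k}^{-eb}$. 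The plan is first to pass to a statement about representations. By Theorem~\ref{T:refined-homomorphism}(b), $\rsw_k$ is characterized by the property that the images of the direct summands of $\rho|_{\Fil^b_\log G_k}$ constitute the multiset $\rsw(\rho)$, for every $p$-adic representation $\rho$ of $G_k$ with finite local monodromy and pure log-break $b$, and every character of $\Fil^b_\log G_k / \Fil^{b+}_\log G_k$ appears in some such $\rho$; the analogous statement holds for $\rsw_{\tilde k}$. Since the summands of $\rho|_{\Fil^{eb}_\log G_{\tilde k}}$ are the pullbacks along $\Fil^{eb}_\log G_{\tilde k} \to \Fil^b_\log G_k$ of the summands of $\rho|_{\Fil^b_\log G_k}$ (this is exactly the first arrow in \eqref{E:saito-base-change}), it suffices to prove
\[
\rsw\big(\rho|_{G_{\tilde k}}\big) \;=\; \iota\big(\rsw(\rho)\big)
\]
as multisets, for each such $\rho$ (with $b>0$). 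Only finitely many $p$-basis elements intervene, so I assume Hypothesis~\ref{H:J-finite-set} from now on.

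Next I would realize this base change on the differential side, as in the proof of Proposition~\ref{P:saito-base-change}. Let $\widetilde K'$ be the completion of $K'(X)$ for the $1$-Gauss norm, $X$ the canonical lift of $\bar x$, and let $f\colon A^1_{\widetilde K'}[\eta^{1/e},1) \to A^1_{K'}[\eta,1)$ be the morphism $T \mapsto XU^e$. Then $f^*\calE_\rho = \calE_{\rho|_{G_{\tilde k}}}$, and by \eqref{E:saito-base-change-differential} the derivations for the lifted $p$-basis $(b_J, x, u)$ of $\tilde k$ satisfy $\tilde\partial_J = \partial_J$, $\tilde\partial_0 = eXU^{e-1}\partial_0$, $\tilde\partial_{m+1} = U^e\partial_0$. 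By Proposition~\ref{P:calE-solvable} and Theorem~\ref{T:swan1}, $\calE_\rho \otimes F'_\eta$ has pure intrinsic radii $\eta^b$ for $\eta$ near $1$; applying Theorem~\ref{T:var-refined-multi} over a finite tame cover of the annulus — which by Lemma~\ref{L:tame-refined} changes neither $\rsw(\rho)$ nor $\rsw(\rho|_{G_{\tilde k}})$ under the usual identifications and pulls back compatibly along $f$ — I reduce to the case where $\calE_\rho$ carries a single refined Swan conductor $\vartheta_\rho$, so that $\rsw(\rho) = \{\tfrac{1}{\bbpi}\vartheta_\rho\pi_k^{-b}\}$ with multiplicity $\dim\rho$; here the $\tfrac{d\pi_k}{\pi_k}$-component of $\vartheta_\rho$ is present precisely when $\partial_0$ is a dominant direction for $\calE_\rho$.

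The heart of the argument is then to compute $\calI\Theta(\calE_{\rho|_{G_{\tilde k}}})$ from the three relations above. For $\eta$ near $1$ in $p^\QQ$, I apply Theorem~\ref{T:independence-of-basis} to $V = \calE_\rho\otimes F'_\eta$ with its derivations $\partial_0, \partial_J$ and the base field $\widetilde F'_{\eta^{1/e}}$: for $\partial = \tilde\partial_j$ with $j \in J$ the refined radius is the unchanged refined $\partial_j$-radius; for $\partial = \tilde\partial_0 = eXU^{e-1}\partial_0$ one gets $eXU^{e-1}$ times the refined $\partial_0$-radius; for $\partial = \tilde\partial_{m+1} = U^e\partial_0$ one gets $U^e$ times the refined $\partial_0$-radius. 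Passing to residues via $XU^e = T \mapsto \pi_k = x\pi_{\tilde k}^e$ and $U \mapsto \pi_{\tilde k}$, and reassembling the three families of components in $\Omega^1_{\calO_{\tilde k}}(\log)$, one finds that $\rsw(\rho|_{G_{\tilde k}})$ is obtained from $\rsw(\rho)$ by the substitutions $\tfrac{d\pi_k}{\pi_k} \mapsto \tfrac{dx}{x} + e\,\tfrac{d\pi_{\tilde k}}{\pi_{\tilde k}}$, $db_j \mapsto db_j$, $\pi_k^{-b} \mapsto x^{-b}\pi_{\tilde k}^{-eb}$ — that is, $\rsw(\rho|_{G_{\tilde k}}) = \iota(\rsw(\rho))$. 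Finally, to upgrade these Gauss-norm-wise equalities into an equality of multisets of refined Swan conductors, I glue over the annulus with Lemma~\ref{L:proj-intersect} and Remark~\ref{R:proj-intersect} and descend from the auxiliary tame cover, just as in the proof of Theorem~\ref{T:var-refined-multi}. I expect the only genuine difficulty to be bookkeeping: carrying the intrinsic-versus-extrinsic normalizations (the parameters $u_j$, the twist levels $\kappa^{(s)}_{K^\alg}$, and the reductions modulo $(\bbpi, T)$ versus $(\bbpi, U)$) correctly through Theorem~\ref{T:independence-of-basis}, together with the separate verification that when $p \mid e$ the integer factor $e$ appearing in $\tilde\partial_0$ reduces to $0$ in $\kappa$, so that $\tilde\partial_0$ is no longer a dominant direction and its would-be $\tfrac{d\pi_{\tilde k}}{\pi_{\tilde k}}$-contribution disappears — consistently with the fact that $\tfrac{d\pi_k}{\pi_k} = \tfrac{dx}{x}$ in $\Omega^1_{\calO_{\tilde k}}(\log)$ when $p \mid e$ — while the $dx$-component still comes through $\tilde\partial_{m+1}$.
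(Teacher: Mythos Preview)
Your proposal is correct and follows essentially the same approach as the paper: both proofs use the derivation relations \eqref{E:saito-base-change-differential} from Proposition~\ref{P:saito-base-change} and apply Theorem~\ref{T:independence-of-basis} to compute the refined $\tilde\partial_j$-radii of $f^*\calE_\rho$ direction by direction, obtaining exactly the three formulas you list. Your write-up is more detailed than the paper's --- you make the map $\iota$ explicit, reduce to a single refined Swan conductor via Theorem~\ref{T:var-refined-multi}, and spell out the $p\mid e$ case (where $\tilde\partial_0$ drops out of dominance and $\tfrac{d\pi_k}{\pi_k}\mapsto \tfrac{dx}{x}$) --- but the underlying argument is the same; the final gluing step you mention is not actually needed, since the pointwise computation at each $\eta$ near $1$ already determines the refined Swan conductors by definition.
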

\begin{proof}
Keep the notation as in Proposition~\ref{P:saito-base-change}, let $\widetilde F'_\eta$ be the completion of $\widetilde K'(U)$ with respect to the $\eta^{1/e}$-Gauss norm in $U$.  Fix $\eta_0 \in (0, 1)$ such that $IR(\calE_\rho \otimes F'_\eta) = \eta^b$ for $\eta \in [\eta_0, 1)$. Then  \eqref{E:saito-base-change-differential} implies that, for any $\eta \in [\eta_0, 1) \cap p^\QQ$ and for any $j \in \{0, \dots, m+1\}$ such that $IR_{\partial_j}(f^*\calE_\rho \otimes \widetilde F'_\eta) = IR(\calE_\rho \otimes \widetilde F'_\eta)$, we have
\[
\Theta_{\partial_j}(f^*\calE_\rho \otimes \widetilde F'_\eta) = \left\{
\begin{array}{ll}
\Theta_{\partial_j}(\calE \otimes F'_\eta) & j \in J,\\
eXU^{e-1} \Theta_{\partial_0}(\calE \otimes F'_\eta) & j = 0 \textrm{ and hence } p \nmid e,\\
U^e \Theta_{\partial_{m+1}}(\calE \otimes F'_\eta) & j = m+1.
\end{array}\right.
\]
We here used Theorem~\ref{T:independence-of-basis} to compute the refined radii.  The proposition follows.
\end{proof}

One may want to prove analogs of Theorem~\ref{T:refined-homomorphism} and Proposition~\ref{P:decomposition=>conjugation} for refined Artin conductors.  This however needs to take a bit more effort because there may not be a representation of $G_k$ with pure refined Artin conductor.  Instead, we reduce to the classical case, where the results for refined Artin conductors follows from those for refined Swan conductors.

\begin{theorem}
Let $k$ be a complete discrete valuation field of equal characteristic $p>0$.
\begin{itemize}
\item[\emph{(a)}] 
Choose the $p$-th root of unity $\zeta_p$ as in Remark~\ref{R:Dwork-pi-vs-pth-root}.  Then there exists an \emph{injective} homomorphism for any $b \in \QQ_{> 1}$,
\begin{equation}\label{E:rar}
\mathrm{rar} = \mathrm{rar}_k: \Hom(\Fil^b G_k / \Fil^{b+} G_k, \Fp) \rar \Omega^1_{\calO_k} \otimes_{\calO_k} \pi_k^{-b}\bar \kappa,
\end{equation}
such that, when viewing the left hand side as a subset of $\Hom(\Fil^b G_k / \Fil^{b+} G_k, \Qp(\zeta_p)^\times)$ via the identification of $1 \in \Fp$ with $\zeta_p$, we have, for any $p$-adic representation $\rho$ of $G_k$ with finite local monodromy and with pure nonlog-break $b$, the images of the summands of $\rho|_{\Fil^b G_k}$ under $\mathrm{rar}$ exactly form the multiset of refined Artin conductors of $\rho$.
Moreover, 
this homomorphism does \emph{not} depend on the choices of the Dwork pi.

\item[\emph{(b)}]  For any $b \in \QQ_{>1}$, the conjugation action of $\Fil^{1+}G_k/\Fil^b G_k$ on $\Fil^b G_k / \Fil^{b+} G_k$ is trivial.  In other words, $\Fil^b G_k / \Fil^{b+} G_k$ lies in the center of $\Fil^{1+}G_k/\Fil^{b+} G_k$.
\end{itemize}
\end{theorem}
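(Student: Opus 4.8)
The plan is to deduce both statements from the already-established results for refined Swan conductors --- Theorem~\ref{T:refined-homomorphism}(b) and Proposition~\ref{P:decomposition=>conjugation} --- by passing to a base field with \emph{perfect} residue field, over which the refined Artin conductor carries exactly the same information as the refined Swan conductor.

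Given $b \in \QQ_{>1}$ and a $p$-adic representation $\rho$ of $G_k$ with finite local monodromy and pure non-log break $b$, I would first perform a sequence of base changes that does not alter $\Fil^b G_k/\Fil^{b+} G_k$ (nor the conjugation action of $\Fil^{1+}G_k$ on it). Using the generic rotations of Lemma~\ref{L:generic-rotation-break} I arrange that the break in the $\partial_0$-direction is the full non-log break $b$; then, applying Lemma~\ref{L:break-p-infty-root} repeatedly, I adjoin $p$-power roots of all residue $p$-basis elements, arriving at a field $k^\flat$ whose residue field is perfect while the $\partial_0$-break stays equal to $b$. By Theorem~\ref{T:properties-diff-conductor}(b) and the constructions of Definition~\ref{D:conductors-repn}, these base changes identify $\Fil^b G_k/\Fil^{b+}G_k$ with $\Fil^b G_{k^\flat}/\Fil^{b+}G_{k^\flat}$. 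The price is that the target $\Omega^1_{\calO_k}\otimes_{\calO_k}\pi_k^{-b}\bar\kappa$ is no longer directly visible: under a rotation a class $\theta_j\,db_j$ is converted, via $db_j = d(b_j+x_j\pi_k) - x_j\,d\pi_k - \pi_k\,dx_j$, into a contribution $-x_j\theta_j$ to the $d\pi_k$-component together with a term of strictly smaller break; since the auxiliary variables $x_j$ are mutually algebraically independent over $\bar\kappa$, the induced map
\[
\Omega^1_{\calO_k}\otimes_{\calO_k}\pi_k^{-b}\bar\kappa \;\inj\; \Omega^1_{\calO_{k^\flat}}\otimes_{\calO_{k^\flat}}\pi_k^{-b}\bar\kappa_{k^\flat} = \bar\kappa_{k^\flat}\cdot\pi_k^{-b}d\pi_k
\]
is injective, and it is used to transport data back from $k^\flat$ to $k$.

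Over $k^\flat$ we are in the classical situation. Comparing the defining intersections, one checks that for perfect residue field the inclusion $\Fil^{a+1}\subseteq\Fil_\log^{a}$ of Theorem~\ref{T:properties-diff-conductor}(c) is an equality, so $\Fil^b G_{k^\flat}/\Fil^{b+}G_{k^\flat} = \Fil_\log^{\,b-1} G_{k^\flat}/\Fil_\log^{\,(b-1)+}G_{k^\flat}$, while simultaneously $\Omega^1_{\calO_{k^\flat}}\otimes\pi_k^{-b}\bar\kappa_{k^\flat} = \Omega^1_{\calO_{k^\flat}}(\log)\otimes\pi_k^{-(b-1)}\bar\kappa_{k^\flat}$. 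Moreover a representation of $G_{k^\flat}$ has pure non-log break $b$ precisely when it has pure log break $b-1>0$, and comparing the two cases of Construction~\ref{C:refined-conductor} shows that its multiset of refined Artin conductors coincides with its multiset of refined Swan conductors under the above identification, since both are read off from the reduced eigenvalues of the same matrix $N_r$ of a power of $\partial_0$ acting on a good norm --- only the normalization by $\pi_k$ differs. I can therefore simply \emph{define} $\mathrm{rar}_k$ to be $\rsw_{k^\flat}$ composed with the two identifications, and Theorem~\ref{T:refined-homomorphism}(b) over $k^\flat$ then gives at once that $\mathrm{rar}_k$ is a well-defined injective homomorphism independent of the Dwork pi, and that for any $\rho$ over $k$ with pure non-log break $b$ the images of the summands of $\rho|_{\Fil^b G_k}$ are the refined Artin conductors of $\rho$ (decomposing $\rho$ into pure-refined-Artin-conductor pieces, over a tame extension and after a preliminary tame extension making $b$ integral so that $e=1$, by Theorem~\ref{T:nonlog-refined-decomposition} together with the slope filtration of \cite{kedlaya-swan1}, exactly as in the proof of Theorem~\ref{T:refined-homomorphism}(a), so the comparison is made summand by summand). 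Part (b) follows formally: an absolutely irreducible $\rho$ with pure non-log break $b$ becomes, after the above base changes, absolutely irreducible with pure log break $b-1$ over $k^\flat$, so Proposition~\ref{P:decomposition=>conjugation} forces $\rho\otimes\rho^\dual$ to have non-log break $<b$; letting $\rho$ vary over such representations yields the centrality of $\Fil^b G_k/\Fil^{b+}G_k$ in $\Fil^{1+}G_k/\Fil^{b+}G_k$.

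The main obstacle is organizational rather than conceptual: one must verify that the whole package of base changes (generic rotations, $p$-power-root extensions, the preliminary tame extension) is compatible with the ramification filtrations and with the conjugation action needed for (b), and --- most delicately --- that the resulting $\mathrm{rar}_k$ is independent of all auxiliary choices. This last point is handled by the same device as in Theorem~\ref{T:refined-homomorphism}(b): two representations $\rho_\chi,\rho'_\chi$ realizing a given $\chi$ on $\Fil^b G_k/\Fil^{b+}G_k$ must have equal refined Artin conductor, since otherwise $\rho_\chi\otimes\rho_\chi'^\dual$ would have pure non-log break $b$ although its construction exhibits a summand of strictly smaller break; and distinct $\chi$ give distinct values by the dual argument, giving injectivity. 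Carefully tracking the transformation formulas for $\partial_{J^+}$ under the rotations (of the type in \eqref{E:generic-rotation-differential}) so that Theorem~\ref{T:independence-of-basis} applies correctly at each stage is the computational heart of the proof.
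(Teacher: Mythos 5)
Your proposal follows essentially the same route as the paper: reduce to the case of perfect residue field by a chain of generic rotations and $p$-power-root adjunctions (the paper's $\tilde k$), then invoke the log-version results (Theorem~\ref{T:refined-homomorphism}(b) and Proposition~\ref{P:decomposition=>conjugation}) in the classical situation, transporting everything back via the injective map on $\Omega^1 \otimes \pi_k^{-b}\bar\kappa$ and the injective dual on characters. One small inaccuracy: you cite Theorem~\ref{T:properties-diff-conductor}(b) to ``identify'' $\Fil^b G_k/\Fil^{b+}G_k$ with its counterpart over the perfected field, but that statement covers only the unramified step; the inseparable $p$-power-root adjunctions need the break-preservation results (Lemmas~\ref{L:generic-rotation-break}, \ref{L:break-pth-root}, \ref{L:break-p-infty-root}), and even then the paper claims only a surjection on the graded piece (with injective dual $\mu$), not an identification --- though since you subsequently argue well-definedness and injectivity via the $\rho_\chi$ device rather than relying on that claim, this does not undermine the proof.
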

\begin{proof}
For both (a) and (b), we may assume Hypothesis~\ref{H:J-finite-set}.  Moreover, we assume that $J$ is not empty because otherwise we are in the classical case, and both (a) and (b) follow from their log-version counterpart: Theorem~\ref{T:refined-homomorphism} and Proposition~\ref{P:decomposition=>conjugation}, respectively.

 We perform a base change similar to the one in Lemma~\ref{L:generic-rotation-break}.
Let $k'$ be the completion of $k(x_1, \dots, x_m)$ with respect to the $(1, \dots, 1)$-Gauss norm and let $\tilde k$ be the completion of $k'\big((b_j+x_j\pi_k)^{1/p^n}, x_j^{1/p^n}; n \in \NN; j \in J\big)$, equipped with the uniformizer $\pi_{\tilde k} = \pi_k$.  It is in fact a complete discrete valuation field with \emph{perfect} residue field.   By Lemmas~\ref{L:generic-rotation-break} and \ref{L:break-pth-root}, the natural homomorphism $G_{\tilde{k}} \to G_k$ induces a \emph{surjective} homomorphism $\Fil^a G_{\tilde k} / \Fil^{a+1} G_{\tilde k} \to \Fil^a G_k \to \Fil^{a+} G_k$.  Dualizing this gives an \emph{injective} homomorphism $\mu: \Hom(\Fil^a G_k/\Fil^{a+} G_k, \FF_p) \to \Hom(\Fil^{a} G_{\tilde k} / \Fil^{a+} G_{\tilde k}, \FF_p)$.

For $\rho$ a representation of $G_k$ with finite local monodromy and with pure nonlog-break $b$ we let $\tilde \rho$ denote the representation $G_{\tilde k} \rar G_k\stackrel \rho \rar GL(V_\rho)$.  Let $K''$ denote the completion of $K'(X_J)$ with respect to the $(1, \dots, 1)$-Gauss norm, where $X_j$ is a lift of $x_j$ for $j \in J$.  Let $\widetilde K$ denote the completion of $K''\big( (B_j+X_jT)^{1/p^n}, X_j^{1/p^n}; n \in \NN, j \in J \big)$.  Let $f: A^1_{\widetilde K}[\eta_0, 1) \rar A_{K'}^1[\eta_0, 1)$ denote the natural morphism.  Then $f^*\calE_\rho$ is the differential module associated to $\tilde \rho$.  Let $\tilde \partial$ denote the differential operator on $f^* \calE_\rho$ dual to the basis $dT$.  Similar to \eqref{E:generic-rotation-differential}, we have
\[
\tilde \partial = \partial_0 - X_1\partial_1 - \cdots -X_m \partial_m.
\]
If we let $\widetilde F_\eta$ denote the completion of $\widetilde K(T)$ with respect to the $\eta$-Gauss norm, we have 
\[
R_{\tilde \partial}(f^*\calE \otimes \widetilde F_\eta) = \min_{j \in J^+} \big\{ R_{\partial_j}(\calE \otimes F'_\eta)\big\}.
\]
Hence $\tilde \rho$ has pure nonlog-break $b$ and, by Theorem~\ref{T:independence-of-basis}, its multiset of refined Artin conductors is
\[
\mathrm{rar}(\tilde \rho) = \big\{ (\theta_0 - X_1\theta_1 - \cdots - X_m \theta_m) d\pi_k \big| \theta_0 d\pi_k + \theta_1 db_1 +\cdots + \theta_m db_m \in \mathrm{rar}(\rho) \big\}.
\]
In other words, if we use $\lambda$ denote the $\bar \kappa$-linear injective homomorphism $\Omega^1_{\calO_k} \otimes_{\calO_k} \pi_k^{-b} \overline \kappa \to \pi_k^{-b} \kappa_{\tilde k^\alg} d\pi_k$ given by $\lambda(db_j) = -X_j d\pi_k$ and $\lambda(d\pi_k) = d\pi_k$, then $\mathrm{rar}(\tilde \rho) = \lambda (\mathrm{rar}(\rho))$. This together with the injectivity of $\mu$ reduce (a) and (b) for $G_{k}$ to that of $G_{\tilde k}$, which is already known as we explained earlier.  In particular, we have $\lambda \circ \rsw_k = \rsw_{\tilde k} \circ \mu$.
\end{proof}

\subsection{Multi-indexed ramification filtrations for higher local fields}
\label{S:higher-local-fields}

When $k$ is a $n$-dimensional local field, the refined Artin and Swan conductors give more refined filtrations on the Galois group $G_k$, indexed by $\QQ^n$ with lexicographic order.
  We restrict ourselves to the equal characteristic $p>0$ case.

\begin{definition}\label{D:higher-local-field}
We say that a complete discrete valuation field $k$ of characteristic $p>0$ is an \emph{$(m+1)$-dimensional local field} if there is a chain of fields $k = k_{m+1}, k_{m}, \dots, k_0$, where $k_{i+1}$ is a complete discrete valuation field with residue field $k_i$ for $i = 0, \dots, m$.  Contrary to most literature, we do \emph{not} assume that $k_0$ is a perfect field. Let $\{b_j\}_{j \in J}$ be a set of lifts of a $p$-basis of $k_0$ to $\calO_k$.

An $(m+1)$-tuple of elements $t_0, \dots, t_m \in k$ is called a \emph{system of local parameters} of $k$ if $t_i \in\calO_k$ is a lift of a uniformizer of $k_{m+1-i}$ all the way up to $k$.  Such a choice gives a (non-canonical) isomorphism $k \simeq k_0((t_m)) \cdots ((t_0))$.  In this case, we have
\[
\Omega_{\calO_{k'}}^1(\log) = \bigoplus_{i = 0}^m \calO_{k'} \frac{dt_i}{t_i} \oplus \bigoplus_{j \in J} \calO_{k'} \frac{db_j}{b_j}, \quad \textrm{and }\ \Omega_{\calO_{k'}}^1 = \bigoplus_{i = 0}^m \calO_{k'} dt_i \oplus \bigoplus_{j \in J} \calO_{k'} db_j.
\]

Equip $\QQ^{m+1}$ with the lexicographic order: $\bbi = (i_1, \dots, i_{m+1}) < \bbj = (j_1, \dots, j_{m+1})$ if and only if 
\[
i_l < j_l, i_{l+1} = j_{l+1}, \dots, \textrm{ and }i_{m+1}= j_{m+1} \textrm{ for some }l \leq m+1.
\]
For $a \in\QQ$, we also use $\QQ^{m+1}_{>a}$ to denote the subset of $\QQ^{m+1}$ consisting of $\bbi = (i_1, \dots, i_{m+1}) $ such that $i_{m+1}>a$.

Given a system of local parameters, we define a multi-indexed valuation as follows, denoted by $\bbv = (v_1, \dots, v_{m+1}): k^\times \rar \ZZ^{m+1} \subset \QQ^{m+1}$, where $v_{m+1} = v_{k_{m+1}}$ and recursively we have, downwards from $i=m+1$ to $i=1$, that $v_{i-1}(\alpha) = v_{k_{i-1}}(\alpha_{i-1})$ with $\alpha_{i-1} $ equal to the reduction of $\alpha_i t_{m+1-i}^{-v_i(\alpha_i)}$ in $k_{i-1}$.  Note that the definition of $\bbv$ depends on the choice of local parameters $t_0, \dots, t_m$.
\end{definition}

\begin{definition}
\label{D:multi-index-filtration}
For $\lambda = \sum_{i = 0}^m\alpha_i dt_i  + \sum_{j \in J} \beta_j db_j\in \Omega^1_{\calO_k} \otimes_{\calO_k} k$, we set 
\[
\bbv_\nlog(\lambda) = \min\{\bbv(\alpha_0), \dots, \bbv(\alpha_m), \bbv(\beta_j); j \in J\}.
\]
This gives a multi-indexed valuation on $\Omega^1_{\calO_k} \otimes_{\calO_k} t_0^{-i_{m+1}}\overline \kappa$ for $i_{m+1} \in \QQ$.

For $\lambda = \sum_{i = 0}^m \alpha_i \frac{dt_i}{t_i} + \sum_{j \in J} \beta_j \frac{db_j}{b_j}\in \Omega^1_{\calO_k}(\log) \otimes_{\calO_k} k$, we set
\[
\bbv_\log(\lambda) = \min\{\bbv(\alpha_0), \dots, \bbv(\alpha_m), \bbv(\beta_j); j \in J\}.
\]
This gives a multi-indexed valuation on $\Omega^1_{\calO_k}(\log) \otimes_{\calO_k} t_0^{-i_{m+1}} \overline \kappa$ for $i_{m+1} \in \QQ$.

For $\bbi = (\serie{i_}{m+1}) \in \QQ^{m+1}_{>1}$, we define $\Fil^\bbi G_k$ to be the inverse image along the homomorphism 
\[
\Fil^{i_{m+1}} G_k \rar \Fil^{i_{m+1}} G_k / \Fil^{i_{m+1}+}G_k \stackrel {\mathrm{rar}} \lrar \Omega_{\calO_k}^1 \otimes_{\calO_k} t_0^{-i_{m+1}} \overline \kappa
\]
of the elements whose image under $-\bbv_\nlog$ is greater than or equal to $\bbi$.

For $\bbi = (\serie{i_}{m+1}) \in \QQ^{m+1}_{>0}$, we define $\Fil^\bbi_\log G_k$ to be the inverse image along the homomorphism 
\[
\Fil^{i_{m+1}}_\log G_k \rar \Fil^{i_{m+1}}_\log G_k / \Fil^{i_{m+1}+}_\log G_k \stackrel {\mathrm{rsw}} \lrar \Omega_{\calO_k}^1(\log) \otimes_{\calO_k} t_0^{-i_{m+1}} \overline \kappa
\]
of the elements whose image under $-\bbv_\log$ is greater than or equal to $\bbi$.
\end{definition}

\begin{remark}
The abstract filtrations do not depend on the choices of local parameters, but the indexings do.
Set $O_K = \{ x \in K | \bbv(x)\geq (0, \dots, 0) \}$.
It might be more natural to index the above filtrations by ``rational powers of fractional ideals of $K$" of the form $I^{1/n}$, where $I$ is an $O_K$-submodule of $K$ containing $O_K$, $n$ is an integer, and $I^{1/n}$ is equivalent to $I'^{1/n'}$ if $I^{n'} = I'^{n}$ as $O_K$-submodules of $K$.
\end{remark}

\begin{remark}
When $k_0$ is a finite field, this filtration is expected to be compatible with an easily defined filtration on the Milnor $K$-groups via class field theory for higher local fields. This may be verified by comparing the filtration on the Milnor K-groups with Kato's refined Swan conductors, which is equivalent to Saito's definition by \cite[Theorem~9.1.1]{abbes-saito-micro-local} and hence to our definition by Theorem~\ref{T:comparison} proved later. For more along this line, the reader may refer to the recipe in Kato's masterpiece \cite{kato}.
\end{remark}

\section{Comparison with Saito's definition}

In this section, we compare our definition of the refined Swan conductor homomorphism with the one given by Saito in \cite{saito-wild-ram}.  Since the reader who is only interested in one side of the story may use this result (Theorem~\ref{T:comparison}) as a black box, we present the proof assuming that the reader is familiar with the definition of arithmetic ramification filtrations (see e.g, \cite[Section~1]{saito-wild-ram} and \cite{xiao1}).  

The proof of the comparison theorem is of geometric natural.  We explain the rough idea here.  We first realize the given finite extension $l$ of $k$ as the corresponding extension of function fields of a finite \'etale extension of smooth affine varieties $Y \to X$.  Our main object is some version of infinitesimal neighborhood of the generic fiber over $k$ of the diagonal embedding of $Y$ into $Y \times Y$, viewed as a rigid analytic space over $k$.  The refined Swan conductor homomorphism defined by Saito makes use of the stable formal model of such an object, whereas our definition using differential modules is close related to some object over the generic point of a smooth model over $\calO_K$ lifting the aforementioned rigid space.  The crucial calculation we performed in Subsection~\ref{S:Dwork-isoc} relates these objects, in which case, it boils down to some explicit computation on a higher dimensional analog of the Artin-Scheier cover, and on the associated $\ell$-adic sheaves and overconvergent $F$-isocrystals.

We assume $p>0$ is a prime number throughout this section.

\subsection{Review of Saito's definition}
\label{S:saito-refined}

In this subsection, we review the definition of the ramification filtrations and the refined Swan conductors defined by Abbes and Saito in \cite{abbes-saito1, abbes-saito2, saito-wild-ram}.  Instead of introducing the general construction, we will focus on a special case which is used in the comparison theorem. 
For more details and a complete treatment, one may consult \cite{saito-wild-ram}.

\begin{construction}\label{C:saito}

Let $l$ be a finite Galois extension of $k$. We consider a closed immersion $\Spec \calO_l \rar P$ into a smooth (affine) scheme $P$ over $\Spec \calO_k$.  Put $\calI = \Ker(\calO_P \rar \calO_l)$.

For $r =a/b \in \QQ_{>0}$ with $a, b >0$, let $P^{[a/b]}_{\calO_k} \rar P$ be the blowup at the ideal $\calI^b + \gothm_k^a \calO_P$ and let $P^{(a/b)}_{\calO_k} \subset P^{[a/b]}_{\calO_k}$ be the complement of the support of $(\calI^a \calO_{P^{[a/b]}_{\calO_k}} + \gothm_k^b\calO_{P^{[a/b]}_{\calO_k}}) / \gothm_k^b\calO_{P^{[a/b]}_{\calO_k}}$.  Let $P^{(r)}_{\calO_k}$ be the normalization of $P^{(a/b)}_{\calO_k}$; it does not depend on $a$ and $b$ but only their ratio.  Let $P^{(r)}_k$ and $P^{(r)}_\kappa$ denote the generic fiber and the special fiber of $P^{(r)}_{\calO_k}$, respectively. 
Let $\widehat{P^{(r)}_k}$ denote the generic fiber of completing $P^{(r)}_{\calO_k}$ along $P^{(r)}_\kappa$.
The immersion $\Spec \calO_l \rar P$ is uniquely lifted to an immersion $\Spec \calO_l \rar P^{(r)}_{\calO_k}$.

By the finiteness theorem of Grauert-Remmert cited in \cite[THEOREM~1.10]{abbes-saito2}, there exists a finite \emph{separable} extension $k'/k$ of na\"ive ramification degree $e = e_{k'/k}$ such that the normalization $P^{(er)}_{\calO_{k'}}$ of $P^{(r)}_{\calO_k} \times_{\calO_k} \calO_{k'}$ has reduced geometric fibers over $\Spec \calO_{k'}$, which we call a \emph{stable model} of $P^{(r)}_{\calO_k}$. We put $P^{(r)}_{\bar \kappa} = P^{(er)}_{\calO_{k'}} \times_{\calO_{k'}} \bar\kappa$; it is called the \emph{stable special fiber} of $P^{(r)}_{\calO_k}$ and it does not depend on the choice of $k'$.
\end{construction}

We defer the discussion of the properties of this construction until later when we have a concrete example at hand.

For the rest of this section, we assume the following geometric assumption.

\begin{hypothesis}[Geom]
There exists an affine smooth variety $X$ over $k_0$ and an irreducible divisor $D$, smooth over $k_0$ with generic point $\xi$, such that $\calO_k \cong \calO_{X, \xi}^\wedge$, where the latter is the completion of the local ring at $\xi$.  In particular, Hypothesis~\ref{H:J-finite-set} is fulfilled.
\end{hypothesis}

\begin{remark}
This Hypothesis (Geom) is essentially the same as the hypothesis (Geom) in \cite[P.786]{saito-wild-ram}, except that our $k$ is the completion of the Henselian local field considered in Saito's paper.
\end{remark}


\begin{construction}
 After replacing $X$ (and hence $D$) by an \'etale neighborhood of $\xi$ if necessary, there exists a finite flat morphism $f: Y \rar X$ of smooth schemes over $k_0$ such that $V = Y \times_X U \rar U = X \bs D$ is finite \'etale with Galois group $G_{l/k}$ and that $Y \times_X \Spec \calO_{X, \xi}^\wedge = \Spec \calO_l$.

Let $(X \times X)'$ be the blowup of $X \times_{k_0} X$ along $(X \times_{k_0} D) \cup (D \times_{k_0} X)$, and let $(X \times X)^\sim$ denote the complement of the proper transforms of $X \times_{k_0} D$ and $ D \times_{k_0} X$ in $(X \times X)'$.  The diagonal embedding $\Delta_X: X \rar X \times_{k_0} X$ naturally lifts to an embedding $\tilde \Delta_X: X \rar (X \times X)^\sim$.  Now, pulling back the whole picture along $f: Y \rar X$ gives the following commutative diagram
\begin{equation}\label{E:geometric-over-k_0}
\xymatrix{
&&& (Y \times X)^\sim \ar[r]^{f \times 1} \ar[dd]_{\pi_Y} & (X \times X)^\sim \ar[dd]^{\pi_X} \\
Y \ar[rr]^{\ \ f} \ar[drrr]_{\Delta_Y} \ar[urrr]^{\tilde \Delta_Y} && X  \ar[drr]^{\Delta_X} \ar[urr]_{\tilde \Delta_X}\\
&& & Y \times_{k_0} X \ar[r]_{f \times 1} \ar[d]_{p_1} & X \times_{k_0} X \ar[d]^{p_1} \ar[r]^-{p_2} & X\\
& && Y \ar[r]^f & X
}
\end{equation}
where $(Y \times X)^\sim$ is the fiber product of the big square, and all parallelograms are Cartesian.

 Put $P = (X \times X)^\sim \times_{p_2\circ \pi_X, X} \Spec \calO_{X, \xi}^\wedge$ and $Q = (Y \times X)^\sim \times_{p_2\circ \pi_X\circ (f \times 1), X} \Spec \calO_{X, \xi}^\wedge$. Taking the Cartesian product of the top part of \eqref{E:geometric-over-k_0} with $\Spec \calO_{X, \xi}^\wedge = \Spec \calO_k$ over $X \times_{k_0} X$ along $p_2$ then gives the following commutative diagram.
\begin{equation}\label{E:geometric-over-Ok}
\xymatrix{
\Spec \calO_l \ar[d]_f \ar[r]^-{\tilde \Delta_Y} & Q \ar[d]^{f \times 1} \\
\Spec \calO_k \ar[r]^-{\tilde \Delta_X} & P \ar[r]^-{p_2} & \Spec \calO_k
}
\end{equation}
Let $\calI$ denote the ideal of the immersion $\tilde \Delta_X$.  We will view $P$ and $Q$ as  schemes over $\calO_k$ via $p_2$.

We can now apply Construction~\ref{C:saito} to the embeddings $\tilde \Delta_X$ and $\tilde \Delta_Y$ to define $P_{\calO_k'}^{(er)}$, $P_{k'}^{(er)}$, $\widehat{P_{k'}^{(er)}}$, $P_{\bar \kappa}^{(er)}$ and $Q_{\calO_k'}^{(er)}$, $Q_{k'}^{(er)}$, $\widehat{Q_{k'}^{(er)}}$, $Q_{\bar \kappa}^{(er)}$, respectively, where $k'/k$ is a finite separable extension of na\"ive ramification degree $e$.  
We still use $p_1$ to denote the morphism $P^{(er)}_{\calO_{k'}} \rar P \stackrel {p_1} \lrar \Spec \calO_k$.  By functoriality of Construction~\ref{C:saito}, we have a morphism $f^{(r)}: Q^{(er)}_{\calO_{k'}} \rar P^{(er)}_{\calO_{k'}}$.
\end{construction}

\begin{remark}
The field extension $k'$ serves as the role of a ``coefficient field"; we only use it to provide reasonable integral structures of our spaces over $\calO_{k'}$, and also to make $er$ an integer.  We can make $k'$ as large as we need.

In contrast, the extension $l/k$ pulled back from $p_1: X \times_{k_0} X \to X$ encodes the arithmetic information.
\end{remark}

We collect together some properties of these spaces.

\begin{proposition}\label{P:saito-properties}
Let $k'/k$ be a finite separable extension of na\"ive ramification degree $e$.
\begin{itemize}
\item[\emph{(a)}] 
When $er$ is an integer, the space $P^{(er)}_{\calO_{k'}}$ is defined to be 
$\sum_{i \geq 0} \pi_{k'}^{-ier} \cdot \calI^i \subset \calO_P \otimes_{\calO_k} k'$.  It is smooth over $\calO_{k'}$, and its
closed fiber $P^{(er)}_{\kappa_{k'}}$ can be canonically identified with the $\kappa_{k'}$-vector space $\Omega^1_{\calO_k}(\log) \otimes_{\calO_k} \pi_{k'}^{-er} \kappa_{k'}$.  The rigid space $\widehat{P^{(er)}_{k'}}$ is isomorphic to $\Sp \big(k'\langle \pi_{k'}^{-er-e}\delta_0, \pi_{k'}^{-er} \delta_J\rangle \big)$, where $\delta_0, \dots, \delta_m$ form a dual basis of $\Omega^1_{\calO_k}$.
\item[\emph{(b)}]
The generic fiber $Q^{(er)}_{k'}$ of $Q^{(er)}_{\calO_{k'}}$ is isomorphic to $ P^{(er)}_{k'} \otimes_{p_1, k} l$. In particular, $Q^{(er)}_{k'}$ is finite and \'etale over $P^{(er)}_{k'}$ with Galois group $G_{l/k}$, and the same is true for $\widehat{Q^{(er)}_{k'}}$ over $\widehat{P^{(er)}_{k'}}$.
\item[\emph{(c)}] Let $\Spf \calO_{Q^\wedge}$ be the completion of $Q$ along $\Spec \calO_l$.  If $er$ is an integer, then $\widehat{Q^{(er)}_{k'}}$ is the affinoid variety $X_\log^j(\calO_{Q^\wedge} \rar \calO_l)_{k'}$ defined in \cite[Section~4.2]{abbes-saito2} for $j = r$.
\item[\emph{(d)}] If the highest log-break $b_\log(l/k)$ is less than or equal to $r$, then $Q^{(r)}_{\bar \kappa}$ is an element of the category $(\FE/ P^{(r)}_{\bar\kappa})^\alg$, defined below in Definition~\ref{D:FE-alg}.
\item[\emph{(e)}] The highest log-break $b_\log(l/k)$ is strictly less than $ r$ if and only if the number of connected components of $Q^{(r)}_{\bar  \kappa}$ is $[l:k]$.
\end{itemize}
\end{proposition}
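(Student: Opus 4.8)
\textbf{Proof proposal for Proposition~\ref{P:saito-properties}.}

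The plan is to establish each part (a)--(e) by tracing through Saito's construction in \cite{saito-wild-ram} and \cite{abbes-saito2} in the concrete geometric situation set up above, using the explicit local coordinates provided by the lifted $p$-basis $\{b_J, \pi_k\}$. For part (a), I would first observe that the ideal $\calI$ of the diagonal embedding $\tilde\Delta_X$ is generated, after identifying $\calO_P$ with a completed tensor product, by the differences $\delta_j = B_j' - B_j$ (for $j \in J$) and $\delta_0 = T' - T$ after the logarithmic blowup, so that $\calI / \calI^2 \cong \Omega^1_{\calO_k}(\log)$ naturally. When $er \in \ZZ$, the blowup-and-normalize construction of $P^{(er)}_{\calO_{k'}}$ is precisely the Rees-algebra--type object $\sum_{i \geq 0} \pi_{k'}^{-ier}\calI^i$; smoothness over $\calO_{k'}$ follows because this is a polynomial ring over $\calO_{k'}$ on the rescaled generators $\pi_{k'}^{-er-e}\delta_0$ and $\pi_{k'}^{-er}\delta_J$ (the extra $-e$ on the $\delta_0$ slot reflects the log-blowup along $D$, i.e.\ the factor $T$ in $dT/T$). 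The closed fiber is then the $\kappa_{k'}$-span of the reductions of these generators, which is exactly $\Omega^1_{\calO_k}(\log)\otimes_{\calO_k}\pi_{k'}^{-er}\kappa_{k'}$, and the associated affinoid is the Tate algebra displayed. I would cite \cite[Section~1]{saito-wild-ram} for the general smoothness/identification statements and just verify the coordinate form here.

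For (b), the key point is that $Q$ was constructed as the pullback of $P$ along $f\colon Y\to X$ composed with $p_1$, hence $Q = P\times_{p_1,\calO_k}\Spec\calO_l$ up to the blowup bookkeeping; since blowup and normalization commute with the finite \'etale base change encoded by $p_1$ (this uses that $V\to U$ is finite \'etale with group $G_{l/k}$ and that the centers of the blowups are pulled back from the $X$-factor), one gets $Q^{(er)}_{k'}\cong P^{(er)}_{k'}\otimes_{p_1,k}l$, whence the finite \'etale claim with group $G_{l/k}$, and likewise after completing along the special fiber. For (c), I would match our $\widehat{Q^{(er)}_{k'}}$ against the definition of the affinoid $X^j_{\log}(\calO_{Q^\wedge}\to\calO_l)_{k'}$ in \cite[Section~4.2]{abbes-saito2}: both are obtained by the same dilatation/normalization recipe applied to the immersion $\Spec\calO_l\hookrightarrow Q$ with the logarithmic structure along $D$, so the identification is essentially a matter of unwinding both definitions and checking they use the same ideal and the same rescaling exponent $j=r$; part (a) provides the model computation on the $P$-side that this is compatible with.

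Parts (d) and (e) are where the genuine content lies, and (e) is the step I expect to be the main obstacle. For (d), assuming $b_\log(l/k)\leq r$, one needs to know that $Q^{(r)}_{\bar\kappa}\to P^{(r)}_{\bar\kappa}$ is in the category $(\FE/P^{(r)}_{\bar\kappa})^{\alg}$ (finite covers that are \'etale away from controlled loci, or more precisely additive-torsor--like after an Artin--Schreier normalization); this is a reformulation in our setting of Saito's structural result that, at or above the conductor, the stable special fiber of the covering becomes a torsor under a finite group scheme, and I would deduce it from \cite[Section~3]{saito-wild-ram} together with the Grauert--Remmert finiteness input of Construction~\ref{C:saito}. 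For (e), the statement that $b_\log(l/k) < r$ iff $Q^{(r)}_{\bar\kappa}$ has exactly $[l:k]$ connected components is the geometric avatar of the defining property of the log-ramification filtration: $\Fil^r_{\log}G_k$ acts trivially on $\rho$ precisely when $r$ strictly exceeds the highest break, and the number of geometric connected components of the dilatation space computes the rank of the $\Fil^r_{\log}$-invariants. I would prove this by combining the dictionary of (c) with \cite[Section~4]{abbes-saito2} (which expresses the breaks in terms of the $X^j_{\log}$ spaces) and then translating to the differential-module language via Theorem~\ref{T:properties-diff-conductor}(e), so that "$b_\log(l/k)<r$" becomes the condition "$\calE_\rho\otimes F'_\eta$ is trivial for $\eta$ close to $1$", which on the cover side is exactly the splitting into $[l:k]$ components. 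The delicate part is ensuring the count is of \emph{geometric} components over $\bar\kappa$ (after passing to the stable model over $k'$) and that it is insensitive to the auxiliary extension $k'$, which follows from the independence statement already built into Construction~\ref{C:saito}.
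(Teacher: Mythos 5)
Your overall plan—lean on Saito's structural results and verify the explicit coordinate forms using the lifted $p$-basis—is precisely what the paper does; the actual proof given there consists almost entirely of citations (\cite[Lemma~1.10]{saito-wild-ram} for (a), \cite[Example~1.21]{saito-wild-ram} for (c), \cite[Lemma~1.13 and Theorem~1.24]{saito-wild-ram} for (d) and (e)), plus the observation you also make that (b) comes down to $V\to U$ being finite \'etale with group $G_{l/k}$ and the blowup centers being pulled back from the $X$-factor. Your sketches for (a), (b), (c), (d) are accurate and add useful detail beyond the bare citations.

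There is, however, a genuine flaw in your treatment of (e). You write that, via the comparison Theorem~\ref{T:properties-diff-conductor}(e), the condition ``$b_\log(l/k)<r$'' translates to ``$\calE_\rho\otimes F'_\eta$ is trivial for $\eta$ close to $1$''. That is not what the condition means: by the definition of the differential log-break, $b_\log(l/k)<r$ means $IR(\calE_\rho\otimes F'_\eta)>\eta^r$ for $\eta$ near $1$, which is a lower bound on intrinsic radii (solvability faster than rate $r$), not triviality of the differential module — triviality would force $l/k$ to be unramified. The correct use of Theorem~\ref{T:properties-diff-conductor}(e) is only to identify the paper's differential log-break with the Abbes--Saito log-break, after which (e) is literally Saito's Theorem~1.24 and no further translation to differential modules is needed.

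Moreover, your next step — ``which on the cover side is exactly the splitting into $[l:k]$ components'' — implicitly invokes a dictionary between the thickening space/annulus picture (where $\calE_\rho$ lives, over $A^1_{K'}[\eta,1)$) and the stable special fiber $Q^{(r)}_{\bar\kappa}$ of the dilatation. Establishing that dictionary is exactly the content of Section~3 of the paper (Subsections~\ref{S:lifting-spaces}--\ref{S:comparison}), and Proposition~\ref{P:saito-properties} is an \emph{input} to that machinery, not a consequence of it. So the route you propose for (e) is circular. The fix is simply to cite \cite[Theorem~1.24]{saito-wild-ram} directly for the component count in the Abbes--Saito framework and \cite[Lemma~1.13]{saito-wild-ram} for (d), together with the comparison of filtrations.
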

\begin{proof}
For (a), see \cite[Lemma~1.10]{saito-wild-ram}. The claim (b) follows from the fact that $f: V \rar U$ is finite and \'etale with Galois group $G_{l/k}$. For (c), see \cite[Example~1.21]{saito-wild-ram}.  The statements (d) and (e) follow from \cite[Lemma~1.13 and Theorem~1.24]{saito-wild-ram}.
 \end{proof}

\begin{definition}\label{D:FE-alg}
For an $\bar\kappa$-vector space $W$ of finite dimensional, let $(\FE/W)^\alg$ be the full subcategory of $(\FE/W)$ whose objects are finite \'etale morphisms $g: Z \rar W$ such that $Z$ admits a structure of algebraic group scheme and such that $g$ is a morphism of algebraic groups.
\end{definition}

\begin{remark}\label{R:FE-alg}
By the argument just before \cite[Lemma~1.23]{saito-wild-ram}, the category $(\FE/W)^\alg$ is a Galois category associated to the Galois group $\pi_1^\alg(W)$, which is a quotient of the fundamental group $\pi_1(W)$.  This group can be identified with the Pontrjagin dual of the extension group $\Ext^1(W, \Fp)$ in the category of smooth algebraic groups  over $\bar\kappa$.
 The map $W^\dual  = \Hom_{\bar\kappa}(W, \bar\kappa) \rar \Ext^1(W, \Fp)$ sending a linear form $f: W\rar \AAA^1_{\bar\kappa}$ to the pullback along $f$ of the Artin-Scheier sequence $0 \rar \Fp \rar \AAA^1_{\bar\kappa} \stackrel {t \rar t^p-t} \lrar \AAA^1_{\bar\kappa} \rar 0$ is an isomorphism.
\end{remark}

\begin{proposition}\label{P:rsw'}
We have a surjective homomorphism $\pi_1^\alg(P^{(b)}_{\bar \kappa}) \surj \Fil^b_\log G_k / \Fil^{b+}_\log G_k$; it induces an injective homomorphism
\[
\rsw':
\Hom(\Fil^b_\log G_k / \Fil^{b+}_\log G_k, \Fp) \lrar \Omega^1_{\calO_k}(\log) \otimes_{\calO_k} \pi_k^{-b} \bar\kappa.
\]
\end{proposition}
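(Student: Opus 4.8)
The statement is part of the review of Saito's construction, so the proof is an assembly of the geometric input recorded in Proposition~\ref{P:saito-properties} with the group-theoretic description of $(\FE/W)^{\alg}$ in Remark~\ref{R:FE-alg}; I would cite \cite{saito-wild-ram} for the substantive point and spell out the formal part. Fix $b \in \QQ_{>0}$. To construct the asserted surjection, I would run over all finite Galois extensions $l/k$ with highest log-break $b_{\log}(l/k) \le b$. For each such $l$, Proposition~\ref{P:saito-properties}(d) places the finite \'etale $G_{l/k}$-cover $Q^{(b)}_{\bar\kappa} \to P^{(b)}_{\bar\kappa}$ in the category $(\FE/P^{(b)}_{\bar\kappa})^{\alg}$, hence it corresponds to a finite quotient of the (profinite abelian) group $\pi_1^{\alg}(P^{(b)}_{\bar\kappa})$; in particular the natural maps $\pi_1^{\alg}(P^{(b)}_{\bar\kappa}) \to \mathrm{Gal}\big(Q^{(b)}_{\bar\kappa}/P^{(b)}_{\bar\kappa}\big)$ are compatible as $l$ grows. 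Applying Proposition~\ref{P:saito-properties}(e) to $l$ and to its intermediate Galois subextensions shows that this cover splits exactly over the maximal subextension of log-break $<b$; the upshot, which is Saito's \cite[Theorem~1.24]{saito-wild-ram}, is that the projective limit over such $l$ of $\mathrm{Gal}\big(Q^{(b)}_{\bar\kappa}/P^{(b)}_{\bar\kappa}\big)$ is canonically $\Fil^b_{\log} G_k / \Fil^{b+}_{\log} G_k$, yielding the desired surjection $\pi_1^{\alg}(P^{(b)}_{\bar\kappa}) \surj \Fil^b_{\log} G_k / \Fil^{b+}_{\log} G_k$. (That the graded piece really is a quotient of the ``algebraic'' $\pi_1^{\alg}$, not merely of $\pi_1$, is consistent with Theorem~\ref{T:properties-diff-conductor}(d), which says it is an $\Fp$-vector space.)

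For the induced map $\rsw'$: apply $\Hom(-,\Fp)$ to the surjection to obtain an injection $\Hom(\Fil^b_{\log} G_k / \Fil^{b+}_{\log} G_k, \Fp) \hookrightarrow \Hom(\pi_1^{\alg}(P^{(b)}_{\bar\kappa}), \Fp)$. I would then identify the target in two steps. First, Remark~\ref{R:FE-alg} (with $W = P^{(b)}_{\bar\kappa}$) gives $\Hom(\pi_1^{\alg}(W),\Fp) \cong \Ext^1(W,\Fp)$, together with the fact that the Artin--Schreier pullback map from the space $W^{\dual}$ of linear forms on $W$ to $\Ext^1(W,\Fp)$ is an isomorphism. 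Second, Proposition~\ref{P:saito-properties}(a) identifies the space of linear forms on $P^{(b)}_{\bar\kappa}$ canonically with $\Omega^1_{\calO_k}(\log) \otimes_{\calO_k} \pi_k^{-b}\bar\kappa$ (these linear forms are $\pi_k^{-b} \cdot (\calI/\calI^2) \otimes \bar\kappa$, and the conormal module $\calI/\calI^2$ of $\tilde\Delta_X$ is $\Omega^1_{\calO_k}(\log)$; concretely this is the basis $\pi_k^{-b-1}\delta_0, \pi_k^{-b}\delta_J$ of the coordinate functions on $\widehat{P^{(b)}_{k'}}$ reorganized as $\pi_k^{-b}\tfrac{d\pi_k}{\pi_k}, \pi_k^{-b}db_J$). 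Composing these isomorphisms with the above injection defines $\rsw'$, whose injectivity is inherited from the exactness of $\Hom(-,\Fp)$ on the surjection.

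\textbf{Main obstacle.} Everything in the second paragraph is purely formal; the real content sits in the first paragraph, namely that the covers $Q^{(b)}_{\bar\kappa}$, as $l$ varies over Galois extensions with $b_{\log}(l/k)\le b$, capture \emph{exactly} the graded quotient $\Fil^b_{\log} G_k/\Fil^{b+}_{\log} G_k$ and nothing larger. The ``no excess'' direction (that only this graded piece, rather than some bigger quotient of $G_k$, appears) rests on the fact that $P^{(b)}_{\bar\kappa}$ is pointed by the image of $\tilde\Delta_X$ and that the cover restricted over that point has monodromy $\Fil^b_{\log}G_k$ modulo the part that becomes trivial, i.e.\ $\Fil^{b+}_{\log}G_k$; the ``no loss'' direction uses that every nonzero character of the graded piece is induced, up to a finite quotient of $G_k$, from a representation of pure log-break $b$ (as in point (iii) of the proof of Theorem~\ref{T:refined-homomorphism}(b)), whose kernel field realizes the required cover. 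I would verify these two points by appeal to \cite[\S1]{saito-wild-ram} rather than reproving them, and present the whole proposition as a short synthesis with the geometric input attributed to Proposition~\ref{P:saito-properties} and Saito.
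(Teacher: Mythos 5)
Your proposal is correct and is essentially the paper's argument fully unpacked: the paper's own proof is two sentences, citing Saito's Theorem~1.24 for the surjection and Remark~\ref{R:FE-alg} for the induced injection, and you have correctly reconstructed how each ingredient (Proposition~\ref{P:saito-properties}(a),(d),(e), the Pontrjagin-dual description of $\pi_1^{\alg}$, and the Artin--Schreier identification of $\Ext^1$ with linear forms) fits into that skeleton. One small thing worth flagging is that you silently and correctly resolve a dualization ambiguity in the statement of Proposition~\ref{P:saito-properties}(a): the identification that matters here is of the coordinate functions (linear forms) on $P^{(b)}_{\bar\kappa}$ with $\Omega^1_{\calO_k}(\log) \otimes_{\calO_k} \pi_k^{-b}\bar\kappa$, so that $\Hom(\pi_1^{\alg}(W),\Fp) \cong \Ext^1(W,\Fp) \cong W^{\dual}$ lands in the stated target rather than in $W$ itself.
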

\begin{proof}
For the first half of the proposition, see \cite[Theorem~1.24]{saito-wild-ram}.  The second half
follows from Remark~\ref{R:FE-alg}.
\end{proof}

In the following special case, we give a more detailed description of these spaces.

\begin{situation}
Let $l/k$ be a finite totally ramified Galois extension, which is \emph{not} tamely ramified.  Assume that the highest log-break $b=b_\log(l/k)$ is a positive integer.  Assume moreover that $\Fil^{b-1}_\log G_k /(\Fil^{b-1}_\log G_k \cap G_l) \simeq \Fp$; in particular, the second highest log-break $b_\log(l/k, 2)$ is strictly less than $ b_\log(l/k)-1$.  By Proposition~\ref{P:rsw'}, $Q_{\bar \kappa}^{(b)}$ consists of $[l:k]/p$ copies of the \emph{same} Artin-Scheier cover of $P_{\bar \kappa}^{(b)}$, at least if we forget about the algebraic group structure.  Assume that this cover is given by 
\begin{equation}\label{E:z-Q-kappa}
\bar z^p - \bar z + \big(\bar \alpha_0 \pi_k^{-b-1}\delta_0 + \bar \alpha_1 \pi_k^{-b} \delta_1+ \cdots + \bar \alpha_m \pi_k^{-b}\delta_m \big)=0
\end{equation}
for some $\bar \alpha_{J^+} \in \bar \kappa$,
where the coordinates of $P_{\bar \kappa}^{(b)}$ are given by $\pi_k^{-b-1}\delta_0$ and $ \pi_k^{-b}\delta_J$. These elements $\bar \alpha_0, \dots, \bar \alpha_m$ are determined up to multiplication by $i \in \Fp^\times$, in accordance with the choice of $\bar z$ up to multiplication by the same $i\in \Fp^\times$.

Let $k'/k$ be a finite separable extension of ramification degree $e>1$, such that $Q^{(eb)}_{\calO_{k'}}$ is a stable model.  By possibly enlarging $k'$, we may assume that $\bar \alpha_{J^+} \in \kappa_{k'}$ and that $Q_{\kappa_{k'}}^{(eb)}$ is the disjoint union of $[l:k]/p$ copies of the aforementioned Artin-Scheier cover of $P_{\kappa_{k'}}^{(eb)}$.
\end{situation}

\begin{lemma}
\label{L:Rkeb-1 definition}
The space $Q^{(eb)}_{\calO_{k'}}$ is the disjoint union of $[l:k]/p$ copies of the same space $R^{(eb)}_{\calO_{k'}}$.  Let $\widehat {R^{(eb)}_{\calO_{k'}}}$ denote the completion of $R^{(eb)}_{\calO_{k'}}$ along its special fiber and let $\widehat {R^{(eb)}_{k'}}$ denote the generic fiber, viewed as a rigid analytic space.  Then $\widehat{Q^{(eb-1)}_{k'}}$ is the disjoint union of $[l:k]/p$ copies of a same space $\widehat{R^{(eb-1)}_{k'}}$, which is the normal closure of $\widehat{P^{(eb-1)}_{k'}}$ in $\widehat{R^{(eb)}_{k'}}$ and is finite and \'etale over $\widehat{P^{(eb-1)}_{k'}}$.
\end{lemma}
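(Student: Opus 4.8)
The plan is to exhibit, at each of the two ``levels'' $eb$ and $eb-1$, the relevant morphism as a finite \'etale Galois cover with group $G_{l/k}$ over a connected smooth base over $\calO_{k'}$, and then to read off the decomposition into isomorphic copies purely from the torsor formalism; the substantive point is the \'etaleness and the component count at level $eb-1$, where the hypothesis on the second ramification break is used.

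First I would treat level $eb$. On the generic fibre $Q^{(eb)}_{k'}\to P^{(eb)}_{k'}$ is finite \'etale Galois with group $G_{l/k}$ by Proposition~\ref{P:saito-properties}(b), and on the special fibre, by the description in the Situation, $Q^{(eb)}_{\kappa_{k'}}$ is a disjoint union of $[l:k]/p$ copies of the Artin--Schreier cover \eqref{E:z-Q-kappa}, hence finite \'etale of degree $[l:k]$ over the affine space $P^{(eb)}_{\kappa_{k'}}$. The scheme $Q^{(eb)}_{\calO_{k'}}$ is, by the functoriality of Construction~\ref{C:saito} and the finiteness of $f\colon Y\to X$, finite over the regular scheme $P^{(eb)}_{\calO_{k'}}$, and it is normal; it is \'etale over the generic point of $P^{(eb)}_{\calO_{k'}}$ and over the generic point $\eta_s$ of the special fibre (which is a codimension-one point, so its local ring is a discrete valuation ring over which the torsion-free, generically rank-$[l:k]$ module $f_*\calO$ is free of rank $[l:k]$, matching the \'etale special-fibre degree). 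Zariski--Nagata purity of the branch locus then forces $Q^{(eb)}_{\calO_{k'}}\to P^{(eb)}_{\calO_{k'}}$ to be finite \'etale, and the Galois action inherited from $V\to U$ makes it a $G_{l/k}$-torsor over the connected base $P^{(eb)}_{\calO_{k'}}$ (smooth over $\calO_{k'}$ with geometrically connected fibres, by Proposition~\ref{P:saito-properties}(a)). Such a torsor, being pulled back from $\Spec l\to\Spec k$, is a disjoint union of $[l:k]/[l\cap k':k]$ mutually isomorphic connected pieces; the special fibre having exactly $[l:k]/p$ components gives $[l:l\cap k']\le p$, while the image $N$ of $\Fil^b_\log G_k$ in $G_{l/k}$ is normal of order $p$ (using the Situation's assumption and the equality $\Fil^{b-1}_\log G_k=\Fil^b_\log G_k$, there being no break of $l/k$ in $(b_\log(l/k,2),b)$), so after enlarging $k'$ to contain $l^N$ we get $l\cap k'=l^N$ and hence exactly $[l:k]/p$ pieces. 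Let $R^{(eb)}_{\calO_{k'}}$ be one of them; the $[l:k]/p$ Artin--Schreier copies in the special fibre are distributed one per piece, so $R^{(eb)}_{\calO_{k'}}\times_{\calO_{k'}}\kappa_{k'}$ is a single such cover.

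For the $(eb-1)$-part, that is ratio $b-1/e$, the hypothesis $b_\log(l/k,2)<b-1<b-1/e<b$ means there is no ramification break of $l/k$ in the interval $(b-1/e,b)$; invoking Saito's analysis of the variation of the Abbes--Saito spaces between consecutive breaks (Proposition~\ref{P:saito-properties} and \cite[Section~1]{saito-wild-ram}) one obtains $Q^{(eb-1)}_{\kappa_{k'}}\cong Q^{(eb)}_{\kappa_{k'}}\times_{P^{(eb)}_{\kappa_{k'}}}P^{(eb-1)}_{\kappa_{k'}}$, so the special fibre at level $eb-1$ is again $[l:k]/p$ copies of an Artin--Schreier cover of $P^{(eb-1)}_{\kappa_{k'}}$, in particular finite \'etale of degree $[l:k]$. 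Repeating the argument of the previous paragraph verbatim gives $Q^{(eb-1)}_{\calO_{k'}}=\coprod_{[l:k]/p}R^{(eb-1)}_{\calO_{k'}}$; completing along the special fibre and passing to rigid generic fibres yields $\widehat{Q^{(eb-1)}_{k'}}=\coprod_{[l:k]/p}\widehat{R^{(eb-1)}_{k'}}$, with each $\widehat{R^{(eb-1)}_{k'}}$ finite \'etale of degree $p$ over the polydisc $\widehat{P^{(eb-1)}_{k'}}$ (Proposition~\ref{P:saito-properties}(a)). Finally, since a smaller ratio gives a larger polydisc, $\widehat{P^{(eb)}_{k'}}$ is a sub-polydisc of $\widehat{P^{(eb-1)}_{k'}}$, and by the compatibility of the covers at the two ratios the restriction of $\widehat{R^{(eb-1)}_{k'}}$ to $\widehat{P^{(eb)}_{k'}}$ is $\widehat{R^{(eb)}_{k'}}$; as $\widehat{R^{(eb-1)}_{k'}}$ is normal and finite over $\widehat{P^{(eb-1)}_{k'}}$ and contains $\widehat{R^{(eb)}_{k'}}$ as its dense open restriction to the sub-polydisc, it is the normal closure of $\widehat{P^{(eb-1)}_{k'}}$ in $\widehat{R^{(eb)}_{k'}}$, as claimed.

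The step I expect to be the main obstacle is controlling the number of connected components as the ratio drops from $b$ to $b-1/e$: one must rule out that the cover ``connects up'' further over the larger polydisc, and this is exactly where the assumption $b_\log(l/k,2)<b-1$ enters. Making it precise requires carefully quoting Saito's comparison of $Q^{(r)}_{\bar\kappa}$ and $P^{(r)}_{\bar\kappa}$ for varying $r$ and verifying the base-change identity $Q^{(eb-1)}_{\kappa_{k'}}\cong Q^{(eb)}_{\kappa_{k'}}\times_{P^{(eb)}_{\kappa_{k'}}}P^{(eb-1)}_{\kappa_{k'}}$ on the nose; once that is in hand, the remaining ingredients---finiteness and normality of the normalisation, Zariski--Nagata purity, the torsor formalism, and the sub-polydisc/normal-closure comparison---are routine.
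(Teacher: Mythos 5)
Your proposal is correct in its broad contours for the level-$eb$ part, but it diverges from the paper and leaves a genuine gap precisely at the step you flag at the end. Let me be concrete.

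For $Q^{(eb)}_{\calO_{k'}}$ the paper does \emph{not} prove integral \'etaleness: it only needs the fact that connected components of a scheme finite over the complete local ring $\calO_{k'}$ (with $\pi_{k'}$-adically complete formal model) are in $G_{l/k}$-equivariant bijection with those of its special fibre, which is a lifting-idempotents argument. Since the Situation already \emph{stipulates} that $Q^{(eb)}_{\kappa_{k'}}$ is a disjoint union of $[l:k]/p$ isomorphic Artin--Schreier covers, the component count and the fact that all pieces are isomorphic fall out immediately. Your route through Zariski--Nagata purity, miracle flatness, and the torsor formalism is considerably longer, and the digression about $l\cap k'$ and enlarging $k'$ to contain $l^N$ is superfluous: that hypothesis is already built into the Situation by ``possibly enlarging $k'$.''

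For the level-$(eb-1)$ statement the paper's proof simply invokes \cite[Remark~3.13]{abbes-saito1} to conclude that $\widehat{Q^{(eb-1)}_{k'}}$ has $[l:k]/p$ connected components, using the hypothesis $b_\log(l/k;2)<b-1$. Your proposal replaces this by a base-change identity
$Q^{(eb-1)}_{\kappa_{k'}}\cong Q^{(eb)}_{\kappa_{k'}}\times_{P^{(eb)}_{\kappa_{k'}}}P^{(eb-1)}_{\kappa_{k'}}$
and then repeats the purity argument. This is the gap. First, the claimed fibre product does not obviously make sense: the natural scheme maps between the $P^{(r)}_{\calO_{k'}}$ for different $r$ run from larger $r$ to smaller $r$ (inclusion of rings $\sum\pi^{-ier}\calI^i$), so there is a map $P^{(eb)}_{\kappa_{k'}}\to P^{(eb-1)}_{\kappa_{k'}}$ on special fibres but not one in the direction your fibre product requires; moreover the normalisation $Q^{(r)}_{\calO_{k'}}$ is not claimed to be a stable model at $r=b-1/e$, so one has no a priori control over $Q^{(eb-1)}_{\kappa_{k'}}$. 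Second, the conclusion the lemma actually needs concerns the \emph{rigid-analytic generic fibre} $\widehat{Q^{(eb-1)}_{k'}}$, whose component count is exactly what Abbes--Saito's remark supplies; deriving it via the special fibre of the (non-stable) integral model is a detour that you do not close. You identify this as ``the main obstacle'' and leave it at ``requires carefully quoting Saito's comparison,'' which is an honest acknowledgement but does not constitute a proof. Once the component count is known, the remaining claims---\'etaleness of each component over $\widehat{P^{(eb-1)}_{k'}}$ and identification with the normal closure of $\widehat{P^{(eb-1)}_{k'}}$ in $\widehat{R^{(eb)}_{k'}}$---do follow as you outline, using that $\widehat{P^{(eb)}_{k'}}$ is a subdomain of $\widehat{P^{(eb-1)}_{k'}}$ and the covers are compatible; this part agrees with the paper.
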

\begin{proof}
There is a $G_{l/k}$-equivariant one-to-one correspondence between the connected components of $Q_{\kappa_{k'}}^{(eb)}$ and the connected components of $Q^{(eb)}_{\calO_{k'}}$.

Since the second highest log-break $b_\log(l/k; 2)$ is strictly less than $ b_\log(l/k) - 1$, by \cite[Remark~3.13]{abbes-saito1}, the number of connected components of $\widehat{Q^{(eb-1)}_{k'}}$ is $[l:k]/p$. Note that each connected component of $\widehat{Q^{(eb-1)}_{k'}}$, which is automatically finite and \'etale over $\widehat{P^{(eb-1)}_{k'}}$, can be also characterized as the normal closure of $\widehat{P^{(eb-1)}_{k'}}$ in $\widehat{R^{(eb)}_{k'}}$; this normal closure is the space $\widehat{R^{(eb-1)}_{k'}}$ we sought for.
\end{proof}

\begin{proposition}\label{P:form-z}
Let $\alpha_{J^+} \subset \calO_{k'}$ lift $\bar \alpha_{J^+} \subset \kappa_{k'}$.
We can choose a lift $z$ of $\bar z$ to $\widehat{R^{(eb)}_{\calO_{k'}}}$ such that its minimal polynomial over $\widehat{P^{(eb)}_{\calO_{k'}}} = \Spf \calO_{k'}\langle \pi_{k'}^{-eb-e} \delta_0, \pi_{k'}^{-eb}\delta_J \rangle$ is
\begin{equation}\label{E:z-Q-Ok}
z^p - z + \big(\alpha_0 \pi_{k'}^{-eb-e}\delta_0 + \alpha_1 \pi_{k'}^{-eb} \delta_1+ \cdots + \alpha_m \pi_{k'}^{-eb}\delta_m \big)=0.
\end{equation}
Then the element $z$ generates $\widehat{R^{(eb)}_{\calO_{k'}}}$ over $\widehat{P^{(eb)}_{\calO_{k'}}}$.  Moreover, the element $z$ extends to a section over $\widehat{R^{(eb-1)}_{k'}}$ and it generates $\widehat{R^{(eb-1)}_{k'}}$ over $\widehat{P^{(eb-1)}_{k'}}$.
\end{proposition}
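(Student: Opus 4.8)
The point to exploit is that all of $P,Q,R$ and their formal models live in characteristic $p$, so an Artin--Schreier polynomial $Z^p-Z+g$ has derivative $pZ^{p-1}-1=-1$, a unit; hence $\calA[Z]/(Z^p-Z+g)$ is finite \'etale of degree $p$ over any base ring $\calA$ containing $g$. The plan is first to record that $\widehat{R^{(eb)}_{\calO_{k'}}}\to\widehat{P^{(eb)}_{\calO_{k'}}}=\Spf\calO_{k'}\langle\pi_{k'}^{-eb-e}\delta_0,\pi_{k'}^{-eb}\delta_J\rangle$ is finite \'etale of degree $p$: on generic fibres this is Proposition~\ref{P:saito-properties}(b) combined with Lemma~\ref{L:Rkeb-1 definition}, while its special fibre is the degree-$p$ (hence \'etale) Artin--Schreier cover \eqref{E:z-Q-kappa}; a finite morphism to a $\pi_{k'}$-adically complete formal $\calO_{k'}$-scheme whose reduction is \'etale is itself \'etale, its non-\'etale locus being closed and supported on the special fibre.

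With this in hand, I would construct $z$ by Hensel's lemma. Since $\widehat{R^{(eb)}_{\calO_{k'}}}$ is finite, hence affine, over the $\pi_{k'}$-adically complete ring $\calA:=\calO_{k'}\langle\pi_{k'}^{-eb-e}\delta_0,\pi_{k'}^{-eb}\delta_J\rangle$, its ring of functions is $\pi_{k'}$-adically complete, so I can lift $\bar z$ to a section $z_0$. With $g=\alpha_0\pi_{k'}^{-eb-e}\delta_0+\alpha_1\pi_{k'}^{-eb}\delta_1+\cdots+\alpha_m\pi_{k'}^{-eb}\delta_m$, the reduction of $F(Z):=Z^p-Z+g$ is exactly the defining relation \eqref{E:z-Q-kappa} of the special fibre, so $F(z_0)\in\pi_{k'}\calO(\widehat{R^{(eb)}_{\calO_{k'}}})$, whereas $F'(z_0)=-1$ is a unit; Hensel's lemma yields a unique section $z\equiv z_0\pmod{\pi_{k'}}$ with $F(z)=0$, which still reduces to $\bar z$. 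For the minimal-polynomial and generation claims, I would observe that $Z\mapsto z$ defines a morphism
\[
\calA[Z]/(Z^p-Z+g)\;\longrightarrow\;\calO\big(\widehat{R^{(eb)}_{\calO_{k'}}}\big)
\]
of finite \'etale $\calA$-algebras of the same rank $p$ which modulo $\pi_{k'}$ is the identity on the Artin--Schreier cover \eqref{E:z-Q-kappa}; by completeness it is surjective, and a surjection of finite locally free modules of equal rank over a Noetherian ring is an isomorphism. This gives the first two assertions.

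For the last assertion I would re-express $g$ in the coordinates of $\widehat{P^{(eb-1)}_{k'}}\cong\Sp\big(k'\langle\pi_{k'}^{-(eb-1)-e}\delta_0,\pi_{k'}^{-(eb-1)}\delta_J\rangle\big)$ supplied by Proposition~\ref{P:saito-properties}(a): putting $w_0=\pi_{k'}^{-(eb-1)-e}\delta_0$ and $w_j=\pi_{k'}^{-(eb-1)}\delta_j$ for $j\in J$, one finds $g=\pi_{k'}^{-1}h$ with $h:=\alpha_0w_0+\alpha_1w_1+\cdots+\alpha_mw_m\in\calO(\widehat{P^{(eb-1)}_{k'}})$. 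Thus $z$ is a root of the monic polynomial $Z^p-Z+\pi_{k'}^{-1}h$ with coefficients in $\calO(\widehat{P^{(eb-1)}_{k'}})$, hence is integral over it; since $\widehat{R^{(eb-1)}_{k'}}$ is by Lemma~\ref{L:Rkeb-1 definition} the normal closure of $\widehat{P^{(eb-1)}_{k'}}$ in $\widehat{R^{(eb)}_{k'}}$, this forces $z\in\calO(\widehat{R^{(eb-1)}_{k'}})$. Finally $z$ has degree exactly $p$ over $\calO(\widehat{P^{(eb-1)}_{k'}})$ (it already has degree $p$ over the larger ring $\calO(\widehat{P^{(eb)}_{k'}})$ by the previous step), so $\calO(\widehat{P^{(eb-1)}_{k'}})[z]\cong\calO(\widehat{P^{(eb-1)}_{k'}})[Z]/(Z^p-Z+\pi_{k'}^{-1}h)$ is finite \'etale, hence normal, over $\calO(\widehat{P^{(eb-1)}_{k'}})$ of rank $p$; being a normal subring of the rank-$p$ integral closure $\calO(\widehat{R^{(eb-1)}_{k'}})$ with the same fraction field, it must equal it.

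The part I expect to require the most care is not any single deduction but the compatibility of conventions: confirming that the reduction of the Hensel lift $z$ realizes \emph{precisely} the Artin--Schreier equation \eqref{E:z-Q-kappa} on the stable special fibre (which rests on the stable-model formalism and Proposition~\ref{P:saito-properties}), and getting the rescaling $g=\pi_{k'}^{-1}h$ correct against the normalizations for $\widehat{P^{(er)}_{k'}}$ at the two levels $r=b$ and $r=b-1/e$ in Proposition~\ref{P:saito-properties}(a).
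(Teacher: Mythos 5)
Your proof is correct and follows essentially the same route as the paper: the paper's explicit series $z = z' + \epsilon + \epsilon^p + \epsilon^{p^2} + \cdots$ (with $\epsilon = z'^p - z' + g$) is precisely the Newton/Hensel iteration you invoke abstractly, and both arguments then conclude generation by comparing the finite \'etale degree-$p$ subalgebra $\calA[z]$ with the full algebra, and deduce the $eb-1$ case from the fact (Lemma~\ref{L:Rkeb-1 definition}) that $\widehat{R^{(eb-1)}_{k'}}$ is the normal closure of $\widehat{P^{(eb-1)}_{k'}}$ inside $\widehat{R^{(eb)}_{k'}}$. You are somewhat more explicit at two points the paper leaves implicit — the Nakayama step showing the inclusion $\calA[z]\hookrightarrow\calO(\widehat{R^{(eb)}_{\calO_{k'}}})$ is surjective, and the rescaling $g=\pi_{k'}^{-1}h$ exhibiting $z$ as integral over $\calO(\widehat{P^{(eb-1)}_{k'}})$ — but these are the same checks the paper is tacitly relying on.
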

\begin{proof}
We first pick any lift $z'$ of $\bar z$ to $\widehat{R^{(eb)}_{\calO_{k'}}}$; it must satisfy an equation of the form
$
z'^p + a_1 z'^{p-1} + \cdots + a_p = 0
$, where $a_1, \dots, a_p \in \calO_{k'}\langle \pi_{k'}^{-eb-e} \delta_0, \pi_{k'}^{-eb}\delta_J \rangle$ and the reduction of this equation is exactly \eqref{E:z-Q-kappa}. For the given $\alpha_{J^+} \subset \calO_{k'}$, we have 
\[
\epsilon = z'^p - z' +\big(\alpha_0 \pi_{k'}^{-eb-e}\delta_0 + \alpha_1 \pi_{k'}^{-eb} \delta_1+ \cdots + \alpha_m \pi_{k'}^{-eb}\delta_m \big) \in  \pi_{k'} \calO_{\widehat {R^{(eb)}_{\calO_{k'}}}}.
\] 
Now, $z = z' + \epsilon + \epsilon^p + \epsilon^{p^2} + \cdots$ converges and satisfies \eqref{E:z-Q-Ok}.  

Since $z$ generates a subalgebra of $\calO_{\widehat{R^{(eb)}_{ \calO_{k'}}}}$ which is finite and \'etale over $\calO_{\widehat{P^{(eb)}_{\calO_{k'}}}}$ of the same degree $p$, this subalgebra has to equal   $\calO_{\widehat{R^{(eb)}_{\calO_{k'}}}}$.

For the similar statement for $eb-1$ in place of $eb$, we argue as follows. Since $\widehat{R^{(eb-1)}_{k'}}$ is the normal closure of $\widehat{P^{(eb-1)}_{k'}}$ in $\widehat{R^{(eb)}_{k'}}$ by Lemma~\ref{L:Rkeb-1 definition}, the element $z$ extends to a section over $\widehat{R^{(eb-1)}_{k'}}$ with the \emph{same} minimal equation \eqref{E:z-Q-Ok}.  Again, since $z$ generates a subalgebra of $\calO_{\widehat{R^{(eb-1)}_{k'}}}$ which is finite and \'etale over $\calO_{\widehat{P^{(eb-1)}_{k'}}}$ of same degree, it has to generate the whole ring.  This finishes the proof.
\end{proof}

\subsection{Lifting rigid spaces}
\label{S:lifting-spaces}

The definition of the refined Swan conductor homomorphism using differential modules needs to work with spaces and modules over the field $K$.  Following the idea of \cite{xiao1}, we formally lift the picture from $k$ to some annulus $A_K^1[\eta, 1)$.  This construction is a local version of Berthelot's definition of rigid cohomology.

\begin{construction}
Replacing $X$ by an open Zariski neighborhood of $\xi$ if necessary, there exists a finite morphism $\bbf: \bbY \rar \bbX$ between two affine smooth formal schemes of topologically finite type over $\calO_{K_0}$, such that $\bbf$ reduces to $f$ modulo $p$ and such that the induced map $\bbY \bs f^{-1}(D) \rar \bbX \bs D$ is finite \'etale with Galois group $G_{l/k}$.  In particular, the special fibers of $\bbX$ and $\bbY$ are $X$ and $Y$, respectively.

Let $\blacktriangle_X: \bbX \rar \bbX \times_{\Spf \calO_{K_0}} \bbX$ be the diagonal embedding, and put $\blacktriangle_Y = (\mathrm{id}, \bbf): \bbY \rar \bbY \times_{\Spf \calO_{K_0}} \bbX$.  Let $p_1$ and $p_2$ denote the projections from $\bbX \times_{\Spf \calO_{K_0}} \bbX$ to the first and the second factors, respectively.

Let $\bbX^\wedge$ denote the completion of $\bbX \times_{\Spf \calO_{K_0}} \bbX$ along the diagonal embedding $\blacktriangle_X$; it can be identified with the completion of the cotangent bundle of $\bbX$ along its zero section.  Set $\bbY^\wedge = \bbX^\wedge \otimes_{p_1, \bbX} \bbY$; it is the same as the completion of $\bbY \times_{\Spf \calO_{K_0}} \bbX$ along the embedding $\blacktriangle_Y$.

For $\eta \in (0,1)$, we set $\calR_{K, \eta}^\inte$ to be the subring of $\calR_K^\eta$ consisting of elements having $1$-Gauss norm $\leq 1$; it is complete with respect to the $\eta'$-Gauss norm for $\eta' \in [\eta,1]$.  On one hand, this ring does not give rise to a formal scheme; on the other hand, it is good to keep the geometric intuition.  Hence we introduce the \textit{geometric incarnation} $\Sp \calR^\inte_{K, \eta}$, which is just a symbol.  Any morphism between geometric incarnations should be thought of as ring homomorphisms; in particular, the fiber product is simply the (completed) tensor product.  We also point out that we will only consider affine schemes and there is no question of gluing.

We may compare the following commutative diagram with \eqref{E:geometric-over-k_0}.
\begin{equation}\label{E:geometric-over-OK_0}
\xymatrix{
\bbY \ar[r]^\bbf \ar[d]_{\blacktriangle_Y} & \bbX \ar[d]^{\blacktriangle_X} \\
\bbY^\wedge \ar[r]^-{\bbf \times 1} \ar[d] & \bbX^\wedge \ar[d]^{p_1} \ar[r]^-{p_2} & \bbX & \Sp \calR_{K, \eta}^\inte \ar[l]_-{i}\\
\bbY \ar[r]^\bbf& \bbX
}
\end{equation}
where $i:\Sp \calR_{K, \eta}^\inte \rar \bbX$ is the geometric incarnation of the natural homomorphism $\calO_{\bbX, \xi}^\wedge \rar \calR^\inte_{K, \eta}$, for some $\eta \in (0, 1)\cap p^\QQ$.  We have $\Sp\calR_{K, \eta}^\inte \times_\bbX \bbY = \Sp\calR_{L, \eta^{1/e_{l/k}}}^\inte$ for $\eta$ sufficiently close to $1^-$.
Put
\[
\bbP_\eta = \bbX^\wedge \times_{p_2, \bbX, i} \Sp \calR_{K, \eta}^\inte\textrm{ and } \bbQ_\eta =  \bbY^\wedge \times_{p_2\circ (\bbf \times 1), \bbX, i} \Sp\calR_{K, \eta}^\inte.
\]
Again, both $\bbP_\eta$ and $\bbQ_\eta$ should be thought of as geometric incarnations of $\calO_{\bbP_\eta}$ and $\calO_{\bbQ_\eta}$, the completed tensor products of corresponding rings of functions.
We then have the following Cartesian diagram
\begin{equation}\label{E:geometric-over-RK}
\xymatrix{
\Sp\calR_{L, \eta^{1/e_{l/k}}}^\inte \ar[d]_\bbf \ar[r]^-{\blacktriangle_Y} & \bbQ_\eta \ar[d]^{\bbf \times 1} \\
\Sp\calR_{K, \eta}^\inte \ar[r]^-{\blacktriangle_X} & \bbP_\eta
}
\end{equation}
\end{construction}

\begin{lemma}\label{L:p1-differential}
The morphism $p_1: \bbP_\eta \rar \Sp (\calR^\inte_{K, \eta})$ is given by the continuous homomorphism $\psi:\calR^\inte_{K, \eta} \rar \calR^\inte_{K, \eta} \llbracket \delta_0 / T, \delta_1, \dots, \delta_m \rrbracket$ such that $\psi(T) = T+ \delta_0$, $\psi(B_j) = B_j + \delta_j$ for $j \in J$.  More precisely, for $x \in \calR^\inte_{K, \eta}$, we have
\[
\psi(x) = \sum_{e_{J^+} = 0}^{+\infty} \frac{\partial^{e_{J^+}}_{J^+}(x)}{(e_{J^+})!}
\delta_{J^+}^{e_{J^+}}.
\]
\end{lemma}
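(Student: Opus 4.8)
The plan is to unwind the two fiber products defining $\bbX^\wedge$ and $\bbP_\eta$, to write $\calO_{\bbP_\eta}$ explicitly as a power series ring over $\calR^\inte_{K,\eta}$, and then to recognize the structure map $p_1$ as the one given by $\partial_{J^+}$-Taylor series. This is a formal lift of the classical fact that the two projections from the completion of the diagonal of a smooth (formal) scheme are related by the exponential of the canonical connection, and it runs parallel to the construction in \cite{xiao1}; so I expect no new ideas, only careful bookkeeping.

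First I would describe $\calO_{\bbX^\wedge}$ as a $p_2^*$-algebra over $\calO_\bbX$. After shrinking $\bbX$ around the image of $\xi$, the differentials $dT, dB_1,\dots,dB_m$ form a basis of $\Omega^1_{\bbX/\calO_{K_0}}$, so the kernel of $\calO_\bbX\cpltotimes_{\calO_{K_0}}\calO_\bbX\to\calO_\bbX$ is generated near the diagonal by the regular sequence $\delta_0=p_1^*T-p_2^*T$, $\delta_j=p_1^*B_j-p_2^*B_j$ ($j\in J$); consequently $\calO_{\bbX^\wedge}$ is the power series ring over $\calO_\bbX$ topologically free on the monomials $\delta_{J^+}^{e_{J^+}}$, the $\calO_\bbX$-structure being via $p_2^*$. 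Base changing along $i^*\circ p_2^*\colon\calO_\bbX\to\calR^\inte_{K,\eta}$ then yields $\calO_{\bbP_\eta}\cong\calR^\inte_{K,\eta}\llbracket\delta_0,\delta_1,\dots,\delta_m\rrbracket$. Since $\calR^\inte_{K,\eta}$ is a Henselian discrete valuation ring whose residue field $k$ contains the nonzero element $\bar T=\pi_k$, the element $T$ is a unit in $\calR^\inte_{K,\eta}$, so replacing the variable $\delta_0$ by $\delta_0/T$ does not change the ring: $\calO_{\bbP_\eta}\cong\calR^\inte_{K,\eta}\llbracket\delta_0/T,\delta_1,\dots,\delta_m\rrbracket$.

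Next I would identify $p_1^*$. By construction the morphism $p_1\colon\bbP_\eta\to\Sp\calR^\inte_{K,\eta}$ sends a function $x$ to the image of $p_1^*(x)\in\calO_{\bbX^\wedge}$ in $\calO_{\bbP_\eta}$, so I only need the Taylor expansion
\[
p_1^*(x)=\sum_{e_{J^+}}\frac{\partial^{e_{J^+}}_{J^+}(x)}{(e_{J^+})!}\,\delta_{J^+}^{e_{J^+}}\qquad\text{in }\calO_{\bbX^\wedge}.
\]
Here I would first note that the coefficients $\partial^{e_{J^+}}_{J^+}(x)/(e_{J^+})!$ lie in $\calO_\bbX$ because $T,B_J$ form a good system of coordinates (the $\partial_{J^+}$ are of rational type in the sense of Definition~\ref{D:admissible-operator}); then both sides of the displayed identity are continuous $\calO_{K_0}$-algebra homomorphisms $\calO_\bbX\to\calO_{\bbX^\wedge}$ — the right-hand side because the Leibniz and multinomial identities make the Taylor series multiplicative — which agree on $\calO_{K_0}$ and on the topological generators $T$ and $B_J$ (both sending $T\mapsto T+\delta_0$ and $B_j\mapsto B_j+\delta_j$), hence they coincide. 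Passing to $\calO_{\bbP_\eta}$ and extending by continuity to all of $\calR^\inte_{K,\eta}$ produces $\psi$; at this point one must check that the series converges in $\calR^\inte_{K,\eta}\llbracket\delta_0/T,\delta_J\rrbracket$: applying $\partial_0^{k}$ increases the pole order along $T=0$ by at most $k$, but this is exactly compensated by the factor $\delta_0^{k}=T^{k}(\delta_0/T)^{k}$, so each coefficient remains in $\calR^\inte_{K,\eta}$ and, the variables $\delta_0/T,\delta_J$ being topologically nilpotent, the series converges. This is the one place where the normalization $\delta_0/T$ (rather than $\delta_0$) is genuinely used, and it is precisely what matches the blowup appearing in Saito's construction in Construction~\ref{C:saito}.

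The main obstacle, as signalled above, is essentially organizational: keeping the geometric incarnations, the completed tensor products, and the two $\calO_\bbX$-structures on $\calO_{\bbX^\wedge}$ straight, so that one is certain $\bbP_\eta$ really is $\Sp\bigl(\calR^\inte_{K,\eta}\llbracket\delta_0/T,\delta_J\rrbracket\bigr)$ and that the structure morphism named $p_1$ is the one computed above; this, together with the elementary convergence check, is the only real work. Once it is in place, the identification of $p_1$ with $\psi$ is formal.
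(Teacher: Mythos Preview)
Your proposal is correct and follows essentially the same approach as the paper: the paper's proof is a two-sentence sketch (``the first statement follows from the description of $\bbX^\wedge$ above and the second statement follows by the uniqueness of such a homomorphism''), and you have simply unpacked those two assertions in full detail. One minor quibble: it is $\calR^\inte_K$ (the union over all $\eta$) that the paper asserts to be a Henselian discrete valuation ring, not $\calR^\inte_{K,\eta}$ for fixed $\eta$, but your conclusion that $T$ is a unit is still correct since $T^{-1}$ visibly lies in $\calR^\inte_{K,\eta}$ with $1$-Gauss norm $1$.
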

\begin{proof}
The first statement follows from the description of $\bbX^\wedge$ above and the second statement follows by the uniqueness of such a homomorphism.
\end{proof}

\begin{construction}
Let $k'/k$ be a finite separable extension of na\"ive ramification degree $e$. Since $\calR^\inte_K$ is Henselian, there exists $\calR^\inte_{K'}$ corresponding to the extension $k'/k$, where $K'$ is the fraction field of a Cohen ring of $\kappa_{k'}$.  For $\eta$ sufficiently close to $1^-$, the extension $\calR^\inte_{K'}$ of $ \calR^\inte_{K}$ descends to a finite \'etale algebra $\calR^\inte_{K', \eta^{1/e}}$ over $\calR^\inte_{K, \eta}$ for some $\eta$ sufficiently close to $1$.  Fix such an $\eta$.  Let $T'$ denote the coordinate of $\calR^\inte_{K', \eta^{1/e}}$.

Let $r \in \NN$ (be a proxy of $eb$ or $eb-1$).
Let $\bbP^{(r)}_{K', \eta} = \Sp \big( \calR_{K', \eta^{1/e}}^\inte \langle T'^{-r-e} \delta_0, T'^{-r}\delta_J \rangle\big)$ be the geometric incarnation of a closed polydisc over $\Sp\calR_{K', \eta^{1/e}}^\inte$; it may be viewed as a subspace of $\bbP_\eta$ (in the sense of geometric incarnation).  Let $\bbQ^{(r)}_{K', \eta}$ be the  preimage (in the sense of geometric incarnation) of $\bbP^{(r)}_{K', \eta}$ under the morphism $\bbQ_\eta \to \bbP_\eta$.
\end{construction}

\begin{proposition}\label{P:calF=p1*calE}
Let $\rho$ be a $p$-adic representation of $G_{l/k}$.  Let $\calF_\rho = \big((\bbf \times 1)_* \calO_{\bbQ_\eta} \otimes V_\rho\big)^{G_{l/k}}$ be the differential module over $\bbP_\eta$ and for $r \in \NN$, let $\calF_{\rho, K'}^{(r)} = \big((\bbf \times 1)_* \calO_{\bbQ_{K', \eta}^{(r)}} \otimes V_\rho\big)^{G_{l/k}}$ be the corresponding differential module over $\bbP_{K', \eta}^{(r)}$.  Then $\calF_\rho$ and $\calF_{\rho, K'}^{(r)}$ are the pullbacks of $\calE_\rho$ along $p_1: \bbP_\eta \rar \Sp\calR^\inte_{K, \eta}$ and $p_1: \bbP_{K', \eta}^{(r)} \rar \Sp\calR^\inte_{K, \eta}$, respectively.
\end{proposition}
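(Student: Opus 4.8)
The plan is to unwind the definitions so that both sides of the claimed identification become invariants computed by descent along the finite \'etale covers obtained by removing the ramification divisor, and then appeal to the way $\calE_\rho$ was itself constructed in \cite{kedlaya-swan1} (Theorem~\ref{T:equivalence-categories}). Concretely, recall that $\calE_\rho$ is defined (away from its overconvergent refinements) as $\big((\bbf')_*\calO_{\calR_{L'}^\inte}\otimes V_\rho\big)^{G_{l/k}}$, the $G_{l/k}$-invariants of the pushforward of the function ring of the Robba-type ring $\calR_{L}^\inte$ (with its natural $\partial_{J^+}$-action coming from the lifted $p$-basis of $l$). The right-hand objects $\calF_\rho$ and $\calF_{\rho,K'}^{(r)}$ are, by construction, the analogous invariants of the pushforward of $\calO_{\bbQ_\eta}$ (resp. $\calO_{\bbQ^{(r)}_{K',\eta}}$) along $\bbf\times 1$. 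So the heart of the matter is to identify $\calO_{\bbQ_\eta}$ as a module-with-connection over $\calO_{\bbP_\eta}$ with $p_1^*$ of $\calR_{L,\eta^{1/e_{l/k}}}^\inte$ (and likewise for the truncated polydisc version), compatibly with the $G_{l/k}$-actions and with the $\partial_{J^+}$-structure, after which taking invariants commutes with $p_1^*$ since $p_1$ is flat (indeed the invariants functor for the finite group $G_{l/k}$, acting on a finite \'etale algebra, is exact and compatible with the flat base change $p_1$).

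First I would establish the square \eqref{E:geometric-over-RK} is Cartesian (this is asserted in the construction) and that $\bbf\times 1$ is finite \'etale with Galois group $G_{l/k}$ over the locus where the ramification divisor is removed; this is inherited from $\bbY\bs\bbf^{-1}(D)\to\bbX\bs D$ being finite \'etale with group $G_{l/k}$, pulled back along $p_2\circ(\bbf\times 1)$, exactly as in the schematic diagram \eqref{E:geometric-over-Ok}. Second, I would compute $p_1:\bbP_\eta\to\Sp\calR^\inte_{K,\eta}$ explicitly: Lemma~\ref{L:p1-differential} already gives that $p_1^\sharp$ is the Taylor-series homomorphism $\psi(x)=\sum \partial^{e_{J^+}}_{J^+}(x)/(e_{J^+})!\,\delta_{J^+}^{e_{J^+}}$, and this is precisely the map that turns a module-with-$\partial_{J^+}$-action over $\calR^\inte_{K,\eta}$ into the completion of the cotangent bundle picture with the ``horizontal'' connection. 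Third, by the Cartesian property, $\bbQ_\eta = \bbP_\eta\times_{p_2\circ(\bbf\times 1)}\Sp\calR^\inte_{K,\eta}$, so its coordinate ring is the completed tensor product $\calO_{\bbP_\eta}\widehat\otimes_{\calR^\inte_{K,\eta}}\calR^\inte_{L,\eta^{1/e_{l/k}}}$; here I use that $\Sp\calR^\inte_{K,\eta}\times_\bbX\bbY = \Sp\calR^\inte_{L,\eta^{1/e_{l/k}}}$ for $\eta$ close to $1$, which is the assertion made just after diagram \eqref{E:geometric-over-OK_0}. Taking $G_{l/k}$-invariants of $(\bbf\times 1)_*\calO_{\bbQ_\eta}\otimes V_\rho$ then gives $p_1^*$ applied to $\big((\bbf')_*\calO_{\calR^\inte_{L,\eta^{1/e_{l/k}}}}\otimes V_\rho\big)^{G_{l/k}} = \calE_\rho$ (restricted to the relevant annulus), because $p_1^*$ is flat and the averaging idempotent $\frac 1{|G_{l/k}|}\sum_{g}g$ is defined after inverting $p$, hence commutes with $p_1^\sharp$. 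Finally, for $\calF^{(r)}_{\rho,K'}$ one repeats the argument after the finite \'etale base change $\calR^\inte_{K,\eta}\rightsquigarrow\calR^\inte_{K',\eta^{1/e}}$ and restriction to the polydisc $\bbP^{(r)}_{K',\eta}$, which is an \emph{open} immersion into $\bbP_\eta$ in the sense of geometric incarnations, so again flat base change applies and the compatibility with $p_1$ is preserved.

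The main obstacle I anticipate is purely bookkeeping rather than conceptual: making precise the ``geometric incarnation'' formalism so that the phrase ``$p_1^*$ of $\calE_\rho$'' is unambiguous and so that the completed tensor products, the $\partial_{J^+}$-actions, and the $G_{l/k}$-actions all match on the nose --- in particular checking that the derivations $\partial_{J^+}$ on $\calO_{\bbP_\eta}$ coming from the lifted $p$-basis $\{B_J,T\}$ are exactly the ones transported by $\psi$ from $\calR^\inte_{K,\eta}$, and that under the identification $\Sp\calR^\inte_{K,\eta}\times_\bbX\bbY=\Sp\calR^\inte_{L,\eta^{1/e_{l/k}}}$ the connection on $\calE_\rho$ built in \cite{kedlaya-swan1} is the restriction of the universal ``Taylor'' connection on $\bbQ_\eta$. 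I would dispatch this by citing the corresponding compatibilities already recorded in \cite{xiao1} (the functoriality of Cohen rings, e.g. the analogue of Proposition~\ref{P:standard-Frob}'s input \cite[Proposition~2.1.8]{xiao1}) and by the uniqueness of continuous $\calR^\inte_{K,\eta}$-algebra homomorphisms with prescribed values on the coordinates, which is exactly the mechanism used to prove Lemma~\ref{L:p1-differential}. No genuinely new estimate or decomposition is needed; the statement is a compatibility of two incarnations of the same descent datum.
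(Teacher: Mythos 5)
Your proposal is correct and follows essentially the same route as the paper: the paper's entire proof is to exhibit the $G_{l/k}$-equivariant Cartesian diagram whose rows are the two $p_1$'s and whose columns are the finite \'etale covers, from which the identification of $\calF_\rho$ and $\calF_{\rho,K'}^{(r)}$ with $p_1^*\calE_\rho$ follows by flat base change and taking invariants, exactly as you outline. (One small slip: the relevant fiber product description is $\bbQ_\eta \cong \bbP_\eta \times_{p_1,\,\Sp\calR^\inte_{K,\eta}}\Sp\calR^\inte_{L,\eta^{1/e_{l/k}}}$, i.e.\ along $p_1$ rather than $p_2\circ(\bbf\times1)$; your subsequent completed-tensor-product conclusion is the correct one.)
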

\begin{proof}
This follows from the following $G_{l/k}$-equivariant Cartesian diagram of geometric incarnated morphisms.
\[
\xymatrix{
\bbQ_{K', \eta}^{(r)} \ar[r] \ar[d]^{\bbf \times 1} &
\bbQ_\eta \ar[r]^-{p_1}  \ar[d]^{\bbf \times 1} &
\Sp \calR^\inte_{L, \eta^{1/e_{l/k}}} \ar[d]^\bbf \\
\bbP_{K', \eta}^{(r)} \ar[r]  &
\bbP_\eta \ar[r]^-{p_1}   &
\Sp \calR^\inte_{K, \eta} 
}
\]
\end{proof}

\begin{corollary}\label{C:refined-calF-vs-calE}
For $a\in \QQ_{<b}$ and $\eta \in (0,1) \cap p^\QQ$, let $F_{\eta, a}$ denote the completion of $K(T, \delta_{J^+})$ with respect to the $(\eta, \eta^{a+1}, \eta^a, \dots, \eta^a)$-Gauss norm and let $F'_{\eta, a} = F_{\eta, a} \otimes_{\calR^\inte_{K, \eta}} \calR^\inte_{K', \eta^{1/e}}$.
Assume that $\rho$ has pure log-break $b$ and pure refined Swan conductor $\vartheta = \pi_k^{-b} \big( \bar\alpha_0 \frac{d\pi_k}{\pi_k} + \bar\alpha_1 \frac{d\bar b_1}{\bar b_1} + \cdots + \bar\alpha_m \frac{d\bar b_m}{\bar b_m} \big)$, where $\bar\alpha_{J^+} \in \bar \kappa$.  If $r<ea<eb$ and $\eta$ is sufficiently close to $1^-$, then $\calF_\rho \otimes F'_{\eta, a} = \calF_{\rho, K'}^{(r)} \otimes F'_{\eta, a}$ as a $\partial / \partial\delta_{J^+}$-differential module has pure intrinsic radii $\eta^b$ and pure refined intrinsic radii
\[
T^{-b} (\bar\alpha_0 \frac{d\delta_0}T + \bar\alpha_1 \frac{d\delta_1}{B_1} + \cdots + \bar\alpha_m \frac{d\delta_m}{B_m} ).
\]
\end{corollary}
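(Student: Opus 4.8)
The plan is to reduce this corollary to facts we have already established: Proposition~\ref{P:calF=p1*calE}, which identifies $\calF_\rho$ (and $\calF_{\rho, K'}^{(r)}$) as the pullback of $\calE_\rho$ along the map $p_1$ from Lemma~\ref{L:p1-differential}; Corollary~\ref{C:refined-radii-for-generic-point}, which relates the refined $\partial$-radii of a module to those of its pullback along a Taylor-series map to a Gauss-norm completion; and Theorem~\ref{T:independence-of-basis}, which governs how refined radii change when we pass from one set of derivations to another via a linear change of variables whose coefficients are Taylor series.

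First I would make the identification $\calF_\rho \otimes F'_{\eta, a} = \calF_{\rho, K'}^{(r)} \otimes F'_{\eta, a}$ explicit. The two modules agree on $\bbP_{K', \eta}^{(r)}$, and $F'_{\eta, a}$ is the $(\eta, \eta^{a+1}, \eta^a, \dots, \eta^a)$-Gauss-norm completion with $r < ea$, so the $\eta$-Gauss norms of $T'^{-r-e}\delta_0$ and $T'^{-r}\delta_j$ are at most $1$ once $\eta$ is close to $1^-$; hence the point defined by $F'_{\eta, a}$ lies in $\bbP^{(r)}_{K', \eta}$, and the two modules become canonically isomorphic after $\otimes F'_{\eta, a}$. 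Next, by Proposition~\ref{P:calF=p1*calE}, this module is $p_1^* \calE_\rho \otimes F'_{\eta, a}$, where $p_1$ corresponds to the homomorphism $\psi$ of Lemma~\ref{L:p1-differential}: $\psi(T) = T + \delta_0$, $\psi(B_j) = B_j + \delta_j$. The derivations on $\calF_\rho$ relevant here are $\partial/\partial\delta_{J^+}$, and under $\psi$ these act on $p_1^*\calE_\rho$ exactly as the derivations $\partial_{J^+}$ do on $\calE_\rho$ — this is the defining property of the Taylor-series pullback (and is precisely the situation of Corollary~\ref{C:refined-radii-for-generic-point}, applied simultaneously to the $m+1$ coordinates, or more directly a one-line consequence of $\psi \circ \partial_j = (\partial/\partial\delta_j) \circ \psi$).

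The substantive step is then to transport the hypothesis "$\calE_\rho$ has pure log-break $b$ and pure refined Swan conductor $\vartheta$" into a statement about pure intrinsic radii and pure refined intrinsic radii over $F'_{\eta,a}$, with the differential normalization $du_j/u_j$ replaced by $d\delta_j / B_j$. By Definition~\ref{D:differential-conductor} (and Theorem~\ref{T:swan1}), pure log-break $b$ means $\calE_\rho \otimes F'_\eta$ has pure intrinsic $\partial_{J^+}$-radii $\eta^b$ for $\eta$ near $1^-$; pulling back along $\psi$ into $F'_{\eta, a}$ preserves this because the $\partial/\partial\delta_j$ act as the $\partial_j$ do, and because the chosen Gauss-norm weights $(\eta, \eta^{a+1}, \eta^a, \ldots)$ with $a < b$ do not cause any of the $\partial/\partial\delta_j$-radii to overtake the others — one checks this via the rational-parameter structure on $F'_{\eta,a}$, where $u_0 = \delta_0$, $u_j = \delta_j$ have $|u_0| = \eta^{a+1}$, $|u_j| = \eta^a$, so that $R_{\partial/\partial\delta_0} = \eta^b < \eta^{a+1}$ and $R_{\partial/\partial\delta_j} = \eta^b < \eta^a$ are indeed all dominant. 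For the refined part: the refined Swan conductor $\vartheta = \pi_k^{-b}(\bar\alpha_0 \, d\pi_k/\pi_k + \sum_j \bar\alpha_j \, d\bar b_j/\bar b_j)$ corresponds, by Construction~\ref{C:refined-conductor}, to $\calI\Theta(\calE_\rho)$ being pure equal to $\bbpi^{-1}\vartheta\pi_k^{-b}$... more precisely $\calI\Theta(\calE_\rho \otimes F'_\eta)$ is pure and the refined $\partial_j$-radii are the $\bar\alpha_j$-components in the $du_j/u_j$ normalization with $u_0 = T$, $u_j = B_j$. Since $\partial/\partial\delta_j$ acts as $\partial_j = \partial/\partial B_j$ (resp. $\partial_0 = \partial/\partial T$) under $\psi$, but the rational parameter for $\partial/\partial\delta_j$ on $F'_{\eta,a}$ is $\delta_j$ (resp. $\delta_0$) rather than $B_j$ (resp. $T$), I would invoke Theorem~\ref{T:independence-of-basis} with the trivial change of basis $\partial = \partial/\partial\delta_j$ expressed in terms of itself — the content is just that $\Theta_{\partial/\partial\delta_j}$ (which is normalization-independent) equals $\Theta_{\partial_j}(\calE_\rho) = $ the image of $\bar\alpha_j$ in $\kappa^{(s)}$, and then reattaching the differential $d\delta_j$ one gets $\calI\Theta$-entry $\bar\alpha_j \, T^{-b} \, d\delta_j / B_j$ (the $B_j$ in the denominator recording the rational parameter $u_j = B_j$ that survives from $\calE_\rho$'s normalization, while $T^{-b}$ is the $t^{-b}$ factor from Theorem~\ref{T:var-refined}-type statements). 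Summing over $j$ gives exactly $T^{-b}(\bar\alpha_0 \, d\delta_0/T + \bar\alpha_1\, d\delta_1/B_1 + \cdots + \bar\alpha_m \, d\delta_m/B_m)$.

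The main obstacle I anticipate is bookkeeping the three different normalizations simultaneously — the intrinsic normalization $du_j/u_j$ on $\calE_\rho$ over $\calR_{K'}$ (parameters $T, B_J$), the Gauss-norm parameters on $F'_{\eta, a}$ (the $\delta$'s, with the funny weights $\eta^{a+1}$ vs $\eta^a$), and the non-log versus log distinction baked into the $-b-e$ versus $-r$ exponents in $\bbP^{(r)}$ — and making sure the factor $T^{-b}$ and the denominators $B_j$ come out correctly rather than, say, $T^{-b-1}$ or with $\delta_j$ in the denominator. Concretely, the one place where genuine care is needed is verifying that when we restrict to the completion $F'_{\eta,a}$ and apply Theorem~\ref{T:independence-of-basis}, the "$s = \min_j\{s_j - \log|\alpha_j|\}$" bookkeeping picks out all of $j \in J^+$ as belonging to $J_0$ (so no derivation is spuriously dominated), which is exactly where the hypothesis $r < ea < eb$ (equivalently $a < b$) is used. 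Everything else is the formal nonsense of pullback along Taylor series, already packaged in Corollary~\ref{C:refined-radii-for-generic-point} and Proposition~\ref{P:calF=p1*calE}.
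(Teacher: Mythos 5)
Your proposal is correct and takes essentially the same approach as the paper's (terse) proof: reduce to the Taylor--series pullback picture via Lemma~\ref{L:p1-differential} and Proposition~\ref{P:calF=p1*calE}, note that the matrix of $\partial/\partial\delta_j$ on $\calF_\rho$ in a pulled-back basis is literally the matrix of $\partial_j$ on $\calE_\rho$, and conclude for each $j$ separately via Corollary~\ref{C:refined-radii-for-generic-point}. One small computational slip worth flagging: you write $R_{\partial/\partial\delta_0}(\calF_\rho\otimes F'_{\eta,a})=\eta^b$, but since $R_{\partial/\partial\delta_0}(\calF_\rho)=R_{\partial_0}(\calE_\rho)$ and the log normalization for $\partial_0$ uses $d\pi_k/\pi_k$, pure log-break $b$ actually forces $R_{\partial_0}(\calE_\rho)\geq\eta^{b+1}$ (with equality if $\partial_0$ is dominant); the correct assertion is that the ratio of extrinsic radius to Gauss weight is the same in every direction, which is what really matters for dominance and for the purity of the intrinsic radius. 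The detour through Theorem~\ref{T:independence-of-basis} with a ``trivial change of basis'' is harmless but unnecessary: once you observe that $\Theta_{\partial/\partial\delta_j}(\calF_\rho\otimes F'_{\eta,a})=\Theta_{\partial_j}(\calE_\rho\otimes F'_\eta)$ (equality of unnormalized refined radii, direct from the identical matrices), reassembling $\calI\Theta$ as $\sum_j\theta_j\,d\delta_j$ and substituting the known values $\theta_j$ from the hypothesis on $\calE_\rho$ gives the stated formula without any change-of-basis lemma.
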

\begin{proof}
By 
Lemma~\ref{L:p1-differential} and Proposition~\ref{P:calF=p1*calE}, $\calF^{(r)}_{\rho, K'}$ is the pullback of $\calE_\rho$ along the multidimensional analog of the generic point homomorphism as in Corollary~\ref{C:refined-radii-for-generic-point}.
However, the calculation of the refined $\partial_j$-radii can be computed independently for each of the $\partial_j$.  Hence the statement follows from Corollary~\ref{C:refined-radii-for-generic-point}.
\end{proof}

Before proceeding, we briefly recall the lifting construction in \cite[Section~1]{xiao1}, which lifts a rigid analytic space over $\kappa_{k'}$ to a rigid analytic space over $A_{K'}^1[\eta^{1/e}, 1)$ for 
$\eta \in p^\QQ \cap (0,1)$ sufficiently close to $1^-$.

\begin{construction}
Let $Z$ be a rigid analytic space over $k'$ with ring of analytic functions $A_{k'} = k' \langle u_1, \dots, u_s \rangle  / I_{k'}$.  Let $I_{K'} \subset \calO_{K'}\langle u_1, \dots, u_s \rangle((T'))$ be an ideal such that $\calO_{K'}\langle u_1, \dots, u_s \rangle((T')) / I_{K'}$ is \emph{flat} over $\calO_{K'}$ and $I_{K'} \otimes_{\calO_{K'}} k' = I_{k'}$.  We call $\bbX_\eta = \Spf \big( \calR_{K', \eta}^\inte \langle u_1, \dots, u_s\rangle / I_{K'} \big)$ a \emph{lifting space} of $X$.
\end{construction}

\begin{proposition}\label{P:lifting-PQR}
Fix $r \in \NN$.

\begin{itemize}
\item [\emph{(a)}] The space $\bbQ^{(r)}_{K', \eta}$ is a lifting space of $\widehat{Q^{(r)}_{k'}}$.

\item [\emph{(b)}] Suppose
that $Q^{(r)}_{\calO_{k'}}$ is a stable model and $r = eb$ or $eb-1$.  Then for $\eta$ sufficiently close to $1^-$, $\bbQ^{(r)}_{K', \eta}$ has $[l:k]/p$ connected components, each of which is isomorphic to a formal scheme $\bbR^{(r)}_{K', \eta}$ finite and \'etale over $\bbP^{(r)}_{K', \eta}$ of degree $p$.

\item [\emph{(c)}] Fix a Dwork pi $\boldsymbol \pi = (-p)^{1/(p-1)}$ and fix $\boldsymbol \alpha_{J^+} \subset \calR_{K'(\bbpi), \eta^{1/e}}^\inte$ lifts of $\alpha_{J^+}$.  By making $\eta$ closer to $1^-$ if needed, we may choose a lift $\bbz$ of $z$ to $\bbR^{(r)}_{K'(\bbpi), \eta}$ whose minimal polynomial over $\bbP_{K', \eta}^{(r)}$ is of the form
\begin{equation}\label{E:z-Q-RK}
\frac 1{p \boldsymbol \pi} \Big((1+ \boldsymbol \pi \bbz)^p - 1 - p\boldsymbol \pi (\boldsymbol \alpha_0 T'^{-eb-e} \delta_0 + \boldsymbol \alpha_1 T'^{-eb} + \cdots + \boldsymbol \alpha_m T'^{-eb}) \Big) = 0.
\end{equation}
\end{itemize}
\end{proposition}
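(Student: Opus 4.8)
The plan is to establish parts (a), (b), (c) in sequence, using the reduction results of Subsection~\ref{S:lifting-spaces} from \cite{xiao1} together with the stable-model description in Section~\ref{S:saito-refined}. For (a), I would argue that $\bbQ^{(r)}_{K',\eta}$ is by construction obtained from $\bbY^\wedge$ by the same blowup-and-normalization recipe as $\widehat{Q^{(r)}_{k'}}$, carried out one dimension up over $\calO_{K'}$ instead of over $\calO_{k'}$; concretely, the ideal $\calI$ of $\blacktriangle_X$ in $\calO_{\bbP_\eta}$ reduces modulo $p$ to the ideal of $\tilde\Delta_X$ in $\calO_P$, and the functions $T'^{-r-e}\delta_0$, $T'^{-r}\delta_J$ on $\bbP^{(r)}_{K',\eta}$ reduce to the generators $\pi_{k'}^{-r-e}\delta_0$, $\pi_{k'}^{-r}\delta_J$ of $P^{(r)}_{\kappa_{k'}}$ from Proposition~\ref{P:saito-properties}(a). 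By flatness over $\calO_{K'}$, taking $\mathrm{Spec}\,\calO_{K'}\langle u\rangle((T'))/I_{K'}$ with $I_{K'}$ lifting the defining ideal of $\widehat{Q^{(r)}_{k'}}$ gives precisely a lifting space in the sense of the construction preceding the proposition; the identification of $\bbQ^{(r)}_{K',\eta}$ with this space follows from the universal property of the completion and the functoriality built into diagram~\eqref{E:geometric-over-RK}.

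For (b), the hypothesis that $Q^{(r)}_{\calO_{k'}}$ is a stable model for $r = eb$ means (Proposition~\ref{P:saito-properties}(e) and Lemma~\ref{L:Rkeb-1 definition}) that its special fiber, and hence $\widehat{Q^{(r)}_{k'}}$, has exactly $[l:k]/p$ connected components, each finite \'etale of degree $p$ over $\widehat{P^{(r)}_{k'}}$; the case $r=eb-1$ is handled by Lemma~\ref{L:Rkeb-1 definition}, where $\widehat{R^{(eb-1)}_{k'}}$ is the normal closure of $\widehat{P^{(eb-1)}_{k'}}$ inside $\widehat{R^{(eb)}_{k'}}$. Since connected components lift along the specialization map once $\eta$ is close enough to $1$ (the reduction functor is an equivalence on the relevant finite \'etale categories for $\eta\to 1^-$, cf.\ \cite{xiao1}), $\bbQ^{(r)}_{K',\eta}$ acquires the same number $[l:k]/p$ of connected components, each finite \'etale of degree $p$ over $\bbP^{(r)}_{K',\eta}$; call one of them $\bbR^{(r)}_{K',\eta}$. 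Compatibility of the two cases $r=eb$ and $r=eb-1$ is forced by the fact that $\bbR^{(eb-1)}_{K',\eta}$ must lift the normal closure $\widehat{R^{(eb-1)}_{k'}}$.

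For (c), the strategy mirrors Proposition~\ref{P:form-z}: start from any lift $\bbz'$ of $z$ to $\bbR^{(r)}_{K'(\bbpi),\eta}$, plug it into the left-hand side of~\eqref{E:z-Q-RK} with the chosen lifts $\boldsymbol\alpha_{J^+}$ to get an error term $\epsilon$ with $|\epsilon|_1 < 1$ (using that modulo $(\bbpi,T')$ the polynomial~\eqref{E:z-Q-RK} reduces to the Artin-Schreier equation~\eqref{E:z-Q-kappa}, by Remark~\ref{R:Dwork-pi-vs-pth-root} relating $\bbpi$ to $\zeta_p-1$), and then correct by the convergent series analogous to $\bbz = \bbz' + \epsilon + \epsilon^p + \cdots$ adapted to the twisted Artin-Schreier form $\tfrac1{p\bbpi}((1+\bbpi\bbz)^p-1-\cdots)$. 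One checks the correction converges after possibly enlarging $\eta$, that the resulting $\bbz$ satisfies~\eqref{E:z-Q-RK} exactly, and — since $\bbz$ generates a degree-$p$ finite \'etale subalgebra of $\calO_{\bbR^{(r)}_{K'(\bbpi),\eta}}$ over $\calO_{\bbP^{(r)}_{K',\eta}}$, which therefore must be the whole ring — that $\bbz$ is a genuine generator. The main obstacle I anticipate is bookkeeping in part (c): one must verify that the modified Artin-Schreier operator $w\mapsto \tfrac1{p\bbpi}((1+\bbpi w)^p - 1)$ behaves like $w\mapsto w^p-w$ to the precision needed for the Hensel-type iteration to converge in the $1$-Gauss norm on $\calR^\inte_{K'(\bbpi),\eta}$, and that the "$eb-1$ vs.\ $eb$" compatibility (the element $z$ extends across the larger disc without acquiring poles) survives the lift; these are exactly the lifted counterparts of the estimates in Proposition~\ref{P:form-z}, so the difficulty is technical rather than conceptual.
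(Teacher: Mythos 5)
Your treatment of parts (a) and (b) follows the paper's route: part (a) "follows from the construction" in the paper's own words, and your elaboration in terms of how the defining ideal lifts is fine (the paper does not define $\bbQ^{(r)}_{K',\eta}$ via a blowup — it is simply the preimage of the explicit polydisc $\bbP^{(r)}_{K',\eta}$ — but this is a framing issue, not a gap). For (b) the paper cites \cite[Proposition~1.2.11]{xiao1} for the lifting of connected components, which is what your "reduction functor is an equivalence" argument amounts to; one thing you do not address is why all the components are \emph{isomorphic} to a single $\bbR^{(r)}_{K',\eta}$, which the paper deduces as a corollary of (c).

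For (c), however, your proposed argument has two genuine gaps. First, reducing \eqref{E:z-Q-RK} modulo $(\bbpi, T')$ does not give $|\epsilon|_1 < 1$: since $|T'|_1 = 1$, an element in the ideal $(T')$ can still have unit $1$-Gauss norm. The paper instead picks $\bbz_1$ so that its minimal polynomial reduces to the integral equation \eqref{E:z-Q-Ok} over $\calO_{k'}$ modulo $\bbpi$ \emph{alone} (here one uses that $z$ satisfies \eqref{E:z-Q-Ok} exactly in $\widehat{R^{(eb)}_{\calO_{k'}}}$, and that $K$ is absolutely unramified); this gives the sharper bound $|\boldsymbol\lambda_1|_1 \leq p^{-1/(p-1)}$, which then propagates to $|\boldsymbol\lambda_1|_\eta < 1$ for $\eta$ close to $1$. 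Second, and more fundamentally, the geometric series $\bbz = \bbz' + \epsilon + \epsilon^p + \epsilon^{p^2}+\cdots$ of Proposition~\ref{P:form-z} works \emph{only} in characteristic $p$, where $(a+b)^p = a^p+b^p$ makes the Artin--Schreier operator additive so that the errors literally satisfy $\epsilon_{i+1}=\epsilon_i^p$. In the mixed-characteristic ring $\calO_{\bbR^{(r)}_{K'(\bbpi),\eta}}$ this fails: the cross terms in $(1+\bbpi\bbz_i - \bbpi\boldsymbol\lambda_i)^p$ produce a \emph{linear} contribution $\bigl(1-(1+\bbpi\bbz_i)^{p-1}\bigr)\boldsymbol\lambda_i$ to the new error, so the sequence does not close up into a tidy series of $p^i$-th powers. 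The paper's correct fix is the recursive Newton-type update $\bbz_{i+1}=\bbz_i-\boldsymbol\lambda_i$ together with the estimate
\[
|\boldsymbol\lambda_{i+1}|_\eta \leq \max\bigl\{p^{-1/(p-1)}|\boldsymbol\lambda_i|_\eta,\ |\boldsymbol\lambda_i|_\eta^p\bigr\},
\]
where the contraction factor $p^{-1/(p-1)}=|\bbpi|$ comes precisely from bounding $|1-(1+\bbpi\bbz_i)^{p-1}|$. This linear contraction (not the Frobenius-power shrinkage you are imagining) is what makes the iteration converge for all $\eta$-Gauss norms at once; you flagged the issue correctly as "the main obstacle", but you did not supply the estimate that resolves it, and the "adapted geometric series" you propose would not.
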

\begin{proof}
The first statement follows from the construction.  The second statement follows from \cite[Proposition~1.2.11]{xiao1}; the fact that all the connected components are isomorphic to the same $\bbR^{(r)}_{K'(\bbpi), \eta}$ is a corollary of (c), proved below.

For (c), pick a lift $\bbz_1$ of $z$ to $\bbR^{(r)}_{K'(\bbpi), \eta}$ whose minimal polynomial reduces to \eqref{E:z-Q-Ok} modulo $\bbpi$. (Note that $K$ is absolutely unramified.)  We define the following substitution process.  Assume that we have defined $\bbz_i$.  We set
\[
\boldsymbol \lambda_i = \frac 1{p \boldsymbol \pi} \Big((1+ \boldsymbol \pi \bbz_i)^p - 1 - p\boldsymbol \pi (\boldsymbol \alpha_0 T'^{-eb-e} \delta_0 + \boldsymbol \alpha_1 T'^{-eb} + \cdots + \boldsymbol \alpha_m T'^{-eb}) \Big).
\]
and set $\bbz_{i+1} = \bbz_i - \boldsymbol \lambda_i$.  Hence we have 
\begin{align*}
\boldsymbol \lambda_{i+1} &= \frac 1{p\bbpi}\Big( (1+\bbpi \bbz_i- \bbpi\boldsymbol \lambda_i)^p - (1+\bbpi \bbz_i)^p + p \bbpi \boldsymbol \lambda_i\Big) \\
&= \big(1-(1+\bbpi \bbz_i)^{p-1})\boldsymbol \lambda_i + \sum_{n=2}^{p-1} \frac 1{p\bbpi} \binom pn (1+\bbpi \bbz_i)^{p-n}(-\bbpi\boldsymbol \lambda_i)^n +(-1)^{p-1} \boldsymbol \lambda_i^p.
\end{align*}
Since $|\boldsymbol \lambda_1|_1 \leq p^{-1/(p-1)}$, by continuity, $|\boldsymbol \lambda_1|_\eta<1$ for $\eta \in [\eta_0, 1]$ for some $\eta_0$ sufficiently close to $1^-$.  Thus,
\[
|\boldsymbol \lambda_{i+1}|_\eta \leq \max \big\{p^{-1/(p-1)}|\boldsymbol \lambda_i|_\eta, 
|\boldsymbol \lambda_i|_\eta^p\big\} \quad \textrm{ for } \eta \in [\eta_0, 1).
\]
As a consequence, this substitution process converges with respect to all $\eta$-Gauss norms for $\eta \in [\eta_0, 1]$.  The limit $\bbz = \lim_{i \rar +\infty} \bbz_i$ satisfies \eqref{E:z-Q-RK}.  By the same argument as in Proposition~\ref{P:form-z}, the limit $\bbz$ generates $\bbR^{(r)}_{K'(\bbpi), \eta}$ over  $\bbP^{(r)}_{K'(\bbpi), \eta}$ when $\eta$ is sufficiently close to $1^-$.
\end{proof}

\subsection{Dwork isocrystals}
\label{S:Dwork-isoc}

In this subsection, we single out a calculation of refined radii for the differential modules coming from a higher dimensional Artin-Scheier cover.  This is the heart of the comparison Theorem~\ref{T:comparison}.  We will state it in a slightly general form because it has its own interest in the study of differential modules.

\begin{hypothesis}
In this subsection, let $K$ be a complete discrete valuation field of characteristic zero, containing $\bbpi$.  Let $\kappa$ denote its residue field, which has characteristic $p>0$.
\end{hypothesis}

\begin{situation}
Let $\bbP$ denote the formal scheme $ \Spf \calR^\inte_{K, \eta} \langle \delta_0, \dots, \delta_m \rangle$, and let $T$ be the coordinate of $R^\inte_{K, \eta}$.  Let $\bbR$ be a finite extension of $\bbP$ generated by $\bbz$ satisfying the relation
\[
(1+ \boldsymbol \pi \bbz)^p =1+ p\boldsymbol \pi  T^{-r}( \boldsymbol \alpha_0 \delta_0 + \cdots + \boldsymbol \alpha_m  \delta_m),
\]
where $r \in \NN$ and $\boldsymbol \alpha_j \in \calR^\inte_{K, \eta}$ for $j = 1, \dots, m$.  Let $\alpha_j \in \kappa$ be the reduction of $\boldsymbol \alpha_j$ for any $j$. We assume that not all $\alpha_j$ are zero.  Let $f: \bbR \rar \bbP$ be the natural morphism, which is finite and \'etale.
\end{situation}
 
\begin{construction}
We reproduce a multi-dimensional version of  the construction in \cite[Lemma~5.4.7]{kedlaya-overview}.
The pushforward $f_*\calO_\bbQ$ decomposes as the direct sum of $p$ differential modules of rank 1, with respect to $\partial_j = \partial / \partial \delta_j$ for $j = 0, \dots, m$.

Let $\calE_i$ be the differential module given by $(1+\bbpi \bbz)^i$ for $i = 1, \dots, p-1$.  (The trivial submodule of $f_* \calO_\bbQ$ is not of interest to us.)
\end{construction}
 
\begin{notation}
For $\eta \in (0, 1)$, let $\bbF_\eta$ be the completion of $K(T, \delta_0, \dots, \delta_m)$ with respect to the $(\eta, 1, \dots, 1)$-Gauss norm.
\end{notation}

\begin{proposition} \label{P:Dwork-isocrystal}
For $\eta$ sufficiently close to $1^-$,
the intrinsic radius $IR(\calE_i \otimes \bbF_\eta)$ is equal to $ \eta^r$ and the refined intrinsic radius of $\calE_i$ for $i = 1, \dots, p-1$ is given by
\[
\calI\Theta(\calE_i \otimes \bbF_\eta) = \big\{i\bbpi T^{-r} (\alpha_0 d \delta_0 + \cdots +\alpha_m d\delta_m)
\big\}.
\] 
\end{proposition}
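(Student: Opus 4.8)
The plan is to compute directly with the rank-one differential module $\calE_i$, whose generator $\bbv_i$ (a proxy for $(1+\bbpi\bbz)^i$) satisfies an explicit first-order system in the derivations $\partial_j = \partial/\partial\delta_j$. First I would differentiate the defining relation $(1+\bbpi\bbz)^p = 1 + p\bbpi T^{-r}(\boldsymbol\alpha_0\delta_0 + \cdots + \boldsymbol\alpha_m\delta_m)$ with respect to each $\delta_j$. Since $\partial_j$ acts as a derivation and annihilates $T$ and the $\boldsymbol\alpha$'s (these lie in $\calR^\inte_{K,\eta}$, on which $\partial_j$ is zero), this gives $p(1+\bbpi\bbz)^{p-1}\bbpi\,\partial_j\bbz = p\bbpi\boldsymbol\alpha_j T^{-r}$, hence $\partial_j\bbz = \boldsymbol\alpha_j T^{-r}(1+\bbpi\bbz)^{-(p-1)} = \boldsymbol\alpha_j T^{-r}(1+\bbpi\bbz)$ after using the defining relation to rewrite $(1+\bbpi\bbz)^{-(p-1)} = (1+\bbpi\bbz)\cdot(1+\bbpi\bbz)^{-p}$ and noting that $(1+\bbpi\bbz)^{-p}$ differs from $1$ by something with positive valuation. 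From this one reads off $\partial_j\bbv_i = i\bbpi\boldsymbol\alpha_j T^{-r}\bbv_i + (\text{higher order in }\bbpi)\,\bbv_i$; more precisely $\partial_j\bigl((1+\bbpi\bbz)^i\bigr) = i(1+\bbpi\bbz)^{i-1}\partial_j\bbz = i\boldsymbol\alpha_j T^{-r}(1+\bbpi\bbz)^i$, so in fact $\partial_j\bbv_i = i\boldsymbol\alpha_j T^{-r}\bbv_i$ exactly, and the matrix of $\partial_j$ in the basis $\{\bbv_i\}$ is the $1\times 1$ matrix $N_j^{(i)} = i\boldsymbol\alpha_j T^{-r}$.

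Next I would invoke Lemma~\ref{L:converse-good-norm} with the parameter $\bbb$ chosen so that $\omega(\bbb|u_j|)^{-1} = \ee^{r\cdot(-\log\eta)}$ in each $\partial_j$-direction; since $|\boldsymbol\alpha_j T^{-r}|_{(\eta,1,\dots,1)} = \eta^{-r}$ (at least for those $j$ with $\alpha_j\neq 0$, and $\leq\eta^{-r}$ otherwise) and $r\in\NN$ so $r(\bbb)=1$ in the relevant visible range, the norm given by $\bbv_i$ itself is a good norm and its reduced eigenvalue is nonzero. This simultaneously yields $IR_{\partial_j}(\calE_i\otimes\bbF_\eta) = \eta^r$ for each $j$ (hence $IR(\calE_i\otimes\bbF_\eta) = \eta^r$) and identifies the refined $\partial_j$-radius as the reduction of $i\boldsymbol\alpha_j T^{-r}$, namely $i\alpha_j T^{-r}$ viewed in $\kappa_{K^\alg}^{(rs)}$ where $s = -r\log\eta$ up to the normalization. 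Assembling these over $j = 0,\dots,m$ via Definition~\ref{D:multi-refined}, the multiset of refined intrinsic radii of $\calE_i$ is the single element $\sum_j (i\alpha_j T^{-r})\,d\delta_j$ with the intrinsic normalization; since $\partial_j$ here has parameter $\delta_j$ (so $d\delta_j$ is already the right intrinsic differential and there is no extra $\bbpi$-twist to worry about because we are in the visible case $r\in\NN$ with $IR<\omega$ once $\eta$ is close enough to $1$), this gives exactly $i\bbpi T^{-r}(\alpha_0\,d\delta_0 + \cdots + \alpha_m\,d\delta_m)$ after restoring the factor $\bbpi$ coming from the normalization of refined intrinsic radii relative to the derivations $\partial_j$ as they appear in the associated differential modules. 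I would double-check this $\bbpi$ bookkeeping against Construction~\ref{C:refined-conductor} and Example~\ref{Ex:pure-refined}, where precisely the same factor appears.

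The main obstacle will be the careful tracking of normalizations and the precise range of validity: one must confirm that for $\eta$ sufficiently close to $1^-$ the module $\calE_i\otimes\bbF_\eta$ genuinely has \emph{visible} radii (i.e.\ $\eta^r < \omega$), so that Lemma~\ref{L:converse-good-norm} applies in its simplest form without Frobenius antecedents, and that the Gauss norm $(\eta,1,\dots,1)$ interacts correctly with the mixed valuation of $\boldsymbol\alpha_j T^{-r}$. A secondary subtlety is justifying that $\partial_j\bbz = \boldsymbol\alpha_j T^{-r}(1+\bbpi\bbz)$ holds on the nose as an identity in $\calO_\bbR$ (not merely modulo higher-order terms), which follows cleanly from differentiating the minimal polynomial but should be written out. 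Once these points are settled, the rest is the routine application of the multi-derivation formalism of Subsection~\ref{S:multi-derivations}, and the stated formula for $\calI\Theta(\calE_i\otimes\bbF_\eta)$ drops out.
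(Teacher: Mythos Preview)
Your approach is the same as the paper's---differentiate the defining relation, read off the connection matrix, and extract the refined radii---but the execution has a concrete error. In the chain rule step you write $\partial_j\bigl((1+\bbpi\bbz)^i\bigr) = i(1+\bbpi\bbz)^{i-1}\partial_j\bbz$, but $\partial_j(1+\bbpi\bbz) = \bbpi\,\partial_j\bbz$, so you have dropped a factor of $\bbpi$. The correct connection is
\[
\partial_j\bbv_i \;=\; i\,\bbpi\,\boldsymbol\alpha_j\,T^{-r}\,\bigl(1+p\bbpi T^{-r}\textstyle\sum_l\boldsymbol\alpha_l\delta_l\bigr)^{-1}\,\bbv_i,
\]
and the $\bbpi$ in the stated answer comes from here, not from any normalization. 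Your later attempt to ``restore the factor $\bbpi$ coming from the normalization of refined intrinsic radii'' is not supported by Definition~\ref{D:multi-refined}: the multiset $\calI\Theta$ is defined without any $\bbpi$, and the factor $1/\bbpi$ appears only later in Construction~\ref{C:refined-conductor} when passing to refined Swan conductors. So the bookkeeping you propose to ``double-check'' would actually have told you something was off.

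There is a second, smaller gap: the claim that $\partial_j\bbv_i = i\boldsymbol\alpha_j T^{-r}\bbv_i$ holds ``exactly'' is false because of the factor $\bigl(1+p\bbpi T^{-r}\sum\boldsymbol\alpha_l\delta_l\bigr)^{-1}$, which you yourself noted is only $1$ modulo something of positive valuation. The paper removes this factor by changing to a new generator $\bbw_j$ (citing \cite[Lemma~5.4.7]{kedlaya-overview}), after which $\partial_j\bbw_j = i\bbpi\boldsymbol\alpha_j T^{-r}\bbw_j$ on the nose; you should do the same, or else argue carefully that the extra factor does not affect the reduced eigenvalue. Finally, your visibility remark is backwards: for $\eta$ close to $1^-$ one has $IR = \eta^r$ close to $1$, hence \emph{non}-visible; for rank one this turns out not to matter (since $\partial_j^n\bbw_j = c^n\bbw_j$ gives the refined radius as the reduction of $c$ regardless), but you should not invoke the visible case of Lemma~\ref{L:converse-good-norm}.
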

\begin{proof}
Since
\[
p \frac{d(1+\bbpi\bbz)^i}{(1+ \bbpi \bbz)^i} = i \frac {d \big(1+ p \bbpi T^{-r} (\boldsymbol \alpha_0 \delta_0 + \cdots + \boldsymbol \alpha_m \delta_m) \big)}{ 1+ p \bbpi T^{-r}(
\boldsymbol \alpha_0 \delta_0 + \cdots + \boldsymbol \alpha_m  \delta_m)},
\]
$\calE_i$ is isomorphic to a differential module given by
\[
\nabla \bbv = i \bbpi T^{-r} \big(1+ p \bbpi T^{-r}(
\boldsymbol \alpha_0 \delta_0 + \cdots + \boldsymbol \alpha_m \delta_m) \big)^{-1} \bbv \otimes \big( \boldsymbol \alpha_0  d \delta_0 + \cdots + \boldsymbol \alpha_m d \delta_m \big).
\]

Fix $j =0, \dots, m$.  Using the proof of \cite[Lemma~5.4.7]{kedlaya-overview}, when $\eta$ is sufficiently close to $1^-$ (e.g., $\eta > p^{-1/r}$), viewed as a $\partial_j$-differential module, this is the same as
\[
\partial_j \bbw_j = i \bbpi \boldsymbol \alpha_j T^{-r} \bbw_j,
\]
where $\bbw_j$ is a section of $\calE_i$, dependent on $j$.
Hence we have $\partial_j^n (\bbw_j) = \big(i \bbpi \boldsymbol \alpha_j T^{-r}\big)^n \bbw_j$, and the proposition follows immediately.
\end{proof}

\subsection{Comparison}
\label{S:comparison}

In this subsection, we assemble the results from previous subsections to prove the following comparison theorem.

\begin{theorem}\label{T:comparison}
Assume Hypothesis \emph{(Geom)}.
Then for $b \in \QQ_{>0}$, the homomorphism $\rsw: \Hom( \Fil^b_\log G_k / \Fil^{b+}_\log G_k, \Fp) \rar \Omega^1_k(\log) \otimes \pi_k^{-b} \bar \kappa$ in Theorem~\ref{T:refined-homomorphism} is the same as the homomorphism $\rsw'$ in Proposition~\ref{P:rsw'}.
\end{theorem}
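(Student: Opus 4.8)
The plan is to reduce the comparison of $\rsw$ and $\rsw'$ to a single explicit computation on an Artin-Schreier cover, namely the one carried out in Proposition~\ref{P:Dwork-isocrystal}, by matching up the two geometric pictures: Saito's stable special fibre $P^{(b)}_{\bar\kappa}$ together with its Artin-Schreier covers, and our lifting spaces $\bbP^{(r)}_{K',\eta}$, $\bbQ^{(r)}_{K',\eta}$, $\bbR^{(r)}_{K',\eta}$ together with the differential module $\calF_\rho = p_1^*\calE_\rho$. Since both $\rsw$ and $\rsw'$ are injective homomorphisms out of $\Hom(\Fil^b_\log G_k/\Fil^{b+}_\log G_k,\Fp)$, it suffices to check that they agree on a single nonzero character $\chi$ of log-break $b$, after possibly passing to a finite tamely ramified extension $k'/k$ (both sides are compatible with such base change: for $\rsw$ by Lemma~\ref{L:tame-refined}, for $\rsw'$ because the stable special fibre is insensitive to $k'$). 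Moreover, by the surjectivity of $\pi_1^\alg(P^{(b)}_{\bar\kappa})\surj \Fil^b_\log G_k/\Fil^{b+}_\log G_k$ in Proposition~\ref{P:rsw'} and the construction in (iii) of the proof of Theorem~\ref{T:refined-homomorphism}, we may realize $\chi$ by a $p$-adic representation $\rho$ of $G_{k'}$ with pure log-break $eb$ and pure refined Swan conductor, induced from a cyclic-$p$ sub-extension, and we may even arrange to be in the explicit ``Situation'' just before Lemma~\ref{L:Rkeb-1 definition}, where $Q^{(eb)}_{\bar\kappa}$ is a disjoint union of copies of one Artin-Schreier cover given by \eqref{E:z-Q-kappa}.

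The core of the argument is then a dictionary. On Saito's side, $\rsw'(\chi)$ is read off as the linear form $\bar\alpha_0\pi_k^{-b-1}\delta_0 + \bar\alpha_1\pi_k^{-b}\delta_1 + \cdots + \bar\alpha_m\pi_k^{-b}\delta_m \in \Omega^1_{\calO_k}(\log)\otimes\pi_k^{-b}\bar\kappa$ defining the Artin-Schreier cover \eqref{E:z-Q-kappa} of $P^{(b)}_{\bar\kappa}$, via the isomorphism of Remark~\ref{R:FE-alg} between $W^\dual$ and $\Ext^1(W,\Fp)$ using the chosen $p$-th root of unity. On our side, by Proposition~\ref{P:calF=p1*calE} the differential module $\calF_{\rho,K'}^{(r)}$ on $\bbP^{(r)}_{K',\eta}$ is $p_1^*\calE_\rho$, and its refined intrinsic radii are computed in Corollary~\ref{C:refined-calF-vs-calE} to be $T^{-b}(\bar\alpha_0 \tfrac{d\delta_0}{T} + \bar\alpha_1\tfrac{d\delta_1}{B_1} + \cdots)$ where $(\bar\alpha_{J^+})$ is precisely $\rsw(\rho)$ rewritten in the $\delta$-coordinates. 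The step that ties the knot is to show that these two sets of coefficients $(\bar\alpha_{J^+})$ coincide: the cover $\bbR^{(r)}_{K'(\bbpi),\eta}$ of $\bbP^{(r)}_{K'(\bbpi),\eta}$ is given by the equation \eqref{E:z-Q-RK}, i.e. $(1+\bbpi\bbz)^p = 1 + p\bbpi T^{-eb}(\boldsymbol\alpha_0 T^{-e}\delta_0 + \cdots)$ after the appropriate normalization, which is exactly the shape of the Artin-Schreier-Dwork cover treated in Proposition~\ref{P:Dwork-isocrystal} (with $r$ there equal to $eb$, or $eb-1$ when we descend along $\widehat{R^{(eb-1)}_{k'}}$). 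Proposition~\ref{P:Dwork-isocrystal} then tells us that the rank-one differential submodule $\calE_i$ cut out of $(\bbf\times1)_*\calO_{\bbQ}$ by the character $\chi^i$ has refined intrinsic radius $i\bbpi T^{-eb}(\alpha_0 d\delta_0 + \cdots + \alpha_m d\delta_m)$, whose image under $\frac{1}{\bbpi}\cdot T^{-b}$ (the normalization in Construction~\ref{C:refined-conductor}, transported to the $\delta$-coordinates) is $i\,T^{-b}(\alpha_0 d\delta_0 + \cdots)$; this is the same linear form, up to the scalar $i\in\Fp^\times$, that Saito's recipe extracts from \eqref{E:z-Q-kappa}. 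Matching the choices of $p$-th root of unity / Dwork pi on the two sides (Remark~\ref{R:Dwork-pi-vs-pth-root} and the last remark of Subsection~\ref{S:refined}) removes the ambiguity by the factor $i$, and we conclude $\rsw(\chi) = \rsw'(\chi)$.

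Concretely I would organize the steps as: (1) reduce to a single $\chi$ via injectivity of both homomorphisms and tame base change; (2) realize $\chi$ inside an induced representation $\rho$ as in the explicit Situation, so that both $P^{(b)}_{\bar\kappa}$-side and the lifting-space side are governed by one Artin-Schreier equation; (3) invoke Proposition~\ref{P:lifting-PQR}(c) to put the lift of the cover in the Dwork form \eqref{E:z-Q-RK}, and check that this lifting space genuinely lifts $\widehat{Q^{(eb)}_{k'}}$ and $\widehat{Q^{(eb-1)}_{k'}}$ (Proposition~\ref{P:lifting-PQR}(a),(b) and the discussion around Lemma~\ref{L:Rkeb-1 definition}); (4) apply Proposition~\ref{P:Dwork-isocrystal} to compute the refined intrinsic radius of the relevant rank-one piece, and apply Corollary~\ref{C:refined-calF-vs-calE} / Corollary~\ref{C:refined-radii-for-generic-point} to see that this equals $p_1^*$ of the refined Swan conductor $\rsw(\rho)$; (5) unwind the normalizations—the factor $\frac{1}{\bbpi}$ and $\pi_k^{-b}$ in Construction~\ref{C:refined-conductor} versus Saito's use of $\Omega^1_{\calO_k}(\log)\otimes\pi_k^{-b}\bar\kappa$ and the Artin-Schreier isomorphism of Remark~\ref{R:FE-alg}—and match the auxiliary choice of $p$-th root of unity to conclude equality on the nose. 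I expect the main obstacle to be step (3)–(5): verifying that the integral/formal model $\bbR^{(eb-1)}_{K'(\bbpi),\eta}$ we build really does reduce to Saito's $\widehat{R^{(eb-1)}_{k'}}$ (so that the Dwork form of the cover on the lifting side has the \emph{same} coefficients $\bar\alpha_{J^+}$ as the defining equation \eqref{E:z-Q-kappa} of the cover on the stable special fibre), and then carefully tracking all the normalization constants and the identification of $\Omega^1$ in $\delta$-coordinates with $\Omega^1_{\calO_k}(\log)$ so that no stray unit or power of $\pi_k$ is lost; the differential-module input itself is essentially finished once Proposition~\ref{P:Dwork-isocrystal} is in hand.
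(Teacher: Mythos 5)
Your proposal follows essentially the same route as the paper: reduce to a single character $\chi$, realize it by a representation $\rho$ in the explicit ``Situation'', match the Artin-Schreier equation \eqref{E:z-Q-kappa} on the stable special fibre with the Dwork-form lift \eqref{E:z-Q-RK} via Proposition~\ref{P:lifting-PQR}(c), and then read off the refined radii from Proposition~\ref{P:Dwork-isocrystal} and Corollary~\ref{C:refined-calF-vs-calE}. The key lemmas, the overall organization, and the normalization-bookkeeping you flag as the ``main obstacle'' are exactly what the paper does.

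One concrete gap: you only allow \emph{tame} base change of $k$ before putting yourself in the Situation, but that Situation requires the log-break $b$ to be a \emph{positive integer}, and tame base change cannot arrange this if $p$ divides the denominator of $b$. The paper first reduces to $b\in\ZZ$ via the non-tame base change $\tilde k/k$ of Situation~\ref{Sit:saito-base-change}, using the compatibility of $\rsw$ with this base change proved after Proposition~\ref{P:saito-base-change} on one side, and the parallel compatibility of $\rsw'$ from Saito's paper (cited as \cite[Lemma~1.22]{saito-wild-ram}) on the other. Without this wild base-change step, your argument cannot get off the ground when $b\notin\ZZ[\frac1\ell : \ell\neq p]$. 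A secondary nit: ``since both are injective homomorphisms, it suffices to check on a single nonzero $\chi$'' is not a valid inference — injectivity of two maps from the same source says nothing about their agreement. What saves you is that the argument you give is uniform in $\chi$, so it does in fact establish agreement on every $\chi$; but the logical justification should be ``the following works for an arbitrary $\chi$,'' not an appeal to injectivity.
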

\begin{proof}
Let $\tilde k$ be as in Proposition~\ref{P:saito-base-change}. By \cite[Lemma~1.22]{saito-wild-ram}, $\rsw'$ for $k$ factors as
\[
\Hom( \Fil^b_\log G_k / \Fil^{b+}_\log G_k, \Fp) \rar \Hom( \Fil^{e_{\tilde k/ k}b}_\log G_{\tilde k} / \Fil^{(e_{\tilde k / k}b)+}_\log G_{\tilde k}, \Fp)
\stackrel {\rsw'_{\tilde k}}\lrar \Omega^1_{\calO_{\tilde k}}(\log) \otimes_{\calO_{\tilde k}} \pi_{\tilde k}^{-e_{\tilde k / k}b} \kappa_{\tilde k^\alg}.
\]
The same factorization is also valid for $\rsw$ as in \eqref{E:saito-base-change}. Hence we may choose $e_{\tilde k / k}$ divisible by the denominator of $b$ and reduce to the case when $b$ is an integer.  We also remark that, for the same reason, we may feel free to replace $k$ by a finite tamely ramified extension.

Fix $\zeta_p$ a $p$-th root of unity.  Let $\chi:  \Fil^b_\log G_k / \Fil^{b+}_\log G_k\rar \Fp$ be a nontrivial character and put $\rsw'(\chi) = \pi_k^{-b}(\bar \alpha_0 \frac{d\pi_k}{\pi_k} + \bar \alpha_1 d \bar b_1 + \cdots +\bar \alpha_m d\bar b_m)$, where $\bar \alpha_0, \dots, \bar \alpha_m \in \overline \kappa$.  By identifying $1 \in \Fp$ with $\zeta_p \in \Qp(\zeta_p)$, we get a homomorphism $\Fil^b_\log G_k / \Fil^{b+}_\log G_k \stackrel \chi \rar \Fp \rar \Qp(\zeta_p)^\times$; we still use $\chi$ to denote the composition.  By the argument and the result of Theorem~\ref{T:refined-homomorphism} and by possibly replacing $k$ by a finite tamely ramified extension, we can find a $p$-adic representation $\rho$ of $G_k$ with finite image and pure log-break $b$ such that $\rho|_{\Fil^b_\log G_k}$ is a direct sum of copies of $\chi$.  Moreover, we may assume that $\rho$ is irreducible when restricted to any finite tamely ramified extension of $k'$ of $k$.  The representation $\rho$ factors exact through $l/k$ a finite Galois extension. It must be true that $\Fil^b_\log G_k / G_l \cap \Fil^b_\log G_k \simeq \Fp$.
By possibly making another tamely ramified extension of $k$, we may assume that the second highest log-break of $l/k$ is strictly less than $b-1$; thus $\Fil^{b-1}_\log G_k / G_l \cap \Fil^{b-1}_\log G_k \simeq \Fp$.

We shall now use the results and notation from previous subsections.  By Proposition~\ref{P:lifting-PQR}, $\bbQ_{K', \eta}^{(eb-1)}$ is a disjoint union of $[l:k]/p$ copies of $\bbR_{K', \eta}^{(eb-1)}$, which is finite and \'etale over $\bbP_{K', \eta}^{(eb-1)}$, generated by $\bbz$ with minimal polynomial \eqref{E:z-Q-RK}.  (Here, we made a choice of $z$ and $\bbz$ in accordance with the algebraic group structure on $Q_{\bar \kappa}^b$; see the remarks after \eqref{E:z-Q-kappa}.)
By Proposition~\ref{P:Dwork-isocrystal}, this implies that $\calF_{\rho, K'}^{eb-1} \otimes F'_{\eta, b- 1/2e}$ as $\eta \rar 1^-$ has pure refined intrinsic radii $\bbpi T^{-b}(\bar \alpha_0 \frac{d\delta_0}T + \bar \alpha_1 d\delta_1 + \cdots + \bar \alpha_m d\delta_m)$.  (Here we made a choice of Dwork pi $\bbpi$ so that $\bbpi \equiv \zeta_p-1 \mod (\zeta_p-1)^2$ as in Remark~\ref{R:Dwork-pi-vs-pth-root}.)  By Corollary~\ref{C:refined-calF-vs-calE}, the refined Swan conductor of $\calE_\rho$ has to be $\pi_k^{-b}(\bar \alpha_0 \frac{d\pi_k}{\pi_k} + \bar \alpha_1 d \bar b_1 + \cdots +\bar \alpha_m d\bar b_m)$, the same as $\rsw'$.
\end{proof}

\begin{remark}
By \cite[Theorem~9.1.1]{abbes-saito-micro-local}, the two definitions of  refined Swan conductors above are the same as Kato's definition in \cite{kato}, when the representation is one-dimensional.  So all three definitions agree.  This result is also implicitly contained in \cite{ChPu-cond}.
\end{remark}

\section{Refined Swan conductors and variation of intrinsic radii on polyannuli}
\setcounter{equation}{0}

When we have a differential module over a polyannulus or a polydisc, similar to the one-dimensional situation, we may study how the multiset of intrinsic radii of the differential module change as we complete the module with respect to different Gauss norm.  Kedlaya and the author \cite{kedlaya-xiao} proved that the partial sums of the log of intrinsic radii form continuous convex piecewise affine functions.  The purpose of this section is to prove that the slopes at some point of such affine functions are related to the refined intrinsic radii of the differential module, when completely with respect to the corresponding Gauss norm.  Again, the proof proceeds in two steps, first over an annulus and a disc (Subsection~\ref{S:boundary}) and then over a polyannulus and a polydisc (Subsection~\ref{S:polyannuli-variation}).  The first subsection focuses on some technical results which will be used in the following two subsections.

\begin{hypothesis}
We assume Hypothesis~\ref{H:K-multi} and keep the notation of Section~\ref{Section 1}.  We also assume that $K$ is discretely valued throughout this section.  We do not insist $p>0$ in this section unless otherwise specified.  
\end{hypothesis}

\subsection{Partial decomposition for differential modules}
\label{S:partial-decomposition}

In Subsection~\ref{S:one-dim}, we deliberately restricted ourself to the situation over open annuli.  In many applications, it is also important to understand the theory of differential modules over a bounded analytic ring, e.g. $K \{\{ \alpha / t, t  \rrbracket_0$.  This subsection is devoted to developing a parallel theory in this case, which is not addressed in \cite{kedlaya-xiao}.

We fix some $\alpha \in (0,1)$ for this subsection.

\begin{notation}\label{N:mathsfv_s}
We define $E$ to be the completion of $\Frac \big(K\{\{ \alpha / t, t \rrbracket_0\big)$ with respect to the 1-Gauss norm; it is isomorphic to $\calO_K\llbracket t\rrbracket[\frac 1p]$, and it contains $F_1$ as a subfield.

If $s \in -\log |K^\times|$, we can find an element $x \in K^\times$ with $|x| = \ee^{-s}$.  This $x$ defines an isomorphism $\kappa_E^{(s)} \stackrel {\cdot x^{-1}}{\lrar} \kappa_E \cong \kappa_K((t))$.  Hence we have a canonical valuation $\mathsf v_s(\cdot)$ on $\kappa_E^{(s)}$ given by the $t$-valuation; this does not depend on the choice of $x \in K^\times$.  This valuation extends naturally to $\kappa_{E^\alg}^{(s)}$ for $s \in \QQ \cdot \log |K^\times|$.
\end{notation}

\begin{notation}
Let $j \in J^+$.  For $M$ a $\partial_j$-differential module over $K\{\{ \alpha / t, t \rrbracket_0$ of rank $d$ and $i \in \{1, \dots, d\}$, we put 
$$
f_i^{(j)}(M, 0) = -\log R_{\partial_j}(M \otimes E; i), \quad\textrm{and }\ F_i^{(j)}(M,0) = f_1^{(j)}(M,0) + \cdots + f_i^{(j)}(M,0).
$$
We similarly define $f_i(M,0)$ and $F_i(M,0)$ if $M$ is a $\partial_{J^+}$-differential module over $K\{\{ \alpha / t, t\rrbracket_0$.
\end{notation}

\begin{proposition}\label{P:decomposition-partial}
Fix $j \in J^+$.  Let $M$ be a $\partial_j$- (resp. $\partial_{J^+}$-) differential module of rank $d$ over $K\{\{ \alpha / t, t \rrbracket_0$.  Then we have the following.
\begin{itemize}
\item[\emph{(a)}] The functions $f_i^{(j)}(M, r)$ and $F_i^{(j)}(M, r)$ are continuous, and are affine if $f_i^{(j)}(M, 0)>-\log|u_j|$;  the functions $f_i(M,r)$ and $F_i(M,r)$ are affine.

\item[\emph{(b)}]
Suppose for some $i \in \{\serie{}d-1\}$, the function 
$F_i^{(j)}(M, r)$ (resp. $F_i(M,r)$) is affine and $f_i^{(j)}(M,r) > f_{i+1}^{(j)}(M,r)$ (resp. $f_i(M,r) > f_{i+1}(M,r)$) for $r \in [0, -\log \alpha)$.
Then $M$ admits a unique direct sum 
decomposition $M_0 \oplus M_1$ over $K\{\{\alpha / t, t\rrbracket_0$ such that
\begin{itemize}
\item[(i)]for any $\eta \in (0, -\log \alpha)$,  the multiset of subsidiary $\partial_j$-radii (resp. intrinsic radii) of  $M_0 \otimes F_\eta$ exactly consists of the $i$ smallest elements of the multiset of subsidiary $\partial_j$-radii (resp. intrinsic radii) of $M \otimes F_\eta$, and 
\item[(ii)] the multiset of subsidiary $\partial_j$-radii (resp. intrinsic radii) of $M_0 \otimes E$ exactly consists of the $i$ smallest elements of the multiset of subsidiary $\partial_j$-radii (resp. intrinsic radii) of $M \otimes E$.
\end{itemize}
\end{itemize}
\end{proposition}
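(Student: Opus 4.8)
The plan is to reduce everything to the open-annulus case treated in Theorem~\ref{T:variation-j} and Theorem~\ref{T:variation}, using the ring $E$ as a "limit at radius $0$" and gluing via Lemma~\ref{L:proj-intersect}. First I would treat part~(a). Since $K\{\{\alpha/t, t\rrbracket_0$ embeds into $K\langle\alpha'/t, t/\alpha''\rangle$ for any $0<\alpha'<\alpha''<\alpha$ (well, into $K\{\{\alpha/t, t/\alpha''\}\}$), and into $E$, the functions $f_i^{(j)}(M,r)$ for $r\in(0,-\log\alpha)$ are exactly those coming from the restriction of $M$ to the open annulus $A^1_K(\alpha'',\alpha)$ intersected with an inner annulus; so Theorem~\ref{T:variation-j}(a),(d) gives continuity, piecewise-affineness, and convexity on $(0,-\log\alpha)$. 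The new content is: (i) these functions are genuinely affine (not just piecewise affine) on the whole interval, and (ii) they extend continuously to $r=0$ with $f_i^{(j)}(M,0)=-\log R_{\partial_j}(M\otimes E;i)$. For (i), the key point is that an element of $K\{\{\alpha/t,t\rrbracket_0$ is bounded with respect to the $\eta$-Gauss norm as $\eta\to 0^+$; this boundedness forces the convex function $F_i^{(j)}(M,r)$ to have a limit as $r\to+\infty$ (i.e.\ $r\to(-\log 0)$, the $t$-adic limit) and, since it is convex on an interval where it cannot blow up at either end in the relevant range, it must be affine — I would extract this from the convexity in Theorem~\ref{T:variation-j}(d) together with the observation that the functions are bounded above on $[0,-\log\alpha)$ because the $1$-Gauss norm dominates. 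For (ii), I would use a cyclic vector / Newton polygon argument (Proposition~\ref{P:spec-norm-from-NP}) applied over $E\cong\calO_K\llbracket t\rrbracket[\tfrac1p]$ to compare the spectral norm over $E$ with the limit of spectral norms over $F_\eta$; alternatively, invoke the fact that the coefficients of a cyclic-vector twisted polynomial lie in $K\{\{\alpha/t,t\rrbracket_0$, so their $\eta$-Gauss norms vary continuously and boundedly down to $\eta=0$, whence the least slope of the Newton polygon, hence the spectral norm, is continuous at $r=0$.

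Next, part~(b). The strategy mirrors Theorem~\ref{T:variation-j}(e): I would first produce the decomposition over the open annulus $A^1_K(\alpha',\alpha)$ for every $\alpha'>0$ by directly citing Theorem~\ref{T:variation-j}(e) (resp.\ Theorem~\ref{T:variation}(e)) — the hypothesis that $F_i^{(j)}(M,r)$ is affine and $f_i^{(j)}(M,r)>f_{i+1}^{(j)}(M,r)$ on $[0,-\log\alpha)$ guarantees its applicability on every compact subannulus. These decompositions are canonical (they split off the $i$ smallest subsidiary radii), hence compatible under further restriction, so they glue to a decomposition $M\otimes K\langle\alpha'/t,t/\alpha''\rangle = M_0'\oplus M_1'$ for all small $\alpha'$. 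Separately, I would produce a decomposition over $E$: because $f_i^{(j)}(M,0)>f_{i+1}^{(j)}(M,0)$, Theorem~\ref{T:decomp-over-field-complete} applied to the derivation $\partial_j$ over the complete field $E$ (or rather the standard decomposition by subsidiary radii over a field, which follows from Theorem~\ref{T:decomp-over-field-complete} combined with its subsidiary-radius refinement) splits $M\otimes E$ into the part with the $i$ smallest radii and the rest. The crucial verification is that the $E$-decomposition and the open-annulus decomposition induce the \emph{same} decomposition after base change to $F_1$ (the $1$-Gauss completion, which receives maps from both $E$ and $K\langle\alpha'/t,t/\alpha''\rangle$), because both are characterized by the same splitting of subsidiary radii of $M\otimes F_1$ — and at radius $r=0$, affineness plus the strict gap forces the $F_1$-subsidiary radii to be exactly the boundary values, so the two splittings agree.

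With these two decompositions agreeing over $F_1$, I would apply Lemma~\ref{L:proj-intersect} (as explained in Remark~\ref{R:proj-intersect}) to the ring diagram whose corners are $R=K\{\{\alpha/t,t\rrbracket_0$, $S=E$, $T=K\langle\alpha'/t, t/\alpha''\rangle$ (for a fixed small $\alpha'$ and some $\alpha''<\alpha$), and $U=F_1$: I need $E\cap K\langle\alpha'/t,t/\alpha''\rangle = K\{\{\alpha/t,t\rrbracket_0$ inside $F_1$, which should follow by comparing coefficient growth conditions — an element of $E$ is a power series in $t$ with bounded coefficients (after clearing $p$), an element of the annulus ring converges on $(\alpha',\alpha'')$, and their common elements are precisely the bounded power series converging near the outer radius, i.e.\ $K\{\{\alpha/t,t\rrbracket_0$; here I must be a little careful about the half-open vs.\ open conditions and about the inner radius, but the idea is that intersecting "converges for $\eta\in(\alpha',\alpha'')$" with "converges for $\eta\in[0,1]$ and bounded at $\eta=1$" collapses the inner condition. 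Applying Lemma~\ref{L:proj-intersect} to $\End(M)$ then glues the idempotent cutting out $M_0$, yielding the desired $M=M_0\oplus M_1$ over $K\{\{\alpha/t,t\rrbracket_0$; properties (i) and (ii) are immediate from the construction since they hold after base change to $F_\eta$ and to $E$ respectively. Uniqueness follows from the uniqueness over the open annulus (Theorem~\ref{T:variation-j}(e)) since any such decomposition is determined by its base change to one $F_\eta$.

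The main obstacle I anticipate is the intersection identity $E \cap K\langle\alpha'/t,t/\alpha''\rangle = K\{\{\alpha/t,t\rrbracket_0$ inside $F_1$ (or whatever the correct common overfield is): one must choose the auxiliary rings so that the hypotheses of Lemma~\ref{L:proj-intersect} — all inclusions of integral domains, with the prescribed intersection — hold exactly, and the bounded/convergent coefficient bookkeeping there is the delicate part. A secondary subtlety is justifying that the convex piecewise-affine function $F_i^{(j)}(M,r)$ on $[0,-\log\alpha)$ is in fact affine: this needs the boundedness at $\eta\to 0^+$ coming from the structure of $K\{\{\alpha/t,t\rrbracket_0$ (power series in $t$, no negative-exponent part making it blow up), plus the fact that a bounded convex function on a half-line with a finite limit is affine only if it is constant — so really the statement must be that $F_i^{(j)}$ is affine because it is convex, defined on $[0,-\log\alpha)$, and its slopes are controlled; I would phrase this as: convexity forces the slope to be nondecreasing in $r$, while the existence of a finite value at $r=0$ and the coefficients living in $K\{\{\alpha/t,t\rrbracket_0$ (so controllable $\eta$-Gauss norms for all $\eta\le$ the outer radius) forces it not to increase, hence constant slope.
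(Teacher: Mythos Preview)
Your overall direction is right in spirit, but there are two concrete errors.

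\textbf{Part (a).} You have misread the statement: ``affine'' means affine \emph{in a neighborhood of $r=0$}, not on all of $[0,-\log\alpha)$. Compare the parallel Proposition~\ref{P:touch-nonlog}(a), which says explicitly ``affine at $r=0$''. On the open interval $(0,-\log\alpha)$ the functions are only \emph{piecewise} affine (Theorem~\ref{T:variation-j}(a)), and the actual content of (a) is continuity at $r=0$; affineness near $0$ then follows automatically from convexity together with discreteness of slopes (Theorem~\ref{T:variation-j}(b),(d)). Your argument about boundedness ``as $r\to+\infty$'' is confused --- the domain is $[0,-\log\alpha)$, which is bounded, and there is simply no claim of global affineness to prove. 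Your cyclic-vector/Newton-polygon idea for establishing continuity at $r=0$ is, however, exactly what the paper (via \cite[Theorem~2.2.6(a) and Step~1 of Theorem~2.3.9]{kedlaya-xiao}) does.

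\textbf{Part (b).} Your gluing square is broken: neither map into $U=F_1$ exists. The paper records that $F_1\subset E$, not the reverse, so there is no map $S=E\to F_1$. And with $\alpha''<1$, elements of $T=K\langle\alpha'/t,t/\alpha''\rangle$ need not have finite $1$-Gauss norm, so there is no map $T\to F_1$ either. The intersection identity you flag as the ``main obstacle'' is thus not even well-posed. There is in fact no natural common overring receiving both $E$ and a strictly-interior sub-annulus ring, so the decomposition cannot be obtained by gluing an $E$-decomposition with an open-annulus decomposition in the way you propose. The argument in \cite[Theorems~2.3.9,~2.5.5]{kedlaya-xiao}, which the paper cites, instead lifts the $E$-splitting (available by Theorem~\ref{T:decomp-over-field-complete} thanks to the strict gap at $r=0$) directly to $K\{\{\alpha/t,t\rrbracket_0$ by an approximation procedure; the gluing with the open annulus then uses the square $S=K\{\{\alpha/t,t\}\}$, $T=K\{\{\beta/t,t\rrbracket_0$, $U=K\{\{\beta/t,t\}\}$ for $\beta\in(\alpha,1)$, where the intersection $S\cap T=R$ is elementary. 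You can see this pattern executed in the proof of the harder Theorem~\ref{T:touch-decomposition}.
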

\begin{proof}
The statement (a) for $\partial_j$-radii follows from the exact same argument as \cite[Theorem~2.2.6(a)]{kedlaya-xiao}, which follows immediately from the corresponding properties of the associated twisted polynomial.  We now explain how we deduce (a) for intrinsic radii.  Firstly, by Theorem~\ref{T:variation}(a)(b)(d), $d! \cdot F_i(M,r)$ is convex and piecewise affine of integer slopes for $r\in (0, -\log \alpha)$.  We need only to check continuity at $r=0$, which follows from exactly the same argument as in Step 1 of the proof of \cite[Theorem~2.3.9]{kedlaya-xiao}.

The statement (b) is proved in \cite[Theorems~2.3.9, 2.5.5 and Remarks 2.3.11, 2.5.7]{kedlaya-xiao}.
\end{proof}

Note that the statement (b) of the above proposition \emph{excludes} the case when $f_i^{(j)}(M,r) > f_{i+1}^{(j)}(M,r)$ for $r \in (0, -\log \alpha)$ and $f_i^{(j)}(M,0) = f_{i+1}^{(j)}(M,0)$, and the similar case with the superscript $(j)$ removed.  The rest of this subsection is devoted to extending the conclusion of (b) to this case.

\begin{notation}
Set $\calR = \cap_{\alpha \in (0, 1)} K \{\{ \alpha / t, t \}\}$ and $\calR^\bd = \cap_{\alpha \in (0, 1)} K \{\{ \alpha / t, t \rrbracket_0$, where the latter can be identified with the subring of the former consisting of elements with finite 1-Gauss norm.
\end{notation}

\begin{hypothesis}
\label{H:u_j=1}
We assume that $|u_j| = 1$ for $j \in J$.
\end{hypothesis} 

This hypothesis is just to make our presentation simpler.  We can always reduce to this case by replacing $K$ by the completion of $K(x_1, \dots, x_m)$ with respect to the $(|u_1|, \dots, |u_m|)$-Gauss norm and by replacing $u_j$ by $u_j / x_j$, where $\partial_j(x_{j'}) = 0$ for $j, j' \in J$.  Note that $K$ is still discretely valued.

\begin{lemma}
\label{L:Rbdd}
The ring $\calR^\bd$ is a field.  A sequence $(f_n)_{n \in \NN} \subset K \{\{ \alpha /t, t \rrbracket_0$ is convergent if it is convergent for the $r$-Gauss norm for all $r \in (\alpha, 1)$ and is bounded for the $1$-Gauss norm.
\end{lemma}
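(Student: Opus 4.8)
Lemma~\ref{L:Rbdd} has two parts: that $\calR^\bd$ is a field, and a convergence criterion for sequences in $K\{\{\alpha/t,t\rrbracket_0$. The plan is to treat these independently, since the second part is really a self-contained statement about the completeness structure of the bounded-coefficient Robba-type ring.

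For the first part, the strategy is to show every nonzero $f \in \calR^\bd$ has an inverse lying again in $\calR^\bd$. First I would recall that $\calR^\bd$ can be identified with the subring of $\calR$ consisting of elements with finite $1$-Gauss norm (this is stated in the notation just above). Since $\calR$ is a field (this is classical: the Robba ring over a complete nonarchimedean field is a field, and $\calR = \cup_{\alpha}K\{\{\alpha/t,t\}\}$ here carries that structure --- though strictly I should invoke the relevant statement from \cite{kedlaya-xiao}), it suffices to check that if $f\in\calR^\bd$ is nonzero then $f^{-1}$, computed in $\calR$, has finite $1$-Gauss norm, i.e.\ lies in $\calR^\bd$. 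The key point is that for a nonzero element of $\calR$, the $\rho$-Gauss norms $|f|_\rho$ for $\rho$ near $1^-$ are governed by the Newton polygon of $f$, which has only finitely many slopes in any closed subinterval; so near $\rho=1$, $\log|f|_\rho$ is an affine function of $\log\rho$, and $|f|_1 := \lim_{\rho\to 1^-}|f|_\rho$ exists in $(0,\infty)$ precisely because $|f|_\rho$ is bounded (as $f\in\calR^\bd$) and monotone/affine near $1$. Then $|f^{-1}|_\rho = |f|_\rho^{-1}$ for $\rho$ near $1$ by multiplicativity of the Gauss norm, so $|f^{-1}|_1 = |f|_1^{-1} < \infty$, giving $f^{-1}\in\calR^\bd$. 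The main obstacle here is being careful about which ring the inverse a priori lives in and what ``$1$-Gauss norm'' means for an element of $\calR$ that need not literally be a bounded power series --- one has to pass through the limit of $\rho$-Gauss norms rather than a naive sup over coefficients.

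For the second part, the approach is a direct completeness argument. Given $(f_n)$ Cauchy for every $r$-Gauss norm with $r\in(\alpha,1)$ and bounded for the $1$-Gauss norm, I would first note that for each fixed $r\in(\alpha,1)$ the ring $K\langle\alpha/t,t/r\rangle$ (or the appropriate completion) is complete, so $(f_n)$ converges there to some limit; these limits are compatible as $r$ varies, hence patch to an element $f\in K\{\{\alpha/t,t\}\}$. The content is that $f$ actually lies in $K\{\{\alpha/t,t\rrbracket_0$, i.e.\ that $f$ has bounded $1$-Gauss norm (equivalently, the coefficients $|a_n|$ of $f$, multiplied by $1^n=1$, stay bounded). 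This follows because convergence for $r$-Gauss norms with $r\to 1^-$ pins down each coefficient $a_n$ of $f$ as the limit of the corresponding coefficients of $f_n$, and the uniform bound $|f_n|_1\le C$ for all $n$ forces $|a_n|\le C$ for all $n$; so $|f|_1\le C<\infty$. A small technical point to address is that ``$|f_n|_1$ bounded'' should be interpreted via the same limit-of-$\rho$-Gauss-norms convention, and one checks this is consistent with the coefficientwise bound. I would also confirm convergence $f_n\to f$ happens in the topology of $K\{\{\alpha/t,t\rrbracket_0$ itself --- but here one should be slightly cautious: the natural statement is convergence for all $r$-Gauss norms $r\in(\alpha,1)$ together with membership in the ring, which is what the lemma asserts, so nothing further is needed.

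I expect the genuinely delicate step to be the first part's handling of the $1$-Gauss norm on $\calR$: making precise that $|\cdot|_1 = \lim_{\rho\to 1^-}|\cdot|_\rho$ is well-defined and multiplicative on nonzero elements, so that inversion preserves finiteness of this norm. Once that limit and its multiplicativity are in hand (which can be cited from the Newton-polygon analysis in \cite[Chapter~2]{kedlaya-xiao} or proved in a line from the fact that $|\cdot|_\rho$ is multiplicative and affine-in-$\log\rho$ near $1$), both halves of the lemma are short. The remaining verifications --- compatibility of limits, coefficientwise bounds --- are routine and I would not spell them out in full.
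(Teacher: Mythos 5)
Your treatment of the second statement is correct and is essentially the paper's own argument: convergence under every $r$-Gauss norm for $r \in (\alpha,1)$ gives a limit $f$ in $K\{\{\alpha/t,t\}\}$, and the uniform bound under the $1$-Gauss norm passes to the coefficientwise limit, so $f \in K\{\{\alpha/t,t\rrbracket_0$. Nothing to change there.

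The first statement, however, has a real gap. You open by asserting ``$\calR$ is a field'' and plan to invert $f$ inside $\calR$ and only afterwards check boundedness of $f^{-1}$. But the Robba ring $\calR = \cup_\alpha K\{\{\alpha/t,t\}\}$ is \emph{not} a field; it is a B\'ezout domain, whose nonzero non-units are precisely the elements whose Newton polygon has infinitely many slopes accumulating at $0$, i.e.\ whose zeros cluster at $|t|=1^-$. Such elements exist over every complete $K$ (discretely valued or not), for instance with unbounded coefficients growing sub-exponentially. So the very first reduction is unavailable, and the actual content of the lemma---that a nonzero bounded-coefficient $f$ is a unit---is precisely what your argument presupposes rather than proves. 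The later step ``$\log|f|_\rho$ is an affine function of $\log\rho$ near $\rho=1$'' is where the missing hypothesis hides. You correctly note that there are finitely many slopes on any closed subinterval of $(-\log 1, -\log\alpha)$, but that says nothing about accumulation \emph{at} $\rho=1$, which is the only thing that matters here. Discreteness of $|K^\times|$ is what rules accumulation out: for $K$ discretely valued and $f \in \calR^\bd$, the values $|a_n|$ lie in a discrete set bounded above, so $\sup_n|a_n|$ is attained and only finitely many Newton slopes can fall in any neighborhood of $0$; one then factors $f$ as a polynomial with roots in the open annulus (a unit in $\calR^\bd$) times a unit, giving $f^{-1}\in\calR^\bd$. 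Over a non-discretely-valued $K$ one can take $|a_n|$ strictly increasing to $1$ without attaining it, producing a bounded $f$ whose slopes accumulate at $0$ and which is therefore not a unit---exactly the failure the paper warns about (``this would be false if $K$ were not discretely valued''). Your proposal never uses discreteness, which is the tell that the argument cannot be correct as written; the clean fix is the paper's own route of citing \cite[Lemma~3.5.2]{kedlaya-overview}, or else supplying the discreteness-plus-Weierstrass-factorization argument sketched above.
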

\begin{proof}
The first statement is well-known; see \cite[Lemma~3.5.2]{kedlaya-overview}. We remark that this would be false if $K$ were not discretely valued.  To see the second statement, we observe that $(f_n)_{n \in \NN}$ converges in $K\{\{\alpha / t, t\}\}$.  The limit has bounded coefficients and hence lies in $K\{\{\alpha / t, t\rrbracket_0$.
\end{proof}

\begin{lemma}
\label{L:twisted-poly-decomposition} 
Fix $j \in J^+$.
Let $\calR^\bd\{T\}$ be the ring of twisted polynomials as in Definition~\ref{D:differential-module}, where $T$ stands for $\partial_j$ if $j \in J$ and for $\frac d{dt}$ if $j=0$.  Let $P = T^d + a_iT^{d-1} + \cdots + a_d \in \calR^\bd\{T\}$ be a monic twisted polynomial whose Newton polygon has \emph{pure} slope $s < 1$.  Let $\{b_1, \dots, b_r\}$ be the set of \emph{$\mathsf v_s$-valuations} of the reduced roots of $P$ (not counting multiplicity, with either increasing or decreasing order), when we view $P$ as a twisted polynomial in $E\{T\}$.
Then $P$ admits a unique factorization $P = Q_1 \cdots Q_r$ as products of monic twisted polynomials such that all the reduced roots of $Q_i$, when viewed as twisted polynomials in $E\{T\}$, have $\mathsf v_s$-valuations $b_i$.
\end{lemma}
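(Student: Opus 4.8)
The strategy is to factor over the completed field $E$ first, and then show the factorization descends to $\calR^{\bd}$, exactly as in the proof of the usual slope factorization (Proposition~\ref{P:spec-norm-from-NP} and its annulus analog) but now tracking the secondary $\mathsf v_s$-valuation datum rather than the slope. First I would pass to $E\{T\}$: since $P$ has pure slope $s$, Proposition~\ref{P:spec-norm-from-NP} tells us that the associated $\partial_j$-differential module $E\{T\}/E\{T\}P$ has pure $\partial_j$-radii $\omega\ee^s$, hence (using $s<1$ and Remark~\ref{R:visible-instead-of-rational} / Corollary~\ref{C:refined-radii=reduced-roots} after a Frobenius antecedent if $s$ is in the non-visible range) the reduced roots of $P$ live in $\kappa_{E^\alg}^{(s)}$ and carry well-defined $\mathsf v_s$-valuations via Notation~\ref{N:mathsfv_s}. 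Grouping the reduced roots by their $\mathsf v_s$-valuation gives, by the theory of Newton polygons over the complete discretely valued field $E$ applied to the ``second-level'' valuation $\mathsf v_s$, a factorization $P = Q_1'\cdots Q_r'$ over $E\{T\}$; uniqueness over $E\{T\}$ is standard since a monic twisted-polynomial factorization over a field is determined by which reduced roots (with multiplicity) go into each factor.

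Next I would descend this factorization from $E\{T\}$ to $\calR^{\bd}\{T\}$. The key point is that the coefficients of each $Q_i'$ lie in $K\{\{\alpha/t,t\rrbracket_0$ for $\alpha$ close to $1$, and in fact in $\calR^{\bd}$. I would realize each $Q_i'$ as a limit of an iterative (Hensel-type) approximation scheme: starting from a factorization $P = \widetilde Q_1\cdots\widetilde Q_r$ over $\calR^{\bd}\{T\}$ whose reductions mod $\gothm_K^{(s)+}$ have the correct reduced roots — such a coarse factorization exists because the reduced polynomials over $\kappa_E$ can be factored by $t$-valuation and lifted — and improving it by successive corrections. Convergence of the scheme is controlled using Lemma~\ref{L:Rbdd}: each correction term is strictly smaller for every $\eta$-Gauss norm with $\eta\in(\alpha,1)$, and the bounded-ness for the $1$-Gauss norm is maintained because the reduced data (which governs the $1$-Gauss norm size of the factors) does not change along the iteration. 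Thus the limit factorization $P = Q_1\cdots Q_r$ exists over $\calR^{\bd}\{T\}$, and base-changing to $E\{T\}$ it must agree with $Q_1'\cdots Q_r'$ by the uniqueness over $E\{T\}$, so each $Q_i$ has the prescribed reduced-root $\mathsf v_s$-valuation.

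For uniqueness over $\calR^{\bd}\{T\}$: any two such factorizations become equal after base change to $E\{T\}$ (by the $E$-uniqueness established above), and since $\calR^{\bd}\{T\}\hookrightarrow E\{T\}$ is injective (as $\calR^{\bd}\hookrightarrow E$ is), they were already equal over $\calR^{\bd}\{T\}$. One should double-check the non-commutative bookkeeping — i.e.\ that the twisted-polynomial Newton-polygon factorization machinery and the Hensel-type iteration genuinely go through with $T$ acting as $\partial_j$, which introduces Leibniz corrections; but this is exactly the kind of argument already carried out in \cite{kedlaya-course} and \cite{kedlaya-xiao} for the slope factorization, and the only new ingredient is that we filter reduced roots by $\mathsf v_s$ instead of by slope of $P$.

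\textbf{Main obstacle.} The delicate step is the convergence of the approximation scheme over $\calR^{\bd}$: one must ensure that the corrections shrink in every $\eta$-Gauss norm for $\eta<1$ \emph{while staying bounded} in the $1$-Gauss norm, so that Lemma~\ref{L:Rbdd} applies and the limit lands in $K\{\{\alpha/t,t\rrbracket_0$ rather than merely in the Robba ring $\calR$. This is where discreteness of $K$ (Hypothesis) and the purity of the slope of $P$ are both essential, and it is the part of the argument that genuinely has no counterpart in \cite{kedlaya-xiao}, which worked only over open annuli; I would model the estimates on Step~1 of the proof of \cite[Theorem~2.3.9]{kedlaya-xiao} and on \cite[Lemma~3.5.2]{kedlaya-overview}.
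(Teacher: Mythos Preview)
Your proposal is correct and follows essentially the same route as the paper: an iterative (Hensel-type) factorization scheme for twisted polynomials whose convergence in $K\{\{\alpha/t,t\rrbracket_0$ is secured by Lemma~\ref{L:Rbdd}, with the key observation being that the successive approximants stay bounded for the $1$-Gauss norm while converging for each $\eta$-Gauss norm with $\eta<1$. The paper compresses all of this into a direct citation of \cite[Proposition~3.2.2]{kedlaya-semistable3} (which already handles the non-commutative bookkeeping you flag) and simply notes that the approximating sequences $\{P_l\},\{Q_l\}$ produced there are bounded under the $1$-Gauss norm; your detour through a preliminary factorization over $E\{T\}$ is harmless but unnecessary, since the iterative scheme already lands in $\calR^\bd\{T\}$ directly.
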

\begin{proof}
We assume that $b_1, \dots, b_r$ are in decreasing order.
It then suffices to show that we can write $P = QR$ as a product of two monic polynomials such that the reduced roots of $Q$ (resp. $R$), when viewed as twisted polynomials in $E\{T\}$, have pure $\mathsf v_s$-valuations $b_1$ (resp. strictly less than $b_1$).  We can also write it as $P = RQ$ satisfying the same condition, but with different $Q$ and $R$.
By Lemma~\ref{L:Rbdd}, the claim follows from \cite[Proposition~3.2.2]{kedlaya-semistable3} because the sequences $\{P_l\}$ and $\{Q_l\}$ there is bounded under the 1-Gauss norm.
\end{proof}

\begin{lemma}
\label{L:NP-vs-IR}
Fix $j \in J$.  Let $M$ be a $\partial_j$-differential module of rank $d$ over $K\{\{ \alpha / t, t\rrbracket_0$ such that $M \otimes E$ has pure intrinsic $\partial_j$-radii $IR_{\partial_j}(M \otimes E) <\omega$.  By choosing a cyclic vector of $M \otimes \calR^\bd$, we may identify $M \otimes \calR^\bd$ with $\calR^\bd\{T\} / \calR^\bd\{T\}P$, where $P$ is a twisted polynomial in $\calR^\bd\{T\}$.  Then for $\eta$ sufficiently close to $1^-$, the slopes of Newton polygon of $P$ (for the $\eta$-Gauss norm) are the log of the subsidiary $\partial_j$-radii of $M \otimes F_\eta$ minus $\log \omega$.
\end{lemma}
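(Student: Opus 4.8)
The plan is to reduce the statement to the Christol--Dwork description of subsidiary radii via Newton polygons (Proposition~\ref{P:spec-norm-from-NP}), applied over the Gauss-norm completions $F_\eta$, and then to use the hypothesis on $M \otimes E$ to guarantee that for $\eta$ close to $1$ the entire Newton polygon lies in the ``visible'' range, so that \emph{every} slope records a subsidiary $\partial_j$-radius.

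First I would fix $\alpha_0 \in (\alpha, 1)$ large enough that the finitely many coefficients of $P$, which lie in $\calR^\bd$, all lie in $K\{\{\alpha_0/t, t\rrbracket_0$; for $\eta \in (\alpha_0, 1)$ the $\eta$-Gauss norm is then a well-defined multiplicative norm on these coefficients, and the cyclic vector presentation in the statement identifies $M \otimes F_\eta$ with $F_\eta\{T\}/F_\eta\{T\}P$. Since $j \in J$ and $|u_j| = 1$, the derivation $\partial_j$ does not involve $t$, so $|\partial_j| = |u_j|^{-1} = 1$ on $K$, on $E$, and on each $F_\eta$; in particular $-\log|\partial_j|_{F_\eta} = 0$. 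Proposition~\ref{P:spec-norm-from-NP} then says that the multiplicity of any slope $s' < 0$ of the Newton polygon $\NP_\eta(P)$ (taken for the $\eta$-Gauss norm) equals the multiplicity of $\omega\ee^{s'}$ in $\gothR_{\partial_j}(M \otimes F_\eta)$; equivalently, the negative slopes of $\NP_\eta(P)$ are exactly $\{\log R_{\partial_j}(M \otimes F_\eta; i) - \log\omega\}$, taken over those $i$ with $R_{\partial_j}(M \otimes F_\eta; i) < \omega$.

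Next I would show that for $\eta$ sufficiently close to $1$ \emph{all} $d$ subsidiary $\partial_j$-radii of $M \otimes F_\eta$ are $<\omega$. By Proposition~\ref{P:decomposition-partial}(a) the functions $f_i^{(j)}(M, r) = -\log R_{\partial_j}(M \otimes F_{\ee^{-r}}; i)$ are continuous on $[0, -\log\alpha)$, with $f_i^{(j)}(M, 0) = -\log R_{\partial_j}(M \otimes E; i)$. The hypothesis that $M \otimes E$ has pure visible intrinsic $\partial_j$-radii gives $R_{\partial_j}(M \otimes E; i) = IR_{\partial_j}(M\otimes E) < \omega$ for all $i$ (using $|\partial_j|_E = 1$), hence $f_i^{(j)}(M, 0) > -\log\omega > 0 = -\log|u_j|$; by continuity $f_i^{(j)}(M, r) > -\log\omega$ for all $r$ in a right neighborhood of $0$, i.e. $R_{\partial_j}(M \otimes F_\eta; i) < \omega$ for $\eta$ close to $1$. (Alternatively, one may argue directly that, $K$ being discretely valued, $|a|_\eta \to |a|_1$ as $\eta \to 1^-$ for each fixed $a \in \calR^\bd$ --- the same discreteness phenomenon underlying Lemma~\ref{L:Rbdd} --- so $\NP_\eta(P)$ converges to the Newton polygon of $P$ over $E$, all of whose slopes are negative by Proposition~\ref{P:spec-norm-from-NP} applied over $E$.)

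Finally, since $P$ is monic of degree $d$ and $M \otimes F_\eta$ has no zero radii, the Newton polygon $\NP_\eta(P)$ has total horizontal length $d$, hence exactly $d$ slopes counted with multiplicity. By the two previous steps the negative slopes are in multiplicity-preserving bijection with the $d$ subsidiary $\partial_j$-radii of $M \otimes F_\eta$, so they already exhaust the full horizontal length; thus $\NP_\eta(P)$ has no slope $\geq 0$, and every slope equals $\log R_{\partial_j}(M \otimes F_\eta; i) - \log\omega$ for the corresponding $i$, which is the assertion of the lemma. The step needing the most care is the middle one --- transferring purity and visibility from the boundary completion $E$ (the $1$-Gauss norm) inward to all $F_\eta$ with $\eta$ near $1$ --- which is precisely where the standing hypothesis that $K$ is discretely valued enters; the Newton-polygon counting in the last step and the scalar-extension bookkeeping in the first are routine.
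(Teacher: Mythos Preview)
Your proof is correct and follows essentially the same route as the paper's: descend the cyclic-vector presentation from $\calR^\bd$ to $K\{\{\beta/t,t\rrbracket_0$ for $\beta$ near $1$, use continuity at $r=0$ (the paper just asserts visibility, you justify it via Proposition~\ref{P:decomposition-partial}(a)) to ensure all $\partial_j$-radii of $M\otimes F_\eta$ are visible for $\eta$ near $1$, and then invoke Christol--Dwork (Proposition~\ref{P:spec-norm-from-NP}). Your write-up is more detailed than the paper's three-line proof, particularly in the slope-counting step and the explicit use of $|u_j|=1$, but the underlying argument is the same.
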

\begin{proof}
The identification $M \otimes \calR^\bd \simeq \calR^\bd\{T\} / \calR^\bd\{T\}P$ descends to
\[
M \otimes K\{\{\beta / t, t\rrbracket_0 \simeq K\{\{\beta / t, t\rrbracket_0\{T\} / K\{\{\beta / t, t\rrbracket_0\{T\}P
\]
for $\beta$ sufficiently close to $1^-$.
Note that for $\eta$ sufficiently close to $1^-$, all $\partial_j$-radii of $M \otimes F_\eta$ are visible.  The lemma follows from Proposition~\ref{P:spec-norm-from-NP}.
\end{proof}

The following theorem also holds without assume Hypothesis~\ref{H:u_j=1}.

\begin{theorem}
\label{T:touch-decomposition}
Fix $j \in J^+$.  Let $M$ be a $\partial_j$- (resp. $\partial_{J^+}$-) differential module of rank $d$ over $K \{\{ \alpha / t, t\rrbracket_0$ such that $M \otimes E$ has pure intrinsic $\partial_j$-radii $IR_{\partial_j}(M \otimes E) < 1$ (resp. intrinsic radii $IR(M \otimes E)<1$).  Suppose that  for some $i \in \{\serie{}d-1\}$, the function 
$F_i^{(j)}(M, r)$ (resp. $F_i(M, r)$) is affine and $f_i^{(j)}(M,r) > f_{i+1}^{(j)}(M,r)$ (resp. $f_i(M,r) > f_{i+1}(M,r)$) for any $r \in (0, -\log \alpha)$.
Then $M$ admits a unique direct sum 
decomposition $M_0 \oplus M_1$ of $\partial_j$- (resp. $\partial_{J^+}$-) differential module over $K\{\{\alpha / t, t\rrbracket_0$ such that,  for any $\eta \in (0, -\log \alpha)$, the multiset of $\partial_j$-radii (resp. intrinsic radii) of $M_0 \otimes F_\eta$ exactly consists of the smallest $i$ elements of the multiset of $\partial_j$-radii (resp. intrinsic radii) of $M \otimes F_\eta$.
\end{theorem}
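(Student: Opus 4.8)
The plan is to treat separately the case already covered by Proposition~\ref{P:decomposition-partial}(b), namely when $f_i^{(j)}(M,r)>f_{i+1}^{(j)}(M,r)$ (resp. $f_i(M,r)>f_{i+1}(M,r)$) also at $r=0$; the real content is the degenerate situation in which these functions touch at $r=0$, and since $M\otimes E$ has pure radii, in that situation \emph{all} the subsidiary radii coincide at $r=0$. First I would reduce the $\partial_{J^+}$-case to the $\partial_0$-case by the pullback trick from the proof of Theorem~\ref{T:variation}: under Hypothesis~\ref{H:u_j=1} one has $IR=ER$, and pulling back along the Taylor homomorphism $\tilde f^{*}$ with $\tilde f^{*}(u_j)=u_j+x_jt$ converts the intrinsic radii into $\partial_0$-radii of $\tilde f^{*}M$ over the enlarged base; once the $\partial_0$-problem is solved, one glues the decomposition back down with Lemma~\ref{L:proj-intersect} and Remark~\ref{R:proj-intersect}, exactly as in Theorem~\ref{T:variation}(e). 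Next I would reduce to the case $IR_{\partial_j}(M\otimes E)<\omega$ by passing to an iterated $\partial_j$-Frobenius antecedent over the bounded ring, as in the proof of Theorem~\ref{T:var-refined}; this is compatible with the bounded-ring structure (the relevant antecedent ring is again discretely valued) and with the other derivations.

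So suppose $j$ is fixed, $M$ is a $\partial_j$-module over $K\{\{\alpha/t,t\rrbracket_0$ with $M\otimes E$ of pure intrinsic $\partial_j$-radii $<\omega$, $F_i^{(j)}(M,r)$ affine and $f_i^{(j)}(M,r)>f_{i+1}^{(j)}(M,r)$ on $(0,-\log\alpha)$. Choosing $\alpha'\in(\alpha,1)$ close to $1$, a Weierstrass-preparation argument produces a cyclic vector of $M$ over $K\{\{\alpha'/t,t\rrbracket_0$, so that $M\otimes K\{\{\alpha'/t,t\rrbracket_0\cong K\{\{\alpha'/t,t\rrbracket_0\{T\}/K\{\{\alpha'/t,t\rrbracket_0\{T\}P$ with $P$ monic. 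Since $M\otimes E$ has pure $\partial_j$-radii, the Newton polygon of $P$ at the $1$-Gauss norm has pure slope $s$; by Lemma~\ref{L:NP-vs-IR} the Newton polygon of $P$ at the $\eta$-Gauss norm records the subsidiary $\partial_j$-radii for $\eta$ close to $1^{-}$, and a short computation (a root with reduced root of $\mathsf v_s$-valuation $v$ has $\eta$-Gauss norm $e^{-s}\eta^{v}$ for $\eta$ near $1$) shows that two reduced roots of $P$ contribute distinct $\eta$-slopes precisely when they have distinct $\mathsf v_s$-valuations. Hence $f_i^{(j)}(M,r)>f_{i+1}^{(j)}(M,r)$ forces the cut at level $i$ to fall strictly between two $\mathsf v_s$-classes, and Lemma~\ref{L:twisted-poly-decomposition}, whose proof applies verbatim over $K\{\{\alpha'/t,t\rrbracket_0$ by the completeness property in Lemma~\ref{L:Rbdd}, lets me regroup its factorization into $P=QR$ with $\deg Q=i$, $Q$ collecting exactly the $\mathsf v_s$-classes of the $i$ smallest radii near $\eta=1$. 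The submodule $N_0:=K\{\{\alpha'/t,t\rrbracket_0\{T\}R\big/K\{\{\alpha'/t,t\rrbracket_0\{T\}P\cong K\{\{\alpha'/t,t\rrbracket_0\{T\}/K\{\{\alpha'/t,t\rrbracket_0\{T\}Q$ then has rank $i$ and, by additivity of Newton polygons under multiplication together with Lemma~\ref{L:NP-vs-IR}, the $i$ smallest $\partial_j$-radii of $M\otimes F_\eta$ for $\eta$ near $1$; applying the same construction to $M^{\dual}$ and dualizing gives a rank-$(d-i)$ submodule $N_1$ with the $d-i$ largest radii near $\eta=1$.

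To upgrade this to $M\otimes K\{\{\alpha'/t,t\rrbracket_0=N_0\oplus N_1$ I would argue as follows: since $N_0$ is a $\partial_j$-submodule, $F_i^{(j)}(N_0,r)\le F_i^{(j)}(M,r)$ for all $r\in(0,-\log\alpha')$, with equality near $r=0$; as $F_i^{(j)}(N_0,r)$ is convex (Theorem~\ref{T:variation-j}(d)) and $F_i^{(j)}(M,r)$ is affine, a convex function bounded by an affine function and agreeing with it on an interval must agree with it everywhere, so $N_0\otimes F_\eta$ is the bottom-$i$ part for \emph{every} $\eta\in(\alpha',1)$, and likewise for $N_1$. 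Therefore $N_0\cap N_1=0$ (its base change to any $F_\eta$ vanishes and $M$ is torsion-free), and the inclusion $N_0+N_1\hookrightarrow M$ has determinant with no zero on the annulus $(\alpha',1)$, hence a unit in $K\{\{\alpha'/t,t\rrbracket_0$; thus $M\otimes K\{\{\alpha'/t,t\rrbracket_0=N_0\oplus N_1$. Finally Theorem~\ref{T:variation-j}(e) (resp. Theorem~\ref{T:variation}(e)) gives the analogous decomposition of $M$ over the open annulus $A_K^1((\alpha,1))$, and since the two decompositions agree over $F_\eta$ for $\eta\in(\alpha',1)$ they agree over $A_K^1((\alpha',1))$; one more application of Lemma~\ref{L:proj-intersect} and Remark~\ref{R:proj-intersect} with $S=K\{\{\alpha'/t,t\rrbracket_0$, $T$ the ring of functions on $A_K^1((\alpha,1))$ and $S\cap T=K\{\{\alpha/t,t\rrbracket_0$ descends the decomposition to $K\{\{\alpha/t,t\rrbracket_0$, with the stated behaviour of the radii for all $\eta\in(0,-\log\alpha)$. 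The main obstacle I expect is the middle step: identifying the $\mathsf v_s$-valuation of the reduced roots as the invariant that separates the coalescing radii and matching the purely algebraic factorization of Lemma~\ref{L:twisted-poly-decomposition} with the analytic splitting of subsidiary radii near the $1$-Gauss point; once that is in place, the convexity argument and the gluing are routine given the earlier results.
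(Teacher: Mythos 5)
Your overall plan follows the paper fairly closely in the $\partial_j$ case — cyclic vector and twisted-polynomial factorization via Lemma~\ref{L:twisted-poly-decomposition}, complementary piece from a second application (the paper uses a quotient rather than the dual, but these are equivalent), glue with Lemma~\ref{L:proj-intersect} — and the convexity argument and the determinant argument you add to confirm $M=N_0\oplus N_1$ are a reasonable, if slightly more laborious, way of spelling out what the paper leaves implicit. However, there are two genuine gaps.

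First, the reduction of the $\partial_{J^+}$ case to the $\partial_0$ case via the pullback trick rests on the claim that ``under Hypothesis~\ref{H:u_j=1} one has $IR=ER$,'' and this is false. Even with $|u_j|=1$ for $j\in J$, the derivation $\partial_0$ has rational parameter $t$, so on $F_\eta$ one has $IR_{\partial_0}=|t|^{-1}R_{\partial_0}=\eta^{-1}R_{\partial_0}$, hence $IR(V)=\min\{\eta^{-1}R_{\partial_0}(V),\,R_{\partial_j}(V)\}$ while $ER(V)=\min\{R_{\partial_0}(V),\,R_{\partial_j}(V)\}$; these differ whenever $\eta<1$. The Taylor pullback $\tilde f^{*}$ with $\tilde f^{*}(u_j)=u_j+x_jt$ converts the \emph{extrinsic} radii $ER(M\otimes F_\eta)$ into $\partial_0$-radii of $\tilde f^{*}M$, exactly as in the ``Moreover'' part of Theorem~\ref{T:variation}, but it does not give you the intrinsic radii; in particular a module with pure intrinsic radii need not have pure extrinsic radii, so the decomposition you would obtain for $\tilde f^{*}M$ would be the wrong one. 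The paper instead deduces the $\partial_{J^+}$ case by applying the single-derivation statement (and Proposition~\ref{P:decomposition-partial}(b)) to each $\partial_j$ separately, obtaining a refinement of the desired decomposition, and then \emph{regrouping} the resulting summands using the fact that each small summand has pure intrinsic radius equal to one of its pure $\partial_j$-radii; that argument never passes through $ER$.

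Second, the reduction to the visible range $IR_{\partial_j}(M\otimes E)<\omega$ by iterated $\partial_j$-Frobenius antecedents is incomplete: a Frobenius antecedent exists only when $IR_{\partial_j}>p^{-1/(p-1)}=\omega$, so this process can terminate at the boundary value $IR_{\partial_j}(M\otimes E)=\omega$ exactly (e.g., when $IR_{\partial_j}(M\otimes E)=p^{-1/p^k(p-1)}$ for some $k\ge 1$), and no further antecedent is available there. The hypothesis of the theorem allows this, since it only requires $IR_{\partial_j}(M\otimes E)<1$. The paper treats this boundary case separately by passing to the $\partial_j$-Frobenius \emph{descendant} $\varphi^{(\partial_j)}_{*}M$ (which lowers the radius out of the critical value, so Proposition~\ref{P:decomposition-partial}(b) or the already-established visible case applies), and then recovers the decomposition of $M$ from that of $\varphi^{(\partial_j)}_{*}M$ via the canonical isomorphism $\varphi^{(\partial_j)*}\varphi^{(\partial_j)}_{*}M\cong M^{\oplus p}$ of Lemma~\ref{L:frob-properties}(b). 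Your sketch omits this case entirely. One small further remark: a cyclic vector need not exist over $K\{\{\alpha'/t,t\rrbracket_0$ directly by ``Weierstrass preparation''; the paper takes the cyclic vector over the field $\calR^{\bd}$ (Lemma~\ref{L:Rbdd}) and then descends to $K\{\{\beta/t,t\rrbracket_0$ for $\beta$ close to $1$, and you should do the same.
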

\begin{proof}
We first deduce the $\partial_j$-differential module case.  By Theorem~\ref{T:variation-j}(e), it suffices to obtain the decomposition over $K\{\{\beta / t, t \rrbracket_0$ for $\beta \in (\alpha , 1)$ sufficiently close to $1$ and then we may apply Lemma~\ref{L:proj-intersect} and Remark~\ref{R:proj-intersect} to glue this decomposition with the decomposition given by Theorem~\ref{T:variation-j}(e).

To start, we assume that $IR_{\partial_j}(M \otimes E) <\omega $.  By making $\beta$ closer to $1$, we may assume that $IR_{\partial_j}(M \otimes F_\eta)  <\omega$ for all $\eta \in (\beta, 1)$ too.  It is also very easy to reduce to the case when Hypothesis~\ref{H:u_j=1} holds.  Since $\calR^\bd$ is a field, we can find a cyclic vector to identify $M \otimes \calR^\bd$ with $\calR^\bd\{T\} / \calR^\bd\{T\}P$ for a monic twisted polynomial $P$ as in Lemma~\ref{L:twisted-poly-decomposition}.  Applying Lemma~\ref{L:twisted-poly-decomposition} to $M\otimes \calR^\bd$ with the $b$'s in  decreasing order, we  can find a submodule $M_0$ of $M$ such that the multiset of $\partial_j$-radii of $M_0 \otimes F_\eta$ exactly consists of the smallest $i$ elements in the multiset of $\partial_j$-radii of $M \otimes F_\eta$ when  $\eta$ sufficiently close to $1^-$.  Applying Lemma~\ref{L:twisted-poly-decomposition} again with the $b$'s increasing, we can find a quotient $M'_0$ of $M$ satisfying exactly the same condition on $M_0$ as above.  Then the kernel of $M \to M'_0$ together with $M_0$ gives the direct sum decomposition required in the theorem.

We next assume that $p>0$ and $IR_{\partial_j}(M \otimes E) = p^{-1/(p-1)}$.    
If $j \in J$, the $\partial_j$-Frobenius $\varphi^{(\partial_j)}: K^{(\partial_j)}\rar K$ naturally extends to $\varphi^{(\partial_j)}: K^{(\partial_j)} \{\{\alpha / t, t \rrbracket_0 \rar K \{\{\alpha / t, t \rrbracket_0$; 
if $j = 0$, we have $\varphi^{(\partial_0)}: K \{\{\alpha^p / t^p, t^p \rrbracket_0 \rar K \{\{ \alpha / t, t \rrbracket_0$.  Then the desired decomposition follows from the decomposition of $\varphi^{(\partial_j)}_*M$.  Note that $\varphi^{(\partial_j)*} \varphi^{(\partial_j)}_*M \cong M^{\oplus p}$.

If $p>0$ and $IR_{\partial_j}(M \otimes E) >p^{-1/(p-1)}$, we may assume that $IR_{\partial_j}(M \otimes F_\eta)> p^{-1/(p-1)}$ for all $\eta \in (\beta, 1)$, and the decomposition follows from that of the $\partial_j$-Frobenius antecedent of $M$.

Finally, we show that the $\partial_{J^+}$-differential module case follows from the $\partial_j$-differential module case.  By Theorem~\ref{T:variation}(e), it suffices to find the decomposition over $K\{\{\beta / t, t \rrbracket_0$ for $\beta \in (\alpha , 1)$ sufficiently close to $1$ and then we may apply Lemma~\ref{L:proj-intersect} and Remark~\ref{R:proj-intersect} to glue the decompositions.  By Proposition~\ref{P:decomposition-partial}(a) and Theorem~\ref{T:variation-j}(a), there exists $\beta \in (\alpha, 1)$ such that,  if $IR_{\partial_j}(M \otimes E; i) <1$ for some $j$, then the function $f^{(j)}_i(M, r)$ for this $j$ is affine over $[0, -\log \beta)$. By the decompositions given by Proposition~\ref{P:decomposition-partial}(b) and this theorem for $\partial_j$, the restriction of $M$ to $K \{\{\beta/t t\rrbracket_0$ is the direct sum of $\partial_{J^+}$-differential modules $M_l$ such that, for any $j \in J^+$ with $IR_{\partial_j}(M_l \otimes E) <1$, the $\partial_j$-differential module $M_l\otimes F_\eta$ has pure $\partial_j$-radii for any $\eta \in (\beta, 1)$.    Since we already know that $M \otimes E$ has pure intrinsic radii $< 1$, we may take $\beta$ sufficiently close to $1$ such that each direct summand above has pure intrinsic radii equal to the $\partial_j$-radii for some $j$, when tensored with $F_\eta$ for any $\eta \in (\beta, 1)$.  Hence regrouping the direct summand gives the direct sum decomposition we are looking for.
\end{proof}

\begin{remark}
The condition $IR_{\partial_j}(M \otimes E) < 1$ is crucial.  As pointed out in \cite[Remark~12.5.4]{kedlaya-course}, one may give counterexamples in the case $IR_{\partial_j}(M \otimes E) = 1$ using the theory of crystals.  However, in the presence of a Frobenius, one may still get the decomposition.  We plan to come back to this point in a future work.
\end{remark}

\begin{proposition}
\label{P:touch-nonlog}
Let $M$ be a $\partial_{J^+}$-differential module over $K \{\{\alpha/t, t \rrbracket_0$ (resp. $K \llbracket t\rrbracket_0$) or rank $d$.  We put $\hat f_i(M, 0) = -\log ER(M \otimes E; i)$ and $\hat F_i(M, 0) = \hat f_1(M, 0) + \cdots + \hat f_i(M,0)$ for $i =1, \dots, d$.  Then we have the following.
\begin{itemize}
\item[\emph{(a)}] The functions  $\hat f_i(M, r)$ and $\hat F_i(M, r)$ are affine at $r = 0$.

\item[\emph{(b)}]
Suppose for some $i \in \{\serie{}d-1\}$, the function 
 $\hat F_i(M,r)$ is affine and  $\hat f_i(M,r) > \hat f_{i+1}(M,r)$ for $r \in (0, -\log \alpha)$ (resp. whenever $\hat f_i(M, r)>r$), and suppose that $\hat f_i(M, 0)>0$.
Then $M$ admits a unique direct sum 
decomposition $M_0 \oplus M_1$ over $K\{\{\alpha / t, t\rrbracket_0$ (resp. $K \llbracket t \rrbracket_0$) such that the multiset of extrinsic  radii of $M \otimes F_\eta$ for any $\eta \in (0, -\log \alpha)$ (resp. for any $\eta>0$ such that $\hat f_i(M, r)>r$) consists of the smallest $i$ elements of the multiset of extrinsic radii of $M \otimes F_\eta$.
\end{itemize}
\end{proposition}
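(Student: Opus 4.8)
The statement is the \emph{extrinsic}-radius counterpart of Theorem~\ref{T:touch-decomposition}, and the plan is to reduce it to that theorem (and to Proposition~\ref{P:decomposition-partial}) by passing from the family $\partial_{J^+}$ to the single derivation $\partial_0$, exactly as the extrinsic part of Theorem~\ref{T:variation} is deduced from Theorem~\ref{T:variation-j}. Let $\widetilde K$ be the completion of $K(x_J)$ for the $(1,\dots,1)$-Gauss norm, with $\partial_j(x_{j'})=0$ for $j,j'\in J$; it is again discretely valued. Taylor series along $\sum_{j\in J}x_j\partial_j$ defines a continuous injective ring homomorphism $\tilde f^{*}:K\{\{\alpha/t,t\rrbracket_0\rar\widetilde K\{\{\alpha/t,t\rrbracket_0$ (resp.\ on $K\llbracket t\rrbracket_0$) with $\tilde f^{*}(u_j)=u_j+x_jt$; one checks, as for the analogous map in the proof of Theorem~\ref{T:variation}, that it is well defined on these bounded rings although it does \emph{not} extend to the $1$-Gauss-norm completion $E$. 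Since $\partial_0$ acts on $\tilde f^{*}M$ as $\partial_0|_M+\sum_{j\in J}x_j\partial_j|_M$, we get $R_{\partial_0}(\tilde f^{*}M\otimes\widetilde F_\eta)=ER(M\otimes F_\eta)$ for every $\eta\in(\alpha,1)$, whence $\hat f_i(M,r)=f_i^{(0)}(\tilde f^{*}M,r)$ and $\hat F_i(M,r)=F_i^{(0)}(\tilde f^{*}M,r)$ for $r\in(0,-\log\alpha)$.

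For (a), these identities together with Proposition~\ref{P:decomposition-partial}(a) applied to the $\partial_0$-differential module $\tilde f^{*}M$, supplemented by the convexity and piecewise-affineness of Theorem~\ref{T:variation-j} and the elementary bound $\hat f_i(M,r)\geq r$, show that $\hat f_i(M,r)$ and $\hat F_i(M,r)$ are affine on a right neighborhood $[0,\epsilon)$ of $r=0$ \emph{on the open part} $(0,\epsilon)$. It remains to prove continuity at $r=0$, i.e.\ $\hat f_i(M,0)=\lim_{r\to0^{+}}\hat f_i(M,r)$; because $\tilde f^{*}$ degenerates at the $1$-Gauss norm this cannot be read off from $\tilde f^{*}M$, and must be argued directly, by comparing the spectral norms of $\partial_0,\dots,\partial_m$ on $M\otimes E$ with those on $M\otimes F_\eta$ as $\eta\to1^{-}$, exactly as in Step~1 of the proof of \cite[Theorem~2.3.9]{kedlaya-xiao} (the argument invoked for the intrinsic radii in Proposition~\ref{P:decomposition-partial}(a)). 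Combining the two gives affineness on $[0,\epsilon)$ and, a posteriori, the identity $\hat f_i(M,0)=f_i^{(0)}(\tilde f^{*}M,0)$.

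For (b), applying $\tilde f^{*}$ again reduces the problem to the single derivation $\partial_0$ and the module $\tilde f^{*}M$ over $\widetilde K\{\{\alpha/t,t\rrbracket_0$ (resp.\ $\widetilde K\llbracket t\rrbracket_0$); the resulting decomposition of $\tilde f^{*}M$ is canonical, hence descends to a decomposition of $M$ over the corresponding ring over $K$ by Lemma~\ref{L:proj-intersect} and Remark~\ref{R:proj-intersect}, just as in the proof of Theorem~\ref{T:variation}. If the strict inequality $\hat f_i(M,r)>\hat f_{i+1}(M,r)$ also holds at $r=0$, the decomposition is furnished directly by the $\partial_0$-case of Proposition~\ref{P:decomposition-partial}(b). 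In general, use Proposition~\ref{P:decomposition-partial}(b) repeatedly to split off every gap in the $\partial_0$-radii that survives at $r=0$; tracking the index $i$ through these splittings and inducting on the rank, we are reduced to the case in which $\tilde f^{*}M\otimes\widetilde E$ has \emph{pure} $\partial_0$-radii, necessarily equal to $\hat f_i(M,0)$ and hence $<1$ by the hypothesis $\hat f_i(M,0)>0$, while $f_i^{(0)}(\tilde f^{*}M,r)>f_{i+1}^{(0)}(\tilde f^{*}M,r)$ with $F_i^{(0)}(\tilde f^{*}M,r)$ affine for $r$ in the relevant open range. This is precisely the setting of Theorem~\ref{T:touch-decomposition} (resp.\ its disc analogue, obtained by running the same argument with \cite[Theorem~2.3.10]{kedlaya-xiao} in place of Theorem~\ref{T:variation-j}(e), as in Propositions~\ref{P:refined-disc-j} and~\ref{P:nonlog-refined-disc}), whose conclusion, pulled back along $\tilde f^{*}$ and descended via Lemma~\ref{L:proj-intersect}, yields the desired $M=M_0\oplus M_1$.

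The main obstacle is the ``touch'' case of (b): the decomposition there rests, through Theorem~\ref{T:touch-decomposition}, on the twisted-polynomial factorization of Lemma~\ref{L:twisted-poly-decomposition} over the bounded ring $\calR^{\bd}$ (and on passing to $\partial_0$-Frobenius antecedents or descendants when the radii are non-visible), and one must match this local-at-$r=0$ decomposition with the one coming from the open annulus. The fact that the auxiliary change of variables $\tilde f^{*}$ breaks down at the $1$-Gauss norm is exactly what forces the separate continuity argument in (a) and makes the bookkeeping at $r=0$ delicate; everything else is a routine transcription of the proofs of Proposition~\ref{P:decomposition-partial} and Theorem~\ref{T:touch-decomposition}.
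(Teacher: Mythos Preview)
Your approach is correct but takes a genuinely different route from the paper. The paper does \emph{not} use the generic substitution $\tilde f^{*}$ here; instead it argues as in the $\partial_{J^+}$-case of Theorem~\ref{T:touch-decomposition}: it applies the single-derivation versions of Proposition~\ref{P:decomposition-partial}(b) and Theorem~\ref{T:touch-decomposition} to each $\partial_j$ individually (directly on $M$ over $K\{\{\beta/t,t\rrbracket_0$), obtains a common refinement into summands with pure $\partial_j$-radii for every relevant $j$, and then simply regroups these summands according to their extrinsic radii. No passage to $\widetilde K$ and no descent are needed.

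Your $\tilde f^{*}$-reduction mirrors the extrinsic part of the proof of Theorem~\ref{T:variation} and is legitimate: one checks that $\tilde f^{*}$ does carry $K\{\{\alpha/t,t\rrbracket_0$ into $\widetilde K\{\{\alpha/t,t\rrbracket_0$ (the $t^N$-coefficient is a power series in $x_J$ whose coefficients are bounded by $|a_{N-|e_J|}|\to 0$ as $|e_J|\to\infty$, hence lies in $K\langle x_J\rangle\subset\widetilde K$), and the descent via Lemma~\ref{L:proj-intersect} by gluing with the extrinsic-radii decomposition of $M\otimes F_\eta$ works as you say. The price you pay is exactly the obstacle you identify: since $\tilde f^{*}$ fails at the $1$-Gauss norm, the continuity of $\hat f_i$ at $r=0$ and the purity hypothesis $IR_{\partial_0}(\tilde f^{*}M\otimes\widetilde E)<1$ must be supplied separately, whereas in the paper's approach these come for free because one never leaves the field $K$. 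In short, the paper's argument is shorter and more direct; yours is more uniform with the earlier extrinsic arguments but requires the extra bookkeeping at $r=0$ that you correctly flag.
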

\begin{proof}
(a) follows from exactly the same argument as in Proposition~\ref{P:decomposition-partial}.  We now prove (b).
By the extrinsic version of Theorem~\ref{T:variation}(e), it suffices to find the decomposition over $K\{\{\beta / t, t \rrbracket_0$ for $\beta \in (\alpha , 1)$ sufficiently close to $1$ and then we may apply Lemma~\ref{L:proj-intersect} and Remark~\ref{R:proj-intersect} to glue the decompositions.  By Proposition~\ref{P:decomposition-partial}(b) and Theorem~\ref{T:touch-decomposition} for $\partial_j$-differential modules, there exists  $\beta \in (\alpha, 1)$ such that when we tensor $M$ with $K\{\{\beta / t, t \rrbracket_0$, it is a direct sum of differential modules $M_l$ such that either for any $j \in J^+$ with $R_{\partial_j}(M_l \otimes E) <1$, $M_l\otimes F_\eta$ has pure $\partial_j$-radii for all $\eta \in (\beta, 1)$, or
we have $ER(M_l \otimes E)=1$.
The proposition then follows from regrouping these direct summands.
\end{proof}

\subsection{Refined radii and the log-slopes of the radii}
\label{S:boundary}

For a differential module over an annulus or a disc, the slopes of the functions coming from the radii of convergence are determined by the multiset of refined radii for the differential module completed for the corresponding Gauss norm.  We also give a refined radii decomposition result for differential modules over bounded analytic rings.

\begin{theorem} \label{T:refined-radii-vs-radii}
Fix $j \in J^+$ and let $M$ be a $\partial_j$-differential module over $K \{\{\alpha / t, t  \rrbracket_0$ of rank $d$.  Assume that  $f_i^{(j)}(M, r)$ for all $i$ are the same and are affine of slope $b$ in $r \in [0, -\log \alpha)$. Moreover, we assume that $R_{\partial_j}(M \otimes E) = \omega \ee^s$ is strictly less than $|u_j|^{-1}$ if $j \in J$ and is strictly less than $1$ if $j =0$.  Then the $\mathsf v_s$-valuation of any element in the multiset of refined $\partial_j$-radii of $M \otimes E$ is $-b$.
\end{theorem}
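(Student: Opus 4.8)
The plan is to read $\Theta_{\partial_j}(M\otimes E)$ off a cyclic vector and then compare two Newton polygons of the associated polynomial: the one for the $1$-Gauss norm (which governs the refined radii over $E$) against the ones for the nearby $\ee^{-r}$-Gauss norms (which record the affineness hypothesis with slope $b$); morally this makes precise that over the open annulus the refined radii have the shape $t^{-b}\theta$, an element of $\mathsf v_s$-valuation $-b$. By the reduction following Hypothesis~\ref{H:u_j=1} (automatic when $j=0$) I would first assume $|u_j|=1$, so the hypothesis reads $R_{\partial_j}(M\otimes E)=\omega\ee^s<1$. If $\omega\ee^s\geq\omega$, i.e. $s\geq 0$ and $p>0$ (the non-visible subrange), I would reduce to the visible case exactly as in the proof of Theorem~\ref{T:touch-decomposition}: pass to $\partial_j$-Frobenius antecedents over the corresponding bounded analytic rings, iterating until the radius drops below $\omega$, and treat the single borderline radius $\omega$ by means of the $\partial_j$-Frobenius descendant. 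Since $\partial_j$-Frobenius fixes $t$ when $j\in J$ and sends $t\mapsto t^p$ when $j=0$, it transforms the valuation $\mathsf v_s$ and the slope $b$ in a matching way, and Proposition~\ref{P:refined-frob} (together with additivity of the $p$-power map on $\kappa_{E^\alg}$) then deduces the statement for $M$ from the statement for its Frobenius descendants. So I now assume $s<0$, in which case $M\otimes E$ has pure visible $\partial_j$-radii.

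After a finite tamely ramified extension of $K$ — harmless by Lemma~\ref{L:tame-extension}, affecting neither hypothesis nor conclusion — I may assume $s\in-\log|K^\times|$ and fix $x\in K^\times$ with $|x|=\ee^{-s}$. Choosing a cyclic vector of $M\otimes\calR^\bd$ (a field by Lemma~\ref{L:Rbdd}), which remains cyclic over $E$ and descends to an identification $M\otimes K\{\{\beta/t,t\rrbracket_0\cong K\{\{\beta/t,t\rrbracket_0\{T\}/K\{\{\beta/t,t\rrbracket_0\{T\}P$ for some $\beta\in(\alpha,1)$ close to $1$, I obtain a monic (twisted) polynomial $P=T^d+a_1T^{d-1}+\cdots+a_d$ with $a_i=\sum_{n\in\ZZ}a_{i,n}t^n\in K\{\{\beta/t,t\rrbracket_0$. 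Since $s<0$ lies in the visible range, Proposition~\ref{P:spec-norm-from-NP} (applicable here by Remark~\ref{R:visible-instead-of-rational}) translates the purity hypothesis into: the Newton polygon of the ordinary polynomial $X^d+a_1X^{d-1}+\cdots+a_d$ for the $\ee^{-r}$-Gauss norm has pure slope $s-br$ for all $r$ in a right neighborhood of $0$; in particular its roots $\zeta_1,\dots,\zeta_d\in E^\alg$ at the $1$-Gauss norm all have $|\zeta_l|=\ee^{-s}$, and by Corollary~\ref{C:refined-radii=reduced-roots} one has $\Theta_{\partial_j}(M\otimes E)=\{\bar\zeta_1^{(s)},\dots,\bar\zeta_d^{(s)}\}$, so it suffices to show $\mathsf v_s(\bar\zeta_l^{(s)})=-b$ for each $l$.

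Writing $v(\cdot)=-\log|\cdot|$, purity of slope $s-br$ at the $\ee^{-r}$-Gauss norm unwinds to: $v(a_{i,n})+nr\geq i(s-br)$ for all $i,n$ and all small $r\geq0$, with equality over $n$ when $i=d$. Putting $r=0$ gives $v(a_{i,n})\geq is$ for all $i,n$; letting $r>0$ be small gives $v(a_{i,n})>is$ whenever $n<-ib$; and the $i=d$ equality gives $v(a_{d,-db})=ds$ together with $v(a_{d,n})>ds$ for $n<-db$ (so in particular $-db\in\ZZ$). Form the reduced polynomial $\bar P^{(s)}(Y):=Y^d+\overline{x^{-1}a_1}\,Y^{d-1}+\cdots+\overline{x^{-d}a_d}\in\kappa_E[Y]=\kappa_K((t))[Y]$ obtained by reducing $x^{-d}P(xY)$; since all $|\zeta_l/x|=1$, its roots in $\kappa_{E^\alg}$ are precisely the reductions $\overline{\zeta_l/x}$. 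With $\mathsf v_0$ the $t$-adic valuation, the displayed inequalities give $\mathsf v_0(\overline{x^{-i}a_i})=\min\{n:v(a_{i,n})=is\}\geq-ib$ for every $i$ (the value being $+\infty$ when $\overline{x^{-i}a_i}=0$), while $\mathsf v_0(\overline{x^{-d}a_d})=-db$. Hence the $\mathsf v_0$-Newton polygon of $\bar P^{(s)}$ has every vertex on or above the line through $(0,0)$ of slope $-b$, and the vertex $(d,-db)$ lies on that line, so the polygon is exactly the segment of pure slope $-b$. Therefore each root $\overline{\zeta_l/x}$ has $\mathsf v_0$-valuation $-b$, i.e. $\mathsf v_s(\bar\zeta_l^{(s)})=\mathsf v_0(\overline{\zeta_l/x})=-b$, which is the claim.

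I expect the main difficulty to be, first, the bookkeeping in the last paragraph — extracting the strict inequalities $v(a_{i,n})>is$ for $n<-ib$ (and the precise $i=d$ statement, including the integrality of $-db$) cleanly from a one-sided affineness hypothesis, and confirming that $\bar P^{(s)}$ is a genuine monic degree-$d$ polynomial with $\{\overline{\zeta_l/x}\}$ as its complete multiset of roots — and, second, the non-visible reduction: carrying out the Frobenius antecedent/descendant passage so that the slope $b$ and the valuation $\mathsf v_s$ transform compatibly, with the combination of a borderline radius $\omega$ and a (possibly negative) slope being the subtlest point, where one must also use that the tuple structure of the refined intrinsic radii of the Frobenius descendant (Proposition~\ref{P:refined-frob}(b)) together with additivity of $\theta\mapsto\theta^p$ in characteristic $p$ produces the correct valuation.
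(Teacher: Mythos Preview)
Your proposal is correct and follows essentially the same approach as the paper's proof: reduce to the visible range via $\partial_j$-Frobenius antecedents/descendants, then over $\calR^\bd$ choose a cyclic vector and compare the Newton polygon of the associated (twisted) polynomial for the $1$-Gauss norm with those for nearby $\ee^{-r}$-Gauss norms, reading off that the reduced polynomial over $\kappa_E$ has $t$-adic Newton slope $-b$. Your coefficient bookkeeping (extracting $v(a_{i,n})>is$ for $n<-ib$, and $\mathsf v_0(\overline{x^{-d}a_d})=-db$ from the right-derivative of $r\mapsto -\log|a_d|_{\ee^{-r}}$) is a slightly more explicit version of the paper's one-line claim that, for $\eta$ close to $1^-$, the $\eta$-Newton polygon of $P$ is the lower convex hull of $\{(-i,-\log|a_i|_1-\mathsf v(\bar a_i^{(is)})\log\eta)\}$; the two arguments are equivalent. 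One small point to keep in mind for the borderline case $IR_{\partial_j}(M\otimes E)=\omega$: the descendant $\varphi^{(\partial_j)}_*M$ generally does \emph{not} inherit the ``all $f_i^{(j)}$ equal'' hypothesis (its slopes split into the values listed in the paper), so when you invoke the visible case for it you are implicitly using the stronger multiset identity (slopes $\leftrightarrow$ $\mathsf v_s$-valuations of all reduced roots) rather than the pure-slope version; your Newton-polygon argument in fact already gives that stronger identity, so this is only a matter of stating it at the right generality before descending.
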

\begin{proof}
We may assume that $|u_j|=1$.
We first consider the case when $M \otimes E$ has pure visible intrinsic $\partial_j$-radii $IR_{\partial_j}(M \otimes E) < \omega$.  By making $\alpha$ closer to $1^-$, we may assume that the function $f_i^{(j)}(M, r) > -\log \omega$ for each $i$ is affine over $[0, -\log \alpha)$.

As in Theorem~\ref{T:touch-decomposition}, we may  identify $M \otimes \calR_K^\bd$ with $\calR^\bd\{T\} / \calR^\bd\{T\}P$ for some twisted polynomial $P = T^d + a_1 T^{d-1} + \cdots + a_d \in \calR^\bd\{T\}$.  Since $M \otimes E$ has pure $\partial_j$-radii $\omega \ee^s$, the Newton polygon of $P$ with respect to the 1-Gauss norm has pure slope $s$ and the multiset $\Theta_{\partial_j}(M \otimes E)$ is just the multiset of reduced roots of this twisted polynomial.  We put  $\overline P = T^d + \bar a_1^{(s)} T^{d-1} + \cdots + \bar a_d^{(ds)}$, where $\bar a_i^{(is)} \in \kappa_{K}^{(is)}((t))$.

When $\eta$ is sufficiently close to $1^-$, the Newton polygon of $P$ with respect to the $\eta$-Gauss norm is determined by the Newton polygon of $\overline P$ in the following sense:  it is the lower convex hull of the set $\{(-i, -\log |a_i|_1 - \mathsf v(\bar a_i^{(is)})\log \eta)\}$.  By Lemma~\ref{L:NP-vs-IR}, this implies that the collection of all slopes of functions $f_i^{(j)}(M, r)$ for all $i$ at $r = 0$ is exactly the collection of the $\mathsf v_s$-valuations  of the roots of  $\overline P$, which in turn equals the collection of  the $\mathsf v_s$-valuations of the elements of the multiset of refined $\partial_j$-radii of $M \otimes E$.

Now, it suffices to reduce to the case above using $\partial_j$-Frobenius.  Assume $p>0$ from now on.  It is easier to work with intrinsic radii and refined intrinsic radii.  So we put $g_i(M, r) = f_i^{(j)}(M, r) + \log|u_j|$ if $j \in J$ and $g_i(M, r) = f_i^{(j)}(M, r) - r$ if $j = 0$.  Moreover, we set $s' = -\log (\omega IR_{\partial_j}(V)^{-1})$.

If $IR_{\partial_j}(M \otimes E) = \omega = p^{-1/(p-1)}$, we set $M_1 = \varphi^{(\partial_j)}_*M$.  Then  Lemma~\ref{L:frob-properties}(d) implies that
\begin{eqnarray*}
&\big\{ g'_i(M_1, 0) \big\} = \left\{
\begin{array}{ll}
\{pg'_i(M, 0)\ (d \textrm{ times}), 0 \ ((p-1)d \textrm{ times})\} & g'_i(M, 0)<0\\
\{ g'_i(M, 0) \ (pd \textrm{ times})\} & g'_i(M, 0)\geq 0
\end{array} \right., \textrm{ if } j \in J;  \\
&\big\{g'_i(M_1, 0) \big\} = \left\{
\begin{array}{ll}
\{g'_i(M, 0), 0 \ (p-1 \textrm{ times})\} & g'_i(M, 0)<0\\
\{ \frac 1p g'_i(M, 0) \ (p \textrm{ times})\} & g'_i(M, 0)\geq0
\end{array}\right., \textrm{ if } j =0.
\end{eqnarray*}
By Proposition~\ref{P:refined-frob}, the elements in the multiset $\calI\Theta_{\partial'_j}(M_1 \otimes E^{(\partial_j)})$ can be grouped into  $p$-tuples $(\frac \theta p, \frac{\theta+1}p, \dots, \frac{\theta+p-1}p)$, and the multiset $\calI\Theta_\partial(M \otimes E)$ is composed of $(\theta^p-\theta)^{1/p}$ for each $p$-tuple above with the same multiplicity, where $\theta \in \kappa_{E^\alg}$.  
Elementary calculation shows the following relation between the $\mathsf v_0$-valuations of $(\theta^p-\theta)^{1/p}$ and the $\mathsf v_{-\log p}$-valuation of $\theta$:
\begin{itemize}
\item when $\mathsf v_0(\theta)< 0$, we have $\mathsf v_{-\log p}(\frac {\theta+ l}p) = \mathsf v_0(\theta)$ for $l = 0, \dots, p-1$, and $\mathsf v_0((\theta^p-\theta)^{1/p}) = \mathsf v_0(\theta)$; 
\item when $\mathsf v_0(\theta)\geq 0$, we have $\mathsf v_{-\log p}(\frac {\theta+ l}p) = 0$ for $l = 1, \dots, p-1$, and $\mathsf v_0((\theta^p-\theta)^{1/p}) = \frac 1p\mathsf v_0(\theta)$.
\end{itemize}
Hence the statement for $M_1$ with $\mathsf v_{-\log p}$ implies that for $M$ with $\mathsf v_0$.

If $IR_{\partial_j}(M \otimes E) > \omega$, by Lemma~\ref{L:frob-properties}(d) and Remark~\ref{R:frob-over-annulus}, $M$ has a $\partial_j$-Frobenius antecedent $M_0$ if $\alpha$ is sufficiently close to $1^-$.  By Lemma~\ref{L:frob-properties}(d) and Proposition~\ref{P:refined-frob}, we have
\begin{align*}
g_i(M_0, r) = pg_i(M, r) &\textrm{ for any } i, \textrm{ and } \calI\Theta_{\partial'_j}(M_0 \otimes E^{(\partial_j)}) = \big\{(-\theta)^p / p\big| \theta \in \calI \Theta_{\partial_j}(M \otimes E)\big\}, \textrm{ if } j \in J; \\
g_i(M_0, pr) = pg_i(M, r) &\textrm{ for any } i, \textrm{ and } \calI\Theta_{\partial'_j}(M_0 \otimes E^{(\partial_j)}) = \big\{(-\theta)^p / p\big| \theta \in \calI \Theta_{\partial_j}(M \otimes E)\big\}, \textrm{ if } j =0.
\end{align*}
Since $\mathsf v_{(ps' -\log p)}((-\theta)^p/p) = p\mathsf v_{s'}(\theta)$, the statement for $M$ with $\mathsf v_{s'}(-\log p)$ follows from the statement for $M_0$ with $\mathsf v_{ps'-\log p}$ if $j \in J$ and with $\frac 1p \mathsf v_{ps'-\log p}$ if $j=0$ (note that $t^p$ is the coordinate in the latter case).
\end{proof}

\begin{corollary} 
\label{C:refined-radii-vs-radii-j}
Fix $j \in J^+$ and let $M$ be a $\partial_j$-differential module over $K \{\{\alpha / t, t  \rrbracket_0$.  Assume that $M \otimes E$ has pure $\partial_j$-radii $R_{\partial_j}(M \otimes E) = \omega \ee^s$, which is strictly less than $|u_j|^{-1}$ if $j \in J$ and is strictly less than $1$ if $j =0$.  Then the following two multisets are the same:
\begin{itemize}
\item[\emph{(i)}] the multiset composed of the $\mathsf v_s$-valuations of the elements in the multiset of refined $\partial_j$-radii of $M \otimes E$, i.e., $\big\{ \mathsf v_s(\theta) \big| \theta  \in\Theta_{\partial_j}(M \otimes E) \big\}$, and
\item[\emph{(ii)}] the multiset composed of the negatives of the slopes of $f_i^{(j)}(M, r)$ at $r=0$, for $i = 1, \dots, d$.
\end{itemize}
\end{corollary}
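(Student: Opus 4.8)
The plan is to slice $M$ into direct summands on each of which the radius functions are all equal to one single affine function, and then quote Theorem~\ref{T:refined-radii-vs-radii}. First I would perform the harmless normalization $|u_j|=1$, exactly as at the beginning of the proof of Theorem~\ref{T:refined-radii-vs-radii}: replacing $K$ by the completion of $K(x)$ for the $|u_j|$-Gauss norm and rescaling $\partial_j$ by $x$. A routine check (as there) shows that this changes each $f_i^{(j)}(M,r)$ only by an additive constant, hence leaves its slope at $r=0$ unchanged, and leaves the multiset of $\mathsf v_s$-valuations of $\Theta_{\partial_j}(M\otimes E)$ unchanged (the new $s$ shifts, but the rescaling element has trivial $t$-valuation). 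So neither of the two multisets in the statement is affected. After this reduction $R_{\partial_j}(M\otimes E)=IR_{\partial_j}(M\otimes E)=\omega\ee^s<1$, and in particular $f_i^{(j)}(M,0)=-\log(\omega\ee^s)>0=-\log|u_j|$ for every $i$; by Proposition~\ref{P:decomposition-partial}(a) each $f_i^{(j)}(M,r)$ is then affine on a right neighbourhood of $0$, and after shrinking $\alpha$ towards $1$ I may assume it is affine on all of $[0,-\log\alpha)$. This shrinking changes neither $M\otimes E$, nor $\Theta_{\partial_j}(M\otimes E)$, nor the slopes at $r=0$.

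Next I would organize the indices $1,\dots,d$ into blocks by slope. Order the functions so that $f_1^{(j)}(M,r)\ge\cdots\ge f_d^{(j)}(M,r)$; they share the value $c:=-\log(\omega\ee^s)$ at $r=0$, so their slopes $\beta_1\ge\cdots\ge\beta_d$ at $0$ are nonincreasing. Let $\gamma_1>\cdots>\gamma_h$ be the distinct values among the $\beta_i$, with multiplicities $m_1,\dots,m_h$, and put $\ell_k=m_1+\cdots+m_k$. For each $k<h$ the function $F^{(j)}_{\ell_k}(M,r)$ is affine (a sum of affine functions) and $f^{(j)}_{\ell_k}(M,r)=c+\gamma_k r>c+\gamma_{k+1}r=f^{(j)}_{\ell_k+1}(M,r)$ on $(0,-\log\alpha)$, so Theorem~\ref{T:touch-decomposition} applies with $i=\ell_k$. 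Applying it iteratively — first splitting off the $\ell_1$ smallest radii, then decomposing the complement, and so on; all modules that occur are direct summands of $M$, hence $\partial_j$-differential modules over $K\{\{\alpha/t,t\rrbracket_0$ whose base change to $E$ still has pure $\partial_j$-radii $\omega\ee^s<1$, so the hypotheses of Theorem~\ref{T:touch-decomposition} persist — I obtain a direct sum decomposition $M=\bigoplus_{k=1}^{h}M^{(k)}$ with $\rank M^{(k)}=m_k$, with $M^{(k)}\otimes E$ of pure $\partial_j$-radii $\omega\ee^s$, and with $f_i^{(j)}(M^{(k)},r)=c+\gamma_k r$ for every $i$ and every $r\in[0,-\log\alpha)$.

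Now Theorem~\ref{T:refined-radii-vs-radii} applies to each $M^{(k)}$ with $b=\gamma_k$: every element of the multiset $\Theta_{\partial_j}(M^{(k)}\otimes E)$, which has size $m_k$, has $\mathsf v_s$-valuation $-\gamma_k$. Since $M\otimes E=\bigoplus_k M^{(k)}\otimes E$ as $\partial_j$-differential modules over $E$, each summand admitting a good norm by Lemma~\ref{L:good-norm-exists}, and the multiset of refined $\partial_j$-radii is additive over direct sums (a concatenation of good norms is a good norm, with block-diagonal matrix of the relevant power of $\partial_j$), we get $\Theta_{\partial_j}(M\otimes E)=\bigsqcup_k\Theta_{\partial_j}(M^{(k)}\otimes E)$, so the multiset $\{\mathsf v_s(\theta)\mid\theta\in\Theta_{\partial_j}(M\otimes E)\}$ consists of $-\gamma_k$ with multiplicity $m_k$ for each $k$. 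That is exactly the multiset $\{-\beta_1,\dots,-\beta_d\}$ of negatives of the slopes of $f_1^{(j)}(M,r),\dots,f_d^{(j)}(M,r)$ at $r=0$, which is the assertion. I do not expect a genuine obstacle here: all the substance is in Theorem~\ref{T:refined-radii-vs-radii}, and the only points demanding care are verifying that the iterated use of Theorem~\ref{T:touch-decomposition} is legitimate (guaranteed because every summand is again a module over $K\{\{\alpha/t,t\rrbracket_0$ meeting the standing hypotheses) and that the normalization $|u_j|=1$ and the shrinking of $\alpha$ leave both relevant multisets untouched.
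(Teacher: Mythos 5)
Your proof is correct and takes essentially the same route as the paper, whose proof of this corollary is the one-liner ``combine Theorem~\ref{T:touch-decomposition} and Theorem~\ref{T:refined-radii-vs-radii}''; you have simply spelled out the intermediate steps (normalization, shrinking of $\alpha$, iterated decomposition by slope, additivity of $\Theta_{\partial_j}$ over direct sums) that the paper leaves to the reader.
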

\begin{proof}
This follows from combining Theorems~\ref{T:touch-decomposition} and \ref{T:refined-radii-vs-radii}.
\end{proof}

\begin{notation}\label{N:mathsf-v} For any $\goths \in \RR$, the valuation $\mathsf v_\goths$ on $\kappa_E^{(\goths)}$ induces a valuation on $\kappa_E^{(\goths)} \frac{dt}t\oplus\bigoplus_{j \in J}\kappa_E^{(\goths)} \frac{du_j}{u_j}$, still denoted by $\mathsf v_\goths$, by setting
\[
\mathsf v_\goths\big(\theta_0 \frac{dt}t + \theta_1 \frac{du_1}{u_1}+ \cdots+ \theta_m \frac{du_m}{u_m} \big) = \min_{j \in J^+} \big\{\mathsf v_\goths(\theta_j) \big\}, \textrm{ for }\theta_0, \dots, \theta_m \in \kappa_E^{(\goths)}.
\]
\end{notation}

\begin{corollary} 
\label{C:refined-radii-vs-radii}
Let $M$ be a $\partial_{J^+}$-differential module over $K \{\{\alpha / t, t  \rrbracket_0$.  Assume that $M \otimes E$ has pure intrinsic radii $IR(M \otimes E) = \omega \ee^\goths <1$.  Then the following two multisets are the same:
\begin{itemize}
\item[\emph{(i)}] the valuations of the refined intrinsic radii of $M \otimes E$, i.e., $\big\{ \mathsf v_\goths(\theta) \big| \theta  \in\calI\Theta(M \otimes E) \big\}$, and
\item[\emph{(ii)}] the  negatives of the slopes of $f_i(M, r)$ at $r=0$, for $i = 1, \dots, d$.
\end{itemize}

\end{corollary}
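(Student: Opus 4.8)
The plan is to reduce to the single–derivation statement, Corollary~\ref{C:refined-radii-vs-radii-j}, by first decomposing $M$ over the bounded ring $K\{\{\alpha/t,t\rrbracket_0$ into pieces on which one or more mutually tied derivations dominate, and then applying Corollary~\ref{C:refined-radii-vs-radii-j} to each piece and each dominant derivation.

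First I would reduce to the case $|u_j|=1$ for all $j\in J$, exactly as in the remark following Hypothesis~\ref{H:u_j=1}; this is a base change and affects neither of the two multisets in the statement. By Proposition~\ref{P:decomposition-partial}(a) the functions $f_i(M,r)$ and $F_i(M,r)$ are affine on $[0,-\log\alpha)$. Next, iterating the decompositions of Proposition~\ref{P:decomposition-partial}(b) and Theorem~\ref{T:touch-decomposition} for $\partial_{J^+}$-modules, and regrouping and gluing via Lemma~\ref{L:proj-intersect} and Remark~\ref{R:proj-intersect} exactly as in the last paragraph of the proof of Theorem~\ref{T:touch-decomposition}, I would write $M=\bigoplus_c M^{(c)}$ over $K\{\{\alpha/t,t\rrbracket_0$ (for $\alpha$ close enough to $1$) so that, for each $c$, there is a nonempty $J_0(c)\subseteq J^+$ such that $M^{(c)}\otimes F_\eta$ has pure intrinsic radii equal to its $\partial_j$-radii for every $j\in J_0(c)$, both for all $\eta\in(\alpha,1)$ and over $E$ (the latter by Proposition~\ref{P:decomposition-partial}(b)(ii)). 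Since $M\otimes E$ has pure intrinsic radii $\omega\ee^\goths<1$, so does each $M^{(c)}\otimes E$, by Lemma~\ref{L:basic-IR-proposition}(a); in particular all the $f_i(M^{(c)},r)$ coincide, say with a common affine function of slope $b_c$, and $\dim M^{(c)}$ of them occur.

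Both multisets in the statement are additive over direct sums: $\calI\Theta$ of a direct sum is the multiset union of the summands', and — because every $f_i(M^{(c)},r)$ is affine on $[0,-\log\alpha)$ — the multiset of negatives of the slopes at $r=0$ of the sorted functions $f_i(M,r)$ is the union of the corresponding multisets for the $M^{(c)}$. It therefore suffices to prove the corollary for a single piece $M^{(c)}$, where the target multiset (ii) is $\{-b_c\}$ with multiplicity $\dim M^{(c)}$. For each $j\in J_0(c)$ the $\partial_j$-radii of $M^{(c)}\otimes E$ are pure and equal to $\omega\ee^\goths$, which is $<|u_j|^{-1}=1$ if $j\in J$ and $<1$ if $j=0$, and $f_i^{(j)}(M^{(c)},r)$ are all equal and affine; hence Corollary~\ref{C:refined-radii-vs-radii-j} applies and says that every element of $\Theta_{\partial_j}(M^{(c)}\otimes E)$ has $\mathsf v_\goths$-valuation equal to the negative of that slope. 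Unwinding the normalizations of Definition~\ref{D:multi-refined}(i) and Notation~\ref{N:mathsf-v} — under which the $\frac{dt}{t}$-component attached to a refined $\partial_0$-radius carries an extra factor $t$, exactly compensating the $-r$ relating $f_i(M^{(c)},r)$ to $f_i^{(0)}(M^{(c)},r)$, while the $\frac{du_j}{u_j}$-component of a refined $\partial_j$-radius ($j\in J$, $|u_j|=1$) differs from it by a $t$-adic unit — each of these negated slopes equals $-b_c$. Thus every $\vartheta\in\calI\Theta(M^{(c)}\otimes E)$ satisfies $\mathsf v_\goths(\vartheta)=-b_c$, and since $\calI\Theta(M^{(c)}\otimes E)$ has cardinality $\dim M^{(c)}$, multiset (i) for $M^{(c)}$ is also $\{-b_c\}$ with multiplicity $\dim M^{(c)}$.

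The proof is essentially an assembly of the decomposition results over bounded analytic rings (Proposition~\ref{P:decomposition-partial}, Theorem~\ref{T:touch-decomposition}) with the single-derivation Corollary~\ref{C:refined-radii-vs-radii-j}. Two points need care: (a) that the splitting of $M$ into dominant-derivation pieces is valid over $K\{\{\alpha/t,t\rrbracket_0$ and is compatible with base change to $E$ — precisely what Theorem~\ref{T:touch-decomposition} and Proposition~\ref{P:decomposition-partial}(b)(ii) guarantee; and (b) the bookkeeping that makes $\mathsf v_\goths$ of a refined intrinsic radius, in the differential-form normalization, agree with the $\mathsf v_s$ appearing in Corollary~\ref{C:refined-radii-vs-radii-j}, including the $\frac{dt}{t}$-twist in the $\partial_0$-direction and the possibility of several tied dominant derivations. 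I expect (b) to be the genuinely fiddly part, but it involves no new ideas beyond tracking normalizations.
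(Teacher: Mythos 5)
Your proposal is correct and follows exactly the route the paper has in mind: combine the decomposition over $K\{\{\alpha/t,t\rrbracket_0$ from Theorem~\ref{T:touch-decomposition} (and Proposition~\ref{P:decomposition-partial}) with the single-derivation valuation statement Theorem~\ref{T:refined-radii-vs-radii} / Corollary~\ref{C:refined-radii-vs-radii-j}, and then unwind the $u_j$-normalizations. The reduction to $|u_j|=1$, the decomposition $M=\bigoplus_c M^{(c)}$ compatible with base change to $E$, the additivity of both multisets over direct sums, and the $\frac{dt}{t}$-bookkeeping for $\partial_0$ are all handled correctly.

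There is one boundary subtlety you gesture at in (b) but do not actually close. After fixing $J_0(c)$ to be the derivations with pure $\partial_j$-radii equal to the intrinsic radii over $F_\eta$ for all $\eta\in(\alpha,1)$, it can happen that some $j'\notin J_0(c)$ is strictly subdominant for $\eta\in(\alpha,1)$ but becomes dominant for some Jordan--H\"older constituent of $M^{(c)}\otimes E$ in the limit $\eta\to1^-$; by Definition~\ref{D:multi-refined}(i) the dominance used to form $\vartheta\in\calI\Theta(M^{(c)}\otimes E)$ is dominance over $E$, so the refined $\partial_{j'}$-radius then contributes a component $\theta_{j'}du_{j'}$ to $\vartheta$ that your argument does not estimate. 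You are saved only because $\mathsf v_\goths$ in Notation~\ref{N:mathsf-v} is a minimum over components: for such $j'$, the intrinsic version of $f_1^{(j')}(M^{(c)},r)$ agrees with $f_1(M^{(c)},r)$ at $r=0$ but is strictly smaller for $r>0$, so by convexity of $F_1^{(j')}$ (Theorem~\ref{T:variation-j}(d)) its right-slope at $0$ is strictly less than $b_c$, and Corollary~\ref{C:refined-radii-vs-radii-j} applied to $\partial_{j'}$ then forces $\mathsf v_\goths(u_{j'}\theta_{j'})>-b_c$. Since $J_0(c)$ is nonempty and every constituent of $M^{(c)}\otimes E$ has every $j\in J_0(c)$ dominant, the minimum remains $-b_c$ and your conclusion stands; but this inequality (and the fact that it only needs to be $\geq$, not $=$, for the extra directions) should be stated, since without it ``each of these negated slopes equals $-b_c$, thus every $\vartheta$ has $\mathsf v_\goths(\vartheta)=-b_c$'' is a non sequitur.
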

\begin{proof}
This follows from combining Theorems~\ref{T:touch-decomposition} and \ref{T:refined-radii-vs-radii}.
\end{proof}

Similar to Theorem~\ref{T:refined-decomposition}, we have the following decomposition by refined radii.

\begin{theorem}\label{T:refined-boudary-j}
Fix $j \in J^+$ and let $M$ be a $\partial_j$-differential module of rank $d$ over $K \{\{\alpha / t, t  \rrbracket_0$.  Assume that $M \otimes F_\eta$, for $\eta \in (\alpha, 1)$, and $M \otimes E$ all have pure $\partial_j$-radii, and assume that the function $f_1^{(j)}(M, r)$ is affine with slope $b$ for $r \in [0, -\log \alpha)$.  Let $e$ be the prime-to-$p$ part of the denominator of $b$.  Moreover,  assume that $R_{\partial_j}(M \otimes E) = \omega \ee^s$ is strictly less than $|u_j|^{-1}$ if $j \in J$ and is strictly less than $1$ if $j =0$.  Then there exists a finite tamely ramified extension $K'$ of $K$ and a unique direct sum  decomposition
\[
M \otimes K'\{\{\alpha^{1/e} / t^{1/e} , t^{1/e} \rrbracket_0 = \bigoplus_{\theta \in \kappa_{K^\alg}^{(s)}} M_\theta
\]
of $\partial_j$-differential modules such that
\begin{itemize}
\item[\emph{(i)}] $M_\theta \otimes F_\eta$ has pure refined $\partial_j$-radii $\theta t^{-b}$ for all $\eta \in(\alpha, 1)$, and
\item[\emph{(ii)}] every element in the multiset of refined $\partial_j$-radii of $M_\theta \otimes E$ is congruent to $\theta t^{-b}$ modulo elements in $\kappa_{K^\alg}^{(s)}$ with $\mathsf v_s$-valuation strictly bigger than $\mathsf v_s(\theta t^{-b}) = -b$.
\end{itemize}
Moreover, this decomposition descents to a unique decomposition of $M$ itself by Galois descent, satisfying analogous properties, but in the fashion stated in terms of $\mu_e \rtimes \Gal(K^\alg/K)$-orbits.
\end{theorem}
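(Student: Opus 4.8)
\textbf{Proof proposal for Theorem~\ref{T:refined-boudary-j}.}

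The plan is to follow the template of Theorem~\ref{T:refined-decomposition}, replacing the inputs tailored to a single complete field by their annular/boundary analogs developed in Subsection~\ref{S:partial-decomposition} and Subsection~\ref{S:boundary}. First I would reduce to the case $e=1$ by base changing to $K'\{\{\alpha^{1/e}/t^{1/e}, t^{1/e}\rrbracket_0$: adjoining $t^{1/e}$ makes $b$ have prime-to-$p$ denominator equal to $1$, and since defining the $\partial_j$-differential module requires only finite data we may also enlarge $K$ to a finite tamely ramified extension containing all the $\theta$'s that will occur (using Theorem~\ref{T:refined-radii-vs-radii} and Corollary~\ref{C:refined-radii-vs-radii-j} to see there are only finitely many relevant $\mathsf v_s$-classes). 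Once $e=1$, the decomposition, if it exists, is forced on each $M\otimes F_\eta$ by Theorem~\ref{T:refined-decomposition} and on $M\otimes E$ by Theorem~\ref{T:refined-decomposition} applied over $E$; so uniqueness is automatic, and by Lemma~\ref{L:proj-intersect} and Remark~\ref{R:proj-intersect} it suffices to produce the decomposition over $K\{\{\beta/t, t\rrbracket_0$ for $\beta$ sufficiently close to $1^-$ and glue with the open-annulus decomposition of Theorem~\ref{T:var-refined}.

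Next I would construct, for each relevant class, the twisting module and split off the corresponding summand exactly as in Theorem~\ref{T:refined-decomposition}. Concretely, fix $\theta\in\kappa_{K^\alg}^{(s)}$ appearing in $\calI\Theta_{\partial_j}(M\otimes F_\eta)$; after a tame extension we may assume $\theta\in\cup_n(\kappa_K^{(p^na)})^{1/p^n}$, and Example~\ref{Ex:pure-IR-1-dim} produces a $\partial_j$-differential module $\calL^{(j)}_{x,b,(n)}$ over $A^1_K(\alpha,\beta)$ — indeed over $K\{\{\alpha/t,t\rrbracket_0$ once one checks the defining element has bounded $1$-Gauss norm, which it does since $\ee^a\beta^b$ obeys the bound \eqref{E:pure-IR-1-dim} — with pure $\partial_j$-radii $\omega\ee^{a-br}$ and pure refined $\partial_j$-radii $\theta t^{-b}$ after base change to every $F_{\ee^{-r}}$, by Lemma~\ref{L:pure-IR-1-dim}. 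Then set $N = M\otimes(\calL^{(j)}_{x,b,(n)})^\dual$. By Proposition~\ref{P:refined-properties-general}(b)(c) and (d), together with Corollary~\ref{C:refined-radii-vs-radii-j}, the largest-radii submodule of $N$ (the part with $\partial_j$-radii strictly bigger than $\omega\ee^{a-br}$) has the rank dictated by the multiplicity of $\theta$, and the functions $f^{(j)}_i(N,r)$ satisfy the affine/gap hypothesis of Theorem~\ref{T:touch-decomposition}: the relevant partial sum is affine for $r\in(0,-\log\beta)$ and strictly exceeds the next one there. Applying Theorem~\ref{T:touch-decomposition} to $N$ gives a direct sum decomposition $N = N_0\oplus N_1$ over $K\{\{\beta/t,t\rrbracket_0$, where $N_1\otimes F_\eta$ has $\partial_j$-radii $>\omega\ee^{a-br}$ and $N_0\otimes F_\eta$ is pure of $\partial_j$-radii $\omega\ee^{a-br}$. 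Untwisting by $\calL^{(j)}_{x,b,(n)}$ and using the diagonal embedding / trace argument of Theorem~\ref{T:refined-decomposition} (the projectors $jp_0i$, $jp_1i$ are again idempotent, since any map between the two untwisted pieces vanishes by Proposition~\ref{P:refined-properties-general}(c)/Lemma~\ref{L:interpretation-refined}(a)) yields $M = M_\theta\oplus M'$ with $M_\theta\otimes(\calL^{(j)}_{x,b,(n)})^\dual = N_1$ and $M'\otimes(\calL^{(j)}_{x,b,(n)})^\dual = N_0$. Then Proposition~\ref{P:refined-properties-general} and Lemma~\ref{L:pure-IR-1-dim} give property (i), and property (ii) — the statement about the $\mathsf v_s$-valuation of the refined $\partial_j$-radii of $M_\theta\otimes E$ — follows from Theorem~\ref{T:refined-radii-vs-radii} applied to $M_\theta$: the hypothesis that $f^{(j)}_i(M_\theta,r)$ is affine with slope $b$ on $[0,-\log\beta)$ forces every refined $\partial_j$-radius of $M_\theta\otimes E$ to have $\mathsf v_s$-valuation $-b = \mathsf v_s(\theta t^{-b})$, and the decomposition $M_\theta\otimes E = \oplus_\mu (M_\theta\otimes E)_\mu$ by refined $\partial_j$-radii over $E$ (Theorem~\ref{T:refined-decomposition}) must then have all its pieces lying in the residue class of $\theta t^{-b}$. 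Iterating over all $\theta$ and regrouping $M'$ finishes the construction over $K\{\{\beta/t,t\rrbracket_0$; gluing via Lemma~\ref{L:proj-intersect} gives the decomposition over $K\{\{\alpha/t,t\rrbracket_0$.

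The last wrinkle is the range restriction: the above argument runs cleanly when $R_{\partial_j}(M\otimes E)$ is in the visible range, or more generally when $IR_{\partial_j}(M\otimes E)<p^{-1/p(p-1)}$, so that Example~\ref{Ex:pure-IR-1-dim} applies (this is exactly the $\goths<\tfrac1p\log p$ restriction there). To handle $IR_{\partial_j}(M\otimes E)\in[p^{-1/p(p-1)},1)$ I would pass to the $\partial_j$-Frobenius antecedent of $M$, which exists over $K\{\{\beta/t,t\rrbracket_0$ for $\beta$ close to $1^-$ by Lemma~\ref{L:frob-properties}(d) and Remark~\ref{R:frob-over-annulus}, apply the already-established decomposition there, and descend via Proposition~\ref{P:refined-frob}(a) (together with the Frobenius formulas for $\mathsf v$-valuations recorded in the proof of Theorem~\ref{T:refined-radii-vs-radii}); the case $IR_{\partial_j}(M\otimes E)=p^{-1/(p-1)}$ goes through the $\partial_j$-Frobenius descendant instead, using Proposition~\ref{P:refined-frob}(b). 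Finally, the descent of the $e=1$ decomposition over $K'\{\{\alpha^{1/e}/t^{1/e},t^{1/e}\rrbracket_0$ to a decomposition of $M$ over $K\{\{\alpha/t,t\rrbracket_0$ indexed by $\mu_e\rtimes\Gal(K^\alg/K)$-orbits is the standard Galois-plus-$\mu_e$ descent, identical to the corresponding passage in Theorem~\ref{T:var-refined}. I expect the main obstacle to be verifying that Theorem~\ref{T:touch-decomposition} is genuinely applicable to $N$ at $r=0$ — i.e.\ that the partial-sum function of $N$ does not merely touch but stays strictly separated on the \emph{open} interval $(0,-\log\beta)$ while the module $M\otimes E$ itself has pure intrinsic $\partial_j$-radii $<1$ so that the hypothesis "$IR_{\partial_j}(M\otimes E)<1$" of that theorem holds for $N$; this requires carefully tracking how tensoring with $(\calL^{(j)}_{x,b,(n)})^\dual$ affects radii at the boundary point, which is where Corollary~\ref{C:refined-radii-vs-radii-j} and the convexity in Theorem~\ref{T:variation-j}(d) do the essential work.
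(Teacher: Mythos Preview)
Your proposal is correct and follows exactly the approach the paper takes: the paper's own proof is the single sentence ``identical to that of Theorem~\ref{T:var-refined}, except that we use decomposition Theorem~\ref{T:touch-decomposition} in place of Theorem~\ref{T:variation-j},'' and you have faithfully unpacked that template, including the twist by $\calL^{(j)}_{x,b,(n)}$, the projector argument, and the Frobenius antecedent/descendant reduction for the non-visible range. One small technical point: Theorem~\ref{T:touch-decomposition} as stated requires $N\otimes E$ to have \emph{pure} $\partial_j$-radii, which need not hold for $N=M\otimes(\calL^{(j)}_{x,b,(n)})^\dual$; you should first split off the part of $N$ with $\partial_j$-radii over $E$ strictly larger than $\omega\ee^{a}$ using Proposition~\ref{P:decomposition-partial}(b), and only then invoke Theorem~\ref{T:touch-decomposition} on the remaining pure part --- but this is a routine adjustment and does not affect the argument. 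Your justification of property~(ii) is slightly elliptical: the cleanest way to see it is that the boundary decomposition coming from Lemma~\ref{L:twisted-poly-decomposition} (and its Frobenius transports) separates $N\otimes E$ precisely by the $\mathsf v_s$-valuation of the refined $\partial_j$-radii, so $N_1\otimes E$ has refined radii of $\mathsf v_s$-valuation strictly greater than $-b$ (or $\partial_j$-radii strictly larger than $\omega\ee^a$), and untwisting by $\calL^{(j)}_{x,b,(n)}$ then forces the leading term $\theta t^{-b}$ via Proposition~\ref{P:refined-properties-general}.
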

\begin{proof}
The proof is identical to that of Theorem~\ref{T:var-refined}, except that we use decomposition Theorem~\ref{T:touch-decomposition} in place of Theorem~\ref{T:variation-j}.
\end{proof}

\begin{theorem}\label{T:refined-boudary}
Let $M$ be a $\partial_{J^+}$-differential module of rank $d$ over $K \{\{\alpha / t, t  \rrbracket_0$.  Assume that $M \otimes F_\eta$, for $\eta \in (\alpha, 1)$, and $M \otimes E$ all have pure intrinsic radii, and assume that the function $f_1(M, r)$ is affine with slope $b$ for $r \in [0, -\log \alpha)$.  Let $e$ be the prime-to-$p$ part of the denominator of $b$.  Moreover, assume that $IR(M \otimes E) = \omega \mathrm e^\goths<1$.  Then there exists a finite tamely ramified extension $K'$ of $K$ and a unique direct sum decomposition
\[
M \otimes K'\{\{\alpha^{1/e} / t^{1/e} , t^{1/e} \rrbracket_0 = \bigoplus_{\vartheta \in \oplus_{j \in J} \kappa_{K^\alg}^{(\goths)}\frac {du_j}{u_j} \oplus \kappa_{K^\alg}^{(\goths)}\frac {dt}{t}} M_\vartheta
\]
of $\partial_{J^+}$-differential modules such that
\begin{itemize}
\item[\emph{(i)}] $M_\vartheta \otimes F_\eta$ has pure refined intrinsic radii $\vartheta t^{-b}$ for all $\eta \in (\alpha, 1)$, and
\item[\emph{(ii)}] every element in the multiset of refined intrinsic radii of $M_\vartheta \otimes E$ is congruent to $\vartheta t^{-b}$ modulo those elements in $\oplus_{j \in J} \kappa_{K^\alg}^{(\goths)}\frac {du_j}{u_j} \oplus \kappa_{K^\alg}^{(\goths)}\frac {dt}{t}$ with $\mathsf v_\goths$-valuation strictly bigger than $\mathsf v_\goths(\vartheta t^{-b}) = -b$.
\end{itemize}
Moreover, this decomposition descents to a unique decomposition of $M$ itself by Galois descent, satisfying analogous properties, but in the fashion stated in terms of $\mu_e \rtimes \Gal(K^\alg/K)$-orbits.
\end{theorem}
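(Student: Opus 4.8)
The plan is to imitate the proof of Theorem~\ref{T:var-refined-multi}, replacing the one-dimensional decomposition results of Section~\ref{S:one-dim} (Theorems~\ref{T:variation-j}(e) and \ref{T:variation}(e)(f)) by their bounded-ring counterparts from Subsection~\ref{S:partial-decomposition} (Proposition~\ref{P:decomposition-partial}(b) and Theorem~\ref{T:touch-decomposition}), and replacing the single-derivation Theorem~\ref{T:var-refined} by the single-derivation Theorem~\ref{T:refined-boudary-j} just established. As in all the earlier decomposition arguments, uniqueness is automatic, since the decomposition is forced by the refined-radii decompositions of $M\otimes F_\eta$ for $\eta\in(\alpha,1)$ supplied by Theorem~\ref{T:var-refined-multi}; moreover we may freely base change $M$ to $K'\{\{\alpha^{1/e}/t^{1/e},t^{1/e}\rrbracket_0$ for a finite tamely ramified $K'/K$ and recover the statement over $K\{\{\alpha/t,t\rrbracket_0$ by Galois descent using the $\mu_e\rtimes\Gal(K^\alg/K)$-equivariance, so we may assume $e=1$; likewise, replacing $K$ by the completion of $K(x_J)$ with respect to the $(|u_1|,\dots,|u_m|)$-Gauss norm as in the reduction following Hypothesis~\ref{H:u_j=1}, we may assume $|u_j|=1$ for $j\in J$.

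First I would reduce to the case that $M$ is indecomposable as a $\partial_{J^+}$-differential module over $K\{\{\alpha/t,t\rrbracket_0$ and establish the bounded-ring analogue of the dichotomy of Theorem~\ref{T:variation}(f): for each $j\in J^+$, either (1) $M\otimes F_\eta$ has pure $\partial_j$-radii equal to $IR(M\otimes F_\eta)$ for every $\eta\in(\alpha,1)$ and $M\otimes E$ has pure $\partial_j$-radii equal to $IR(M\otimes E)=\omega\ee^\goths$, or (2) the least $\partial_j$-radius of $M\otimes F_\eta$ exceeds $IR(M\otimes F_\eta)$ for all such $\eta$, and likewise over $E$. The argument mirrors that of Theorem~\ref{T:variation}(f): by Proposition~\ref{P:decomposition-partial}(a) the functions $f_i^{(j)}(M,r)$ are continuous on $[0,-\log\alpha)$, and $f_1^{(j)}(M,r)\le f_1(M,r)$ with $f_1(M,r)$ affine; if $\partial_j$ is not as in (2), then $f_1^{(j)}$ meets $f_1$ at some point and hence, a convex function lying below an affine one being equal to it once they touch, on all of $[0,-\log\alpha)$, after which indecomposability together with the splitting in Proposition~\ref{P:decomposition-partial}(b) and Theorem~\ref{T:touch-decomposition} forces the remaining subsidiary $\partial_j$-radii to coincide as well, giving (1), with the value at $r=0$ included by the continuity just quoted.

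Next, for each $j$ in case (1) — i.e.\ each dominant derivation — I would apply Theorem~\ref{T:refined-boudary-j} (after a tamely ramified extension of $K$, enlarged once and for all) to obtain a direct sum decomposition $M=\bigoplus_\theta M^{(j)}_\theta$ with $M^{(j)}_\theta\otimes F_\eta$ of pure refined $\partial_j$-radii $\theta t^{-b}$ for $\eta\in(\alpha,1)$ and with the congruence property over $E$. The decompositions obtained for distinct dominant $j$'s are mutually compatible: restricted to a single $F'_\eta$ with $\eta\in(\alpha,1)\cap p^{\QQ}$ they are the decompositions of $M\otimes F'_\eta$ by refined $\partial_j$-radii from Theorem~\ref{T:refined-decomposition} (with Proposition~\ref{P:refined-properties-general}), and these admit a common refinement indexed by $\vartheta\in\bigoplus_{j\in J}\kappa_{K^\alg}^{(\goths)}\frac{du_j}{u_j}\oplus\kappa_{K^\alg}^{(\goths)}\frac{dt}{t}$, the sum running over the dominant components only. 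Gluing these refinements by means of Lemma~\ref{L:proj-intersect} and Remark~\ref{R:proj-intersect} yields the asserted decomposition $M\otimes K'\{\{\alpha^{1/e}/t^{1/e},t^{1/e}\rrbracket_0=\bigoplus_\vartheta M_\vartheta$. Property (i) is then immediate from Theorem~\ref{T:refined-boudary-j}(i) for each dominant $j$, together with the convention of Definition~\ref{D:multi-refined} that non-dominant derivations are dropped; for property (ii), Theorem~\ref{T:refined-boudary-j}(ii) shows that each dominant component of every refined intrinsic radius of $M_\vartheta\otimes E$ agrees with the corresponding component of $\vartheta t^{-b}$ modulo $\mathsf v_\goths$-valuation strictly greater than $-b$, and since $\mathsf v_\goths$ on differential forms is the minimum over the components (Notation~\ref{N:mathsf-v}) and $\mathsf v_\goths(\vartheta t^{-b})=-b$ by Corollary~\ref{C:refined-radii-vs-radii}, this is exactly (ii). The decomposition of $M$ itself over $K\{\{\alpha/t,t\rrbracket_0$ then follows from uniqueness and Galois descent as noted above.

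The step I expect to be the main obstacle is the careful treatment of the boundary point $r=0$, i.e.\ of $M\otimes E$: unlike at an interior radius, $M\otimes E$ need not have pure refined intrinsic radii, so both the dichotomy above and property (ii) rest on controlling the limiting behaviour of the $f_i^{(j)}$ and of the refined radii as $\eta\to 1^-$. This is precisely what Proposition~\ref{P:decomposition-partial}(a), Theorem~\ref{T:touch-decomposition}, Theorem~\ref{T:refined-radii-vs-radii}, and Corollary~\ref{C:refined-radii-vs-radii} are designed to provide, so no genuinely new estimate is needed; the real work is in assembling them and in verifying that the per-$j$ decompositions glue to a single $\partial_{J^+}$-stable decomposition over the bounded analytic ring.
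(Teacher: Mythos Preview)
Your proposal is correct and follows essentially the same approach as the paper: the paper's proof simply says ``identical to that of Theorem~\ref{T:var-refined-multi}, except that we invoke Theorem~\ref{T:refined-boudary-j} in place of Theorem~\ref{T:variation-j},'' and you have accurately unpacked what that entails --- the reduction to indecomposable $M$, the bounded-ring dichotomy via Proposition~\ref{P:decomposition-partial} and Theorem~\ref{T:touch-decomposition}, the per-$j$ application of Theorem~\ref{T:refined-boudary-j}, and the compatibility of the resulting decompositions. Your extra care with the boundary behaviour at $r=0$ and the verification of condition~(ii) are details the paper leaves implicit but which are handled exactly as you describe.
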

\begin{proof}
The proof is identical to that of Theorem~\ref{T:var-refined-multi}, except that we use invoke Theorem~\ref{T:refined-boudary-j} in place of Theorem~\ref{T:variation-j}.
\end{proof}

\begin{corollary}
\label{C:refined-boundary}
Let $M$ be a $\partial_{J^+}$-differential module of rank $d$ over $K \{\{\alpha / t, t  \rrbracket_0$.  Assume that $M \otimes E$ has pure intrinsic radii $IR(M \otimes E) = \omega \ee^\goths <1$ and that the function $f_i(M, r)$ for each $i=1, \dots, d$ is affine over $[0, -\log \alpha)$.  Let $M = \oplus_{b \in \QQ} M_b$ be the unique direct sum decomposition of $M$ over $A^1_K(\alpha, 1)$ such that $f_1(M_b, r) = \cdots = f_{\dim M_b}(M_b, r)$ has slope $b$. Then the following two multisets are the same
\begin{itemize}
\item[\emph{(i)}] The multiset composed of all elements in $\calI\Theta(M_b \otimes F_\eta) \subset  \oplus_{j \in J^+}t^{-b} \kappa_{K^\alg}^{(\goths)} \frac{du_j}{u_j} \oplus t^{-b} \kappa_{K^\alg}^{(\goths)} \frac{dt}t$ for all $b$ and for some fixed $\eta \in (\alpha, 0)$ (this is independent of the choice of $\eta$);
\item[\emph{(ii)}] The multiset composed of $\bar \vartheta$ for all $\vartheta \in \Theta_{\partial_j(V)}$, where $\bar \vartheta$ is the reduction of $\vartheta \in \oplus_{j \in J^+}t^{-b} \kappa_{K^\alg}^{(\goths)} \frac{du_j}{u_j} \oplus t^{-b} \kappa_{K^\alg}^{(\goths)} \frac{dt}t$ modulo those elements with $\mathsf v_s$-valuation strictly bigger than $\mathsf v_s(\vartheta)$.
\end{itemize}
\end{corollary}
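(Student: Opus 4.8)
The plan is to reduce to the case of a single slope via the slope decomposition, invoke the refined‑radii decomposition Theorem~\ref{T:refined-boudary}, and then match the two multisets summand by summand. The first observation is that both multisets are additive along the decomposition $M=\bigoplus_b M_b$. By Theorem~\ref{T:touch-decomposition} this decomposition is already defined over $K\{\{\alpha/t,t\rrbracket_0$, so tensoring with $F_\eta$ (any $\eta\in(\alpha,1)$) and with $E$ produces the corresponding decompositions of $M\otimes F_\eta$ and $M\otimes E$. Refined radii are additive over direct sums by Theorem~\ref{T:refined-decomposition}, applied over $F_\eta$ and over $E$; the latter application is legitimate because every subsidiary intrinsic radius of $M\otimes E$ equals $\omega\ee^\goths<1$, so each $M_b\otimes E$ again has pure intrinsic radii $\omega\ee^\goths<1$. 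Since the leading‑term reduction $\vartheta\mapsto\bar\vartheta$ in (ii) depends only on $\vartheta$, it commutes with forming the union $\calI\Theta(M\otimes E)=\bigcup_b\calI\Theta(M_b\otimes E)$. Hence it suffices to prove the corollary when $f_1(M,r)=\cdots=f_d(M,r)$ is affine of a single slope $b$ on $[0,-\log\alpha)$, with $M\otimes E$ of pure intrinsic radii $\omega\ee^\goths<1$; in that case Corollary~\ref{C:refined-radii-vs-radii} (or Theorem~\ref{T:refined-boudary}(ii) itself) tells us every element of $\calI\Theta(M_b\otimes E)$ has $\mathsf v_\goths$‑valuation exactly $-b$.

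In this single‑slope situation one would apply Theorem~\ref{T:refined-boudary}: after a finite tamely ramified extension $K'/K$ and the coordinate change $t\mapsto t^{1/e}$ (with $e$ the prime‑to‑$p$ part of the denominator of $b$) there is a direct sum decomposition $M\otimes K'\{\{\alpha^{1/e}/t^{1/e},t^{1/e}\rrbracket_0=\bigoplus_\vartheta M_\vartheta$ such that $M_\vartheta\otimes F'_\eta$ has pure refined intrinsic radii $\vartheta t^{-b}$ for every $\eta\in(\alpha,1)$, and every element of $\calI\Theta(M_\vartheta\otimes E')$ is congruent to $\vartheta t^{-b}$ modulo elements of $\mathsf v_\goths$‑valuation strictly greater than $-b$. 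Both multisets (i) and (ii) are preserved, under the evident identifications, by this base change, and — by the Galois descent built into Theorem~\ref{T:refined-boudary} — it is enough to verify the equality over $K'$. Now fix a summand $M_\vartheta$. First one checks $\vartheta\neq0$ as a differential form: a summand all of whose refined intrinsic radii in the dominant directions vanish is impossible, since for a dominant $\partial_j$ one has $IR_{\partial_j}(M_\vartheta)=IR(M_\vartheta)<1$ and hence nonzero reduced eigenvalues by Lemma~\ref{L:converse-good-norm} (in the non‑visible case via a Frobenius antecedent/descendant). Therefore $\mathsf v_\goths(\vartheta t^{-b})=-b$. The contribution of $M_\vartheta$ to multiset (i) is then $\dim M_\vartheta$ copies of $\vartheta t^{-b}$, independently of $\eta$ (which also establishes the parenthetical independence claim); and each $\vartheta'\in\calI\Theta(M_\vartheta\otimes E')$, being $\vartheta t^{-b}$ plus a term of $\mathsf v_\goths$‑valuation $>-b$, has $\mathsf v_\goths(\vartheta')=-b$ and reduces to $\bar{\vartheta'}=\vartheta t^{-b}$, so $M_\vartheta$ contributes $\dim M_\vartheta$ copies of $\vartheta t^{-b}$ to multiset (ii) as well. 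Summing over $\vartheta$, and using $\calI\Theta(M_b\otimes E')=\bigcup_\vartheta\calI\Theta(M_\vartheta\otimes E')$ once more, finishes the single‑slope case and hence the general case.

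The only genuinely delicate point is the bookkeeping around the tamely ramified base change $K'/K$ together with the substitution $t\mapsto t^{1/e}$: one has to be careful that the valuation $\mathsf v_\goths$, the leading‑term reduction $\vartheta\mapsto\bar\vartheta$, and the two multisets all transform compatibly under passage to $K'$ and descent back to $K$ (exactly as they do in Theorem~\ref{T:refined-boudary}). Everything else is a formal consequence of the refined‑radii decomposition theorems over the bounded ring $K\{\{\alpha/t,t\rrbracket_0$ (Theorems~\ref{T:touch-decomposition} and~\ref{T:refined-boudary}) and of Corollary~\ref{C:refined-radii-vs-radii} matching $\mathsf v_\goths$‑valuations with slopes.
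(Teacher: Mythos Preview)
Your proposal is correct and follows essentially the same approach as the paper: the paper's proof is simply the one-line statement ``It follows from the decomposition Theorems~\ref{T:touch-decomposition} and \ref{T:refined-boudary},'' and what you have written is a careful unpacking of exactly that sentence---first using Theorem~\ref{T:touch-decomposition} to decompose by slope over the bounded ring, then applying Theorem~\ref{T:refined-boudary} on each slope piece to match multisets (i) and (ii) via its conditions (i) and (ii).
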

\begin{proof}
It follows from the decomposition Theorems~\ref{T:touch-decomposition} and \ref{T:refined-boudary}.
\end{proof}

We have similar results for  extrinsic radii.

\begin{theorem}
\label{T:touch-refined-nonlog}
Assume that $|u_j|=1$ for all $j \in J$.
For $s \in \RR$, let $\hat {\mathsf v}_s$ be the valuation on $\kappa_E^{(s)} dt \oplus \bigoplus_{j \in J} \kappa_E^{(s)} du_j$ given by
\[
\hat {\mathsf v}_s\big( \theta_0 dt+ \theta_1 du_1 + \cdots + \theta_m du_m \big)= \min_{j \in J^+}\big\{ \mathsf v_s(\theta_j)\big\}.
\]
Let $M$ be a $\partial_{J^+}$-differential module of rank $d$ over $K \{\{\alpha / t, t  \rrbracket_0$.  Assume that $M \otimes F_\eta$, for $\eta \in (\alpha, 1)$, and $M \otimes E$ all have pure extrinsic radii, and assume that the function $\hat f_1(M, r)$ is affine with slope $b$ for $r \in [0, -\log \alpha)$.  Let $e$ be the prime-to-$p$ part of the denominator of $b$.  Moreover,  assume that $ER(M \otimes E) = \omega \mathrm e^s<1$.  Then there exists a unique direct sum decomposition
$
M = \bigoplus_{\{\mu_e\hat \vartheta\}} M_{\{\mu_e\hat \vartheta\}}
$
of $\partial_{J^+}$-differential modules over $K \{\{\alpha / t, t  \rrbracket_0$, where the direct sum runs through all $\mu_e \rtimes \Gal(K^\sep/K)$-orbits $\{\mu_e \hat \vartheta\}$ in $ \bigoplus_{j \in J} \kappa_{K^\alg}^{(s)} du_j \oplus \kappa_{K^\alg}^{(s)} dt$ such that
\begin{itemize}
\item[\emph{(i)}] for all $\eta \in (\alpha, 1)$, the multiset of refined extrinsic radii of  $M_{\{\mu_e\vartheta\}} \otimes F_\eta$ is composed of the $\mu_e \rtimes\Gal(K^\alg/K)$-orbit $\{\mu_e \hat \vartheta t^{-b}\}$ with the appropriate multiplicity, and
\item[\emph{(ii)}] the multiset consisting of the reductions of elements in the multiset of refined extrinsic radii of $M_{\{\mu_e\hat \vartheta\}} \otimes E$ modulo those elements with $\hat{\mathsf v}_s$-valuation is strictly  bigger than $-b$, is composed of the the $\mu_e \rtimes\Gal(K^\alg/K)$-orbit $\{\mu_e \hat \vartheta t^{-b}\}$ with the appropriate multiplicity.
\end{itemize}
\end{theorem}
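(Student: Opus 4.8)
The plan is to follow the template of the proofs of Theorems~\ref{T:var-refined-multi}, \ref{T:nonlog-refined-decomposition}, and \ref{T:refined-boudary}, feeding the one-derivation refined boundary decomposition (Theorem~\ref{T:refined-boudary-j}) applied to the dominant derivations into a ``compatibility, gluing, and descent'' argument. As usual, I would first reduce without loss of generality to the case where $M$ is not a direct sum of two nonzero $\partial_{J^+}$-differential submodules.

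The first substantive step is to establish a dichotomy over the bounded ring $K\{\{\alpha/t, t\rrbracket_0$, the analog of Theorem~\ref{T:variation}(f) and of the dichotomy used in the proof of Proposition~\ref{P:nonlog-refined-disc}: for each $j \in J^+$, either (1) $R_{\partial_j}(M \otimes F_\eta) = ER(M \otimes F_\eta)$ for all $\eta \in (\alpha,1)$ and also $R_{\partial_j}(M \otimes E) = ER(M \otimes E)$, with $M \otimes F_\eta$ and $M \otimes E$ of pure $\partial_j$-radii; or (2) $R_{\partial_j}(M \otimes F_\eta) > ER(M \otimes F_\eta)$ for all $\eta \in (\alpha,1)$. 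This I would get by combining the continuity and convexity of $f_1^{(j)}(M,r)$ from Theorem~\ref{T:variation-j}(a)(d) and Proposition~\ref{P:decomposition-partial}(a) with the boundary decompositions of Proposition~\ref{P:touch-nonlog} and the $\partial_j$-version of Theorem~\ref{T:touch-decomposition}. Indeed, since $f_1^{(j)}(M,r)$ is convex and lies below the affine function $\hat f_1(M,r)$, if the two agree at one interior point they agree on all of $(\alpha,1)$; and if $\partial_j$ were dominant at some radius but not with full multiplicity at some radius, one could split off a proper direct summand of $M$ over $K\{\{\alpha/t, t\rrbracket_0$ (controlling simultaneously the behavior on $(\alpha,1)$ and at the boundary point $r=0$, via Proposition~\ref{P:touch-nonlog}(b) and Theorem~\ref{T:touch-decomposition}), contradicting indecomposability; the affineness of $\hat f_1(M,r)$ then forces the exceptional locus to be empty or all of $(\alpha,1)$.

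For the dominant derivations --- those $j \in J^+$ in case (1), which are exactly the $j$ entering the definition of the refined extrinsic radii --- I would apply Theorem~\ref{T:refined-boudary-j}, whose hypotheses hold: $M \otimes F_\eta$ and $M \otimes E$ have pure $\partial_j$-radii, $f_1^{(j)}(M,r) = \hat f_1(M,r)$ is affine of slope $b$, and $R_{\partial_j}(M \otimes E) = ER(M \otimes E) = \omega \ee^s < 1 = |u_j|^{-1}$ for $j \in J$ and $< 1$ for $j = 0$, using $|u_j|=1$. Passing to a common finite tamely ramified extension $K'/K$ and to the coordinate $t^{1/e}$, this gives for each dominant $j$ a decomposition of $M \otimes K'\{\{\alpha^{1/e}/t^{1/e}, t^{1/e}\rrbracket_0$ indexed by the refined $\partial_j$-radii, each summand satisfying the $F_\eta$-purity and the $\mathsf v_s$-reduction-at-$E$ statements of Theorem~\ref{T:refined-boudary-j}(i)--(ii). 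Each such decomposition is canonical, hence preserved by the commuting operators $\partial_{j'}$ with $j' \neq j$, so it is a decomposition of $\partial_{J^+}$-differential modules, and the decompositions for the various dominant $j$ admit a common refinement $M \otimes K'\{\{\alpha^{1/e}/t^{1/e}, t^{1/e}\rrbracket_0 = \bigoplus_{\hat\vartheta} M_{\hat\vartheta}$ indexed by $\hat\vartheta = \sum_{j\ \mathrm{dominant}} \theta_j\, du_j \in \bigoplus_{j\in J}\kappa_{K^\alg}^{(s)}du_j \oplus \kappa_{K^\alg}^{(s)}dt$ (components in non-dominant directions set to $0$), where $M_{\hat\vartheta}\otimes F_\eta$ has refined extrinsic radii $\{\hat\vartheta t^{-b}\}$ with multiplicity and, since $\hat{\mathsf v}_s$ is the minimum of $\mathsf v_s$ over components, the reductions of the refined extrinsic radii of $M_{\hat\vartheta}\otimes E$ modulo $\hat{\mathsf v}_s$-valuation $>-b$ form the multiset $\{\hat\vartheta t^{-b}\}$. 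Finally, as this refinement is canonical it is equivariant for $\Gal(K'^\alg/K)$ and for the $\mu_e$-action on $t^{1/e}$; grouping the summands into $\mu_e \rtimes \Gal(K^\sep/K)$-orbits and gluing back over $K\{\{\alpha/t, t\rrbracket_0$ via Lemma~\ref{L:proj-intersect} and Remark~\ref{R:proj-intersect} (applied to the commuting square formed by $K\{\{\alpha/t, t\rrbracket_0$, a Gauss-norm completion over $K$, $K'\{\{\alpha^{1/e}/t^{1/e}, t^{1/e}\rrbracket_0$, and the corresponding completion over $K'$) yields the asserted $M = \bigoplus_{\{\mu_e\hat\vartheta\}}M_{\{\mu_e\hat\vartheta\}}$, with uniqueness inherited from the uniqueness in Theorems~\ref{T:touch-decomposition} and \ref{T:refined-boudary-j}.

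I expect the main obstacle to be the bounded-ring dichotomy in the second paragraph: the convexity and monotonicity statements of Theorem~\ref{T:variation-j} are formulated over annuli, so some care is needed to combine them with the boundary decompositions of Proposition~\ref{P:touch-nonlog} and Theorem~\ref{T:touch-decomposition} and to control the behavior at $r=0$ in tandem with that on $(\alpha,1)$. Once the dichotomy is available, the remainder is the same bookkeeping already carried out in the proofs of Theorems~\ref{T:var-refined-multi} and \ref{T:refined-boudary}.
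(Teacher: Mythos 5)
Your proposal is correct and follows essentially the same route as the paper's one-line proof, which reduces to the argument of Theorem~\ref{T:nonlog-refined-decomposition} (hence of Theorem~\ref{T:var-refined-multi}) with Theorem~\ref{T:refined-boudary-j} serving as the single-derivation decomposition tool. One small simplification of the step you flagged as the main obstacle: once the extrinsic version of the dichotomy in Theorem~\ref{T:variation}(f) is applied on the open annulus $(\alpha,1)$, the fact that a dominant $\partial_j$ remains dominant with full multiplicity at $r=0$ (i.e.\ that $M \otimes E$ has pure $\partial_j$-radii equal to $ER(M\otimes E)$) is an immediate consequence of the continuity of each $f_l^{(j)}(M,r)$ at $r=0$ from Proposition~\ref{P:decomposition-partial}(a), so you do not really need to route the boundary dichotomy through the decomposition results of Proposition~\ref{P:touch-nonlog} and Theorem~\ref{T:touch-decomposition}; the indecomposability assumption and those results are of course still what underlie the dichotomy on the open part.
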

\begin{proof}
The proof is identical to that of Theorem~\ref{T:nonlog-refined-decomposition}, except that we use invoke Theorem~\ref{T:refined-boudary-j} in place of Theorem~\ref{T:variation-j}.
\end{proof}

\begin{corollary}
Assume that $|u_j| =1 $ for all $j \in J$.  Let $M$ be a $\partial_{J^+}$-differential module of rank $d$ over $K \llbracket t\rrbracket_0$.  Assume that $ER(M \otimes E) = \omega \ee^s<1$.  Let $M_e$ denote the unique $\partial_{J^+}$-differential submodule of $M \otimes E$ that has pure extrinsic radii $ER(M \otimes E)$; put $l=\dim M_e$.  Then
\begin{itemize}
\item[\emph{(a)}] The $\hat {\mathsf v}_s$-valuations of elements in $\Theta(M_e \otimes E)$ are all nonnegative.
\item[\emph{(b)}] There exists a unique direct sum decomposition $M = \bigoplus_{\{\hat\vartheta\}} M_{\{\hat\vartheta\}} \oplus M_0$ of $\partial_{J^+}$-differential modules over $K\llbracket t\rrbracket_0$, where the first direct sum is taken over all $ \Gal(\kappa_K^\sep / \kappa_K)$-orbits $\{\hat\vartheta\}  \subset \bigoplus_{j \in J} \kappa_{K^\alg}^{(s)} du_j \oplus \kappa_{K^\alg}^{(s)} dt$ such that
\begin{itemize}
\item[\emph{(i)}] for all $\eta<1$, $M_{\{\hat \vartheta\}} \otimes F_\eta$ has pure extrinsic radii $\min\{\omega \ee^s, \eta\}$ and, when $\eta \in (\omega \ee^s, 1)$, the multiset $\Theta(M_{\{\vartheta\}} \otimes F_\eta)$ is composed of  $\{\hat \vartheta\}$ with multiplicity,
\item[\emph{(ii)}] the multiset consisting of reductions of elements in the multiset of refined extrinsic radii of $M_{\{\vartheta\}} \otimes E$ modulo those elements with positive $\hat{\mathsf v}_s$-valuation, is composed of  $\{ \hat \vartheta\}$ with appropriate multiplicity, and
\item[\emph{(iii)}] For any $r>0$ satisfying $\hat f_1(M_0, r)<r$, we have $\hat f_1(M_0, r) < \omega \ee^s$.
\end{itemize}
\end{itemize}
\end{corollary}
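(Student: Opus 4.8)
The plan is to establish this as the disc (i.e.\ $K\llbracket t\rrbracket_0$) analogue of Theorem~\ref{T:touch-refined-nonlog}, replacing the bounded--annulus inputs by their disc counterparts Proposition~\ref{P:touch-nonlog}, Proposition~\ref{P:refined-disc-j} and Proposition~\ref{P:nonlog-refined-disc} (whose proofs only use the modules $\calL^{(j)}_{x,0,(n)}$, which are defined over the whole disc), and gluing decompositions over the centre by Lemma~\ref{L:proj-intersect} and Remark~\ref{R:proj-intersect}. Throughout I would use the generic--parameter substitution from the proof of Theorem~\ref{T:variation} (adjoin transcendentals $x_J$ to $K$ and set $\tilde\partial_0 = \partial_0 + \sum_{j\in J} x_j\partial_j$): this converts every statement about extrinsic radii into the corresponding statement about $\tilde\partial_0$--radii and, via Theorem~\ref{T:independence-of-basis}, every statement about refined extrinsic radii into one about refined $\tilde\partial_0$--radii. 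Feeding Theorem~\ref{T:refined-radii-vs-radii} through this substitution gives the dictionary $(\star)$: for a $\partial_{J^+}$--differential module $N$ over $K\llbracket t\rrbracket_0$ for which every $\hat f_i(N,r)$ is affine at $r=0$ (automatic by Proposition~\ref{P:touch-nonlog}(a)) and $N\otimes E$ has pure extrinsic radii $\omega\ee^s$, the multiset of $\hat{\mathsf v}_s$--valuations of the refined extrinsic radii of $N\otimes E$ coincides with the multiset of negatives of the slopes of the $\hat f_i(N,r)$ at $r=0$.

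\textbf{Part (a).} The subsidiary extrinsic radii of $M\otimes E$ are $\omega\ee^s$ with multiplicity $l=\dim M_e$ and strictly larger otherwise, so $\hat f_l(M,0)=-\log(\omega\ee^s)>0$ (as $\omega\ee^s<1$) and $\hat f_l(M,0)>\hat f_{l+1}(M,0)$; since $\hat f_l(M,0)>0$ forces $\hat f_l(M,r)>r$ for $r$ near $0$, Proposition~\ref{P:touch-nonlog}(b) splits off over $K\llbracket t\rrbracket_0$ a rank-$l$ summand $N$ with $N\otimes E=M_e$ (passing to the boundary $r=0$) and each $\hat f_i(N,r)$ affine near $r=0$. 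Because $\hat f_1(N,0)=-\log(\omega\ee^s)>0$, monotonicity (the disc statement (c$'$) of Theorem~\ref{T:variation}) applies in a right neighbourhood of $r=0$, so the slopes of the $\hat F_i(N,r)$, hence of the $\hat f_i(N,r)$, at $r=0$ are nonpositive; by $(\star)$ their negatives are the $\hat{\mathsf v}_s$--valuations of $\Theta(M_e\otimes E)$, which are therefore all nonnegative.

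\textbf{Part (b).} Write $M=N\oplus M'$ with $N$ as in (a); then $M'\otimes E$ has all subsidiary extrinsic radii $>\omega\ee^s$. Next decompose $N$ further over $K\llbracket t\rrbracket_0$: all $\hat f_i(N,\cdot)$ agree at $r=0$ but may separate for $r>0$, which is precisely the touching situation handled by Theorem~\ref{T:touch-decomposition} and Proposition~\ref{P:touch-nonlog}(b); this expresses $N$ as $N_0\oplus N_{<0}$, where on $N_0\otimes F_\eta$ the extrinsic radius equals $\min\{\omega\ee^s,\eta\}$ for all $\eta$ (combine boundary purity, convexity, and monotonicity), while every slope of $\hat f_1(N_{<0},r)$ at $r=0$ is strictly negative, so by $(\star)$ the refined extrinsic radii of $N_{<0}\otimes E$ all have positive $\hat{\mathsf v}_s$--valuation. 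Put $M_0=M'\oplus N_{<0}$; the asserted property of $\hat f_1(M_0,\cdot)$ holds (indeed vacuously, since $\hat f_1\ge r$ always, so $\hat f_1(M_0,r)<r$ never occurs). It remains to decompose $N_0$: over $K\{\{\alpha/t,t\rrbracket_0$ with $\alpha$ close to $1$ it has pure boundary extrinsic radius $\omega\ee^s$ and affine $\hat f_1$ of slope $0$, so Theorem~\ref{T:touch-refined-nonlog} (with $b=0$, $e=1$) gives $N_0=\bigoplus_{\{\hat\vartheta\}}(N_0)_{\{\hat\vartheta\}}$ satisfying the $F_\eta$--property~(i) and the $E$--property~(ii); applying Proposition~\ref{P:nonlog-refined-disc} to $N_0$ over the whole disc yields another such decomposition, which agrees with the previous one on the overlap by uniqueness, so Lemma~\ref{L:proj-intersect} and Remark~\ref{R:proj-intersect} glue them into a decomposition $N_0=\bigoplus_{\{\hat\vartheta\}}M_{\{\hat\vartheta\}}$ over $K\llbracket t\rrbracket_0$ with properties (i) and (ii). Then $M=\bigoplus_{\{\hat\vartheta\}}M_{\{\hat\vartheta\}}\oplus M_0$ is the desired decomposition, and uniqueness follows from uniqueness of the extrinsic--radius decomposition over $E$ together with the uniqueness in Proposition~\ref{P:nonlog-refined-disc}.

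\textbf{Main obstacle.} The crux is the dictionary $(\star)$: one must check that Theorem~\ref{T:refined-radii-vs-radii}, proved over a bounded annulus, genuinely transports to the boundary $r=0$ of the disc $K\llbracket t\rrbracket_0$ and survives the generic--parameter substitution $\tilde\partial_0=\partial_0+\sum x_j\partial_j$ (via Theorem~\ref{T:independence-of-basis}) for \emph{extrinsic} rather than single--derivation radii; and, relatedly, that the touching--case splitting $N=N_0\oplus N_{<0}$ of Proposition~\ref{P:touch-nonlog} can be pushed into the centre of the disc, which is exactly where Propositions~\ref{P:refined-disc-j} and \ref{P:nonlog-refined-disc} (and the fact that $\calL^{(j)}_{x,0,(n)}$ lives over the entire disc) do the work. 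The remaining steps are bookkeeping parallel to the proofs of Theorems~\ref{T:var-refined-multi}, \ref{T:nonlog-refined-decomposition} and \ref{T:touch-refined-nonlog}.
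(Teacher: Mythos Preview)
Your proposal is correct and follows essentially the same route as the paper: use Proposition~\ref{P:touch-nonlog}(a) with monotonicity (Theorem~\ref{T:variation}(c$'$)) to get nonpositive slopes at $r=0$, translate these into nonnegative $\hat{\mathsf v}_s$-valuations, split off the slope-zero summand via Proposition~\ref{P:touch-nonlog}(b), and then glue the refined decompositions from Theorem~\ref{T:touch-refined-nonlog} and Proposition~\ref{P:nonlog-refined-disc} using Lemma~\ref{L:proj-intersect}. Your ``main obstacle'' $(\star)$ is not actually new work: the extrinsic dictionary you need is already packaged in Theorem~\ref{T:touch-refined-nonlog}(ii), and since $K\llbracket t\rrbracket_0$ sits inside $K\{\{\alpha/t,t\rrbracket_0$ for any $\alpha\in(0,1)$, you may simply invoke that theorem at the boundary rather than redo the generic-parameter argument. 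Your observation that condition~(iii) is vacuous as written (because $\hat f_1\ge r$ always) is also correct; the intended content is presumably that $ER(M_0\otimes F_\eta)>\omega\ee^s$ whenever the extrinsic radius is visible, which is what your construction of $M_0=M'\oplus N_{<0}$ actually delivers.
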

\begin{proof}
(a) By Proposition~\ref{P:touch-nonlog}(a) together with  Theorem~\ref{T:variation}(c'), we know that the functions $f_1(M, r), \dots, f_l(M,r)$ are linear in a neighborhood of $r$ with nonpositive slopes.  Then applying the decomposition in Proposition~\ref{P:touch-nonlog}(b) and Theorem~\ref{T:touch-refined-nonlog} together with  description (ii) in Theorem~\ref{T:touch-refined-nonlog}, we conclude that the $\hat {\mathsf v}_s$-valuations of elements in $\Theta(M_e)$ are all nonnegative.

(b)  
Let $l'$ denote the number of elements in $\Theta(M_e)$ whose $\hat {\mathsf v}_s$-valuation is zero.
By the proof of (a), we see that the derivatives $\hat f'_1(M, 0) = \dots= \hat f'_{l'}(M,0)$ are equal to $0$, and that $f'_{l'+1}(M,0)>0$ or $\hat f_{l'+1}(M,0)> \hat f_l(M,0)$ in case $l=l'$.  By Theorem~\ref{T:variation}(c')(d), we know that 
\[
\hat f_1(M,0)=\hat f_1(M,r) = \dots= \hat f_{l'}(M,r) > \hat f_{l'+1}(M,r)\]
 for any $r< \hat f_1(M,0)$.  We may then apply Proposition~\ref{P:touch-nonlog} to split off the desired $M_0$.  Now, we may apply the standard technique (Lemma~\ref{L:proj-intersect} and Remark~\ref{R:proj-intersect}) to glue the decomposition given by Theorem~\ref{T:touch-refined-nonlog} and Proposition~\ref{P:nonlog-refined-disc}; this gives the further decompositions by $M_{\{\theta\}}$.
 \end{proof}

\subsection{Variation over polyannuli}
\label{S:polyannuli-variation}

In this subsection, we study differential modules over a polyannulus or a polydisc.  In particular, we are interested in the study the functions coming from the radii of convergence when we complete the differential module with respect to various Gauss norms.  We relate the slopes of such functions with the valuations of the refined intrinsic radii.

In this subsection, we assume Hypothesis~\ref{H:K-multi} and we assume that $K$ is discretely valued.

\begin{definition}
A subset $C \subseteq \RR^n$ is called \emph{nondegenerate} if it contains an open subset of $\RR^n$.  Its interior is denoted by $C^\inte$.

An \emph{integral affine functional} on $\RR^n$ is a map $\lambda: \RR^n \to \RR$ of the
form $\lambda(x_1,\dots,x_n) = a_1 x_1 + \cdots + a_n x_n + b$ for some
$a_1,\dots, a_n \in \ZZ$ and $b \in -\log |K^\times |^\QQ$.

A subset $C \subseteq \RR^n$ is \emph{rational polyhedral} (or \emph{RP} for short) if it is \emph{bounded} and there exist 
integral affine functionals $\lambda_1, \dots, \lambda_r$ such that
$C = \{x \in \RR^n| \lambda_i(x) \geq 0 \textrm{ for }i=1,\dots,r\}$.

For $C \subseteq \RR^n$ a RP subset of $\RR^n$,
a function $f: C \to \RR^n$ is \emph{integral polyhedral} 
if there exist finitely many integral
affine functionals 
$\lambda'_1, \dots, \lambda'_d$ such that
$f(x) = \max\{\lambda'_1(x), \dots, \lambda'_d(x)\}$ for any $x \in C.$
\end{definition}

\begin{remark} 
Our convention slightly differs from \cite{kedlaya-xiao}, where RP subsets are not assumed to be bounded.  However, some of the statements below still hold for unbounded RP, and they are often simple corollaries of the statements in the bounded case.  We leave this as an exercise for the reader.
\end{remark}

\begin{notation}
We put $I = \{\serie{}n\}$.  We use $\underline a$ to denote the $n$-tuple $(a, \dots, a)$.
\end{notation}

\begin{definition}
For a subset $C \subseteq \RR^n$, let $\ee^{-C}$ denote the \emph{closure} of the subset $\{\ee^{-r_I}: r_I \in C\} \subseteq (0, +\infty)^n$.
A subset $S$ of $[0, +\infty)^n$ is called \emph{log-RP} if $S = \ee^{-C}$ for some RP subset $C$ of $\RR^n$; it is called \emph{nondegenerate} if $C$ is so.

For $S$ a log-RP subset of $[0, +\infty)^n$, define $A_K(S^\inte)$ to be the 
subspace of the (Berkovich) analytic $n$-space with coordinates $t_1, 
\dots, t_n$ satisfying the condition $(|t_1|, \dots, |t_n|) \in S^\inte$.  We use $K\{\{ S\}\}$ to denote  its ring of functions, and use $K\llbracket S \rrbracket_0$ to denote the subring of $K \{\{ S\}\}$ consisting of functions that are bounded on $|t_I| \in S^\inte$.
\end{definition}

\begin{notation}
Let $S$ be a nondegenerate log-RP subset of $[0, +\infty)^n$ and let $R$ denote either $K\{\{ S \}\}$ or $K \llbracket S \rrbracket_0$.  Let $M$ be a $\partial_{I\cup J}$-differential module over $R$ of rank $d$, with respect to the derivations $\serie{\partial_}m$ and
$\partial_{m+1} = \partial / \partial t_1, \dots, \partial_{m+n} = \partial / \partial t_n$.
For an element $\eta_I$ in $ (\serie{\eta_}n) \in S$ ($S^\inte$ if $R = K\{\{ S\}\}$), let $F_{\eta_I}$ be the completion of $\Frac(R)$ with respect to the $\eta_I$-Gauss norm.  We remark that for $\eta_I$ on the boundary of $S$, $F_{\eta_I}$ ``looks different" (more like $E$ than $F_\eta$ in the 1-dimensional case).

For an element $r_I$ in $ -\log(S)$ ($-\log (S^\inte)$ if $R = K\{\{ S\}\}$), put $f_l(M, r_I) = 
-\log IR(M \otimes F_{\ee^{-r_I}}; l)$ and 
$F_l(M, r_I) = f_1(M, r_I) + \cdots + 
f_l(M, r_I)$ for $l = 1, \dots, d$.
\end{notation}

\begin{theorem}\label{T:polyannuli-variation}
Keep the notation as above. We have the following.
\begin{enumerate}
\item[\emph{(a)}]
(Polyhedrality)
The functions
$d! F_l(M, r_I)$, for $l = 1, \dots, d-1$, and $F_d(M, r_I)$ are integral polyhedral functions.
\item[\emph{(b)}] (Decomposition)  Suppose that for some $l \in \{1, \dots, d\}$, the function $F_l(M, r_I)$ is affine, and suppose that $f_l(M, r_I) > f_{l+1}(M, r_I)$ for any $r_I \in -\log (S)$.  Then $M$ admits a unique direct sum decomposition $M \cong M_0 \oplus M_1$ of differential modules such that for any $\eta_I \in -\log(S^\inte)$, the multiset of intrinsic radii of $M_0$ exactly consists of the smallest $l$ elements in the multiset of  intrinsic radii of $M \otimes F_{\eta_I}$.
\item[\emph{(c)}] (Refined radii) Assume that $R = K\{\{S\}\}$ and that $f_1(M, r_I) = \cdots = f_d(M, r_I) = -\log \omega - \goths+b_1r_1 +\cdots +b_nr_n$ are affine functions on $-\log (S^\inte)$. Let $e_i$ denote the prime-to-$p$ part of the denominator of $b_i$ for all $i \in I$.   Then there exists a finite tamely ramified extension $K'$ of $K$ and a multiset $\calI\Theta(M) \subset \oplus_{i \in I} \kappa_{K'}^{(\goths)} \frac {dt_i}{t_i} \oplus \oplus_{j \in J}\kappa_{K'}^{(\goths)} \frac {du_j}{u_j}$ such that we have a unique direct sum  decomposition of differential modules
\[
M \otimes_R R[t_1^{1/e_1}, \dots, t_n^{1/e_n}] = \bigoplus_{\vartheta \in \calI\Theta(M)} M_\vartheta,
\]
such that each $M_\vartheta \otimes F_{\eta_I}[t_1^{1/e_1}, \dots, t_n^{1/e_n}]$ has pure refined intrinsic radii $t_I^{-b_I}\vartheta$.
\end{enumerate}
\end{theorem}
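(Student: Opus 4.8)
The plan is to reduce Theorem~\ref{T:polyannuli-variation} to the one-variable results of Subsection~\ref{S:one-dim} and Subsection~\ref{S:boundary} by slicing the polyannulus along lines, exactly in the spirit of \cite[Chapter~2]{kedlaya-xiao}.

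For part~(a), I would first invoke Theorem~\ref{T:variation}(a)(b)(d) along every affine line in $-\log(S^\inte)$ to see that the restriction of each $d!\,F_l(M,r_I)$ to such a line is continuous, convex, and piecewise affine with integer slopes; the argument of \cite[Theorems~2.4.4 and~2.5.1]{kedlaya-xiao} (applied to the $\partial_{I\cup J}$-differential module, using that all $u_j$ and $t_i$ give rational-type derivations on $F_{\eta_I}$) then upgrades this to the statement that $d!\,F_l$ and $F_d$ are integral polyhedral on the RP set $-\log(S)$, including the boundary, where one uses Proposition~\ref{P:decomposition-partial}(a) and Proposition~\ref{P:touch-nonlog}(a) to control the behavior as one approaches a facet. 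For part~(b), the decomposition is again constructed line by line: along any affine segment in $-\log(S^\inte)$ with the given spectral-gap hypothesis, Theorem~\ref{T:variation}(e) (together with Theorem~\ref{T:touch-decomposition} when the line meets the boundary) yields a direct sum splitting of $M\otimes K\{\{S'\}\}$ over a sub-polyannulus $S'$; the projectors obtained over different sub-polyannuli and over the various $F_{\eta_I}$ agree after base change, so Lemma~\ref{L:proj-intersect} and Remark~\ref{R:proj-intersect} glue them into a single idempotent in $\End(M)$ over $R$, producing the global decomposition $M\cong M_0\oplus M_1$. Uniqueness is immediate since the splitting is forced fiberwise.

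For part~(c), which is the heart of the statement, I would proceed as follows. Since defining a differential module involves only finitely many coefficients, I may assume $\QQ\cdot\log|K^\times|\neq\RR$, pass to a finite tamely ramified extension so that all relevant reduced eigenvalues become rational over the residue field, and replace $t_i$ by $t_i^{1/e_i}$ so that $e_i=1$ for all $i\in I$; by Lemma~\ref{L:tame-extension} the derivations stay of rational type. The multiset $\calI\Theta(M)$ will be \emph{defined} by computing it along a single generic slice: pick a line $r_I = r_I^{(0)} + s\,v$ with $v$ an integer direction vector and with $r_I^{(0)}$ having coordinates not in $\QQ\cdot\log|K^\times|$; restricting $M$ to the corresponding one-dimensional annulus (completed appropriately) and applying Theorem~\ref{T:var-refined-multi} (in the $K\{\{S\}\}$-setting this is really the refined-radii decomposition for a $\partial_{J^+}$-module over an annulus) gives a decomposition indexed by a multiset of elements of $\oplus_{i}\kappa_{K'}^{(\goths)}\frac{dt_i}{t_i}\oplus\oplus_j\kappa_{K'}^{(\goths)}\frac{du_j}{u_j}$, and Theorem~\ref{T:independence-of-basis} shows this multiset is independent of the choice of generic slice and of the line direction. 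For each $\vartheta$ in this multiset I then build, as in Example~\ref{Ex:pure-IR-1-dim}, a rank-$d_\vartheta$ module $\calL_\vartheta$ over $A^1_{K'}(S^\inte)$ with pure refined intrinsic radii $t_I^{-b_I}\vartheta$; tensoring $M$ with $\calL_\vartheta^\dual$ and applying Proposition~\ref{P:refined-properties} plus the affineness hypothesis forces a spectral gap $f_{(\dim M-\mu)d_\vartheta}>f_{(\dim M-\mu)d_\vartheta+1}$ that, by part~(b) (convexity of a function below an affine one forces affineness once they touch), holds on all of $-\log(S^\inte)$; part~(b) then splits off a summand, which after tensoring back by $\calL_\vartheta$ gives $M_\vartheta$ with the asserted pure refined radii (using Proposition~\ref{P:refined-properties-general} and Lemma~\ref{L:pure-IR-1-dim}). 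Iterating over $\vartheta$ and gluing across sub-polyannuli via Lemma~\ref{L:proj-intersect} produces the desired global decomposition; uniqueness follows because the decomposition is determined by any single generic fiber.

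The main obstacle is the compatibility and gluing in part~(c): one must check that the refined-radii decompositions obtained over different affine slices and over the boundary facets of $S$ genuinely arise from a single idempotent on $M$ over $K\{\{S\}\}$, rather than merely matching up to ambiguity. This is where the hypothesis $R=K\{\{S\}\}$ (as opposed to $K\llbracket S\rrbracket_0$) and the strict inequality $\omega\ee^{-\goths+b\cdot r}<1$, i.e. the purely visible-or-non-visible-but-not-critical situation, are used: the former guarantees the relevant rings of functions intersect correctly inside the Gauss-norm completions so that Lemma~\ref{L:proj-intersect} applies, and the latter ensures that whenever $IR(M\otimes F_{\eta_I})$ lands in the critical range one may pass to Frobenius antecedents over a neighborhood (Remark~\ref{R:frob-over-annulus}) and reduce to the visible case, exactly as in the proof of Theorem~\ref{T:var-refined}. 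Everything else is a matter of bookkeeping that mirrors the one-dimensional arguments already in place.
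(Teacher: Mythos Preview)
Your treatment of (a) and (b) is correct and matches the paper, which simply cites \cite[Theorems~3.3.9 and~3.4.4, Remark~3.4.7]{kedlaya-xiao}; those results are proved precisely by the slice-and-glue argument you outline.

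For (c) the paper's proof is a one-line reduction: rerun the argument of \cite[Theorem~3.4.4]{kedlaya-xiao}, but with Theorem~\ref{T:var-refined-multi} in place of Theorem~\ref{T:variation}(e) as the one-dimensional decomposition input. That is, one applies the refined-radii decomposition only on one-dimensional slices of the polyannulus---where Theorem~\ref{T:var-refined-multi} already handles all derivations at once via the per-$\partial_j$ dichotomy of Theorem~\ref{T:variation}(f) and the single-derivation Theorem~\ref{T:var-refined}---and then glues the resulting projectors with Lemma~\ref{L:proj-intersect}. Your route is different: after reading off $\calI\Theta(M)$ from one generic slice, you propose to construct a test module $\calL_\vartheta$ over the full polyannulus with pure refined \emph{intrinsic} radii $t_I^{-b_I}\vartheta$, tensor with its dual, and split via part~(b).

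That step is a genuine gap. Example~\ref{Ex:pure-IR-1-dim} builds, for a \emph{single} $\partial_j$, a module over a \emph{one}-dimensional annulus with prescribed refined $\partial_j$-radii. To get a module over the polyannulus whose refined intrinsic radii are a prescribed $\vartheta$, all dominant $\partial_j$-actions must be specified simultaneously and must \emph{commute}; the paper explicitly flags this as unresolved in the remark just before Proposition~\ref{P:refined-is-boundary-p=0} (one does not know which elements of $\oplus_j\kappa_{K^\alg}^{(\goths)}\frac{du_j}{u_j}$ arise as refined intrinsic radii of an actual module). The paper's approach never needs such a global test object: Theorem~\ref{T:var-refined-multi} itself avoids multi-derivation test modules by decomposing for each dominant $\partial_j$ separately and observing compatibility, and the polyannulus case inherits this by slicing. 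If you replace your global $\calL_\vartheta$ step by ``apply Theorem~\ref{T:var-refined-multi} on each one-dimensional slice and glue,'' your proof becomes exactly the paper's.
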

\begin{proof}
For (a) and (b), see \cite[Theorems~3.3.9 and 3.4.4, and Remark 3.4.7]{kedlaya-xiao}.  (c) follows from the same argument but using Theorem~\ref{T:var-refined-multi} as the decomposition tool.
\end{proof}

To extend (c) of the theorem above to the boundary is a little tricky.  We will prove it in a special case and leave the general case as an exercise for the reader.

\begin{situation}
\label{Sit:polyannuli-boundary}
Consider the subset $C =  \big\{(x_I) \subset \RR^n\big|x_I \geq 0, x_1+ \cdots + x_n \leq 1\big\}$.  Put $S = \ee^{-C}$, and $R = K \llbracket S \rrbracket_0$.  Let $M$ be a differential module over $K\llbracket S\rrbracket_0$.  Assume moreover that $f_1(M, \underline 0) = \cdots f_d(M, \underline 0) = -\log \omega - \goths$ with $\goths<0$.  We define the following two multisets.

\begin{itemize}
\item [(1)] 
Choose $x \in \gothm_K^{(\goths)} \bs \gothm_K^{(\goths)+}$ to identify $\kappa_{F_{\underline 1}}^{(\goths)} \stackrel {\cdot x^{-1}} \lrar \kappa_{F_{\underline 1}}$ and embed the latter into the higher local field $\kappa_K((t_1))\cdots ((t_n))$, which is equipped with a multi-indexed valuation with respect to the parameters $(t_n, \dots, t_1)$.  This gives rise to a valuation $\bbv_\goths: \kappa_{F_{\underline 1}}^{(\goths)} \rar \ZZ^n \subset \QQ^n$, where the latter is equipped with the lexicographical order; this does not depend on the choice of $x$.  Define the following valuation on $\bigoplus_{i \in I}\kappa_{F_{\underline 1}^\alg}^{(\goths)} \frac{dt_i}{t_i} \oplus\bigoplus_{j \in J}\kappa_{F_{\underline 1}^\alg}^{(\goths)} \frac{du_j}{u_j}$, still denoted by $\bbv_\goths$, by taking the minimum of $\bbv_\goths$ over the coefficients. 
We consider the multiset $A = \big\{(\bbv(\vartheta), \bar \vartheta) \big| \vartheta \in \calI\Theta(M \otimes F_{\underline 1})\big\}$, where $\bar \vartheta$ is the reduction of $t_I^{-\bbv_\goths(\vartheta)}\vartheta$ to $\bigoplus_{i \in I}\kappa_{K^\alg}^{(\goths)} \frac{dt_i}{t_i} \oplus\bigoplus_{j \in J}\kappa_{K^\alg}^{(\goths)} \frac{du_j}{u_j}$.

\item [(2)] By Theorem~\ref{T:polyannuli-variation}(a), there exists a RP subset $C'$ of $C$ which is adjacent to the cells  $t_1 = \cdots = t_i = 0$ for $i = 1, \dots, n-1$, such that the function $f_l(M, r_I)$ for each $l$ is affine in $r_I$ over $C'$.   Then, over $\ee^{-C'^\inte}$, we have a unique direct sum decomposition of differential modules $M = \bigoplus_{b_I \in \QQ^n} M_{b_I}$ such that
 \[
f_1(M_{b_I}, r_I) = \cdots = f_{\dim M_{b_I}}(M_{b_I}, r_I) = -\log \omega -\goths +b_1r_1 + \cdots +b_nr_n.
\]
We put
\[
B = \big\{ (-b_1, \dots, -b_n, \vartheta) \big| b_I \in \QQ^n, t_1^{-b_1} \cdots t_n^{-b_n}\vartheta \in \calI\Theta(M \otimes F_{\eta_I})\big\},
\]
for some $\eta_I \in C'^\inte$ and this set does not depend on the choice of $\eta_I$ by Theorem~\ref{T:polyannuli-variation}(c).
\end{itemize}
Choose integers $e_1, \dots, e_n \in \NN$ coprime to $p$ such that $e_ib_i \in \ZZ$ for any $i$ and for any $(-b_1, \dots, -b_n, \vartheta) \in B$. Put $R' = K \llbracket C' \rrbracket_0[t_1^{1/e_1}, \dots, t_n^{1/e_n}]$.
\end{situation}

\begin{theorem}\label{T:polyannuli-boundary}
The two multisets $A$ and $B$ are the same (for any $C'$ that satisfies the condition in (2)).  Moreover, there exists a finite tamely ramified extension $K'/K$ and a unique direct sum  decomposition $M \otimes R' \otimes K' = \bigoplus_{(b_I, \vartheta) \in B} M_{(b_I, \vartheta)}$ such that, if we put $F'_{\ee^{-r_I}} = F_{\ee^{-r_I}} [t_1^{1/e_1}, \dots, t_n^{1/e_n}] \otimes K'$,
\begin{itemize}
\item [\emph{(i)}] for all $r_I \in C'^\inte$, $M_{(b_I, \vartheta)} \otimes F'_{\ee^{-r_I}}$ has pure intrinsic radii $\omega \ee^{-b_1r_1- \cdots -b_nr_n + \goths}$ and pure refined intrinsic radii $t_I^{-b_I}\vartheta$, and
\item [\emph{(ii)}] any element in $\calI\Theta(M \otimes F'_{\underline 1})$ is congruent to $t_I^{-b_I}\vartheta$ modulo elements with $\bbv_s$-valuation strictly bigger than $(-b_1, \dots, -b_n)$.
\end{itemize}
\end{theorem}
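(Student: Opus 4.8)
The plan is to follow the pattern of Theorem~\ref{T:var-refined-multi} and Corollary~\ref{C:refined-boundary}, the only genuinely new feature being that the completion $F_{\underline 1}$ at the corner carries the iterated higher-local valuation $\bbv_\goths$ rather than a single $t$-adic one; I would handle this by induction on $n$, peeling off one coordinate at a time, the base case $n=1$ being Corollary~\ref{C:refined-boundary} together with Theorem~\ref{T:refined-boudary}. First the usual reductions: since a $\partial_{I\cup J}$-differential module is defined by finite data we may assume $\QQ\cdot\log|K^\times|\neq\RR$, and after a finite tamely ramified base change (absorbed into $K$) and after adjoining $t_i^{1/e_i}$ we may assume every $e_i=1$ and that every $\theta$ that will occur lies in the constant field $\kappa_K^{(\goths)}$. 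Applying Theorem~\ref{T:polyannuli-variation}(a),(b) over the open polyannulus $\ee^{-C'^\inte}$ I would first split $M$ by slope, $M=\bigoplus_{b_I}M_{b_I}$ with $f_1(M_{b_I},r_I)=\cdots=f_{\dim M_{b_I}}(M_{b_I},r_I)=-\log\omega-\goths+b_1r_1+\cdots+b_nr_n$ affine, and then by Theorem~\ref{T:polyannuli-variation}(c) decompose each $M_{b_I}$ over the interior into pieces $M_{(b_I,\vartheta)}$ of pure refined intrinsic radii $t_I^{-b_I}\vartheta$; this is exactly the decomposition that defines the multiset $B$.

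The core step is to extend this interior decomposition across the boundary faces of $C'$ down to the corner, i.e.\ over the bounded ring $R'=K\llbracket C'\rrbracket_0$, and I would do this face by face in order of increasing codimension. Near a codimension-one face $\{r_i=0\}$, after shrinking $C'$ to a region $C''$ adjacent to it, base change the $n-1$ coordinates $t_{i'}$ with $i'\neq i$ along a generic Gauss norm to a complete discretely valued field $\widetilde K$; this exhibits $M$ restricted to $C''$ as a one-variable $\partial$-differential module in $t_i$ over a bounded ring of the shape $\widetilde K\{\{\beta/t_i,t_i\rrbracket_0$. Since each $f_l$ is affine on $C''$, Theorem~\ref{T:touch-decomposition} and Theorem~\ref{T:refined-boudary} produce a refined-radii decomposition of this one-variable module; by the uniqueness clauses in those theorems and in Theorem~\ref{T:polyannuli-variation}(c) it induces the same decomposition as the interior one over the overlap $F_{\eta_I}$, so Lemma~\ref{L:proj-intersect} and Remark~\ref{R:proj-intersect} glue the two into a decomposition over $K\llbracket C''\rrbracket_0$; the transcendental parameters of $\widetilde K$ are harmless because every construction here is canonical, so the projectors descend to $K$ by the argument of Remark~\ref{R:refined-in-K}. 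Iterating — equivalently, after the first peeling applying the inductive hypothesis of the theorem in the remaining $n-1$ coordinates (the residual $\goths$ is unchanged and still negative, since peeling only removes a linear term) and then the one-variable results \ref{T:refined-boudary-j}, \ref{T:refined-boudary} for the last coordinate — yields the decomposition $M\otimes R'\otimes K'=\bigoplus_{(b_I,\vartheta)\in B}M_{(b_I,\vartheta)}$ over all of $C'$, corner included.

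Finally I would identify the data at the corner. For a summand $M_{(b_I,\vartheta)}$ the interior refined intrinsic radii are $t_I^{-b_I}\vartheta$ with $\vartheta$ constant, hence $\bbv_\goths(\vartheta)=\underline 0$ and $\bbv_\goths(t_I^{-b_I}\vartheta)=(-b_1,\dots,-b_n)$. Applying property (ii) of Theorem~\ref{T:refined-boudary} (and of Corollary~\ref{C:refined-boundary}) one coordinate at a time — the $t_n$-adic coordinate being the dominant one in the lexicographic $\bbv_\goths$ precisely because the completion $E$ of the one-variable theory used in the last peeling has $t_n$ as its outermost uniformizer — shows that every element of $\calI\Theta(M_{(b_I,\vartheta)}\otimes F_{\underline 1})$ is congruent to $t_I^{-b_I}\vartheta$ modulo terms of strictly larger $\bbv_\goths$-valuation. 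Thus $(\bbv_\goths(\cdot),\overline{(\cdot)})$ carries this multiset to $((-b_1,\dots,-b_n),\vartheta)$ with the correct multiplicity, which proves $A\subseteq B$ with multiplicities; comparing total multiplicities (both equal $\rank M$, using Corollary~\ref{C:refined-radii-vs-radii} in each coordinate so that no refined radius degenerates away at the boundary) gives $A=B$, and properties (i), (ii) of the decomposition are then immediate — (i) from the interior and (ii) from the congruence just established. In particular $B$ is independent of the choice of $C'$.

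I expect the main obstacle to be the bookkeeping in the face-by-face gluing: one must verify by induction on the codimension of a face that the decompositions produced on the different boundary faces of $C'$ are mutually compatible — they are, by the uniqueness clauses, but organizing the induction and the repeated use of Lemma~\ref{L:proj-intersect} and Remark~\ref{R:proj-intersect} is the delicate point — and one must ensure the auxiliary generic Gauss-norm parameters can always be eliminated, which again rests on the canonicity of the constructions. The conceptual reason an induction on $n$ is needed rather than a single global appeal to Theorem~\ref{T:var-refined-multi} is exactly the one flagged in Situation~\ref{Sit:polyannuli-boundary}: $F_{\underline 1}$ behaves like the field $E$ of the one-variable theory, so its refined radii must be compared with the one-variable valuations $\mathsf v_s$ assembled lexicographically, and this assembly is precisely what the inductive peeling carries out.
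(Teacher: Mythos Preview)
Your approach is essentially the same as the paper's: both argue by induction on $n$, peel off one coordinate so that the base field becomes a higher-local completion in the remaining variables, apply the inductive hypothesis there, and close with the one-variable boundary results (Theorem~\ref{T:touch-decomposition}, Theorem~\ref{T:refined-boudary}, Corollary~\ref{C:refined-boundary}) to match the refined radii at the corner with those over the interior.

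The differences are organizational. The paper first produces the decomposition over $R'$ in one stroke, simply saying it follows the pattern of \cite[Theorem~3.4.4]{kedlaya-xiao} with Theorems~\ref{T:touch-decomposition} and \ref{T:refined-boudary} substituted in; it then reduces to a single summand $M_{(b_I,\vartheta)}$ and proves $A=B$ by induction, peeling off $t_1$ (the innermost variable for $\bbv_\goths$), introducing an explicit intermediate multiset $A'$ over $\widetilde K = \widehat{\Frac(K\llbracket t_1\rrbracket_0)}$, and identifying $A'$ with $B$ by applying Corollary~\ref{C:refined-boundary} along lines parallel to the $t_1$-axis. You instead spread the work out face by face and carry the decomposition and the identification together. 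One point the paper makes explicit that you elide: for $r_I$ with irrational coordinates the line-by-line application of Corollary~\ref{C:refined-boundary} is not directly available, and the paper invokes the continuity in Theorem~\ref{T:polyannuli-variation}(c) to pass from rational to general $r_I$; you should flag this step in your inductive identification of $A$ with $B$.
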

\begin{proof}
We first construct the decomposition that satisfies condition (i).
For this, we may replace $K$ by a finite tamely ramified extension such that all $\vartheta$ appearing in $B$ lie in $\oplus_{i \in I} \kappa_K^{(s)} \frac{dt_i}{t_i} \oplus \oplus_{j\in J} \kappa_K^{(s)} \frac{du_j}{u_j}$ for an appropriate $s$.  In this case, we construct the decomposition of $M \otimes R'$ using the same argument as in \cite[Theorem~3.4.4]{kedlaya-xiao} by invoking Theorems~\ref{T:touch-decomposition} and \ref{T:refined-boudary} at appropriate places.

Now we check condition (ii) for this direct sum decomposition; this is equivalent to identifying the multisets A with B for each $M_{b_I, \vartheta}$.  Note that we already know that $M_{b_I, \vartheta} \otimes F_{e^{-r_I}}$ has pure intrinsic radii $\omega e^{-b_1r_1 -\cdots -b_nr_n + s}$.  For simplicity, we put $M=M_{b_I, \vartheta}$.
We do induction on the dimension $n$.  When $n = 0$ there is nothing to prove.  We assume that the theorem is proved for $n-1$.
Let $D$ denote the face $t_1 = 0$ of $C$.
Put $\widetilde C = C \cap D$, $\widetilde C'= C' \cap D$, $\widetilde S = \ee^{-\widetilde C}$, and $\widetilde R = \widetilde K \llbracket \widetilde S \rrbracket_0$ with coordinates $t_2, \dots, t_n$, where $\widetilde K$ is the completion of $\Frac (K\llbracket t_1\rrbracket_0)$ with respect to the 1-Gauss norm.

By applying the induction hypothesis to $\widetilde M = M \otimes_R \widetilde R$, the multiset $A$ is equal to
\[
A' = \big\{(\mathsf v_{s}(\vartheta'), -b_2, \dots, -b_n, \overline{t_1^{-\mathsf v_s(\vartheta')} \vartheta'} ) \big| (-b_2, \dots, -b_n) \in \QQ^{n-1}, t_2^{-b_2} \cdots t_n^{-b_n} \vartheta' \in \calI\Theta(M \otimes F_{\eta_I})\big\},
\]
for any $(r_2, \dots, r_n) \in \widetilde C'$, where $\mathsf v_s$ is the valuation on $\bigoplus_{i \in I}\kappa_{\widetilde K^\alg}^{(s)} \frac{dt_i}{t_i} \oplus\bigoplus_{j \in J}\kappa_{\widetilde K^\alg}^{(s)} \frac{du_j}{u_j}$ as in Notation~\ref{N:mathsf-v}, and $\overline{t_1^{-\mathsf v_s(\vartheta')} \vartheta'}$ is the reduction of $t_1^{-\mathsf v_s(\vartheta')} \vartheta'$ in 
$\bigoplus_{i \in I}\kappa_{K^\alg}^{(s)} \frac{dt_i}{t_i} \oplus\bigoplus_{j \in J}\kappa_{K^\alg}^{(s)} \frac{du_j}{u_j}$.

It suffices to identify the multiset $A'$ with $B$.  When $r_I \in \QQ^n \cap C'$, this follows from applying Corollary~\ref{C:refined-boundary} to the line which passes through the point $r_I$ and is parallel to the $t_1$-axis.  In particular, this says that for any $\vartheta'$ above, $\overline{t_1^{-\mathsf v_s(\vartheta')} \vartheta'}$ is the same as $\vartheta$.  When $r_I$ is not rational, the same statement follows from the ``continuity" result in Theorem~\ref{T:polyannuli-variation}(c).
\end{proof}

\begin{remark}
One can also describe the intrinsic radii of $M_{b_I, \vartheta}$ at the point $(r_I) \in C'$ with $r_1 = \cdots = r_l = 0$ for some $l \in \{1, \dots, d-1\}$.   We leave this as an exercise for interested readers.
\end{remark}

Next we consider the situation for solvable differential modules.

\begin{definition}
Let $C = \big\{(x_I) \subset \RR^n\big|x_I \geq 0, x_1+ \cdots + x_n = 1\big\}$.  For $[\alpha, \beta] \in (0,1)$, we put $S_{[\alpha, \beta]}= \{\rho^C| \rho \in [\alpha, \beta]\}$ and $R_{[\alpha, \beta]} = K \llbracket S_{[\alpha, \beta]} \rrbracket_0$.  For $\alpha \in (0, 1)$, we put $R_\alpha = \cap_{\beta \in (\alpha, 1)} R_{[\alpha, \beta]}$.

Fix $\alpha \in (0, 1)$.  Let $M$ be a differential module over $R_\alpha$.  Assume that $M$ is \emph{solvable}, that is, for each $x_I \in C$, we have $f_1(M, \rho^{x_I}) \rar 0$ as $\rho \rar 1^-$. 

By Theorem~\ref{T:swan1}, for $x_I \in C$, there exists $b_1(M, x_I), \dots, b_d(M, x_I)$ such that $f_l(M, -x_I\log \rho) = \rho^{b_l(M, x_I)}$ when $\rho \rar 1^-$, for $l = 1, \dots, d$.  Put $B_l(M, x_I) = b_1(M, x_I) +\cdots + b_l(M, x_I)$ for $l = 1, \dots, d$.
\end{definition}

\begin{proposition}
Keep the notation as above.
Then the functions $d! B_l(M, x_I)$ and $B_d(M, x_I)$ are integral polyhedral functions.
\end{proposition}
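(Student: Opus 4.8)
The plan is to deduce the statement from the polyhedrality of Theorem~\ref{T:polyannuli-variation}(a) over compact truncated cones, and then to read off the behaviour as $\rho \to 1^-$ by the convexity argument used in the proof of Theorem~\ref{T:swan1} and Proposition~\ref{P:calE-solvable}. For $x_I \in C$ the number $b_l(M, x_I)$, hence $B_l(M, x_I)$, is well defined by Theorem~\ref{T:swan1}, and moreover $F_l(M, t x_I) = B_l(M, x_I)\, t$ for $t > 0$ small; thus $d!\, B_l(M, x_I)$ (for $l < d$), resp. $B_d(M, x_I)$, is the directional derivative at the apex, along $x_I$, of the function $r_I \mapsto d!\, F_l(M, r_I)$, resp. $r_I \mapsto F_d(M, r_I)$, on the bounded truncated cone $\widetilde C = \{\, r_I \in \RR_{\geq 0}^n : 0 < r_1 + \cdots + r_n \leq -\log\alpha \,\}$. (We may take $\alpha$ as close to $1$ as we wish, since $M$ restricts from $R_\alpha$ to $R_{\alpha'}$ for any $\alpha' \in (\alpha, 1)$ without affecting the $b_l$'s near $\rho = 1$, and we may arrange $-\log\alpha \in -\log|K^\times|^\QQ$ by density.)

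First I would show that $g_l := d!\, F_l(M, \cdot)$ (for $l < d$), resp. $g_d := F_d(M, \cdot)$, is integral polyhedral on all of $\widetilde C$. For each $\beta \in (\alpha, 1)$ with $-\log\beta \in -\log|K^\times|^\QQ$, Theorem~\ref{T:polyannuli-variation}(a) applied to $M|_{R_{[\alpha,\beta]}}$ shows that $g_l$ is integral polyhedral, in particular convex, on the compact truncated cone $\widetilde C_{[\alpha,\beta]} = \{\, r_I \geq 0 : -\log\beta \leq \sum_i r_i \leq -\log\alpha \,\}$. Since an affine function supporting a convex function from below on an open subset of a connected domain supports it everywhere, each integral affine functional occurring in such a local representation is a global supporting functional of $g_l$ on $\widetilde C$; as $\beta \to 1^-$ the union of these compact cones exhausts $\widetilde C$, and the relevant supporting functionals stabilize to a finite set — the least slopes near the apex are attained because $g_l$ is convex, nonnegative, and (by solvability) tends to $0$ at the apex — yielding a single representation $g_l = \max_k \lambda_{l,k}$ on $\widetilde C$ with $\lambda_{l,k}(r_I) = a_{l,k}\cdot r_I + c_{l,k}$, $a_{l,k} \in \ZZ^n$, $c_{l,k} \in -\log|K^\times|^\QQ$.

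Next, solvability of $M$ over $R_\alpha$ gives $f_1(M, t x_I) \to 0$ as $t \to 0^+$ for every $x_I \in C$, hence $g_l(t x_I) \leq l\cdot d!\, f_1(M, t x_I) \to 0$; letting $t \to 0^+$ in $g_l(t x_I) = \max_k(t\, a_{l,k}\cdot x_I + c_{l,k})$ forces $\max_k c_{l,k} = 0$, so all $c_{l,k} \leq 0$. Then for small $t$ the functionals with $c_{l,k} < 0$ are strictly dominated and $g_l(t x_I) = t\cdot \max_{k : c_{l,k}=0} a_{l,k}\cdot x_I$; since $g_l \geq 0$ this quantity is $\geq 0$ on $C$, and it equals the directional derivative computed in the first paragraph, i.e.
\[
d!\, B_l(M, x_I) = \max_{k : c_{l,k}=0} a_{l,k}\cdot x_I \quad (l < d), \qquad B_d(M, x_I) = \max_{k : c_{d,k}=0} a_{d,k}\cdot x_I .
\]
Each right-hand side is a finite maximum of integral linear functionals, hence an integral polyhedral function on $C$, which is the assertion.

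I expect the main obstacle to be the second paragraph: passing from the polyhedrality over the interiors of compact truncated cones (which Theorem~\ref{T:polyannuli-variation}(a) supplies, but which never reaches the outer boundary $\rho = 1$) to one polyhedral description valid arbitrarily close to the apex and uniformly in $x_I \in C$ — equivalently, ruling out that the ``linear regime'' along a ray shrinks in length as $x_I$ varies. This is the multidimensional analogue of the statement, used in the proof of Theorem~\ref{T:swan1}, that a solvable, convex, piecewise-affine, nonnegative function vanishing at the origin is linear in a right neighborhood of the origin; here the supporting-functional gluing step plus convexity plays the role of the one-variable ``least slope'' bookkeeping, and some care is needed to see that the finitely many supporting functionals obtained in the limit $\beta \to 1^-$ indeed exhaust $g_l$ on the whole of $\widetilde C$.
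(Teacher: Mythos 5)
The paper itself gives no argument here: its ``proof'' is a pointer to \cite[Theorem~3.3.3]{kedlaya-swan2}, plus the one-line remark that the statement ``also follows from Theorem~\ref{T:polyannuli-variation}(a)''. Your proposal is an attempt to spell out exactly that remark, so the strategy matches what the paper has in mind.

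However, the step you yourself flag in the last paragraph is a genuine gap, not just a place where ``some care is needed''. The assertion that ``the relevant supporting functionals stabilize to a finite set'' is never argued, and without it the rest of the reasoning does not close. Concretely: from Theorem~\ref{T:polyannuli-variation}(a) on each compact truncated cone $\widetilde C_{[\alpha,\beta]}$ you get, for each $\beta$, a \emph{finite} set of achieved integral affine supporting functionals, and by convexity each such $\lambda$ satisfies $\lambda\le g_l$ on all of $\widetilde C$. But as $\beta\to 1^-$ the union of these finite sets could a priori be infinite, in which case you only obtain $g_l=\sup_{\lambda\in\Lambda}\lambda$ for a countable family $\Lambda$. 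Your next two steps then fail: the inference $\max_k c_{l,k}=0$ becomes $\sup c_{l,k}=0$, which no longer guarantees that some functional with $c=0$ actually occurs; and the displayed identity $d!\,B_l(M,x_I)=\max_{k:c_{l,k}=0}a_{l,k}\cdot x_I$ requires the max on the right to be over a finite, nonempty set. The phrase ``the least slopes near the apex are attained'' is precisely the one-dimensional Hasse--Arf stabilization argument, and its $n$-dimensional analogue is exactly what has to be proved, not invoked.

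To close the gap you need a finiteness statement. Boundedness of $g_l$ on $\widetilde C$ (it is nonnegative, polyhedral hence continuous on each compact cone, and $\to 0$ at the apex by solvability) gives, for any supporting functional with $c=0$, the upper bound $a_i\le M_1/(-\log\alpha)$ on each coordinate (test $a\cdot r_I\le g_l(r_I)$ on the outer slice $\sigma_{-\log\alpha}$); but no lower bound on the $a_i$ comes this cheaply, and without one you cannot conclude that finitely many $a\in\ZZ^n$ occur. An alternative that sidesteps ``global polyhedrality of $g_l$'' is to argue directly about $D:=d!\,B_l$: it is a bounded, nonnegative, convex function on the simplex $C$ (directional derivatives of a convex function are sublinear in the direction), and one can then use Theorem~\ref{T:variation}(b)-type integrality along rational rays (realizing the restriction to a rational ray as a one-variable family via a monomial substitution) together with monotonicity of slopes under convexity to show $D$ has gradients in a fixed finite subset of $\ZZ^n$, hence is integral polyhedral. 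Either way this is the piece you must actually prove; your paragraphs one and three are fine once paragraph two is made rigorous.
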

\begin{proof}
See \cite[Theorem~3.3.3]{kedlaya-swan2}.  It also follows from Theorem~\ref{T:polyannuli-variation}(a).
\end{proof}

\begin{construction}
Keep the notation as above.

Let $\underline x = (0, \dots, 1) \in C$ be the point.  Let $\gothF$ be the completion of the fraction field of $\calO_K(( t_1 )) \cdots (( t_{n-1} ))$; it is a higher dimensional local field.  We have a natural embedding $R_\alpha \inj \gothF\{\{ \eta / t_n, t_n \}\} = \widetilde \gothF_\eta$, if $\eta \in (\alpha, 1)$.  This means to restrict the picture to the line $(0, \dots, 0, \rho)$ for $\rho \in (\eta, 1)$.  We assume that $M \otimes \widetilde \gothF_\eta$ has pure-log break $b$.

Recall that, as in Situation~\ref{Sit:polyannuli-boundary}, we have a valuation $\bbv: \oplus_{i\in I} \kappa_{\gothF^\alg}\frac{dt_i}{t_i} \oplus \oplus_{ j \in J} \kappa_{\gothF^\alg} \frac{du_j}{u_j} \rar \QQ^n$.
\end{construction}

\begin{proposition}\label{P:polyannuli-Swan-conductor}
Keep the notation as above.
The following two multisets of $(n-1)$-tuples are the same.
\begin{itemize}
\item[\emph{(i)}] The multiset composed of valuations $\bbv$ of the elements of $\frac 1\bbpi\calI\Theta(M \otimes \widetilde \gothF_\eta)$, where $\bbpi$ is a Dwork pi.
\item[\emph{(ii)}] The multiset of slopes of $b_l(M, x_I)$, for $l = 1, \dots, d$, on a RP subset of $C$ which is adjacent to the cells $\big\{ t_1 = \cdots = t_i = 0, t_{i+1} + \cdots +t_n = 1\big\}$ for all $i = 1, \dots, n$.
\end{itemize}
\end{proposition}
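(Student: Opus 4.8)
The plan is to prove the identity of multisets by transporting the one-dimensional identification of Corollary~\ref{C:refined-radii-vs-radii} (valuations of refined radii at a boundary point equal the negatives of the slopes of the radius functions there) to the toroidal setting, peeling off the coordinates $t_{n-1},\dots,t_1$ one at a time and matching the lexicographic order on the target of $\bbv$ with the nesting of the faces $\{t_1=\cdots=t_i=0\}$. The argument will run closely parallel to the proof of Theorem~\ref{T:polyannuli-boundary}, using that result and its one-variable inputs as building blocks.

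First I would reduce to the pure case. By Theorem~\ref{T:swan1} and Theorem~\ref{T:polyannuli-variation}(a), the functions $B_l(M,x_I)$, equivalently the functions $r\mapsto f_l(M,\rho^{x_I})$ as $\rho\to 1^-$, are integral polyhedral near the vertex $\underline x=(0,\dots,0,1)$; hence there is an RP subset $C'$ of $C$ adjacent to the vertex and to each of the nested cells $\{t_1=\cdots=t_i=0,\ t_{i+1}+\cdots+t_n=1\}$ for $i=1,\dots,n$, on which every $f_l$ is affine. After shrinking $C'$ and passing to a finite tamely ramified extension $K'/K$, Theorem~\ref{T:polyannuli-variation}(b),(c) together with the boundary extension Theorem~\ref{T:polyannuli-boundary} (applied to the slice $\sum x_i=1$, which is the solvable analogue of the situation treated there; the same proof applies, as anticipated in the remark preceding Situation~\ref{Sit:polyannuli-boundary}) decompose $M$, over the corresponding bounded ring, as $\bigoplus_{b_I} M_{b_I}$, where each $M_{b_I}$ has pure intrinsic radii with slope vector $b_I$ and pure refined intrinsic radii $t_I^{-b_I}\vartheta$ at interior points, with $b_I$ ranging over the slope vectors of the $f_l$'s and $\vartheta$ over the corresponding reduced refined radii. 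Along the line $(0,\dots,0,\rho)$ the $t_n$-direction is the annulus direction, and the exponent $b_n$ coincides with the pure log-break $b$ of $M_{b_I}\otimes\widetilde\gothF_\eta$, which is therefore handled separately.

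Second, for a fixed summand $M_{b_I}$ I would show that the valuation $\bbv$ of $\frac1\bbpi$ times an element of $\calI\Theta(M_{b_I}\otimes\widetilde\gothF_\eta)$, with the $t_n^{-b}$ factor removed, equals the slope tuple of $M_{b_I}$ in the directions $t_1,\dots,t_{n-1}$. This is carried out by induction on $n$. One restricts $M_{b_I}$ to the line through a suitable point of $C'$ parallel to the $t_1$-axis and applies the one-dimensional boundary results Theorem~\ref{T:touch-decomposition}, Theorem~\ref{T:refined-radii-vs-radii}, Theorem~\ref{T:refined-boudary} and Corollary~\ref{C:refined-boundary} in the ``mixed'' bounded ring whose base is the completion of the fraction field of the relevant $\calO_K((t_2))\cdots((t_{n-1}))$-type ring (a discretely valued field), obtaining that the $t_1$-coordinate of $\bbv$ equals the $t_1$-slope of $f_l$ and that the refined Swan conductor, reduced modulo that coordinate, is a refined Swan conductor over the face $\{t_1=0\}$; the inductive hypothesis, now over a polyannulus with coordinates $t_2,\dots,t_n$, recovers the remaining coordinates $(-b_2,\dots,-b_{n-1})$. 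The peeling order $t_1,t_2,\dots$ is exactly the order of the nested faces, which is why the lexicographic order on the target of $\bbv$ is respected; this step is a verbatim analogue of the identification of the multisets $A$ and $B$ in the proof of Theorem~\ref{T:polyannuli-boundary}. Assembling over all $M_{b_I}$ and all $l$, multiset (i) equals multiset (ii); and the tame base change is undone by Galois descent, since $\bbv$ and the polyhedral slopes are invariant under tamely ramified extensions once the relevant $\Omega^1$-spaces are identified, as in Lemma~\ref{L:tame-refined}.

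The main obstacle, exactly as in Theorem~\ref{T:polyannuli-boundary}, is the bookkeeping of the second step: one must make precise that restricting to a line parallel to the $t_i$-axis through a boundary point of $C'$ and invoking Theorem~\ref{T:refined-radii-vs-radii} over the base $\gothF$ peels off precisely one lexicographic coordinate of $\bbv$ and corresponds to precisely one partial slope of $b_l$, uniformly over $C'$, and that the one-variable boundary decomposition results remain valid in the requisite mixed bounded rings. These are the same technical inputs already used to prove Theorem~\ref{T:polyannuli-boundary}, so the work here is to organize an $n$-fold iteration of them rather than to overcome a genuinely new difficulty; the continuity statement in Theorem~\ref{T:polyannuli-variation}(c) will be used to pass from rational points $r_I$ to arbitrary ones, as in the final paragraph of the proof of Theorem~\ref{T:polyannuli-boundary}.
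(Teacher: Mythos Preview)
Your proposal is correct and follows the same underlying approach as the paper, but you are doing far more work than necessary: the paper's entire proof is the single sentence ``It follows from Theorem~\ref{T:polyannuli-boundary}.'' You yourself recognize this when you call your second step ``a verbatim analogue of the identification of the multisets $A$ and $B$ in the proof of Theorem~\ref{T:polyannuli-boundary}''; rather than re-running that induction in the solvable setting, you should simply observe that after fixing $\eta$ close to $1$ the refined Swan conductors of $M\otimes\widetilde\gothF_\eta$ are, by Definition~\ref{D:differential-conductor} and Theorem~\ref{T:var-refined-multi}, the refined intrinsic radii at the point $(1,\dots,1,\eta)$ up to the factor $t_n^{-b}$, and that Theorem~\ref{T:polyannuli-boundary} (applied over the base field containing $t_n$, with the $(n-1)$ variables $t_1,\dots,t_{n-1}$) then identifies their $\bbv$-valuations with the slopes of the $f_l$, hence of the $b_l$, near the vertex $\underline x$.
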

\begin{proof}
It follows from Theorem~\ref{T:polyannuli-boundary}.
\end{proof}

\begin{remark}
One may interpret the above proposition geometrically, as in \cite{kedlaya-swan2}.  We will come back to this discussion in a future work.
\end{remark}

\end{document}